\def\rdots{\rotatebox[origin=l]{29}{$\scriptscriptstyle\ldots\mathstrut$}}
\def\ve#1{\mathchoice{\mbox{\boldmath$\displaystyle\bf#1$}}
{\mbox{\boldmath$\textstyle\bf#1$}}
{\mbox{\boldmath$\scriptstyle\bf#1$}}
{\mbox{\boldmath$\scriptscriptstyle\bf#1$}}}
\newcommand\vea{{\ve a}}
\newcommand\vealpha{{\ve \alpha}}
\newcommand\veb{{\ve b}}
\newcommand\vecc{{\ve c}}
\newcommand\vece{{\ve e}}
\newcommand\veg{{\ve g}}
\newcommand\veG{{\ve G}}
\newcommand\veh{{\ve h}}
\newcommand\vek{{\ve k}}
\newcommand\vel{{\ve l}}
\newcommand\vep{{\ve p}}
\newcommand\ver{{\ve r}}
\newcommand\ves{{\ve s}}
\newcommand\vet{{\ve t}}
\newcommand\veu{{\ve u}}
\newcommand\vev{{\ve v}}
\newcommand\vew{{\ve w}}
\newcommand\vex{{\ve x}}
\newcommand\vey{{\ve y}}
\newcommand\vez{{\ve z}}
\newcommand{\norm}[1]{\left\| #1 \right\|}
\newcommand\vegamma{{\ve \gamma}}
\newcommand\vebeta{{\ve \beta}}
\newcommand\vezero{{\ve 0}}
\newcommand\veone{{\ve 1}}
\newcommand{\domain}{\mathfrak{D}}
\newcommand\II{\mathcal{I}}
\newcommand{\df}{\mathrel{\mathop:}=}
\newcommand{\rd}[1]{\lfloor#1\rceil}
\newcommand{\eps}{\epsilon}
\DeclarePairedDelimiter\ceil{\lceil}{\rceil}
\DeclarePairedDelimiter\floor{\lfloor}{\rfloor}
\newcommand{\hy}{\hbox{-}\nobreak\hskip0pt}
\newcommand{\chr}[1]{\textcolor{red}{#1} \marginpar{\footnotesize \flushleft Christoph}}
\newcommand{\martin}[1]{\textcolor{blue}{#1} \marginpar{\footnotesize \flushleft Martin}}
\newtheorem*{rep@theorem}{\rep@title}
\newcommand{\newreptheorem}[2]{%
\newenvironment{rep#1}[1]{%
 \def\rep@title{#2 \ref{##1}}%
 \begin{rep@theorem}}%
 {\end{rep@theorem}}}
\newcommand{\sv}[1]{#1}
\newcommand{\lv}[1]{}
\newtheorem{theorem}{Theorem}
\theoremstyle{remark}
\newtheorem*{remark}{Remark}
\newtheorem*{claim*}{Claim}
\newtheorem*{example*}{Example}
\newcommand{\ignore}[1]{}
\def\N{\mathbb{N}}
\def\R{\mathbb{R}}
\def\Z{\mathbb{Z}}
\def \G {\mathcal{G}}
\def \T {\mathcal{T}}
\def \l {\langle}
\def \r {\rangle}
\DeclareMathOperator{\best}{\ngspc\mathrm{-best}}
\DeclareMathOperator{\poly}{\mathrm{poly}}
\DeclareMathOperator{\suppo}{\mathrm{supp}}
\def \ngspc {\hspace{-1.5pt}}
\def \C {\mathcal{C}}
\def \Oh {\mathcal{O}}
\def \oh {o}
\def \AA {\mathcal{A}}
\def \AAap {\AA^{\mathrm{ap}}}
\newcommand{\oracle}[1]{#1^{\mathrm{oracle}}}
\def \AAA {\oracle{\AA}}
\def \CC {\mathcal{C}}
\def \DD {\mathcal{D}}
\def \FF {\mathcal{F}}
\def \PP {\mathcal{P}}
\def \RR {\mathcal{R}}
\def \RRR {\oracle{\RR}}
\def \FFF {\oracle{\FF}}
\def \l {\langle}
\def \r {\rangle}
\def \la {\langle}
\def \ra {\rangle}
\DeclareMathOperator{\cone}{cone}
\DeclareMathOperator{\spann}{span}
\DeclareMathOperator{\convol}{\mathrm{convol}}
\def \poly {{\rm poly}}
\def \sign {{\rm sign}}
\def \undffd {\mathtt{undef}}
\DeclareMathOperator{\tw}{\mathrm{tw}}
\DeclareMathOperator{\td}{\mathrm{td}}
\DeclareMathOperator{\ttd}{\mathrm{th}}
\DeclareMathOperator{\cl}{\mathrm{cl}}
\DeclareMathOperator{\height}{\mathrm{height}}
\DeclareMathOperator{\Sol}{\mathrm{Sol}}
\DeclareMathOperator{\fin}{\mathrm{fin}}
\def \transpose {{\intercal}}
\newcommand{\NP}{$\mathsf{NP}$\xspace}
\newcommand{\NPh}{$\mathsf{NP}$\hy\textsf{hard}\xspace}
\newcommand{\NPc}{$\mathsf{NP}$\hy\textsf{complete}\xspace}
\newcommand{\FPT}{$\mathsf{FPT}$\xspace}
\newcommand{\Wh}[1]{$\mathsf{W[#1]}$\hy\textsf{hard}\xspace}
\newcommand{\Trule}{\rule{0pt}{3ex}}
\newcommand{\Brule}{\rule[-2ex]{0pt}{0pt}}
\newcommand{\prob}[3]{
	\begin{center}
		\begin{tabularx}{\textwidth}{|l X|}
			\hline
			\multicolumn{2}{|c|}{#1\Trule} \\
			\textbf{Input:\ }&{#2}\\
			\textbf{Task:\ }&{#3\Brule}\\
			\hline
		\end{tabularx}
	\end{center}
}
\title{An Algorithmic Theory of Integer Programming}
\author{Friedrich Eisenbrand}
\affiliation{%
	\institution{EPFL}
	\city{Lausanne}
	\country{Switzerland}}
\email{friedrich.eisenbrand@epfl.ch}
\author{Christoph Hunkenschröder}
\affiliation{%
	\institution{TU}
	\city{Berlin}
	\country{Germany}}
\email{hunkenschroeder@tu-berlin.de}
\author{Kim-Manuel Klein}
\affiliation{%
	\institution{Universität Kiel}
	\city{Kiel}
	\country{Germany}}
\email{kmk@informatik.uni-kiel.de}
\author{Martin Kouteck{\'y}}
\affiliation{%
	\institution{Charles University}
	\city{Prague}
	\country{Czech Republic}}
\email{koutecky@iuuk.mff.cuni.cz}
\author{Asaf Levin}
\email{levinas@ie.technion.ac.il}
\author{Shmuel Onn}
\affiliation{%
	\institution{Technion -- Israel Institute of Technology}
	\city{Haifa}
	\country{Israel}}
\email{onn@ie.technion.ac.il}
\subjclass{F.2.2 Nonnumerical Algorithms and Problems, G.1.6 Optimization}
\keywords{integer programming, parameterized complexity, Graver basis, treedepth, $n$-fold, tree-fold, 2-stage stochastic, multi-stage stochastic}
\begin{document}
	\begin{abstract}
		We study the general integer programming problem where the number of variables $n$ is a
		variable part of the input. We consider two natural parameters of the constraint matrix $A$:
		its {\em numeric measure} $a$
		and its {\em sparsity measure} $d$.
		We show that integer programming can be solved in time $g(a,d)\poly(n,L)$, where $g$ is some computable function of the parameters $a$
		and $d$, and $L$ is the binary encoding length of the input.
		In particular, integer programming is fixed-parameter tractable parameterized by $a$ and $d$, and is solvable in polynomial time for every fixed $a$ and $d$.
		Our results also extend to nonlinear separable convex
		objective functions.
		Moreover, for linear objectives, we derive a strongly-polynomial algorithm,
		that is, with running time $g(a,d)\poly(n)$, independent of the rest of the input data.
		
		We obtain these results by developing an algorithmic framework based on the idea of iterative
		augmentation: starting from an initial feasible solution, we show how to quickly find augmenting
		steps which rapidly converge to an optimum. A central notion in this framework is the Graver basis
		of the matrix $A$, which constitutes a set of fundamental augmenting steps. The iterative
		augmentation idea is then enhanced via the use of other techniques such as new and improved bounds on the Graver basis, rapid solution of integer programs with bounded variables, proximity theorems and
		a new proximity-scaling algorithm, the notion of a reduced objective function, and others.
		
		As a consequence of our work, we advance the state of the art of solving block-structured integer programs. In particular, we develop
		near-linear time algorithms for $n$-fold, tree-fold, and $2$-stage stochastic integer programs.
		We also discuss some of the many applications of these classes.
	\end{abstract}
	\maketitle
	
\tableofcontents	
	
	\section{Introduction}
	Our focus is on the integer (linear) programming problem in standard form
	\begin{gather}
	\min\left\{f(\vex) \mid A\vex=\veb,\, \vel\leq\vex\leq\veu,\, \vex\in\Z^{n}\right\} \tag{IP}, \text{ and} \label{IP}
	\\
	\min\left\{\vew \vex \mid A\vex=\veb,\, \vel\leq\vex\leq\veu,\, \vex\in\Z^{n}\right\} \tag{ILP}, \label{ILP}
	\end{gather}
	with $A$ an integer $m\times n$ matrix, $f: \R^n \to \R$ a separable convex function, $\veb\in\Z^m$, and $\vel,\veu\in(\Z\cup\{\pm\infty\})^n$.
	\eqref{IP} is well-known to be strongly \NPh already in the special case \eqref{ILP} when $f(\vex) = \vew \vex$ is a linear objective function for some vector $\vew \in \Z^n$.
	In spite of that, in this paper we identify broad natural and useful conditions under which~\eqref{IP} can be solved in polynomial time, even when the number of variables $n$ is a variable part of the input.
	
	Specifically, we consider two natural parameters of the constraint matrix $A$: its numeric measure $a$ and its sparsity measure $d$ defined as follows.
	The numeric measure depends only on the values of the entries of the matrix $A$ and is essentially the largest absolute value of any coefficient: $a \df \max\{2,\|A\|_\infty\}$.
	On the other hand, the sparsity measure $d$ depends only on the structure of non-zeroes of $A$ and we use the notion of primal and dual treedepth to capture it.
	These are defined as follows.
	Let $G_P(A)$ denote the \emph{primal graph of $A$}, which has $\{1, \dots, n\}$ as its vertex set, and an edge between vertices $i$ and $j$ exists if $A$ contains a row which is nonzero in coordinates $i$ and $j$.
	The \emph{dual graph of $A$} is $G_D(A) \df G_P(A^{\intercal})$.
	The \emph{treedepth} of a graph denoted $\td(G)$ is the smallest height of a rooted forest $F$ such that each edge of $G$ is between vertices which are in a descendant-ancestor relationship in $F$.
	The \emph{primal treedepth of $A$} is $\td_P(A) \df \td(G_P(A))$, and analogously the \emph{dual treedepth of $A$} is $\td_D(A) \df \td(G_D(A))$.
	Then, the sparsity measure $d$ is defined as $d \df \min \{\td_P(A), \td_D(A)\}$.
	Denote by $\la A, f, \veb, \vel, \veu \ra$ the binary encoding length of an~\eqref{IP} instance.\footnote{We define the encoding length of $f$ to be the length of $f_{\max}$, which is the difference between the maximum and minimum values of $f$ on the domain.}
	The function $f$ is given by an oracle.
	Our first main result then reads as follows:
	\begin{theorem} \label{thm:main}
		There exists a computable function $g$ such that problem~\eqref{IP} can be solved in time \[g(a,d) \poly(n,L), \qquad \text{where } d \df \min\{\td_P(A), \td_D(A)\} \text{ and } L \df \la A, f, \veb, \vel, \veu \ra \enspace .\]
	\end{theorem}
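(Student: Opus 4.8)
The plan is to prove Theorem~\ref{thm:main} by reducing to the two extreme cases $d = \td_P(A)$ and $d = \td_D(A)$ separately, since $d$ is defined as the minimum of the two and the algorithm can first compute an optimal treedepth decomposition of both $G_P(A)$ and $G_D(A)$ (this is itself \FPT in the treedepth, or one can afford an approximation) and then branch on whichever is smaller. In each case the heart of the argument is the iterative augmentation framework advertised in the abstract: we maintain a feasible point $\vex$, and as long as $\vex$ is not optimal we find a Graver-basis-like augmenting step $\veg$ and a step length $\lambda$ such that $\vex + \lambda \veg$ is feasible and strictly improves $f$. Standard Graver theory gives that a single best Graver augmenting step already yields geometric convergence in the number of iterations, so that $\poly(n, L)$ iterations suffice (with the usual trick of scaling/halving step lengths to get a polynomial rather than pseudopolynomial bound, via the proximity and reduced-objective ideas mentioned in the abstract). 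The entire difficulty is thus pushed into a single subroutine: given the current $\vex$, find a good augmenting step quickly.

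The first main step is therefore to bound the complexity of the Graver basis of $A$ in terms of $a$ and $d$. One shows that every Graver basis element $\veg$ has $\|\veg\|_\infty \le g_1(a,d)$ and in fact $\|\veg\|_1 \le g_2(a,d)$, using the treedepth decomposition: elements of $\ell_\infty$-norm bounded by the numeric measure propagate up the decomposition tree, and at each of the $\le d$ levels the norm blows up by a controlled (function-of-$a$-and-$d$) factor. This is the place where the primal/dual distinction matters — for primal treedepth one bounds the support structure directly, for dual treedepth one uses that $A^\transpose$ has small treedepth so $A$ has few "independent blocks of rows" in a recursive sense. The second main step is the augmentation oracle: because Graver elements are $\ell_\infty$-bounded by $g_1(a,d)$, finding a best (or good enough) augmenting step reduces to solving an auxiliary bounded integer program $\min\{ f(\vex+\veg) - f(\vex) \mid A\veg = \vezero,\ \|\veg\|_\infty \le g_1(a,d),\ \vel \le \vex+\veg\le\veu\}$, which has the same matrix $A$ (hence the same small treedepth) but variable bounds of width $2g_1(a,d)+1$. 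Such small-width IPs with bounded primal or dual treedepth are solved in time $g_3(a,d)\poly(n)$ by a dynamic program over the treedepth decomposition — rooting the forest, processing components bottom-up, and keeping a table of partial solutions indexed by the (few, by treedepth) "interface" variables/constraints, whose number of configurations is bounded by a function of $a$ and $d$ because each coordinate ranges over an interval of bounded width.

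I expect the main obstacle to be the dynamic-programming / bounded-IP subroutine in the dual-treedepth regime and making all the pieces fit together with a \emph{polynomial} (not merely pseudopolynomial) dependence on $L$. For the primal case the DP is fairly natural because the treedepth decomposition of $G_P(A)$ literally partitions the variables, so an interface of $\le \td_P(A)$ variables each ranging over $O(g_1(a,d))$ values gives a table of size $g(a,d)$ per node. For the dual case one instead works with the constraints, and has to argue that after fixing a bounded-size set of "boundary" rows the remaining system decomposes; combining this with the $\ell_\infty$-bound on Graver elements to keep the right-hand sides in the subproblems bounded is the technical crux. Finally, turning the pseudopolynomial iteration count from naive Graver augmentation into $\poly(n,L)$ requires the proximity-scaling algorithm and the reduced objective function: one runs the augmentation procedure on a scaled instance, uses a proximity theorem to argue the scaled optimum is close to the true optimum, and iterates over $O(L)$ scaling phases — each phase again only needing the bounded-width augmentation oracle above. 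Assembling these (treedepth decomposition $\to$ Graver bound $\to$ bounded-IP augmentation oracle $\to$ scaling/proximity wrapper) yields the claimed $g(a,d)\poly(n,L)$ bound.
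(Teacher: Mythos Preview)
Your overall architecture is correct and matches the paper's: bound Graver norms in terms of $(a,d)$, build an augmentation oracle via dynamic programming over the treedepth decomposition, and iterate. Two corrections are in order.

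First, you overcomplicate the convergence argument. Proximity-scaling and reduced objectives are \emph{not} needed for Theorem~\ref{thm:main}; they appear later in the paper only for the strongly polynomial and near-linear refinements. The geometric convergence you mention already yields $O(n\log f_{\max})$ iterations directly: by separable-convex superadditivity, the best Graver step closes a $\Theta(1/n)$ fraction of the optimality gap, and a step at least half as good (a ``halfling'') is obtained by solving the augmentation subproblem for each step length $\lambda\in\{1,2,4,\dots,\|\veu-\vel\|_\infty\}$ and taking the best. That is $O(\log\|\veu-\vel\|_\infty)$ calls per iteration, hence $O(nL^2)$ calls to the augmentation oracle in total, with no scaling wrapper. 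Feasibility is handled by the same oracle applied to an auxiliary instance.

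Second, your dual-treedepth sketch has the wrong norm and the wrong DP index. The relevant Graver bound in the dual regime is on $\|\veg\|_1$ (not $\|\veg\|_\infty$), obtained by a recursion over the topological height of $F$ whose base case and inductive step both use the Steinitz lemma --- this is the concrete mechanism behind your ``blows up by a controlled factor at each level''. The augmentation DP then does not fix boundary rows; rather, at each level it enumerates the possible values of the partial contribution $\bar A_i\veg^i$ of each block to the shared top constraints, and these lie in a box of size $(2\|A\|_\infty g_1(A)+1)^{k_1(F)}$, which is where both $a$ and the $\ell_1$-bound enter. For the primal norm bound the paper invokes a nontrivial lemma of Klein on multisets with equal sums; a naive propagation argument does not obviously suffice there.
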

	Note that for our algorithm to be fast it suffices if at least one of $\td_P(A)$ and $\td_D(A)$ is small.
	We also develop an algorithmic framework among others suitable for obtaining strongly polynomial algorithms, and as a consequence of Theorem~\ref{thm:main} we show a strongly polynomial algorithm for~\eqref{ILP}:
	\begin{corollary} \label{cor:ilp_stronglypoly}
		There exists a computable function $g$ such that problem~\eqref{ILP} can be solved with an algorithm whose number of arithmetic operations is bounded by \[g(a,d) \poly(n), \qquad \text{where }d \df \min\{\td_P(A), \td_D(A)\} \enspace .\]
	\end{corollary}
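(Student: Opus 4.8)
The plan is to transform an arbitrary instance of~\eqref{ILP} into an \emph{equivalent} instance all of whose data have encoding length $g(a,d)\poly(n)$, and then to run the algorithm of Theorem~\ref{thm:main} on it: that algorithm then performs $g(a,d)\poly(n)$ bit operations with all intermediate numbers of polynomially bounded length, hence in particular $g(a,d)\poly(n)$ arithmetic operations, which is what is claimed. I would assume throughout that~\eqref{ILP} is feasible and bounded; the remaining cases are settled in passing, since infeasibility of the linear relaxation forces infeasibility of~\eqref{ILP}, and if the relaxation is unbounded one extracts an integral ray of its recession cone and deduces, via a separate feasibility test (the $\vew=\vezero$ case of the construction below), that~\eqref{ILP} is either infeasible or unbounded.

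First I would shrink the feasible region. Solve the linear relaxation $\min\{\vew\vex : A\vex=\veb,\ \vel\le\vex\le\veu\}$ by Tardos's algorithm for linear programming, whose number of arithmetic operations is polynomial in $n$, $m$ and $\log a$ and independent of $\veb,\vel,\veu,\vew$, and let $\vex_{LP}$ be the basic optimum it returns. Now I would invoke a proximity theorem for matrices of bounded $\|A\|_\infty$ and bounded primal or dual treedepth: there is a computable function $P$ such that~\eqref{ILP}, if feasible, has an optimum $\vez^\star$ with $\|\vez^\star-\vex_{LP}\|_\infty\le P(a,d)$. Hence I may add the constraint $\floor{\vex_{LP}}-(P(a,d)+1)\veone\le\vex\le\floor{\vex_{LP}}+(P(a,d)+1)\veone$ without discarding any optimum, and then substitute $\vey\df\vex-\floor{\vex_{LP}}$; this produces the equivalent instance $\min\{\vew\vey : A\vey=\veb',\ \vel'\le\vey\le\veu'\}$ in which $\veb'\df\veb-A\floor{\vex_{LP}}=A(\vex_{LP}-\floor{\vex_{LP}})$, so that $\|\veb'\|_\infty\le a n$; in which $\|\vel'\|_\infty,\|\veu'\|_\infty\le P(a,d)+1$; and in which the additive constant $\vew\floor{\vex_{LP}}$ of the objective has been dropped. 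Thus $\la\veb'\ra$, $\la\vel'\ra$ and $\la\veu'\ra$ are $g(a,d)\poly(n)$, and if this instance is infeasible then so is~\eqref{ILP}.

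Next I would shrink the objective. Applying the Frank--Tardos rounding procedure to the (already shifted) objective $\vew$ with parameter $N\df 2P(a,d)+2$ produces, in strongly polynomial time, a vector $\bar\vew\in\Z^n$ with $\la\bar\vew\ra=g(a,d)\poly(n)$ that agrees in sign with $\vew$ on every integer vector of $\ell_\infty$-norm at most $N$. Since any two feasible points of the instance constructed above differ by a vector of $\ell_\infty$-norm at most $N$, the objectives $\vew$ and $\bar\vew$ order that feasible set identically and in particular have the same minimizers, so I may replace $\vew$ by $\bar\vew$. The resulting instance of~\eqref{IP}, with $f(\vey)=\bar\vew\vey$, then has encoding length $L'=\la A,\bar\vew,\veb',\vel',\veu'\ra=g(a,d)\poly(n)$ (the matrix $A$ being unchanged and having entries bounded by $a$); running the algorithm of Theorem~\ref{thm:main} on it uses $g(a,d)\poly(n,L')=g(a,d)\poly(n)$ arithmetic operations and returns an optimum $\vey^\star$, whereupon $\vey^\star+\floor{\vex_{LP}}$ is an optimum of the original~\eqref{ILP}.

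The hard part will be the proximity theorem used to shrink the feasible region, namely the bound $\|\vez^\star-\vex_{LP}\|_\infty\le P(a,d)$ relating an optimum of the linear relaxation to an optimum of~\eqref{ILP} when $A$ has small primal or dual treedepth and small $\|A\|_\infty$; the rest is routine bookkeeping, including the handling of the infeasible and unbounded cases and, if one insists on stating the running time purely in terms of $n$, a preprocessing step that selects a maximal linearly independent set of original rows of $A$ (thereby bounding the number of rows by $n$ while preserving $\|A\|_\infty\le a$) and checks consistency of the discarded ones.
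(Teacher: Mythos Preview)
Your outline is essentially the paper's own proof: the paper derives the corollary by combining Theorem~\ref{thm:main} with Theorem~\ref{thm:oracle}, whose proof is exactly the four-step scheme you describe (Tardos LP $\to$ proximity $\to$ feasibility $\to$ Frank--Tardos $\to$ augmentation). Two remarks, however.

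First, your proximity claim is slightly too strong: you assert $\|\vez^\star-\vex_{LP}\|_\infty\le P(a,d)$ with $P$ depending only on $a,d$, but no such bound is established (nor needed) in the paper. What is available is $\|\vez^\star-\vex_{LP}\|_\infty\le n\,g_\infty(A)$ (Theorem~\ref{thm:proximity}), and the paper in fact uses the cruder $n(2m\|A\|_\infty+1)^m$ coming from Lemma~\ref{lem:bound1}. The extra factor $n$ is harmless for your argument, since after taking logarithms it contributes only $\poly(n)$ to the encoding length of the reduced data; you should simply write $nP(a,d)$ (or the generic bound) and adjust $N$ in the Frank--Tardos step accordingly.

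Second, and relatedly, you misidentify the ``hard part''. The proximity step does \emph{not} require anything treedepth-specific; the generic Steinitz-based bound $n(2m\|A\|_\infty+1)^m$ already has logarithm polynomial in $n$ and $\log a$, which is all you need. The only place where bounded treedepth actually enters the proof of the corollary is in the realization of the~\eqref{AugIP} oracle (Lemmas~\ref{lem:primal} and~\ref{lem:dual} together with the norm bounds of Lemmas~\ref{lem:primal_norm} and~\ref{lem:dual_norm}), i.e.\ inside the black box of Theorem~\ref{thm:main} that you invoke at the end. So the proximity theorem is not the bottleneck here; it is standard, and your sketch would be cleaner if you simply cited the generic bound and dropped the appeal to a treedepth-dependent $P(a,d)$.
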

	Note that already for $a=1$ or $d=1$~\eqref{ILP} is \NPh (Proposition~\ref{prop:ilphardness}).
	Moreover, arguably the two most important tractable classes of~\eqref{IP} are formed by instances whose constraint matrix is either totally unimodular or has small number $n$ of columns, yet our results are incomparable with either: the class of totally unimodular matrices might have large $d$, but has $a=1$, and the matrices considered here have variable $n$.
	
	We also show that Theorem~\ref{thm:main} cannot be improved in multiple senses.
	First, treedepth cannot be replaced with the more permissive notion of treewidth, since~\eqref{IP} is \NPh already when $\min\{\tw_P(A), \tw_D(A)\} = 2$ and $a=2$ (Corollaries~\ref{cor:twdhardness} and~\ref{cor:twphardnness}).
	Second, the parameterization cannot be relaxed by removing the parameter $a$: \eqref{IP} is para-\NPh parameterized by $\td_P(A)$~\cite[Thm 21]{DvorakEGKO21} and strongly \Wh{1} parameterized by $\td_D(A)$~\cite[Thm 5]{KnopKM:2017} alone; the fact that a problem is \Wh{1} is strong evidence that it is not fixed-parameter tractable, and a problem is para-\NPh if it is \NPh already for some constant value of the parameter.
	Third, the requirement that $f$ is separable convex cannot be relaxed since~\eqref{IP} with a non-separable convex or a separable concave function are \NPh even for small values of our parameters (Proposition~\ref{prop:nphconvexconcave}).
	We also provide several concrete lower bounds, for example showing that the function $g$ must be at least double-exponential unless the exponential time hypothesis (ETH) fails (Theorems~\ref{thm:tdp_lowerbound} and~\ref{thm:tdd_lowerbound}).
	
	\medskip
	
	Beyond Theorem~\ref{thm:main}, we develop several new techniques and use them to construct improved algorithms for~\eqref{IP} when $a$ and $\td_P(A)$ or $\td_D(A)$ is small.
	Let us highlight the main technical contributions.
	We prove a new proximity theorem and use it to obtain a scaling algorithm:
	\begin{repcorollary}{cor:scaling_algo}[Proximity-scaling algorithm, informal]
		Solving~\eqref{IP} can be reduced to solving $\log \|\veu-\vel\|_\infty$ auxiliary instances with polynomial lower and upper bounds.
	\end{repcorollary}
	Next, consider the following question: given a linear objective function $\vew \vex$, how to obtain an equivalent function $\bar{\vew} \vex$ with small $\|\bar{\vew}\|_\infty$?
	We say that $\bar{\vew}$ is \emph{equivalent} if $\vew \vex > \vew \vey \Leftrightarrow \bar{\vew} \vex > \bar{\vew}\vey$ for all feasible $\vex, \vey$.
	Frank and Tardos~\cite{FT} provide an algorithm to compute such a vector.
	We
	\begin{itemize}
		\item extend their result also to separable convex objective functions (Corollary~\ref{cor:sepconvex_red}),
		\item give stronger non-constructive bounds on equivalent functions (Theorem~\ref{thm:lin_red} and Corollary~\ref{cor:sepconvex_red}),
		\item and show that these bounds are asymptotically the best possible (Theorems~\ref{thm:lin_red_lowerbound} and~\ref{thm:sepconvex_lb}).
	\end{itemize}
	Such bounds allow us to construct fast strongly-polynomial algorithms for~\eqref{IP}.
	All of the results mentioned above, combined with a wealth of useful technical statements, allow us to construct improved algorithms for some previously studied classes of IP:
	\begin{theorem}[{Informal, see Corollaries~\ref{cor:nfold},~\ref{cor:2stage},~\ref{cor:tf}}]
		$N$-fold, tree-fold, and 2-stage stochastic IP is solvable in near-linear fixed-parameter tractable time
		$$g(k_1, \dots, k_\tau) \cdot n \log^{\Oh(1)} n \cdot L,$$ where $k_1, \dots, k_\tau$ are the relevant instance parameters and $L$ is a measure of the input length.
	\end{theorem}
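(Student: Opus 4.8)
The plan is to instantiate the general iterative-augmentation framework developed above on each of the three block-structured classes and then push the running time down to near-linear by exploiting the recursive structure of the constraint matrices. First I would observe that each of the three matrix classes has one of its treedepth parameters bounded by a function of its block parameters: a $2$-stage stochastic matrix with blocks of bounded dimensions and $n$ blocks has $\td_P(A)$ bounded by a function of those dimensions, while an $N$-fold matrix, and more generally a tree-fold matrix of depth $\tau$ with block dimensions $k_1,\dots,k_\tau$, has $\td_D(A)$ bounded by a function of the $k_i$. Consequently Theorem~\ref{thm:main} already yields a $g(k_1,\dots,k_\tau)\poly(n,L)$-time algorithm, and the real work is to improve the $\poly(n,L)$ factor to $n\log^{\Oh(1)} n\cdot L$.

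The key is a twofold reduction that shrinks the number of augmentation steps. First, apply the reduced-objective results (Theorem~\ref{thm:lin_red} / Corollary~\ref{cor:sepconvex_red}) to replace $\vew$ (resp.\ $f$) by an equivalent objective of small encoding length; using the block structure together with the Frank--Tardos-type estimates this length can be driven down to $g(\text{params})\cdot\log n$ rather than a general polynomial in $n$. Second, apply the proximity-scaling algorithm (Corollary~\ref{cor:scaling_algo}) to reduce to $\log\|\veu-\vel\|_\infty$ auxiliary instances in which the variable bounds, hence the diameter that iterative augmentation must traverse, are polynomially bounded. Combined with the Graver-basis bounds for bounded-treedepth matrices, these two steps guarantee that a Graver-best augmentation procedure started from a feasible point reaches the optimum in $g(k_1,\dots,k_\tau)\cdot\log^{\Oh(1)} n$ iterations per auxiliary instance.

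It then remains to compute, in near-linear time, (i) an initial feasible solution and (ii) a single augmenting step of sufficient quality. For (i) one solves an auxiliary instance of the same block structure and invokes the same machinery recursively. For (ii) --- the heart of the argument --- I would set up a dynamic program along the elimination forest witnessing the small treedepth: for the dual-treedepth classes ($N$-fold, tree-fold) the step-finding subproblem decomposes over the brick/block decomposition and partial solutions are combined by a $(\min,+)$-style convolution whose table dimensions are bounded by a function of the parameters; for the primal-treedepth class ($2$-stage stochastic) one runs the analogous bottom-up dynamic program on the primal elimination tree. Since each block has bounded size and there are $\Oh(n)$ of them, one sweep costs $n\log^{\Oh(1)} n$ arithmetic operations on numbers of length $\Oh(L)$, and the current solution and the dynamic-programming tables can be maintained within the same budget.

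The main obstacle is precisely step (ii): turning the polynomial-time step-finding subroutine implicit in Theorem~\ref{thm:main} into a near-linear one. This calls for a careful choice of augmenting step --- a fixed-length ``halfling'' rather than a true Graver-best step --- so that the step-finding program has bounded right-hand side and the dynamic-programming state space stays bounded independently of $n$, together with data structures ensuring that after each augmentation only a bounded portion of the dynamic program is recomputed, or alternatively an amortized analysis showing that the total recomputation over all $g(\text{params})\cdot\log^{\Oh(1)} n$ iterations is still $n\log^{\Oh(1)} n$. A secondary technical point, already flagged above, is establishing the $g(\text{params})\cdot\log n$ bound on the encoding length of the reduced objective, which is what keeps the iteration count near-logarithmic and hence the overall running time near-linear.
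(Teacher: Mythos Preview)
Your high-level framing is right, but the central quantitative claim is wrong and this breaks the plan. You assert that, after reducing the objective and applying proximity-scaling, a Graver-best augmentation procedure converges in $g(\text{params})\cdot\log^{\Oh(1)} n$ iterations. This is false: the halfling convergence bound (Lemma~\ref{lem:halfling}) is $3n\log(2f_{\max})$, and the factor $n$ comes from decomposing $\vex^*-\vex$ into up to $2n-2$ conformal Graver summands, which is tight. Moreover, the reducibility bounds do \emph{not} let you push $\log f_{\max}$ down to $g(\text{params})\cdot\log n$: Theorem~\ref{thm:lin_red} gives $\log\rho(N)=\Theta(n\log(nN))$, and the matching lower bound (Theorem~\ref{thm:lin_red_lowerbound}) shows the exponent $n$ is unavoidable even for a single linear objective. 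Block structure does not help here, because the per-brick objectives are coupled through the linking constraints and cannot be reduced independently. So the iteration count is $\Theta(n)\cdot L$, not $\log^{\Oh(1)} n$, and your accounting in the last paragraph (``total recomputation over all $g(\text{params})\cdot\log^{\Oh(1)} n$ iterations'') is off by a factor of $n$.

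The paper achieves near-linear time by the \emph{opposite} trade-off, and with two genuinely different mechanisms for the dual and primal cases. For $N$-fold and tree-fold (bounded $\td_D$), it accepts $\Theta(n)$ halfling iterations but makes each iteration cost $\Oh(\log n)$ amortized: since every halfling has $\ell_1$-norm at most $g_1(A)$, consecutive \eqref{AugIP} instances differ in at most $g_1(A)$ coordinates, and a \emph{convolution tree} data structure (Lemma~\ref{lem:convol_tree}) maintains the step-finding DP under such bounded-support updates in $g_1(A)\cdot\log n$ time per update, yielding Theorem~\ref{thm:nearlylinear_dual}. Your ``data structures ensuring that after each augmentation only a bounded portion of the dynamic program is recomputed'' is the right idea, but it is needed precisely \emph{because} there are $\Theta(n)$ iterations, not $\log^{\Oh(1)} n$. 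For $2$-stage stochastic (bounded $\td_P$), the paper instead changes the augmentation step itself: Lemma~\ref{lem:primal_speedup} shows that if one enumerates the first $k_1(F)$ coordinates of the step and solves the rest \emph{to optimality} (recursively), the gap shrinks by a factor $1/(2g_\infty(A)^{k_1})$ with no $n$ in the denominator; combined with the scaling algorithm this yields the recursive near-linear algorithm of Theorem~\ref{thm:nearlylinear_primal}. Neither mechanism is a plain ``DP along the elimination tree''; both are where the actual work lies.
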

	Multi-stage stochastic IP can be solved in near-linear time in a restricted regime, see Corollary~\ref{cor:mss}.
	
	Finally, we show double-exponential lower bounds for~\eqref{IP} with parameters $\td_P(A)$ or $\td_D(A)$ based on the Exponential Time Hypothesis (ETH) (Theorems~\ref{thm:tdd_lowerbound} and~\ref{thm:tdp_lowerbound}).
	No such bound was known for $\td_P(A)$, and our bound for $\td_D(A)$ improves and refines the recent lower bound of Knop et al.~\cite{KnopPW:2018}.
	
	\medskip
	
	We note that our results also hold for~\eqref{IP} whose constraints are given in the inequality form $A\vex \leq \veb$: introducing slack variables leads to~\eqref{IP} in standard form with a constraint matrix $A_I \df (A~I)$, with $\min\{\td_P(A_I), \td_D(A_I)\} \leq \min\{\td_P(A)+1, \td_D(A)\}$ (Lemma~\ref{lem:feas_td}).
	
	\subsection{Related Work}
	\paragraph*{Parametric, arithmetic, numeric input.}
	To clearly state our results and compare them to previous work we introduce the following terminology.
	The input to a problem will be partitioned into three parts
	$(\alpha, \beta, \gamma)$, where $\alpha$ is the
	{\em parametric input}, $\beta$ is the
	{\em arithmetic input}, and $\gamma$ is the {\em numeric input}.
	The \emph{time} an algorithm takes to solve a problem is the number of arithmetic operations and oracle queries, and all numbers involved in the computation are required to have length polynomial in $(\beta,\gamma)$.
	A \emph{polynomial algorithm} for the problem is one that solves it in time $\poly(\beta, \gamma)$, while a \emph{strongly-polynomial algorithm} solves it in time $\poly(\beta)$, i.e., independent of the numeric input.
	Similarly, a \emph{fixed-parameter tractable} (\FPT) \emph{algorithm} solves the problem in time $g(\alpha) \poly(\beta,\gamma)$, while a \emph{strongly \FPT algorithm} solves it in time $g(\alpha)\poly(\beta)$, where $g$ is a computable function.
	If such an algorithm exists, we say that the problem is \emph{(strongly) fixed-parameter tractable} (\FPT) \emph{parameterized by $\alpha$}.
	Having multiple parameters $\alpha_1, \dots, \alpha_k$ simultaneously is understood as taking the \emph{aggregate parameter} $\alpha = \alpha_1 + \cdots + \alpha_k$.
	When we want to highlight the fact that an oracle is involved (e.g., when the oracle calls are expected to take a substantial portion of the time), we say that the algorithm works in certain (polynomial, strongly polynomial, \FPT, etc.) \emph{oracle time}.
	Each part of the input may have several entities, which may be presented in unary or binary.
	For the parametric input the distinction between unary and binary is irrelevant, but it defines the function $g$.
	
	\medskip
	
	We distinguish work prior to 2018 when the conference papers~\cite{EisenbrandHK:2018,KouteckyLO:2018} forming the basis of our work have been published, and subsequent results.
	We summarize how our work compares with the best known time complexities before and after 2018 in Table~\ref{tab:results}, and signify the before/after distinction with bold horizontal lines.
	For detailed comparison with the previously best known results see Section~\ref{sec:apps}.
	\begin{table}[tb]
		\centering
		\newcommand{\minitab}[2][l]{\begin{tabular}{#1}#2\end{tabular}\hspace{-5pt}}
		\resizebox{\textwidth}{!}{%
			\begin{tabular}{llr}
				\toprule
				Type of instance             & Previous best run time & Our result\\
				\toprule
				\multirow{4}{*}{$n$-fold}          & $a^{\Oh(rst + st^2)} n^3 L$~\cite{HOR} & \multirow{4}{*}{\minitab[r]{$a^{\Oh(r^2s + rs^2)} (nt)^2 \log^3(nt) + \RR(A,L)$~Cor~\ref{cor:nfold} \\ $a^{\Oh(r^2s + rs^2)} (nt) \log(nt)L$~Cor~\ref{cor:nfold}}}  \\
				& $n^{f_1(a,r,s,t)}$~\cite{DeLoeraHL:2015} &  \\
				
				& $t^{\Oh(r)} (ar)^{r^2} n^3 L$ if $A_2 = (1~1~\cdots~1)$~\cite{KnopKM:2017b} &  \\
				\cmidrule[1.5\heavyrulewidth]{2-2}
				& $(ars)^{\Oh(r^2s + s^2)} (nt) \log^6 (nt) L$~\cite{JansenLR:2018} &  \\
				\midrule
				\multirow{2}{*}{2-stage stochastic}          & $f_2(a,r,s) n^3 L$~\cite{DHK} & \multirow{2}{*}{\minitab[r]{$f_3(a,r,s)n^2 \log^5 n + \RR(A,L)$~Cor~\ref{cor:2stage} \\ $f_3(a,r,s)n \log^3 n L$~Cor~\ref{cor:2stage}}}  \\
				\cmidrule[1.5\heavyrulewidth]{2-2}
				& $f_3(a,r,s) n^2 \log n L$~\cite{Klein} &  \\
				\midrule
				\multirow{2}{*}{Tree-fold}  & \multirow{2}{*}{$f_{4}(a,r_1,\dots,r_{\tau},t) n^3 L$~\cite{MC}} & $f_{5}(a,r_1,\dots,r_{\tau}) (nt)^2 \log^3(nt) + \RR(A,L)$~Cor~\ref{cor:tf}\\
				& & $f_{5}(a,r_1,\dots,r_{\tau}) (nt) \log (nt)L$~Cor~\ref{cor:tf} \\
				\midrule
				\multirow{3}{*}{Multi-stage stochastic} & $f_{6}(a, n_1, \dots, n_{\tau}, l) n^3 L$~\cite{AH} & \multirow{3}{*}{\minitab[r]{
						$f_{7}(a, n_1, \dots, n_{\tau}) n^3 \log^2 n + \RR(A,L)$~Cor~\ref{cor:mss} \\
						$f_{7}(a, n_1, \dots, n_{\tau}) n^2 L$~Cor~\ref{cor:mss}\\
						$f_{7}(a, n_1, \dots, n_{\tau}) n^{1+o(1)}(\log f_{\max})^{\tau-1}$~Cor~\ref{cor:mss}}}\\
				\cmidrule[1.5\heavyrulewidth]{2-2}
				& \multirow{2}{*}{$f_{7}(a, n_1, \dots, n_{\tau}, l) n^2 \log n L$~\cite{Klein}} & \\
				& & \\
				\midrule
				Bounded $\td_P(A)$ & $f_{8}(a, \|\veb\|_\infty, \td_P(A)) n L$~\cite{GO} & strongly \FPT~Cor~\ref{cor:primal} \\
				\midrule
				{Bounded $\td_D(A)$} &{Open whether \FPT} & near linear \FPT ~/ strongly \FPT Cor~\ref{cor:dual} \\
				\bottomrule
		\end{tabular}}
		\caption{Run time improvements for~\eqref{ILP} implied by this paper.
			All of our results also extend to separable convex~\eqref{IP}, potentially with some overhead.
			This table contains simplifications necessary for brevity; for a detailed comparison see Section~\ref{sec:apps}.
			We denote by $L$ a certain measure of the arithmetic input $\veb, \vel, \veu, f_{\max}$, where $f_{\max} \df \max_{\vex, \vex': \vel \leq \vex, \vex' \leq \veu} |f(\vex) - f(\vex')|$ (typically $L = (\log\|\veu-\vel, \veb\|_\infty) \cdot (\log f_{\max} + \log \|\veb\|_1)$).
			We consider $\l A \r$ to be part of the arithmetic input.
			We denote by $a = \max \{2, \|A\|_\infty\}$, by $r,s,t, n_1, \dots, n_\tau, r_1, \dots, r_\tau$ the relevant block dimensions and by $\RR(A,L)$ the time complexity of an algorithm solving the fractional relaxation of~\eqref{IP}.
			In the case of~\eqref{ILP}, there exists a strongly polynomial LP algorithm~\cite{Tar}, i.e., with time complexity independent of $L$.
			If some related work appeared after 2018, it lies below a thick horizontal line.
			We do not include the conference papers~\cite{EisenbrandHK:2018,KouteckyLO:2018} on which this paper is based in the comparison.
			The parameter dependence functions satisfy $f_2 > f_3$, $f_4 > f_5$, and $f_6 > f_7$.}
		\label{tab:results}
	\end{table}
	\paragraph*{Work prior to 2018}
	Concerning strongly-polynomial algorithms, so far we are aware only of algorithms for totally unimodular ILP~\cite{HS}, bimodular ILP~\cite{ArtmannWZ:2017}, so-called binet ILP~\cite{AppaKP:2007}, and $n$-fold IP with constant block dimensions~\cite{DeLoeraHL:2015}.
	All remaining results, such as Lenstra's famous algorithm or the fixed-parameter tractable algorithm for $n$-fold IP which has recently led to several breakthroughs~\cite{MC,JansenKMR:2018,KnopKM:2017b,KnopKM:2017}, are not strongly polynomial.
	
	Let us turn to \FPT algorithms for~\eqref{IP}.
	In the '80s it was shown by Lenstra and Kannan~\cite{Kannan:1987,Lenstra:1983} that~\eqref{ILP} can be solved in time~$n^{\Oh(n)} L$, where $L$ is the length of the binary encoding of the input, thus \FPT parameterized by $n$.
	Other large classes which are known to be \FPT are $n$-fold~\cite{HOR}, tree-fold~\cite{MC}, $2$-stage and multi-stage stochastic~\cite{AH}, as well as algorithms for ILPs with bounded treewidth~\cite{GOR}, treedepth~\cite{GO} and fracture number~\cite{DvorakEGKO21} of graphs related to the matrix $A$.
	The class of $4$-block $n$-fold IPs has an algorithm with time complexity $n^{g(k)}$~\cite{HKW} where $k$ is the maximum of the largest absolute value of a coefficient and the largest dimension, and is not known to be \FPT.
	
	More precisely, it follows from Freuder's algorithm~\cite{Freu} and was reproven by Jansen and Kratsch~\cite{JK} that~\eqref{IP} is \FPT parameterized by the primal treewidth $\tw_P(A)$ and the largest domain $\|\veu-\vel\|_\infty$.
	Regarding the dual graph $G_D(A)$, the parameters $\td_D(A)$ and $\tw_D(A)$ were only recently considered by Ganian et al.~\cite{GOR}.
	They show that even deciding feasibility of~\eqref{IP} is \NPh on instances with $\tw_I(A) = 3$ ($\tw_I(A)$ denotes the treewidth of the \emph{incidence graph}; $\tw_I(A) \leq \tw_D(A) + 1$ always holds, see Lemma~\ref{lem:inc_prim_tw}) and $\|A\|_\infty = 2$~\cite[Theorem 12]{GOR}.
	Furthermore, they show that~\eqref{IP} is \FPT parameterized by $\tw_I(A)$ and parameter $\Gamma$, which is an upper bound on any prefix sum of $A \vex$ for any feasible solution $\vex$.
	
	Dvořák et al~\cite{DvorakEGKO21} introduce the  parameter fracture number.
	Having a bounded \emph{variable fracture number} $\mathfrak{p}^V(A)$ implies that deleting a few columns of $A$ breaks it into independent blocks of small size, similarly for \emph{constraint fracture number} $\mathfrak{p}^C(A)$ and deleting a few rows.
	Because bounded $\mathfrak{p}^V(A)$ implies bounded $\td_P(A)$ and bounded $\mathfrak{p}^C(A)$ implies bounded $\td_D(A)$, our results generalize theirs.
	The remaining case of \emph{mixed fracture number} $\mathfrak{p}(A)$, where deleting both rows and columns is allowed, reduces to the $4$-block $n$-fold ILP problem, which is not known to be either \FPT or \Wh{1}.
	
	\paragraph*{Subsequent work}
	Jansen, Lassota, and Rohwedder~\cite{JansenLR:2018} showed a near-linear time algorithm for $n$-fold IP, which has a slightly better parameter dependence but slightly worse dependence on $n$ when compared with our algorithms, and only applies to the case of~\eqref{ILP} while our algorithm also solves~\eqref{IP} and generalizes to tree-fold IP.
	Knop, Pilipczuk, and Wrochna~\cite{KnopPW:2018} gave lower bounds for~\eqref{ILP} with few rows and also~\eqref{ILP} parameterized by $\td_D(A)$; our lower bound of Theorem~\ref{thm:tdd_lowerbound} generalizes their latter bound.
	Klein~\cite{Klein} proved a lemma (Proposition~\ref{prop:klein}) which allowed him to give a double-exponential (in terms of the parameters) algorithm for $2$-stage stochastic IP, when prior work had no concrete bounds on the parameter dependence.
	His results also generalize to multi-stage stochastic IP.
	On the side of hardness, Eiben et al. have shown that~\eqref{ILP} is \NPh already when the more permissive \emph{incidence treedepth} $\td_I(A)$ is $5$ and $\|A\|_\infty=2$~\cite{EibenGKOPW:2019}.
	
	\subsubsection*{Organization.}
	The paper contains four main sections.
	In Section~\ref{sec:fastip}, we prove Theorem~\ref{thm:main}.
	Our exposition is unified, streamlined, largely self-contained, and already attains complexities comparable with prior work.
	Its structure lays the groundwork for later improvements.
	Section~\ref{sec:framework} develops a framework for improving the complexity of algorithms for~\eqref{IP}.
	Its main technical contributions have been outlined above and culminate in near-linear algorithms for~\eqref{IP} with small coefficients and small $\td_P(A)$ or $\td_D(A)$.
	Section~\ref{sec:apps} reaps the fruits of the framework established previously by discussing the implications of our results.
	We review the motivation and applications of the classes of $n$-fold, tree-fold, 2-stage and multi-stage stochastic IPs and derive the time complexities for these classes implied by our algorithms.
	Finally, Section~\ref{sec:hardness} contains hardness results, in particular the double-exponential lower bounds for~\eqref{IP} parameterized by $\td_P(A)$ or $\td_D(A)$.

\section{A Fast Algorithm for IP with Small Treedepth and Coefficients}
\label{sec:fastip}
We write vectors in boldface (e.g., $\vex, \vey$) and their entries in normal font (e.g., the $i$-th entry of~$\vex$ is~$x_i$).
For positive integers $m \leq n$ we set $[m,n] \df \{m,\ldots, n\}$ and $[n] \df [1,n]$, and we extend this notation for vectors: for $\vel, \veu \in \Z^n$ with $\vel \leq \veu$, $[\vel, \veu] \df \{\vex \in \Z^n \mid \vel \leq \vex \leq \veu\}$.
If~$A$ is a matrix, $A_{i,j}$ denotes the $j$-th coordinate of the $i$-th row, $A_{i, \bullet}$ denotes the $i$-th row and $A_{\bullet, j}$ denotes the $j$-th column.
We use $\log \df \log_2$.
For an integer $a \in \Z$, $\l a \r \df 1 + \ceil{\log (|a| + 1)}$ denotes the binary encoding length of $a$; we extend this notation to vectors, matrices and tuples of these objects.
For example, $\l A, \veb \r = \l A \r + \l \veb \r$, and $\l A \r = \sum_{i,j} \l A_{i,j} \r$.
For a graph~$G$ we denote by $V(G)$ its set of vertices.
For a function $f: \Z^n \to \Z$ and two vectors $\vel, \veu \in \Z^n$, we define $f_{\max}^{[\vel, \veu]} \df \max_{\vex, \vex' \in [\vel, \veu]} |f(\vex) - f(\vex')|$; if $[\vel, \veu]$ is clear from the context we omit it and write just $f_{\max}$.
We assume that $f: \R^n \to \R$ is a separable convex function, i.e., it can be written as $f(\vex) = \sum_{i=1}^n f_i(x_i)$ where $f_i$ is a convex function of one variable, for each $i \in [n]$.
Moreover, we require that for each $\vex \in \Z^n$, $f(\vex) \in \Z$.
We assume $f$ is given by a comparison oracle.
We use $\omega$ to denote the smallest number such that matrix multiplication of $n \times n$ matrices can be performed in time $\Oh(n^\omega)$.
We say that a system of equations $A\vex = \veb$ is \emph{pure} if the rows of $A$ are linearly independent.
The next statement follows easily by Gaussian elimination, hence we assume $m \leq n$ throughout the paper.
\begin{proposition}[{Purification~\cite[Theorem 1.4.8]{GLS}}] \label{prop:purification}
Given $A \in \Z^{m \times n}$ and $\veb \in \Z^m$ one can in time $\Oh(\min\{n,m\}nm)$ either declare $A\vex=\veb$ infeasible, or output a pure equivalent subsystem $A'\vex = \veb'$.
\end{proposition}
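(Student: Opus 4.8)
The plan is to reduce the system to row echelon form by Gaussian elimination and read off a maximal linearly independent set of rows. Concretely, I would run Gaussian elimination on the augmented matrix $[A \mid \veb] \in \Z^{m \times (n+1)}$, scanning columns $1, \dots, n$ from left to right, and at each column, if some not-yet-used row has a nonzero entry there, select it as a pivot row, swap it into place, and use it to eliminate that column from the remaining rows. Let $B \subseteq [m]$ be the set of chosen pivot rows. Since a fresh linearly independent row is selected at each pivot step and there are at most $n$ columns and at most $m$ rows, we get $|B| \le \min\{m,n\}$, and each of these $\le \min\{m,n\}$ pivot steps touches $\Oh(m)$ rows of length $\Oh(n)$, for a total of $\Oh(\min\{m,n\}\,mn)$ arithmetic operations. (Intermediate quantities can be kept of polynomial bit-length, e.g. by carrying the elimination over $\Q$ with fractions in lowest terms or via a fraction-free scheme, so these are legitimate operations in our model.)

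After the elimination, every row with index not in $B$ has become identically zero in coordinates $1, \dots, n$: such a row records an identity $\sum_{j \in B} \lambda_j A_{j,\bullet} = \vezero$ among the original rows, and since the $\veb$-column was carried along under the same row operations, the final entry of that row in the $\veb$-column equals $b_i - \sum_{j\in B}\lambda_j b_j$ for the corresponding original index $i$. If any such entry is nonzero, the original system implies the contradiction $0 = b_i - \sum_{j\in B}\lambda_j b_j \ne 0$, so we declare $A\vex = \veb$ infeasible. Otherwise, each deleted equation $A_{i,\bullet}\vex = b_i$ is exactly the combination $\sum_{j \in B} \lambda_j\,(A_{j,\bullet}\vex = b_j)$ of retained equations and hence implied by them, while the retained equations are literally a subset of the original ones; therefore, setting $A' \df A_{B,\bullet}$ and $\veb' \df \veb_B$, the system $A'\vex = \veb'$ has exactly the same set of rational — and in particular integer — solutions as $A\vex = \veb$, i.e., it is equivalent. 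The rows of $A'$ are linearly independent, since each carries a pivot of the echelon form, so the subsystem is pure; and it has at most $\min\{m,n\} \le n$ rows, which justifies the standing assumption $m \le n$.

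There is essentially no hard step here; the only points requiring a little care are (i) detecting infeasibility correctly, handled by keeping the $\veb$-column attached throughout the elimination and inspecting the zeroed-out rows, and (ii) ensuring that the returned object is an actual subsystem of the original equations rather than the transformed echelon system — which is why one tracks the pivot index set $B$ rather than returning the reduced matrix. The bookkeeping on bit-lengths of intermediate quantities is standard and orthogonal to the combinatorial content.
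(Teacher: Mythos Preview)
Your proposal is correct and matches the paper's approach: the paper does not give a detailed proof but simply states that the proposition ``follows easily by Gaussian elimination,'' citing~\cite[Theorem 1.4.8]{GLS}. Your write-up supplies exactly those details---row-reduce the augmented matrix, detect infeasibility via nonzero entries in the $\veb$-column of zeroed rows, and return the original rows indexed by the pivot set---with the correct $\Oh(\min\{m,n\}\,mn)$ operation count.
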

In Section~\ref{sec:framework} we will replace Proposition~\ref{prop:purification} with Proposition~\ref{prop:tw_pure} which gives a better complexity bound for matrices of small treewidth.
The goal of this section is to prove the following theorem:
\begin{reptheorem}{thm:main}
	There is a computable function $g$ such that~\eqref{IP} can be solved in time \[g(\|A\|_\infty, \min\{\td_P(A), \td_D(A)\}) \, \cdot \, n^2 \log \|\veu-\vel, \veb\|_\infty \log \left(2 f_{\max}\right) + \Oh(n^{\omega} \la A \ra) \enspace \]
\end{reptheorem}
In Sections~\ref{sec:iterative}--\ref{sec:norms} we shall develop the necessary ingredients to prove this theorem.
Then, we will conclude in Section~\ref{sec:mainrproof} by providing its proof which puts these ingredients together.

\subsection{Introduction to Iterative Augmentation} \label{sec:iterative}
Let us introduce Graver bases and discuss how they are used for optimization.
We define a partial order $\sqsubseteq$ on $\R^n$ as follows: for $\vex,\vey\in\R^n$ we write $\vex\sqsubseteq \vey$ and say that $\vex$ is
{\em conformal} to $\vey$ if, for each $i \in [n]$, $x_iy_i\geq 0$ (that is, $\vex$ and $\vey$ lie in the same orthant) and $|x_i|\leq |y_i|$.
For a matrix $A \in \Z^{m \times n}$ we write $\ker_{\Z}(A) = \{\vex \in \Z^n \mid A\vex = \vezero\}$.
It is well known that every subset of $\Z^n$ has finitely many
$\sqsubseteq$-minimal elements~\cite{Gordan}.

\begin{definition}[Graver basis~\cite{Graver:1975}]\label{def:graver}
The {\em Graver basis} of an integer $m\times n$ matrix $A$ is the finite set $\G(A)\subset\Z^n$ of $\sqsubseteq$-minimal elements in $\ker_{\Z}(A) \setminus \{\vezero\}$.
\end{definition}

One important property of $\G(A)$ is as follows:

\begin{proposition}[{Positive Sum Property~\cite[Lemma 3.4]{Onn}}] \label{prop:possum}
Let $A \in \Z^{m \times n}$.
For any $\vex \in \ker_{\Z}(A)$, there exists an $n' \leq 2n-2$ and a decomposition $\vex = \sum_{j=1}^{n'} \lambda_j \veg_j$ with $\lambda_j \in \N$ and $\veg_j \in \G(A)$ for each $j \in [n']$, and with $\veg_j \sqsubseteq \vex$, i.e., all $\veg_j$ belonging to the same orthant as $\vex$.
\end{proposition}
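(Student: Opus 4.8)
The plan is to first build \emph{some} conformal decomposition of $\vex$ into Graver basis elements by repeated subtraction, and then to cut the number of distinct summands down to $2n-2$ by an integer Carath\'eodory argument; the conformality condition $\veg_j\sqsubseteq\vex$ will be maintained essentially for free.

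For the first step I would induct on $\|\vex\|_1$. If $\vex=\vezero$, the empty sum works. Otherwise $\vex\in\ker_{\Z}(A)\setminus\{\vezero\}$, so the set $\{\vey\in\ker_{\Z}(A)\setminus\{\vezero\}:\vey\sqsubseteq\vex\}$ is nonempty (it contains $\vex$) and, since every subset of $\Z^n$ has $\sqsubseteq$-minimal elements, it has a minimal element $\veg$; by Definition~\ref{def:graver}, $\veg\in\G(A)$. Then $\vex-\veg\in\ker_{\Z}(A)$, and since $\veg\sqsubseteq\vex$ the vector $\vex-\veg$ lies in the same orthant as $\vex$ with coordinatewise smaller absolute values, so $\vex-\veg\sqsubseteq\vex$ and $\|\vex-\veg\|_1<\|\vex\|_1$. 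By induction $\vex-\veg=\sum_i\mu_i\veh_i$ with $\veh_i\in\G(A)$, $\mu_i\in\N$, and $\veh_i\sqsubseteq\vex-\veg\sqsubseteq\vex$; adding $\veg$ and collecting equal terms yields a decomposition of $\vex$ into distinct Graver elements with positive integer multipliers, each conformal to $\vex$.

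For the second step, observe that any $\veg\in\G(A)$ with $\veg\sqsubseteq\vex$ vanishes outside $\suppo(\vex)$ and, on $\suppo(\vex)$, has each coordinate either zero or of the same sign as the corresponding coordinate of $\vex$; so after flipping signs we may assume $\vex>\vezero$ on its support and all such $\veg$ lie in the nonnegative orthant of $\R^{\suppo(\vex)}$, which is pointed. These vectors therefore generate a pointed rational cone $C$ of dimension $d\le n$, of which $\vex$ is an integer point expressible as a nonnegative integer combination of the integral generators (by the first step). The integer Carath\'eodory theorem, in the sharp form due to Seb\H{o}, now asserts that such a point is a nonnegative integer combination of at most $2d-2\le 2n-2$ of the generators; this produces $\vex=\sum_{j=1}^{n'}\lambda_j\veg_j$ with $n'\le 2n-2$, $\lambda_j\in\N$, and the $\veg_j$ among the generators — hence automatically $\veg_j\sqsubseteq\vex$ — as required.

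The routine part is the first step; the real content is the second. Plain (real) Carath\'eodory only gives $\vex$ as a \emph{rational} combination supported on at most $d$ generators, and such a representation cannot in general be rounded to an integral one supported on few generators — controlling the rounding error is exactly what degrades the bound to $2d-2$ and is the heart of Seb\H{o}'s argument (take, among all integral conformal decompositions, one minimizing the number of summands and, secondarily, the total multiplier $\sum_j\lambda_j$, and derive a contradiction from a sign-structured linear dependence if more than $2d-2$ summands occur). If only a bound polynomial in $n$ were needed this could be handled more softly, but for the stated constant $2n-2$ the Seb\H{o} refinement is what gets invoked.
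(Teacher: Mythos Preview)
The paper does not supply its own proof of this proposition --- it is quoted from Onn's monograph as a known fact, so there is nothing in the paper to compare against. Your argument is correct and is essentially the standard one: greedy subtraction produces \emph{some} conformal Graver decomposition, and then one observes that the Graver elements conformal to $\vex$ are precisely the Hilbert basis of the pointed rational cone $\ker(A)\cap O_\vex$ (where $O_\vex$ is the closed orthant of $\vex$, restricted to $\suppo(\vex)$), so Seb\H{o}'s integer Carath\'eodory theorem applies and gives at most $2d-2\le 2n-2$ summands. This is exactly how the constant $2n-2$ is obtained in the cited reference.
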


We say that $\vex \in \Z^n$ is \emph{feasible} for~\eqref{IP} if $A\vex = \veb$ and $\vel \leq \vex \leq \veu$.
Let $\vex$ be a feasible solution for~\eqref{IP}.
We call $\veg$ a \emph{feasible step} if $\vex + \veg$ is feasible for~\eqref{IP}.
Further, call a feasible step $\veg$ \emph{augmenting} if $f(\vex+\veg) < f(\vex)$.
An important implication of Proposition~\ref{prop:possum} is that if \emph{any} augmenting step exists, then there exists one in $\G(A)$~\cite[Lemma 3.3.2]{DHK}.

An augmenting step $\veg$ and a \emph{step length} $\lambda \in \N$ form an \emph{$\vex$\hy{}feasible step pair} if $\vel \le \vex + \lambda\veg \le \veu$.
An augmenting step $\veh$ is a \emph{Graver-best step} for $\vex$ if $f(\vex + \veh) \leq f(\vex + \lambda \veg)$ for all $\vex$\hy{}feasible step pairs $(\veg,\lambda) \in \G(A)\times \N$.
A slight relaxation of a Graver-best step is a halfling:
an augmenting step $\veh$ is a \emph{halfling} for $\vex$ if $f(\vex) - f(\vex + \veh) \geq \frac{1}{2}(f(\vex) - f(\vex + \lambda \veg))$ for all $\vex$\hy{}feasible step pairs $(\veg,\lambda) \in \G(A)\times \N$.
A \emph{halfling augmentation procedure} for~\eqref{IP} with a given feasible solution $\vex_0$ works as follows. Let $i \df 0$.
\begin{enumerate}
  \item \label{step1}If there is no halfling for $\vex_i$, return it as optimal.
  \item If a halfling $\veh_i$ for $\vex_i$ exists, set $\vex_{i+1} := \vex_i + \veh_i$, $i \df i+1$, and go to \ref{step1}.
\end{enumerate}

We assume that the bounds $\vel, \veu$ are finite.
Since there are several approaches how to achieve this, we postpone the discussion on dealing with infinite bounds to Section~\ref{sec:infinite_bounds}.

\begin{lemma}[Halfling convergence]
	\label{lem:halfling}
	Given a feasible solution~$\vex_0$ for~\eqref{IP}, the halfling augmentation procedure finds an optimum of~\eqref{IP} in at most $3n \log \left(f(\vex_0) -f(\vex^*)\right) \leq 3n \log \left(f^{[\vel, \veu]}_{\max}\right)$ steps.
\end{lemma}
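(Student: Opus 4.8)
The bound on the number of steps has two parts: first, that after at most $3n\log(f(\vex_0)-f(\vex^*))$ halfling steps the procedure terminates with an optimal solution, and second, the crude estimate $f(\vex_0)-f(\vex^*)\le 2f_{\max}^{[\vel,\veu]}$. The second part is immediate since $|f(\vex_0)|,|f(\vex^*)|\le f_{\max}$, so $f(\vex_0)-f(\vex^*)\le 2f_{\max}$, and monotonicity of $\log$ does the rest. So the heart of the matter is the convergence rate.

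First I would argue correctness of the termination condition: if no halfling exists for $\vex_i$, then in particular no augmenting step exists in $\G(A)$ (a Graver-best step would be a halfling), and by the remark following Proposition~\ref{prop:possum} (that if any augmenting step exists then one exists in $\G(A)$), $\vex_i$ is optimal. So whenever the procedure stops, it stops correctly; it remains to bound how long it runs.

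The main step is a geometric-decay estimate on the optimality gap $g_i \df f(\vex_i) - f(\vex^*)$. The plan is: apply the Positive Sum Property (Proposition~\ref{prop:possum}) to the vector $\vex^* - \vex_i \in \ker_\Z(A)$, obtaining a conformal decomposition $\vex^*-\vex_i = \sum_{j=1}^{n'} \lambda_j \veg_j$ with $n' \le 2n-2$, $\lambda_j \in \N$, $\veg_j \in \G(A)$, all conformal to $\vex^*-\vex_i$. Conformality guarantees each pair $(\veg_j,\lambda_j)$ is an $\vex_i$-feasible step pair (since $\vel \le \vex_i \le \vex_i + \lambda_j\veg_j \le \vex^* \le \veu$ by conformality, coordinatewise). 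Separable convexity of $f$ then gives the superadditivity-type inequality $f(\vex_i) - f(\vex^*) \le \sum_{j=1}^{n'} \big(f(\vex_i) - f(\vex_i+\lambda_j\veg_j)\big)$: this is the standard fact that for separable convex $f$ and a conformal sum, the total decrease is at most the sum of the individual decreases. Hence some single pair $(\veg_j,\lambda_j)$ achieves $f(\vex_i) - f(\vex_i+\lambda_j\veg_j) \ge \frac{1}{n'}\big(f(\vex_i)-f(\vex^*)\big) \ge \frac{1}{2n}g_i$. By definition of a halfling, $\veh_i$ then satisfies $f(\vex_i) - f(\vex_{i+1}) \ge \frac12 \cdot \frac{1}{2n} g_i = \frac{1}{4n} g_i$, so $g_{i+1} \le (1 - \tfrac{1}{4n}) g_i$.

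Finally I would iterate: $g_i \le (1-\tfrac{1}{4n})^i g_0 \le e^{-i/(4n)} g_0$. Since $f$ is integer-valued on $\Z^n$, once $g_i < 1$ we have $g_i = 0$ and $\vex_i$ is optimal; this happens as soon as $e^{-i/(4n)} g_0 < 1$, i.e. $i > 4n \ln g_0 = 4n \ln 2 \cdot \log g_0 < 3n \log g_0$ (using $4\ln 2 \approx 2.77 < 3$). That yields the claimed $3n\log(f(\vex_0)-f(\vex^*))$ bound. The one place needing a little care is the separable-convexity inequality for conformal sums — I would either cite it as a known Graver-basis lemma or prove it by peeling off one summand at a time and using that along a single coordinate a convex function restricted to a monotone lattice path has nonincreasing "forward differences," so splitting a step into conformal pieces can only spread the decrease out, never concentrate it. The rest is bookkeeping with the logarithm constants.
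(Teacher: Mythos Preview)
Your proposal is correct and follows essentially the same approach as the paper: decompose $\vex^*-\vex_i$ via the Positive Sum Property into at most $2n-2$ conformal Graver pieces, invoke separable-convex superadditivity (the paper's Proposition~\ref{prop:superadditivity}) to get the averaging bound, apply the halfling definition to obtain geometric decay of the gap with ratio $1-\Theta(1/n)$, and finish with integrality of $f$ and the numerical estimate $4\ln 2<3$. Your use of $1-x\le e^{-x}$ is slightly cleaner than the paper's Taylor-expansion argument, and your feasibility justification (that $\vex_i+\lambda_j\veg_j$ lies coordinatewise between $\vex_i$ and $\vex^*$) is a detail the paper leaves implicit; otherwise the arguments coincide.
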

Before we prove the lemma we need a useful proposition about separable convex functions:
\begin{proposition}
	[{Separable convex superadditivity~\cite[Lemma 3.3.1]{DHK}}]
	\label{prop:superadditivity}
	Let $f(\vex) = \sum_{i=1}^n f_i(x_i)$ be separable convex, let $\vex \in \R^n$,
	and let $\veg_1,\dots,\veg_k \in \R^n$ be vectors that are pairwise conformal.
	Then
	\begin{align}
	\label{eq:conv_ineq}
	f \left( \vex + \sum_{j=1}^k \alpha_j \veg_j \right) - f(\vex)
	&\geq \sum_{j=1}^k \alpha_j \left( f(\vex + \veg_j) - f(\vex) \right)
	\end{align}
	for arbitrary integers $\alpha_1,\dots,\alpha_k \in \N$.
\end{proposition}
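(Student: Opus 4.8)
The plan is to peel the inequality apart coordinate by coordinate using separability, and then prove the resulting one-dimensional statement by feeding in the steps one unit at a time. Writing $s_i \df \sum_{j=1}^k \alpha_j g_{j,i}$, separability gives $f(\vex + \sum_j \alpha_j \veg_j) - f(\vex) = \sum_{i=1}^n \bigl(f_i(x_i + s_i) - f_i(x_i)\bigr)$ and $\sum_j \alpha_j(f(\vex+\veg_j) - f(\vex)) = \sum_{i=1}^n \sum_j \alpha_j\bigl(f_i(x_i + g_{j,i}) - f_i(x_i)\bigr)$, so it suffices to prove, for each fixed coordinate $i$, that $f_i(x_i + s_i) - f_i(x_i) \ge \sum_{j=1}^k \alpha_j\bigl(f_i(x_i + g_{j,i}) - f_i(x_i)\bigr)$. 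Fix $i$ and abbreviate $x \df x_i$, $g_j \df g_{j,i}$, $h \df f_i$. If $x = 0$, then $\veg_j \sqsubseteq \vex$ forces $g_j = 0$ for all $j$ and both sides vanish; so assume $x \ne 0$. Then $\veg_j \sqsubseteq \vex$ gives $g_j x \ge 0$, so each $g_j$ is either $0$ or has the sign of $x$; consequently every partial sum of nonnegative integer multiples of the $g_j$ is again either $0$ or of the sign of $x$. This sign fact is the only place conformality to $\vex$ enters.

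The one convexity ingredient I use is the monotone-increments property: for a convex $h : \R \to \R$, a real $c$, and reals $t \le t'$, one has $h(t'+c) - h(t') \ge h(t+c) - h(t)$ when $c \ge 0$, and $h(t'+c) - h(t') \le h(t+c) - h(t)$ when $c \le 0$ (a standard consequence of the three-point slope inequality for convex functions). Now enumerate the steps with multiplicities as a sequence $\veq_1,\dots,\veq_N$, where each $\veg_j$ occurs $\alpha_j$ times and $N = \sum_j \alpha_j$; put $q_\ell \df q_{\ell,i}$, $\sigma_0 \df 0$, and $\sigma_\ell \df \sigma_{\ell-1} + q_\ell$, so $\sigma_N = s_i$. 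Telescoping,
\[
h(x + s_i) - h(x) = \sum_{\ell=1}^N \bigl( h(x + \sigma_\ell) - h(x + \sigma_{\ell-1}) \bigr) .
\]
By the sign fact, $\sigma_{\ell-1}$ is $0$ or shares the sign of $q_\ell$; applying the monotone-increments property with base points $x$ and $x + \sigma_{\ell-1}$ (ordered according to that common sign) and increment $q_\ell$ gives $h(x + \sigma_\ell) - h(x + \sigma_{\ell-1}) \ge h(x + q_\ell) - h(x)$. Summing over $\ell$ and regrouping the $N$ terms according to which $\veg_j$ each $\veq_\ell$ came from yields $h(x + s_i) - h(x) \ge \sum_{j=1}^k \alpha_j\bigl(h(x + g_j) - h(x)\bigr)$; summing over $i \in [n]$ finishes the proof.

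The main (and essentially only) obstacle is the sign bookkeeping in the telescoping step: one must be sure that each intermediate base point $x + \sigma_{\ell-1}$ is displaced from $x$ in the \emph{same} direction as the current increment $q_\ell$, so that convexity moves each unit increment \emph{up} to at least $h(x+q_\ell)-h(x)$ rather than down — this is exactly what the hypothesis $\veg_j \sqsubseteq \vex$ buys us, and without it the inequality is false. A fully equivalent alternative, if one prefers to avoid the explicit enumeration, is induction on $N = \sum_j \alpha_j$: remove one copy of some $\veg_{j_0}$, apply the inductive hypothesis, and then use the same coordinatewise monotone-increments argument to show $f(\vex + \vez) - f(\vex + \vez - \veg_{j_0}) \ge f(\vex + \veg_{j_0}) - f(\vex)$ where $\vez = \sum_j \alpha_j \veg_j$.
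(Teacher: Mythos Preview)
The paper does not supply a proof of this proposition; it is cited from~\cite[Lemma~3.3.1]{DHK}. Your argument is correct and is the standard one: reduce to a single coordinate by separability, then telescope using the fact that first differences of a convex function are monotone.

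One caveat on the hypothesis. You read ``conformal to $\vex$'' as $\veg_j \sqsubseteq \vex$ (including the magnitude bound $|g_{j,i}| \le |x_i|$), and your treatment of the case $x_i=0$ relies on this. But the way the proposition is actually invoked in the paper---in the proof of Lemma~\ref{lem:halfling} with base point $\vex_0$ and steps $\lambda_i\veg_i \sqsubseteq \vex^*-\vex_0$, and in Proposition~\ref{prop:prox_superadd} where the only assumption is that $\vey_1,\vey_2$ lie in a common orthant---shows that the intended hypothesis is merely that the $\veg_j$ are pairwise sign-compatible, with no relation to the base point $\vex$. Your telescoping argument covers this weaker hypothesis with no extra work: if in coordinate $i$ all $g_j$ share a sign, then every partial sum $\sigma_{\ell-1}$ shares that sign, so $x_i+\sigma_{\ell-1}$ lies on the correct side of $x_i$ relative to the increment $q_\ell$, and your monotone-increments inequality applies exactly as written; no special case for $x_i=0$ is needed.
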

\begin{proof}[{Proof of Lemma~\ref{lem:halfling}}]
	Let $\vex^*$ be an optimal solution of~\eqref{IP}.
	By Proposition~\ref{prop:possum} we may write $\vex^* - \vex_0 = \sum_{j=1}^{n'} \lambda_j \veg_j$ such that $\veg_j \sqsubseteq \vex^* - \vex_0$ for all $j \in [n']$, and $n' \leq 2n-2$.
	We apply Proposition~\ref{prop:superadditivity} to $\vex_0$ and the $n'$ vectors $\lambda_j \veg_j$ with $\alpha_j \df 1$, so by~\eqref{eq:conv_ineq} we have
	\[
	0 \geq f(\vex^*) - f(\vex_0) = f\left(\vex_0 + \sum_{j=1}^{n'} \lambda_j \veg_j\right) - f(\vex_0) \geq \sum_{j=1}^{n'} \left(f(\vex_0 + \lambda_j \veg_j) - f(\vex_0) \right),
	\]
	and multiplying by $-1$ gives
	$0 \leq f(\vex_0) - f(\vex^*) \leq \sum_{j=1}^{n'} (f(\vex_0) - f(\vex_0 + \lambda_j \veg_j))$.
	By an averaging argument, there must exist an index $\ell \in [n']$ such that
	\begin{equation}
	f(\vex_0) - f(\vex_0 + \lambda_\ell \veg_\ell) \geq \frac{1}{n'} \left(f(\vex_0) - f(\vex^*)\right) \geq \frac{1}{n'} f_{\max} \enspace . \label{eq:halfling_convergence}
	\end{equation}
	Consider a halfling $\veh$ for $\vex_0$: by definition, it satisfies $f(\vex_0) - f(\vex_0 + \veh) \geq \frac{1}{2}(f(\vex_0) - f(\vex_0 + \lambda_i \veg_i))$.
	Say that the halfling augmentation procedure required $s$ iterations.
	\lv{ 
		For $i \in [s-1]$ we have that 
		\[f(\vex_i) - f(\vex^*) \leq \left(1-\frac{1}{4n-4}\right)\left(f(\vex_{i-1}) - f(\vex^*)\right) = \frac{4n-5}{4n-4}\left(f(\vex_{i-1}) - f(\vex^*)\right)\]
		and, by repeated application of the above,
		\[f(\vex_i) - f(\vex^*) \leq \left(\frac{4n-5}{4n-4}\right)^{i}\left(f(\vex_0) - f(\vex^*)\right) \enspace .\]
		Since $i$ is not the last iteration, $f(\vex_i) - f(\vex^*) \geq 1$ by the integrality of $f$.
		Take $t \df 4n-4$ and compute\martin{maybe no labels?}
		\begin{align}
		1 &\leq \left(\frac{t-1}{t}\right)^i \left(f(\vex_0) - f(\vex^*)\right) & / \, \ln(\cdot)  \label{eq:halfling_convergence:1} \\
		0 & \leq i \ln \left( \frac{t-1}{t} \right) + \ln \left( f(\vex_0) - f(\vex^*) \right) & / \, -i \ln \left( \frac{t-1}{t} \right) \label{eq:halfling_convergence:2} \\
		-i \ln \left( \frac{t-1}{t} \right) = i \ln \left( \frac{t}{t-1} \right) & \leq \ln \left( f(\vex_0) - f(\vex^*) \right) & / \, : \ln \left(\frac{t-1}{t}\right)
		\label{eq:halfling_convergence:3} \\
		i & \leq \left( \ln \left( \frac{t}{t-1} \right) \right)^{-1} \ln \left( f(\vex_0) - f(\vex^*) \right) & \label{eq:halfling_convergence:4}
		\end{align}
		Now Taylor expansion gives for $t\geq 3$
		\[\ln \left(1+\frac{1}{t-1} \right)\geq\frac{1}{t-1}-\frac{1}{2(t-1)^2}
		=\frac{2t-3}{2t^2-4t+2}, \]
		so
		\[\left(\ln \left(1+\frac{1}{t-1}\right)\right)^{-1} \leq \frac{2t^2-4t+2}{2t-3},\]
		which is bounded above by $t$ for all $t\geq 2$
		since $t(2t-3)=2t^2-3t\geq {2t^2-4t+2}$ for all $t\geq2$.
		Plugging back $t \df 4n-4$ we get that for all $n\geq 2$ we have
		$t\geq 3$ and hence
		\[i\leq (4n-4) \ln \left(f(\vex_0)-f(\vex^*)\right) = (4n-4) \cdot \ln 2 \cdot \log_2 \left(f(\vex_0)-f(\vex^*)\right),\]
		and the number of iterations is at most one unit larger.
		Since $f(\vex_0) - f(\vex^*) \leq 2 f_{\max}$ and $\ln(2) = 0.693147 \dots \leq 3/4$, we have that the number of iterations is at most $3 n \log (2 f_{\max})$.
	}
	\sv{ 
		For $i \in [s-1]$ we have that 
		\[f(\vex_i) - f(\vex^*) \leq \left(1-\frac{1}{4n-4}\right)\left(f(\vex_{i-1}) - f(\vex^*)\right) = \frac{4n-5}{4n-4}\left(f(\vex_{i-1}) - f(\vex^*)\right)\]
		and, by repeated application of the above, $f(\vex_i) - f(\vex^*) \leq \left(\frac{4n-5}{4n-4}\right)^{i}\left(f(\vex_0) - f(\vex^*)\right)$.
		Since $i$ is not the last iteration, $f(\vex_i) - f(\vex^*) \geq 1$ by the integrality of $f$.
		Take $t \df 4n-4$ and compute an upper bound on $i$.
		We start with $1 \leq \left(\frac{t-1}{t}\right)^i \left(f(\vex_0) - f(\vex^*)\right)$.
		Taking the natural logarithm gives $0 \leq i \ln \left( \frac{t-1}{t} \right) + \ln \left( f(\vex_0) - f(\vex^*) \right)$ and moving terms around then gives $-i \ln \left( \frac{t-1}{t} \right) = i \ln \left( \frac{t}{t-1} \right) \leq \ln \left( f(\vex_0) - f(\vex^*) \right)$.
		Dividing by $\ln \left(\frac{t-1}{t}\right)$, we obtain $i \leq \left( \ln \left( \frac{t}{t-1} \right) \right)^{-1} \ln \left( f(\vex_0) - f(\vex^*) \right)$.
		Now Taylor expansion gives for $t\geq 3$
		$\ln \left(1+\frac{1}{t-1} \right)\geq\frac{1}{t-1}-\frac{1}{2(t-1)^2}$.
		From this it follows for all $t \geq 3$ that
		$\left(\ln \left(1+\frac{1}{t-1}\right)\right)^{-1} \leq t$.
		Plugging back $t \df 4n-4$ we get that for all $n\geq 2$ we have
		$t\geq 3$ and hence
		\[i\leq (4n-4) \ln \left(f(\vex_0)-f(\vex^*)\right) = (4n-4) \cdot \ln 2 \cdot \log_2 \left(f(\vex_0)-f(\vex^*)\right),\]
		and the number of iterations is at most one unit larger.
		Since $f(\vex_0) - f(\vex^*) \leq f_{\max}$ and $\ln(2) = 0.693147 \dots \leq 3/4$, we have that the number of iterations is at most $3 n \log (f_{\max})$.
	}
\end{proof}

Clearly it is now desirable to show how to find halflings quickly.
The following lemma will be helpful in that regard.

\begin{lemma}[Powers of Two] \label{lem:powersoftwo}
  Let $\Gamma_{2} = \{1,2,4,8, \dots\}$ and $\vex$ be a feasible solution of~\eqref{IP}.
  If $\veh$ satisfies $f(\vex + \veh) \leq f(\vex + \lambda \veg)$ for each $\vex$\hy feasible step pair $(\veg, \lambda) \in \G(A) \times \Gamma_{2}$, then $\veh$ is a halfling.
\end{lemma}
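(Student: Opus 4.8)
The plan is to argue that for every $\vex$-feasible step pair $(\veg,\lambda)\in\G(A)\times\N$ there is a \emph{power-of-two} step length $\lambda'\le\lambda$ for $\veg$ which already realises more than half of the decrease that $\lambda\veg$ realises; since $\veh$ dominates all power-of-two pairs by hypothesis, it is then a halfling. Fix such a pair. If $\lambda=0$, or more generally if $f(\vex+\lambda\veg)\ge f(\vex)$, then $\tfrac12\bigl(f(\vex)-f(\vex+\lambda\veg)\bigr)$ is nonpositive and all we need is that $\veh$ is augmenting: assuming $\vex$ is not optimal (otherwise no halfling exists and the statement is moot), the implication of Proposition~\ref{prop:possum} noted above gives an augmenting $\veg_0\in\G(A)$, the pair $(\veg_0,1)$ is $\vex$-feasible with $1\in\Gamma_2$, and hence the hypothesis yields $f(\vex+\veh)\le f(\vex+\veg_0)<f(\vex)$.

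So assume $\lambda\ge 1$ and $f(\vex+\lambda\veg)<f(\vex)$, and put $\lambda'\df 2^{\lfloor\log_2\lambda\rfloor}\in\Gamma_2$, so that $s\df\lambda'/\lambda$ satisfies $\tfrac12<s\le 1$. First I would check that $(\veg,\lambda')$ is $\vex$-feasible: for each coordinate $i$, $x_i+\lambda' g_i=(1-s)x_i+s(x_i+\lambda g_i)$ is a convex combination of $x_i$ and $x_i+\lambda g_i$, both of which lie in $[l_i,u_i]$, and it is an integer; hence $\vex+\lambda'\veg\in[\vel,\veu]$. Consequently the hypothesis applies to $(\veg,\lambda')$ and gives $f(\vex+\veh)\le f(\vex+\lambda'\veg)$.

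It remains to relate $f(\vex+\lambda'\veg)$ to $f(\vex+\lambda\veg)$, and this is where separable convexity enters: $\phi(t)\df f(\vex+t\veg)=\sum_{i=1}^n f_i(x_i+tg_i)$ is a convex function of the real variable $t$, being a sum of convex functions precomposed with affine maps. Writing $\lambda'=(1-s)\cdot 0+s\cdot\lambda$, convexity gives $\phi(\lambda')\le(1-s)\phi(0)+s\phi(\lambda)$, i.e. $\phi(0)-\phi(\lambda')\ge s\bigl(\phi(0)-\phi(\lambda)\bigr)$; since $\phi(0)-\phi(\lambda)=f(\vex)-f(\vex+\lambda\veg)>0$ and $s>\tfrac12$, this exceeds $\tfrac12\bigl(f(\vex)-f(\vex+\lambda\veg)\bigr)$. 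Chaining with $f(\vex+\veh)\le f(\vex+\lambda'\veg)=\phi(\lambda')$ then yields $f(\vex)-f(\vex+\veh)\ge\phi(0)-\phi(\lambda')>\tfrac12\bigl(f(\vex)-f(\vex+\lambda\veg)\bigr)$, which is exactly the halfling inequality for $(\veg,\lambda)$. None of the steps is really hard; the only places that need a little care are the feasibility check for $\lambda'$ and the two degenerate cases (an optimal $\vex$, and pairs that do not decrease $f$), both dispatched above.
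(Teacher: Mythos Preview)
Your proof is correct and follows essentially the same approach as the paper: round the step length down to the nearest power of two and use convexity of $t\mapsto f(\vex+t\veg)$ to show that this captures more than half of the decrease. The paper's version is slightly terser—it works only with a single Graver-best pair $(\veg^*,\lambda^*)$ rather than an arbitrary pair, and it leaves the feasibility of the rounded step length and the degenerate cases implicit—whereas you verify these details explicitly, which is a welcome bit of care.
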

\begin{proof}
  Consider any Graver-best step pair $(\veg^*, \lambda^*) \in \G(A) \times \N$, let $\lambda \df 2^{\lfloor \log \lambda^* \rfloor}$,
  and choose $\frac{1}{2} < \gamma \leq 1$ in such a way that
  $\lambda = \gamma \lambda^*$.
  Convexity of $f$ yields
  \begin{align*}
      f(\vex_0) - f(\vex_0 + \lambda \veg^*) &\geq
          f(\vex_0) - \left[ (1-\gamma)f(\vex_0) + \gamma f(\vex_0 + \lambda^* \veg^*) \right] \\
          &= \gamma \left( f(\vex_0) - f(\vex_0 + \lambda^* \veg^*)\right) \\
          &\geq \frac{1}{2} \left( f(\vex_0) - f(\vex_0 + \lambda^* \veg^*)\right) \enspace .
  \end{align*}
This shows that $\lambda \veg^*$ is a halfling, and by the definition of $\veh$, $f(\vex+\veh) \leq f(\vex + \lambda \veg^*)$ and thus $\veh$ is a halfling as well.
\end{proof}
This makes it clear that the main task is to find, for each $\lambda \in \Gamma_2$, a step $\veh$ which is at least as good as any feasible $\lambda \veg$ with $\veg \in \G(A)$.
We need the notion of a \emph{best} solution:
\begin{definition}[$S$-best solution]
  Let $S,P \subseteq \R^n$.
  We say that $\vex^* \in P$ is a solution of
  \begin{equation}
    S\best \left\{f(\vex) \mid \vex \in P\right\} \tag{$S\best$}\label{eq:sbest}
  \end{equation}
  if $f(\vex^*) \leq \min \{f(\vex) \mid \vex \in P \cap S\}$.
If $P \cap S$ is empty, we say $S\best \left\{f(\vex) \mid \vex \in P\right\}$ has no solution.
\end{definition}
In other words, $\vex^*$ has to belong to $P$ and be at least as good as any point in $P \cap S$.
Note that to define the notion of an $S\best$ solution to be a ``no solution'' if $P \cap S = \emptyset$ might look unnatural as one might require \emph{any} $\vex \in P$ if $P \cap S = \emptyset$.
However, this would make~\eqref{eq:sbest} as hard as finding some $\vex \in P$ (just take $S=\emptyset$), but intuitively~\eqref{eq:sbest} should be an easier problem.
The following is a central notion.
\begin{definition}[Augmentation IP]
For an~\eqref{IP} instance $(A, f, \veb, \vel, \veu)$, its feasible solution $\vex \in \Z^n$, and an integer $\lambda \in \N$, the \emph{Augmentation IP} problem is to solve
\begin{equation}
\G(A)\best \{f(\vex+\lambda\veg) \mid A\veg = \vezero, \, \vel \leq \vex+\lambda\veg \leq \veu, \, \veg \in \Z^n \} \enspace . \tag{AugIP} \label{AugIP}
\end{equation}
Let $(A,f,\veb, \vel, \veu)$ be an instance of~\eqref{IP}, $\vex$ a feasible solution, and $\lambda \in \N$. We call the pair $(\vex, \lambda)$ an~\emph{\eqref{AugIP} instance} for $(A,f,\veb, \vel, \veu)$.
If clear from the context, we omit the~\eqref{IP} instance $(A,f,\veb, \vel, \veu)$.
\end{definition}
By Lemma~\ref{lem:powersoftwo} we obtain a halfling by solving~\eqref{AugIP} for each $\lambda \in \Gamma_2$ and picking the best solution.
Given an initial feasible solution $\vex_0$ and a fast algorithm for~\eqref{AugIP}, we can solve~\eqref{IP} quickly:
\begin{lemma}[(\eqref{AugIP} and $\vex_0$) $\implies$ \eqref{IP}] \label{lem:lambda_oracle_initi}
  Given an initial feasible solution $\vex_0$ to~\eqref{IP},~\eqref{IP} can be solved by solving \[3n (\log \|\veu - \vel\|_\infty + 1) \log (f(\vex_0) - f(\vex^*)) \leq 3n (\log \|\veu - \vel\|_\infty + 1) \log \left(f^{[\vel, \veu]}_{\max}\right)\] instances of~\eqref{AugIP}, where $\vex^*$ is any optimum of~\eqref{IP}.
\end{lemma}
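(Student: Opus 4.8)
The plan is to run the halfling augmentation procedure of Section~\ref{sec:iterative} starting from the given $\vex_0$, and to realize each of its iterations using only a small number of~\eqref{AugIP} calls. By Lemma~\ref{lem:halfling}, this procedure reaches an optimum of~\eqref{IP} within $3n \log\left(f(\vex_0) - f(\vex^*)\right) \le 3n \log\left(2 f_{\max}^{[\vel,\veu]}\right)$ iterations, \emph{provided} that in each iteration we can either exhibit a halfling for the current iterate~$\vex$ or correctly certify that none exists. Hence it suffices to show that one iteration can be carried out using at most $\log\|\veu-\vel\|_\infty + 1$ instances of~\eqref{AugIP}.

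Fix a feasible iterate~$\vex$. The key observation is that if $(\veg,\lambda) \in \G(A)\times\N$ is an $\vex$\hy feasible step pair, then $\vel \le \vex + \lambda\veg \le \veu$ together with $\veg$ being a \emph{nonzero} integer vector forces $\lambda \le \|\veu-\vel\|_\infty$. Consequently, in the hypothesis of Lemma~\ref{lem:powersoftwo} the only step lengths that can contribute are those $\lambda \in \Gamma_2$ with $\lambda \le \|\veu-\vel\|_\infty$, i.e. $\lambda \in \{1, 2, 4, \dots, 2^{\lfloor\log\|\veu-\vel\|_\infty\rfloor}\}$, a set of size at most $\log\|\veu-\vel\|_\infty + 1$; for larger $\lambda \in \Gamma_2$ there are no $\vex$\hy feasible pairs at all, so the corresponding condition in Lemma~\ref{lem:powersoftwo} is vacuous. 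For each relevant~$\lambda$ we solve the~\eqref{AugIP} instance $(\vex,\lambda)$; whenever it has a solution this yields a step $\veh_\lambda \df \lambda\veg^{(\lambda)}$ with $\vel \le \vex + \veh_\lambda \le \veu$ and $f(\vex+\veh_\lambda) \le f(\vex + \lambda\veg)$ for every $\veg \in \G(A)$ such that $(\veg,\lambda)$ is $\vex$\hy feasible. Let~$\veh$ be the best of these steps, i.e. the one minimizing $f(\vex + \veh_\lambda)$ over all relevant~$\lambda$ for which~\eqref{AugIP} has a solution. If $f(\vex+\veh) < f(\vex)$, then~$\veh$ meets the hypothesis of Lemma~\ref{lem:powersoftwo} and is therefore a halfling for~$\vex$, which we feed back into the procedure.

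It remains to argue that if no improving~$\veh$ is found then~$\vex$ is already optimal, so that the procedure terminates correctly; multiplying the per-iteration bound $\log\|\veu-\vel\|_\infty + 1$ by the iteration bound $3n\log(f(\vex_0)-f(\vex^*))$ then gives exactly the claimed total. This termination test is the one point that needs a little care, and it is obtained by chaining the earlier lemmas: were~$\vex$ not optimal, the Positive Sum Property decomposition of $\vex^* - \vex$ together with the averaging inequality~\eqref{eq:halfling_convergence} from the proof of Lemma~\ref{lem:halfling} would produce an $\vex$\hy feasible step pair $(\veg,\lambda) \in \G(A)\times\N$ with $f(\vex) - f(\vex+\lambda\veg) > 0$; by Lemma~\ref{lem:powersoftwo} this in turn yields a halfling, and in particular some power-of-two step length $\lambda' \le \|\veu-\vel\|_\infty$ for which the corresponding~\eqref{AugIP} returns a strictly improving step — contradicting the assumption that none was found. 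None of the individual steps is genuinely hard; the only subtlety, as noted, is the correctness of this optimality certificate, i.e. that failing to improve along every power-of-two step length really does certify optimality.
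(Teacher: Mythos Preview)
Your proof is correct and follows essentially the same approach as the paper: bound the number of relevant step lengths $\lambda \in \Gamma_2$ by $\log\|\veu-\vel\|_\infty + 1$ (since larger $\lambda$ admit no nonzero feasible step), invoke Lemma~\ref{lem:powersoftwo} to turn the best of the resulting~\eqref{AugIP} solutions into a halfling, and multiply by the iteration bound of Lemma~\ref{lem:halfling}. The paper's own proof is a terse three-sentence version of exactly this argument; your additional paragraph justifying the optimality certificate is a welcome elaboration but not a different idea.
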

\begin{proof}
  Observe that no $\lambda \in \Gamma_2 = \{1,2,4, \dots\}$ greater than $\|\veu - \vel\|_\infty$ results in a non-zero $\vex$\hy feasible step pair.
  Thus, by Lemma~\ref{lem:powersoftwo}, to compute a halfling for $\vex$ it suffices to solve \eqref{AugIP} for all $\lambda \in \Gamma_2$, $\lambda \leq \|\veu - \vel\|_\infty$, and there are at most $\log \|\veu - \vel\|_\infty +1$ of these.
  By Lemma~\ref{lem:halfling}, $3n \log \left(f(\vex_0) - f(\vex^*)\right) \leq 3n \log \left(f_{\max}\right)$ halfling augmentations suffice and we are done.
\end{proof}

\paragraph{Feasibility.} Our goal now is to satisfy the requirement of an initial solution $\vex_0$.
\begin{lemma}[\eqref{AugIP} $\implies \vex_0$] \label{lem:initial}
  Given an instance of~\eqref{IP}, it is possible to compute a feasible solution $\vex_0$ for~\eqref{IP} or decide that~\eqref{IP} is infeasible by solving $ \Oh(n \log (\|A,\veb,\vel,\veu\|_\infty)^2)$ many \eqref{AugIP} instances, plus $\Oh(n^\omega)$ time needed to compute an integral solution of $A \vez = \veb$.
  Moreover, $\la \vex_0 \ra \leq \poly{\la \veb \ra}$.
\end{lemma}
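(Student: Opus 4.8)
The plan is to turn the feasibility question into an \emph{optimization} instance of~\eqref{IP} over the \emph{same} matrix $A$, which we can already solve via Lemma~\ref{lem:lambda_oracle_initi} once we have any feasible starting point for it. First I would compute, using Gaussian elimination together with a Hermite normal form computation, either a certificate that the system $A\vex=\veb$ has no integral solution at all — in which case~\eqref{IP} is infeasible — or an integral vector $\vez$ with $A\vez=\veb$; this is exactly the $\Oh(n^\omega)$ part, and the classical size bound for integral solutions of linear systems (Cramer's rule applied to a pure subsystem, cf.\ Proposition~\ref{prop:purification}) guarantees that $\la\vez\ra$ is polynomial in $\la A,\veb\ra$.

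Next I would set up the auxiliary instance $(A,g,\veb,\vel^{(0)},\veu^{(0)})$ with $g(\vex)\df\sum_{i=1}^n g_i(x_i)$, $g_i(x_i)\df\max\{0,\,l_i-x_i,\,x_i-u_i\}$, and enlarged bounds $\vel^{(0)}\df\min\{\vel,\vez\}$, $\veu^{(0)}\df\max\{\veu,\vez\}$ taken coordinatewise. Being a sum of maxima of affine functions with integer coefficients, $g$ is separable convex and integer-valued on $\Z^n$; it is nonnegative, and $g(\vex)=0$ precisely when $\vel\leq\vex\leq\veu$. Since $\vel^{(0)}\leq\vez\leq\veu^{(0)}$, the vector $\vez$ is feasible for this auxiliary instance, and the key point is that~\eqref{AugIP} for it is an optimization over $\G(A)$, the Graver basis of the \emph{same} matrix $A$ — so each such~\eqref{AugIP} call is answered by the given~\eqref{AugIP} subroutine. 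Running the algorithm of Lemma~\ref{lem:lambda_oracle_initi} on this auxiliary instance starting from $\vez$ produces an optimum $\vex_0$; if $g(\vex_0)=0$ then $A\vex_0=\veb$ and $\vel\leq\vex_0\leq\veu$, so $\vex_0$ is feasible for~\eqref{IP} and we output it, whereas if $g(\vex_0)>0$ then no point of $[\vel^{(0)},\veu^{(0)}]\supseteq[\vel,\veu]$ solves $A\vex=\veb$, so~\eqref{IP} is infeasible.

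For the bookkeeping, Lemma~\ref{lem:lambda_oracle_initi} bounds the number of~\eqref{AugIP} calls by $\Oh\big(n(\log\|\veu^{(0)}-\vel^{(0)}\|_\infty+1)\log(2g_{\max})\big)$; here $\|\veu^{(0)}-\vel^{(0)}\|_\infty\leq\|\veu-\vel\|_\infty+2\|\vez\|_\infty$ and $g_{\max}\leq n\|\veu^{(0)}-\vel^{(0)}\|_\infty$, and plugging in the polynomial size bound for $\vez$ yields a bound of the stated form $\Oh(n(\log\|A,\veb,\vel,\veu\|_\infty)^2)$. Finally, on output we have $\vel\leq\vex_0\leq\veu$, so $\la\vex_0\ra\leq\la\vel,\veu\ra\leq\poly(\la\veb\ra)$ as claimed. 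I expect the two points needing care to be (i) verifying that the infeasibility-measuring objective $g$ really does fit the separable-convex, integer-valued setting so that Lemmas~\ref{lem:halfling} and~\ref{lem:lambda_oracle_initi} apply without change, and (ii) invoking the correct size estimate for $\vez$ so that the enlarged box — and hence the iteration count — stays polynomially controlled; these are the only places where anything beyond the already-established machinery enters.
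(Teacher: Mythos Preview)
Your proposal is correct and essentially identical to the paper's proof: the paper likewise computes an integral $\vez$ with $A\vez=\veb$ via Hermite normal form, enlarges the box to $\hat l_i=\min\{l_i,z_i\}$, $\hat u_i=\max\{u_i,z_i\}$, minimizes the separable convex penalty $\hat f_i(x_i)=\mathrm{dist}(x_i,[l_i,u_i])$ (which is exactly your $g_i$) via the halfling augmentation procedure, and concludes feasibility iff the optimum has value zero. The only cosmetic difference is that the paper cites Lemma~\ref{lem:halfling} directly for the iteration count rather than going through Lemma~\ref{lem:lambda_oracle_initi}.
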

\begin{proof}
We first compute an integer solution to the system of equations $A\vez=\veb$.
This can be done by computing the Hermite normal form of $A$ in time $\Oh(n^{\omega-1} m) \leq \Oh(n^{\omega})$~\cite{StorjohannL96} (using $m \leq n$).
Then either we conclude that there is no integer solution to $A\vez=\veb$ and hence \eqref{IP} is infeasible, or we find a solution $\vez\in\Z^n$ with
  encoding length polynomially bounded in the encoding length of $A, \veb$.

Next, we will solve an auxiliary IP.
Define new relaxed bounds by
  \[
  {\hat l}_i:=\min\{l_i,z_i\},\ \ {\hat u}_i:=\max\{u_i,z_i\},\quad i \in [n],
  \]
and define an objective function $\hat{f} \df \sum_{i=1}^n \hat{f}_i$ as, for each $i \in [n]$, $\hat{f}_i(x_i) \df \textrm{dist}(x_i, [l_i, u_i])$, which is $0$ if $x_i \in [l_i, u_i]$ and $\max \{l_i-x_i, x_i-u_i\}$ otherwise.
This function has at most three linear pieces, the first decreasing, the second constantly zero, and the third increasing, and thus each $\hat{f}_i$ is convex and $\hat{f}$ is separable convex.
Moreover, a solution $\vex$ has $\hat{f}(\vex) = 0$ if and only if $\vel \leq \vex \leq \veu$.

By Lemma~\ref{lem:halfling}, an optimum $\vex_0$ of $\min \left\{\hat{f}(\vex) \,\middle|\, A\vex = \veb, \, \hat{\vel} \leq \vex \leq \hat{\veu}, \, \vex \in \Z^n\right\}$ can be computed by solving $3n (\log \|\hat{\veu} - \hat{\vel}\| + 1) \log \left(\hat{f}^{[\hat{\vel}, \hat{\veu}]}_{\max}\right)$ instances of~\eqref{AugIP}.
Since $\|\hat{\vel}, \hat{\veu}\|_\infty$ is polynomially bounded in $\|A,\veb\|_\infty$ and $\|\vel, \veu\|_\infty$ and, by definition of $\hat{f}$, $\hat{f}^{[\hat{\vel}, \hat{\veu}]}_{\max}$ is bounded by $n \cdot \|\hat{\vel}, \hat{\veu}\|_\infty$, we have that the number of times we have to solve~\eqref{AugIP} is bounded by $\Oh(n\log(\|A,\veb,\vel,\veu\|_\infty)^2)$.
Finally, if $\hat{f}(\vex_0) = 0$ then $\vex_0$ is a feasible solution of~\eqref{IP} and otherwise~\eqref{IP} is infeasible.
\end{proof}

As a corollary of Lemmas~\ref{lem:initial} and~\ref{lem:halfling}, we immediately obtain that a polynomial \eqref{AugIP} algorithm is sufficient for solving~\eqref{IP} in polynomial time:
\begin{corollary}[{\eqref{AugIP} $\implies$~\eqref{IP}}] \label{cor:lambda_oracle}
  Problem~\eqref{IP} can be solved by solving $\Oh(nL^2)$ instances of~\eqref{AugIP}, where $L \df \log ( \|A,f_{\max},\veb,\vel,\veu\|_\infty)$, plus time $\Oh(n^\omega + \min\{n,m\}nm)$.
\end{corollary}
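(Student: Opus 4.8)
The plan is to simply chain together the three lemmas of this subsection: Lemma~\ref{lem:initial} produces an initial feasible solution (or detects infeasibility) using few \eqref{AugIP} calls, and then Lemma~\ref{lem:lambda_oracle_initi} (which in turn invokes Lemma~\ref{lem:halfling}) upgrades that initial solution to an optimum of \eqref{IP} using few more \eqref{AugIP} calls. The only real work is to verify that both call counts, as well as the auxiliary arithmetic, are bounded by $\Oh(nL^2)$ and $\Oh(n^\omega + \min\{n,m\}nm)$ respectively, where $L = \log(\|A, f_{\max}, \veb, \vel, \veu\|_\infty)$.

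First I would apply Proposition~\ref{prop:purification} to purify the system $A\vex = \veb$ in time $\Oh(\min\{n,m\}nm)$, either concluding infeasibility or reducing to a pure system; this accounts for the $\Oh(\min\{n,m\}nm)$ additive term and lets us assume $m \leq n$. Next, invoke Lemma~\ref{lem:initial}: this either reports that \eqref{IP} is infeasible, or returns a feasible $\vex_0$ with $\la \vex_0 \ra \leq \poly(\la \veb \ra)$; it costs $\Oh(n \log(\|A,\veb,\vel,\veu\|_\infty)^2) = \Oh(nL^2)$ many \eqref{AugIP} instances plus $\Oh(n^\omega)$ time for the Hermite-normal-form computation, which accounts for the $\Oh(n^\omega)$ additive term. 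Finally, feed $\vex_0$ into Lemma~\ref{lem:lambda_oracle_initi}, which solves \eqref{IP} by solving at most $3n(\log\|\veu-\vel\|_\infty + 1)\log(2 f_{\max}^{[\vel,\veu]})$ instances of \eqref{AugIP}.

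It then remains to bound this last quantity by $\Oh(nL^2)$. Here I would observe that $\log\|\veu-\vel\|_\infty + 1 = \Oh(\log\|\vel,\veu\|_\infty) = \Oh(L)$ and, crucially, $\log(2 f_{\max}) = \Oh(\log f_{\max}) = \Oh(L)$ since $f_{\max}$ appears in the definition of $L$; multiplying the two factors together with the leading $3n$ gives $\Oh(nL^2)$. Summing the \eqref{AugIP} counts from Lemmas~\ref{lem:initial} and~\ref{lem:lambda_oracle_initi} yields $\Oh(nL^2)$ total, and summing the non-\eqref{AugIP} overheads yields $\Oh(n^\omega + \min\{n,m\}nm)$, as claimed.

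The statement is essentially a bookkeeping corollary, so there is no genuine obstacle; the only point requiring a moment's care is making sure all the disparate logarithmic quantities ($\log\|A\|_\infty$, $\log\|\veb\|_\infty$, $\log\|\vel,\veu\|_\infty$, $\log f_{\max}$) are uniformly absorbed into the single parameter $L$, and that the cost of purification/Hermite normal form is correctly separated out as the additive $\Oh(n^\omega + \min\{n,m\}nm)$ term rather than being multiplied by the number of augmentation steps.
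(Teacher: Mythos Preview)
Your proposal is correct and takes exactly the approach the paper intends: the paper presents this corollary as immediate from Lemmas~\ref{lem:initial} and~\ref{lem:halfling} (via Lemma~\ref{lem:lambda_oracle_initi}) without spelling out a proof, and your argument fills in precisely those details, including the purification step from Proposition~\ref{prop:purification} that accounts for the $\Oh(\min\{n,m\}nm)$ term.
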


\subsection{The Graphs of $A$ and Treedepth}

\begin{definition}[Primal and dual graph]\label{primaldual-graph}
Given a matrix $A \in \Z^{m \times n}$, its \emph{primal graph} $G_P(A) = (V,E)$ is defined as $V = [n]$ and $E = \left\{\{i,j\} \in \binom{[n]}{2} ~\middle|~ \exists k \in [m]: A_{k,i}, A_{k,j} \neq 0\right\}$.
In other words, its vertices are the columns of $A$ and two vertices are connected if there is a row with non-zero entries at the corresponding columns.
The \emph{dual graph of $A$} is defined as $G_D(A) \df G_P(A^\transpose)$, that is, the primal graph of the transpose of $A$.
\end{definition}

From this point on we always assume that $G_P(A)$ and $G_D(A)$ are connected, otherwise $A$ has (up to row and column permutations) a diagonal structure $A=\left(\begin{smallmatrix}A_1 & & \\ & \ddots & \\ & & A_d\end{smallmatrix} \right)$ and solving~\eqref{IP} amounts to solving $d$ smaller~\eqref{IP} instances independently.

\begin{definition}[Treedepth]
\label{def:tree-depth}
The {\em closure} $\cl(F)$ of a rooted tree $F$
is the graph obtained from $F$ by making every vertex adjacent to all of its ancestors.
We consider both $F$ and $\cl(F)$ as undirected graphs.
The \emph{height} of a tree $F$ denoted $\height(F)$ is the maximum number of vertices on any root-leaf path.
The {\em treedepth} $\td(G)$ of a connected graph $G$ is the minimum
height of a tree $F$ such that $G\subseteq \cl(F)$.
A \emph{$\td$-decomposition of $G$} is a tree $F$ such that $G \subseteq \cl(F)$.
A $\td$-decomposition $F$ of $G$ is \emph{optimal} if $\height(F) = \td(G)$.
\end{definition}

Computing $\td(G)$ is \NPh, but fortunately can be done quickly when $\td(G)$ is small:
\begin{proposition}[{\cite{ReidlRVS:2014}}]\label{prop:tddecomposition}
The treedepth $\td(G)$ of a graph $G$ with an optimal $\td$-decomposition $F$ can be computed in time $2^{\td(G)^2} \cdot |V(G)|$.
\end{proposition}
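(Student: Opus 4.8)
The plan is to prove this by the standard recursive self-reduction for treedepth, accelerated by two pruning ideas. First I would reduce to the connected case: since $\td(G)=\max_C \td(C)$ over the connected components $C$ of $G$, and components are found in linear time, it suffices to handle connected $G$ and then glue the component decompositions under a common structure. For connected $G$ with at least two vertices, the governing recursion is $\td(G)=1+\min_{v\in V(G)}\td(G-v)$: given an optimal $\td$-decomposition $F$ of $G$ with root $r$, the forest $F-r$ witnesses $\td(G-r)\le\td(G)-1$, because every edge of $G-r$ was an edge of $G$ and hence respects the ancestor relation, which is unchanged by deleting the root; conversely, given $v$ and optimal decompositions of the components of $G-v$, hanging them all below a fresh root $v$ yields a $\td$-decomposition of $G$ of height $1+\td(G-v)$, since $v$ becomes an ancestor of everything. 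Iterating this already decides whether $\td(G)\le d$, but naively in time $n^{\Oh(d)}$; the goal is to shrink the branching to about $2^d$ candidates.

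The first pruning ingredient is a cheap lower-bound certificate. A path on $k$ vertices has treedepth exactly $\lceil\log_2(k+1)\rceil$, and treedepth is monotone under subgraphs; since every root-to-leaf path of a DFS tree $T$ of $G$ is a (simple) path in $G$ — all non-tree edges of a DFS join an ancestor to a descendant — we get $\height(T)\le 2^{\td(G)}-1$. Hence, as soon as some DFS tree of the current graph has height $\ge 2^d$, we may immediately reject the hypothesis $\td(G)\le d$. In particular, whenever we do not reject, a deepest root-to-leaf path $P$ of $T$ satisfies $|V(P)|<2^d$.

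The second, and \textbf{crucial}, ingredient is that when $\td(G)\le d$ it suffices to branch only over the vertices of such a path $P$ as candidates for the root of an optimal $\td$-decomposition. The intuition: if an optimal root $r$ avoided $V(P)$, then $P$, being connected and disjoint from $r$, would lie inside a single component $C$ of $G-r$, forcing $|V(P)|\le 2^{\td(C)}-1\le 2^{d-1}-1$; so a "bad" optimal root can only occur when the DFS path is already short, a case that must be absorbed by recursing with a halved height bound or by a more delicate structural argument on DFS trees. This is exactly the combinatorial core of Reidl et al.~\cite{ReidlRVS:2014}, and it is the step I expect to be the main obstacle to carry out cleanly. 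Granting it, the algorithm is: compute a DFS tree; reject if its height is $\ge 2^d$; otherwise branch over the $<2^d$ vertices of a deepest path $P$, deleting each in turn, recursing with budget $d-1$ on the components of the remainder, and recording the deleted vertices so as to reconstruct $F$.

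Finally I would assemble the complexity. The recursion has depth at most $d$ and branching below $2^d$, hence at most $\prod_{i=1}^{d}2^i<2^{d^2}$ recursive calls; each call computes a DFS tree and the component decomposition in time $\Oh(n+m)$, and since a graph admitting a $\td$-decomposition of height $\le d$ has $m\le(d-1)n$, this is $\Oh(dn)$, with all integers of $\Oh(\log n)$ bits. Thus deciding $\td(G)\le d$ together with a witnessing decomposition costs $2^{\Oh(d^2)}\,n$. Running this for $d=1,2,3,\dots$ until it first succeeds computes $\td(G)$ and an optimal $\td$-decomposition $F$, with running time dominated by the final value $d=\td(G)$, which yields the claimed bound $2^{\td(G)^2}\cdot|V(G)|$ once the polynomial-in-$d$ factors are absorbed into the exponent.
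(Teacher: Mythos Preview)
The paper does not prove this proposition; it simply cites it from~\cite{ReidlRVS:2014}. Your plan follows the same high-level strategy as that paper: the recursion $\td(G)=1+\min_v\td(G-v)$, the DFS height test to reject when the tree is too tall, and branching over a path of length $<2^d$ to keep the total number of calls at $2^{O(d^2)}$. The complexity bookkeeping (including the $m\le(d-1)n$ edge bound, which you should enforce explicitly as a rejection test, and the iteration over $d=1,2,\ldots$) is fine.

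The gap you flag is genuine, and your suggested patch does not close it. Your contrapositive argument correctly shows that if $|V(P)|\ge 2^{d-1}$ then \emph{every} optimal root lies on $P$; but when $|V(P)|<2^{d-1}$ you cannot ``recurse with a halved height bound'': a short DFS tree only tells you $\td(G)\le|V(P)|$, which says nothing about whether $\td(G)\le d-1$. What is actually needed is the statement that for \emph{every} connected $G$ with $\td(G)\le d$ and \emph{every} longest root--leaf DFS path $P$, some $v\in V(P)$ satisfies $\td(G-v)\le d-1$, regardless of how short $P$ is. This is exactly the combinatorial lemma you defer to~\cite{ReidlRVS:2014}, and it requires a more careful argument than the length dichotomy you sketch; without it the algorithm may incorrectly reject instances where the (unique) optimal root sits on a non-longest branch of the DFS tree. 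So your plan is on the right track and correctly isolates the crux, but as written it is a plan contingent on that lemma rather than a proof.
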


We define the \emph{primal treedepth of $A$} to be $\td_P(A) \df \td(G_P(A))$ and the \emph{dual treedepth of $A$} to be $\td_D(A) \df \td(G_D(A))$.

We often assume that an optimal $\td$-decomposition is given since the time required to find it is dominated by other terms.
Moreover, in many applications a small $\td$-decomposition of $G_P(A)$ or $G_D(A)$ is clear from the way $A$ was constructed and does not have to be computed as part of the algorithm.

It is clear that a graph $G$ has at most $\td(G)^2 |V(G)|$ edges because the closure of each root-leaf path of a $\td$-decomposition of $G$ contains at most $\td(G)^2$ edges, and there are at most $n$ leaves.
Thus, constructing $G_P(A)$ or $G_D(A)$ can be done in linear time if $A$ is given in its sparse representation.
Throughout we shall assume that $G_P(A)$ or $G_D(A)$ are given.

To facilitate our proofs and to provide more refined complexity bounds we introduce a new parameter called topological height.
This notion is useful in our analysis and proofs, and we later show that it plays a crucial role in complexity estimates of~\eqref{IP} (Theorems~\ref{thm:tdd_lowerbound} and~\ref{thm:tdp_lowerbound}).
It has not been studied elsewhere to the best of our knowledge.
\begin{definition}[Topological height] \label{def:topheight}
A vertex of a rooted tree $F$ is \emph{degenerate} if it has exactly one child, and \emph{non-degenerate} otherwise (i.e., if it is a leaf or has at least two children).
The \emph{topological height of $F$}, denoted $\ttd(F)$, is the maximum number of non-degenerate vertices on any root-leaf path in $F$.
Equivalently, $\ttd(F)$ is the height of $F$ after contracting each edge from a degenerate vertex to its unique child.
Clearly, $\ttd(F) \leq \height(F)$.

We shall now define the level heights of $F$, which relate to lengths of paths between non-degenerate vertices.
For a root-leaf path $P=(v_{b(0)}, \dots, v_{b(1)}, \dots, v_{b(2)}, \dots, v_{b(e)})$ with $e$ non-degenerate vertices $v_{b(1)}, \dots, v_{b(e)}$ (potentially $v_{b(0)} = v_{b(1)}$), define $k_1(P) \df |\{v_{b(0)}, \dots, v_{b(1)}\}|$, $k_i(P) \df |\{v_{b(i-1)}, \dots, v_{b(i)}\}|-1$ for all $i \in [2,e]$, and $k_i(P) \df 0$ for all $i > e$.
For each $i \in [\ttd(F)]$, define $k_i(F) \df \max_{P: \text{root-leaf path}} k_i(P)$.
We call $k_1(F), \dots, k_{\ttd(F)}(F)$ the \emph{level heights of $F$}.
See Figure~\ref{fig:td}.
\end{definition}
\begin{figure}[bt]
  \centering
  \begin{subfigure}[b]{0.45\textwidth}
    \lv{\includegraphics[width=\textwidth]{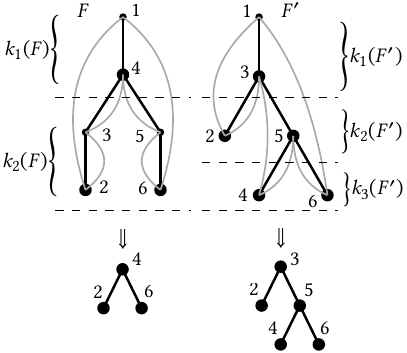}}
    \sv{\includegraphics[width=\textwidth]{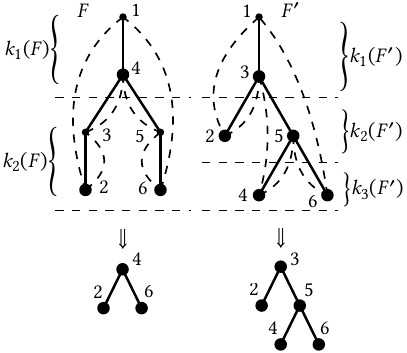}} \hfill
    \caption{Two optimal $\td$-decompositions $F$ and $F'$ of the cycle on six vertices (in \lv{grey}\sv{dashed} edges). Non-degenerate vertices are enlarged. The trees obtained by contracting edges outgoing from vertices with only one child are pictured below. Notice that even though both $F$ and $F'$ are optimal $\td$-decompositions, their topological height differs. Dashed lines depict ``levels'' of $F$ and $F'$, and we have $k_1(F) = k_2(F) = k_1(F') = 2$ and $k_2(F') = k_3(F') = 1$.} \label{fig:td}
  \end{subfigure}
  \hfill
  \begin{subfigure}[b]{0.53\textwidth}
    \hfill
    \lv{\includegraphics[width=\textwidth]{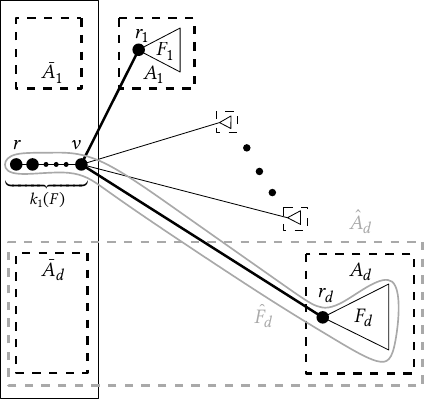}}
    \sv{\includegraphics[width=\textwidth]{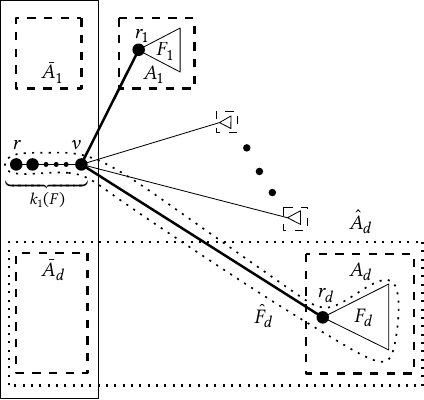}}
  \caption{The situation of Lemma~\ref{lem:decomposition}: a $\td$-decomposition $F$ of $G_P(A)$ pictured in the matrix $A$, the decomposition into smaller blocks $\bar{A}_1, \dots, \bar{A}_d, A_1, \dots, A_d$ derived from $F$ and their $\td$-decompositions $F_1, \dots, F_d$, and a $\td$-decomposition $\hat{F}_d$ of $G_P(\hat{A}_d)$ (Lemma~\ref{lem:fhati}).}
  \label{fig:decomposition}
  \end{subfigure}
  \caption{Illustration of Definitions~\ref{def:tree-depth} and~\ref{def:topheight} (part~\ref{fig:td}) and Lemmas~\ref{lem:decomposition} and~\ref{lem:fhati} (part~\ref{fig:decomposition}).} \label{fig:figs}
\end{figure}

\begin{definition}[Block-structured Matrix]\label{def:primal-decomp}
Let $A \in \Z^{m \times n}$ and $F$ be a $\td$-decomposition of $G_P(A)$.
We say that $A$ is \emph{block-structured along $F$} if either $\ttd(F) = 1$, or if $\ttd(F) > 1$ and the following holds.
Let $v$ be the first non-degenerate vertex in $F$ on a path from the root, $r_1, \dots, r_d$ be the children of $v$, $F_i$ be the subtree of $F$ rooted in $r_i$, and $n_i \df |V(F_i)|$, for $i \in [d]$, and
\begin{align}
	A=
	\left(\begin{array}{ccccc}
		\bar{A}_1 & A_1     &  &\\
		\vdots &      &  \ddots & \\
		\bar{A}_d&      &   & A_d
	\end{array}\right), \tag{block-structure} \label{eq:block-structure}
\end{align}
where, for $i \in [d]$, $\bar{A}_i \in \Z^{m_i \times k_1(F)}$ where $k_1(F)$ is the first level height of $F$ and $m_i \in \N$, $A_i \in \Z^{m_i \times n_i}$, and $A_i$ is block-structured along $F_i$.
Note that $\ttd(F_i) \leq \ttd(F)-1$, $\height(F_i) \leq \height(F)-k_1(F)$, for $i \in [d]$.
\end{definition}
Whenever $A$ and $F$ are given, we will assume throughout this paper that $A$ is block-structured along $F$.
The following Lemma shows that this is without loss of generality as we can always efficiently put $A$ in this format.

\begin{lemma}[Primal Decomposition] \label{lem:decomposition}
  Let $A \in \Z^{m \times n}$, $G_P(A)$, and a $\td$-decomposition $F$ of $G_P(A)$ be given, where $n,m \geq 1$.
  There exists an algorithm which in time $\Oh(n)$ permutes the rows and columns of $A$ such that the resulting matrix $A'$ is block-structured along $F$.
\end{lemma}
\begin{proof}[{Proof of Lemma~\ref{lem:decomposition}}]
Denote by $r$ the root of $F$.
Take any root-leaf path in $F$ and let $v$ be the first non-degenerate vertex (i.e., a leaf or a vertex with at least two children) on this path; note that possibly $v=r$.
Denote by $P$ the path from $r$ to $v$ and observe that $k_1(F)$ is the number of vertices of $P$.
Let $d \df \deg(v)$ be the number of children of $v$ and denote them $r_1, \dots, r_d$, and let $F_1, \dots, F_d$ be the subtrees of $F$ rooted in $r_1, \dots, r_d$.
Clearly $\ttd(F_i) \leq \ttd(F)-1$ for each $i \in [d]$ since $v$ is a non-degenerate vertex.
For a set of column indices $S \subseteq [n]$ denote by $A_{\bullet,S}$ the submatrix of $A$ consisting of exactly the columns indexed by $S$.
For each $i \in [d]$, we obtain $A_i$ from $A_{\bullet, V(F_i)}$ by deleting zero rows, and we obtain $\bar{A}_i$ from $A_{\bullet,V(P)}$ by only keeping rows which are non-zero in $A_{\bullet,V(F_i)}$.
For every row of $A$ whose support is contained in $V(P)$, append its restriction to $V(P)$ to $\bar{A}_1$, and append a zero row to $A_1$.
If $A$ has $\zeta$ zero rows, then append $\zeta$ zero rows to $\bar{A}_1$ and $A_1$; this accounts for zero rows of $A$.
Now, apply the same procedure recursively to $F_1, \dots, F_d$; the base case is when $F_i$ has $\ttd(F_i) = 1$ and $A_i$ is already block-structured by definition.

To finish the proof we need to argue that $A$ has the form~\eqref{eq:block-structure}, in particular, that there is no overlap between the blocks $A_1, \dots, A_d$.
This follows simply from the fact that by the definition of treedepth there are no edges between any two $u \in F_i$, $w \in F_j$ for $i \neq j$, and thus, by definition of $G_P(A)$, there is no row containing a non-zero at indices $u$ and $v$, see Figure~\ref{fig:decomposition}.
\end{proof}

Note that given an~\eqref{IP}, the primal decomposition naturally partitions the right hand side $\veb = (\veb^1, \dots, \veb^d)$ according to the rows of $A_1, \dots, A_d$, and each object of length $n$ (such as bounds $\vel, \veu$, a solution $\vex$, any step $\veg$, or the objective function $f$) into $d+1$ objects according to the columns of $\bar{A}_1, A_1, \dots, A_d$.
For example, we write $\vex = (\vex^0, \vex^1, \dots, \vex^d)$.

By considering the transpose of $A$ we get an analogous notion and a corollary for the dual case:
\begin{definition}[Block-structured Matrix (dual)] \label{def:dual-decomp}
	Let $A \in \Z^{m \times n}$ and $F$ be a $\td$-decomposition of $G_D(A)$.
	We say that $A$ is \emph{dual block-structured along $F$} if either $\ttd(F) = 1$, or if $\ttd(F) > 1$ and the following holds.
	Let $v$ be the first non-degenerate vertex in $F$ on a path from the root, $r_1, \dots, r_d$ be the children of $v$, $F_i$ be the subtree of $F$ rooted in $r_i$, and $m_i \df |V(F_i)|$, for $i \in [d]$, and
  \begin{align}
	A=
	\left(\begin{array}{cccc}
		\bar{A}_1 & \bar{A}_2 & \cdots & \bar{A}_d\\
		A_1 &      &  &  \\
		& A_2      &  &   \\
		&      & \ddots  &   \\
		&      &         & A_d
	\end{array}\right) \enspace . \tag{dual-block-structure} \label{eq:dual-block-structure}
\end{align}

where $d \in \N$, and for all $i \in [d]$, $\bar{A}_i \in \Z^{k_1(F) \times n_i}$, and $A_i \in \Z^{m_i \times n^i}$, $n_i \in \N$, and $A_i$ is block-structured along $F_i$.
Note that $\ttd(F_i) \leq \ttd(F)-1$, $\height(F_i) \leq \height(F)-k_1(F)$, for $i \in [d]$.
\end{definition}

\begin{corollary}[Dual decomposition] \label{cor:dual_decomp}
  Let $A \in \Z^{m \times n}$, $G_D(A)$, and a $\td$-decomposition $F$ of $G_D(A)$ be given, where $n,m \geq 1$.
There exists an algorithm which in time $\Oh(n)$ permutes the rows and columns of $A$ such that the resulting matrix $A'$ is dual block-structured along $F$.
\end{corollary}

Again, the dual decomposition naturally partitions the right hand side $\veb = (\veb^0, \veb^1, \dots, \veb^d)$ according to the rows of $\bar{A}_1, A_1, \dots, A_d$, and each object of length $n$ into $d$ objects according to the columns of $A_1, \dots, A_d$.

\begin{lemma} \label{lem:fhati}
Let $A \in \Z^{n \times m}$, a $\td$-decomposition $F$ of $G_P(A)$ (or $G_D(A)$), and $\bar{A}_i, A_i, F_i$, for all $i \in [d]$, be as in Definitions~\ref{def:primal-decomp} (or~\ref{def:dual-decomp}).
Let $\hat{A}_i \df (\bar{A}_i~A_i)$ (or $\hat{A}_i \df \left(\begin{smallmatrix} \bar{A}_i \\ A_i \end{smallmatrix}\right)$, respectively) and let $\hat{F}_i$ be obtained from $F_i$ by appending a path on $k_1(F)$ new vertices to the root of $F_i$, and the other endpoint of the path is the new root.
Then $\hat{F}_i$ is a $\td$-decomposition of $\hat{A}_i$, $\ttd(\hat{F}_i) < \ttd(F)$, and $\height(\hat{F}_i) \leq \height(F)$.
\end{lemma}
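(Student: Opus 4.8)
The plan is to verify the three claimed properties of $\hat{F}_i$ essentially by unwinding the definitions of Lemma~\ref{lem:decomposition} (resp. Corollary~\ref{cor:dual_decomp}) and of treedepth, topological height, and height. First I would fix $i \in [d]$ and recall the construction: $F_i$ is the subtree of $F$ rooted at $r_i$, where $r_i$ is one of the $d$ children of the first non-degenerate vertex $v$ on some root-leaf path $P$, and $P$ has exactly $k_1(F)$ vertices. By construction, $\hat{F}_i$ is formed from $F_i$ by prepending a path $Q$ on $k_1(F)$ fresh vertices, whose top vertex becomes the new root. I would note that $F$ restricted to $V(P)\cup V(F_i)$ is exactly the tree obtained from $F_i$ by attaching the path $P$ above its root (since $r_i$ is a child of the bottom vertex $v$ of $P$), and this is isomorphic to $\hat{F}_i$ via the identity on $V(F_i)$ and any bijection $V(Q)\to V(P)$ respecting the path order. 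Thus $\hat{F}_i$ is, up to relabeling, an induced rooted subtree of $F$.

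For the containment $G_P(\hat A_i) \subseteq \cl(\hat F_i)$: the columns of $\hat A_i = (\bar A_i~A_i)$ are indexed by $V(P) \cup V(F_i)$. A row of $\hat A_i$ comes from a row of $A$ (after the deletions/restrictions of Lemma~\ref{lem:decomposition}) whose support lies in $V(P)\cup V(F_i)$; any edge of $G_P(\hat A_i)$ therefore corresponds to two column indices that are simultaneously nonzero in some row of $A$, hence form an edge of $G_P(A) \subseteq \cl(F)$. Since both endpoints lie in $V(P)\cup V(F_i)$, and $\cl(F)$ restricted to that vertex set equals $\cl$ of the subtree $F|_{V(P)\cup V(F_i)} \cong \hat F_i$, this edge is in $\cl(\hat F_i)$. (Here I use that for a rooted subtree $T'$ of $F$ that is downward-closed below its root path, $\cl(F)[V(T')] = \cl(T')$ — ancestor relations are inherited.) The dual case is identical after transposing, using $\hat A_i = \left(\begin{smallmatrix}\bar A_i\\ A_i\end{smallmatrix}\right)$ and $G_D$ in place of $G_P$.

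For $\ttd(\hat F_i) < \ttd(F)$ and $\height(\hat F_i) \le \height(F)$: the path $Q$ prepended to $F_i$ consists of degenerate vertices except possibly its bottom vertex (the old root of $F_i$), so the non-degenerate vertices on any root-leaf path of $\hat F_i$ are exactly the non-degenerate vertices of $F_i$ on the corresponding path, plus at most the bottom vertex of $Q$ if it is non-degenerate in $\hat F_i$; comparing with $F$, every such root-leaf path of $\hat F_i$ maps into a root-leaf path of $F$ of the form (path $P$) followed by (root-leaf path of $F_i$), which contains the non-degenerate vertex $v$ strictly above $r_i$ in addition to all non-degenerate vertices of $F_i$. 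Hence $\ttd(\hat F_i) \le \ttd(F_i) + 1 \le \ttd(F)$, and in fact the extra non-degenerate vertex $v$ present in $F$ but absent from $\hat F_i$ gives the strict inequality $\ttd(\hat F_i) < \ttd(F)$; more directly one may just invoke $\ttd(F_i) \le \ttd(F)-1$ from Lemma~\ref{lem:decomposition} and $\ttd(\hat F_i)\le \ttd(F_i)+1$. For height, $\height(\hat F_i) = k_1(F) + \height(F_i) \le k_1(F) + (\height(F)-k_1(F)) = \height(F)$, again using the bound $\height(F_i)\le \height(F)-k_1(F)$ from Lemma~\ref{lem:decomposition}. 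The only mildly delicate point — the main obstacle, such as it is — is being careful that when one ``appends a path on $k_1(F)$ new vertices to the root of $F_i$'' the counting of vertices versus edges is consistent with the definition of $k_1(F)$ as a number of vertices of $P$, and that the old root of $F_i$ may change its degeneracy status (it was degenerate as a child in $F$ only if it had one child; prepending a parent does not change its number of children), which is harmless for all three inequalities.
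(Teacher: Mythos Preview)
Your approach is the same as the paper's: you identify $\hat F_i$ with the rooted subtree of $F$ induced on $V(P)\cup V(F_i)$ (the paper phrases this as ``take $F$ and delete all $F_j$ for $j\neq i$''), and then read off the three claimed properties. The containment $G_P(\hat A_i)\subseteq \cl(\hat F_i)$ and the height bound are handled correctly.

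There is, however, a small but genuine gap in your $\ttd$ argument. Your ``more direct'' route combines $\ttd(F_i)\le \ttd(F)-1$ with $\ttd(\hat F_i)\le \ttd(F_i)+1$, but this only yields $\ttd(\hat F_i)\le \ttd(F)$, not the required strict inequality. The problem is your hedge ``except possibly its bottom vertex'': in fact \emph{every} vertex of the new path $Q$ is degenerate in $\hat F_i$, since each has exactly one child (the next vertex of $Q$, or $r_i$ for the bottom one). Hence $\ttd(\hat F_i)=\ttd(F_i)\le \ttd(F)-1<\ttd(F)$. Equivalently --- and this is exactly how the paper puts it --- the vertex corresponding to $v$ had $d\ge 2$ children in $F$ (non-degenerate) but has only the single child $r_i$ in $\hat F_i$ (degenerate); that is what buys the strict drop in topological height. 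Note also that your parenthetical identifying ``the bottom vertex of $Q$'' with ``the old root of $F_i$'' is off: $Q$ consists of $k_1(F)$ \emph{new} vertices, so its bottom vertex is the parent of $r_i$ in $\hat F_i$, not $r_i$ itself.
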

\begin{proof}
Consider Figure~\ref{fig:decomposition}.
The construction of $\hat{F}_i$ can be equivalently described as taking $F$ and deleting all $F_j$, $j \neq i$.
Thus, $\hat{F}_i$ has the claimed properties, in particular $\ttd(\hat{F}_i) < \ttd(F)$ because $v$ was non-degenerate in $F$ but is degenerate in $\hat{F}_i$.
The dual case follows by transposition.
\end{proof}

\subsection{Solving Augmentation IP} \label{sec:augip}
Our goal now is to show that \eqref{AugIP} can be solved quickly when the largest absolute value of a coefficient in $A$, $\|A\|_\infty$, and the primal or dual treedepth $\td_P(A)$ or $\td_D(A)$, respectively, are small.
Together with Corollary~\ref{cor:lambda_oracle}, this implies Theorem~\ref{thm:main}.

To that end, we need two key ingredients.
The first are algorithms solving \eqref{AugIP} quickly when $\|A\|_\infty$ and $\td_P(A)$ or $\td_D(A)$ are small and when restricted to solutions of small $\ell_\infty$- or $\ell_1$-norm, respectively.
The second are theorems showing that this is in fact sufficient because the elements of $\G(A)$ have bounded $\ell_\infty$- and $\ell_1$-norms, respectively.

More specifically, denote by $B_\infty(\rho)$ and $B_1(\rho)$ the $\ell_\infty$- and $\ell_1$-norm balls, respectively, of appropriate dimension and of radius $\rho$, centered at the origin.
Let
\[ g_\infty(A) \df \max_{\veg \in \G(A)}\|\veg\|_\infty \qquad\text{ and }\qquad g_1(A) \df \max_{\veg \in \G(A)}\|\veg\|_1 \enspace .\]
Observe that if $S \subseteq S'$, then an $S'\best$ solution is certainly also an $S\best$ solution.
Since $\G(A) \subseteq B_\infty\left(g_\infty(A)\right)$ and $\G(A) \subseteq B_1\left(g_1(A)\right)$, it follows that a $B_\infty(g_\infty(A))\best$ solution or a $B_1(g_1(A))\best$ solution of~\eqref{AugIP} is also a $\G(A)\best$ solution.
This implies that solving~\eqref{AugIP} roughly amounts to solving an instance of~\eqref{IP} with an additional norm bound.

The plan is as follows.
In Sections~\ref{sec:augip:primal} and~\ref{sec:augip:dual} we will prove that~\eqref{AugIP} can be solved quickly when $\td_P(A)$ and $g_\infty(A)$ or $\td_D(A)$ and $g_1(A)$ are small, respectively.
In Sections~\ref{sec:norms:primal} and~\ref{sec:norms:dual} we show that $g_\infty(A)$ and $g_1(A)$ are small if $\|A\|_\infty$ and either $\td_P(A)$ or $\td_D(A)$ are small, respectively.
Theorem~\ref{thm:main} then follows.

\subsubsection{Primal Treedepth}\label{sec:augip:primal}
\begin{lemma}[Primal Lemma] \label{lem:primal}
Problem~\eqref{AugIP} can be solved in time $\td_P(A)^2 (2g_\infty(A)+1)^{\td_P(A)} n$.
\end{lemma}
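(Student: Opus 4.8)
The plan is to exploit the block-structure decomposition from Lemma~\ref{lem:decomposition} together with dynamic programming over the $\td$-decomposition $F$ of $G_P(A)$, recursing on the topological height $\ttd(F)$. The key observation is that once we fix the values of the variables indexed by the top path $P$ (the first $k_1(F)$ coordinates $\vex^0 + \lambda \veg^0$), the remaining problem splits into $d$ independent subproblems, one for each block $A_i$ with its own $\td$-decomposition $F_i$ of strictly smaller topological height. Since we are solving \eqref{AugIP} restricted to $\G(A) \subseteq B_\infty(g_\infty(A))$, each coordinate of $\veg$ lies in $\{-g_\infty(A), \dots, g_\infty(A)\}$, so there are at most $(2g_\infty(A)+1)^{k_1(F)}$ choices for $\veg^0$.

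The recursion I would set up: define, for each subtree $F'$ of $F$ rooted at some vertex, and for each assignment $\vealpha$ of the variables on the path from the root of $F$ down to (and excluding) the root of $F'$ — these are exactly the variables that can interact via a common row with variables inside $F'$ — the best objective achievable on the sub-instance consisting of the columns in $V(F')$ subject to those rows whose support meets $V(F')$, with the ``boundary'' variables frozen to $\vealpha$. The number of distinct relevant $\vealpha$ is bounded by $(2g_\infty(A)+1)^{\height(F)-\ttd(F')\text{-ish}}$; more carefully one tracks it via level heights, and the total table size and combination cost multiply out to the claimed $\td_P(A)^2 (2g_\infty(A)+1)^{\td_P(A)} n$. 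At a leaf block (a single column, $\ttd = 1$) one simply enumerates the $O(g_\infty(A))$ feasible values and picks the best one consistent with the frozen boundary and the bounds $\vel \le \vex + \lambda \veg \le \veu$; convexity/separability of $f$ is not even needed here beyond evaluating $f$ coordinatewise via the oracle. The $S\best$ semantics are handled naturally: if no choice of boundary values extends to a feasible completion, that branch reports ``no solution,'' and we only commit to a global $\veg$ if the whole tree is consistent; meanwhile $\veg = \vezero$ is always available as a fallback since $\vex$ is feasible, so the output is a genuine $\G(A)\best$ (indeed $B_\infty(g_\infty(A))\best$) solution.

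Concretely I would proceed as follows. First, invoke Lemma~\ref{lem:decomposition} to write $A$ in \eqref{eq:block-structure} form with blocks $\hat A_i = (\bar A_i~A_i)$ and $\td$-decompositions $\hat F_i$ (via Lemma~\ref{lem:fhati}), noting $\ttd(\hat F_i) < \ttd(F)$ and $\height(\hat F_i) \le \height(F)$. Second, enumerate all $(2g_\infty(A)+1)^{k_1(F)}$ candidate values of $\veg^0$; for each, check the bound constraints on the top-path coordinates and, for each block $i$, recursively solve the \eqref{AugIP}-type subproblem on $\hat A_i$ with the top-path columns pinned to the chosen values — but presented so that the recursion only ever sees a matrix and a decomposition, with the pinned columns contributing constants to the right-hand-side feasibility checks and to $f$. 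Third, combine: the best completion for the fixed $\veg^0$ is the sum of $f$ on the top path plus the optimal block values (or ``no solution'' if any block has none); take the minimum over all $\veg^0$ against the baseline $\veg=\vezero$. Fourth, analyze the running time by unrolling the recursion over the $\ttd(F)$ levels, with branching $(2g_\infty(A)+1)^{k_i(F)}$ at level $i$ and $\sum_i k_i(F) = \td_P(A)$ (after adjusting for the overlap of $k_1$), and $O(n)$ work per leaf for the oracle evaluations, yielding the stated bound; the $\td_P(A)^2$ factor absorbs the per-row feasibility checks along each root-leaf path.

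The main obstacle I anticipate is bookkeeping the interface between a block and its frozen boundary variables correctly — ensuring that the recursive subproblem really is again an \eqref{AugIP} instance (or close enough) on $\hat A_i$ with a valid $\td$-decomposition $\hat F_i$, so that the induction on $\ttd$ goes through cleanly, and making the counting of boundary assignments match the level heights $k_i(F)$ so that the exponent telescopes to exactly $\td_P(A)$ rather than something larger like $\height(F) \cdot \ttd(F)$. Getting the $S\best$ ``no solution'' propagation to interact correctly with the fact that each block individually always admits the zero step (since $\vex$ itself is feasible) also needs a careful, if routine, argument.
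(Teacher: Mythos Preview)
Your proposal is correct and follows essentially the same approach as the paper: recurse on $\ttd(F)$ via the Primal Decomposition Lemma, enumerate all $(2g_\infty(A)+1)^{k_1(F)}$ choices of $\veg^0$, and solve the resulting independent block subproblems on $A_i$ with adjusted right-hand side. The paper streamlines the presentation by first substituting $\rho := g_\infty(A)$, $f(\veg) := f(\vex+\lambda\veg)$, $\veb := \vezero$, $\vel := \ceil*{(\vel-\vex)/\lambda}$, $\veu := \floor*{(\veu-\vex)/\lambda}$ to reduce \eqref{AugIP} to a clean $B_\infty(\rho)\best$ problem (no $\lambda$ or $\vex$ carried around), and then recurses directly on $A_i$ with $F_i$ (not on $\hat A_i$ with $\hat F_i$ plus pinning), which makes the telescoping $\td_P(A_i) \le \td_P(A) - k_1(F)$ immediate and avoids exactly the bookkeeping obstacle you flagged.
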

\begin{proof}
Let $F$ be an optimal $\td$-decomposition of $G_P(A)$.
The proof proceeds by induction on $\ttd(F) \leq \td_P(A)$.
For that, we prove a slightly more general claim:
\begin{claim*}
  Given $\rho \in \N$, there is an algorithm running in time $\td_P(A)^2 (2\rho+1)^{\td_P(A)} n$ which solves
 \begin{equation*}
 B_\infty(\rho)\best \{f(\veg) \mid A\veg = \veb, \, \vel \leq \veg \leq \veu, \, \veg \in \Z^n\} 
 \end{equation*}
  for any separable-convex function $f$.
\end{claim*}
The statement of the Lemma is obtained by the following substitution.
For a given~\eqref{AugIP} instance $(\vex, \lambda)$, solve the auxiliary problem above with $\rho \df g_\infty(A)$, $f(\veg) \df f(\vex + \lambda \veg)$, $\veb \df \vezero$, $\vel \df \ceil*{\frac{\vel- \vex}{\lambda}}$, and $\veu \df \floor*{\frac{\veu-\vex}{\lambda}}$.
If $f$ of~\eqref{IP} was separable convex, then the newly defined $f$ is also separable convex.
The returned solution is a solution of~\eqref{AugIP} because $\G(A) \subseteq B_\infty\left(g_\infty(A)\right)$.

As the base case, if $\ttd(F) = 1$, then $F$ is a path, meaning that $A$ has $\td_P(A)$ columns.
An optimal solution is found simply by enumerating all $(2\rho+1)^{\td_P(A)}$ integer vectors
$\veg \in [-\rho,\rho]^{\td_P(A)} \cap [\vel, \veu]$, for each checking $A\veg=\veb$ and evaluating $f$, and returning the best feasible one.
Since the number of rows of $A$ is at most its number of columns, which is $\td_P(A)$, checking whether $A\veg=\veb$ takes time at most $\td_P(A)^2$ for each $\veg$.

As the induction step, we assume $A$ is block-structured along $F$ (otherwise apply Lemma~\ref{lem:decomposition}), hence we have matrices $\bar{A}_1, \dots, \bar{A}_d, A_1, \dots, A_d$ for some $d$ and $\td$-decompositions $F_1, \dots, F_d$ for $G_P(A_1), \dots, G_P(A_d)$, respectively, with, for each $i \in [d]$, $\bar{A}_i$ having $k_1(F)$ columns, $F_i$ having $\ttd(F_i) < \ttd(F)$, and $\td_P(A_i) \leq \td_P(A) - k_1(F)$.
Now iterate over all vectors $\veg^0 \in \Z^{k_1(F)}$ in $[-\rho, \rho]^{k_1(F)} \cap [\vel^0, \veu^0]$ and for each use the algorithm which exists by induction to compute $d$ vectors $\veg^i$, $i \in [d]$, such that $\veg^i$ is a solution to
\begin{equation}
B_\infty(\rho)\best \{f(\veg^i) \mid A_i\veg^i = -\bar{A}_i \veg^0 +  \veb^i, \, \vel^i \leq \veg^i \leq \veu^i, \, \veg^i \in \Z^{n_i}\} \enspace . \label{eq:primal_subproblem}
\end{equation}
Finally return the vector $(\veg^0, \dots, \veg^d)$ which minimizes $\sum_{i=0}^d f(\veg^i)$.
If $\veg^i$ is undefined for some $i \in [d]$ because the subproblem~\eqref{eq:primal_subproblem} has no solution, report that the problem has no solution.

Let $k \df \td_P(A) - k_1(F)$.
There are $(2\rho+1)^{k_1(F)}$ choices of $\veg^0$, and computing the solution $(\veg^1, \dots, \veg^d)$ for each takes time at most $\sum_{i=1}^d k^2 (2\rho+1)^{k} n_i = k^2 (2\rho+1)^{k} n$.
For each choice we also need to compute the product $-\bar{A}_i \veg^0$, which is possible in time $k_1(F)\cdot \td_P(A)$ since the number of rows of $\bar{A}_i$ is at most $\td_P(A)$.
The total time needed is thus $(2\rho+1)^{k_1(F)} \cdot \left(\td_P(A) \cdot k_1(F) + k^2 (2\rho+1)^{k}\right) n \leq \td_P(A)^2 (2\rho+1)^{\td_P(A)} n$.
\end{proof}

\lv{
	\begin{remark}
	A claim analogous to the above but for the more permissive parameter treewidth follows from Freuder's algorithm~\cite{Freu}.
	However, the proof above is simpler and will be later extended in ways which would not be possible if we used treewidth.
	\end{remark}
}

\subsubsection{Dual Treedepth}\label{sec:augip:dual}
\begin{lemma}[Dual Lemma] \label{lem:dual}
	Problem~\eqref{AugIP} can be solved in time $(2\|A\|_\infty g_1(A)+1)^{\Oh(\td_D(A))} n$.
\end{lemma}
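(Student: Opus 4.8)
The plan is to mirror the structure of the Primal Lemma proof, but to work with the dual decomposition (Corollary~\ref{cor:dual_decomp}) instead, and crucially to track \emph{both} an $\ell_\infty$-bound on each block's contribution to $A\veg$ (coming from the $\ell_1$-norm of $\veg$ and $\|A\|_\infty$) and the recursion on topological height. First I would set up the same reduction as in the Primal Lemma: given an \eqref{AugIP} instance $(\vex,\lambda)$, pass to the auxiliary problem of finding a $B_1(\rho)\best$ solution of $\{f(\veg) \mid A\veg = \veb,\ \vel \le \veg \le \veu,\ \veg \in \Z^n\}$ with $\rho \df g_1(A)$, $f(\veg) \df f(\vex+\lambda\veg)$, $\veb \df \vezero$, and the rescaled bounds $\vel \df \lceil (\vel-\vex)/\lambda\rceil$, $\veu \df \lfloor(\veu-\vex)/\lambda\rfloor$; since $\G(A) \subseteq B_1(g_1(A))$, a solution of the auxiliary problem solves \eqref{AugIP}. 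So it suffices to prove a claim: for any $\rho \in \N$ and any separable convex $f$, a $B_1(\rho)\best$ solution of that system can be computed in time $(2\|A\|_\infty\rho+1)^{\Oh(\td_D(A))} n$.

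The induction is on the topological height $\ttd(F) \le \td_D(A)$ of an optimal $\td$-decomposition $F$ of $G_D(A)$. In the base case $\ttd(F)=1$, $F$ is a path, so $A$ has at most $\td_D(A)$ \emph{rows}. Here the key observation is that if $\|\veg\|_1 \le \rho$ then $A\veg \in [-\|A\|_\infty\rho, \|A\|_\infty\rho]^{\td_D(A)}$, so only the right-hand sides $\veb$ in this box are feasible; more to the point, after the dual decomposition the ``shared'' rows $\bar{A}_1,\dots,\bar{A}_d$ force the partial sums of $\bar{A}_i\veg^i$ over $i$ to add up to the prescribed value in $\veb^0$. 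In the induction step I apply Corollary~\ref{cor:dual_decomp} to get $\bar{A}_i \in \Z^{k_1(F)\times n_i}$, $A_i \in \Z^{m_i\times n^i}$, and $F_i$ with $\ttd(F_i) < \ttd(F)$ and $\td_D(A_i) \le \td_D(A)-k_1(F)$. Unlike the primal case one cannot enumerate a small coordinate block of $\veg$; instead I enumerate over the possible values of the vector $\veb^0 - \sum_{i} \bar{A}_i\veg^i$, i.e. I guess, for each $i$, the contribution $\ve{t}^i \df \bar{A}_i\veg^i \in \Z^{k_1(F)}$, which lies in the box $[-\|A\|_\infty\rho, \|A\|_\infty\rho]^{k_1(F)}$ and whose entries sum (over $i$) to $\veb^0$. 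For each choice of $(\ve{t}^1,\dots,\ve{t}^d)$ with $\sum_i \ve{t}^i = \veb^0$ I solve, using the inductive algorithm on $\hat{A}_i \df \left(\begin{smallmatrix}\bar{A}_i\\ A_i\end{smallmatrix}\right)$ and $\hat{F}_i$ (Lemma~\ref{lem:fhati}, which has $\ttd(\hat{F}_i) < \ttd(F)$ and the same height bound), the subproblem of finding a $B_1(\rho)\best$ solution of $\hat{A}_i\veg^i = \left(\begin{smallmatrix}\ve{t}^i\\ \veb^i\end{smallmatrix}\right)$, $\vel^i \le \veg^i \le \veu^i$; then I return $(\veg^1,\dots,\veg^d)$ minimizing $\sum_i f(\veg^i)$, reporting infeasibility if some subproblem has none.

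The bookkeeping for correctness is the same conformality/superadditivity argument as before (a $B_1(\rho)\best$ solution restricted to each block is a $B_1(\rho)\best$ solution of the block subproblem, because $\|\veg^i\|_1 \le \|\veg\|_1$), and separability of $f$ lets us optimize block-by-block once the shared rows are fixed. For the running time: there are at most $(2\|A\|_\infty\rho+1)^{k_1(F)}$ choices of $\ve{t}^i$ for each block, hence the product over the $d$ blocks telescopes — one should handle the ``$\sum_i \ve{t}^i = \veb^0$'' constraint by processing the blocks one at a time, maintaining the running partial sum, so that the total number of guesses stays $(2\|A\|_\infty\rho+1)^{\Oh(k_1(F))}$ times the cost of the children rather than blowing up multiplicatively; with $k \df \td_D(A)-k_1(F)$, each child costs $(2\|A\|_\infty\rho+1)^{\Oh(k)} n_i$, so summing gives $(2\|A\|_\infty\rho+1)^{\Oh(k_1(F))}\cdot(2\|A\|_\infty\rho+1)^{\Oh(k)}\cdot n = (2\|A\|_\infty\rho+1)^{\Oh(\td_D(A))} n$, plus lower-order matrix-vector products bounded by $\td_D(A)^2$ per guess. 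Substituting $\rho = g_1(A)$ gives the stated bound. \textbf{The main obstacle} I anticipate is precisely this last point: naively iterating over all of $\veg^0$ as in the primal proof is impossible here (the shared rows have $n$ columns, not $k_1(F)$), so the right move is to iterate over the \emph{images} $\bar{A}_i\veg^i$ under the shared rows — a box of size $(2\|A\|_\infty g_1(A)+1)^{k_1(F)}$ — and to feed each guessed image as an additional right-hand side into the recursive call on $\hat{A}_i$; getting the interface between ``guessed partial right-hand side'' and the inductive hypothesis stated cleanly (and making sure the height/topological-height accounting of $\hat{F}_i$ from Lemma~\ref{lem:fhati} lines up) is where the care is needed.
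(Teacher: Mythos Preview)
Your approach is essentially the paper's: reduce to the $B_1(\rho)\best$ auxiliary problem, recurse on $\ttd(F)$, at each step enumerate the images $\ve t^i = \bar A_i\veg^i$ inside the box $R=[-\|A\|_\infty\rho,\|A\|_\infty\rho]^{k_1(F)}$, and glue blocks via a prefix-sum dynamic program. The paper phrases the gluing explicitly as a $(\min,+)$-convolution DP with table $D(i,\ver)$, but that is exactly your ``process blocks one at a time maintaining the running partial sum''.

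There is one point in your time analysis that does not go through as written. You recurse on $\hat A_i$ with tree $\hat F_i$ (correctly, since $\ttd(\hat F_i)<\ttd(F)$), but then claim each child costs $(2\|A\|_\infty\rho+1)^{\Oh(k)}n_i$ with $k=\td_D(A)-k_1(F)$. That exponent would be right for $A_i$, not for $\hat A_i$: Lemma~\ref{lem:fhati} only gives $\height(\hat F_i)\le\height(F)$, so the inductive hypothesis on $\hat A_i$ yields an exponent $\Oh(\td_D(A))$, not $\Oh(k)$. Multiplying by the $(2\|A\|_\infty\rho+1)^{\Oh(k_1(F))}$ guesses at the parent then does \emph{not} telescope; naively you would accumulate a factor of $\ttd(F)$ in the exponent.

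The paper fixes this with a memoization observation you are close to but do not state: the DP table in the recursive call on $\hat A_i$ produces $D(d_i,\ver)$ for \emph{every} $\ver$ in the child's box, and that box contains (as its first $k_1(F)$ coordinates) every $\ve t\in R$ the parent might ask about. Hence a single recursive computation on $\hat A_i$ answers all of the parent's queries for that block, and the total work is accounted for level by level: at each of the $\ttd(F)$ levels, the sum over all DPs at that level has total number of stages at most $n$, each stage costing at most $|R'|^2\le(2\|A\|_\infty\rho+1)^{\Oh(\td_D(A))}$. Summing gives the stated $(2\|A\|_\infty\rho+1)^{\Oh(\td_D(A))}n$. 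Once you insert this memoization argument in place of the ``each child costs $(\ldots)^{\Oh(k)}$'' step, your proof is complete and coincides with the paper's.
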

\begin{proof}
	We solve an auxiliary problem analogous to the one in Lemma~\ref{lem:primal}: given $\rho \in \N$ and a separable convex function $f$, solve
	\[B_1(\rho)\best \{f(\veg) \mid A\veg = \veb, \, \vel \leq \veg \leq \veu, \, \veg \in \Z^n\} \enspace . \]
	The lemma then follows by the same substitution described at the beginning of the proof of Lemma~\ref{lem:primal}.
	We assume that $\|\veb\|_\infty \leq \rho \|A\|_\infty$ since otherwise there is no solution within $B_1(\rho)$.
	
	Let $F$ be an optimal $\td$-decomposition of $G_D(A)$.
	We define the algorithm recursively over $\ttd(F)$.
	If $\ttd(F) \geq 2$, we assume $A$ is dual block-structured along $F$ (otherwise apply Corollary~\ref{cor:dual_decomp}) and we have, for every $i \in [d]$, matrices $A_i, \bar{A}_i, \hat{A}_i$ and a tree $\hat{F}_i$ (see Lemma~\ref{lem:fhati}) with the claimed properties, and a corresponding partitioning of $\veb, \vel, \veu, \veg$ and $f$.
	If $\ttd(F) = 1$, let $d \df n$ and $\hat{A}_i \df A_{\bullet,i}$, for all $i \in [d]$, be the columns of $A$, and let $\veb^1, \dots, \veb^n$ be empty vectors (i.e., vectors of dimension zero).
	
	The crucial observation is that for every solution $\veg$ of $A\veg = \veb$ with $\|\veg\|_1 \leq \rho$ and each $i \in [d]$, both $\bar{A}_i \veg^i$ and $\sum_{j=1}^i \bar{A}_j \veg^j$ belong to $R \df [-\rho\|A\|_\infty, \rho \|A\|_\infty]^{k_1(F)}$.
	For every $i \in [d]$ and every $\ver \in R$, solve
	\begin{equation}
	B_1(\rho)\best \{f^i(\veg^i) \mid \hat{A}_i\veg^i = (\ver, \veb^i), \, \vel^i \leq \veg^i \leq \veu^i, \, \veg^i \in \Z^{n_i}\} \enspace . \label{eq:dual_subproblem}
	\end{equation}
	In the base case when $\hat{A}_i$ has only one column, we simply enumerate all $\veg^i \in [\vel^i, \veu^i] \cap [-\rho,\rho]$, check whether the equality constraints are satisfied, and return the best feasible choice.
	If $\ttd(F) > 1$, then we use recursion to solve~\eqref{eq:dual_subproblem}.
	The recursive call is well-defined, since, for all $i \in [d]$, $\ttd(\hat{F}_i) < \ttd(F)$ and $\hat{F}_i$ is a $\td$-decomposition of $G_D(\hat{A}_i)$.
	Next, we show how to ``glue'' these solutions together.
	
	Let $\ver \in R$ and denote by $\veg^i_{\ver}$ a solution to the subproblem~\eqref{eq:dual_subproblem}; by slight abuse of notation, when the subproblem has no solution, we define $f^i(\veg^i_{\ver}) \df +\infty$.
	Now we need to find such $\ver_1, \dots, \ver_d \in R$ that $\sum_{i=1}^d \ver_i= \veb^0$ and $\sum_{i=1}^d f^i(\veg^i_{\ver_i})$ is minimized.
	This is actually a form of the $(\min,+)$\hy convolution problem, a fact which we will use later.
	For now it suffices to say that this problem can be easily solved using dynamic programming in $d$ stages: our DP table $D$ shall have an entry $D(i,\ver)$ for $i \in [d]$ and $\ver \in R$ whose meaning is the minimum $\sum_{j=1}^i f^j(\veg^j_{\ver_j})$ where $\sum_{j=1}^i \ver_j = \ver$.
	To compute $D$, set $D(0, \ver) \df 0$ for $\ver = \vezero$ and $D(0, \ver) \df +\infty$ otherwise, and for $i \in [d]$, set
	$$D(i, \ver) \df \min_{\substack{\ver', \ver'' \in R:\\\ver'+\ver'' = \ver}} D(i-1, \ver') + f^i(\veg^i_{\ver''}) \enspace .$$
	The value of the solution is $D(d, \veb^0)$ and the solution $\veg = (\veg^1, \dots, \veg^d)$ itself can be computed easily with a bit more bookkeeping in the table $D$.
	If $D(d,\veb^0) = +\infty$, report that the problem has no solution.
	Another important observation is this: in the DP above we computed the solution of the auxiliary problem not only for the right hand side $\veb$, but for \emph{all} right hand sides of the form $(\ver, \veb^1, \dots, \veb^d)$ where $\ver \in R$ and $\veb^1,\dots,\veb^d$ are fixed.
	We store all of these intermediate results in an array (an approach also known as ``memoization'').
	When the algorithm asks for solutions of such instances, we simply retrieve them from the array of intermediate results instead of recomputing them. This is important for the complexity analysis we will describe now.
	
	The recursion tree has $\ttd(F)$ levels, which we number $1, \dots, \ttd(F)$, with level $1$ being the base of the recursion.
	Let us compute the time required at each level.
	In the base case $\ttd(F)=1$, recall that the matrix $\hat{A}_i$ in subproblem~\eqref{eq:dual_subproblem} is a single column with $\height(F)$ rows, and solving~\eqref{eq:dual_subproblem} amounts to trying at most $2\rho+1$ feasible valuations of $\veg^i$ (which is a scalar variable) satisfying $\vel^i \leq \veg^i \leq \veu^i$ and returning the best feasible one.
	Since there are $n$ columns in total, computing the solutions of~\eqref{eq:dual_subproblem} takes time $(2\rho+1)n$.
	Let $N_1$ be the number of leaves of $F$, and let $\alpha_j$, $j \in [N_1]$, denote the number of columns corresponding to the $j$-th leaf.
	``Gluing'' the solutions is done by solving $N_1$ DP instances with $\alpha_1, \dots, \alpha_{N_1}$ stages, where $\sum_{i=1}^{N_1} \alpha_i = n$.
	This takes time $\sum_{i=1}^{N_1} |R|^2 \cdot \alpha_i \leq (2\|A\|_\infty \rho+1)^{\td_D(A)} \cdot n$, since a $\td$-decomposition of each column is a path on $\td_D(A)$ vertices.
	In total, computing the first level of recursion takes time $(2\|A\|_\infty \rho +1)^{\td_D(A)} n$.
	
	Consider a recursion level $\ell \in [2,\ttd(F)]$ and subproblem~\eqref{eq:dual_subproblem}.
	The crucial observation is that when the algorithm asks for the answer to~\eqref{eq:dual_subproblem} for one specific $\ver' \in R$, an answer for \emph{all} $\ver \in R$ is computed; recall that the last step of the DP is to return $D(d,\ver')$ but the table contains an entry $D(d,\ver)$ for all $\ver \in R$.
	Thus the time needed for the computation of all $\veg^i_{\ver}$ has been accounted for in lower levels of the recursion and we only have to account for the DP at the level $\ell$.
	Let $R'$ be the analogue of $R$ for a specific subproblem at level $\ell$, and let $A'$ be the corresponding submatrix of $A$ and $F' \subseteq F$ be a $\td$-decomposition of $G_D(A')$.
	We have that $|R'| \leq (2\|A\|_\infty \rho+1)^{k_1(F')}$, with $k_1(F') \leq \td_D(A)$.
	Note that the levels here are defined bottom up, hence all leaves are at level $1$, and an inner node of $F$ is at level $\ell$ if $\ell-1$ is the largest level of its children; in particular a level does \emph{not} correspond to the distance from the root.
	Let $N_\ell$ be the number of vertices of $F$ at level $\ell$.
	The number of subproblems on level $\ell$ is exactly $N_{\ell}$, so computation of the $\ell$-th level takes time at most $|R|^2 \cdot N_\ell \leq (2\|A\|_\infty \rho +1)^{\td_D(A)} N_\ell$.
	Adding up across all levels we get that the total complexity is at most $\left(n + \sum_{\ell=2}^{\ttd(F)} N_\ell\right) \cdot (2\|A\|_\infty \rho +1)^{\td_D(A)}$ where $\sum_{\ell=2}^{\ttd(F)} N_\ell < n$ since $F$ has $n$ leaves and each level corresponds to a vertex with degree at least $2$.
	The lemma follows.
\end{proof}

\subsection{Bounding Norms} \label{sec:norms}
We begin by using the Steinitz Lemma to obtain a basic bound on $g_1(A)$.

\begin{proposition}[Steinitz~\cite{Steinitz1913}, Sevastjanov, Banaszczyk~\cite{SevastjanovBanasczyk1997}]
\label{prop:steinitz}
  Let $\|\cdot\|$ be any norm, and let $\vex_1, \dots, \vex_n \in \R^d$ be such that $\|\vex_i\| \leq 1$ for $i \in [n]$ and $\sum_{i=1}^n \vex_i = \vezero$.
  Then there exists a permutation $\pi \in S_n$ such that for each $k \in [n]$, $\|\sum_{i=1}^k \vex_{\pi(i)}\| \leq d$.
\end{proposition}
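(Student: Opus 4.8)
The plan is to prove the sharp bound $d$ via the polytope argument of Grinberg and Sevastjanov. First I would dispose of the trivial case $n \le d$: there any permutation works, since $\|\sum_{i=1}^k \vex_{\pi(i)}\| \le k \le d$ by the triangle inequality and homogeneity of the norm. So assume $n > d$. Next I would recast the problem in terms of nested index sets rather than an explicit ordering: it suffices to construct a chain $[n] = U_n \supsetneq U_{n-1} \supsetneq \cdots \supsetneq U_1$ with $|U_k| = k$ and $\|\sum_{i \in U_k} \vex_i\| \le d$ for every $k$, because defining $\pi$ so that the unique element of $U_k \setminus U_{k-1}$ (resp.\ the unique element of $U_1$) occupies position $k$ (resp.\ position $1$) gives $\sum_{i=1}^k \vex_{\pi(i)} = \sum_{i \in U_k}\vex_i$.

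The core of the argument is an invariant maintained along the chain for $d \le k \le n$: there are coefficients $\lambda_i \in [0,1]$, $i \in U_k$, with $\sum_{i \in U_k}\lambda_i = k-d$ and $\sum_{i \in U_k}\lambda_i \vex_i = \vezero$. I would first observe that this invariant already yields the required bound: since $\sum_{i \in U_k}\lambda_i \vex_i = \vezero$, we may write $\sum_{i \in U_k}\vex_i = \sum_{i \in U_k}(1-\lambda_i)\vex_i$, where the weights $1-\lambda_i$ lie in $[0,1]$ and sum to $k-(k-d)=d$, so $\|\sum_{i \in U_k}\vex_i\| \le \sum_{i \in U_k}(1-\lambda_i)\|\vex_i\| \le d$. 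For the remaining levels $k < d$ the bound $\|\sum_{i\in U_k}\vex_i\| \le k < d$ is immediate, so for those I would simply remove elements arbitrarily. The base of the construction is $U_n = [n]$ with $\lambda_i \df (n-d)/n$, which satisfies the invariant because $\sum_i \lambda_i = n - d$ and $\sum_i \lambda_i \vex_i = \tfrac{n-d}{n}\sum_i \vex_i = \vezero$.

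The inductive step --- passing from $U_k$ with $k \ge d+1$ to $U_{k-1}$ --- is where the dimension $d$ is used, and it is the only non-routine point. Given valid coefficients $\lambda$ for $U_k$, consider the polytope $Q \df \{\mu \in [0,1]^{U_k} : \sum_i \mu_i = k-d-1, \ \sum_i \mu_i \vex_i = \vezero\}$, which is nonempty since $\tfrac{k-d-1}{k-d}\lambda \in Q$. Take a vertex $\mu^*$ of $Q$. Since $Q$ is cut out by $d+1$ linear equalities together with the $2k$ box inequalities, at a vertex at least $k-(d+1)$ of the box inequalities are tight, each on a distinct coordinate. A short counting argument then forces at least one of these tight box constraints to have the form $\mu^*_j = 0$: if instead all tight box constraints read $\mu^*_i = 1$, then at least $k-d-1$ coordinates equal $1$, while any coordinate not among those lies in $(0,1)$ (it cannot be $0$, else $\mu^*_i = 0$ would be tight), which pushes $\sum_i \mu^*_i$ strictly above $k-d-1$ unless every coordinate equals $1$ --- but then $\sum_i \mu^*_i = k \ne k-d-1$. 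Taking such a $j$, I would set $U_{k-1} \df U_k \setminus\{j\}$ and restrict $\mu^*$ to $U_{k-1}$; the restricted vector still lies in $[0,1]^{U_{k-1}}$, sums to $k-d-1 = (k-1)-d$, and annihilates the $\vex_i$, so the invariant holds for $U_{k-1}$. Iterating from $k=n$ down to $k=d$ builds the chain and finishes the proof. The main obstacle is exactly this vertex-counting step, which is the mechanism that trades ``$d$ linear equations'' for ``one coordinate that can be dropped without destroying the structure''; everything else reduces to the triangle inequality and the bookkeeping of the nested index sets.
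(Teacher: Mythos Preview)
Your proof is correct; it is exactly the Grinberg--Sevastjanov polytope argument that yields the sharp constant $d$. Note, however, that the paper does not prove this proposition at all: it is quoted as a known result with attributions to Steinitz and to Sevastjanov--Banaszczyk, and is used as a black box in the proofs of Lemmas~\ref{lem:bound1} and~\ref{lem:dual_norm}. So there is nothing to compare against; your write-up simply supplies a self-contained proof where the paper relies on the literature. One minor wording issue: when you say ``any coordinate not among those lies in $(0,1)$'', strictly speaking such a coordinate could also equal $1$ (and then be tight at $1$); the cleaner phrasing is to let $S$ be the set of \emph{all} coordinates equal to $1$, note $|S| \ge k-d-1$, and argue that either $S = U_k$ (giving $\sum_i \mu^*_i = k$) or the remaining coordinates in $(0,1)$ contribute a strictly positive amount, both contradicting $\sum_i \mu^*_i = k-d-1$. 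This does not affect correctness.
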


\begin{lemma}[Base bound]
  \label{lem:bound1}
  Let $A \in \Z^{m \times n}$. Then $g_1(A) \leq (2m \|A\|_\infty+1)^m$.
\end{lemma}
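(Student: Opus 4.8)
The plan is to bound the $\ell_1$-norm of any Graver basis element $\veg \in \G(A)$ by constructing, from $\veg$, a long sequence of vectors in $\R^m$ that sum to zero and whose individual norms are at most $1$ after scaling, then applying the Steinitz Lemma (Proposition~\ref{prop:steinitz}) to find a rearrangement with small partial sums. Concretely, let $\veg \in \G(A)$ with $\|\veg\|_1 = N$. Write $\veg$ as a sum of $N$ signed unit vectors $\pm \vee_j$ (so the $i$-th coordinate contributes $|g_i|$ copies of $\sign(g_i)\vee_i$), and consider the corresponding sequence of $N$ column vectors $\ves_1, \dots, \ves_N \in \Z^m$, where each $\ves_k$ is $\pm A_{\bullet, i}$ for the appropriate column $i$. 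Since $A\veg = \vezero$, we have $\sum_{k=1}^N \ves_k = \vezero$, and each $\ves_k$ satisfies $\|\ves_k\|_\infty \leq \|A\|_\infty$. Normalizing by $\|A\|_\infty$ and applying Proposition~\ref{prop:steinitz} with the $\ell_\infty$-norm and dimension $m$, we obtain a permutation $\pi$ such that every prefix sum $\vep_k \df \sum_{j=1}^k \ves_{\pi(j)}$ satisfies $\|\vep_k\|_\infty \leq m\|A\|_\infty$.

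The key step is then a pigeonhole argument: if $N$ were too large, two prefix sums $\vep_k$ and $\vep_{k'}$ with $k < k'$ would coincide, and then the "middle segment" $\ves_{\pi(k+1)} + \cdots + \ves_{\pi(k')}$ sums to zero. This middle segment corresponds to a nonzero integer vector $\veh \in \ker_\Z(A)$ (nonzero because it is a nonempty sub-multiset of the unit-vector decomposition of $\veg$) with $\veh \sqsubseteq \veg$ and $\veh \neq \veg$ (since the complementary segment is also nonempty and nonzero) — contradicting the $\sqsubseteq$-minimality of $\veg$ in $\ker_\Z(A) \setminus \{\vezero\}$. Hence all prefix sums $\vep_0 = \vezero, \vep_1, \dots, \vep_N$ must be distinct. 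Since each lies in the set $[-m\|A\|_\infty, m\|A\|_\infty]^m$, which has exactly $(2m\|A\|_\infty + 1)^m$ elements, we get $N + 1 \leq (2m\|A\|_\infty + 1)^m$, and in particular $N = \|\veg\|_1 \leq (2m\|A\|_\infty + 1)^m$. Taking the maximum over $\veg \in \G(A)$ yields $g_1(A) \leq (2m\|A\|_\infty + 1)^m$.

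I would take a little care with the bookkeeping of the "sub-multiset" argument: one needs that removing the middle segment from the decomposition still leaves a valid conformal decomposition of a proper conformal sub-vector of $\veg$, which is immediate here because all the $\ves_k$ arise from the sign-consistent unit-vector expansion of $\veg$, so any sub-multiset $\veh$ of them automatically satisfies $\veh \sqsubseteq \veg$. The main (and essentially only) obstacle is recognizing that Steinitz is exactly the right tool to control the prefix sums and then combining it with the minimality of Graver elements via pigeonhole; once that is set up, the rest is routine counting. No separate handling of signs or orthants is needed because conformality is preserved automatically by the construction.
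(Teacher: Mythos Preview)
Your approach is essentially identical to the paper's: expand $\veg$ into $\|\veg\|_1$ signed columns of $A$, apply the Steinitz Lemma to bound all prefix sums by $m\|A\|_\infty$ in $\ell_\infty$-norm, then use pigeonhole together with $\sqsubseteq$-minimality of $\veg$ to bound $\|\veg\|_1$.

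One small slip in your bookkeeping: you claim that all of $\vep_0,\vep_1,\dots,\vep_N$ must be distinct, but $\vep_0=\vezero=\vep_N$ always holds (the full sequence sums to zero), and in that case the ``middle segment'' is the whole sequence, so $\veh=\veg$ and no contradiction arises. The correct statement is that $\vep_1,\dots,\vep_N$ (equivalently $\vep_0,\dots,\vep_{N-1}$) must be pairwise distinct, yielding $N\le(2m\|A\|_\infty+1)^m$ rather than $N+1\le(2m\|A\|_\infty+1)^m$. This does not affect the lemma's conclusion, and the paper's proof handles it by only considering prefix sums with index in $[1,\|\veg\|_1]$.
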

\begin{proof}
  Let $\veg \in \G(A)$.
We define a sequence of vectors in the following manner.
If $g_i \geq 0$, we add $g_i$ copies of the $i$-th column of $A$ to the sequence,
if $g_i < 0$ we add $|g_i|$ copies of the negation of column $i$ to the sequence, either way obtaining vectors $\vev_1^i, \dots, \vev_{|g_i|}^i$.

Clearly, the vectors $\vev_j^i$ sum up to $\vezero$ as $\veg \in \ker_{\Z}(A)$ and their $\ell_\infty$-norm is bounded by $\|A\|_\infty$.
Using the Steinitz Lemma, there is a reordering $\veu^1,\dots,\veu^{\|\veg\|_1}$ (i.e., $\vev_j^i = \veu^{\pi(i,j)}$ for some permutation $\pi$)
of this sequence such that
each prefix sum $\vep_k \df \sum_{j=1}^k \veu^j$
is bounded by $m\|A\|_\infty$ in the $l_\infty$-norm. Clearly
\begin{displaymath}
  | \{ \vex \in \Z^m \mid \|\vex\|_{\infty} \leq m\|A\|_\infty \} | = \left( 2m \|A\|_\infty + 1 \right)^m \enspace .
\end{displaymath}
Assume for contradiction that $\|\veg\|_1 > \left( 2m \|A\|_\infty + 1 \right)^m$. Then two of these prefix sums are the same, say, $\vep_{\alpha} = \vep_{\beta}$ with $1 \leq \alpha < \beta \leq \|\veg\|_1$.
Obtain a vector $\veg'$ from the sequence $\veu^1, \dots, \veu^\alpha, \veu^{\beta+1}, \dots, \veu^{\|\veg\|_1}$ as follows: begin with $g'_i \df 0$ for each $i \in [n]$, and for every $\veu^\ell$ in the sequence, set
\[g'_i \df
\begin{cases}
 g'_i + 1 & \text{ if } \pi^{-1}(\ell) = (i,j) \text{ and } g_i \geq 0 \\
 g'_i - 1 & \text{ if } \pi^{-1}(\ell) = (i,j) \text{ and } g_i < 0 \enspace .
 \end{cases}
\]
Similarly obtain $\veg''$ from the sequence $\veu^{\alpha+1} \dots, \veu^{\beta}$.
We have $A\veg'' = \vezero$ because $\vep_\alpha - \vep_\beta = \vezero$ and thus $\veg'' \in \ker_{\Z}(A)$, and thus also $\veg' \in \ker_{\Z}(A)$.
Moreover, both $\veg'$ and $\veg''$ are non-zero and satisfy $\veg', \veg'' \sqsubseteq \veg$.
This is a contradiction with $\sqsubseteq$-minimality of $\veg$, hence $\|\veg\|_1 \leq (2m\|A\|_\infty + 1)^m$, finishing the proof.
\end{proof}

\subsubsection{Norm of Primal Treedepth}\label{sec:norms:primal}
\begin{lemma}[Primal Norm]\label{lem:primal_norm}
	Let $A \in \Z^{m \times n}$, and $F$ be a $\td$-decomposition of $G_P(A)$.
	Then there exists a constant $\alpha \in \N$ such that
	\[
	\stackinset{l}{39pt}{b}{-9pt}{\tiny\rotatebox{33}{$\underbrace{\kern21pt}_{\ttd(F)-1}$}}{%
		$g_\infty(A) \leq 2^{2^{\rdots^{2^{(2\|A\|_\infty)^{2^{\ttd(F)} \cdot \alpha \cdot \td_P(A)^2}}}}}$}
	\]
\end{lemma}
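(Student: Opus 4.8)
The plan is induction on the topological height $\ttd(F)$, peeling off the top level by means of the Primal Decomposition (Lemma~\ref{lem:decomposition}) together with Lemma~\ref{lem:fhati}. In the base case $\ttd(F)=1$ the tree $F$ is a path, so $A$ has at most $\td_P(A)$ columns and, since $m\le n$, at most $\td_P(A)$ rows; the Base bound (Lemma~\ref{lem:bound1}) then gives $g_\infty(A)\le g_1(A)\le(2\td_P(A)\|A\|_\infty+1)^{\td_P(A)}$, which is at most $(2\|A\|_\infty)^{2^{1}\cdot\alpha\cdot\td_P(A)^2}$ once $\alpha$ is a large enough absolute constant, matching the claimed bound with an empty tower. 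For the inductive step, assume $\ttd(F)\ge 2$. Apply Lemma~\ref{lem:decomposition} to obtain blocks $\bar A_i\in\Z^{m_i\times k_1(F)}$, $A_i\in\Z^{m_i\times n_i}$ and $\td$-decompositions $F_i$ of $G_P(A_i)$; by Lemma~\ref{lem:fhati} the matrices $\hat A_i=(\bar A_i~A_i)$ admit $\td$-decompositions $\hat F_i$ with $\ttd(\hat F_i)<\ttd(F)$ and $\height(\hat F_i)\le\height(F)$, hence $\td_P(\hat A_i)\le\td_P(A)$, and the induction hypothesis yields $g_\infty(\hat A_i)\le\rho$, where $\rho$ is the bound of the lemma at topological height $\ttd(F)-1$ (a tower one level shorter).

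Now fix $\veg\in\G(A)$ and write $\veg=(\veg^0,\veg^1,\dots,\veg^d)$ according to the decomposition. If $\veg^0=\vezero$ then $A_i\veg^i=\vezero$ for every $i$, and $\sqsubseteq$-minimality of $\veg$ forces exactly one $\veg^i$ to be nonzero and to lie in $\G(A_i)$; lifting Graver elements by zeros gives $g_\infty(A_i)\le g_\infty(\hat A_i)$, so $\|\veg\|_\infty\le\rho$. Suppose now $\veg^0\ne\vezero$. For each $i$ the vector $(\veg^0,\veg^i)$ lies in $\ker_\Z(\hat A_i)$, so by the Positive Sum Property (Proposition~\ref{prop:possum}) it decomposes as $\sum_j\lambda_j\veh_j$ with $\veh_j\in\G(\hat A_i)$ all conformal to $(\veg^0,\veg^i)$. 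Every $\veh_j$ must have a nonzero \emph{top part} (its first $k_1(F)$ coordinates): otherwise its bottom part would be a nonzero element of $\ker_\Z(A_i)$ conformal to $\veg^i$, and placing it into the $i$-th block of $\Z^n$ would yield a nonzero kernel element of $A$ strictly conformal to $\veg$, contradicting minimality. Since these top parts are nonzero and conformal to $\veg^0$, their total multiplicity is at most $\|\veg^0\|_1$, and therefore $\|\veg^i\|_\infty\le\|\veg^0\|_1\cdot g_\infty(\hat A_i)\le\td_P(A)\,\rho\,\|\veg^0\|_\infty$. It thus remains only to bound $\|\veg^0\|_\infty$.

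Bounding $\|\veg^0\|_\infty$ is the heart of the proof and the step I expect to be the main obstacle. Minimality of $\veg$ forbids a ``common conformal sub-step'': there is no $\vew$ with $\vezero\neq\vew\sqsubseteq\veg^0$ and $\vew\ne\veg^0$ that is simultaneously, for \emph{every} $i\in[d]$, a sub-multiset sum of the top parts appearing in the decomposition of $(\veg^0,\veg^i)$ (such a $\vew$, together with the corresponding sub-sums of bottom parts, would give a nonzero kernel element of $A$ strictly conformal to $\veg$). The possible top parts form a set of size at most $(2\rho+1)^{k_1(F)}$, each of $\ell_\infty$-norm at most $\rho$, and the partial top-sums reachable inside a single block connect $\vezero$ to $\veg^0$ in steps of $\ell_\infty$-norm at most $k_1(F)\rho$; from these facts and the ``no common sub-step'' property one extracts $\|\veg^0\|_\infty\le 2^{\poly(\rho,\td_P(A),\|A\|_\infty)}$. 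The delicate point is that this estimate must be independent of the number of blocks $d$, which is unbounded: as in the prototypical situation where $\veg^0$ is forced to be the least common multiple of bounded integers, the correct bound is an lcm-/determinant-type quantity of the $d$ sublattices generated by the block top parts rather than a product over the blocks, and this single exponentiation over $\rho$ is precisely what raises the tower by one level.

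Finally, combining $\|\veg^i\|_\infty\le\td_P(A)\,\rho\,\|\veg^0\|_\infty$ with the bound on $\|\veg^0\|_\infty$ gives $g_\infty(A)\le\td_P(A)\,\rho\cdot 2^{\poly(\rho,\td_P(A),\|A\|_\infty)}$; since $\td_P(A)$ is dominated by $\rho$ (the seed of $\rho$ is already at least $2^{\td_P(A)^2}$), a short computation shows this is at most the claimed tower of height $\ttd(F)-1$ — the single exponentiation matches passing from the level-$(\ttd(F)-1)$ tower, of height $\ttd(F)-2$, to the level-$\ttd(F)$ tower — provided the absolute constant $\alpha$, which also absorbs the factor $2^{\ttd(F)}$ accumulating over the $\ttd(F)$ recursion levels, is chosen large enough.
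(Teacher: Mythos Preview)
Your setup mirrors the paper's proof closely: induction on $\ttd(F)$, the same base case via Lemma~\ref{lem:bound1}, the same use of Lemmas~\ref{lem:decomposition} and~\ref{lem:fhati} to pass to $\hat A_i$ with smaller topological height, and the same decomposition of each $(\veg^0,\veg^i)$ into Graver elements of $\hat A_i$. Your observation that each summand must have nonzero top part (else it yields a strictly conformal sub-element of $\veg$) is correct and useful, and the resulting bound $\|\veg^i\|_\infty\le\|\veg^0\|_1\cdot\rho$ is fine.

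The gap is precisely where you say it is: bounding $\|\veg^0\|_\infty$ (equivalently, the sizes $|T_i|$ of the top-part multisets) independently of $d$. You correctly identify that minimality forbids a ``common conformal sub-step'' across all blocks, and you gesture at an ``lcm-/determinant-type quantity'', but you do not prove the bound. This is not a routine estimate: it is exactly the content of Klein's lemma (Proposition~\ref{prop:klein} in the paper), which states that if multisets $T_1,\dots,T_d\subseteq\Z^{k}$ lie in one orthant, have entries bounded by $C$, and share the same sum, then there exist nonempty $S_i\subseteq T_i$ with equal sums and $|S_i|\le(kC)^{kC^{\alpha k^2}}$. Applied with $k=k_1(F)$ and $C=\hat g_\infty$, the contrapositive says that if no proper equal-sum subfamilies exist (which minimality of $\veg$ enforces), then each $|T_i|$ is at most this bound, and hence $\|\veg^0\|_\infty\le\hat g_\infty\cdot|T_i|$ is bounded independently of $d$.

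Klein's lemma is a recent and nontrivial result (the paper cites it as~\cite{Klein}); prior to it no explicit bound on $g_\infty(A)$ in terms of $\td_P(A)$ and $\|A\|_\infty$ was known. Your sketch does not supply an alternative proof of this fact, and the LCM analogy, while suggestive of why a $d$-independent bound should exist, does not by itself yield one in dimension $k_1(F)>1$. To complete your argument you must either invoke Proposition~\ref{prop:klein} or prove an equivalent statement.
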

We will need a new and powerful lemma due to Klein~\cite{Klein}.
\begin{proposition}[Klein~\cite{Klein}] \label{prop:klein}
	Let $T_1, \dots, T_n \subseteq \Z^d$ be multisets all belonging to one orthant where all elements $\vet \in T_i$ have bounded size $\|\vet\|_\infty \leq C$ and where
	$$\sum_{\vet \in T_1} \vet = \sum_{\vet \in T_2} \vet = \cdots = \sum_{\vet \in T_n} \vet \enspace .$$
	Then there exists a constant $\alpha \in \N$ and non-empty submultisets $S_1 \subseteq T_1, \dots, S_n \subseteq T_n$ of bounded size $|S_i| \leq (dC)^{dC^{\alpha d^2}}$ such that
	$$\sum_{\ves \in S_1} \ves = \sum_{\ves \in S_2} \ves = \cdots = \sum_{\ves \in S_n} \ves \enspace .$$
\end{proposition}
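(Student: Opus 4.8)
The plan is to reduce to the case where the common sum $\Sigma \df \sum_{\vet \in T_1}\vet$ is very large — larger than a threshold $g(d,C)$ depending only on $d$ and $C$ — and then to exploit the fact that in this regime every $T_i$ must contain astronomically many copies of some small set of vectors, which gives it enough freedom to form a short sub-sum realizing a prescribed target.

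\emph{Reductions.} After flipping signs coordinatewise (using the one-orthant hypothesis) we may assume every $\vet$ lies in $\Z_{\ge 0}^d$; since the statement is trivial when the $T_i$ have no repeated vectors (there are then at most $(2C+1)^d$ of them), we treat the $T_i$ as multisets. If $\Sigma = \vezero$ then every $\vet \in T_i$ equals $\vezero$ and we take each $S_i$ to be one copy of $\vezero$. Otherwise, deleting all copies of $\vezero$ changes no sum and keeps each $T_i$ nonempty, and deleting every coordinate $j$ with $\Sigma_j = 0$ (in which every $\vet$ vanishes) lets us assume $\Sigma$ is strictly positive in each coordinate. If $\|\Sigma\|_\infty \le g(d,C)$ then $|T_i| \le \|\Sigma\|_1 \le d\,g(d,C)$ and we take $S_i \df T_i$; so henceforth $\|\Sigma\|_\infty > g(d,C)$.

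\emph{Dominant types.} Call $\vet \in \{0,\dots,C\}^d$ a \emph{type}, let $m_{i,\vet}$ be its multiplicity in $T_i$, fix a parameter $\theta = \theta(d,C)$ to be chosen, and set $D_i \df \{\vet : m_{i,\vet} \ge \theta\}$. Since $|T_i| \ge \|\Sigma\|_\infty/C$ while there are at most $(C{+}1)^d$ types, once $g(d,C)$ is large enough each $D_i$ is nonempty, and $\Sigma - \vec\varepsilon_i = \sum_{\vet \in D_i} m_{i,\vet}\vet \in \cone(D_i)$, where $\vec\varepsilon_i \df \sum_{\vet \notin D_i} m_{i,\vet}\vet$ has $\|\vec\varepsilon_i\|_1 < (C{+}1)^d\theta C$, a bound depending on $d,C$ only. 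The list of distinct cones $\cone(D_i)$ has length at most $2^{(C+1)^d}$, and each is generated by integer vectors of sup-norm at most $C$. The heart of the argument is to convert this into a single bounded-size common sub-sum: I would show that the cones $\{\cone(D_i)\}_i$ admit a common nonzero lattice vector $\vesigma$ whose size, and whose representation $\vesigma = \sum_{\vet \in D_i} \lambda_{i,\vet}\vet$ with $\lambda_{i,\vet} \in \N$ for every $i$, are both bounded by a function of $d$ and $C$; then, choosing $\theta$ to exceed the (predetermined) bound on $\max_{i,\vet}\lambda_{i,\vet}$, each $T_i$ has enough copies of each needed $\vet$, so $S_i \df$ ``$\lambda_{i,\vet}$ copies of $\vet$ for each $\vet \in D_i$'' is the desired sub-multiset with common sum $\vesigma$.

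\emph{Main obstacle, and the size bound.} The crux is the highlighted claim: that the common-sum hypothesis forces the dominant cones to be compatible enough to share a small common lattice point with uniformly short representations. Intuitively this holds because $\Sigma$, up to the bounded error $\vec\varepsilon_i$, lies in every $\cone(D_i)$, so the cones cannot be too transverse — but making this precise is delicate, since $\Sigma$ can lie just outside a dominant cone (at bounded distance, yet with angular margin tending to $0$ as $\|\Sigma\|_\infty \to \infty$), so a naive separating-hyperplane or Helly argument on the cones alone does not close. Resolving this is exactly the technical content of Klein's lemma; two natural routes are (i) a finite-list compactness argument that retains a bounded portion of the non-dominant contributions $\vec\varepsilon_i$ to repair the near-boundary defect, and (ii) an induction on $d$ peeling off one coordinate, whose nontrivial point is decoupling the last coordinate of the common sub-sum from the first $d-1$ — which cannot be done by naively projecting, recursing and lifting, because in a single orthant one cannot adjust one coordinate of a sub-sum without spare elements of the right shape. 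Whichever route is taken, each recursion level (or application of the geometric sub-lemma) costs a $\poly(C)$-th power in the size bound and one extra factor of $d$ in the exponent tower; accumulated over $d$ levels this yields the stated bound of the form $(dC)^{dC^{\alpha d^2}}$.
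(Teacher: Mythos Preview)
The paper does not prove this proposition at all: it is quoted verbatim from Klein~\cite{Klein} and used as a black box in the proof of Lemma~\ref{lem:primal_norm}. So there is no ``paper's own proof'' to compare against.

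As for your proposal itself, it is not a proof but an outline that stops precisely at the hard step. Your reductions and the dominant-type setup are reasonable, but you explicitly acknowledge that the crux --- showing that the dominant cones $\cone(D_i)$ share a common nonzero lattice point with uniformly bounded nonnegative integer representations in each $D_i$ --- is unresolved. You correctly note that the naive angular/separating-hyperplane argument fails because $\Sigma$ can sit near the boundary of some $\cone(D_i)$, and you list two candidate routes without executing either. In particular, route~(ii) as you describe it does not work: projecting to $d{-}1$ coordinates, recursing, and lifting fails for exactly the reason you mention, and you give no mechanism to overcome it. Route~(i) is vague (``retain a bounded portion of the non-dominant contributions to repair the defect'') and does not constitute an argument. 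The final paragraph's accounting of the size bound is therefore unsupported: you cannot bound the recursion cost per level without actually carrying out the recursion. In short, the proposal identifies the right difficulty but does not resolve it.
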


\begin{proof}[{Proof of Lemma~\ref{lem:primal_norm}}]
	We will proceed by induction on $\ttd(F)$.	
	In the base case when $\ttd(F) = 1$, $G_P(A)$ is a path and thus $A$ has $\td_P(A)$ columns.
	Observe that the number of rows of $A$ is bounded by $\td_P(A)$ as we assume purity.
	By Lemma~\ref{lem:bound1} we then have that
	\[g_\infty(A) \leq \td_P(A) g_1(A) \leq \td_P(A)(2\|A\|_\infty \td_P(A) +1)^{\td_P(A)} \leq 2^{2\alpha \cdot \td_P(A)^2 + \log 2\|A\|_\infty} \enspace .\]
	
	In the inductive step, we assume $A$ is block-structured along $F$ (otherwise apply Lemma~\ref{lem:decomposition}).
	Let $\hat{A}_i = (\bar{A}_i~A_i) \in \Z^{m_i \times k' + n_i}$ and $\hat{F}_i$ as in Lemma~\ref{lem:fhati}, and let $\hat{g}_\infty \df \max_{i \in [d]} g_\infty(\hat{A}_i)$.
	Note that $\td_P(\hat{A}_i) \leq \td_P(A)$.
	Since $\hat{F}_i$ is a $\td$-decomposition of $G_P(\hat{A}_i)$ and $\ttd(\hat{F}_i) < \ttd(F)$, we may apply induction on $\hat{A}_i$, showing
	\begin{equation}
	\stackinset{l}{39pt}{b}{-9pt}{\tiny\rotatebox{33}{$\underbrace{\kern21pt}_{\ttd(F)-2}$}}{%
		$\hat{g}_\infty \leq 2^{2^{\rdots^{2^{(2\|A\|_\infty)^{2^{\ttd(F)} \cdot \alpha \cdot \td_P(A)^2}}}}}$} \label{eq:primal_bound_induction}
	\end{equation}

	Consider $\veg = (\veg^0, \veg^1, \dots, \veg^d) \in \G(A)$.
	For each $i \in [d]$, decompose $(\veg^0, \veg^i) = \sum_{j=1}^{N_i} (\veh_j^0, \veh_j^i)$ with $(\veh_j^0, \veh_j^i) \in \G(\hat{A}_i)$ by the Positive Sum Property (Proposition~\ref{prop:possum}).
	Let $T_i \df \left\{\veh^0_j \ \middle|\ j \in [N_i]\right\}$ and observe that $\max_{\vet \in T_i} \|\vet\|_\infty \leq g_\infty(\hat{A}_i) \leq \hat{g}_\infty$.
	If applying Proposition~\ref{prop:klein} to $T_1, \dots, T_d$ yielded sets $S_1, \dots, S_d$ such that $S_i \subsetneq T_i$ for some $i \in [d]$ then $\veg$ was not $\sqsubseteq$-minimal, a contradiction.
	Let $k_1 \df k_1(F)$.
	Thus Proposition~\ref{prop:klein} implies, for each $i \in [d]$, 
	\[|T_i| \leq (k_1 \hat{g}_\infty)^{k_1 \hat{g}_\infty^{\alpha k_1^2}}
	= 2^{2^{\alpha k_1^2 + \log(k_1 \hat{g}_\infty) + \log \log (k_1 \hat{g}_\infty)}} \leq 2^{2^{2 \alpha k_1^2 + \log \hat{g}_\infty}}\] 
	and $\|(\veg^0, \veg^i)\|_\infty \leq \hat{g}_\infty |T_i|$,
	which in turn means that
	$\|\veg\|_\infty \leq \hat{g}_\infty \max_{i \in [d]} |T_i|$.
	Note that $2^{{2}^{2\alpha k_1^2 + \log \hat{g}_\infty}} \hat{g}_\infty \leq 2^{{2}^{2\alpha k_1^2 + \log \hat{g}_\infty + \log \log \hat{g}_\infty}}$.
	To simplify, let $\zeta \df 2\alpha k_1^2 + \log \hat{g}_\infty + \log \log \hat{g}_\infty$ so that the expression reads $2^{2^\zeta}$.
	Plugging in the bound~\eqref{eq:primal_bound_induction} for $\hat{g}_\infty$ then gives
	\begin{multline*}
	\zeta = 2\alpha k_1^2 + \log \hat{g}_\infty + \log \log \hat{g}_\infty \leq 2\alpha k_1^2 + 2\log \hat{g}_\infty \leq \\
	2\alpha k_1^2 + 2\cdot \stackinset{l}{2pt}{b}{-10pt}{\tiny\rotatebox{29}{$\underbrace{\kern15pt}_{\ttd(F)-3}$}}{%
		$2^{\rdots^{2^{(2\|A\|_\infty)^{2^{\ttd(F)-3} \cdot \alpha \cdot \td_P(A)^2}}}}$}
	\leq \stackinset{l}{2pt}{b}{-10pt}{\tiny\rotatebox{29}{$\underbrace{\kern15pt}_{\ttd(F)-3}$}}{%
		$2^{\rdots^{2^{(2\|A\|_\infty)^{2^{\ttd(F)} \cdot \alpha \cdot \td_P(A)^2}}}}$}, \quad \text{and thus,}
	\end{multline*}
	\[2^{2^{\zeta}} \leq 
	\stackinset{l}{8pt}{b}{-6pt}{\tiny\rotatebox{29}{$\underbrace{\kern8pt}_{\ttd(F)-3}$}}{%
		$2^{2^{2^{{\rdots^{2^{(2\|A\|_\infty)^{2^{\ttd(F)} \cdot \alpha \cdot \td_P(A)^2}}}}}}}$}
	\leq 
	\stackinset{l}{39pt}{b}{-9pt}{\tiny\rotatebox{33}{$\underbrace{\kern21pt}_{\ttd(F)-1}$}}{%
		$g_\infty(A) \leq 2^{2^{\rdots^{2^{(2\|A\|_\infty)^{2^{\ttd(F)} \cdot \alpha \cdot \td_P(A)^2}}}}}$}
	\]
\end{proof}

\subsubsection{Norm of Dual Treedepth}\label{sec:norms:dual}
\begin{lemma}[Dual Norm]\label{lem:dual_norm}
Let $A \in \Z^{m \times n}$, $F$ be a $\td$-decomposition of $G_D(A)$, and let $K \df \max_{P: \text{root-leaf path in } F} \prod_{i=1}^{\ttd(F)} \left(k_i(P)+1\right)$.
Then $g_1(A) \leq (3\|A\|_\infty K)^{K-1}$.
\end{lemma}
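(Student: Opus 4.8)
The plan is to prove the Dual Norm Lemma by induction on $\ttd(F)$, mirroring the structure of the Primal Norm Lemma (Lemma~\ref{lem:primal_norm}) but working with $\ell_1$-norms and the dual decomposition (Corollary~\ref{cor:dual_decomp} and Lemma~\ref{lem:fhati}) instead of the primal one. In the base case $\ttd(F)=1$, the graph $G_D(A)$ is a path on $\height(F)\le K$ vertices, so $A$ has at most $K$ rows (we may assume purity); then the Base Bound (Lemma~\ref{lem:bound1}) gives $g_1(A)\le(2K\|A\|_\infty+1)^K\le(3\|A\|_\infty K)^{K-1}$ after a routine adjustment of constants. Here $K=k_1(P)+1$ along the unique path, and $m\le k_1(P)$, so the bound lines up.

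For the inductive step, apply Corollary~\ref{cor:dual_decomp} to obtain the $\eqref{eq:dual-block-structure}$ decomposition with blocks $A_i$, $\bar A_i$, trees $F_i$, and the combined matrices $\hat A_i=\left(\begin{smallmatrix}\bar A_i\\ A_i\end{smallmatrix}\right)$ with $\td$-decompositions $\hat F_i$ from Lemma~\ref{lem:fhati}, where $\ttd(\hat F_i)<\ttd(F)$. Take $\veg=(\veg^1,\dots,\veg^d)\in\G(A)$. Each restriction $\veg^i$ lies in $\ker_\Z(\hat A_i)$ when we pad with the appropriate portion of the shared top rows; more precisely, setting $\ver\df\bar A_i\veg^i$, the vector $\veg^i$ is an element of $\ker_\Z(\hat A_i)$ relative to right-hand side $(\ver,\vezero)$. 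I would instead directly decompose $\veg^i$ via the Positive Sum Property (Proposition~\ref{prop:possum}): $\veg^i=\sum_{j=1}^{N_i}\veh^i_j$ with $\veh^i_j\in\G(\hat A_i)$ all conformal to $\veg^i$, hence $\|\veh^i_j\|_1\le g_1(\hat A_i)$, and $\|\veg^i\|_1=\sum_j\|\veh^i_j\|_1$. The key quantity to control is $N_i$, the number of summands. Here is where the argument diverges from the primal case: rather than Klein's lemma I would use a Steinitz-type / pigeonhole argument on the prefix sums of the top-row contributions $\bar A_j\veh^i_j\in\Z^{k_1(F)}$. These lie in a box of side $\le g_1(\hat A_i)\|A\|_\infty$ in $\Z^{k_1(F)}$, so there are at most $(2g_1(\hat A_i)\|A\|_\infty+1)^{k_1(F)}$ possible partial sums; if $N_i$ exceeded this, two partial sums would coincide and we could split off a conformal nonzero kernel element of $A$, contradicting $\sqsubseteq$-minimality of $\veg$ (this uses that the $A_j$-blocks below are in disjoint column sets, so cancelling a middle segment produces $\veg',\veg''\sqsubseteq\veg$ both in $\ker_\Z(A)$). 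Thus $N_i\le(2g_1(\hat A_i)\|A\|_\infty+1)^{k_1(F)}$ and $\|\veg\|_1=\sum_i\|\veg^i\|_1\le d\cdot g_1(\hat A_i)\cdot\max_i N_i$, though the factor $d$ is actually absorbed because only the block containing the support of $\veg$ matters along any single root-leaf path — one should be careful to route the estimate along the path achieving $K$.

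The bookkeeping then is: write $K=\prod_{i=1}^{\ttd(F)}(k_i(P)+1)$ for the worst path $P$, let $K'\df\prod_{i=2}^{\ttd(F)}(k_i(P)+1)$ be the analogous quantity for the subtrees $\hat F_i$ (noting $\ttd(\hat F_i)<\ttd(F)$ and the level heights shift down by one, with $k_1(F)$ playing the role of the contracted prefix), so $K=(k_1(F)+1)K'$. By induction $g_1(\hat A_i)\le(3\|A\|_\infty K')^{K'-1}$, and then $g_1(A)\le g_1(\hat A_i)\cdot(2g_1(\hat A_i)\|A\|_\infty+1)^{k_1(F)}\le(3\|A\|_\infty g_1(\hat A_i))^{k_1(F)+1}\le\bigl(3\|A\|_\infty(3\|A\|_\infty K')^{K'-1}\bigr)^{k_1(F)+1}$, and one checks this is at most $(3\|A\|_\infty K)^{K-1}$ since $K-1\ge(k_1(F)+1)\bigl((K'-1)+1\bigr)-1=(k_1(F)+1)K'-1$ and $(3\|A\|_\infty K')\le(3\|A\|_\infty K)$. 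The main obstacle I anticipate is exactly this last inequality-chasing: getting the exponents to close cleanly without slack, in particular correctly identifying $K'$ as the right inductive quantity for the $\hat A_i$ (the $+1$'s in the product are what make the telescoping work) and making sure the pigeonhole bound on $N_i$ is stated with the correct base so that the $(3\|A\|_\infty K)^{K-1}$ form is preserved rather than something weaker. A secondary subtlety is the very first reduction — arguing that $\veg^i$ genuinely decomposes into $\G(\hat A_i)$ elements conformally and that a repeated prefix sum yields a legitimate conformal splitting of $\veg$ over \emph{all} of $\ker_\Z(A)$, which relies on the disjoint-column structure of $\eqref{eq:dual-block-structure}$.
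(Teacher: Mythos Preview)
Your overall architecture is right—induction on $\ttd(F)$ with a Steinitz/pigeonhole step—but two concrete steps fail.

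\textbf{The decomposition into $\G(\hat A_i)$ is not available.} You invoke Proposition~\ref{prop:possum} to write $\veg^i=\sum_j \veh^i_j$ with $\veh^i_j\in\G(\hat A_i)$, but that proposition only applies to elements of $\ker_\Z(\hat A_i)$. You yourself note that $\hat A_i\veg^i=(\bar A_i\veg^i,\vezero)$ with $\bar A_i\veg^i$ generally nonzero, so $\veg^i\notin\ker_\Z(\hat A_i)$ and no such decomposition exists. Worse, if it did, every summand would satisfy $\bar A_i\veh^i_j=\vezero$, so the ``top-row contributions'' you want to pigeonhole on would all vanish. The correct inductive object is $\G(A_i)$: since $A_i\veg^i=\vezero$, Proposition~\ref{prop:possum} gives $\veg^i=\sum_j\veg^i_j$ with $\veg^i_j\in\G(A_i)$ and $\veg^i_j\sqsubseteq\veg^i$, and the induction hypothesis applies to $A_i$ with the tree $F_i$ (whose level heights are $k_2(F),k_3(F),\dots$), yielding exactly your $K'$.

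\textbf{The per-block pigeonhole does not bound $N_i$.} You claim the prefix sums of $\bar A_i\veg^i_j$ lie in a box of side $g_1(A_i)\|A\|_\infty$, but that bound is on the individual terms, not on their prefix sums. To confine prefix sums you need Steinitz, and Steinitz requires the full sequence to sum to zero; per block, $\sum_j\bar A_i\veg^i_j=\bar A_i\veg^i$ is generally nonzero. The paper's fix is to pool \emph{all} vectors $\bar A_i\veg^i_j$ across \emph{all} blocks $i\in[d]$ into a single sequence (which sums to $\sum_i\bar A_i\veg^i=\vezero$ by the top rows of $A\veg=\vezero$), apply Steinitz in $\Z^{k_1(F)}$, and then pigeonhole on the total count $N=\sum_i N_i$. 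This also dissolves the spurious factor of $d$ you were worried about: one directly gets $\|\veg\|_1\le N\cdot\hat g_1$ with $N\le(2k_1(F)\|A\|_\infty\hat g_1+1)^{k_1(F)}$, and the closing inequality $\hat g_1\cdot(3\|A\|_\infty K)^{k_1(F)K'}\le(3\|A\|_\infty K)^{K-1}$ uses $K=(k_1(F)+1)K'$ exactly as you wrote.
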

\begin{proof}
  The proof will proceed by induction over $\ttd(F)$.
  In the base case we have $\ttd(F) = 1$ and thus $G_D(A)$ is a path with $\height(F)$ vertices, meaning $A$ has $\height(F)$ rows.
  Now we use the Base bound of Lemma~\ref{lem:bound1} to get that $g_1(A) \leq (2\|A\|_\infty \height(F)+1)^{\td_D(A)}$, which is at most $(3\|A\|_\infty K)^{K-1}$, where $K = \height(F)+1 = k_1(F)+1$.
  (Note that $k_1(F)=k_1(P)$ for all root-leaf paths $P$ in $F$ since all paths share an identical segment from the root to the first non-degenerate vertex.)

  For the inductive step, assume that the claim holds for all trees of topological height less than $\ttd(F)$.
  Let $\veg \in \G(A)$ and $K' \df \max_{P: \text{root-leaf path in } F} \prod_{i=2}^{\ttd(F)} \left(k_i(P)+1\right)$.
  For each $i \in [d],$ $\veg^i$ has a decomposition into elements $\veg^i_j$ of $\G(A_i)$, and by induction we have $\|\veg^i_j\|_1 \leq g_1(A_i) \leq (3 \|A\|_\infty  K')^{K'-1} =: \hat{g}_1$.
  Construct a sequence of vectors as follows: for each $i \in [d]$ and each $\veg^i_j$ in the decomposition of $\veg^i$, insert $\vev^i_j := \bar{A}_i \veg^i_j$ into the sequence.
  Note that $\|\vev^i_j\|_\infty \leq \|A\|_\infty \hat{g}_1$.
  Denote the resulting sequence $\veu^1, \dots, \veu^N$.

  Applying the Steinitz Lemma (Proposition~\ref{prop:steinitz}) to this sequence, we obtain its permutation $\veu^{\pi(1)}, \dots, \veu^{\pi(N)}$ such that the $\ell_\infty$-norm of each of its prefix sums is bounded by $k_1(F) \|A\|_\infty \hat{g}_1$.
  As in the proof of Lemma~\ref{lem:bound1}, we will prove that no two prefix sums are the same, thus $N \leq (2 k_1(F) \|A\|_\infty \hat{g}_1+1)^{k_1(F)}$ and subsequently $\|\veg\|_1 \leq N \hat{g}_1 \leq \hat{g}_1 (2 k_1(F) \|A\|_\infty \hat{g}_1+1)^{k_1(F)}$.
  Plugging in $\hat{g}_1 = (3 \|A\|_\infty  K')^{K'-1} \leq (3 \|A\|_\infty  K)^{K'-1}$ and simplifying yields
\[
\|\veg\|_1 \leq (3 \|A\|_\infty  K)^{K'-1} \cdot (3 \|A\|_\infty K)^{k_1(F) K'} = (3 \|A\|_\infty K)^{K-1} \enspace .
\]

  Assume to the contrary that some two prefix sums $\vep_{\alpha}$ and $\vep_{\beta}$, for $\alpha < \beta$, are identical.
  Then the sequence $\veu^{\alpha+1}, \dots, \veu^{\beta}$ sums up to zero and we may ``work backward'' from it to obtain an integer vector $\bar{\veg} \sqsubset \veg$, which is a contradiction to $\veg \in \G(A)$.
  Specifically, $\bar{\veg}$ can be obtained by initially setting $\bar{\veg} = \vezero$ and then, for each $\gamma \in [\alpha+1, \beta]$, if $\pi^{-1}(\gamma) = (i,j)$, setting $\bar{\veg}^i := \bar{\veg}^i + \veg^i_j$.
\end{proof}
\begin{remark}
Our definition of $K$ allows us to recover the currently best known upper bounds on $g_1(A)$ from Lemma~\ref{lem:dual_norm}.
Specifically, Knop et al.~\cite[Lemma 10]{KnopPW:2018} show that $g_1(A) \leq (2\|A\|_\infty +1)^{2^{\td_D(A)}-1}$.
This pertains to the worst case when $\ttd(F) = \height(F) = \td_D(A)$.
Then, we have $K=\prod_{i=1}^{\ttd(F)} (k_i(P)+1) = 2^{\td_D(A)}$ and our bound essentially matches theirs.
On the other hand, our bound is better in scenarios when $\ttd(F) < \height(F)$ and $K$ is attained by some path with $k_i(P) > 1$ for some $i \in \ttd(F)$.
A particular example of this are $N$-fold and tree-fold matrices discussed in Section~\ref{sec:apps}.
\end{remark}

\subsection{The Proof}\label{sec:mainrproof}
\begin{proof}[Proof of Theorem~\ref{thm:main}]
We run two algorithms in parallel, terminate when one of them terminates, and return its result.
In the \emph{primal} algorithm, let $G(A) = G_P(A)$, $\td(A) = \td_P(A)$ and $p=\infty$.
In the \emph{dual algorithm}, let $G(A) = G_D(A)$, $\td(A) = \td_D(A)$ and $p=1$.
The description of both algorithms is then identical.

First, run the algorithm of Proposition~\ref{prop:tddecomposition} on $G(A)$ to obtain its optimal $\td$-decomposition.
By Lemmas~\ref{lem:primal_norm} and~\ref{lem:dual_norm} there is a computable function $g'$ such that the maximum $\ell_p$-norm of elements of $\G(A)$ is bounded by $g'(\|A\|_\infty,\td(A))$.
By Lemmas~\ref{lem:primal} and~\ref{lem:dual}, there is a computable function $g''$ such that~\eqref{AugIP} is solvable in time $g''(g'(\td(A), \|A\|_p), \|A\|_p, \td(A))$ and thus in time $g(\|A\|_p, \td(A))$ for some computable function $g$.
Then, solve~\eqref{IP} using the algorithm of Corollary~\ref{cor:lambda_oracle} in the claimed time.
\end{proof}

\section{A General Framework for Improved Complexity} \label{sec:framework}
We develop an oracle-based framework for designing algorithms for~\eqref{IP} with improved complexity bounds.
In Section~\ref{sec:oracles} we introduce the individual parts of the framework.
In Section~\ref{sec:warmup} we provide a warm-up example.
In Section~\ref{sec:master} we develop a ``Master Lemma'' which connects the individual components together and is later (Section~\ref{sec:apps}) used to derive concrete time complexity bounds.
The next five sections deal with the individual components of the framework: feasibility oracles (Section~\ref{sec:feasibility}), handling infinite bounds (Section~\ref{sec:infinite_bounds}), proximity bounds (Section~\ref{sec:proximity}), relaxation oracles (Section~\ref{sec:relaxation}), reducibility bounds (Section~\ref{sec:reducibility}), and augmentation oracles (Section~\ref{sec:almostlinear}).

\subsection{An Oracle-based Approach for Designing Algorithms for IP} \label{sec:oracles}
In this section we develop a framework for designing algorithms for~\eqref{IP} which comprises five ingredients with the following roles:
\begin{description}
\item[Relaxation oracle] is used to obtain a point near an optimum of the fractional relaxation of~\eqref{IP}.
\item[Proximity bound] guarantees a nearby integer optimum and allows shrinking the bounds $\vel$ and $\veu$.
\item[Feasibility oracle] finds an initial feasible integer solution or declares infeasibility.
\item[Reducibility bound] concerns replacing $f$ with an equivalent objective $g$ with smaller values.
\item[Augmentation oracle] is used to augment the initial solution to optimality.
\end{description}
We will later prove a ``Master Lemma'' (Lemma~\ref{lem:stronglypoly_master}) which relates the ingredients and spells out exactly how improvements to individual ingredients improve the overall time complexity.
The main focus of the remainder of this section is then on examining closely these components, improving them, and in some cases showing that they cannot be improved further.
The order in which we will study these ingredients in Sections~\ref{sec:feasibility}--\ref{sec:almostlinear} is different from the one above because of the interdependencies of the proved statements.
Before proving the Master Lemma, we demonstrate the oracles and bounds in Section~\ref{sec:warmup} by constructing a strongly polynomial algorithm for ILP when $A$ is endowed with an~\eqref{AugIP} oracle.

\subsection{Warm-up: Linear Objectives} \label{sec:warmup}
As a warm-up for the Master Lemma and a specific demonstration of a situation where we already have all the components (either in this paper or in the existing literature), we shall prove that~\eqref{ILP} has a strongly-polynomial algorithm with numeric input $\vew, \veb, \vel, \veu$ whenever $A$ is endowed with an oracle solving~\eqref{AugIP}.
Note that we still treat the augmentation oracle as an oracle for the sake of generality, while we realize the rest of the ingredients.
By now the reader is aware of two realizations of an~\eqref{AugIP} oracle: Lemmas~\ref{lem:primal} and~\ref{lem:dual}.

\begin{theorem}\label{thm:oracle}
Problem~\eqref{ILP} with arithmetic input $\la A\ra$ and numeric input $\l \vew,\veb,\vel,\veu \r$, endowed with an oracle solving~\eqref{AugIP}, is solvable in strongly polynomial oracle time.
\end{theorem}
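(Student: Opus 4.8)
The plan is to realize all the non-oracle ingredients of the framework in a strongly polynomial way and combine them. Concretely, we must turn the numeric input $\langle \vew, \veb, \vel, \veu \rangle$ into something of polynomially bounded size in $\langle A \rangle$ alone. First I would invoke the Frank--Tardos reduction (and our variants, e.g. Theorem~\ref{thm:lin_red}/Corollary~\ref{cor:sepconvex_red}) to replace $\vew$ with an equivalent objective $\bar\vew$ whose encoding length is bounded by a polynomial in $n$ and $\langle A \rangle$ only; this is the \textbf{reducibility} step, and equivalence guarantees that an optimum of the reduced instance is an optimum of the original. Second, I would handle the bounds $\vel, \veu$ and the right-hand side $\veb$ via the \textbf{proximity-scaling} machinery: by Corollary~\ref{cor:scaling_algo} (and the underlying proximity theorem), solving~\eqref{ILP} reduces to $O(\log\|\veu-\vel\|_\infty)$ auxiliary instances whose bounds have polynomial width, and a proximity bound lets us also replace a fractional optimum (obtained from the strongly polynomial LP algorithm of Tardos~\cite{Tar}, which exists since $A$ has entries of bounded size) by a nearby point, shrinking the search region to one whose encoding length depends only on $\langle A \rangle$. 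Strictly speaking, since we want a \emph{strongly} polynomial bound, the number of scaling phases $\log\|\veu-\vel\|_\infty$ is not a priori polynomial in $\langle A\rangle$; here one uses the standard trick that after the LP relaxation pins down the optimum to within a box of width $\poly(\langle A\rangle)$ (via proximity), only $\poly(\langle A\rangle)$ further phases are needed, or alternatively that entries of $\vel,\veu$ can be truncated to $\poly(n,\langle A\rangle)$ magnitude without changing the optimal face.

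Third, with bounds of polynomially bounded width and an objective of polynomially bounded encoding length, the \textbf{feasibility} step (Lemma~\ref{lem:initial}, which only calls~\eqref{AugIP} and an $O(n^\omega)$ Hermite-normal-form computation) produces an initial feasible $\vex_0$ with $\langle\vex_0\rangle$ polynomial in $\langle\veb\rangle$ — and after truncation $\langle\veb\rangle$ is itself polynomial in $\langle A\rangle$ and $n$. Finally, the \textbf{augmentation} step via Lemma~\ref{lem:lambda_oracle_initi} solves the (reduced, scaled) instance using $O\!\big(n(\log\|\veu-\vel\|_\infty+1)\log(2 f_{\max})\big)$ calls to the~\eqref{AugIP} oracle; since $\|\veu-\vel\|_\infty$ and $f_{\max} = \max|\bar\vew\vex|$ are now both $\poly(n,\langle A\rangle)$, this is $\poly(n,\langle A\rangle)$ oracle calls, hence strongly polynomial oracle time. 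One then chains the reduction steps: apply scaling on top of the reduced objective, or reduce the objective inside each scaled subinstance — the order does not matter since both reductions preserve optima.

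The \textbf{main obstacle} I expect is the interaction between the scaling phases and strong polynomiality: naively, Corollary~\ref{cor:scaling_algo} gives $\log\|\veu-\vel\|_\infty$ phases, which depends on the numeric input, so this alone does not yield a strongly polynomial algorithm. The fix is a proximity argument — solve the LP relaxation in strongly polynomial time~\cite{Tar}, use the proximity theorem to conclude the integer optimum lies within $\ell_\infty$-distance $\poly(n,\langle A\rangle)$ of the (rounded) LP optimum, and thereby replace the original box $[\vel,\veu]$ by one of width $\poly(n,\langle A\rangle)$ before ever scaling. After this, everything downstream (number of scaling phases, $f_{\max}$, $\langle\vex_0\rangle$, number of augmentation calls) is controlled purely by $n$ and $\langle A\rangle$. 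The remaining bookkeeping — checking that each reduction preserves the set of optimal solutions and that intermediate numbers stay of length $\poly(\langle A\rangle)$ — is routine given the cited results, so the heart of the argument is assembling proximity $+$ LP $+$ Frank--Tardos reduction $+$ the~\eqref{AugIP}-driven feasibility and augmentation lemmas into one pipeline.
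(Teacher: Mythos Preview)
Your proposal is essentially correct and lands on the same approach as the paper: solve the LP relaxation strongly polynomially via Tardos, use a proximity bound to shrink the box $[\vel,\veu]$ to width polynomial in $\langle A\rangle$, find an initial feasible point via Lemma~\ref{lem:initial}, reduce $\vew$ via Frank--Tardos, and finish with the halfling augmentation procedure (Lemma~\ref{lem:lambda_oracle_initi}).

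Two minor remarks. First, once LP $+$ proximity has shrunk the box, the scaling algorithm (Corollary~\ref{cor:scaling_algo}) is unnecessary --- the paper's proof never invokes it here, and your own ``main obstacle'' paragraph correctly abandons it in favor of the LP $+$ proximity route. Second, the order \emph{does} matter in one respect: the Frank--Tardos reduction must be applied \emph{after} the box has been shrunk, since the encoding length of the reduced weight $\bar\vew$ depends polynomially on $\log N$ where $N$ is the box width; if $N$ were still the original $\|\veu-\vel\|_\infty$, the resulting $\bar\vew$ would not have size polynomial in $\langle A\rangle$ alone. Your final pipeline (in the obstacle paragraph) gets this right, so what you arrive at coincides with the paper's four steps in the same order.
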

\begin{remark}
	The partition of the input to the arithmetic input $\l A \r$ and the numeric input $\l \vew, \veb, \vel, \veu \r$ is the same as in the classical results for linear programming~\cite{FT,Tar}.
\end{remark}
Together with Theorem~\ref{thm:main}, Theorem~\ref{thm:oracle} immediately yields that:
\begin{repcorollary}{cor:ilp_stronglypoly}
	There exists a computable function $g$ such that problem~\eqref{ILP} can be solved in time \[g(a,d) \poly(n), \qquad \text{where }d \df \min\{\td_P(A), \td_D(A)\} \enspace .\]
\end{repcorollary}
Let $\C(A)$ be the set of {\em circuits} of $A$, which are those
$\vecc\in\ker_{\Z}(A)$ whose support is a circuit of the linear matroid of $A$ and whose entries are coprime.
Let $c_\infty(A):=\max_{\vecc\in\C(A)}\|\vecc\|_\infty$.
It is known that $\C(A) \subseteq \G(A)$~\cite[Definition 3.1]{Onn} and thus $c_\infty(A) \leq g_\infty(A)$.
\begin{proposition}[{Onn~\cite[Lemma 2.17]{Onn}}] \label{prop:circuits}
	For any $\vex \in \ker(A)$, $\vex$ may be written as $\sum_{i=1}^{n'} \lambda_i \veg_i$ where $n' \leq n-r$ with $r \df \mathrm{rank}(A)$, and for all $i \in [n']$, $\lambda_i > 0$, $\veg_i \in \CC(A)$, and $\lambda_i\veg_i \sqsubseteq \vex$, i.e., the sum is sign-compatible.
\end{proposition}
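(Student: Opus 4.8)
The plan is to recast the statement as a conical Carathéodory theorem in a suitable pointed cone. If $\vex = \vezero$ the empty sum works ($n' = 0 \le n-r$), so assume $\vex \neq \vezero$. Let $O \subseteq \R^n$ be a closed orthant containing $\vex$, say $O = \{\vey : \sigma_j y_j \geq 0 \text{ for all } j\}$ with $\sigma_j = \sign(x_j)$ on $\suppo(\vex)$ and $\sigma_j$ chosen arbitrarily elsewhere, and consider the polyhedral cone $K := \ker(A) \cap O$. Then $\vex \in K$, the cone $K$ is pointed (being contained in an orthant), and since $K \subseteq \ker(A)$ we have $\dim K \leq \dim \ker(A) = n-r$.

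Next I would identify the extreme rays of $K$. The key claim is that if $\vey \in K \setminus \{\vezero\}$ spans an extreme ray of $K$, then $\suppo(\vey)$ is a circuit of the linear matroid of $A$; the coprime integer vector spanning the (one-dimensional) space of kernel vectors supported on that circuit and lying in $O$ is then, by definition, the unique element of $\C(A)$ on the ray. To prove the claim, note $\suppo(\vey)$ is linearly dependent since $\vezero \neq \vey \in \ker(A)$; were it to properly contain a dependent set $C' \subsetneq \suppo(\vey)$, there would be $\vez \in \ker(A)\setminus\{\vezero\}$ with $\suppo(\vez) \subseteq C' \subsetneq \suppo(\vey)$, and for all sufficiently small $\varepsilon > 0$ both $\vey \pm \varepsilon\vez$ would lie in $K$ (on $\suppo(\vez)$ we have $\sigma_j y_j > 0$, which survives small perturbations; on $\suppo(\vey)\setminus\suppo(\vez)$ nothing changes; off $\suppo(\vey)$ both $\vey$ and $\vez$ vanish), so $\vey = \tfrac12(\vey+\varepsilon\vez) + \tfrac12(\vey-\varepsilon\vez)$ would be a midpoint of two points of $K$ off the ray $\R_{\geq 0}\vey$, contradicting extremality. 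Hence $\suppo(\vey)$ is an inclusion-minimal dependent set, i.e.\ a circuit.

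Finally, since $K$ is a pointed polyhedral cone of dimension at most $n-r$, Minkowski--Weyl gives $K = \cone(\text{extreme rays of } K)$, and the conical Carathéodory theorem expresses $\vex \in K$ as a nonnegative combination of at most $\dim K \leq n-r$ extreme-ray generators: $\vex = \sum_{i=1}^{n'}\lambda_i\vecc_i$ with $n' \leq n-r$, each $\lambda_i > 0$, and each $\vecc_i \in \C(A)\cap O$. As $\vex$ and all $\vecc_i$ lie in the same closed orthant $O$, signs never cancel, so $|x_j| = \sum_{i=1}^{n'}\lambda_i|c_{i,j}| \geq \lambda_i|c_{i,j}|$ for every coordinate $j$; thus $\lambda_i\vecc_i \sqsubseteq \vex$ for all $i$ and the decomposition is sign-compatible, as claimed.

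I expect the main obstacle to be the second step --- establishing precisely that the extreme rays of $K$ are exactly the circuits of $A$ lying in $O$, in particular the support-minimality argument and the check that the perturbations $\vey\pm\varepsilon\vez$ stay inside $O$ (which is exactly where choosing $O$ to be an orthant of $\vex$ is used). The reduction to conical Carathéodory and the sign-compatibility bookkeeping are then routine linear algebra.
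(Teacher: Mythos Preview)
The paper does not give its own proof of this proposition; it is quoted from Onn's monograph. Your argument is correct and is exactly the standard proof found there: intersect $\ker(A)$ with the closed orthant of $\vex$ to get a pointed cone of dimension at most $n-r$, show its extreme rays are spanned by circuits via the support-minimality perturbation argument, and apply conical Carath\'eodory.

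One cosmetic remark: you conclude $\lambda_i\vecc_i \sqsubseteq \vex$, whereas the statement literally asserts $\veg_i \sqsubseteq \vex$; the latter can fail in the strict sense when $\lambda_i < 1$ (circuits are coprime integer vectors), but the paper's ``i.e., the sum is sign-compatible'' makes clear that only same-orthant conformality is intended, and your argument establishes that (together with $\suppo(\vecc_i)\subseteq\suppo(\vex)$, which follows from the no-cancellation identity $|x_j|=\sum_i\lambda_i|c_{i,j}|$). This is a wrinkle in the statement, not in your proof.
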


\begin{proof}[Proof of Theorem~\ref{thm:oracle}]
The algorithm which demonstrates the theorem consists of several steps as follows.
\subsubsection*{Step 1: Relaxation oracle and proximity bound \emph{(i.e., reducing $\veb,\vel,\veu$.)}}
Apply the strongly polynomial algorithm of Tardos~\cite{Tar} to the linear programming relaxation $\min\left\{\vew\vey \mid \vey\in\R^n,\, A\vey=\veb, \,\vel\leq \vey\leq \veu\right\}$; the algorithm performs $\poly(\l A \r)$ arithmetic operations.
If the relaxation is infeasible then so is \eqref{ILP} and we are done.
If it is unbounded then \eqref{ILP} is either infeasible or unbounded too, and in this case we set $\vew:= \mathbf{0}$ so that all solutions are optimal, and we proceed as below and terminate at the end of step 3.
Suppose then that we obtain an optimal solution $\vey^*\in\R^n$ to the relaxation,
with round down $\lfloor \vey^*\rfloor\in\Z^n$. 
By Lemma~\ref{lem:bound1} we have $c_\infty(A)\leq g_\infty(A) \leq g_1(A) \leq  (2m \|A\|_\infty + 1)^m$.

We now use the proximity results of \cite{HKW,HS} (see Theorem~\ref{thm:proximity}) which assert that either \eqref{ILP} is
infeasible or it has an optimal solution $\vex^*$ with $\|\vex^*-\vey^*\|_\infty\leq nc_\infty(A)$ and hence $\|\vex^*-\lfloor \vey^*\rfloor\|_\infty < n(2m \|A\|_\infty + 1)^m + 1$, where the ``$+1$'' is due to the round-down of $\vey^*$.
Since both sides are integers, we have $\|\vex^*-\lfloor \vey^*\rfloor\|_\infty \leq n(2m \|A\|_\infty + 1)^m$.
Thus, making the variable transformation $\vex=\vez+\lfloor \vey^*\rfloor$, problem \eqref{ILP} reduces to the following,
\begin{equation*}
\min\,\left\{\vew(\vez+\lfloor \vey^*\rfloor) \mid \vez\in\Z^n\,,\ A(\vez+\lfloor \vey^*\rfloor)=\veb
\,,\ \vel\leq \vez+\lfloor \vey^*\rfloor\leq \veu\,,\ \|\vez\|_\infty\leq n(2m \|A\|_\infty + 1)^m\right\},
\end{equation*}
which is equivalent to the program
\begin{equation}\label{IP1}
\min\,\left\{\vew\vez \mid \vez\in\Z^n\,,\ A\vez=\bar \veb\,,\ \bar \vel\leq \vez\leq \bar \veu\right\}, \quad \text{where}
\end{equation}
\[\bar \veb:=\veb-A\lfloor \vey^*\rfloor,\ \
\bar l_i:=\max\left\{l_i-\lfloor y^*_i\rfloor,-n(2m \|A\|_\infty + 1)^m\right\},\ \
\bar u_i:=\min\left\{u_i-\lfloor y^*_i\rfloor,n(2m \|A\|_\infty + 1)^m\right\}\ \enspace .\]

If some $\bar l_i>\bar u_i$ then \eqref{IP1} is infeasible
and hence so is \eqref{ILP}, so we may assume that
\[-n(2m \|A\|_\infty + 1)^m\leq\bar l_i\leq\bar u_i\leq n(2m \|A\|_\infty + 1)^m, \quad \mbox{for all }i \in [n] \enspace .\]

This implies that for every point $\vez$ feasible for \eqref{IP1},
$\|A\vez\|_\infty\leq n^2\|A\|_\infty(2m \|A\|_\infty + 1)^m$ holds and so we may assume that
$\|\bar \veb\|_\infty\leq n^2\|A\|_\infty(2m \|A\|_\infty + 1)^m$ else there is no feasible solution.
By $m \leq n$ (see Proposition~\ref{prop:purification}) we have
\[\|\bar \veb, \bar \vel, \bar \veu\|_\infty \leq 2^{\Oh(n\log n)}\|A\|_\infty^{\Oh(n)}
\ \mbox{and hence}\ \l\bar \veb,\bar \vel,\bar \veu\r\ \mbox{is polynomial in}\ \l A\r\ .\]

\subsubsection*{Step 2: Feasibility oracle}
The next step is to find an integer solution to the system of equations $A \vez = \bar{\veb}$, and then to use this solution in an auxiliary problem with relaxed bounds to find an initial feasible solution to~\eqref{IP1}.
This is exactly the purpose of Lemma~\ref{lem:initial}.
Crucially, its bound on the number of calls to an \eqref{AugIP} oracle and the time to compute an integral solution of $A \vez = \bar{\veb}$ only depends on $\l A, \bar{\veb}, \bar{\vel}, \bar{\veu} \r$ and \emph{not} on the objective function $\vew$.
In the following subsections we will show that the general Lemma~\ref{lem:initial} may be sometimes replaced with a faster approach.

\subsubsection*{Step 3: Reducibility bound \emph{(i.e., reducing $\vew$)}.}
Let $N:=2n(2m\|A\|_\infty+1)^m$.
Now apply the strongly polynomial algorithm of Frank and Tardos~\cite{FT}, which on arithmetic input $n,\l N\r$ and numeric input $\l \vew\r$, outputs $\bar \vew\in\Z^n$ with $\|\bar \vew\|_\infty \leq 2^{\Oh(n^3)}N^{\Oh(n^2)}$ such that $\sign(\vew\vex)=\sign(\bar \vew\vex)$ for all $\vex\in\Z^n$ with $\|\vex\|_1<N$.
Since $\l N\r=1+\ceil{\log N}=\Oh(n\log n+ n\log \|A\|_\infty)$ is polynomial in $\l A\r$, this algorithm is also strongly polynomial in our original input. Now, for every two points $\vex,\vez$ feasible in \eqref{IP1} we have $\|\vex-\vez\|_1<2n(2m\|A\|_\infty+1)^m=N$, so that for
any two such points we have $\vew\vex\leq \vew\vez$ if and only if $\bar \vew\vex\leq \bar \vew\vez$,
and therefore we can replace~\eqref{IP1} by the equivalent program
\begin{equation}\label{IP3}
\min\,\left\{\bar \vew\vez\ \mid \vez\in\Z^n,\, A\vez=\bar \veb,\, \bar \vel\leq \vez\leq \bar \veu\right\}, \quad \text{where}
\end{equation}
\[
\|\bar \vew\|_\infty=2^{\Oh(n^3\log n)}\|A\|_\infty^{\Oh(n^3)}
\ \mbox{and hence}\ \l\bar \vew,\bar \veb,\bar \vel,\bar \veu\r\ \mbox{is polynomial in}\ \l A\r.
\]

\subsubsection*{Step 4: Augmentation oracle.}
Starting from the point $\vez$ which is feasible in \eqref{IP3} and using the \eqref{AugIP} oracle, we can solve program~\eqref{IP3} using Lemma~\ref{lem:lambda_oracle_initi} in polynomial time and in a number of arithmetic operations and oracle queries which is polynomial in $n$ and in $\log (f_{\max})$, which is bounded by
$\log\left(n\|\bar \vew\|_\infty\|\bar \veu-\bar \vel\|_\infty\right)$,
which is polynomial in $\l A\r$, and hence strongly polynomially.
\end{proof}

\subsection{Ingredient Definitions and the Master Lemma} \label{sec:master}
Now we shall construct a framework which brings to the fore individual components of the last proof.
Let us introduce the necessary notions, provide some examples, and prove the Master Lemma.
Generally we use the convention that the objects related to an oracle are denoted by its first letter written in the caligraphic font.
The oracle itself is denoted by the letter with the word ``oracle'' displayed over it, and the letter itself denotes a function bounding the time required to realize the oracle.
For example, a relaxation oracle is denoted $\RRR$ and the time it takes to solve the fractional relaxation of~\eqref{IP} to $\epsilon$-accuracy is $\RR(\II,\epsilon)$.

\subsubsection{Relaxation}
The first step in the proof of Theorem~\ref{thm:oracle} is to solve the relaxation of~\eqref{IP}:
\begin{equation} \label{relax}
  \min\left\{f(\vex) \, \mid A\vex=\veb,\, \vel\leq\vex\leq\veu,\, \vex\in\R^{n}\right\} \tag{P} \enspace .
\end{equation}
In the context of non-linear functions we run into the possibility of irrational optima.
Hochbaum and Shanthikumar~\cite[Section 1.2]{HS} argue in favor of the notion of an \emph{$\epsilon$-accurate} optimum, which is a solution of~\eqref{relax} close to some optimum in terms of distance, not necessarily objective value.
Moreover, they show that under reasonable assumptions on the objective such an optimum is also close in terms of objective value.
\begin{definition}[{$\epsilon$-accuracy~\cite{HS}}]
Let $\vex_\epsilon$ be a feasible solution of~\eqref{relax}.
We say that $\vex_\epsilon$ is an \emph{$\epsilon$-accurate solution} if there exists an optimum $\vex^*$ of~\eqref{relax} with $\|\vex^* - \vex_\epsilon\|_\infty \leq \epsilon$.
\end{definition}
\begin{definition}[Approximate relaxation oracle]
An \emph{approximate relaxation oracle $\RRR$} for a matrix $A$ is one that, queried on an instance of~\eqref{relax} with a constraint matrix $A$ and $\epsilon \in \R_{\geq 0}$, returns an $\epsilon$-accurate solution of~\eqref{relax}, or correctly reports that~\eqref{relax} is unbounded or infeasible.
\end{definition}

Such an oracle can be realized in weakly polynomial time by Chubanov's algorithm:
\begin{proposition}[Chubanov~\cite{Chubanov:2016}]
For each separable convex function $f$, an approximate relaxation oracle $\RRR$ for $A$ with error $\epsilon >0$ may be realized in $\poly(n, \la A, \veb, \vel, \veu, 1/\epsilon \ra)$ arithmetic operations.
\end{proposition}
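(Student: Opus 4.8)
The plan is to reduce the separable convex relaxation~\eqref{relax} to a polynomial number of linear programs, each solved by Chubanov's algorithm, following a piecewise-linearization and scaling scheme in the spirit of Hochbaum and Shanthikumar~\cite{HochbaumS:1990}. First I would dispose of the degenerate cases: running Chubanov on the feasibility LP (with a probing linear objective) either certifies that~\eqref{relax} is infeasible or unbounded, or else---using standard bounds on the vertices of rational polyhedra---lets us replace any infinite entry of $\vel,\veu$ by a finite one of encoding length $\poly(\la A,\veb\ra)$. So assume $\vel,\veu$ are finite and set $R \df \|\veu-\vel\|_\infty$, with $\log R = \poly(\la\vel,\veu\ra)$.

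Given a resolution $s>0$ and a current feasible point $\bar\vex$, I would replace each $f_i$ on the relevant window by the piecewise-linear convex function $\hat f_i^{(s)}$ that interpolates $f_i$ at the points $\bar x_i + ks$ ($k\in\Z$); convexity of $f_i$ guarantees $\hat f_i^{(s)}\ge f_i$, equality at the grid points, and convexity of $\hat f_i^{(s)}$. The breakpoint values---equivalently the slopes of the pieces---are obtained from the oracle for $f$ (binary search turns the needed comparisons into polynomially many oracle calls). Adding epigraph variables $z_i$ together with the constraints $z_i \geq (\text{each linear piece of }\hat f_i^{(s)})$ converts $\min\{\sum_i \hat f_i^{(s)}(x_i)\mid A\vex=\veb,\ \vel\le\vex\le\veu\}$ into a linear program of size $\poly(n,\la A,\veb,\vel,\veu\ra,\log(1/s))$, provided the number of breakpoints per coordinate is polynomial; this holds both over the whole box (a geometric grid of $\Oh(\log(R/s))$ breakpoints) and, within the scaling loop, over a window of data-bounded radius where $\Oh(1)$ breakpoints suffice.

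Finally I would iterate over geometrically decreasing scales $s = 2^K, 2^{K-1},\dots,2^{-M}$ with $2^K\ge R$ and $2^{-M}\le \epsilon/\poly(n,\la A\ra)$: at each scale solve the surrogate LP with Chubanov to get a near-optimal $\vex^{(s)}$, then let the next window be the box of radius $\Oh(n\,c_\infty(A)\,s)$ around $\vex^{(s)}$, where $c_\infty(A)\le 2^{\poly(\la A\ra)}$ by Lemma~\ref{lem:bound1}. Correctness hinges on a proximity theorem for separable convex minimization over linear constraints (cf.~\cite{HochbaumS:1990} and Theorem~\ref{thm:proximity}): an optimum of the scale-$s$ surrogate lies within $\ell_\infty$-distance $\Oh(n\,c_\infty(A)\,s)$ of an optimum of~\eqref{relax} and of the scale-$(s/2)$ surrogate, so the window never excludes a true optimum and shrinks by a constant factor per round; after $\Oh(\log(R/\epsilon)) = \poly(\la\vel,\veu,1/\epsilon\ra)$ scales the maintained point is $\epsilon$-accurate. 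Summing one Chubanov LP solve plus $\poly$ oracle calls over all scales yields the claimed bound. The main obstacle is exactly this proximity step: one must show that the minimizer of the \emph{linearized, scaled} program stays within an $\epsilon$-independent, data-polynomial $\ell_\infty$-distance of a genuine minimizer of~\eqref{relax}---this is what both justifies the geometric refinement and delivers the distance (not merely value) guarantee demanded by $\epsilon$-accuracy; a secondary point is checking that the available oracle for $f$ supports building the surrogates to sufficient precision.
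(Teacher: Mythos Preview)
The paper does not prove this proposition at all: it is stated as a direct citation of Chubanov's result~\cite{Chubanov:2016}, with only a remark afterwards about the shape of the running time (roughly $n^4\log n + Tn\log n$ with $T$ the cost of an auxiliary LP). So there is no ``paper's own proof'' to compare against; the authors treat the statement as a black box from the literature.

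Your sketch is not Chubanov's argument but rather an independent construction in the Hochbaum--Shanthikumar style, reducing separable convex minimization to a logarithmic sequence of LPs via piecewise linearization and geometric scaling. That is a legitimate route to a result of this flavor, and you correctly identify the crux: the proximity statement that the minimizer of the scale-$s$ surrogate stays within $\ell_\infty$-distance $O(n\,c_\infty(A)\,s)$ of a true minimizer of~\eqref{relax}. Two caveats are worth flagging. First, the proposition's running time is $\poly(n,\la A,\veb,\vel,\veu,1/\epsilon\ra)$ with no dependence on $f$; in your scheme the encoding length of the surrogate LP depends on the slopes of $f$ at the breakpoints, which can be arbitrarily large, so you would need to argue (e.g.\ by truncation or normalization) that only polynomially many bits of those slopes matter for an $\epsilon$-accurate solution in \emph{distance}. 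Second, the paper specifies $f$ via a comparison oracle, not an evaluation oracle; extracting numerical slope values by binary search then also needs an a~priori bound on their magnitude, which again is not part of the stated input. Neither issue is fatal, but both must be handled to match the claimed complexity.
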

\begin{remark}
A closer inspection of Chubanov's algorithm~\cite[Theorem 12]{Chubanov:2016} reveals that its dependence on $n$ is roughly $n^4 \log n + Tn\log n$ where $T$ is the time needed to solve an auxiliary linear program.
\end{remark}
While this is not a strongly polynomial algorithm, to the best of our knowledge this is the only algorithm which provides an approximate relaxation oracle for non-linear functions.
In particular, it is not clear whether the ellipsoid method can be used to return an $\epsilon$-accurate solution (instead of a solution approximating the objective).

The time complexity of current realizations of the relaxation oracle $\RRR$ dominates the overall dependence on $n$ for our algorithms for~\eqref{IP}.
We thus ask in which cases, particularly when $\td_P(A)$ and $\td_D(A)$ are small, can the dependence on $n$ be reduced?
Regarding strongly polynomial algorithms, the famous algorithm for linear programming of Tardos can be rephrased as
\begin{proposition}[Tardos~{\cite{Tar}}] \label{prop:Tardos}
For each $f(\vex) = \vew \vex$, an approximate relaxation oracle $\RRR$ for $A$ may be realized in $\poly(n, \l A \r)$ arithmetic operations, even with $\epsilon=0$.
\end{proposition}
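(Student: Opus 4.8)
The plan is to derive this directly from Tardos' strongly polynomial algorithm for linear programming~\cite{Tar}, so that the only real work is translating its guarantees into the language of an approximate relaxation oracle. First I would note that with $f(\vex) = \vew\vex$ the problem~\eqref{relax} is an ordinary linear program, and that by Proposition~\ref{prop:purification} we may assume $m \leq n$; hence any bound of the form $\poly(n, m, \l A \r)$ is a bound of the form $\poly(n, \l A \r)$. Bringing the system $A\vex = \veb$, $\vel \leq \vex \leq \veu$ into whatever normalized form Tardos' algorithm expects — e.g.\ introducing slack variables or splitting variables into positive and negative parts — changes $n$, $m$, and $\l A \r$ by at most a constant factor and so does not affect the claimed bound.

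Next I would invoke Tardos' theorem itself: her algorithm performs a number of arithmetic operations polynomial in $n$, $m$, and $\l A \r$ \emph{only} — independent of the encoding lengths of $\vew$, $\veb$, $\vel$, and $\veu$ — and on every input it either reports that the program is infeasible, reports that it is unbounded, or returns an exact optimal solution $\vex^*$. In the last case, taking $\vex_\epsilon \df \vex^*$ witnesses $0$-accuracy, since $\|\vex^* - \vex_\epsilon\|_\infty = 0 \leq \epsilon$ for every $\epsilon \in \R_{\geq 0}$; in particular $\vex^*$ is an $\epsilon$-accurate solution even for $\epsilon = 0$. This is exactly where the linear case beats the general separable convex case: a linear objective attains its optimum at a rational vertex of polynomial encoding length whenever the program is feasible and bounded, whereas a general $f$ may force irrational optima. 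The infeasible and unbounded verdicts match precisely the two exceptional outputs permitted of $\RRR$, so all requirements of an approximate relaxation oracle are met within $\poly(n, \l A \r)$ arithmetic operations.

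There is essentially no obstacle beyond citing~\cite{Tar}: the single substantive point is that Tardos' running time genuinely does not depend on the objective or the right-hand side, and that is the content of her theorem. Were one to insist on a self-contained argument, the hard part would be reproducing Tardos' rounding scheme — iteratively replacing $\vew$ (and, in the feasibility phase, $\veb$) by integer vectors of encoding length $\poly(n, \l A \r)$ that preserve the optimal face of the polyhedron, and bounding the number of such rounds by a polynomial in $n$ and $m$ using the sign pattern of an optimal dual slack vector. That analysis is the genuinely nontrivial ingredient of the original proof, but for our purposes it is imported wholesale as a black box.
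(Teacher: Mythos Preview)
Your proposal is correct and is essentially what the paper does: it treats the proposition as a direct restatement of Tardos' theorem and provides no separate proof, simply citing~\cite{Tar}. Your translation of Tardos' guarantees into the language of the approximate relaxation oracle is exactly the intended content, and the paper likewise relies on the black-box invocation you describe.
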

Granot and Skorin-Kapov~\cite{GranotS:1990} partially extend Tardos' algorithm to quadratic programming:
\begin{proposition}[{Granot and Skorin-Kapov~\cite[Theorem 3.5]{GranotS:1990}}]
For each $f(\vex) = \frac{1}{2}\vex^\intercal D \vex + \vew \vex$, an approximate relaxation oracle $\RRR$ for $A$ may be realized in $\poly(n, \Delta)$ arithmetic operations even with $\epsilon=0$, where $\Delta$ is the maximal absolute subdeterminant of $(D,A^{\transpose}, -I)$.
\end{proposition}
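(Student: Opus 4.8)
The plan is to follow Tardos' scheme (Proposition~\ref{prop:Tardos}) verbatim in structure, replacing its linear-programming sensitivity analysis by a convex-quadratic one. First I would rewrite the relaxation in the form $\min\{\frac{1}{2}\vex^\intercal D \vex + \vew\vex \mid A\vex = \veb,\ B\vex \le \ved\}$ with $D \succeq 0$ (convexity of $f$), where $B = \left(\begin{smallmatrix} I \\ -I \end{smallmatrix}\right)$ and $\ved = \left(\begin{smallmatrix} \veu \\ -\vel \end{smallmatrix}\right)$ absorb the box constraints. The Karush--Kuhn--Tucker conditions state that $\vex$ is optimal iff there exist multipliers $\vey$ and $\vez \ge \vezero$ with $D\vex + \vew + A^\intercal \vey + B^\intercal \vez = \vezero$, $A\vex = \veb$, $B\vex \le \ved$, and $\vez^\intercal(\ved - B\vex) = \vezero$; once the active set $J = \{j : (B\vex)_j = d_j\}$ is fixed, this collapses to a square linear system whose coefficients are entries of $D$, $A$, or $\pm I$, so by Cramer's rule every determinant that appears is, in absolute value, at most the quantity $\Delta$ of the statement.

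The technical core I would need is a \emph{quadratic sensitivity lemma}: if $\bar\vew$ is obtained from $\vew$ so that $\|2^k \bar\vew - \vew\|_\infty$ is small relative to $1/(n\Delta)^{\Oh(1)}$ for a suitable power of two $2^k$, then some optimum of the $\bar\vew$-problem has the same active set as some optimum of the $\vew$-problem; likewise for rounding $\veb$ (i.e.\ $\ved$). The proof idea is that the optimal-value function $\vew \mapsto \mathrm{val}(\vew)$ of a convex QP is piecewise quadratic, its breakpoints (the costs at which the optimal face changes) being controlled by subdeterminants of the KKT system, so a perturbation finer than the smallest breakpoint gap cannot change which face is optimal. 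This is exactly the QP analogue of Tardos' observation that the optimal dual vertex of an LP depends on the cost only through the signs of $\vew\cdot\vecc$ over circuits $\vecc$; here the bases of the KKT system play the role of circuits, and the bound $\Delta$ on their determinants is what makes the required rounding precision $\poly(n,\Delta)$ rather than dependent on $\|\vew\|_\infty$ or $\|D\|_\infty$.

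With this lemma in hand the algorithm is the usual iteration: scale and round $\vew, \veb, \ved$ to vectors of magnitude $\poly(n,\Delta)$ at the precision the lemma dictates; solve the resulting QP, whose data has binary length $\Oh(\l\Delta\r + \log n)$, by any weakly polynomial convex-QP algorithm (the ellipsoid method suffices), costing $\poly(n, \l\Delta\r) = \poly(n,\Delta)$ arithmetic operations; from its solution identify one inequality of $B\vex \le \ved$ that is strictly satisfied at every optimum of the original problem (discard it) or provably tight (promote it to an equality). Each round removes one free inequality, so after $\Oh(n)$ rounds the box constraints are all resolved and the optimum of the remaining equality-constrained QP is computed exactly by solving a single linear system, which delivers an $\epsilon = 0$ optimum; unboundedness or infeasibility is caught and reported by the QP solver along the way. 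Multiplying out, the whole procedure runs in $\poly(n,\Delta)$ arithmetic operations.

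The hard part will be the sensitivity lemma. Unlike in LP, the optimum of a convex QP need not be a vertex and may have large denominators, so the lemma has to be phrased as ``the active set is stable under rounding'' --- which is true --- rather than ``the optimum is stable'', which is false; and one must track carefully how the quadratic block $D$ enters the KKT determinants to be sure the rounding precision, and hence the running time, comes out as $\poly(n,\Delta)$ with $\Delta$ the subdeterminant bound of the combined matrix $(D, A^\intercal, -I)$, not of $A$ alone and not inflated by a separate $\|D\|_\infty$ factor.
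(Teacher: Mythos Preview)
The paper does not prove this proposition at all: it is stated as a citation of an external result (Granot and Skorin-Kapov~\cite[Theorem 3.5]{GranotS:1990}) and used as a black box, with no accompanying argument. So there is no ``paper's own proof'' to compare your proposal against.

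That said, your sketch is a faithful outline of how the cited result is actually obtained in~\cite{GranotS:1990}: it is precisely the quadratic-programming analogue of Tardos' scheme, driven by a sensitivity/proximity lemma for the KKT system whose precision is governed by subdeterminants of $(D, A^\intercal, -I)$. Your identification of the delicate point --- that stability must be phrased for the optimal active set rather than the optimal point itself, and that $D$ enters the KKT determinants so $\Delta$ must be defined over the combined matrix --- is exactly right. If you want to flesh this out into a self-contained proof, the place to be most careful is the quantitative form of the sensitivity lemma (making the ``small relative to $1/(n\Delta)^{\Oh(1)}$'' precise and showing that a single inequality can be certified as always-slack or always-tight from the rounded solution); the rest is routine once that lemma is in hand.
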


\subsubsection{Proximity}
After (approximately) solving the relaxation~\eqref{relax} we would like to relate its solution to the optimum of~\eqref{IP} so as to reduce the bounds $\vel, \veu$ and subsequently the right hand side $\veb$.
\begin{definition}[Proximity bound]
Let $1 \leq p \leq +\infty$. We say that~$(A,f)$ has an \emph{$\ell_p$-proximity bound $\PP_p(A, f)$} if for any optimum $\vex^* \in \R^n$ of~\eqref{relax}, there exists an optimum $\vez^* \in \Z^n$ of~\eqref{IP} with
$$\|\vex^* - \vez^*\|_p \leq \PP_p(A, f) \enspace .$$
\end{definition}

\begin{example*}
In the proof of Theorem~\ref{thm:oracle} we have used the fact that for any separable convex function $f$, $\PP_\infty(A, f) \leq n g_\infty(A)$ due to Hemmecke, Köppe and Weismantel~\cite{HKW}.
\end{example*}
In Section~\ref{sec:proximity} we will extend the result of~\cite{HKW} to any $\ell_p$-norm.
Note that obviously any $p$-norm proximity bound implies an $\ell_\infty$-norm proximity bound, i.e., $\PP_\infty(A,f) \leq \PP_p(A,f)$ for any $1 \leq p < \infty$.

\subsubsection{Feasibility}
After obtaining an instance of~\eqref{IP} with reduced bounds and right hand side, we need to find an initial feasible integer solution $\vex_0$ in order to start the augmentation procedure.
\begin{definition}[Feasibility oracle]
  A \emph{feasibility oracle $\FFF$} for a matrix $A$ is one which, given $\veb, \vel, \veu$, either computes a feasible integer solution $\vex_0$ with $\la \vex_0 \ra \leq \poly(\la \veb \ra)$ or declares~\eqref{IP} infeasible.
\end{definition}

\begin{example*}
Lemma~\ref{lem:lambda_oracle_initi} shows that fast solvability of \eqref{AugIP} implies a feasibility oracle for $A$, however, with a super-quadratic overhead due to computing the Hermite normal form of $A$, which might dominate the overall time complexity.
\end{example*}
In Section~\ref{sec:feasibility} we show faster feasibility oracles for IPs with bounded $\td_P(A)$ and $\td_D(A)$.

\subsubsection{Reducibility of $f$}
With an initial solution $\vex_0$ at hand it remains to reduce the convergence dependence on the values of $f$.
To that end we seek to replace $f$ with an ``equivalent'' objective.
Actually, we show that because our algorithm only requires $f$ to be represented by a comparison oracle, it is sufficient to guarantee the \emph{existence} of a smaller but ``equivalent'' objective and there is no need to compute it.
Let us define the terms precisely:
\begin{definition}[Equivalent objective] \label{def:f_equivalence}
Let $f, g: \R^n \to \R$ be functions such that $\forall \vex \in \Z^n:\, f(\vex), g(\vex) \in \Z$ and let $\DD \subseteq \Z^n$.
We say that $g$ and $f$ are equivalent on $\DD$ if
\begin{equation} \label{eq:f_equivalence}
\forall \vex, \vey \in \DD: \, f(\vex) \geq f(\vey) \Leftrightarrow g(\vex) \geq g(\vey) \enspace .
\end{equation}
\end{definition}
Naming this notion an ``equivalence'' is justified by the following lemma:
\begin{lemma}
\label{lem:optimality}
Let $\II_f=(A,f,\veb,\vel,\veu)$ and $\II_g=(A,g,\veb,\vel,\veu)$ be instances of~\eqref{IP}, where $f$ and $g$ are equivalent on $[\vel,\veu]$.
 Then $\vex \in [\vel,\veu]$ is an optimum of $\II_f$ if and only if it is an optimum of $\II_g$.
\end{lemma}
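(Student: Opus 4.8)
The plan is to unwind the definition of optimality and exploit the fact that $I_f$ and $I_g$ share the same feasible region. First I would observe that the set of feasible solutions of both instances is exactly $\Sol \df \{\vex \in \Z^n \mid A\vex = \veb,\ \vel \le \vex \le \veu\}$, which is a subset of $[\vel,\veu]$, since the constraint matrix $A$, the right-hand side $\veb$, and the bounds $\vel,\veu$ coincide in $I_f$ and $I_g$. Hence ``$\vex$ is an optimum of $I_f$'' unfolds to ``$\vex \in \Sol$ and $f(\vex) \le f(\vey)$ for every $\vey \in \Sol$'', and analogously for $I_g$.

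Next I would fix $\vex \in [\vel,\veu]$ and assume it is an optimum of $I_f$; in particular $\vex \in \Sol$, so $\vex$ is feasible for $I_g$ as well. For an arbitrary $\vey \in \Sol$ we have $\vex,\vey \in [\vel,\veu]$, so the equivalence~\eqref{eq:f_equivalence} applies to the pair $(\vey,\vex)$: from $f(\vey) \ge f(\vex)$ we obtain $g(\vey) \ge g(\vex)$. As $\vey \in \Sol$ was arbitrary, $\vex$ minimises $g$ over the feasible region, i.e.\ $\vex$ is an optimum of $I_g$. The converse implication follows by the identical argument with the roles of $f$ and $g$ interchanged, which is legitimate because the relation in~\eqref{eq:f_equivalence} is visibly symmetric in $f$ and $g$.

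I do not expect any real obstacle here; the one point that requires a moment's care is that~\eqref{eq:f_equivalence} is assumed on all of $[\vel,\veu]$ rather than only on $\Sol$, and it is precisely this that lets us invoke it for pairs $(\vex,\vey)$ with $\vey$ ranging over all feasible points. I would also note in passing that both the statement and the proof go through verbatim if $[\vel,\veu]$ is replaced by any domain $\DD \supseteq \Sol$.
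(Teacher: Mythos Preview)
Your proposal is correct and follows essentially the same approach as the paper: both unfold the definition of optimality over the common feasible region and apply the equivalence~\eqref{eq:f_equivalence} pointwise. The paper's proof is just a terser chain of biconditionals, while you spell out feasibility of $\vex$ and the symmetry in $f,g$ explicitly.
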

\begin{proof}
By Definition~\ref{def:f_equivalence}, we have
\begin{alignat*}{3}
\vex \in [\vel,\veu] \text{ is optimal for } \II_f &\Leftrightarrow f(\vex) \leq f(\vez) \quad \forall \vez \in [\vel,\veu],\, A\vez=\veb \\
&\Leftrightarrow g(\vex) \leq g(\vez) \quad \forall \vez \in [\vel,\veu],\, A\vez=\veb &
\Leftrightarrow \vex \in [\vel,\veu] \text{ is optimal for } \II_g. \qedhere
\end{alignat*}
\end{proof}
\begin{definition}[$\rho$-reducibility] \label{def:reducibility}
Let $\rho: \N \to \N$. 
We say that a linear or separable convex function $f: \R^n \to \R$ is \emph{$\rho$-reducible} if, for every $N \in \N$, there exists a linear or separable convex function $g: [-N,N]^n \to \Z$, respectively, which is equivalent to $f$ on $[-N,N]^n$ and $g_{\max}^{[-N,N]^{n}} \leq \rho(N)$.
\end{definition}

The famous result of Frank and Tardos we used in the proof of Theorem~\ref{thm:oracle} can be rephrased as:
\begin{proposition}[{\cite{FT}}] \label{prop:FT}
Every linear function $f: \R^n \to \R$ is $\left(2^{n^3}N^{n^2}\right)$-reducible.
\end{proposition}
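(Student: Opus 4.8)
The plan is to obtain Proposition~\ref{prop:FT} as a direct repackaging of the Frank--Tardos theorem~\cite{FT} (in the form recalled inside the proof of Theorem~\ref{thm:oracle}), applied with the parameter tuned so that sign-preservation covers all \emph{differences} of points in the box $[-N,N]^n$. Write $f(\vex)=\vew\vex$; since $f$ is integer-valued on $\Z^n$ we may take $\vew\in\Z^n$ (evaluate $f$ at the unit vectors), and in any case the Frank--Tardos procedure accepts rational input. Fix $N\in\N$. The key observation is that for a \emph{linear} objective, comparing objective values on $[-N,N]^n$ is exactly a sign test for $\vew$ against a difference vector:
\[
\forall\,\vex,\vey\in[-N,N]^n:\quad f(\vex)\ge f(\vey)\ \Longleftrightarrow\ \sign\bigl(\vew(\vex-\vey)\bigr)\ge 0,
\]
and every such $\vex-\vey$ is an integer vector with $\|\vex-\vey\|_1\le 2nN$.

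Concretely, I would call the Frank--Tardos algorithm on $\vew$ with parameter $2nN+1$. It returns $\bar\vew\in\Z^n$ with $\sign(\bar\vew\vez)=\sign(\vew\vez)$ for every $\vez\in\Z^n$ with $\|\vez\|_1<2nN+1$, and with $\|\bar\vew\|_\infty\le 2^{\Oh(n^3)}(2nN)^{\Oh(n^2)}=2^{\Oh(n^3)}N^{\Oh(n^2)}$, using $(2n)^{\Oh(n^2)}=2^{\Oh(n^2\log n)}=2^{\Oh(n^3)}$. Then set $g(\vex)\df\bar\vew\vex$, a linear $\Z$-valued function; the displayed equivalence together with $\|\vex-\vey\|_1\le 2nN$ shows immediately that $g$ is equivalent to $f$ on $[-N,N]^n$ in the sense of Definition~\ref{def:f_equivalence}. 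Finally, bound the magnitude of $g$ on the box:
\[
g_{\max}^{[-N,N]^n}=\max_{\vex\in[-N,N]^n}|\bar\vew\vex|\le\|\bar\vew\|_\infty\cdot\max_{\vex\in[-N,N]^n}\|\vex\|_1\le nN\,\|\bar\vew\|_\infty\le 2^{\Oh(n^3)}N^{\Oh(n^2)},
\]
which is the asserted bound $\rho(N)=2^{n^3}N^{n^2}$ up to the constants hidden in the $\Oh(\cdot)$'s.

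The only genuine obstacle here is cosmetic rather than conceptual: collapsing $2^{\Oh(n^3)}N^{\Oh(n^2)}$ to the literal expression $2^{n^3}N^{n^2}$ requires substituting the explicit constant from the Frank--Tardos norm bound and carefully absorbing (i) the extra factor $nN$ from the norm estimate of $g$ and (ii) the factor $(2n)^{\Oh(n^2)}$ arising from replacing the parameter $N$ by $2nN$, into the leading exponents; this is routine bookkeeping but is the one place where one must be attentive to constants. Everything conceptual --- the reduction, for linear $f$, of ``equivalence on a box'' to ``sign-invariance on an $\ell_1$-ball of radius $2nN$'', the construction $g=\bar\vew\vex$, and the estimate of $g_{\max}$ --- is straightforward. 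Note also that Definition~\ref{def:reducibility} asks only for the \emph{existence} of such a $g$ (which is all our algorithms need, since $f$ is accessed through a comparison oracle), whereas the argument above in fact produces $g$ explicitly in strongly polynomial time.
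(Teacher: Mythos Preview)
Your proposal is correct and matches the paper's treatment: Proposition~\ref{prop:FT} is stated in the paper as a direct rephrasing of the Frank--Tardos theorem (with no separate proof given), and your unpacking---apply Frank--Tardos with parameter $2nN+1$ so that sign-preservation covers all differences $\vex-\vey$ with $\vex,\vey\in[-N,N]^n$, set $g=\bar\vew\vex$, and bound $g_{\max}^{[-N,N]^n}\le nN\|\bar\vew\|_\infty$---is exactly the intended derivation. Your remark that the literal constants in $2^{n^3}N^{n^2}$ require absorbing the $(2n)^{\Oh(n^2)}$ and $nN$ factors is accurate; the paper itself writes the Frank--Tardos bound with $\Oh(\cdot)$ exponents in the proof of Theorem~\ref{thm:oracle}, so this is indeed cosmetic.
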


\begin{remark}
The definition of $\rho$-reducibility speaks of equivalence on the $[-N,N]^n$ box, but the fact that $[-N,N]^n$ is centered around the origin (or that it contains it at all) will be of no importance to us for the following reason.
Let $[\vel, \veu] \subseteq \Z^{n}$ be a box with largest side $N \df \|\veu-\vel\|_\infty$.
Our positive statements about reducibility (Theorem~\ref{thm:lin_red} and Corollary~\ref{cor:sepconvex_red}) hold for \emph{all} linear or separable convex functions, respectively.
Given this, notice that a separable convex function $f$ on $[\vel, \veu]$ is isomorphic to another separable convex function $f'$ on $[-N,N]^n$ (more precisely, on some $[\vel', \veu'] \subseteq [-N,N]^n$ with $u'_i - l'_i = u_i - l_i$), hence we are guaranteed a reduced $g$ on $[\vel, \veu]$ by the existence of a reduced $g'$ on $[-N,N]^n$.
We use the fact that separable convex functions are closed under translation.
For linear functions, we may just observe that any $f(\vex) = \vew \vex$ is already isomorphic to itself on $[\vel, \veu]$ and $[-N,N]^n$.
\end{remark}

\subsubsection{Augmentation}
\begin{definition}[Augmentation oracle]
	An \emph{augmentation oracle} $\AAA$ for an integer matrix $A$ is one that, queried on an instance of~\eqref{IP} whose constraint matrix is $A$, and a feasible solution $\vex$, either returns a feasible augmenting step $\veg \in \Z^n$, or correctly declares $\vex$ optimal.
\end{definition}

Let $\AAA$ be an augmentation oracle.
The \emph{$\AAA$-augmentation procedure} for~\eqref{IP} with a given feasible solution $\vex_0$ works as follows. Let $i \df 0$.
\begin{enumerate}
	\item \label{generic_step1}Query $\AAA$ on~\eqref{IP} and $\vex_i$.
	If $\AAA$ declares $\vex_i$ optimal, return it.
	\item Otherwise, $\AAA$ returns an augmenting step $\veh_i$ for $\vex_i$. Set $\vex_{i+1} := \vex_i + \veh_i$, $i \df i+1$, and go to \ref{generic_step1}.
\end{enumerate}
%

\begin{example*}
	We have seen that a halfling oracle, i.e., one which returns a halfling, converges in
	$3n \log \left(f_{\max}\right)$ iterations (Lemma~\ref{lem:halfling}) and can be realized by solving $(\log \|\veu - \vel\|_\infty) + 1$ instances of \eqref{AugIP}.
	A Graver-best oracle, i.e., one which returns a Graver-best step, converges in $1.5n \log f_{\max}$ steps.
	However, it is not in general known to be realizable faster than by solving \eqref{AugIP} for all possible $\lambda \in [-\|\veu-\vel\|_\infty, \|\veu-\vel\|_\infty]$, thus much slower than a halfling oracle.
\end{example*}

We denote by $\AA\left(\|\veu-\vel\|_\infty, f_{\max}^{[\vel,\veu]}\right)$ the time it takes to realize one call to $\AAA$, and denote by $\AAap\left(\|\veu-\vel\|_\infty, f_{\max}^{[\vel,\veu]}\right)$ the number of operations needed to realize the $\AAA$-augmentation procedure.
If $\sigma$ is the number of iterations in which $\AAA$ converges, then clearly $\AAap\left(\|\veu-\vel\|_\infty, f_{\max}^{[\vel,\veu]}\right) \leq \AA(\|\veu-\vel\|_\infty) \cdot \sigma$.
However, in Section~\ref{sec:almostlinear} we will show that, by the fact that $\AAA$ is always queried on a point $\vex_{i}$ obtained from $\vex_{i-1}$ based on $\AAA$'s answer (which is not arbitrary but under our control), the $\AAA$-augmentation procedure may be realizable significantly faster.

A neat property of the halfling augmentation procedure is that its convergence guarantee (Lemma~\ref{lem:halfling}) only depends on $f$ ``non-constructively''.
Specifically, for every function $g$ equivalent to $f$ on $[\vel, \veu]$ we can bound the number of iterations by $3n\log (g^{[\vel, \veu]}_{\max})$ instead of $3n\log (f^{[\vel, \veu]}_{\max})$.
We define this property formally now:

\begin{proposition}[Convergence with equivalent objective] \label{prop:converge_comparison}
	If an $\AAA$ only requires $f$ to be given by a comparison oracle, then for any~\eqref{IP} instance
	\[
	\AAap\left(\|\veu-\vel\|_\infty, f^{[\vel,\veu]}_{\max} \right) \leq \min_{\substack{g: \, \Z^n \to \Z\\ g \text{ is equivalent to $f$ on $[\vel,\veu]$}}} \AAap\left(\|\veu-\vel\|_\infty, g^{[\vel,\veu]}_{\max}\right) \enspace . \qedhere
	\]
\end{proposition}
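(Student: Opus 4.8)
The plan is to unwind the definitions and observe that the claimed inequality is essentially a tautology once one notices that the $\AAA$-augmentation procedure never actually inspects the numeric values of $f$—it only compares them. First I would fix an arbitrary $g:\Z^n\to\Z$ equivalent to $f$ on $[\vel,\veu]$ and argue that running the $\AAA$-augmentation procedure on the instance $(A,f,\veb,\vel,\veu)$ and on the instance $(A,g,\veb,\vel,\veu)$ produces, step by step, the same sequence of iterates $\vex_0,\vex_1,\dots$. This is because by hypothesis $\AAA$ accesses $f$ only through a comparison oracle, and by Definition~\ref{def:f_equivalence} the comparison oracle for $f$ and the comparison oracle for $g$ return identical answers on all queries confined to $[\vel,\veu]$; since every iterate $\vex_i$ is feasible and hence lies in $[\vel,\veu]$, and since the augmenting steps $\veh_i$ returned by $\AAA$ keep us inside $[\vel,\veu]$, every query made during the run stays in that box. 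Consequently the two runs are indistinguishable from $\AAA$'s point of view: the same augmenting steps are returned, the same optimality declaration is made, and the number of iterations is identical.

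Next I would account for the cost. The number of arithmetic operations $\AAap(\cdot,\cdot)$ of the procedure on the $f$-instance equals that on the $g$-instance, because the two computations are literally the same sequence of operations. On the other hand, $\AAap$ is by its very definition (and by the way it is bounded throughout the paper, e.g.\ via Lemma~\ref{lem:halfling}) a quantity that depends on the objective only through $\|\veu-\vel\|_\infty$ and the quantity $f^{[\vel,\veu]}_{\max}$ plugged into the convergence bound; so the cost of the $g$-run is bounded by $\AAap(\|\veu-\vel\|_\infty,\,g^{[\vel,\veu]}_{\max})$. Chaining these two observations: for every equivalent $g$ we have
\[
\AAap\!\left(\|\veu-\vel\|_\infty, f^{[\vel,\veu]}_{\max}\right)=\AAap_g\!\left(\|\veu-\vel\|_\infty\right)\leq \AAap\!\left(\|\veu-\vel\|_\infty, g^{[\vel,\veu]}_{\max}\right),
\]
where $\AAap_g$ denotes the operation count of the actual run on the $g$-instance. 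Taking the minimum over all such $g$ on the right-hand side yields the claim.

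**The main subtlety**—and the only place the argument needs care—is making precise what ``$\AAA$ only requires $f$ to be given by a comparison oracle'' buys us, namely that the entire transcript of $\AAA$'s interaction (and of the surrounding augmentation loop) is a function of the \emph{comparison pattern} of the objective on $[\vel,\veu]$ and nothing else. I would state this as a short lemma or remark: if an algorithm's only access to $f$ is through queries ``is $f(\vex)\ge f(\vey)$?'' with $\vex,\vey$ ranging over a set $\DD$, then its output and running time are unchanged if $f$ is replaced by any $g$ equivalent to $f$ on $\DD$. The one thing to double-check is that the augmentation loop genuinely keeps all queried points feasible (so that $\DD=[\vel,\veu]$ suffices): the loop starts from a feasible $\vex_0$, and $\AAA$ by definition returns feasible augmenting steps, so $\vex_{i+1}=\vex_i+\veh_i\in[\vel,\veu]$ for all $i$—there is no point at which $\AAA$ would need to compare objective values outside the box. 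With that observation in place the proof is a one-paragraph argument.

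\begin{proof}[Proof of Proposition~\ref{prop:converge_comparison}]
Fix an~\eqref{IP} instance $(A,f,\veb,\vel,\veu)$ and an initial feasible solution $\vex_0$, and let $g:\Z^n\to\Z$ be any function equivalent to $f$ on $[\vel,\veu]$.
Consider running the $\AAA$-augmentation procedure twice: once on $(A,f,\veb,\vel,\veu)$ and once on $(A,g,\veb,\vel,\veu)$, both starting from $\vex_0$.
We claim the two runs are identical.
The procedure maintains a feasible iterate, starting from $\vex_0\in[\vel,\veu]$; whenever $\AAA$ is queried it returns a \emph{feasible} augmenting step, so each subsequent iterate $\vex_{i+1}=\vex_i+\veh_i$ again lies in $[\vel,\veu]$.
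Hence every objective comparison performed during either run involves two points of $[\vel,\veu]$.
Since $\AAA$ accesses its objective only through a comparison oracle and, by Definition~\ref{def:f_equivalence}, $f(\vex)\ge f(\vey)\Leftrightarrow g(\vex)\ge g(\vey)$ for all $\vex,\vey\in[\vel,\veu]$, the comparison oracles for $f$ and $g$ return identical answers on every query that arises.
Therefore $\AAA$ returns the same augmenting steps and the same optimality declarations in both runs, the two sequences of iterates coincide, and both runs execute exactly the same sequence of arithmetic operations.
In particular the number of operations used by the run on $(A,g,\veb,\vel,\veu)$ equals $\AAap\!\left(\|\veu-\vel\|_\infty, f^{[\vel,\veu]}_{\max}\right)$, while by definition of $\AAap$ that run uses at most $\AAap\!\left(\|\veu-\vel\|_\infty, g^{[\vel,\veu]}_{\max}\right)$ operations.
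Thus $\AAap\!\left(\|\veu-\vel\|_\infty, f^{[\vel,\veu]}_{\max}\right)\le \AAap\!\left(\|\veu-\vel\|_\infty, g^{[\vel,\veu]}_{\max}\right)$ for every such $g$, and taking the minimum over all $g$ equivalent to $f$ on $[\vel,\veu]$ gives the statement.
\end{proof}
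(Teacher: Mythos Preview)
Your proof is correct and matches the paper's intended reasoning. The paper itself provides no separate proof---note the \verb|\qedhere| placed inside the displayed inequality, signalling that the statement is taken as immediate from the definitions---so your detailed unpacking of why the two runs are literally identical (same comparison answers on $[\vel,\veu]$, hence same transcript, hence same operation count) is exactly the justification the paper leaves implicit.
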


\begin{example*}
	Since in all of the previous exposition we have only required $f$ to be given by a comparison oracle, Proposition~\ref{prop:converge_comparison} applies.
	In particular this is true of the halfling augmentation oracle for any separable convex function $f$, and in the proof of Lemma~\ref{lem:halfling}, $f$ may be replaced with any equivalent $g$, without $g$ being required for the execution of the algorithm.
\end{example*}

\subsubsection{Master Lemma}
\begin{definition}[An $(\AAA,\RRR,\PP_p,\FFF, \rho)$-equipped tuple]
Let $A \in \Z^{m \times n}$ be a matrix and $f: \R^n \to \R$ be a separable convex function.
We say that $(A,f)$ is \emph{$(\AAA,\RRR,\PP_p,\FFF, \rho)$-equipped} if the tuple $(\AAA,\RRR,\PP_p,\FFF, \rho)$ is such that, for any instance $\II=(A, f, \veb, \vel, \veu)$ of~\eqref{IP},
\begin{itemize}
\item $\AAA$ is an augmentation oracle which realizes the $\AAA$-augmentation procedure in time $\AAap(\|\veu-\vel\|_\infty, f_{\max}^{[\vel, \veu]})$ and accepts $f$ represented by a comparison oracle,
\item $\RRR$ is an approximate relaxation oracle which is realizable in $\mathcal{R}(\II, \epsilon)$ arithmetic operations,
\item $\mathcal{P}_p(A,f)$ for some $p \geq 1$ is a proximity bound for $(A,f)$,
\item $\FFF$ is a feasibility oracle realizable in $\mathcal{F}(\|\veu-\vel\|_\infty, \|\veb\|_\infty, \|A\|_\infty)$ arithmetic operations,
\item $\rho: \N \to \N$ is a function such that $f$ is $\rho$-reducible.
\end{itemize}
\end{definition}

\begin{lemma}[Master lemma] \label{lem:stronglypoly_master}
Let $\II=(A,f,\veb, \vel, \veu)$ be an instance of~\eqref{IP} such that $(A,f)$ is $(\AAA,\RRR,\PP_p,\FFF, \rho)$-equipped.
Let $N = 4\PP_p(A,f)$.
Then $\II$ is solvable in time
  \begin{numcases}{\min}
    \mathcal{R}(\II,N/4) + \mathcal{F}(N, N \|A\|_\infty n, \|A\|_\infty) + \AAap(N,\rho(N/2)) \label{lem:stronglypoly_master:1}\\
    \mathcal{F}(\|\veu-\vel\|_\infty, \|\veb\|_\infty, \|A\|_\infty) + \AAap\left(\|\veu-\vel\|_\infty, f_{\max}^{[\veu, \vel]}\right) \enspace . \label{lem:stronglypoly_master:2}
  \end{numcases}
\end{lemma}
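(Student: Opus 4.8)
The plan is to describe two algorithms, one realizing each bound in the $\min$, and argue correctness and running time for each. The second bound~\eqref{lem:stronglypoly_master:2} is essentially what we already did in the first half of the paper: call the feasibility oracle $\FFF$ on $(\veb, \vel, \veu)$ to either decide infeasibility or obtain an initial feasible solution $\vex_0$ with $\la \vex_0 \ra \leq \poly(\la \veb \ra)$, and then run the $\AAA$-augmentation procedure starting from $\vex_0$. This terminates with an optimum of~\eqref{IP} (by definition of an augmentation oracle) in time $\FF(\|\veu-\vel\|_\infty, \|\veb\|_\infty, \|A\|_\infty) + \AAap(\|\veu-\vel\|_\infty, f_{\max}^{[\veu, \vel]})$, which is exactly~\eqref{lem:stronglypoly_master:2}. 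Running both algorithms in parallel and returning the answer of whichever halts first then yields the $\min$.

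For the first bound~\eqref{lem:stronglypoly_master:1}, I would mimic the four steps of the proof of Theorem~\ref{thm:oracle}, but now tracking the contribution of each oracle instead of merely asserting ``strongly polynomial''. \textbf{Step 1 (relaxation + proximity).} Query $\RRR$ on~\eqref{relax} with accuracy $\epsilon \df N/4 = \PP_p(A,f)$; this costs $\RR(I, N/4)$. If it reports infeasibility or unboundedness, handle as in Theorem~\ref{thm:oracle} (in the unbounded case one must be slightly careful, but the argument is the same: either declare~\eqref{IP} infeasible or unbounded after checking feasibility). Otherwise we obtain an $\epsilon$-accurate $\vex_\epsilon$ with $\|\vex^* - \vex_\epsilon\|_\infty \leq \epsilon$ for some optimum $\vex^*$ of~\eqref{relax}; by the proximity bound there is an integer optimum $\vez^*$ of~\eqref{IP} with $\|\vex^* - \vez^*\|_p \leq \PP_p(A,f)$, hence $\|\vez^* - \rd{\vex_\epsilon}\|_\infty \leq \|\vez^*-\vex^*\|_p + \|\vex^*-\vex_\epsilon\|_\infty + 1 \leq 2\PP_p(A,f) + 1 \leq N/2$ after absorbing the $+1$ into integrality as in Theorem~\ref{thm:oracle}. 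Translating $\vex = \vez + \rd{\vex_\epsilon}$ reduces~\eqref{IP} to an equivalent instance $I'$ with new bounds $\bar\vel, \bar\veu$ satisfying $\|\bar\veu - \bar\vel\|_\infty \leq N$, and consequently a new right-hand side $\bar\veb = \veb - A\rd{\vex_\epsilon}$ with $\|\bar\veb\|_\infty \leq n\|A\|_\infty \cdot N$. \textbf{Step 2 (feasibility).} Apply $\FFF$ to $I'$; since $\|\bar\veu-\bar\vel\|_\infty \leq N$ and $\|\bar\veb\|_\infty \leq N\|A\|_\infty n$ this costs $\FF(N, N\|A\|_\infty n, \|A\|_\infty)$, and either declares $I'$ (hence~\eqref{IP}) infeasible or returns a feasible $\vex_0$. \textbf{Step 3 (reducibility).} Since $f$ is $\rho$-reducible and every feasible point of $I'$ lies in $[-N/2, N/2]^n$ (taking $N$ a multiple of $2$; after the shift the bounds have absolute value at most $N/2$), there exists a separable convex $g$ equivalent to $f$ on $[\bar\vel, \bar\veu]$ with $g_{\max}^{[\bar\vel,\bar\veu]} \leq \rho(N/2)$. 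By Lemma~\ref{lem:optimality}, solving $I'$ with $g$ in place of $f$ gives the same optimum. \textbf{Step 4 (augmentation).} Run the $\AAA$-augmentation procedure from $\vex_0$ on $I'$. Because $\AAA$ only requires $f$ via a comparison oracle, Proposition~\ref{prop:converge_comparison} lets us bound its running time by $\AAap(\|\bar\veu-\bar\vel\|_\infty, g_{\max}^{[\bar\vel,\bar\veu]}) \leq \AAap(N, \rho(N/2))$ — crucially \emph{without} ever computing $g$; we execute the procedure with the original comparison oracle for $f$. Summing the four steps gives~\eqref{lem:stronglypoly_master:1}.

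The main obstacle I anticipate is bookkeeping the constants in the proximity/rounding argument so that the reduced bounds genuinely fit inside the domains on which $\rho$-reducibility and the oracle time-bounds are stated: one must verify that choosing $N = 4\PP_p(A,f)$ makes $\|\vez^*-\rd{\vex_\epsilon}\|_\infty \leq N/2$ (so that recentering at $\rd{\vex_\epsilon}$ and clamping the bounds to $[-N/2,N/2]$ does not cut off the integer optimum), and that the resulting $\|\bar\veb\|_\infty$ is at most $N\|A\|_\infty n$ as claimed. The other delicate point is the unbounded/infeasible case of the relaxation, where $\vex^*$ may not exist; there one argues exactly as in Theorem~\ref{thm:oracle} that~\eqref{IP} is then infeasible or unbounded, and an unbounded certificate is detected by the feasibility oracle together with an unbounded direction, or one falls back to bound~\eqref{lem:stronglypoly_master:2}. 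Everything else is a routine assembly of the already-established ingredients.
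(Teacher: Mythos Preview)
Your proposal is correct and essentially identical to the paper's proof: the paper also says it ``generalizes the proof of Theorem~\ref{thm:oracle}'' and carries out exactly your four steps (relaxation with $\epsilon=N/4$, triangle-inequality proximity to get $\|\vex_\epsilon-\vez^*\|_\infty\le N/2$, translation by $\lfloor\vex_\epsilon\rfloor$ and clamping to $[-N/2,N/2]$, then feasibility oracle, then reducibility via Proposition~\ref{prop:converge_comparison}, then augmentation), with bound~\eqref{lem:stronglypoly_master:2} handled by the direct feasibility-plus-augmentation route. The constant bookkeeping you flag as the main obstacle is indeed the only delicate part, and the paper handles it in the same way (and with the same mild sloppiness about the $+1$ from rounding).
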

Obviously, part~\eqref{lem:stronglypoly_master:2} of the Lemma is obtained simply by first calling $\FFF$ and then using the $\AAA$-augmentation procedure, yet we include this statement for future reference.
Moreover, part~\eqref{lem:stronglypoly_master:1} is useful not only for obtaining strongly polynomial algorithms, but also algorithms whose time complexity is weakly polynomial in the ``$\RR(\II,N/4)$'' part and strongly polynomial otherwise.

Before we proceed with the proof, let us demonstrate the lemma by deriving Theorem~\ref{thm:oracle}.
Let $A$ be an integer matrix and let $f(\vex) = \vew \vex$ be a linear function.
If~\eqref{AugIP} is solvable in $\AA(\|\veu - \vel\|_\infty)$ arithmetic operations, then $(A,f)$ form a well equipped tuple with the following parameters:
\begin{itemize}
\item $\AAap(\|\veu-\vel\|_\infty, f^{[\vel, \veu]}_{\max}) = 3n\log f^{[\vel, \veu]}_{\max} \cdot \AA(\|\veu-\vel\|_\infty)$ by Lemma~\ref{lem:halfling},
\item $\RR(A,f,\veb, \vel, \veu, \epsilon) = \RR(A,f,\veb, \vel, \veu, 0)= \poly(n, \l A \r)$ by Proposition~\ref{prop:Tardos},
\item $\PP_\infty(A,f) \leq g_\infty(A) \leq (2m\|A\|_\infty+1)^m$ by Theorem~\ref{thm:proximity},
\item $\FF(\|\veu - \vel\|_\infty, \|\veb\|_\infty, \|A\|_\infty) = 3n\AA(\|A,\veb,\veu-\vel\|_\infty) + \Oh(n^{\omega})$ by Lemma~\ref{lem:initial},
\item $\rho(N) \leq 2^{n^3} N^{n^2}$ by Proposition~\ref{prop:FT}.
\end{itemize}
Plugging in, we obtain
\begin{itemize}
\item $N = 4\PP_\infty(A,f) \leq 4(2m\|A\|_\infty+1)^m \leq 4(2n\|A\|_\infty+1)^n$ since $m \leq n$,
\item $\RR(\II,N/4) = \poly(n, \l A \r)$,
\item $\FF(N, N\|A\|_\infty n, \|A\|_\infty) = 3n \log (N) \AA(N) + \Oh(n^{\omega})$,
\item $\AAap(N, \rho(N/2)) \leq 3n \log(2^{n^3} N^{n^2}) \AA(N) = \Oh(n^4 + n^3 \log N) \AA(N)$.
\end{itemize}
and thus,
\begin{align*}
\RR(\II,N/4) + \FF(N, N\|A\|_\infty n) + \AAap(N,\rho(N/2)) \leq& \poly(n, \l A \r) + 3n\log (N) \AA(N) +  \\
&+\poly(\l n, N\|A\|_\infty n \r) + \Oh(n^4 + n^3 \log N)\AA(N).
\end{align*}
Since the encoding length of $N$ is polynomial in $n$, the whole time complexity is strongly polynomial.
\begin{proof}[Proof of Lemma~\ref{lem:stronglypoly_master}]
We generalize the proof of Theorem~\ref{thm:oracle}.
First, use the relaxation oracle $\RRR$ on $\II$ with $\epsilon=N/4=P_\infty(A,f)$, in time $\mathcal{R}(\II,N/4)$ obtaining an $N/4$-accurate solution $\vex_\epsilon$.
By triangle inequality, there is some integer optimum $\vez^*$ of~\eqref{IP} with $\|\vex_\epsilon - \vez^*\|_\infty \leq N/4 + \PP_\infty(A,f) = N/2$.

We proceed with the variable transformation
$$\bar{\veb} \df \veb - A \floor{\vex_\epsilon}, \,\, \bar{l}_i \df \max \left\{l_i-\floor{(x_\epsilon)_i}, -N/2 \right\}, \,\, \bar{u}_i \df \max \left\{u_i - \floor{(x_\epsilon)_i}, N/2\right\},$$
obtaining the reduced instance~\eqref{IP1}.
If some $\bar{l}_i > \bar{u}_i$ then~\eqref{IP1} is infeasible and so is~\eqref{IP}, so we may assume that $\|\bar{\veu}-\bar{\vel}\|_\infty \leq N$.
We also see that $\|\veb\|_\infty \leq n \|A\|_\infty N$ since any feasible solution $\vez$ of~\eqref{IP1} must satisfy $\|A\vez\|_\infty \leq 2n\|A\|_\infty (N/2)$ by the previous observation.

Now we apply the feasibility oracle $\FFF$ to the reduced instance~\eqref{IP1} which finds a solution $\vex_0$ of~\eqref{IP1} or declares it infeasible, meaning also~\eqref{IP} is infeasible, in time $\mathcal{F}(N, N \|A\|_\infty n, \|A\|_\infty)$.

Finally, $\|\bar{\veu}-\bar{\vel}\|_\infty \leq N$ implies $[\bar{\vel}, \bar{\veu}] \subseteq [-N/2, N/2]^n$ and by $\rho$-reducibility of $f$ there exists a function $g$ equivalent to $f$ on $[\bar{\vel}, \bar{\veu}]$ such that $g^{[\bar{\vel}, \bar{\veu}]}_{\max} \leq \rho(N/2)$.
The $\AAA$-augmentation procedure is then realizable in time $\AAap(N,\rho(N/2))$.
\end{proof}

\subsection{Feasibility Oracles} \label{sec:feasibility}
When working with matrices with small treedepth (or more generally pathwidth and treewidth), Gaussian elimination can be solved more efficiently, as was shown by Fomin et al.~\cite{FominLSPW:2018}.
\begin{proposition}[{Bounded treedepth purification~\cite[Theorem 1.2 ]{FominLSPW:2018}}]\label{prop:tw_pure}
Given $A \in \Z^{m \times n}$, a $\td$-decomposition $F$ of $G_P(A)$ or $G_D(A)$, and $\veb \in \Z^m$, in time $\Oh(\height(F)^2 (n+m))$ one can either declare $A\vex=\veb$ infeasible, or return a pure equivalent subsystem $A'\vex = \veb'$.
\end{proposition}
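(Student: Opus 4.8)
The plan is to run Gaussian elimination on the augmented system $(A\mid\veb)$, but to order the elimination of variables (columns) along the given treedepth decomposition so that fill-in stays local and each pivoting step is cheap. First suppose $F$ is a $\td$-decomposition of $G_P(A)$, so $V(F)=[n]$; write $h\df\height(F)$. Eliminate the variables $x_c$ in a \emph{postorder} of $F$, i.e.\ eliminate $x_c$ only after every variable in the subtree of $F$ rooted at $c$ has been eliminated. To eliminate $x_c$, pick any current equation with a nonzero coefficient at $x_c$ as a pivot, use it to cancel $x_c$ from all other equations (updating the right-hand side simultaneously), and set that equation aside; an equation reduced to $0=\beta$ is dropped if $\beta=0$ and certifies infeasibility otherwise. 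At the end, the original equations that became pivots form a maximal linearly independent subset of the rows, and — if no infeasibility was detected — the subsystem they span is equivalent to $A\vex=\veb$, since every dropped equation lay in the span of the ones processed before it.

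The structural heart of the argument is the invariant that the primal graph of the current not-yet-eliminated system always stays inside $\cl(F)$. Indeed, the support of any single equation is a clique of that primal graph, hence lies on one root-leaf path of $F$; so when $x_c$ is eliminated, the postorder guarantees every equation touching $x_c$ has already lost all variables strictly below $c$ on that path, leaving its support inside the ancestor set $\mathrm{anc}_F(c)$ (of size $\le h$), and the fill-in it incurs only adds edges among $\mathrm{anc}_F(c)\setminus\{c\}$, which are already pairwise adjacent in $\cl(F)$. It follows that over the whole run each equation ever touches at most $h$ distinct variables — those on the path from the root to the deepest variable in its original support — and is modified at most $h$ times, each modification costing $O(h)$; with per-column adjacency lists maintained under these updates, the total running time is $O(h^2 m + h n)=O(h^2(n+m))$ beyond the cost of reading $A$.

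For the case where $F$ is a $\td$-decomposition of $G_D(A)=G_P(A^\transpose)$, transpose: $F$ is a $\td$-decomposition of the primal graph of $M\df A^\transpose$. Adjoin $\veb$ to $M$ as one extra column; since $G_P(M\mid\veb)$ arises from $G_P(M)$ by adding a single vertex, hanging $F$ under a fresh root gives a $\td$-decomposition of it of height $h+1$. Running the elimination above on $(M\mid\veb)$ row-reduces it in time $O((h+1)^2(n+m))=O(h^2(n+m))$; if the column of $\veb$ becomes a pivot then $\veb\notin\mathrm{rowspace}(A)$ and the system is infeasible, and otherwise the pivot columns among the first $m$ index a maximal independent set of rows of $A$, which together with the matching entries of $\veb$ give the desired pure equivalent subsystem. (Alternatively one may recurse directly along the dual decomposition of Corollary~\ref{cor:dual_decomp}, using each purified diagonal block $A_i$ to clean its columns out of the $k_1(F)$ top rows and finishing with those top rows; this is the dual analogue of the postorder above.)

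The one genuinely delicate point is the fill-in and cost bookkeeping: one must verify that the postorder never creates an equation whose support escapes $\cl(F)$, that each original equation is updated only $O(h)$ times and always spans only $O(h)$ variables, and — if one wants to stay in the arithmetic model with polynomially bounded intermediate numbers — that a fraction-free (Bareiss-style) variant achieves all of this without losing the $O(h^2(n+m))$ bound. These are precisely the points carried out by Fomin et al.~\cite{FominLSPW:2018}; everything else is routine linear algebra.
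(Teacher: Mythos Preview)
The paper does not prove this proposition; it cites it from Fomin et al.\ and only remarks that their result is in fact stronger (stated for incidence pathwidth, from which both the primal and dual treedepth versions follow via Lemma~\ref{lem:inc_prim_tw}). Your primal argument is essentially the right one and is more than the paper itself offers: postorder elimination keeps each equation's live support inside the ancestor set of its original deepest variable, so each of the $m$ equations is touched at most $h$ times at cost $O(h)$ per touch.

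Your dual reduction, however, is broken. With $M=A^{\transpose}\in\Z^{n\times m}$ and $\veb\in\Z^m$, adjoining $\veb$ as a column of $M$ is a dimension mismatch. The natural repair---adjoin $\veb^{\transpose}$ as an extra \emph{row} of $M$, i.e.\ work with $(A\mid\veb)^{\transpose}$---does not help either: that new row has support $\{i:b_i\neq0\}$, which for a generic $\veb$ is all of $[m]$, so $G_P\big((A\mid\veb)^{\transpose}\big)$ acquires a clique on all rows and $F$ is no longer a valid $\td$-decomposition of it; the fill-in invariant then fails. (Also, the infeasibility test should read $\veb\notin\mathrm{colspace}(A)$, not rowspace.)

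The clean fix is to run the \emph{dual} of your primal elimination directly on $(A\mid\veb)$: process the \emph{rows} $r\in[m]$ in postorder of $F$. If the $A$-part of row $r$ has become zero, inspect $b_r$ (drop if zero, infeasible otherwise); else pick any column $c$ with a nonzero in row $r$, observe that the support of column $c$ among not-yet-processed rows lies on the root-to-$r$ path of $F$ (this is the dual of your invariant, proved identically), use row $r$ to cancel $c$ from those at most $h$ ancestor rows, and set $r$ aside as a pivot. Each column's support never leaves its original root-leaf path, so the at most $hn$ ever-nonzero $A$-entries are each touched at most $h$ times; together with $O(hm)$ updates to $\veb$ this gives $O(h^2(n+m))$, with the set-aside rows forming the desired pure subsystem. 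Your parenthetical alternative via Corollary~\ref{cor:dual_decomp} is essentially this same idea phrased recursively.
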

We note here that Fomin et al.~\cite{FominLSPW:2018} in fact prove a stronger statement which replaces a $\td$-decomposition with a path decomposition and replaces $G_P(A)$ and $G_D(A)$ with the incidence graph, whose treedepth (and also pathwidth and treewidth) are bounded from above by $\td_P(A)$ and $\td_D(A)$ (see Lemma~\ref{lem:inc_prim_tw}).
They also show that the more general tree decomposition may be used at the cost of increasing the dependence on the parameter from quadratic to cubic.
Because of Proposition~\ref{prop:tw_pure} we again assume that an~\eqref{IP} under consideration has pure $A\vex=\veb$, and do not explicitly account for the cost of purifying the system as it is dominated by the time complexity of other steps.
We also assume that the lower and upper bounds are finite, and discuss how to handle the case when they are not later in Section~\ref{sec:infinite_bounds}.

\medskip

Next, we show how to avoid the overhead of computing HNF in Lemma~\ref{lem:initial} for~\eqref{IP} instances with small $\td_P(A)$ or $\td_D(A)$.
We do this similarly to phase I of the simplex algorithm, via an auxiliary~\eqref{IP} instance with a constraint matrix $A_I \df (A~I) \in \Z^{m \times (n+m)}$.
We will use the notion of a centered instance:
\begin{definition}[Centered instance]
An~\eqref{IP} instance is \emph{centered} if $\vezero \in [\vel,\veu]$.
\end{definition}
Is is easy to see that any instance can be transformed into a centered instance by a simple translation, hence we will omit the proof of the following lemma.
For an~\eqref{IP} instance $\II$, let $\Sol(\II) \df \{\vex \in \Z^n \mid A\vex=\veb, \, \vel \leq \vex \leq \veu\}$ denote the set of feasible solutions of $\II$.
\begin{lemma}[Equivalent centered instance] \label{lem:centered}
Let an~\eqref{IP} instance $\II$ be given, and $\vev \in \Z^n$.
Define an~\eqref{IP} instance $\bar{\II}=(A,\bar{f},\bar{\veb}, \bar{\vel}, \bar{\veu})$ by
$$\bar{\veb} \df \veb - A\vev, \, \bar{\vel} \df \vel - \vev, \, \bar{\veu} \df \veu - \vev, \, \bar{f}(\vex) \df f(\vex -\vev) \enspace .$$
The translation $\tau (\vex) = \vex + \vev$ is a bijection from $\Sol(\bar{\II})$ to $\Sol(\II)$.
Moreover, $\vex$ is an optimal solution of $\bar{\II}$ if and only if $\tau(\vex)$ is an optimal solution of $\II$. 
If $\vev \in [\vel,\veu]$, then $\bar{\II}$ is a centered instance.\qedhere
\end{lemma}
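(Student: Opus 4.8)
The plan is to verify the three claimed properties of the translation $\tau(\vex) = \vex + \vev$ directly, since each follows from a routine substitution. First I would check that $\tau$ maps $\Sol(\bar I)$ into $\Sol(I)$: if $\vex \in \Sol(\bar I)$ then $A\vex = \bar\veb = \veb - A\vev$, so $A(\vex+\vev) = \veb$; and $\bar\vel \le \vex \le \bar\veu$ means $\vel - \vev \le \vex \le \veu - \vev$, i.e. $\vel \le \vex + \vev \le \veu$. Hence $\tau(\vex) \in \Sol(I)$. The inverse map $\tau^{-1}(\vey) = \vey - \vev$ is handled symmetrically, giving that $\tau$ is a bijection $\Sol(\bar I) \to \Sol(I)$.

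Next I would address optimality. By definition $\bar f(\vex) = f(\vex - \vev)$, so for $\vey \in \Sol(I)$ we have $\bar f(\tau^{-1}(\vey)) = \bar f(\vey - \vev) = f(\vey - \vev - \vev)$ — wait, this needs care: actually $\bar f(\vex) = f(\vex - \vev)$ is the wrong direction, so let me instead observe directly that for $\vex \in \Sol(\bar I)$, $\bar f(\vex) = f(\vex - \vev)$, but we want to compare $\bar f(\vex)$ with $f(\tau(\vex)) = f(\vex + \vev)$. Hmm — so in fact the right relation to use is that the statement's definition should read $\bar f(\vex) = f(\vex + \vev)$; assuming the intended reading that makes $\tau$ objective-preserving, we get $\bar f(\vex) = f(\tau(\vex))$ for all $\vex$. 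Then since $\tau$ is a bijection of feasible sets preserving objective values, $\vex$ minimizes $\bar f$ over $\Sol(\bar I)$ iff $\tau(\vex)$ minimizes $f$ over $\Sol(I)$. (In the write-up I would simply state $\bar f(\vex) = f(\vex - \vev)$ as given and note that then $f(\tau^{-1}(\vey)) = \bar f(\vey)$ after relabeling, i.e. that $\tau$ carries $\bar f$-values to $f$-values bijectively; the algebra is a one-line substitution either way.)

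Finally I would confirm that $\bar I$ is centered, i.e. $\vezero \in [\bar\vel, \bar\veu]$: we need $\bar l_i \le 0 \le \bar u_i$, that is $l_i - v_i \le 0 \le u_i - v_i$, equivalently $l_i \le v_i \le u_i$. Since $v_i = \lfloor (l_i + u_i)/2 \rfloor$ and $l_i \le u_i$ are integers, we have $l_i \le (l_i+u_i)/2 \le u_i$, and taking floors preserves $l_i \le v_i$ (as $l_i$ is an integer $\le (l_i+u_i)/2$) and $v_i \le u_i$. This is the only place the specific choice of $\vev$ matters; everywhere else an arbitrary integer translation vector would do.

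The "main obstacle" here is essentially nonexistent — this is a bookkeeping lemma whose proof the authors explicitly say they omit — so the only real subtlety is getting the sign conventions in the definition of $\bar f$ consistent with the direction of $\tau$, and making sure the floor in the definition of $\vev$ genuinely lands in $[l_i, u_i]$ (which fails only if $[l_i,u_i]$ is empty, excluded since we assume feasibility). I would therefore keep the written proof to three or four sentences, one per bullet, and not belabor it.
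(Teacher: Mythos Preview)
The paper explicitly omits the proof of this lemma (``It is easy to see that any instance can be transformed into a centered instance by a simple translation, hence we will omit the proof of the following lemma''), so there is nothing to compare against; your approach is exactly the routine verification one would expect and is correct. You also correctly spotted that the definition $\bar f(\vex) \df f(\vex-\vev)$ in the stated lemma is a sign typo: for $\tau(\vex)=\vex+\vev$ to preserve objective values one needs $\bar f(\vex)=f(\vex+\vev)$, and your write-up should simply note this and proceed with the intended reading.
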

For simplicity, we will usually choose $\vev = \vel$.
When beneficial, we will move to the centered instance $\bar{\II}$, recovering an optimum of $\II$ eventually.

\begin{lemma}[$A_I$ feasibility instance] \label{lem:feas_instance}
For a centered instance of~\eqref{IP}, define $\vel', \veu', \vew' \in \Z^m$ as
\begin{equation*}
w'_i \df \sign(b'_i), \, l'_i \df \min\{0,b_i\}, \, u'_i \df \max \{0, b_i\} \quad \forall i \in [m],
\end{equation*}
and let $\bar{\vel} \df (\vel, \vel')$, $\bar{\veu} \df (\veu, \veu')$.
The vector $(\vezero, \veb)$ is feasible for the instance
	\begin{equation}
\min \{\vew'\vex' \mid A_I(\vex, \vex') = \veb, \, (\vel, \vel') \leq (\vex, \vex') \leq (\veu, \veu'), \, (\vex, \vex') \in \Z^{n+m}\}, \tag{$A_I$-feas IP}\label{eq:auxiliary_feasibility}
\end{equation}
and this instance has an optimum of the form $(\vex, \vezero)$ (of value zero) if and only if~\eqref{IP} is feasible, and then $\vex$ is a feasible solution of~\eqref{IP}.
\end{lemma}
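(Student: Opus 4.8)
The plan is a direct verification from the definitions; the only ``idea'' in the lemma is the choice of $\vel',\veu',\vew'$, and the proof just checks that this choice does what it should. I would split the argument into three short steps: feasibility of the proposed starting point $\bar\vez_0$, nonnegativity of the objective together with a characterization of its zero set, and translating ``objective value zero'' back into feasibility of the original system.

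\textbf{Step 1: $\bar\vez_0$ is feasible for~\eqref{eq:auxiliary_feasibility}.} Since $A_I=(A~I)$, we have $A_I\bar\vez_0 = A\vezero + I\veb = \veb$, so the equality constraint holds. The bound $\vel\le\vezero\le\veu$ holds precisely because the instance is centered, and $\vel'\le\veb\le\veu'$ holds coordinatewise because $\min\{0,b_i\}\le b_i\le\max\{0,b_i\}$ for every $i\in[m]$. Hence $\bar\vez_0$ is feasible and in particular~\eqref{eq:auxiliary_feasibility} attains an optimum. (The objective $\vew'\vez'$, read as a function on $\Z^{n+m}$ that ignores the first $n$ coordinates, is linear, hence a legitimate separable convex objective.)

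\textbf{Step 2: the objective is nonnegative and vanishes exactly at $\vez'=\vezero$.} For each $i\in[m]$ we have $w'_i=\sign(b_i)$ and the feasible interval $[l'_i,u'_i]$ equals $[0,b_i]$ when $b_i\ge 0$ and $[b_i,0]$ when $b_i<0$; in either case $w'_iz'_i=|z'_i|\ge 0$ for $z'_i$ in this interval, with equality iff $z'_i=0$ (and when $b_i=0$ the interval already forces $z'_i=0$). Summing over $i$, $\vew'\vez'\ge 0$ for every feasible $(\vez,\vez')$, with equality iff $\vez'=\vezero$.

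\textbf{Step 3: conclude.} The equality constraint $A_I(\vez,\vez') = A\vez+\vez' = \veb$ forces $\vez'=\veb-A\vez$, so $\vez'=\vezero$ is equivalent to $A\vez=\veb$. Therefore, if an optimal $\bar\vez=(\vez,\vez')$ has value zero then $\vez'=\vezero$, hence $A\vez=\veb$, and together with $\vel\le\vez\le\veu$ (from feasibility of $\bar\vez$) this means $\vez$ is feasible for~\eqref{IP}; conversely, if the $\vez$-part of an optimal $\bar\vez$ is feasible for~\eqref{IP} then $A\vez=\veb$ gives $\vez'=\vezero$ and value $\vew'\vezero=0$, and indeed $(\vez,\vezero)$ is then a feasible point of~\eqref{eq:auxiliary_feasibility} of value $0$, so the optimum is $0$. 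There is no genuine obstacle here — it is a routine verification; the one point that warrants care is the coordinatewise sign analysis in Step~2, which is exactly what makes ``optimal value zero'' equivalent to ``the slack variables $\vez'$ can be driven to $\vezero$'', i.e.\ to feasibility of $A\vex=\veb,\ \vel\le\vex\le\veu$.
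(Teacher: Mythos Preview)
Your proof is correct and follows exactly the same approach as the paper's own proof, which is a very terse two-sentence verification (``feasible by definition'' and ``value $0$ iff $\vez'=\vezero$, in which case $\vez$ is feasible''). You simply spell out in detail the sign analysis and the role of centeredness that the paper leaves implicit.
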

\begin{proof}
The vector $(\vezero, \veb)$ is feasible for~\eqref{eq:auxiliary_feasibility} by definition.
Moreover, a solution $(\vex, \vex')$ of~\eqref{eq:auxiliary_feasibility} is optimal with value $0$ if and only if $\vex' = \vezero$, in which case $\vex$ is feasible for~\eqref{IP}.
\end{proof}



\begin{lemma}\label{lem:feas_A_I}
	Let $A \in \Z^{m \times n}$, $\veb \in \Z^m$, $\vel,\veu \in \Z^n$ define an IP feasibility instance.
	Let $\AAA_I$ be an augmentation oracle for $A_I$ which, given $\bar{\veb} \in \Z^m$ and $\bar{\vel}, \bar{\veu} \in \Z^n$ and a linear objective $f(\vex) = \vew \vex$, realizes the $\AAA_I$-augmentation procedure in $\AAap_I\left(\|\bar{\veu}-\bar{\vel}\|_\infty, f_{\max}^{[\vel, \veu]}\right)$ arithmetic operations.
	Then a feasibility oracle for $A$ is realizable in
	$\AAap_I\left(\|\veu-\vel,\veb - A\vev\|_\infty,\|\veb - A \vev \|_1\right)$	
	arithmetic operations, where $\vev \in [\vel,\veu]$ is any vector.
	
	If the initial instance is centered, the feasibility oracle can be realized in $\AAap_I\left(\|\veu-\vel,\veb\|_\infty,\|\veb\|_1\right)$
	arithmetic operations.
\end{lemma}
\begin{proof}
	If~\eqref{IP} is not centered, translate the instance using Lemma~\ref{lem:centered} with $\tau(\vex) = \vex + \vel$.
	We obtain vectors $\bar{\veb} = \veb - A \vel$, $\bar{\vel} = \vezero$, $\bar{\veu} = \veu - \vel$.
	Then, solve the instance~\eqref{eq:auxiliary_feasibility} of Lemma~\ref{lem:feas_instance} by $\AAA_I$, reporting~\eqref{IP} as infeasible if the optimum has value more than $0$ and otherwise returning $\vez$ as its feasible solution.
	Observe that $\vew' \bar{\veb} = \|\bar{\veb}\|_1$ and for any $(\vex,\vex')$ feasible for~\eqref{eq:auxiliary_feasibility}, $\vew' \vex'$ is non-negative.
	Using the left-hand side bound of Lemma~\ref{lem:halfling} thus gives that the auxiliary instance~\eqref{eq:auxiliary_feasibility} is solved by $\AAA_I$ in time 
	$\AAap_I\left(\|\bar{\veu}-\bar{\vel},\bar{\veb}\|_\infty,\|\bar{\veb}\|_1\right) = 
	\AAap_I\left(\|\veu-\vel,\veb - A\vel\|_\infty,\|\veb - A \vel \|_1\right)$.
	For a centered instance, observe that $\bar{\veb} = \veb$.
\end{proof}


\begin{lemma} \label{lem:feas_td}
	Let $A \in \Z^{m \times n}$ and let $F_D$ be a $\td$-decomposition of $G_D(A)$. Then
	\begin{equation}
	G_D(A_I) = G_D(A),  \quad  \text{ in particular, } \td_D(A_I) = \td_D(A), \quad G_D(A_I) \subseteq \cl(F_D) \enspace . \label{eq:A_I_dual}
	\end{equation}
	
	Let an instance of~\eqref{IP} and a $\td$-decomposition $F_P$ of $G_P(A)$ be given, and assume $A\vex=\veb$ is pure.
	Then there is an algorithm which runs in time $\Oh(|F_P|)$ and computes rooted trees $F'_P$, $F''_P$, with $G_P(A_I) \subseteq \cl(F'_P)$ and $G_P(A_I) \subseteq \cl(F''_P)$, respectively, satisfying
	\begin{align}
	\height(F'_P) &\leq \height(F_P)+1, && \ttd(F'_P) \leq \ttd(F_P)+1, \label{eq:A_I_primal:1}\\
	 \height(F''_P) &\leq 2\height(F_P), && \ttd(F''_P) = \ttd(F) \enspace .\label{eq:A_I_primal:2} 
	\end{align}
\end{lemma}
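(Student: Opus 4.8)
The dual claim~\eqref{eq:A_I_dual} is the easy part. The key observation is that appending the identity matrix $I$ to $A$ adds $m$ new columns, each of which is nonzero in exactly one row. Hence no new row is created, and the set of pairs of rows sharing a common nonzero coordinate is unchanged: a row $A_{k,\bullet}$ of $A$ becomes $(A_{k,\bullet} \mid \vee_k^\transpose)$ where $\vee_k$ is the $k$-th unit vector, and two rows $k,l$ of $A_I$ are nonzero in a common coordinate iff the original rows $k,l$ of $A$ were (the extra coordinates $n+1,\dots,n+m$ are "private" to a single row). Therefore $G_D(A_I)=G_D(A)$ as graphs on the vertex set $[m]$, which immediately gives $\td_D(A_I)=\td_D(A)$ and $G_D(A_I)\subseteq\cl(F_D)$ for any $\td$-decomposition $F_D$ of $G_D(A)$.

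For the primal claims I would split into the two trees. For $F'_P$ satisfying~\eqref{eq:A_I_primal:1}: the primal graph $G_P(A_I)$ has vertex set $[n+m]$; its restriction to $[n]$ is exactly $G_P(A)$, and each new vertex $n+k$ (the slack of row $k$) is adjacent only to those original columns $j\in[n]$ with $A_{k,j}\neq 0$, i.e. to the support of row $k$, which is a clique in $G_P(A)$ and hence lies on a single root-leaf path of $F_P$ (by definition of a $\td$-decomposition, pairwise-adjacent vertices are in ancestor-descendant relation). The plan is: take $F_P$, and for each row $k$ let $v_k$ be the deepest vertex of $\suppo(A_{k,\bullet})$ in $F_P$ (well-defined since this support is a path-segment); attach the new leaf $n+k$ as a child of $v_k$. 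Then every edge of $G_P(A_I)$ is an ancestor-descendant pair in the resulting tree $F'_P$: edges inside $[n]$ are covered because $F_P\subseteq$ it, and an edge $\{n+k,j\}$ is covered because $j$ is an ancestor of $v_k$ hence of $n+k$. Adding leaves increases $\height$ by at most $1$. For $\ttd$: attaching possibly several new children to various vertices can turn a degenerate vertex into a non-degenerate one, but on any root-leaf path of $F'_P$ the only genuinely new non-degenerate vertex encountered (beyond those of $F_P$) is the parent $v_k$ of the terminal new leaf, if $v_k$ was previously degenerate — a root-leaf path of $F'_P$ is either a root-leaf path of $F_P$ (unchanged), or a root-$v_k$ path in $F_P$ followed by the single new leaf $n+k$; in the latter case the number of non-degenerate vertices is at most (those on the root-$v_k$ path in $F_P$) $+1$, so $\ttd(F'_P)\leq\ttd(F_P)+1$. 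The construction visits each row once and each visit costs $O(\height(F_P))$ to locate $v_k$, but one can preprocess depths so the total is $O(|F_P|)$ (recall $m\le n$ and $A$ is pure).

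For $F''_P$ satisfying~\eqref{eq:A_I_primal:2}: here we cannot afford to raise the topological height, so instead of hanging each slack as a fresh child we thread it into the existing "spine". Since $A\vex=\veb$ is pure we have $m\le n$; moreover each row's support corresponds to a root-leaf path. The idea is to route the $m$ new slack vertices $n+1,\dots,n+m$ through the tree by, for each row $k$, inserting $n+k$ as a degenerate node along the root-leaf path at the location of its deepest support vertex $v_k$ — i.e. subdivide the edge from $v_k$ to (one of) its children, or append $n+k$ below $v_k$ on an already-existing branch — in such a way that the new vertices are degenerate (single-child) and thus do not contribute to $\ttd$. A clean way: process leaves $\ell_1,\dots$ of $F_P$ and for each root-leaf path, splice the slacks whose deepest support vertex lies on that path into the path in depth order as degenerate internal vertices, being careful that each slack is placed below all of its neighbours; since a root-leaf path of $F_P$ has at most $\height(F_P)$ vertices and at most $\height(F_P)$ slacks get routed through it (at most one per original row whose support tops out there), the path grows to length at most $2\height(F_P)$, giving $\height(F''_P)\le 2\height(F_P)$, while $\ttd(F''_P)=\ttd(F_P)=\ttd(F)$ because only degenerate vertices were added. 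One must check $G_P(A_I)\subseteq\cl(F''_P)$: edges within $[n]$ survive since $F_P\subseteq F''_P$ as a minor/subdivision-ancestor relation is preserved, and $\{n+k,j\}$ is covered since $j\preceq v_k\preceq (n+k)$.

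The main obstacle is the $F''_P$ construction: making precise the routing so that (a) every slack vertex ends up an ancestor of nothing that forces it to be non-degenerate, (b) each slack is below all its support neighbours simultaneously, and (c) the length blow-up is genuinely at most a factor $2$ and the algorithm runs in $O(|F_P|)$ time. This requires the observation that the supports of distinct rows, viewed as root-leaf path-segments, can be "stacked" along shared ancestor chains without collision precisely because there are at most $\height(F_P)$ of them meeting any given root-leaf path — I would prove this by a careful amortized/charging argument assigning each slack to the edge just below its deepest support vertex, noting at most $\height(F_P)$ charges land on the vertices of any fixed root-leaf path; the rest is bookkeeping on the tree data structure.
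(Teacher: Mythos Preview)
Your dual argument is correct and matches the paper's.

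Your construction of $F'_P$ has a genuine error in the $\ttd$ bound. You acknowledge that attaching new children can turn degenerate vertices into non-degenerate ones, but then claim that on any root--leaf path this happens at most once (at the terminal parent $v_k$). This is false: other slacks may have been attached to \emph{internal} vertices along that same path. Concretely, let $F_P$ be the path $v_1-v_2-v_3$ (so $\ttd(F_P)=1$) and take the pure system with rows supported on $\{v_1\}$, $\{v_1,v_2\}$, $\{v_1,v_2,v_3\}$, whose deepest support vertices are $v_1,v_2,v_3$ respectively. Your $F'_P$ attaches a leaf to each $v_i$; then $v_1$ and $v_2$ each acquire two children and become non-degenerate, so the root--leaf path $v_1-v_2-v_3-s_3$ has three non-degenerate vertices, giving $\ttd(F'_P)=3>\ttd(F_P)+1=2$.

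The paper's fix is to attach the new slack vertices \emph{only to leaves} of $F_P$: each row $k$ is assigned to the first leaf $i$ (in a fixed order) whose root--leaf path $P(i)$ contains $\suppo(A_{k,\bullet})$; such a leaf exists because the support is a clique in $G_P(A)$ and hence lies on a single root--leaf path. Since only leaves gain children, no internal vertex changes degeneracy status, and on any root--leaf path of $F'_P$ the count of non-degenerate vertices increases by at most one (the new slack leaf, while the old leaf was already non-degenerate). For $F''_P$ the paper uses the same leaf assignment but appends the slacks as a \emph{path} below each leaf rather than as siblings; this keeps every new vertex except the terminal one degenerate, preserving $\ttd$ exactly. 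The $2\height(F_P)$ bound then follows from the purity argument you sketch: at most $\height(F_P)$ rows can have support contained in any fixed root--leaf path. This leaf-only attachment also dispenses with the routing ambiguity in your $F''_P$ sketch (which child of a branching $v_k$ to subdivide), making the construction and the linear-time bound immediate.
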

\begin{proof}
	Concerning~\eqref{eq:A_I_dual}, by definition of $G_D(A)$ an edge corresponds to a column with at least two non-zero entries.
	Thus adding to any matrix any number of columns which have at most one non-zero entry does not change its dual graph.
	Hence $G_D(A) = G_D(A_I)$ and the rest follows.
	
	Let us now prove~\eqref{eq:A_I_primal:1}-\eqref{eq:A_I_primal:2}.
	For each leaf $i \in [n]$ of $F_P$, denote by $P(i)$ the path from the root of $F_P$ to $i$.
	Define $\ell(i) \df \left|\left\{A_{j,\bullet} \,\middle|\, \suppo(A_{j,\bullet}) \subseteq V(P(i))  \wedge \left(\forall i' < i\colon\suppo(A_{j,\bullet}) \not\subseteq V(P(i')) \right)  \right\}\right|$ to be the number of rows of $A$ whose support lies in $V(P(i))$ but not in $V(P(i'))$ for $i' < i$.
	For all non-leaf vertices define $\ell(i) \df 0$.
	If $\max_{i \in [n]} \ell(i) > \height(F_P)$ then $A$ contains a submatrix with $\height(F_P)$ columns and at least $\height(F_P)+1$ rows, which induces a subsystem of $A\vex=\veb$ which is not pure, contradicting purity of $A\vex=\veb$.
	Hence there are at most $\height(F_P)$ rows of $A$ whose support contains $V(P(i))$ for every leaf $i$ of $F_P$.
	
	To obtain $F'_P$, to each $i \in [n]$ attach $\ell(i)$ new leaves.
	To obtain $F''_P$, to each $i \in [n]$ attach a path on $\ell(i)$ new vertices.
	It is easy to verify that we have added $m$ new vertices in both cases, corresponding to the columns of $I$, in such a way that $G_P(A_I) \subseteq \cl(F'_P)$ and $G_P(A_I) \subseteq \cl(F''_P)$.
	
	Regarding $F'_P$, because we have attached new vertices only to leaves (for all other vertices $\ell(i)=0$), the height of $F'_P$ is at most one more than the height of $F_P$.
	Regarding $F''_P$, because $\ell(i) \leq \height(F_P)$, we have $\height(F''_P) \leq 2\height(F_P)$, and we have not increased the number of non-degenerate vertices on any path, so $\ttd(F''_P) = \ttd(F_P)$. (Specifically, on every root-leaf path we have made its old leaf degenerate, and the new leaf is a new non-degenerate vertex, while all other new vertices are degenerate.)
\end{proof}
In Section~\ref{sec:almostlinear} we will need a few more observations about the tree $F''_P$ in the previous Lemma.
First, consider again the proof of the Primal Decomposition Lemma (Lemma~\ref{lem:decomposition}).
The next proposition easily follows from it:
\begin{proposition}
Let $A$, $F$, $A_i$ and $F_i$, $i \in [d]$, be as in Definition~\ref{def:primal-decomp}.
Let $P \subseteq F$ be a path on $k_1(F)$ vertices containing the root of $F$.
Then $\{F_i\}_{i \in [d]}$ are the connected components of $F \setminus P$. \qedhere
\end{proposition}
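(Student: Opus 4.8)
The plan is to unwind the construction in the proof of Lemma~\ref{lem:decomposition} and to observe that the path $P$ used there is the unique path on $k_1(F)$ vertices through the root. Recall from that proof that $r$ is the root of $F$, that $v$ is the first non-degenerate vertex on a root-leaf path of $F$, that $P$ is the path from $r$ to $v$ (which has exactly $k_1(F)$ vertices), and that $F_1,\dots,F_d$ are the subtrees of $F$ rooted at the children $r_1,\dots,r_d$ of $v$, where $d=\deg(v)$. First I would observe that every vertex of $F$ strictly above $v$ is degenerate and so has exactly one child; hence the initial segment on $k_1(F)$ vertices of every root-leaf path is forced and equals $P$, and since the root can only be an endpoint of such a path, $P$ is the unique path on $k_1(F)$ vertices containing the root. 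Thus the $P$ of the statement is the same as the one in the construction.

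It then remains to identify the connected components of $F\setminus P$. Every vertex of $P$ other than $v$ is degenerate, so in $F$ its only neighbours are its parent and its unique child, both lying on $P$; removing it therefore detaches no subtree. The vertex $v$ has as neighbours its parent (on $P$, unless $v=r$) together with $r_1,\dots,r_d$, so removing $v$ splits off exactly the subtrees rooted at $r_1,\dots,r_d$, that is, $F_1,\dots,F_d$. Since $V(F)$ is the disjoint union of $V(P)$ with $V(F_1)\cup\cdots\cup V(F_d)$, since each $F_i$ is connected, and since in a rooted tree there is no edge between a vertex of $F_i$ and a vertex of $F_j$ for $i\neq j$, the components of $F\setminus P$ are precisely $F_1,\dots,F_d$. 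I expect no real obstacle here; the only point worth a remark is the degenerate case in which $v$ is a leaf (equivalently $\ttd(F)=1$), where $F=P$, $d=0$, and both sides of the claimed equality are empty.
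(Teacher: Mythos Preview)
Your proposal is correct and matches the paper's approach: the paper gives no explicit proof at all, merely asserting that the proposition ``easily follows'' from the proof of Lemma~\ref{lem:decomposition}, and your argument is precisely the unwinding of that construction that the paper leaves to the reader. Your observation that the path $P$ is forced (because every proper ancestor of $v$ is degenerate, hence of tree-degree~$1$ at the root and with a unique child otherwise) is exactly the point needed to match the hypothesised $P$ with the one used in Lemma~\ref{lem:decomposition}.
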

Then, the following is easily derived from the proof of Lemma~\ref{lem:feas_td}:
\begin{proposition} \label{prop:AIstructure}
Let $A$, $F$, and $F''_P$ be as in Lemma~\ref{lem:feas_td}.
For any level $\ell \in [\ttd(F)-1]$, any block $E$ of the block decomposition of $A_I$ along $F''_P$ is of the form $(E'_\ell~I)$.
(A block at level $1$ is one of the blocks $A_i$; a block at level $2$ is a block of the decomposition of $A_i$ along $F_i$, and so on.)
\end{proposition}
\begin{proof}
By the construction of $F''_P$, on any root-leaf path, all vertices belonging to $[n]$ (corresponding to the columns of $A$) come before vertices belonging to $[n+1,n+m]$ (corresponding to the columns of $I$).
Thus, if $\ttd(F) \geq 2$, the vertices from $[n+1,n+m]$ are never contained in the path $P$ from the root to the first non-degenerate vertex, and do not become part of the $\bar{A}_i$ block.
\end{proof}

Combining Lemmas~\ref{lem:feas_A_I} and~\ref{lem:feas_td} then immediately yields that for matrices with bounded $\td_P$ and $\td_D$, ``feasibility is as easy as optimization'', where optimization is problem~\eqref{IP} when an initial solution is provided.
This means that it is sufficient to focus on improving algorithms realizing the augmentation procedure and any time complexity improvements there will translate to improvements for~\eqref{IP}.
\begin{corollary}[Feasibility as easy as optimization for $\td_P$ and $\td_D$] \label{cor:feas_td}
	If there is an algorithm which, given an instance of~\eqref{IP}, a $\td$-decomposition $F_P$ of $G_P(A)$ (or, $F_D$ of $G_D(A)$), and a feasible solution $\vex_0$ of~\eqref{IP}, solves the given instance in time 
	\begin{align*}
	&T\df T_P\left(k_1(F_P),\dots,k_{\ttd(F_P)}(F_P),n,\|A\|_\infty,\|\veu-\vel\|_\infty, \|\veb\|_\infty, f_{\max}^{[\vel,\veu]}\right), \\
	&\left(\text{or, } T\df T_D\left(k_1(F_D),\dots,k_{\ttd(F_D)}(F_D),n,\allowbreak\|A\|_\infty,\|\veu-\vel\|_\infty, \|\veb\|_\infty, f_{\max}^{[\vel,\veu]}\right)\text{, respectively}\right),
	\end{align*}
	then there is an algorithm which, given an instance of~\eqref{IP} and a $\td$-decomposition $F_P$ of $G_P(A)$ (or, $F_D$ of $G_D(A)$), solves it in time
	\begin{align*}
	&T_P\left(k_1(F_P),\dots,k_{\ttd(F_P)-1}, k_{\ttd(F_P)}(F_P)+\height(F_P),n+m,\|A\|_\infty, \|\veu-\vel,\veb\|_\infty, \|\veb\|_\infty, \|\veb\|_1\right)
	+ T, \\
	&\left(\text{or, }T_D\left(k_1(F_D),\dots,k_{\ttd(F_D)}(F_D),n+m,\|A\|_\infty, \|\veu-\vel,\veb\|_\infty, \|\veb\|_\infty, \|\veb\|_1\right) +T \text{, respectively}\right) \enspace.
	\end{align*}
	If $A$ has the form $A=(A'~I)$, then~\eqref{IP} can be solved in time
	\[
	2T_P\left(k_1(F_P),\dots,k_{\ttd(F_P)}(F_P),n,\|A\|_\infty, \|\veu-\vel,\veb\|_\infty, \|\veb\|_\infty, \max\left\{\|\veb\|_1,f_{\max}^{[\vel,\veu]}\right\}\right) \enspace .
	\]
\end{corollary}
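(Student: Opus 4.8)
The plan is to invoke the hypothesized algorithm twice: first as a feasibility subroutine, on an auxiliary instance that comes with an explicit initial solution, and then a second time on the original instance to perform the actual optimization; the second invocation contributes the additive ``$+T$'' in each bound. The work is to choose the auxiliary instance so that the $\td$-decomposition passed to the subroutine has parameters degrading only as the statement permits and so that the numeric quantities ($\|A\|_\infty$, the $\ell_\infty$-width of the bounds, $\|\veb\|_\infty$, and $f_{\max}$) are controlled as claimed. Throughout I assume the given instance is centered; if not, translate once via Lemma~\ref{lem:centered}, which leaves $\|\veu-\vel\|_\infty$ unchanged. I also assume $T_P,T_D$ are non-decreasing in each argument.

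\textbf{General primal and dual cases.} Apply Lemma~\ref{lem:feas_instance} to obtain the auxiliary~\eqref{IP} instance~\eqref{eq:auxiliary_feasibility}: it has constraint matrix $A_I$ with $n+m$ columns and $\|A_I\|_\infty=\|A\|_\infty$, right-hand side $\veb$, bounds of $\ell_\infty$-width $\|\veu-\vel,\veb\|_\infty$, a linear objective $\vew'$ with $f_{\max}=\|\veb\|_1$, and the explicit feasible solution $(\vezero,\veb)$. In the dual case, Lemma~\ref{lem:feas_td} gives $G_D(A_I)=G_D(A)\subseteq\cl(F_D)$, so $F_D$ is itself a $\td$-decomposition of $G_D(A_I)$ with the same topological height and level heights; in the primal case, use the tree $F''_P$ of Lemma~\ref{lem:feas_td}, which has $\ttd(F''_P)=\ttd(F_P)$, $k_i(F''_P)=k_i(F_P)$ for $i<\ttd(F_P)$, and $k_{\ttd(F_P)}(F''_P)\le k_{\ttd(F_P)}(F_P)+\height(F_P)$ --- on each root-leaf path the former leaf turns degenerate and the appended path of at most $\height(F_P)$ new vertices lengthens only the last segment, leaving every earlier level height intact. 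Running the hypothesized algorithm on the auxiliary instance with this decomposition and the initial solution $(\vezero,\veb)$ costs $T_P(k_1(F_P),\dots,k_{\ttd(F_P)-1}(F_P),k_{\ttd(F_P)}(F_P)+\height(F_P),n+m,\|A\|_\infty,\|\veu-\vel,\veb\|_\infty,\|\veb\|_\infty,\|\veb\|_1)$ in the primal case (resp.\ the analogous $T_D$-expression with $F_D$ and unchanged level heights); if the returned optimum has positive value, declare~\eqref{IP} infeasible, and otherwise its $A$-part is, after un-centering, a feasible solution $\vex_0$ of~\eqref{IP}. A second call of the hypothesized algorithm on $(A,f,\veb,\vel,\veu)$ with $F_P$ (resp.\ $F_D$) and $\vex_0$ costs $T$, and the two times add up.

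\textbf{The case $A=(A'~I)$.} Here padding with fresh slack columns is wasteful: it would route through $A_I=(A'~I~I)$ and incur the primal degradation above. Instead one reuses the identity block already present. Write $\vex=(\vey,\vez)$ according to the columns of $A'$ and of $I$, so the constraints read $A'\vey+\vez=\veb$, and let $[\vel_\vez,\veu_\vez]$ be the corresponding part of $[\vel,\veu]$. Relax these bounds to $[\min\{\vel_\vez,\veb\},\max\{\veu_\vez,\veb\}]$ (componentwise), keep the bounds on $\vey$, and put $\hat{f}(\vey,\vez)\df\sum_i\mathrm{dist}(z_i,[(\vel_\vez)_i,(\veu_\vez)_i])$, which is separable convex. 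Then $(\vezero,\veb)$ is feasible for the relaxed instance; since centering yields $0\in[(\vel_\vez)_i,(\veu_\vez)_i]$ we get $\hat{f}_{\max}\le\|\veb\|_1$; the relaxed bounds have $\ell_\infty$-width $\Oh(\|\veu-\vel,\veb\|_\infty)$; an optimum of value $0$ is precisely a feasible solution of~\eqref{IP}; and, crucially, the constraint matrix is still $A$, so $G_P(A)$, $n$, and $F_P$ are unchanged. One call with $F_P$, initial solution $(\vezero,\veb)$, and objective $\hat{f}$ thus finds a feasible point or reports infeasibility in time $T_P(k_1(F_P),\dots,k_{\ttd(F_P)}(F_P),n,\|A\|_\infty,\|\veu-\vel,\veb\|_\infty,\|\veb\|_\infty,\|\veb\|_1)$, and a second call with $f$ finishes in time $T_P(\dots,f_{\max}^{[\vel,\veu]})$; by monotonicity each is at most $T_P(\dots,\max\{\|\veb\|_1,f_{\max}^{[\vel,\veu]}\})$, giving the factor $2$.

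\textbf{Main obstacle.} The delicate point is the primal decomposition bookkeeping in the general case: one must check that passing from $A$ to $A_I$ preserves every level height except the last and inflates the last one by at most $\height(F_P)$. This is exactly what the analysis of $F''_P$ in Lemma~\ref{lem:feas_td} supplies; the remaining numeric bookkeeping ($\|\veb\|_1$ as the new $f_{\max}$, $\|\veu-\vel,\veb\|_\infty$ as the new box width, and the un-centering) is routine once Lemma~\ref{lem:feas_instance} is available.
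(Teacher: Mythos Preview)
Your proof is correct and, for the general primal and dual cases, proceeds exactly as the paper intends: reduce to the $A_I$-feasibility instance of Lemma~\ref{lem:feas_instance}, supply the explicit initial point $(\vezero,\veb)$, and invoke Lemma~\ref{lem:feas_td} for the $\td$-decomposition of $A_I$. You also make explicit a point the paper leaves implicit, namely that in the construction of $F''_P$ only the last level height grows (by at most $\height(F_P)$), because the appended paths hang off former leaves.

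For the special case $A=(A'~I)$ you take a genuinely different route from the paper. The paper reuses the identity block by forming the auxiliary instance~\eqref{eq:auxiliary_feasibility} \emph{for the matrix $A'$}, so that its constraint matrix is again $(A'~I)=A$; the objective is the linear $\vew'\vez'$ of Lemma~\ref{lem:feas_instance}, and optimum $0$ forces $\vez'=\vezero$, after which $(\vey,\vezero)$ is feasible (using centering). You instead keep the original $I$-columns as real variables, relax only their bounds to admit $(\vezero,\veb)$, and drive them back into $[\vel_\vez,\veu_\vez]$ via the separable convex distance penalty of Lemma~\ref{lem:initial}. Both approaches achieve the key point---the auxiliary instance has constraint matrix $A$, so $F_P$ and $n$ are preserved---but yours is more self-contained: it directly certifies feasibility of the \emph{given} bounds on the $I$-columns, whereas the paper's version implicitly tests the stricter condition $A'\vey=\veb$ and relies on centering to extend the solution. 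The only cost of your variant is a benign factor of two in the box-width argument (you get at most $2\|\veu-\vel,\veb\|_\infty$ rather than $\|\veu-\vel,\veb\|_\infty$), which is absorbed by monotonicity of $T_P$.
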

\begin{proof}
We only argue the last part of the statement since the rest is clear.
If $A$ has the form $(A'~I)$, then we can solve feasibility using an auxiliary instance of the form~\eqref{eq:auxiliary_feasibility} where $A$ is replaced by $A'$, hence~\eqref{eq:auxiliary_feasibility} is itself an~\eqref{IP} instance again with a constraint matrix $A$.
Thus, we require time $T_P\left(k_1(F_P),\dots,k_{\ttd(F_P)}(F_P),n,\|A\|_\infty,\|\veu-\vel,\veb\|_\infty, \|\veb\|_\infty, \|\veb\|_1\right)$ to solve this instance and find an initial feasible solution, and then time $T_P\left(k_1(F_P),\dots,k_{\ttd(F_P)}(F_P),n,\|A\|_\infty,\|\veu-\vel\|_\infty, \|\veb\|_\infty, f_{\max}^{[\vel,\veu]}\right)$ to solve~\eqref{IP} to optimality.
The bound then follows from $\|\veu-\vel\|_\infty, \|\veb\|_\infty \leq \|\veu-\vel,\veb\|_\infty$ and $\|\veb\|_1, f_{\max}^{[\vel,\veu]} \leq \max\{\|\veb\|_1, f_{\max}^{[\vel,\veu]}\}$.
\end{proof}

\subsection{Handling Infinite Bounds} \label{sec:infinite_bounds}
So far we have assumed that the bounds $\vel, \veu$ are finite.
In the following we will discuss several aspects of when and how this can (and cannot) be attained.
As the situation differs rather drastically for linear and non-linear objectives, we treat these cases separately. First, we will discuss the feasibility problem, as the objective only comes into play once we have an initial feasible solution.
\begin{lemma}[\cite{JansenLR:2018}]
\label{lem:feas-inf-bounds-bounded}
Any feasible instance of~\eqref{IP} has a feasible solution satisfying $\norm{\vex_{0}}_{1} \leq \norm{\veb - A \vev}_{1} g_1 (A_I)$, for any finite $\vev \in [\vel,\veu]$.
\end{lemma}
\begin{proof}
If necessary, we change to a centered instance by setting $\veb' \df \veb - A \vev$, and $\vel' \df \vel - \vev$, $\veu' \df \veu - \vev$.
Then, we set up the auxiliary IP according to Lemma~\ref{lem:feas_A_I} with objective $\vew$.
Decomposing an optimum $\binom{\vex_{0}}{\ve{0}} - \binom{\ve{0}}{\veb'} = \sum_{i=1}^{2n} \alpha_i \veg_i$ for some $\veg_i \in \G(A_I)$ conformal, we may assume that $\vew \veg_i < 1$; otherwise, omitting $\alpha_i \veg_i$ leads to an optimum of smaller $\ell_1$-norm.
But this implies that $\sum_{i=1}^{2n} \alpha_i \leq \norm{\veb^\prime}_1$, in turn showing that $\norm{\vex_0}_1 \leq \norm{\veb^\prime}_1 \cdot g_1(A_I)$.
%
\end{proof}
Now we can simply use artificial bounds for solving feasibility.
\begin{corollary}[\cite{JansenLR:2018}]
Let $A \in \Z^{m \times n},\veb \in \Z^m, \vel,\veu \in (\Z \cup {\pm \infty})^n$ define an IP feasibility instance.
Let $\AAA_I$ be an augmentation oracle for $A_I$ which, given $\bar{\veb} \in \Z^m$ and $\bar{\vel}, \bar{\veu} \in \Z^n$ and a linear objective $f(\vex) = \vew \vex$, realizes the $\AAA_I$-augmentation procedure in $\AAap_I\left(\|\bar{\veu}-\bar{\vel}\|_\infty, f_{\max}^{[\bar{\vel}, \bar{\veu}]}\right)$ arithmetic operations.
	Then a feasibility oracle for $A$ with possibly infinite bounds is realizable in	
	\[
	\AAap_I\left( \norm{\veb - A \vev}_1 g_1(A_I),\|\veb - A \vev \|_1\right)
	\]	
	arithmetic operations, where $\vev \in [\vel,\veu]$ is any vector.
	
	If the initial instance is centered, the feasibility oracle can be realized in
	\[
	\AAap_I\left( \norm{\veb}_1 g_1(A_I) ,\|\veb \|_1\right)
	\]
	arithmetic operations.
\end{corollary}

\subsubsection{Linear objectives}
The most standard argument when $f(\vex)$ is a linear function is the following: The relaxation of~\eqref{IP} can be solved in polynomial time, and we have shown several realizations of a feasibility oracle, e.g., Lemmas~\ref{lem:initial} and~\ref{lem:feas_A_I}.
If the IP is feasible and its relaxation is unbounded, there is a rational augmenting step that can be added arbitrarily often.
By scaling this step to integrality, also the IP is unbounded.
Otherwise, a proximity result provides finite bounds of polynomial length which contain some optimum~(see~\cite[Section 6.2]{GLS}).

If we want to avoid solving the relaxation, Jansen et al.~\cite{JansenLR:2018} gave an elegant approach.
First, they argue that computing a single augmenting step is sufficient for deciding unboundedness, given any feasible point $\vex$.
If the IP is bounded, any augmenting step $\veg$ must have a coordinate $i$ such that either $0 < g_i \leq u_i - x_i < \infty$, or $0 > g_i \geq \ell_i - x_i > - \infty$.
Otherwise, we could apply this augmenting step arbitrarily often.
Vice versa, if the IP is unbounded, there is an unbounded improving direction, and due to sign compatibility, an unbounded augmenting step.
Hence, solving the augmentation IP with altered bounds $\vel_i = 0$ ($\veu_i = 0$) whenever $\vel_i$ is finite ($\veu_i$ is finite) lets us conclude whether the IP is unbounded.

If it is bounded, we could proceed as in the case of finite bounds. However, for the halfling procedure, we guess step-lengths $2^k$, $k \in \Z_{\geq 0}$.
But how do we know where to stop? The natural argument would be to guess only $2^k \leq \max \{|x_i - \ell_i|: \ \ell_i > - \infty \} \cup \{|u_i - x_i|: \ u_i < \infty\}$, but this bound is depending on the current point $\vex$, and hence might even increase during the algorithm.
Instead, Jansen et al.\ argue that if the IP is bounded, there has to be an optimal solution with bounded encoding length.
This allows us to introduce artificial bounds similar to the proximity approach, but depending on the encoding size of $\veb,\vel,\veu$.
Let $\vel_{\fin}$ and $\veu_{\fin}$ denote the restriction of $\vel$ and $\veu$ to their finite entries, respectively.

\begin{lemma}[{Infinite bounds and linear objectives~\cite{JansenLR:2018}}] \label{lem:infinite_linear}
	Given an initial solution $\vex_0$ of~\eqref{IP} with a linear objective function $f(\vex) = \vew \vex$, it is possible to decide unboundedness of~\eqref{IP} by solving~\eqref{AugIP} once.
	If the IP is bounded, there exists an optimal solution satisfying $\norm{\vex^* - \vex_0}_1 \leq (n \norm{\vel_{\fin}, \veu_{\fin}}_\infty + \norm{\vex_0}_1) g_1(A)$.
	Moreover, if $\vex_0$ is the initial solution of Lemma~\ref{lem:feas-inf-bounds-bounded} and we can optimize over finite bounds with
	$\AAap (\norm{\veu - \vel}_\infty,\vew (\vex_0 - \vex^*))$ arithmetic operations,
	then we can optimize over possibly infinite bounds with
	$\AAap ( 2 n \norm{\vel_{\fin},\veu_{\fin},\veb}_\infty  \norm{A}_\infty g_1(A_I)g_1(A) ,\vew (\vex_0 - \vex))$ arithmetic operations.
\end{lemma}
\begin{proof}
	To decide unboundedness, solve~\eqref{AugIP} with input $A, f, \vex_0$, $\lambda\df 1$, and with auxiliary bounds $\bar{\vel}, \bar{\veu}$, defined, for each $i \in [n]$, as follows:
	\[
	\bar{l}_i \df 
	\begin{cases} 
	0 & \text{if } l_i > -\infty \\
	-N       & \text{if } l_i = -\infty
	\end{cases} \qquad
	\bar{u}_i \df 
	\begin{cases} 
	0 & \text{if } u_i < +\infty \\
	N       & \text{if } u_i = +\infty
	\end{cases},
	\]
	where $N \geq g_\infty(A)$, for example take $N \df (2m \|A\|_\infty +1)^m$ by Lemma~\ref{lem:bound1}.
	Observe that any solution $\veg$ of this~\eqref{AugIP} instance is an unbounded augmenting direction.
	Hence if there is a solution $\veg$ with $\vew \veg < 0$, we report that~\eqref{IP} is unbounded.

	Otherwise, let $\zeta \df \norm{\vel_{\fin}, \veu_{\fin}}_\infty$, and let $\vex^*$ be an optimum solution closest to $\vex_{0}$.
	We will show that $\vex^*$ has bounded norm, allowing us to replace the infinite bounds.
	
	To this end decompose $\vex^* - \vex_{0} = \sum_{i=1}^{2n} \alpha_i \veg_i$ for some conformal Graver basis elements with negative objective value.
	For each Graver basis element $\veg_i$ however, there has to be one component with either $0 < (\veg_i)_j \leq u_j -(\vex_{0})_j < \infty$, or $0 > (\veg_i)_j \geq l_j -(\vex_{0})_j > - \infty$.
	Otherwise, we found an unbounded augmenting direction.
	This implies that $\sum_{i=1}^{2n} \alpha_i \leq n \zeta + \norm{\vex_{0}}_1$.
	In total, we have that $\norm{\vex^* - \vex_{0}}_1 \leq  (n \zeta + \norm{\vex_{0}}_1) g_1(A)$.
	Choosing the initial solution of Lemma~\ref{lem:feas-inf-bounds-bounded}, we can replace the infinite bounds by 
	\[
	(n \zeta + (\norm{b}_1 + \norm{A}_\infty n \zeta)g_1(A_I)) g_1(A) \leq (2n\zeta \norm{A}_\infty) g_1(A_I)g_1(A).
	\]
\end{proof}

\subsubsection{Separable convex objectives}
The situation is quite different for separable convex functions.
In the most general case, the problem is undecidable (also see~Onn~\cite[Section 1.3.3]{Onn}).
\begin{proposition}[Unbounded convex minimization undecidable]
	For a convex function $f: \R \to \R$ given by an evaluation oracle, the question whether $f$ has a finite minimum is undecidable.
\end{proposition}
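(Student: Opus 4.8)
The plan is to reduce the halting problem. Given a Turing machine $M$, I would construct, uniformly in the code of $M$, an evaluation oracle for a convex function $f_M \colon \R \to \R$ which attains a finite minimum precisely when $M$ halts on the empty input. Since whether $M$ halts is undecidable, and since the code of the oracle for $f_M$ is produced algorithmically from the code of $M$, it follows that no algorithm can decide finiteness of the minimum when $f$ is presented by an evaluation oracle.

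For the construction, set $h(M,n) \df 1$ if $M$ halts within $n$ steps and $h(M,n) \df 0$ otherwise, so that $h(M,\cdot)$ is nondecreasing. Let $f_M$ be the piecewise-linear function with $f_M(0) = 0$, slope $-1$ on $(-\infty, 0]$, and slope $s_n \df h(M,n) - 1 \in \{-1, 0\}$ on $[n, n+1]$ for every integer $n \ge 0$. The slope sequence is nondecreasing (it equals $-1$ until $M$ halts and $0$ thereafter), hence $f_M$ is convex. If $M$ halts at step $T$, then $f_M$ decreases on $[0,T]$, is constant on $[T, \infty)$, and tends to $+\infty$ as $x \to -\infty$, so $\min f_M = f_M(T)$ is finite and attained; if $M$ never halts then $f_M(x) = -x$ for all $x$, which is unbounded below, so $f_M$ has no finite minimum (under either reading of ``finite minimum'').

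Next I would check that $f_M$ has a computable evaluation oracle. On a rational input $q$: if $q \le 0$ return $-q$; otherwise put $k \df \lceil q \rceil$, simulate $M$ for $k$ steps to read off $h(M,0), \dots, h(M,k)$, and return $\sum_{n=0}^{\lfloor q \rfloor - 1} s_n + s_{\lfloor q \rfloor}(q - \lfloor q \rfloor)$. This is a terminating computation for every $q$, and a program computing it can be written down mechanically from the description of $M$. Consequently, if some algorithm $\mathcal{A}$ decided, for every evaluation oracle of a convex $f\colon\R\to\R$, whether $f$ has a finite minimum, then running $\mathcal{A}$ with the oracle $f_M$ and accepting iff $\mathcal{A}$ answers ``yes'' would decide the halting problem --- a contradiction, proving the statement.

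The mathematics here is elementary; the step that needs the most care in the write-up is the bookkeeping of the oracle model: one must state precisely that ``$f$ given by an evaluation oracle'' permits the oracle to be any terminating program, so that the self-contained program for $f_M$ is a legitimate input to $\mathcal{A}$, and that $\mathcal{A}$ issues only finitely many queries, each answered in finite time, so that the composed machine genuinely halts on every $M$. Once this is pinned down, the reduction above goes through verbatim.
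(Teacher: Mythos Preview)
Your reduction from the halting problem is correct: $f_M$ is convex (the slope sequence $-1,\dots,-1,0,0,\dots$ is nondecreasing), it attains a finite minimum iff $M$ halts, each evaluation query is answerable after simulating $M$ for boundedly many steps, and hence a hypothetical decider $\mathcal{A}$ combined with the computable oracle for $f_M$ would solve halting. The bookkeeping you flag (that $\mathcal{A}$ makes finitely many queries and each is answered in finite time) is exactly what is needed for the simulation to terminate.

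The paper, however, takes a different and considerably shorter route: a direct adversary argument. One assumes a decider $\mathcal{A}$ and answers every query $x$ with $-x$; when $\mathcal{A}$ halts, one retroactively fixes $f$ to be either $f(x)=-x$ or a two-piece convex function that turns upward just past the largest queried point, whichever contradicts $\mathcal{A}$'s output. This is purely information-theoretic --- no computability theory is invoked --- and it immediately yields that no oracle algorithm can be correct on all convex $f$. Your approach buys something extra: it shows undecidability already for the subclass of \emph{computable} convex functions (indeed piecewise-linear ones with slopes in $\{-1,0\}$), which is a slightly sharper statement. The paper's approach buys brevity and self-containment: it is a five-line argument that avoids Turing machines entirely and makes transparent that the obstacle is the finiteness of the query set, not any computational hardness.
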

\begin{proof}
	Assume for contradiction there is an algorithm deciding whether $f$ has a finite minimum.
	Then we may construct an adversary function $f$ as follows.
	As long as the algorithm is querying the evaluation oracle, we respond with the function $-x$ (i.e., on input $x \in \R$ we return $-x$).
	At some point the algorithm must terminate with an answer.
	If its answer is ``$f$ has a finite minimum'', then we claim that actually $f(x) = -x$ and hence the answer is incorrect.
	On the other hand, if the answer is ``$f$ has no finite minimum'', we let $x_{\max}$ be the largest $x$ on which $f$ was evaluated in the course of the run of the algorithm, and claim that in fact $f$ is a two-piece linear function with
	\[
	f(x) \df 
	\begin{cases} 
	-x & \text{if } x \leq x_{\max} \\
	x - x_{\max}       & x > x_{\max}
	\end{cases}, \\
	\]
	so clearly $-x_{\max}$ is a finite minimum of $f$ and the answer was incorrect.
\end{proof}
Even when $f$ is minimized over a finite interval $[\ell,u]$ but only presented by a comparison oracle, it is not possible to find its minimum in less than $\log |u-\ell|$ steps (by binary search) from basic information theory lower bounds, see~Hochbaum~\cite[Section 3.1]{Hochbaum94}.
In the more powerful algebraic-tree model there is a lower bound of $\log \log |u-\ell|$~\cite[Section 3.2]{Hochbaum94}.

Again, the problem is how to limit the step-lengths we choose for the halfling procedure (Lemma~\ref{lem:halfling}).

A natural modification of this algorithm is to solve~\eqref{AugIP} for $\vex$ and for increasing $\lambda \in \Z_{\geq 0}$ as long as the returned solution $\veg$ has $f(\vex + 2^\lambda \veg) < f(\vex)$.
	However, the objective may become constant at a certain point, meaning that at a certain point, every step-length yields the same improvement and remains feasible.
	As a criterion to stop increasing $\lambda$, we could check whether $\lambda$ and $\lambda+1$ yield the same improvement.
	But one can imagine that some step $\veg$ yields a lot improvement, and for $\lambda \geq 1$ the step $2^\lambda \veg$ yields the same (i.e.\ $f$ becomes constant in that direction), but another step $2^\lambda \veg^\prime$ gives a lot more improvement, but only for $\lambda$ large enough.
	If we stopped at step-length $2$ already, we would not have found $g^\prime$.
	
	For these reasons, it is unclear what is a good selection for an abortion criterion.
    However, simply guessing a bound on $\|\vex_0 - \vex^*\|_\infty$ yields the following.

\begin{lemma}[No a priori bounds] \label{lem:infinite_sepconv}
	Let an instance of~\eqref{IP} with $A \in \Z^{m \times n}$, $\vel, \veu \in (\Z \cup \{\pm\infty\})^n$  be given, and let $\vex_0$ be a feasible solution and $\vex^*$ be any finite optimum.
	If we can optimize over finite  bounds in $\AAap(\norm{\veu-\vel}_\infty, f(\vex_0) - f(\vex^*))$ arithmetic operations,
	then the given instance can be solved in
	$\log(2\norm{\vex^* - \vex_0}_\infty + 1) \AAap (2 \norm{\vex^* - \vex_0}_\infty, f(\vex_0) - f(\vex^*))$ arithmetic operations.
\end{lemma}
\begin{proof}
	In the proof of Lemma~\ref{lem:halfling} we describe that a halfling for $\vex$ can be found by solving~\eqref{AugIP} with step-lengths 
	$2^\lambda$, $0 \leq \lambda \leq \ceil{\|\veu-\vel\|_\infty}$ and then picking the best solution.

	If the bounds are not finite, we ``guess'' a bound $\norm{\vex^* - \vex_0}_\infty \leq B$, and then run the algorithm where we replace each variable bound that is too large by a corresponding term w.r.t.\ $B$.
	For the new instance, we can find a halfling by solving~\eqref{AugIP} with step-lengths $2^{\lambda}$, $0 \leq \lambda \leq \log(B)$.
	If we guessed correctly, after at most 
	$\AAap(B, f(x_0) - f(x^\star))$
	arithmetic operations
	we terminate with an optimal solution for the new instance.
	If our guess was too small, we can still perform an augmenting step at this point;
 instead of continuing, we restart the whole algorithm again at $\vex_0$, but with $2B$ as a guess.
	With this approach, the iterative guessing yields an additional factor of $\log (2\norm{\vex^* - \vex_0}_\infty)$.
\end{proof}

By the lower bound result of Hochbaum~\cite{Hochbaum94}, the $\log (2\|\vex^* - \vex_0\|_\infty)$ factor of Lemma~\ref{lem:infinite_sepconv} cannot be improved if $f$ is given by a comparison oracle, and cannot be improved below $\Omega(\log \log \|\vex^* - \vex_0\|_\infty)$ even in the stronger algebraic-tree model.
\lv{Whether this factor can be attained is an interesting open problem.}

\subsection{Proximity Bounds} \label{sec:proximity}
Here we focus on proximity results.
First, we show that a careful analysis of a proof of Hemmecke, Köppe and Weismantel~\cite{HKW} allows us to extend their theorem to additionally provide an $\ell_1$-norm bound.
Second, in the spirit of Hochbaum and Shanthikumar~\cite{HS}, we show that also for each integer optimum there is a continuous optimum nearby (Theorem~\ref{thm:proximity}).
Third, we turn our attention to a proximity theorem relating the solutions of an instance of~\eqref{IP} and an instance obtained from~\eqref{IP} by ``scaling down'' (Theorem~\ref{thm:scaling_proximity}).
Using this result we obtain a proximity-scaling algorithm for~\eqref{IP} which works by solving a sequence of instances with small bounds (Corollary~\ref{cor:scaling_algo}).
This algorithm is later useful in obtaining a nearly-linear time algorithm for~\eqref{IP} with small $\td_P(A)$ (Theorem~\ref{thm:nearlylinear_primal}).

\subsubsection{Basic Proximity Theorem}

\begin{theorem}[Basic Proximity] \label{thm:proximity}
  Let $\hat{\vex}$ be an optimum of~\eqref{relax} and $\hat{\vez}$ be an optimum of~\eqref{IP}. 
There exist $\vex^* \in \R^n$ and $\vez^* \in \Z^n$ optima of~\eqref{relax} and~\eqref{IP}, respectively, such that
\begin{align*}
\|\hat{\vex} - \vez^*\|_\infty = \|\vex^* - \hat{\vez}\|_\infty \leq n g_\infty(A), & \qquad &\|\hat{\vex} - \vez^*\|_1 = \|\vex^* - \hat{\vez}\|_1 \leq n g_1(A) \enspace .
\end{align*}
\end{theorem}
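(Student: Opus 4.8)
The plan is to adapt and slightly sharpen the argument of Hemmecke, Köppe and Weismantel~\cite{HKW}: rather than choosing the two optima independently, I will produce both $\vex^*$ and $\vez^*$ from a single conformal decomposition of $\hat{\vez}-\hat{\vex}$, so that the two displayed distances are literally the same quantity $\|\veg_2\|$.

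\textbf{Step 1 (Conformal decomposition).} Since $A\hat{\vex}=A\hat{\vez}=\veb$, the vector $\veg\df\hat{\vez}-\hat{\vex}$ lies in $\ker_\R(A)$. Apply Proposition~\ref{prop:circuits} to the \emph{real} vector $\veg$ to write $\veg=\sum_{k=1}^{n'}\mu_k\ver_k$ with $n'\le n-\mathrm{rank}(A)\le n$, each $\mu_k>0$, each $\ver_k\in\C(A)\subseteq\G(A)$, and $\ver_k\sqsubseteq\veg$. Split the multipliers as $\mu_k=\lfloor\mu_k\rfloor+\{\mu_k\}$ and set $\veg_1\df\sum_k\lfloor\mu_k\rfloor\ver_k$ and $\veg_2\df\sum_k\{\mu_k\}\ver_k=\veg-\veg_1$. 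Then $\veg_1\in\ker_\Z(A)$, both $\veg_1$ and $\veg_2$ lie in the same orthant as $\veg$ (being nonnegative combinations of the conformal $\ver_k$), and componentwise $|(\veg_1)_i|,|(\veg_2)_i|\le|g_i|$.

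\textbf{Step 2 (The two candidates and the distance bound).} Put $\vez^*\df\hat{\vez}-\veg_1=\hat{\vex}+\veg_2\in\Z^n$ and $\vex^*\df\hat{\vex}+\veg_1=\hat{\vez}-\veg_2\in\R^n$. As $A\veg_1=\vezero$, both satisfy the equality constraints, and since every coordinate of $\hat{\vex}+\veg_j$ lies between the corresponding coordinates of $\hat{\vex}$ and $\hat{\vez}$ (both in $[\vel,\veu]$), $\vex^*$ is feasible for~\eqref{relax} and $\vez^*$ for~\eqref{IP}. Moreover $\|\hat{\vex}-\vez^*\|=\|\vex^*-\hat{\vez}\|=\|\veg_2\|$ in every norm, giving the claimed equalities; and since each $\{\mu_k\}<1$ and the $\ver_k$ are conformal, $\|\veg_2\|_\infty\le\sum_k\{\mu_k\}\|\ver_k\|_\infty<n'\,c_\infty(A)\le n\,g_\infty(A)$, and likewise $\|\veg_2\|_1<n'\,c_1(A)\le n\,g_1(A)$, where $c_1(A)\df\max_{\vecc\in\C(A)}\|\vecc\|_1\le g_1(A)$ since $\C(A)\subseteq\G(A)$.

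\textbf{Step 3 (Optimality of the candidates) and the main obstacle.} It remains to show $\vex^*$ is optimal for~\eqref{relax} and $\vez^*$ for~\eqref{IP}. Let $E_P\df f(\vex^*)-f(\hat{\vex})$ and $E_I\df f(\vez^*)-f(\hat{\vez})$; optimality of $\hat{\vex}$ resp.\ $\hat{\vez}$ together with feasibility of $\vex^*,\vez^*$ gives $E_P\ge0$ and $E_I\ge0$. Rewriting both through $\hat{\vex},\veg_1,\veg_2$ (using $\hat{\vez}=\hat{\vex}+\veg_1+\veg_2$),
\[
E_P+E_I=\big[f(\hat{\vex}+\veg_2)-f(\hat{\vex})\big]-\big[f(\hat{\vex}+\veg_1+\veg_2)-f(\hat{\vex}+\veg_1)\big]\le 0,
\]
the inequality being coordinatewise convexity of the $f_i$ (equivalently, Proposition~\ref{prop:superadditivity} at $\hat{\vex}$ applied to the pairwise-conformal $\veg_1,\veg_2$): moving by $\veg_2$ starting from $\hat{\vex}+\veg_1$ costs at least as much as moving by $\veg_2$ from $\hat{\vex}$, because $\veg_1$ points into the same orthant as $\veg_2$. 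Hence $E_P=E_I=0$, so $\vex^*$ and $\vez^*$ are optima and we are done. The routine ingredients are the Carathéodory decomposition and the norm bookkeeping; the one genuinely load-bearing point is this balancing identity — pushing the continuous optimum \emph{forward} by the integral part $\veg_1$ and the integer optimum \emph{backward} by $\veg_1$ produce changes in $f$ that cancel, forcing both new points to remain optimal. One should also keep in mind that Proposition~\ref{prop:circuits} is invoked for a \emph{non-integral} kernel vector, which is precisely why the rational multipliers $\mu_k$ must be split into integral and fractional parts instead of using an integral Graver decomposition.
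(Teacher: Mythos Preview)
Your proof is correct and follows essentially the same approach as the paper: decompose $\hat{\vez}-\hat{\vex}$ (the paper uses $\hat{\vex}-\hat{\vez}$, a harmless sign flip) via the circuit decomposition of Proposition~\ref{prop:circuits}, split the real multipliers into integer and fractional parts, and define $\vex^*,\vez^*$ so that both distances equal $\|\veg_2\|$. Your optimality argument $E_P+E_I\le 0$ with both terms nonnegative is exactly the paper's use of Proposition~\ref{prop:prox_superadd} (which you cite as Proposition~\ref{prop:superadditivity}; the paper first isolates the two-vector consequence as a separate proposition), just repackaged symmetrically rather than as the chain ``LHS${}\le0\Rightarrow$RHS${}\le0\Rightarrow$equality''.
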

We will need a small proposition which follows from Proposition~\ref{prop:superadditivity}:
\begin{proposition} \label{prop:prox_superadd}
	Let $\vex, \vey_1, \vey_2 \in \R^n$, $\vey_1, \vey_2$ be from the same orthant, and $f$ be a separable convex function.
	Then
	\[
	f(\vex + \vey_1 + \vey_2) - f(\vex+\vey_1) \geq f(\vex+\vey_2) - f(\vex) \enspace .
	\]
\end{proposition}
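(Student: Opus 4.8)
The plan is to reduce everything to a one-dimensional convexity fact. Since $f$ is separable, write $f(\vex) = \sum_{i=1}^n f_i(x_i)$ with each $f_i\colon \R \to \R$ convex; every quantity appearing in the claimed inequality splits as a sum over the coordinates, so it suffices to show that for each coordinate $i$, abbreviating $x \df x_i$, $a \df (\vey_1)_i$ and $b \df (\vey_2)_i$, one has
\[
f_i(x+a+b) - f_i(x+a) \ \geq\ f_i(x+b) - f_i(x) \enspace .
\]
The hypothesis that $\vey_1,\vey_2$ lie in a common orthant enters only through the fact that $a$ and $b$ then have the same sign (either may be zero); this is exactly the input needed for the one-dimensional inequality, which would be false in general without it.

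Next I would remove the sign ambiguity by a reflection. If $a,b \leq 0$, substitute $x' \df x+a+b$, $a'\df -a \geq 0$, $b' \df -b \geq 0$; a one-line check (using $x'+a'+b' = x$, $x'+a' = x+b$, $x'+b' = x+a$) shows that the displayed inequality for the triple $(x,a,b)$ is the same inequality, after rearranging, for the triple $(x',a',b')$. Hence I may assume $a,b \geq 0$; and if $a+b = 0$ both sides vanish, so I may also assume $a+b > 0$.

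Finally, with $a,b \geq 0$ and $a+b>0$, I would observe that $x+a$ and $x+b$ are the complementary convex combinations
\[
x+a = \tfrac{b}{a+b}\,x + \tfrac{a}{a+b}\,(x+a+b), \qquad x+b = \tfrac{a}{a+b}\,x + \tfrac{b}{a+b}\,(x+a+b) \enspace ,
\]
apply convexity of $f_i$ to each, and add: the coefficients of $f_i(x)$ and of $f_i(x+a+b)$ each sum to $1$, giving $f_i(x+a)+f_i(x+b) \leq f_i(x)+f_i(x+a+b)$, which rearranges to the desired one-dimensional inequality. Summing over $i \in [n]$ then yields the proposition. This is in essence the one-dimensional convexity estimate (monotonicity of difference quotients) underlying Proposition~\ref{prop:superadditivity}, applied once per coordinate.

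There is no genuine obstacle here: the statement is a routine consequence of separable convexity and the whole argument is only a few lines. The two places that call for a little care are (i) checking that the common-orthant assumption is precisely what forces $a$ and $b$ to have matching signs — without it the inequality fails, e.g. $f(t) = t^2$ with $\vey_1 = -\vey_2$ — and (ii) handling the degenerate case $a+b=0$ before dividing.
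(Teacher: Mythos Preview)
Your proof is correct. The paper takes a quicker route: it simply applies Proposition~\ref{prop:superadditivity} with $k=2$, $\veg_1 \df \vey_1$, $\veg_2 \df \vey_2$, $\alpha_1=\alpha_2=1$, obtaining
\[
f(\vex+\vey_1+\vey_2)-f(\vex)\ \geq\ \bigl(f(\vex+\vey_1)-f(\vex)\bigr)+\bigl(f(\vex+\vey_2)-f(\vex)\bigr),
\]
and then rearranges. Your argument instead unpacks that black box: the coordinate-wise reduction and the convex-combination estimate $f_i(x+a)+f_i(x+b)\leq f_i(x)+f_i(x+a+b)$ are exactly the ingredients that go into proving Proposition~\ref{prop:superadditivity} in the first place, as you yourself note. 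So the two proofs have the same mathematical content; the paper's is a one-line citation, yours is self-contained and makes explicit where the common-orthant hypothesis enters (namely, to force $a$ and $b$ to have matching signs so that $x+a$ and $x+b$ lie between $x$ and $x+a+b$).
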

\begin{proof}
	Apply Proposition~\ref{prop:superadditivity} with $\vex \df \vex$, $\veg_1 \df \vey_1$, $\veg_2 \df \vey_2$, and $\lambda_1, \lambda_2 \df 1$, to get
	\[
	f(\vex + \vey_1 + \vey_2) - f(\vex) \geq \left(f(\vex + \vey_1) - f(\vex)\right) + \left(f(\vex + \vey_2) - f(\vex)\right) = f(\vex + \vey_1) + f(\vex + \vey_2) - 2f(\vex) \enspace .
	\]
	Adding $2f(\vex)$ to both sides and rearranging then yields the statement.
\end{proof}
\begin{proof}[Proof of Theorem~\ref{thm:proximity}]
By Proposition~\ref{prop:circuits}, we may write $\hat{\vex} - \hat{\vez}$ as a sign-compatible sum $\sum_{i=1}^{n'} \lambda_i \veg_i$ where $n' \leq n-r$ with $r=\textrm{rank}(A)$, and, for all $i \in [n']$, $\veg_i \in \CC(A) \subseteq \G(A)$, $\lambda_i \in \R_{>0}$, and $\lambda_i\veg_i \sqsubseteq \hat{\vex} - \hat{\vez}$.
Write $\hat{\vex} - \hat{\vez} = \sum_{i=1}^{n'} \floor{\lambda_i} \veg_i + \sum_{i=1}^{n'} \{\lambda_i\} \veg_i$, where $\{\lambda\} \df \lambda - \floor{\lambda}$ denotes the fractional part of $\lambda$.
Now define
\begin{align*}
\vex^* &\df \hat{\vez} + \sum_{i=1}^{n'} \{\lambda_i\} \veg_i, & \vez^* &\df \hat{\vez} + \sum_{i=1}^{n'} \floor{\lambda_i} \veg_i = \hat{\vex} - \sum_{i=1}^{n'} \{\lambda_i\} \veg_i \enspace .
\end{align*}
By the fact that both $\hat{\vez}$ and $\hat{\vex}$ lie within the bounds $\vel$ and $\veu$ and that both $\sum_{i=1}^{n'} \{\lambda_i\} \veg_i$ and $\sum_{i=1}^{n'} \floor{\lambda_i} \veg_i$ are conformal to $\hat{\vex} - \hat{\vez}$, we see that both $\vex^*$ and $\vez^*$ also lie within the bounds $\vel$ and $\veu$.
Thus $\vex^*$ is a feasible solution of~\eqref{relax} and $\vez^*$ is a feasible solution of~\eqref{IP}.
We can also write
$$\hat{\vex} - \hat{\vez} = (\vex^* - \hat{\vez}) + (\vez^* - \hat{\vez}),$$
which, using Proposition~\ref{prop:prox_superadd}, gives
\[f(\hat{\vex}) - f(\vex^*) \geq f(\vez^*) - f(\hat{\vez}) \enspace .\]
Since $\hat{\vex}$ is a continuous optimum and $\vex^*$ is a feasible solution to~\eqref{relax}, the left hand side is non-positive, and so is $f(\vez^*) - f(\hat{\vez})$.
But since $\hat{\vez}$ is an integer optimum it must be that $\vez^*$ is another integer optimum and thus $f(\hat{\vez}) = f(\vez^*)$, and subsequently $f(\vex^*) = f(\hat{\vex})$ and thus $\vex^*$ is another continuous optimum.
Let us now compute the proximity.
Since $\hat{\vex} - \vez^* = \sum_{i=1}^{n'} \floor{\lambda_i} \veg_i = \vex^* - \hat{\vez}$, it is sufficient to bound the distance between just one of the pairs $(\hat{\vex}, \vez^*)$ and $(\vex^*, \hat{\vez})$.
The following derivation is invariant under the norm bound on the elements of $\G(A)$, so, in particular, holds for both the $\ell_\infty$- and $\ell_1$-norms:
\[
\|\hat{\vex} - \vez^*\| =\|\sum_{i=1}^{n'} \{\lambda_i\} \veg_i\|
\leq n'\max_{i=1,\dots,n} \|\veg_i\|
\leq n \max_{\veg \in \G(A)} \|\veg\|,
\]
where the first equality follows by definition of $\vez^*$.
When considering the $\ell_\infty$-norm we use in the last line $g_\infty(A)$ and for the $\ell_1$-norm we use in $g_1(A)$, and the claim clearly follows.
\end{proof}

\lv{ 
	For $\vex \in \R^n$, denote by $\mathfrak{f}(\vex)$ the \emph{fractional support of $\vex$}, i.e., the set of fractional coordinates of $\vex$.
	We call $\hat{\vex}$ an \emph{$\mathfrak{f}(\hat{\vex})$-optimal solution} of~\eqref{relax} if every solution $\vex$ of~\eqref{relax} with $\mathfrak{f}(\vex) = \mathfrak{f}(\hat{\vex})$ satisfies $f(\hat{\vex}) \leq f(\vex)$.
	In the special case of bounded $\td_P(A)$ and $\td_D(A)$ we have a stronger theorem which: \begin{enumerate}
	\item replaces a continuous optimum $\vex^*$ with an $\mathfrak{f}(\vex)$-optimal solution, and,
	\item replaces the factor $n$ with the number of fractional coordinates of $\hat{\vex}$, denoted $\varphi(\vex)$.\end{enumerate}

	\begin{theorem}[Improved proximity] \label{thm:improved_proximity}
	Let $\hat{\vex}$ be an $\mathfrak{f}(\hat{\vex})$-optimal solution of~\eqref{relax},
	and let $\varphi(\hat{\vex}) = |\mathfrak{f}(\hat{\vex})|$ denote the number of fractional coordinates of $\hat{\vex}$.
	Then there exists an integer optimum $\vez^*$ of~\eqref{IP} such that
	\begin{align*}
	\|\hat{\vex} - \vez^*\|_\infty \leq \varphi(\hat{\vex}) \phi_P(\|A\|_\infty, \td_P(A)) & \qquad & \|\hat{\vex} - \vez^*\|_1 \leq \varphi(\hat{\vex}) \phi_D(\|A\|_\infty, \td_D(A)),
	\end{align*}
	for some computable functions $\phi_P$ and $\phi_D$.
	\end{theorem}
	Before we prove the theorem we need the following definition and a lemma.
	Let $\vex \in \ker(A)$ and let $\vey \in \ker_{\Z}(A)$.
	We say that \emph{$\vey$ is a cycle of $\vex$} if $\vey \sqsubseteq \vex$.
	\begin{lemma} \label{lem:cycle}
	Let $\hat{\vex}$ be an $\mathfrak{f}(\hat{\vex})$-optimal solution of~\eqref{relax} and $\vez^*$ be an integer optimum of~\eqref{IP} minimizing $\|\hat{\vex} - \vez^*\|_1$.
	Then there does not exist a cycle of $\hat{\vex} - \vez^*$.
	\end{lemma}
	\begin{proof}
	For contradiction assume that there exists a cycle $\vey$ of $\hat{\vex} - \vez^*$.
	We begin by a simple observation: let $g: \R \to \R$ be a convex function, let $x \in \R$ and $z \in \Z$, and let $y \in \Z$ be such that $y \sqsubseteq x-z$. By convexity of $g$ we have that
	\begin{align*}
	g(z+y) + g(x-y) \leq g(z) + g(x) \enspace .
	\end{align*}
	Due to separability of $f$ and conformality of $\vey$, we may apply this observation to each coordinate of $i \in [n]$ independently, and by aggregation we obtain
	\begin{align}
	f(\vez^* + \vey) + f(\hat{\vex} - \vey) &\leq f(\vez^*) + f(\hat{\vex}) \enspace. \label{eq:cycle}
	\end{align}
	This implies one of two cases:
	
	\noindent\textbf{Case 1: $f(\vez^* + \vey) \leq f(\vez^*)$.}
	Then $\vez^* + \vey$ is an optimal integer solution which is closer to $\hat{\vex}$, a contradiction to minimality of $\|\hat{\vex} - \vez^*\|_1$.
	
	\noindent\textbf{Case 2: $f(\hat{\vex} - \vey) < f(\hat{\vex})$.} By integrality of $\vey$ we have that $\mathfrak{f}(\hat{\vex} - \vey) = \mathfrak{f}(\hat{\vex})$ and thus $\hat{\vex} - \vey$ is a better solution with identical fractional support, contradicting the $\mathfrak{f}(\hat{\vex})$-optimality of $\hat{\vex}$. \qedhere
	\end{proof}

	\begin{proof}[{Proof of Theorem~\ref{thm:improved_proximity}}]
	Let $\rd{\hat{\vex}}$ be the vector $\hat{\vex}$ rounded towards $\vez^*$, i.e., $\rd{\hat{x}_i} \df \floor{x_i}$ if $z^*_i \leq \hat{x}_i$ and $\rd{\hat{x}_i} \df \ceil{x_i}$ otherwise.
	Denote by $\{\hat{\vex}\}$ the fractional remainder of $\hat{\vex}$, i.e., $\{\hat{\vex}\} \df \hat{\vex} - \rd{\hat{\vex}}$.
	Consider the equality
	$$A(\hat{\vex} - \vez^*) = A(\rd{\hat{\vex}} - \vez^*) + A\{\hat{\vex}\} = \vezero,$$
	and observe that in spite of $\{\hat{\vex}\}$ being fractional, $A \{\hat{\vex}\}$ is in fact an integral vector.
	
	We shall now construct $\varphi \df \varphi(\hat{\vex})$ integral vectors $\vev^1, \dots, \vev^\varphi$ such that $A \{\hat{\vex}\} = \sum_{i=1}^\varphi \vev^i$.
	These vectors will satisfy a particular property which will allow us to show the claim.
	Begin by taking the columns of $A_1, \dots, A_n$ of $A$ multiplied by coordinates of $\{\hat{\vex}\}$, that is, let $\vev^i = \{\hat{x}_i\}A_i$.
	After discarding zero columns we are left with $\varphi$ columns; renumber them to be $\vev^1, \dots, \vev^\varphi$.
	Now we need to appropriately round the vectors $\vev^i$.
	
	For each coordinate $j \in [m]$, define $t_j = \sum_{i=1}^\varphi v^i_j - \floor{v^i_j}$ to be the number of $j$-th coordinates which need to be rounded up.
	Then, go over the vectors $\vev^1, \dots, \vev^\varphi$, and for the first $t_j$ of them where $\vev^i_j$ is fractional, set $\vev^i_j \df \ceil{\vev^i_j}$; for the remaining ones, set $\vev^i_j \df \floor{\vev^i_j}$.
	
	Now let $A' \in \Z^{m \times \varphi}$ be a matrix whose columns are $\vev^1, \dots, \vev^\varphi$.
	We wish to bound the primal and dual treedepth of the matrix $\bar{A} = (A~A')$.
	The key observation is that each column $\vev$ of $A'$ was obtained from a column $A_i$ of $A$ in such a way that $\suppo(\vev) \subseteq \suppo(A_i)$.
	Thus $G_D(\bar{A}) = G_D(A)$ and $\td_D(\bar{A}) = \td_D(A)$.
	Moreover, each $\vev$ was obtained from a \emph{different} column of $A$, which in the primal graph corresponds to making a copy $v'$ of a vertex $v$ and connecting it to $v$ and all neighbors of $v$.
	Subsequently, if $F$ is a $\td$-decomposition of $G_D(A)$, we obtain $F'$ by, for each vertex corresponding to a column of $A_i$ for which $\{\hat{x}_i\} > 0$, subdividing the edge to its parent; if the vertex is the root, we attach a new vertex to it, making it the new root.
	Then, $G_P(\bar{A}) \subseteq \cl(F')$.
	Since we have at most doubled the length of each root-leaf path, we have shown that $\td_P(\bar{A}) \leq 2\td_P(A)$.
	Moreover, $\ttd(F') = \ttd(F)$.\martin{maybe $+1$?}
	
	The usefulness of $\bar{A}$ is that it allows us to write
	\begin{align*}
	A(\hat{\vex} - \vez^*) &= \left( A ~ A' \right)
	            \left(\rd{\hat{\vex}} - \vez^*, \veone_{\varphi} \right) = \vezero,
	\end{align*}
	and conformally decompose the integral vector $(\rd{\hat{\vex}} - \vez^*,\veone_{\varphi})$ into $e$ elements of $\G(\bar{A})$.
	
	We now claim that if the number $e$ of elements is at least $\varphi + 1$ then we have found a cycle of $\hat{\vex} - \vez^*$ which is not possible by Lemma~\ref{lem:cycle}.
	To that end, observe that there can be at most $\varphi$ elements $(\veg, \veg') \in \G(\bar{A}) \subseteq \Z^{n + \varphi}$ with $\veg' \neq \vezero$.
	If $e > \varphi$ then there is an element $(\veg, \veg') \in \G(\bar{A})$ of the decomposition with $\veg' = \vezero$.
	Then, by the conformality of the decomposition we have $\veg \sqsubseteq \rd{\hat{\vex}} - \vez^* \sqsubseteq \hat{\vex} - \vez^*$, making $\veg$ a cycle of $\hat{\vex} - \vez^*$, a contradiction.
	
	We conclude that 
	\begin{align*}
	\|\hat{\vex} - \vez^*\|_\infty &\leq \varphi g_\infty(\bar{A}) \leq \varphi \phi_P\left(\|A\|_\infty, \td_P(\bar{A})\right) \leq \varphi \phi_P\left(\|A\|_\infty, 2\td_P(A)\right) \\
	\|\hat{\vex} - \vez^*\|_1 &\leq \varphi g_1(\bar{A}) \leq \varphi \phi_D\left(\|A\|_\infty, \td_D(\bar{A})\right) = \varphi \phi_D\left(\|A\|_\infty, \td_D(A)\right),
	\end{align*}
	where the existence (and bounds) of $\phi_P$ and $\phi_D$ follow from Lemmas~\ref{lem:primal_norm} and~\ref{lem:dual_norm}, respectively.
	\end{proof}
}

\subsubsection{Scaling and Proximity}
If we want to avoid solving the continuous relaxation of~\eqref{IP}, but still have large box constraints $\norm{\veu-\vel}_\infty$, it can already be helpful to reduce the dependency on $\vel,\veu$ of the algorithm.
For instance, we might expect that $f_{\max } \geq f(\vex_0) - f(\vex^*)$ is at least linear in $\norm{\veu-\vel}_\infty$ for a separable convex function.
The idea is to first optimize with rather large step lengths $\lambda$ only i.e.\ updating $\vex \leftarrow \vex + \lambda \vey$ with $\lambda \geq 2^k$.
This practically replaces $\vel,\veu$ by smaller bounds $\tfrac{\vel}{\lambda}, \tfrac{\veu}{\lambda}$, and we can apply the results of Section~\ref{sec:reducibility} for limiting the objective function values.
Iteratively refining the solution leads to an optimal solution eventually.
The motivation for these results was taken from the techniques in~\cite{HS}.

We lay out the approach more specifically.
Assume for now that $\mathbf{0}$ is a feasible solution, i.e.\ we are concerned with the following IP.
\begin{align}
\label{eq:s-ip-original}
	\min & \left\{ f(\vex) : \ A\vex = \mathbf{0}, \ \vel \leq \vex \leq \veu, \ \vex \in \Z^n \right\}.
\end{align}
Instead of looking for a solution in the lattice $\vex \in \Z^n$, we first look for a solution in the scaled lattice $\vez \in s \Z^n = \{s\vex \mid \vex \in \Z^n\}$, for some $s \in \Z_{\geq 1}$.
\begin{align}
\label{eq:s-ip-lattice}
	\min & \left\{ f(\vex) : \ A\vex = \mathbf{0}, \ \vel \leq \vex \leq \veu, \ \vex \in s \Z^n \right\}. \tag{$s$-scaled IP}
\end{align}
Observe that both systems have the same continuous relaxation.
Hence, an optimal solution $\vex^*$ to the continuous relaxation of~\eqref{eq:s-ip-original} is also an optimal solution to the continuous relaxation of~\eqref{eq:s-ip-lattice}.
With this, we can relate an optimal solution $\vez^*$ of~\eqref{eq:s-ip-original} with an optimal solution $\hat{\vez}$ of~\eqref{eq:s-ip-lattice}.
\begin{theorem}[Scaling proximity] \label{thm:scaling_proximity}
Let an IP~\eqref{eq:s-ip-original} be given, and $s \in \Z_{\geq 1}$.
For every optimal solution $\vez^*$ of~\eqref{eq:s-ip-original}, there exists an optimum solution $\hat{\vez}$ of the scaled IP~\eqref{eq:s-ip-lattice} such that
\begin{align*}
\norm{\vez^* -\hat{\vez}}_1 \leq (s+1) n g_1(A), &\quad \norm{\vez^* -\hat{\vez}}_{\infty} \leq (s+1) n g_\infty(A).
\end{align*}
Vice versa, for every $\hat{\vez}$, there exists a $\vez^*$ within the same distance.
\end{theorem}
\begin{proof}
We will show one direction for the $\ell_1$-norm, as the remaining statements are shown in the same way.
Let $\vez^*$ be an optimum solution of~\eqref{eq:s-ip-original}.
By proximity, there exists an optimum solution $\vex^*$ to the continuous relaxation with
\begin{align}
\label{eq:aux2}
\norm{\vez^* - \vex^*}_1 &\leq n g_1(A).
\end{align}
As the continuous relaxations coincide, $\vex^*$ is also an optimum to the continuous relaxation of~\eqref{eq:s-ip-lattice}.
Substituting $\vex^\prime \df \tfrac{1}{s} \vex$, 
we obtain an objective-value preserving bijection between the solutions of~\eqref{eq:s-ip-lattice} and the solutions of the \emph{$s$-scaled IP}
\begin{align}
\label{eq:s-ip-scaled}
	\min & \left\{ f(s \vex^\prime ) : \ A\vex^\prime = \mathbf{0}, \ \tfrac{\vel}{s} \leq \vex^\prime \leq \tfrac{\veu}{s}, \ \vex^\prime \in \Z^n \right\}.
\end{align}
In particular, $\bar{\vex} \df \tfrac{1}{s} \vex^*$ is an optimum solution to the continuous relaxation of~\eqref{eq:s-ip-scaled}.

Again by proximity, the instance~\eqref{eq:s-ip-scaled} has an optimal solution $\bar{\vez}$ with 
\begin{alignat}{3}
\allowdisplaybreaks
\label{eq:aux3}
\norm{\bar{\vex} - \bar{\vez}}_1 &\leq n g_1(A) & \quad &\Leftrightarrow \quad & \norm{s\bar{\vex} - s\bar{\vez}}_1 &\leq sn g_1(A)
\end{alignat}
Substituting back, $\hat{\vez} \df s \bar{\vez}$ is an optimal solution to~\eqref{eq:s-ip-lattice}.
The claim follows by triangle inequality.
\end{proof}

\paragraph*{Scaling algorithm.}
As a $2^{k}$-scaled instance is also a $2$-scaled instance of a $2^{k-1}$-scaled instance, this gives the following idea for an algorithm to solve a general~\eqref{IP}.
\begin{enumerate}
\item[$(1)$] Find an initial feasible solution $\vex_{0}$, and recenter the instance to~\eqref{eq:s-ip-original}, so that $\mathbf{0}$ is feasible.
Let $k = \log_2 (\max(\norm{\veu}_\infty, \norm{\vel}_\infty)) + 1$, and set $\vex_k \df 0$.
As it is the only feasible solution, $\vex_k$ is an optimal solution of the $2^k$-scaled instance.
\item[$(2)$] If $k > 0$, replace the box constraints of the $2^{k-1}$-scaled instance with the proximity bounds $\norm{\vex_{k} - \vex}_\infty \leq 3ng_1(A)$.
Solve this instance with initial solution $\vex_{k}$ to optimality, and let $\vex_{k-1}$ denote the optimum found.
Update $k \leftarrow k-1$ and repeat.
\item[$(3)$] Output $\vex_0$.
\end{enumerate}
Analyzing the running time, we obtain the following corollary.
\begin{corollary}
\label{cor:scaling_algo}
Given an instance of~\eqref{IP} with finite bounds, we can find an optimum solution by solving $2 \log \norm{\veu - \vel, \veb}_{\infty}$ instances of~\eqref{IP} with right-hand side $\veb = \vezero$, and the lower and upper bounds $\bar{\vel}, \bar{\veu}$ of any instance satisfy $\norm{\bar{\veu}-\bar{\vel}}_{\infty} \leq 6 n g_\infty (A_I)$, $\norm{\bar{\veu}-\bar{\vel}}_{1} \leq 6 n g_1 (A_I)$.
\end{corollary}
\begin{proof}
Given an intial feasible solution, it is clear that we need at most $\log (\norm{\veu-\vel}_\infty)$ iterations, in each of which we apply Theorem~\eqref{thm:scaling_proximity} with $s=2$ to the bounds $\bar{l},\bar{u}$.
\end{proof}

Once we reached an optimal solution in the initial IP~\eqref{IP}, we could continue scaling, and look for a solution in the superlattice $\tfrac12 \Z^n$.
Continuing further, this can be used to find a $2^{-k}$-accurate solution for the continuous relaxation of~\eqref{IP}, as we shall see in the next section.

\subsection{Relaxation Oracles} \label{sec:relaxation}

The following is a simple corollary from the proximity theorem~\ref{thm:proximity}, applied to the $s$-scaled instance~\eqref{eq:s-ip-scaled} with the respective parameters.
\begin{corollary}[Solving~\eqref{relax} by solving~\eqref{eq:s-ip-lattice}]
\label{cor:scaling_relaxation}
Let $\varepsilon > 0$.
Then an optimal solution for~\eqref{eq:s-ip-lattice} is an $\epsilon$-accurate solution of~\eqref{relax} for any $s \leq \tfrac{\epsilon}{n g_\infty(A)}$.
\end{corollary}


This corollary immediately motivates an algorithm for finding an $\eps$-accurate solution for the continuous problem~\eqref{relax}.
However, for analysing the convergence of the halfling-augmentation, we require that the restriction of $f$ to $\Z^n$ takes integer values.
When applying the algorithm to a finer lattice $2^{-k} \Z^n$, we would need the assumption that $f$ also takes integer values on $2^{-k} \Z^n$.
Since $f$ is separable convex, this would lead to a huge term $f_{\max}$ in the running time estimate.
Instead, we will replace the function $f$ with a function $f^\prime$ equivalent to $f$ on the corresponding refinement of the lattice $\Z^n$, which also makes it unnecessary to assume integrality on the lattice.
We refer to the next section for the details.
\begin{theorem}\label{thm:scaling_relaxation}
Let $\varepsilon > 0$.
We can find an $\epsilon$-accurate solution to~\eqref{relax} by solving
\[
\Oh \left( n^3 g_1(A) (\log (n g_1(A))^2 \left[ \log (\norm{\veu - \vel}_\infty + 1) + \log \tfrac{n g_{\infty}(A)}{\varepsilon} \right] \right)
\]
many augmentation IPs, each with polynomial lower and upper bounds.
\end{theorem}
\begin{proof}
Let $k \df 2 \log (\norm{\veu - \vel}_\infty + 1)$.
For $s = 2^k$, the only feasible solution to~\eqref{eq:s-ip-lattice} is $\mathbf{0}$.
Applying Theorem~\ref{thm:scaling_proximity}, together with Corollary~\ref{cor:sepconvex_red}, we can find an optimum solution for the IP with lattice $\tfrac{s}{2} \Z^n$ by solving $\Oh (n^3 g_1(A) (\log (n g_1(A))^2)$ many augmenting IPs.
We update $k \leftarrow k-1$, and repeat the procedure.
Once $2^k \leq \tfrac{\varepsilon}{2 n g_{\infty}(A)}$, we stop with an $\epsilon$-accurate solution.
This yields $2 \log (\norm{\veu - \vel}_\infty + 1) + \log \tfrac{2 n g_{\infty}(A)}{\varepsilon}$ iterations, and the theorem is proven.
\end{proof}

\subsection{Reducibility Bounds} \label{sec:reducibility}
Thanks to the fact that our augmentation oracles only require $f$ to be represented by a comparison oracle, it is sufficient to prove the \emph{existence} of a small equivalent objective in order to improve our time complexity bounds (see~Proposition~\ref{prop:converge_comparison}).
Here, we show two such existence bounds, first for linear functions, and then for separable convex functions.
We also give asymptotically matching lower bounds in both cases.

\subsubsection*{Linear functions.}
Our main result here is the following:
\begin{theorem}[Linear reducibility] \label{thm:lin_red}
	\label{lem:ordering}
	Let $\vew \in \R^n$ and $f(\vex) = \vew \vex$.
	Then $f$ is $n (4nN)^{n}$-reducible.
\end{theorem}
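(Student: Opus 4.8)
The plan is to derive a bound on an equivalent linear function purely from the combinatorial structure of how $\vew$ orders the lattice points of $[-N,N]^n$, in the spirit of Frank–Tardos (Proposition~\ref{prop:FT}) but with a quantitatively better dependence. First I would reduce to the case of \emph{distinct, nonnegative} coordinates by a standard preprocessing: the ordering of $\{f(\vex) \mid \vex \in [-N,N]^n\}$ is unchanged under adding a constant to a single coordinate of $\vew$ (which only shifts all values by a constant on the cube), and ties $w_i = w_j$ can be merged, so it suffices to produce a small equivalent integer vector when $0 < w_1 < w_2 < \cdots < w_n$ (after a permutation). The key quantity is then: how many distinct values can $\vew\vex$ take over $\vex \in [-N,N]^n$? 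Since the differences $\vew\vex - \vew\vey$ for $\vex,\vey \in [-N,N]^n$ all lie in the set $\{\sum_i c_i w_i : |c_i| \le 2N\}$, and (by strict ordering and induction, exactly as in the proof that a ``superincreasing'' choice works) one can replace $w_i$ one coordinate at a time from the top down, each time rounding to an integer multiple of a common denominator without changing any strict inequality among cube points. The bound $n(4nN)^n$ should emerge from: at most $(4N+1)^n < (4nN)^n$ distinct difference-values, each coordinate contributing a factor of order $nN$ when we rescale so that consecutive achievable values are separated by at least $1$, and a leading factor $n$ from aggregating the $n$ coordinates.

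More concretely, the main step I would carry out is an explicit inductive construction of $\bar{\vew}$. Order so that $w_1 \le w_2 \le \cdots \le w_n$, and build $\bar w_1, \dots, \bar w_n$ by $\bar w_1 \df 1$ and $\bar w_{k} \df$ (something like) $1 + \sum_{j<k} 2N \bar w_j$, i.e. a superincreasing sequence scaled just enough that for all $\vex, \vey \in [-N,N]^n$ with $\vew\vex > \vew\vey$ we also have $\bar\vew\vex > \bar\vew\vey$. The verification that this preserves the ordering is the routine ``carry'' argument: if $\vex, \vey$ differ first (from the top) in coordinate $k$, then the sign of $\vew\vex - \vew\vey$ is dominated by the $k$-th term because $w_k > \sum_{j<k} 2N w_j$ would be false in general — so this naive superincreasing idea alone does \emph{not} respect the \emph{given} $\vew$; rather, one must be careful: we only know $\vew$'s ordering, not that it is itself superincreasing. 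The correct route (this is where Frank–Tardos's trick enters) is to process coordinates from largest to smallest, at each stage rounding $w_k$ to the nearest integer multiple of $1/q_k$ for a suitable $q_k$, maintaining the invariant that no strict inequality among the (finitely many) cube points is destroyed; since between consecutive distinct values of the current functional the gap is some $\delta_k > 0$, choosing $q_k$ so that the total perturbation from rounding coordinates $1,\dots,k$ is below $\delta_k/2$ suffices, and unwinding the recursion for the $q_k$ (each a product of at most $n N$-type factors) yields $\|\bar\vew\|_\infty \le n(4nN)^n$ after clearing denominators. The bound $f_{\max}^{[-N,N]^n} \le n N \|\bar\vew\|_\infty \le n(4nN)^n$ — absorbing the extra $nN$ into the estimate — gives $\rho(N) = n(4nN)^n$ as claimed.

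The main obstacle I expect is the bookkeeping that controls the growth of the denominators $q_k$ tightly enough to land at $n(4nN)^n$ rather than the weaker $2^{n^3}N^{n^2}$ of Proposition~\ref{prop:FT}. The improvement hinges on observing that the relevant ``gap'' $\delta_k$ can be bounded \emph{below} by $1/((4nN)^{\,n-k}\cdot \text{stuff})$ using that, after rounding the top $k$ coordinates, the functional restricted to the remaining coordinates already takes only integer multiples of a controlled unit, so the number of distinct values is at most $(4N+1)^n$ and they are equally spaced — this is the quantitative gain over the generic Frank–Tardos analysis, which does not exploit that all perturbations live on the bounded cube $[-N,N]^n$. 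So the heart of the argument is: (i) bound the number of distinct values of any partial-rounded functional on $[-N,N]^n$ by $(4nN)^n$; (ii) conclude the minimal gap is at least the reciprocal of the span divided by that count; (iii) choose the rounding precision accordingly; (iv) clear denominators and check the final $\ell_\infty$ bound. Steps (i) and (ii) are where care is needed; (iii) and (iv) are routine. A matching lower bound (Theorem~\ref{thm:lin_red_lowerbound}, cited but not needed here) confirms the exponent $n$ is optimal, so there is no slack to be gained and the estimates must be carried out sharply.
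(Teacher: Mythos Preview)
Your proposal contains a genuine gap, and the approach is fundamentally different from the paper's.

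First, two concrete errors. Your preprocessing claim that ``adding a constant to a single coordinate of $\vew$ only shifts all values by a constant on the cube'' is false: replacing $w_i$ by $w_i + c$ changes $\vew\vex$ by $cx_i$, which depends on $\vex$, so the ordering is not preserved. Second, and more seriously, step (ii) of your outline is wrong: from ``at most $(4N+1)^n$ distinct values in an interval of length $L$'' you cannot conclude that the minimum gap is at least $L/(4N+1)^n$. Pigeonhole gives an \emph{upper} bound on the minimum gap, not a lower bound; consider values $0,\epsilon,1$. For an arbitrary real $\vew$, the gaps between distinct values of $\vew\vex$ over integer $\vex$ can be arbitrarily small. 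This is precisely why Frank--Tardos needs simultaneous Diophantine approximation (LLL), and why their bound is $2^{n^3}N^{n^2}$: controlling these gaps algorithmically is expensive. Your rounding scheme, without such a tool, cannot yield the sharper bound.

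The paper's argument avoids the gap problem entirely by a different, non-constructive route. It considers the cone $\mathcal{C} = \bigcap_{\vew\veu \geq \vew\vev} \{\vez \mid (\veu-\vev)\vez \geq 0\}$ of all vectors inducing the same (or coarser) ordering as $\vew$ on $[-N,N]^n$. Any integral point in the relative interior of $\mathcal{C}$ yields an equivalent objective. The cone is described by integer inequalities with coefficients in $[-2N,2N]$, so by the paper's Lemma~\ref{lem:facet-generating} (which ultimately rests on the Steinitz/Graver bound of Lemma~\ref{lem:bound1}) its generators have $\ell_1$-norm at most $(4nN)^{n-1}$. Summing a maximal linearly independent set of at most $n$ generators gives a point $\vew'$ in the relative interior with $\|\vew'\|_1 \leq n(4nN)^{n-1}$, hence $g_{\max}^{[-N,N]^n} \leq n(4nN)^n$. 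The key insight you are missing is that the \emph{existence} of a small equivalent $\vew'$ follows from the polyhedral structure of the ordering cone, not from any gap analysis of $\vew$ itself.
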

This stands in contrast with Proposition~\ref{prop:FT} which gives a constructive but much worse bound of $(nN)^{\Oh(n^3)}$.
Replacing Proposition~\ref{prop:FT} with Theorem~\ref{thm:lin_red} in our strongly-polynomial algorithms thus shaves off a factor of $n^2$.
We first need a few definitions and a technical lemma.
A \emph{convex polyhedral cone} is a set $C = \{\sum_{\vex \in S} \alpha_{\vex} \vex \mid \alpha_{\vex} \geq 0 \, \forall \vex \in S\}$ for some finite set $S$.
The set $S$ is called a \emph{set of generators} of $C$, and we also write $\cone(S)$ for the cone generated by $S$. As we are only interested in convex polyhedral cones, we will write cone for brevity.

It is commonly known that $C$ can equivalently be described by a matrix $A$ as $C = \{\vex \mid A\vex \geq \vezero\}$.
For a finite set $S \subseteq \R^n$, the \emph{span of $S$} is defined as $\spann(S) \df \{\sum_{\vex \in S} \alpha_{\vex} \vex \mid \alpha_{\vex} \in \R, \, \forall \vex \in S\}$.
\begin{lemma}
\label{lem:facet-generating}
Let $C = \{\vex \mid A\vex \geq \vezero\} \subseteq \R^n$ be a cone with matrix $A \in \Z^{m \times n}$. Then there exists a finite set $S$ of integral vectors such that $C = \cone(S)$ and $\|\vev\|_1 \leq (2n\|A\|_\infty)^{n-1}$ for all $\vev \in S$.
\end{lemma}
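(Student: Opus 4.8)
The plan is to write $S$ explicitly as the union of two families of integral vectors — one that positively spans the lineality space of $C$, and one that generates the pointed part of $C$ — and to bound every vector in $S$ by a single application of Hadamard's inequality to the subdeterminants of $A$. We may assume $\|A\|_\infty\ge1$ (if $A=\vezero$ then $C=\R^n$ and the statement is trivial/vacuous) and, after discarding zero rows, that $A$ has no zero row.

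First the lineality space. Set $L\df\ker(A)=\{\vex\in\R^n\mid A\vex=\vezero\}$ and $r\df\mathrm{rank}(A)$. By Proposition~\ref{prop:circuits}, every $\vex\in\ker(A)$ lies in the cone generated by the circuits $\vecc\in\C(A)$ conformal to it; hence $\cone\!\left(\C(A)\cup(-\C(A))\right)=\spann_\R(\C(A))=L$. Each $\vecc\in\C(A)$ is supported on a set of at most $\rho+1\le r+1$ columns (with $\rho$ the rank of those columns) and, restricted to its support, equals — up to sign and division by the gcd of its entries — the vector of signed maximal minors of a full-rank $\rho\times(\rho+1)$ submatrix of $A$. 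By Hadamard's inequality each such minor has absolute value at most $(\sqrt\rho\,\|A\|_\infty)^\rho$, so $\|\vecc\|_1\le(\rho+1)(\sqrt\rho\,\|A\|_\infty)^\rho$. If $L\ne\{\vezero\}$ then $r\le n-1$, hence $\rho\le n-1$, and a short computation gives $(\rho+1)(\sqrt\rho\,\|A\|_\infty)^\rho\le n(n-1)^{(n-1)/2}\|A\|_\infty^{n-1}\le(2n\|A\|_\infty)^{n-1}$; if $L=\{\vezero\}$ there is nothing to add here.

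Next the pointed part. Pick $r$ linearly independent rows of $A$ forming $E\in\Z^{r\times n}$, so $\mathrm{rowspace}(E)=L^\perp$. Since $A(\vex+\vey)=A\vex$ for $\vey\in L$ we have $C+L=C$, and together with $\R^n=L\oplus L^\perp$ this yields $C=L+C_0$, where $C_0\df C\cap L^\perp$ is a pointed polyhedral cone (its lineality lies in $L\cap L^\perp=\{\vezero\}$). Writing $\tilde A\df\left(\begin{smallmatrix}A\\E\\-E\end{smallmatrix}\right)$ we have $C_0=\{\vex\mid\tilde A\vex\ge\vezero\}$, and the rows of $\tilde A$ are rows of $A$ up to sign and repetition, so $\|\tilde A\|_\infty=\|A\|_\infty$. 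Being pointed, $C_0$ is the conical hull of its finitely many extreme rays; each extreme ray is spanned by a primitive integral $\vev$ at which the active constraints of $\tilde A\vex\ge\vezero$ have rank $n-1$, so $\vev$ spans the kernel of an $(n-1)\times n$ full-rank submatrix of $\tilde A$ and is, up to sign and gcd, the vector of its signed maximal minors; hence $\|\vev\|_1\le n(\sqrt{n-1}\,\|A\|_\infty)^{n-1}\le(2n\|A\|_\infty)^{n-1}$ by Hadamard again. Choosing the sign of $\vev$ so that $\vev\in C_0\subseteq C$ and collecting one such $\vev$ per extreme ray, we let $S$ be the union of $\C(A)\cup(-\C(A))$ with these generators; then $\cone(S)=L+C_0=C$ and every element of $S$ satisfies the claimed bound.

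The Hadamard estimates and the inequality $n(n-1)^{(n-1)/2}\le(2n)^{n-1}$ are routine. The part deserving care — and the main obstacle — is assembling the structural facts correctly: that the circuits of $A$ span $\ker(A)$ (available via Proposition~\ref{prop:circuits}), that $C=L+(C\cap L^\perp)$ with $C\cap L^\perp$ pointed, and that a pointed polyhedral cone is generated by extreme rays, each of which arises from a rank-$(n-1)$ active subsystem. The reason for passing from $A$ to $\tilde A$ is precisely that the extreme-ray generators then live in the kernel of a submatrix whose entries are still bounded by $\|A\|_\infty$; describing $C\cap L^\perp$ instead by combining rows of $A$ with a basis of $L^\perp$ would inflate the entries and break the bound.
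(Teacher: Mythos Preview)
Your proof has a genuine error in the description of $C_0=C\cap L^\perp$. You set $E$ to consist of $r$ linearly independent rows of $A$, correctly note that $\mathrm{rowspace}(E)=L^\perp$, and then claim that $\tilde A=\left(\begin{smallmatrix}A\\E\\-E\end{smallmatrix}\right)$ cuts out $C_0$. But the constraints $E\vex\ge\vezero$ and $-E\vex\ge\vezero$ together say $E\vex=\vezero$, i.e.\ $\vex\in\ker(E)=(\mathrm{rowspace}(E))^\perp=(L^\perp)^\perp=L$, not $\vex\in L^\perp$. Hence $\{\vex\mid\tilde A\vex\ge\vezero\}=C\cap L=L$, and its only ``extreme ray'' is the origin; you have described the lineality space again, not the pointed part. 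To cut out $L^\perp$ by equations you would need a matrix whose \emph{rows span $L$}, and an integral basis of $L=\ker(A)$ may have entries as large as subdeterminants of $A$, which is exactly the inflation you were trying to avoid.

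The paper sidesteps this by not separating the lineality and pointed parts at all. It invokes (a form of) Minkowski's theorem for the cone $\{\vex\mid A\vex\ge\vezero\}$ using the augmented matrix $\binom{A}{I}$: each generator arises as the unique solution of some system $M\vey=\pm\vece_k$ with $M$ an invertible $n\times n$ submatrix of $\binom{A}{I}$. Dropping the $k$-th row gives a full-rank $(n-1)\times n$ matrix $M'$ with entries from $A$ or from $I$, hence $\|M'\|_\infty\le\|A\|_\infty$, and the generator spans $\ker(M')$. A Graver element of $M'$ (with the correct sign) then serves as the integral generator, and its $\ell_1$-norm is bounded by Lemma~\ref{lem:bound1} as $(2(n-1)\|A\|_\infty+1)^{n-1}\le(2n\|A\|_\infty)^{n-1}$. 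The role of the identity rows is precisely to supply a small-coefficient complement to $\mathrm{rowspace}(A)$, which is the ingredient your argument is missing. Your circuit-based treatment of the lineality space is fine, but for the pointed part the cleanest repair is to bring in rows of $I$ (equivalently, coordinate equations) rather than rows of $A$.
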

\begin{proof}
We will use the following statement of Minkowski.
\begin{proposition}[Minkowski~\cite{minkowski1896geometrie}] 
Let $C = \{\vex \in \R^n \mid A\vex \geq \vezero\}$ be a polyhedral cone.
Let $S^\prime$ be the set of all possible solutions to any of the systems $M\vey = \veb'$ where $M$ consist of $n$ linearly independent rows of the matrix $\binom{A}{I}$ and $\veb^\prime =\pm \vece_j$ for the $j$-th canonic unit vector, $j=1,\dots,n$.
Then there is a subset $S \subseteq S^\prime$ such that $C = \cone (S)$.
\end{proposition}
Let $S$ be the set of generators described in this proposition and let $\vev \in S$ be one of those generators.
We will replace $\vev$ by an integral vector $\vev^\prime = \lambda \vev$, $\lambda > 0$ of the desired length.

Let $M$ be the submatrix of $\binom{A}{I}$ and $k \in [n]$ the index such that $\vev$ is the unique solution to $M\vex = \veb^\prime$ with either $b^\prime = \vece_k$ or $b^\prime = - \vece_k$.
Let $\vea$ be the $k$-th row of $M$ and let $M^\prime \in \Z^{(n-1) \times n}$ be the matrix $M$ without this row.
As $\spann (\vev) = \ker M^\prime$, we can replace $\vev$ by a Graver basis element $\vev^\prime \in \G(M')$ with $(\vea \vev^\prime)(\vea \vev) > 0$ and the set $S\setminus \{\vev\} \cup \{\vev^\prime\}$ still generates $C$.
As Graver basis elements come in antipodal pairs $(\vev^\prime,-\vev^\prime)$ (i.e., $\vev' \in \G(M')$ implies $-\vev' \in \G(M')$), there exists such a $\vev^\prime$ (note that $\vea \vev^\prime =0$ implies $\vev^\prime = \vezero$).
But by Lemma~\ref{lem:bound1}, the length of such a $\vev^\prime$ is bounded by $(2(n-1)\|A\|_\infty + 1)^{n-1} \leq (2n\|A\|_\infty)^{n-1}$.
Replacing every element in $S$ yields a set of generators $S^\prime$ as desired.
\end{proof}

\begin{proof}[Proof of Theorem~\ref{thm:lin_red}]
By Definitions~\ref{def:f_equivalence} and~\ref{def:reducibility} it is sufficient to show that there exists a vector $\vew' \in \Z^n$ with $\|\vew'\|_1 \leq n (4nN)^{n-1}$ such that if $g(\vex) = \vew' \vex$, then $g$ and $f$ are equivalent on $\DD \df [-N,N]^n$, see~Property~\eqref{eq:f_equivalence} of Definition~\ref{def:f_equivalence}.
The bound $\|\vew'\|_1 \leq n (4nN)^{n-1}$ then implies $g_{\max}^{[-N,N]^n} \leq n (4nN)^{n}$ and the statement of the Lemma.
We will define a cone $C$, show that any integral vector $\vez$ in the relative interior of $C$ fulfills Property~\eqref{eq:f_equivalence}, and then provide a sufficiently small integral vector in $C$.

For any two points $\veu,\vev \in \DD$ with $\vew \veu \geq \vew \vev$, consider the halfspace
\[
H(\veu,\vev) := \{\vez \mid (\veu-\vev) \vez \geq 0\}.
\]
We define the cone
\[
	\mathcal{C} := \bigcap_{\substack{
		(\veu,\vev) \in \DD \times \DD \\
		\vew \veu \geq \vew \vev}} H(\veu,\vev).
\]
Obviously, $\vew \in \mathcal{C}$.
Let $\vez$ be any point in the relative interior of $\mathcal{C}$. We will show that $\vez$ fulfils Property~\eqref{eq:f_equivalence}.
The implication ``$\Rightarrow$'' follows from the definition of the cone. Similarly, for the implication ``$\Leftarrow$'' the only interesting case is $\veu \vez = \vev \vez$ (as any pair $\veu,\vev$ induces at least one inequality). But as $\vez$ was chosen in the relative interior, both inequalities 
$\veu \vez \geq \vev \vez$ and $\veu \vez \leq \vev \vez$ must be present in the description of the cone, and hence $\veu \vew = \vev \vew$.


Hence, for any $\vez$ in the relative interior, Property~\eqref{eq:f_equivalence} is satisfied.
Moreover, $\mathcal{C}$ is described by an integral matrix $A$ with $\|A\|_\infty \leq 2N$, i.e., $\mathcal{C} = \{\vex \mid A\vex \geq \vezero\}$.
By Lemma~\ref{lem:facet-generating}, each generator of $\mathcal{C}$ is an integral vector of $\ell_1$-norm at most $(4nN)^{n-1}$.
Let $S$ be the set of all these vectors, and let $S^\prime$ be any maximal subset of linearly independent vectors.

Define $\vew' = \sum_{\vev \in S^\prime} \vev$.
It is left to show that $\vew'$ is in the relative interior of $\mathcal{C}$.
For this, it is sufficient to show that whenever $\veu \vew' = \vev \vew'$ for any two points $\veu, \vev$, then $\mathcal{C} \subseteq \{\vex \mid (\veu-\vev) \vex = 0\}$.
Let $\veu,\vev \in \DD$ be two vectors such that $\veu \vew' = \vev \vew'$. As $(\veu-\vev) \vex \geq 0$ or $(\veu-\vev) \vex \leq 0$ is feasible for $\mathcal{C}$, this implies that $(\veu-\vev) \vez = 0$ for all $\vez \in S^\prime$. But as $S^\prime$ was chosen to be maximal, this means that actually $\mathcal{C} \subseteq \ker (\veu-\vev)$, hence $\vew'$ is contained in the relative interior.
Thus $\vew'$ is as desired.
\end{proof}

\begin{theorem}[Linear lower bound]\label{thm:lin_red_lowerbound}
Let $\DD = [-N,N]^n$, $\vew = \left(1, (nN)^1, (nN)^2, \dots, (nN)^{n-1}\right) \in \Z^n$, and $f(\vex) = \vew \vex$.
There does not exist any $\vew' \in \Z^n$ with $\|\vew'\|_1 < (nN)^{n-1}$ such that $g(\vex) = \vew' \vex$ is equivalent to $f$ on $\DD$, hence no $g$ equivalent to $f$ with $g_{\max}^{[-N,N]^n} \leq N(nN)^{n-1}$.
\end{theorem}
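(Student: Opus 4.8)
The plan is to reduce the claim to a condition on sign patterns of difference vectors, and then to pin down the coordinates of any hypothetical small equivalent weight vector $\vew'$ one at a time.

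\medskip

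\noindent\textbf{Step 1 (reformulation via differences).} First I would observe that $g(\vex)=\vew'\vex$ is equivalent to $f$ on $\DD=[-N,N]^n$ precisely when $\sign(\vew'\ved)=\sign(\vew\ved)$ for every $\ved$ in the difference set $\DD-\DD$, and that $\DD-\DD$ is exactly the box $\{-2N,\dots,2N\}^n\cap\Z^n$ (every such $\ved$ equals $\vex-\vey$ for suitable $\vex,\vey\in\DD$, splitting each coordinate). So it suffices to show that these sign conditions force $\|\vew'\|_1$ to be large.

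\noindent\textbf{Step 2 ($\vew$ realizes the lexicographic order).} The defining feature of $\vew=(1,nN,\dots,(nN)^{n-1})$ is that it is ``super-increasing with a margin of $2N$'': for a nonzero $\ved\in\{-2N,\dots,2N\}^n$ whose highest nonzero coordinate has index $k$, the leading term dominates the rest, since $|(nN)^{k-1}d_k|\ge (nN)^{k-1}>2N\sum_{j<k}(nN)^{j-1}\ge|\sum_{j<k}(nN)^{j-1}d_j|$ (the middle inequality is a short calculation, valid for $n\ge 3$; the cases $n\le 2$ are treated separately). Hence $\sign(\vew\ved)=\sign(d_k)$; that is, $\vew$ induces the lexicographic order on $\DD$ read from the top coordinate downwards, and for $n\ge 3$ this is a strict total order.

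\noindent\textbf{Step 3 (forcing geometric growth).} By Step 2 any equivalent $\vew'$ must satisfy $\sign(\vew'\ved)=\sign(d_k)$ for the same $\ved$. Feeding in $\ved=\vece_i$ gives $w'_i\ge 1$ for all $i$, and then, for each $i\in[n-1]$, the test vector $\ved^{(i)}:=\vece_{i+1}-2N(\vece_1+\dots+\vece_i)$ lies in the box and has highest nonzero coordinate $i+1$ with positive sign, so the sign condition yields $\vew'\ved^{(i)}>0$, i.e.\ $w'_{i+1}\ge 2N\sum_{j\le i}w'_j+1$. An induction on $i$ then shows $w'_i$ grows geometrically, and summing the resulting progression contradicts $\|\vew'\|_1<(nN)^{n-1}$. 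The statement about $g_{\max}$ is immediate from $g_{\max}^{[-N,N]^n}=N\|\vew'\|_1$.

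\noindent\textbf{Expected main obstacle.} The crux is the quantitative bookkeeping in Step 3: one must choose the test difference vectors so that they stay inside the $\ell_\infty$-ball of radius $2N$ while still driving the recursion with the right base, so that the geometric sum actually reaches $(nN)^{n-1}$; and one must separately treat the degenerate small cases $n\le 2$, where $\vew$ already has ties on $\DD$ and it is the tie-preservation \emph{equalities}, rather than strict inequalities, that pin down the top coordinate of $\vew'$.
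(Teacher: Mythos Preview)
Your reduction to sign conditions on the difference set $[-2N,2N]^n$ and the lex-order characterization of $\vew$ are both fine, but Step~3 has a genuine quantitative gap. The recursion $w'_{i+1}\ge 2N\sum_{j\le i}w'_j+1$ is solved with equality by $w'_i=(2N+1)^{i-1}$, so the strongest conclusion it can yield is $\|\vew'\|_1$ of order $(2N+1)^{n-1}$, not $(nN)^{n-1}$. Indeed, the vector $\vew'=\big(1,2N+1,\dots,(2N+1)^{n-1}\big)$ also realizes the strict lexicographic order on $[-2N,2N]^n$ by the very domination calculation you use in Step~2, hence is equivalent to $\vew$ on $\DD$ while having $\ell_1$-norm well below $(nN)^{n-1}$ once $n\ge 4$ (e.g.\ $n=4$, $N=1$: $\|\vew'\|_1=40<64$). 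Strict-inequality test vectors cannot force growth faster than $2N+1$ per coordinate, because each coordinate of a difference vector is capped at $2N$ in absolute value; so ``summing the resulting progression'' does not contradict $\|\vew'\|_1<(nN)^{n-1}$.

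The paper's argument is entirely different and much shorter: for each $i\in[n-1]$ it picks the pair $\vev^i,\veu^i$ with $v^i_i=nN$, $u^i_{i+1}=1$ and all other coordinates zero, so that $\vew\vev^i=\vew\veu^i=(nN)^i$; equivalence then forces the \emph{equality} $nN\,w'_i=w'_{i+1}$, whence $w'_n=(nN)^{n-1}w'_1\ge(nN)^{n-1}$. The idea you are missing is to exploit ties under $\vew$ rather than strict inequalities. That said, observe that the paper's test point $\vev^i$ has a coordinate equal to $nN$, which lies outside the stated domain $\DD=[-N,N]^n$; inside that domain, for $n\ge 3$, your lex-order analysis in fact shows there are no nontrivial ties at all, which is consistent with the counterexample $\vew'$ above.
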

\begin{proof}
For each $i \in [n-1]$ consider the two points $\vev^i, \veu^i \in \DD$, with $v^i_i = nN$ and $u^i_{i+1} = 1$ and all remaining coordinates zero.
Since $\vew \vev^i = \vew \veu^i$, any $g(\vex) = \vew'\vex$ satisfying~\eqref{eq:f_equivalence} must also satisfy $nN w'_i = w'_{i+1}$.
Thus
\begin{equation} \label{eq:equiv_chain}
(nN)^{n-1} w'_1 = (nN)^{n-2} w'_2 = \cdots = nN w'_{n-1} = w'_{n} \enspace .
\end{equation}
Any $\vew'$ for which $g(\vex) = \vew' \vex$ is equivalent with $f$ on $\DD$ must be nonzero, and since $\vew'$ is integer, we have that $w'_{1} \geq 1$, otherwise all terms of~\eqref{eq:equiv_chain} are $0$. 
We conclude that $\|\vew'\|_1 \geq w'_n \geq (nN)^{n-1}$.
\end{proof}

\subsubsection*{Separable convex functions.}
Turning our attention to separable convex functions, we shall provide a reducibility bound and also an algorithm constructing an equivalent function, although with a worse bound, similarly to the linear case where Theorem~\ref{thm:lin_red} and Proposition~\ref{prop:FT} contrast:
\begin{corollary} \label{cor:sepconvex_red}
	Let $f\colon \R^n \to \R$ be a separable convex function. Then, for every $N \in \N$,
	there exist separable convex functions $g_1, g_2$ which are equivalent to $f$ on $[-N,N]^n$ and satisfy $\forall \vex \in [-N,N]^n\colon g_1(\vex), g_2(\vex) \in \Z$, and
	\begin{enumerate}
		\item \label{it:sepconvex_red:1}$g_1$ satisfies $(g_1)_{\max}^{[-N,N]^n} \leq (n^2N)^{n(2N+1)+1}$,
		\item \label{it:sepconvex_red:2}$g_2$ is computable in strongly polynomial time and satisfies $(g_2)_{\max}^{[-N,N]^n} \leq 2^{\Oh((nN)^3)} (nN)^{\Oh((nN)^2)}$.
	\end{enumerate}
\end{corollary}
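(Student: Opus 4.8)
The plan is to reduce the separable convex case to the linear case by a standard linearization trick, and then apply the linear reducibility results (Theorem~\ref{thm:lin_red} for part~\ref{it:sepconvex_red:1} and Proposition~\ref{prop:FT} for part~\ref{it:sepconvex_red:2}). Since each coordinate $x_i$ of a feasible point ranges over the $2N+1$ integers in $[-N,N]$, the restriction of $f_i$ to this domain is determined by its $2N$ consecutive increments $\delta_i^{(k)} \df f_i(k+1)-f_i(k)$ for $k \in [-N,N-1]$, and convexity of $f_i$ says precisely that $\delta_i^{(-N)} \le \delta_i^{(-N+1)} \le \cdots \le \delta_i^{(N-1)}$. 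I would encode the value $f_i(x_i)$ (up to an additive constant, which is irrelevant for equivalence) as a linear function of ``staircase'' variables: introduce for each $i$ and each breakpoint $k \in [-N+1, N-1]$ a variable $y_i^{(k)} \in \{0,1\}$ indicating $x_i \ge k$, so that $x_i = -N + \sum_k y_i^{(k)}$ and $f_i(x_i) - f_i(-N) = \sum_k \delta_i^{(k)} y_i^{(k)}$ whenever the $y_i^{(k)}$ form a valid monotone $0/1$ staircase. This gives an extended formulation in dimension $n' \le n(2N+1)$ (the $y$-variables, or equivalently we may keep $x$ and $y$ together) with a \emph{linear} objective $\vew' \cdot \vey$ whose coefficients are the increments $\delta_i^{(k)}$, and with $\vel',\veu'$ bounding all variables in $[-1,1]$ or so, in particular $\|\vel',\veu'\|_\infty \le N$.

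The key point is that equivalence is preserved under this transformation: two feasible integer points $\vex,\vex' \in [-N,N]^n$ satisfy $f(\vex) \ge f(\vex')$ if and only if the corresponding extended points $\vey,\vey'$ satisfy $\vew'\vey \ge \vew'\vey'$, because the staircase encoding reproduces $f$ exactly on the relevant domain. Now I would apply the linear reducibility result to $\vew'$ in dimension $n' = \Oh(nN)$ over the box $[-N,N]^{n'}$ (or $[-1,1]^{n'}$, which only helps): Theorem~\ref{thm:lin_red} produces an equivalent integral weight vector $\bar{\vew}'$ with $\|\bar{\vew}'\|_1 \le n'(4n'N)^{n'-1}$, hence $(\bar g_1)_{\max} \le n'(4n'N)^{n'}$. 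Reading this weight vector back as a tuple of increments $\bar\delta_i^{(k)}$ — which are automatically monotone in $k$ for each $i$, since the ``$\Leftarrow$'' direction of equivalence forces $\bar\delta_i^{(k)} \le \bar\delta_i^{(k+1)}$ from the fact that swapping a unit of mass between breakpoints $k$ and $k+1$ is a feasible move that doesn't decrease $f$ — we define $g_1$ by partial-summing the $\bar\delta_i^{(k)}$ back into a convex function $g_{1,i}$, which is equivalent to $f_i$ on $[-N,N]$ and separable convex over $[-N,N]^n$. Plugging $n' \le n(2N+1)$ into the bound yields $(g_1)_{\max}^{[-N,N]^n} \le (n^2N)^{n(2N+1)+1}$ after absorbing constants. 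Part~\ref{it:sepconvex_red:2} is identical but uses the constructive Frank--Tardos algorithm (Proposition~\ref{prop:FT}) instead, giving $(\bar g_2)_{\max} \le 2^{(n')^3}N^{(n')^2} = 2^{\Oh((nN)^3)}n^{\Oh((nN)^2)}$, and strong polynomiality is inherited from Frank--Tardos (the dimension and $N$ are part of the input already, and the increments $\delta_i^{(k)}$ are obtained by $\Oh(nN)$ oracle comparisons — or rather, since $f$ is given only by a comparison oracle, we argue existence of $g_1$ and compute $g_2$ by first extracting the ordering of increments via the oracle).

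The main obstacle I expect is the bookkeeping around monotonicity of the reduced increments: the linear reducibility lemmas only guarantee that the \emph{sign} (or order) of inner products is preserved, so I need to argue carefully that the ``$\Leftarrow$'' direction of equivalence on the extended polytope forces the returned weights $\bar\delta_i^{(k)}$ to respect $\bar\delta_i^{(-N)} \le \cdots \le \bar\delta_i^{(N-1)}$, so that partial-summing them really produces a convex $g_{1,i}$. This follows because for each $i$ and each $k$ there is a pair of feasible integer points in $[-N,N]^n$ (differing only in coordinate $i$, by $\pm 1$ around the breakpoints $k,k+1$) witnessing the inequality $\delta_i^{(k)} \le \delta_i^{(k+1)}$ as a consequence of $f$'s convexity, and equivalence transfers exactly this comparison to $\bar\delta$. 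A secondary subtlety is handling the additive constants $f_i(-N)$ and possibly non-integrality: we may freely shift each $g_{1,i}$ by an integer constant to ensure $g_1(\vex) \in \Z$ on the box, which does not affect equivalence, and the linear reducibility output is already integral. Once these points are nailed down the calculation of the final bounds is routine.
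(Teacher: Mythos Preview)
Your approach is essentially the paper's: linearize $f$ in an extended space of dimension $\Theta(nN)$ and then invoke Theorem~\ref{thm:lin_red} (for part~\ref{it:sepconvex_red:1}) and Proposition~\ref{prop:FT} (for part~\ref{it:sepconvex_red:2}). The only cosmetic difference is that you encode via increments and staircase indicators, whereas the paper encodes via values and one-hot indicators (Lemma~\ref{lem:sepconvex_lin_equiv}); both work and give comparable bounds.

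There is, however, a genuine gap in your convexity step. You claim that ``a pair of feasible integer points in $[-N,N]^n$ (differing only in coordinate $i$, by $\pm 1$ around the breakpoints $k,k{+}1$)'' witnesses $\delta_i^{(k)} \le \delta_i^{(k+1)}$, but no such pair exists: if $\vex,\vex'$ differ only in coordinate $i$, then in your staircase encoding $\eta(\vex)-\eta(\vex')$ is supported on a \emph{contiguous} block of indices within brick $i$, so $\vew'(\eta(\vex)-\eta(\vex'))$ is a sum of consecutive increments, never a difference $\delta_i^{(k+1)}-\delta_i^{(k)}$ of two individual ones. The inequality $\delta_i^{(k)}\le\delta_i^{(k+1)}$ is a three-point (midpoint-convexity) condition on $f$, not a two-point comparison.

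The fix---which is exactly what the paper does---is to use that the linear equivalence from Theorem~\ref{thm:lin_red} holds on the \emph{whole} extended box, not only on encodings of points of $[-N,N]^n$. In your encoding, the unit vectors $\vece_{i,k},\vece_{i,k+1}$ lie in $[-1,1]^{n'}$ and satisfy $\vew'\vece_{i,k}=\delta_i^{(k)}\le\delta_i^{(k+1)}=\vew'\vece_{i,k+1}$ by convexity of $f_i$; equivalence then gives $\bar\delta_i^{(k)}\le\bar\delta_i^{(k+1)}$, hence $g$ is separable convex. The paper carries out the analogous argument in its one-hot encoding by comparing, inside $B_1(n)$, a vector with two $1$'s (at positions $x$ and $x{+}2$) against a vector with a single $2$ (at position $x{+}1$). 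Once you replace your ``pair in $[-N,N]^n$'' sentence with this unit-vector argument, the proof goes through.
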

Interestingly, while for linear functions the reducibility bound does not depend exponentially on $N$, this is unavoidably not the case for separable convex functions, as we will show later (Theorem~\ref{thm:sepconvex_lb}).

We handle the separable convex case by reducing it to the linear case so that we can apply our existence bound (Theorem~\ref{thm:lin_red}) as well as the constructive algorithm of Frank and Tardos (Proposition~\ref{prop:FT}); a result similar to Corollary~\ref{cor:sepconvex_red}, part~\ref{it:sepconvex_red:1}, was proven by De Loera et al.~\cite{banff}.
We differ in exploiting the connection between linear and separable convex function, which allows us to additionally use Proposition~\ref{prop:FT} to obtain Corollary~\ref{cor:sepconvex_red}, part~\ref{it:sepconvex_red:2}.

Let $M \df n(2N+1)$, and for any separable function $f$ defined on the box $[-N,N]^n$, let $\mu(f) \df \left(f_1(-N), f_1(-N+1), \dots, f_1(N), f_2(-N), \dots, f_2(N), \dots, f_n(-N), \dots, f_n(N)\right) \in \R^{M}$.
The vector $\mu(f)$ can be divided into $n$ bricks of $2N+1$ values, so, as usual, we denote the index of the brick with a superscript and the index within a brick with a subscript.
Note that the natural inverse $\mu^{-1}(\vew)$ defines for any vector $\vew \in \R^M$ a separable function $f$ by setting, for each $i \in [n], j \in [-N,N]$, $f_i(j) \df w^i_j$.
Furthermore, define a mapping $\eta: [-N,N]^n \to \{0,1\}^M$ as $\vey \df \eta(\vex)$ where $y^i_{x_i} \df 1$ for each $i \in [n]$ and all other coordinates of $\vey$ are zero.
Again, the natural inverse is defined for all points $\vey \in \{0,1\}^M$ which have exactly one $1$ in each brick by setting $\vex \df \eta^{-1}(\vey)$ as $x_i = \sum_{j=-N}^N j \cdot y^i_j$ for each $i \in [n]$.
Note that for any $\vez \in [-N,N]^n$ and any separable function $f$, $f(\vez) = \mu(f) \eta(\vez)$.
\begin{lemma}\label{lem:sepconvex_lin_equiv}
	Let $n \in \N$, $N \in \N$, $n \geq 4$, $f$ be a separable convex function, $\vew \df \mu(f)$, and $f'(\vey) \df \vew \vey$.
	If $\vew' \in \Z^M$ is such that $g'(\vey) \df \vew' \vey$ is equivalent with $f'$ on $B_1(n) = \{\vey \in \Z^M \mid \|\vey\|_1 \leq n\}$, then $g \df \mu^{-1}(\vew')$ is equivalent with $f$ on $[-N,N]^n$ and $g$ is separable convex.
\end{lemma}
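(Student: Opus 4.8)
The plan is to use the two dictionaries $\eta$ and $\mu$ defined just before the lemma. Recall that for every $\vez\in[-N,N]^n$ one has $f(\vez)=\mu(f)\,\eta(\vez)=f'(\eta(\vez))$, and, by the very definition of $g=\mu^{-1}(\vew')$ (i.e. $g(\vez)=\sum_i (w')^i_{z_i}$), also $g(\vez)=\vew'\,\eta(\vez)=g'(\eta(\vez))$. So $g'$ and $f'$ restricted to the image of $\eta$ are just $g$ and $f$ rewritten, and the whole statement follows once we know (a) that $\eta$ lands inside $B_1(n)$, and (b) how to read off convexity of each one-dimensional piece of $g$ from the equivalence of $g'$ and $f'$.

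First I would handle the equivalence on $[-N,N]^n$. For any $\vex\in[-N,N]^n$ the vector $\eta(\vex)$ is $0/1$ with exactly one $1$ in each of the $n$ bricks, so $\|\eta(\vex)\|_1 = n$ and $\eta(\vex)\in B_1(n)$. Hence for $\vex,\vey\in[-N,N]^n$,
\[ f(\vex)\ge f(\vey)\ \Leftrightarrow\ f'(\eta(\vex))\ge f'(\eta(\vey))\ \Leftrightarrow\ g'(\eta(\vex))\ge g'(\eta(\vey))\ \Leftrightarrow\ g(\vex)\ge g(\vey), \]
the middle step being the hypothesis applied to the two $B_1(n)$-points $\eta(\vex),\eta(\vey)$, and the outer ones the identities above. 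That is exactly~\eqref{eq:f_equivalence} for $g$ and $f$ on $\DD=[-N,N]^n$ (and $g(\vex)\in\Z$ because $\vew'\in\Z^M$).

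Next, separable convexity. By construction $g$ is separable, $g(\vex)=\sum_i g_i(x_i)$ with $g_i(j)=(w')^i_j$; extending each $g_i$ to $\R$ by linear interpolation, convexity is equivalent to the discrete second differences being nonnegative, i.e. $(w')^i_{j-1}+(w')^i_{j+1}\ge 2(w')^i_j$ for all $i\in[n]$ and $j\in[-N+1,N-1]$. The key move is to realize one such inequality as a comparison between two points of $B_1(n)$: set $\vey_1\df\vece^i_{j-1}+\vece^i_{j+1}$ and $\vey_2\df 2\,\vece^i_{j}$, both of $\ell_1$-norm $2\le n$, hence in $B_1(n)$. Convexity of $f_i$ gives $w^i_{j-1}+w^i_{j+1}\ge 2w^i_j$, i.e. $f'(\vey_1)\ge f'(\vey_2)$; the equivalence of $g'$ and $f'$ on $B_1(n)$ then forces $g'(\vey_1)\ge g'(\vey_2)$, which unwinds to precisely $(w')^i_{j-1}+(w')^i_{j+1}\ge 2(w')^i_j$. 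Ranging over all $i,j$ proves each $g_i$ convex.

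I do not anticipate a real obstacle: the only slightly delicate point is choosing the test-point pair $(\vey_1,\vey_2)$ that encodes a single second-difference inequality while staying inside $B_1(n)$ — everything else is bookkeeping with $\eta$ and $\mu$. The hypothesis $n\ge 4$ is comfortably more than the $n\ge 2$ those test points require.
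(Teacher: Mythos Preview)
Your proposal is correct and follows essentially the same approach as the paper: both parts—the equivalence chain via $\eta$ and the convexity check via a pair of $\ell_1$-norm-$2$ test vectors encoding a single second difference—match the paper's argument. Your notation is in fact slightly cleaner, as you work directly in $\Z^M$ with unit vectors $\vece^i_j$, whereas the paper writes the test vectors in $\Z^{2N+1}$ and leaves the embedding into the $i$-th brick implicit.
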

\begin{proof}
	There are two things to prove: that $g$ is equivalent to $f$, and that $g$ is convex.
	The equivalence follows from the equivalence of $f'$ and $g'$, because for any $\vex, \vex' \in [-N,N]^n$,
	\begin{equation*}
	f(\vex) \geq f(\vex') \underset{(1)}{\Longleftrightarrow} \vew \eta(\vex) \geq \vew \eta(\vex') \underset{(2)}{\Longleftrightarrow}
	\vew'\eta(\vex) \geq \vew'\eta(\vex') \underset{(3)}{\Longleftrightarrow} g(\vex) \geq g(\vex'),
	\end{equation*}
	where $(1)$ and $(3)$ are from the fact that $f(\vez) = \vew \eta(\vez)$ and $g(\vez) = \vew' \eta(\vez)$ for all $\vez \in [-N,N]^n$, respectively, and $(2)$ is from the equivalence of $f'$ and $g'$.
	Showing convexity of $g$ reduces to showing convexity of each univariate function $g^i$.
	Fix $i \in [n]$.
	It is known that a function $h$ of one variable is convex on $[-N,N]$ if, for any $x,y,z \in [-N,N]$ with $y=x+2$ and $z=x+1$, the inequality $h(x) + h(y) \geq 2h(z)$ holds.
	Consider the two points $\veu, \vev \in \Z^{2N+1}$ defined as follows: 
	\[
	u_k \df \begin{cases}0 & k\neq x,y\\ 1 & k=x,y\end{cases} \quad \text{and} \quad v_k \df \begin{cases}0 & k\neq z\\ 2 & k=z\end{cases} \enspace .
	\]
	Then $\vew \veu \geq \vew \vev \Leftrightarrow f_i(x) + f_i(y) \geq 2f_i(z)$ and similarly $\vew' \veu \geq \vew' \vev \Leftrightarrow g_i(x) + g_i(y) \geq 2g_i(z)$.
	Since by equivalence of $f'$ and $g'$ we have $\vew \veu \geq \vew \vev \Leftrightarrow \vew' \veu \geq \vew' \vev$ the chain of equivalences is complete and $g_i$ is convex if and only if $f_i$ is, thus $g$ is convex.
\end{proof}
Using Theorem~\ref{thm:lin_red} and Proposition~\ref{prop:FT} it is easy to prove Corollary~\ref{cor:sepconvex_red}:
\begin{proof}[Proof of Corollary~\ref{cor:sepconvex_red}]
	We will apply Theorem~\ref{thm:lin_red} and Proposition~\ref{prop:FT} to $\mu(f)$, which has $\bar{n} \df n(2N+1)$ dimensions, and, by Lemma~\ref{lem:sepconvex_lin_equiv}, it suffices to obtain a linear function equivalent to $\mu(f)$ on $B_1(n) \subseteq B_\infty(n) = [-n,n]^{\bar{n}}$, hence $\bar{N} \df n$.
	For part~\ref{it:sepconvex_red:1}, we have
	\[
	(g_1)_{\max}^{[-N,N]^n} \leq \bar{n}(4\bar{n}\bar{N})^{\bar{n}} \leq (n(2N+1))(4\cdot n(2N+1) \cdot n)^{n(2N+1)} \leq (n^2N)^{n(2N+1)+1},
	\]
	whereas for part~\ref{it:sepconvex_red:2} we have
	\[
	(g_2)_{\max}^{[-N,N]^n} \leq 2^{\Oh(\bar{n}^3)} \bar{N}^{\Oh(\bar{n}^2)} \leq 2^{\Oh((nN)^3)} n^{\Oh((nN)^2)} \enspace . \qedhere
	\]
\end{proof}
It remains to show that Corollary~\ref{cor:sepconvex_red}, part~\ref{it:sepconvex_red:1}, cannot be asymptotically improved:
\begin{theorem}[Separable convex lower bound] \label{thm:sepconvex_lb}
	There exists a separable convex function $f\colon \R^n \to \R$ which is not $(c^{nN})$-reducible for any $c < \phi$, where $\phi = \frac{1+\sqrt{5}}{2} \approx 1.61083$ is the golden ratio.
\end{theorem}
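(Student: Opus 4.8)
The plan is to exhibit a single univariate convex function whose successive differences grow like Fibonacci numbers, and then to show that any equivalent function must have values at least as large as those Fibonacci numbers, which grow like $\phi^{N}$. Concretely, I would work with $n=1$ (the bound is on $c^{nN}$, so $n=1$ suffices to block all $c<\phi$) and consider $f\colon\R\to\R$ defined on the integer points $\{-N,\dots,N\}$ — or more conveniently on $\{0,1,\dots,2N\}$ after a shift — by prescribing its values so that the discrete second differences $f(j+2)-2f(j+1)+f(j)$ are positive for every $j$; this makes $f$ convex. The cleanest choice is to let the \emph{first} differences $\delta_j \df f(j+1)-f(j)$ themselves be a strictly increasing sequence of integers realizing the Fibonacci recurrence, e.g. $\delta_j = F_{j+1}$ where $F_1=F_2=1$, $F_{k+1}=F_k+F_{k-1}$; then $f(j) = \sum_{i<j}\delta_i$, and since $\delta_{j+1}>\delta_j$ for all $j\ge 1$, $f$ is convex. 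The maximum value is $f(2N)=\sum_{i=1}^{2N}F_i = F_{2N+2}-1 = \Theta(\phi^{2N})$, which already shows $f$ is not $(c^{nN})$-reducible for $c<\phi^2$ in the trivial sense — but of course reducibility asks whether a \emph{smaller} equivalent $g$ exists, so the real work is a lower bound on every equivalent $g$.

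The core step is: any separable convex $g$ equivalent to $f$ on $[-N,N]$ must, after subtracting $g(0)$ (which changes nothing about equivalence or about $g_{\max}$ up to a constant, and we may assume $g\ge 0$ on the box by adding a constant and noting $g_{\max}$ absorbs it), satisfy a chain of inequalities forcing $g(2N)\ge F_{2N+2}-1$. To get this I would track the ordering relations that $f$ imposes among sums of the form $f(a)+f(b)$ versus $f(c)+f(d)$ with $a+b=c+d$. Convexity of $f$ with \emph{strictly} increasing differences means that for $a+b=c+d$ with $\{a,b\}$ ``more spread out'' than $\{c,d\}$ we have $f(a)+f(b)>f(c)+f(d)$; equivalence forces the same strict inequality for $g$, hence $g(a)+g(b)>g(c)+g(d)$, i.e. $g(a)+g(b)\ge g(c)+g(d)+1$ by integrality. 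Iterating these unit-gap inequalities along a carefully chosen telescoping path (the same combinatorial skeleton as in the proof of Theorem~\ref{thm:lin_red_lowerbound}, but now the ``multiplier'' at each step is not a fixed $nN$ but the running Fibonacci ratio, because here the constraint is $g_{j+1}-g_j \ge (g_j - g_{j-1}) + (\text{something}\ge 1)$ coming from strict convexity at each interior point) yields $g(j+1)-g(j)\ge F_{j+1}$ for all $j$, whence $g_{\max}^{[-N,N]}\ge g(2N)\ge F_{2N+2}-1 \ge c'\phi^{2N}\ge c'\phi^{nN\cdot 2}$... — so in fact the bound we extract is at least $\phi^{2N}$, comfortably beating $c^{N}$ for any $c<\phi^2$ and certainly for any $c<\phi$ as claimed. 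I would state the theorem's bound in the weaker published form $c<\phi$ to leave slack, but the Fibonacci argument gives the stronger $\phi^2$.

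The main obstacle, and the step I would be most careful about, is making the passage from ``$f$-equivalence'' to the unit-integer-gap inequalities fully rigorous: one must check that the pairs of points used in the telescoping chain all lie in the box $[-N,N]$ (this is why I shift to $[0,2N]$ and why the chain uses triples $(j-1,j,j+1)$ rather than widely separated points — each convexity witness $f(j-1)+f(j+1)\ge 2f(j)+1$ only involves three consecutive integers, so it stays in range), and that equivalence as defined in Definition~\ref{def:f_equivalence} — which is about $\ge$ between \emph{values} $f(\vex),f(\vey)$ at lattice points $\vex,\vey\in\DD$, not between sums — actually delivers the needed comparison. For the latter, the trick (exactly as in Lemma~\ref{lem:sepconvex_lin_equiv}) is to encode ``$f_i(x)+f_i(y)$ vs $2f_i(z)$'' as a comparison of the linearization $\mu(f)$ at the two $0/1/2$-vectors $\veu,\vev$ supported on $\{x,y\}$ and $\{z\}$; but since here we only have $f$-equivalence on lattice \emph{points} and not $\mu(f)$-equivalence on $B_1(n)$, I instead observe directly that for a \emph{univariate} convex $f$ the relation $f(z-1)+f(z+1)\gtreqless 2f(z)$ is equivalent to comparing $f$ at two \emph{scalar} lattice points only if $f$ is monotone, which it need not be — so the honest route is to build the lower bound purely from the defining comparisons $f(a)\gtreqless f(b)$, $a,b\in[-N,N]$, chaining enough of them to recover the second-difference information. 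Verifying that finitely many such scalar comparisons pin down each gap $g(j+1)-g(j)$ from below by $F_{j+1}$ is the technical heart of the argument; everything else — the Fibonacci asymptotics $F_{k}=\Theta(\phi^k)$ and the conclusion that no $g$ with $g_{\max}\le c^{N}$, $c<\phi$, can exist — is routine.
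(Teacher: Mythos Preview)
Your approach has a genuine and fatal gap at the point you yourself flag as ``the main obstacle'': for $n=1$ the notion of equivalence in Definition~\ref{def:f_equivalence} is far too weak to force any growth on $g$. Equivalence only says that $g$ induces the same total preorder on the $2N+1$ integer points of $[-N,N]$ as $f$ does. Given any univariate $f$, rank the points by their $f$-value and set $g$ equal to that rank; this $g$ is integer-valued, equivalent to $f$, and has $g_{\max}\le 2N$. (If you also want $g$ convex, you can still achieve $g_{\max}=O(N^2)$ by a simple counting argument.) Hence every univariate $f$ is $(2N)$-reducible, and certainly $(c^{N})$-reducible for any $c>1$. The comparisons $f(a)\gtreqless f(b)$ you hope to chain simply do not carry any information about second differences; you cannot recover ``$f(z-1)+f(z+1)>2f(z)$'' from order data on single points, and that is exactly the information you need.

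The paper's construction repairs this by using all $n$ coordinates: it distributes the Fibonacci numbers across the $n$ univariate pieces, setting $f_i(j)=F_{(i-1)+(j-1)n}$, so that the recurrence $F_k=F_{k-1}+F_{k-2}$ becomes an \emph{equality between values of $f$ at two lattice points of $[-N,N]^n$}. Namely, the point $\veu_k$ with a single nonzero coordinate (giving $f(\veu_k)=F_k$) and the point $\vev_k$ with two nonzero coordinates in \emph{different} positions (giving $f(\vev_k)=F_{k-1}+F_{k-2}$ by separability) satisfy $f(\veu_k)=f(\vev_k)$. Equivalence then forces $g(\veu_k)=g(\vev_k)$, which is the recurrence for $g$; combined with $g_1(1)\ge 1$ from integrality and a single strict inequality, this yields $g_n(N)\ge F_{nN-1}\sim\phi^{nN}$. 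The essential idea you are missing is that separability plus $n\ge 2$ lets one encode sums of values as single evaluations at multi-dimensional points, turning additive relations into order relations that equivalence must preserve.
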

\begin{proof}
	Let $F_i$ be the $i$-th Fibonacci number, where $F_0, F_1 \df 1$ and $F_i \df F_{i-1} + F_{i-2}$ for all $i \geq 2$.
	Define a function $f\colon \N^n \to \N$ by setting, for each $i \in [n]$, $j \in \N$, 
	\[
	f_i(j) \df
	\begin{cases}
	0 & \text{if $j=0$}, \\
	F_k & \text{where } k \df (i-1) + (j-1)n \enspace .
	\end{cases}
	\]
	For any $k \in \N$, let $i(k) \df 1+\floor*{\frac{k}{n}}$ and $j(k) \df 1+(k \mod n)$.
	Construct a sequence of points $\veu_k, \vev_k$ for all $k \in \Z$ as follows.
	Begin with all-zero vectors, and set $(\veu_k)_{i(k)} \df j(k)$, $(\vev_k)_{i(k-1)} \df j(k-1)$, and $(\vev_k)_{i(k-2)} \df j(k-2)$, i.e., $\suppo(\veu_k) = \{i(k)\}$ and $\suppo(\vev_k) = \{i(k-1), i(k-2)\}$.
	Note that 
	\[
	F_k \underset{(1)}{=} f(\veu_k) \underset{(2)}{=} f(\vev_k) \underset{(3)}{=} f_{i(k-1)}(j(k-1)) + f_{i(k-2)}(j(k-2)) \underset{(4)}{=} F_{k-1} + F_{k-2},
	\]
	where $(1)$ and $(4)$ hold by the definition of $f$, $(3)$ is due to separability of $f$, and $(2)$ follows from $F_k = F_{k-1} + F_{k-2}$.
	Also note that, because, for any $k \geq 2$, $F_k - F_{k-1} = F_{k-2}$ and the Fibonacci sequence is growing, the function $f$ is convex.
	
	Now consider any function $g$ equivalent to $f$ on $[-N,N]^n$ with integral values.
	By equivalence with $f$, $g$ must satisfy, for all $k \in \N$, the equality $g(\veu_k) = g(\vev_k)$, and hence $g(\veu_k) = F_k \cdot g^1(1)$.
	Because $g^1(1) > g^1(0)$ must hold since $f_1(1) > f_1(0)$, and, w.l.o.g. we may assume $g_1(0) = 0$, and $g$ is integral, we have $g_1(1) \geq 1$, and hence $g(\veu_k) \geq F_k$.
	By standard bounds $F_k \approx \phi^k$ and thus for large enough value of $k$, $F_k \geq c^k$, and $g_n(N) \geq f_n(N) = F_{nN-1} > c^{nN-1}$, concluding the proof.
\end{proof}
\begin{remark}
	Note that we have proven an even stronger result: there is no \emph{separable} (not necessarily convex) function which is integral on integral points and equivalent to $f$ with largest value smaller than $c^{nN}$ for $c < \phi$.
\end{remark}
\lv{ 
	\begin{lemma}[{Separable convex reducibility~\cite{banff}}] \label{lem:sepconvex_reducibility}
	Let $f: \R^n \to \R$ be a separable convex function.
	Then $f$ is $\left((2nN^2)^{2nN + o(1)}\right)$-reducible.
	\end{lemma}
	\begin{proof}
	Let $\DD = [-N,N]^n$.
	As $f$ is separable convex, we can write $f(\vex) = \sum_{i=1}^n f_i(x_i)$ for some one-dimensional convex functions $f_i$.
	Moreover, we can encode each $f_i$ by a vector of length at most $N' = 2N+1$, simply by storing all its values for the points in $[-N,N]$.
	Thus, the function $f$ gives rise to a vector $\vex_f$ of dimension at most $nN'$.
	We will construct an $nN'$-dimensional cone $P$ with the property that every integral point in the relative interior of $P$ corresponds to a separable convex function equivalent to $f$.
	The vector $\vex_f$ will be in $P$ (hence it is non-empty) and therefore we will be able to find a short integral vector in the relative interior, which encodes a desired equivalent function $g$.
	
	We will construct $P$ as follows.
	\begin{enumerate}
	\item \emph{The variables.} For each $x \in [-N,N]$, we introduce a variable $g_i(x)$. This gives us at most $2N+1$ variables for every dimension, and hence at most $nN'$ variables in total.
	\item \emph{Equivalence constraints.} For each pair $\vex,\vey \in \DD$, $\vex \neq \vey$, we add a constraint
	\begin{alignat*}{2}
	\sum_{i=1}^n g_i(x_i) &\leq \sum_{i=1}^n g_i(y_i) & \hspace{50pt} & \text{if } f(\vex) \leq f(\vey), \\
	\sum_{i=1}^n g_i(x_i) &\geq \sum_{i=1}^n g_i(y_i) & & \text{if } f(\vex) \geq f(\vey). \\
	\end{alignat*}
	This gives $\Oh (N^2)$ constraints, with coefficients in $\{-1,0,1\}$.
	\item \emph{Convexity constraints.} For each dimension $i$ and each triple $l_i \leq x < y < z \leq u_i$, $x,y,z \in \Z$, we add a constraint
	\[
	0 \leq (z-y) g_i(x) + (x-z) g_i(y) + (y-x) g_i(z).
	\]
	This gives $\Oh(nN^3)$ constraints, whose coefficients are bounded by $N$.
	\end{enumerate}
	The rest of the proof is similar to the proof for a linear objective function, Lemma~\ref{lem:ordering}.
	Each vector $\vex_g$ in $P$ grants a function that is separable convex, as it fulfills the convexity constraints, and that fulfills $f( \vex ) \leq f( \vey ) \Rightarrow g( \vex ) \leq g( \vey )$ by the equivalence constraints.
	If $\vex_g$ is in the relative interior, we even have $f(\vex) < f(\vey) \Rightarrow g(\vex) < g(\vey)$, hence $f,g$ are equivalent on $[-N,N]^n$.
	
	The description yields a cone that is generated by a set $S \subseteq \Z^{nN'}$ of vectors such that for each $\vev$, we have $\Vert \vev \Vert_1 \leq \left(2n(N')^2\right)^{nN' -1}$ by Lemma~\ref{lem:facet-generating}.
	The vector $\vex_f$ is in the relative interior, hence the cone is not empty, and we can add up a maximum set of linearly independent vectors in $S$ to obtain a point $\vev_g$ in the relative interior with $\Vert \vev_g \Vert_1 \leq nN' \left(2n(N')^2\right)^{nN' - 1}$.
	As every function value $g(\vex)$ is the sum of entries $g_i(x_i)$ of $\vev_g$, we obtain $g_{\max}^{[-N,N]^n} \leq (n N')^2 (2n(N')^2)^{nN' - 1} \leq (2n(N')^2)^{nN'}$, finishing the proof.
	\end{proof}
}

\subsection{Augmentation Oracles} \label{sec:almostlinear}
\subsubsection{Primal Treedepth}
The main result of this section is the following:
\begin{theorem}[Nearly linear $\td_P(A)$] \label{thm:nearlylinear_primal}
	Let $F$ be a $\td$-decomposition of $G_P(A)$.
	There is a computable function $g$ such that~\eqref{IP} can be solved in time
	\[g(\|A\|_\infty, \td_P(A)) \cdot n^{1+o(1)} \log \|\veu-\vel\|_\infty (\log f_{\max})^{\ttd(F)-1} \enspace .\]
	When $f(\vex) = \vew \vex$, then~\eqref{IP} can be solved in time
	\[g(\|A\|_\infty, \td_P(A)) \cdot n^{1+o(1)} \log \|\veu-\vel\|_\infty \cdot (\|\vew\|_\infty)^{o(1)}  \enspace .\]
	The function $g$ is a tower of exponentials of height $\Oh(\ttd(F))$ (i.e., $g_\infty(A)^{2\td_P(A)}$, where $g_\infty(A)$ is bounded by Lemma~\ref{lem:primal_norm}).
	The $n^{1+o(1)}$ term is more specifically $n (\log n)^{\ttd(F)+1}$.
\end{theorem}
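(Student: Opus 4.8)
The plan is to combine the scaling reduction of Corollary~\ref{cor:scaling_algo} with a new recursive algorithm that solves~\eqref{IP} directly along the block structure of Lemma~\ref{lem:decomposition}, arranged so that the recursion has depth $\ttd(F)$ and each level costs only polylogarithmic overhead. First I would homogenize the instance and apply Corollary~\ref{cor:scaling_algo}, reducing to $\Oh(\log\|\veu-\vel,\veb\|_\infty)$ auxiliary instances~\eqref{eq:scaling_aux_ip}, all with constraint matrix $A_I$ and then $A$, whose bounds satisfy $\|\bar{\veu}_i-\bar{\vel}_i\|_\infty \le N$ with $N = 8n\,g_\infty(A_I)$; thus $\log N = \Oh(\log n)$ up to a parameter-dependent constant. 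By Lemma~\ref{lem:feas_td} the matrix $A_I$ admits a $\td$-decomposition $F''_P$ with $\ttd(F''_P)=\ttd(F)$ and $\height(F''_P)\le 2\height(F_P)$, so the parameters degrade only mildly, and by Corollary~\ref{cor:feas_td} feasibility of each auxiliary instance is no harder than its optimization. It therefore suffices to solve one instance whose bounds are at most $N$ (with matrix $A$ or $A_I$) in time $g(\|A\|_\infty,\td_P(A))\cdot n\,(\log n)^{\ttd(F)}(\log f_{\sepmax})^{\ttd(F)-1}$.

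For such a reduced instance I would recurse on the decomposition~\eqref{eq:block-structure}: a shared column block of width $k_1(F)\le\td_P(A)$, children $A_1,\dots,A_d$ with $\ttd(F_i)=\ttd(F)-1$ and $\sum_i n^i\le n$, and the induced partitions $\veb=(\veb^1,\dots,\veb^d)$, $\vex=(\vex^0,\vex^1,\dots,\vex^d)$, $f=f_0+\sum_i f_i$ with $(f_i)_{\sepmax}\le f_{\sepmax}$; this monotonicity of $f_{\sepmax}$ (which fails for $f_{\max}$) is exactly what lets the objective bound be carried down the recursion. Once the shared variable $\vex^0$ is fixed, the children become independent~\eqref{IP} instances with right-hand sides $\veb^i-\bar{A}_i\vex^0$ and unchanged bounds, each solved by a recursive call; the base case $\ttd=1$ is a path matrix on $\le\td_P(A)$ columns and is handled by a direct discrete convex minimization over the box of integer points, in time $g(\|A\|_\infty,\td_P(A))(\log N)^{\Oh(1)}(\log f_{\sepmax})^{\Oh(1)}$. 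The task at an internal node is to minimize over the (bounded-dimension, side $\le N$) box of $\vex^0$ the function $\vex^0\mapsto f_0(\vex^0)+\sum_i(\text{optimal value of child }i\text{ at }\vex^0)$; since this function is convex --- the optimal value of a child is a convex function of its right-hand side, which is affine in $\vex^0$ --- it can be located with only $\Oh(\log n)$ ``evaluate all children'' rounds (e.g.\ via a proximity/scaling argument narrowing $\vex^0$ to a parameter-sized box, or a multidimensional discrete convex search), memoizing the children's answers over the parameter-bounded set of relevant right-hand sides $\{\bar{A}_i\vex^0\}$. Unrolling: each of the $\ttd(F)$ levels multiplies the running time by a factor $g(\td_P(A))(\log n)^{\Oh(1)}$ from the shared-variable search and the combining step, and by a factor of order $\log f_{\sepmax}$ from the searches over objective values, while the subproblem sizes telescope ($\sum_i n^i\le n$ at every level), yielding total time $g(\|A\|_\infty,\td_P(A))\cdot n\,(\log n)^{\ttd(F)+1}(\log f_{\sepmax})^{\ttd(F)-1}$; here $g$ is dominated by $g_\infty(A_I)^{\Oh(\td_P(A))}$, a tower of exponentials of height $\Oh(\ttd(F))$ by Lemma~\ref{lem:primal_norm}. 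Multiplying by the $\Oh(\log\|\veu-\vel,\veb\|_\infty)$ scaling instances and adding the feasibility reduction gives the first bound. The linear case follows from the general one: on the reduced instances $f=\vew\vex$ satisfies $\log f_{\sepmax}=\Oh(\log n+\log\|\vew\|_\infty)$, so $(\log f_{\sepmax})^{\ttd(F)-1}=(\log n)^{\Oh(\ttd(F))}\cdot\|\vew\|_\infty^{o(1)}$, and the $(\log n)^{\Oh(\ttd(F))}$ is absorbed into $n^{1+o(1)}$.

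The main difficulty is quantitative: making the per-level overhead genuinely $(\log n)^{\Oh(1)}$, and in particular bounding the number of ``evaluate all children'' rounds per node by $\Oh(\log n)$ (rather than $(\log n)^{k_1}$), so that the exponent of $\log n$ stays linear in $\ttd(F)$ and does not grow with $\td_P(A)$. This needs the correct notion of discrete convexity of the parametric value function $\vex^0\mapsto\sum_i(\text{child optimum})$ together with a matching search, and a memoization whose total cost across an entire level is $n\,(\log n)^{\Oh(1)}$ rather than $n^{\Omega(1)}$ times that; pinning the exponents down to exactly $\ttd(F)+1$ and $\ttd(F)-1$ is then a careful amortized accounting over the $\ttd(F)$ levels. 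A secondary point, already guaranteed by Lemma~\ref{lem:feas_td} and Corollary~\ref{cor:scaling_algo}, is that the auxiliary feasibility instance~\eqref{eq:auxiliary_feasibility} and the scaled instances retain topological height $\ttd(F)$, so that the recursion depth does not increase.
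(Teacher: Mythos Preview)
Your proposal has a genuine gap at its core. You claim that ``the optimal value of a child is a convex function of its right-hand side,'' and you plan to locate the optimal shared block $\vex^0$ by a discrete convex search in $\Oh(\log n)$ rounds. But the value function of an integer program in its right-hand side is \emph{not} convex, not even for linear objectives. A one-line counterexample: $\min\{-x \mid 2x \le b,\ x\in\Z_{\ge 0}\}$ has value $-\lfloor b/2\rfloor$, and $-\lfloor 1/2\rfloor = 0 > -\tfrac12 = \tfrac12\bigl(-\lfloor 0/2\rfloor - \lfloor 2/2\rfloor\bigr)$. The same failure occurs for separable convex $f$ and, more basically, the parametric value function can be $+\infty$ on a non-convex set of right-hand sides. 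So no ``correct notion of discrete convexity'' will rescue a $\Oh(\log n)$-round search over the $N^{k_1(F)}$-sized box of $\vex^0$, and your per-level overhead is not polylogarithmic. The memoization idea fails for the same reason: $\vex^0$ ranges over a box of side $N=\Theta(n\,g_\infty(A_I))$, so $\{\bar A_i\vex^0\}$ is not parameter-bounded.

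The paper's proof avoids this issue entirely by never optimizing over $\vex^0$ directly. It runs an \emph{augmentation} procedure at each level: given a current feasible $\vex$, it enumerates augmenting directions $\veg^0$ for the shared block over the \emph{parameter-sized} box $[-g_\infty(A),g_\infty(A)]^{k_1(F)}$ (together with step-lengths $\lambda\in\Gamma_2$), and for each choice solves the children optimally by recursion. Lemma~\ref{lem:primal_speedup} is the key new ingredient: grouping a sign-compatible Graver decomposition of $\vex^*-\vex$ by the first $k_1(F)$ coordinates and applying separable-convex superadditivity shows that the best such step closes the gap by a factor $\tfrac{1}{2}\,g_\infty(A)^{-k_1(F)}$. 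Hence the number of iterations per level is $g_\infty(A)^{\Oh(k_1(F))}\log f_{\sepmax}$; the first factor is absorbed into $g(\|A\|_\infty,\td_P(A))$ and only one $\log f_{\sepmax}$ factor is paid per recursion level, which is exactly where the exponent $\ttd(F)-1$ comes from. The base case (Lemma~\ref{lem:fixeddim_nofmax}) removes any $f_{\sepmax}$-dependence at the leaves by combining scaling with the separable-convex reducibility bound, and Lemma~\ref{lem:nearlylinear_primal_aux} does the amortized bookkeeping across levels.
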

Let us first describe the algorithm realizing Theorem~\ref{thm:nearlylinear_primal} and then gradually prove its correctness and the complexity bound given in the Theorem.

\paragraph*{Algorithm}
The algorithm starts by applying the scaling algorithm (Corollary~\ref{cor:scaling_algo}), hence solving the instance of~\eqref{IP} by solving $\log \|\veu-\vel\|_\infty$ auxiliary instances~\eqref{eq:s-ip-scaled} whose constraint matrix has the form $A' \df A_I=(A~I)$ and with smaller right hand side and bounds.
The scaling algorithm thus forms an outer loop of the algorithm.
Let the auxiliary instance of the scaling algorithm be 
\begin{equation}
\min \tilde{f}(\tilde{\vex}):\, A'\tilde{\vex}=\tilde{\veb}, \, \tilde{\vel} \leq \tilde{\vex} \leq \tilde{\veu}, \, \tilde{\vex} \in \Z^{n+m} \enspace . \label{eq:primal_aux}
\end{equation}
As usual, we assume (by Lemma~\ref{lem:decomposition}) that $A'$ is block-structured along $F$ and we have, for each $i \in [d]$, matrices $\bar{A}'_i, A'_i$ and a $\td$-decomposition $F_i$ of $G_P(A_i)$.
This instance is solved by an augmentation procedure which forms an inner loop and is defined recursively over $\ttd(F)$ as follows:

\begin{enumerate}
	\item \label{it:primalalgo:2}Compute an initial feasible solution $\tilde{\vex}_0$ of~\eqref{eq:primal_aux} using the algorithm we shall describe (recall that by Corollary~\ref{cor:feas_td} ``feasibility is as easy as optimization'').
	\item \label{it:primalalgo:3}Perform the augmentation procedure where an augmenting step $\veh$ for $\tilde{\vex}$ is computed as follows.
	Let $\Gamma_2 \df  \{1,2,4,\dots, 2^{\ceil{\log\|\tilde{\veu}-\tilde{\vel}\|}}\}$.
	For each $(\lambda, \veg^0) \in \Gamma_2 \times \left( [-g_\infty(A'), g_\infty(A')]^{k_1(F)} \cap [\tilde{\vel}^0, \tilde{\veu}^0] \right)$, solve, for each $i \in [d]$,
	\begin{equation}
	\min \tilde{f}^i(\veh^i):\, A'_i \veh^i = \tilde{\veb}^i - \bar{A}'_i \lambda \veg^0,\, \tilde{\vel}^i \leq \tilde{\vex}^i + \veh^i \leq \tilde{\veu}^i, \, \veh^i \in \Z^{n_i}, \label{eq:primal_aux_subproblem}
	\end{equation}
	and let $\veh \df (\lambda \veg^0, \veh^1, \dots, \veh^d)$ be a minimizer of $f(\tilde{\vex} + \veh)$ among all choices of $(\lambda, \veg^0)$.
	If for a given pair $(\lambda, \veg^0)$ any subproblem~\eqref{eq:primal_aux_subproblem} is infeasible, disregard this pair.
	\begin{enumerate}
		\item If $\ttd(F) > 1$, then~\eqref{eq:primal_aux_subproblem} is solved by a recursive call because $\ttd(F_i) < \ttd(F)$.
		\item \label{it:t:primalalgo:3b} Otherwise $\ttd(F) = 1$ and~\eqref{eq:primal_aux_subproblem} is solved by the scaling algorithm (Corollary~\ref{cor:scaling_algo}), with each of its auxiliary instances being solved by an augmentation procedure where an augmenting step $\veh$ for $\tilde{\vex}$ is obtained by enumerating all pairs $(\lambda, \veg) \in \Gamma_2 \times \left([-g_\infty(A'), g_\infty(A')]^{n+m} \cap [\tilde{\vel}, \tilde{\veu}]\right)$ and taking $\veh \df \lambda \veg$ which satisfies $A'\veh = \vezero$ and minimizes $f(\tilde{\vex} + \lambda \veg)$.
	\end{enumerate}
\end{enumerate}

\begin{lemma}
The algorithm described above returns the optimum of~\eqref{IP}.
\end{lemma}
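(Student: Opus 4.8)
The plan is to argue by induction on $\ttd(F)$, peeling off the outer scaling loop first. By Corollary~\ref{cor:scaling_algo}, running the scaling algorithm on the given instance of~\eqref{IP} returns an optimum of~\eqref{IP} as soon as every auxiliary instance~\eqref{eq:primal_aux} it produces (of the shape~\eqref{eq:scaling_aux_ip}, with constraint matrix $A' \df A_I=(A~I)$) is solved correctly, including the computation of its initial feasible solution; the latter is legitimate because the constraint matrix has the form $(A'~I)$, so by the corresponding case of Corollary~\ref{cor:feas_td} a feasible solution of~\eqref{eq:primal_aux} is obtained by applying the same algorithm to an auxiliary feasibility instance whose constraint matrix is again of the form $(A'~I)$. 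Hence it suffices to prove, for every $\td$-decomposition $F$ of a primal graph: \emph{started from a feasible solution, the recursive augmentation procedure of steps~\ref{it:primalalgo:1}--\ref{it:primalalgo:3} returns an optimum of~\eqref{eq:primal_aux}.}

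For this I will show that every step $\veh$ produced in step~\ref{it:primalalgo:3} is a halfling for the current iterate $\tilde{\vex}$. Granting this, Lemma~\ref{lem:halfling} shows that the augmentation procedure reaches an optimum of~\eqref{eq:primal_aux} after finitely many iterations (each step strictly decreases the integer-valued objective $\tilde{f}$, which is bounded below on $[\tilde{\vel},\tilde{\veu}]$, so halting follows directly as well), and the recursion terminates because its depth is $\ttd(F)$. By Lemma~\ref{lem:powersoftwo}, to see that $\veh$ is a halfling it is enough to check, for each fixed $\lambda\in\Gamma_2$, that $\tilde{f}(\tilde{\vex}+\veh)\le \tilde{f}(\tilde{\vex}+\lambda\veg)$ for every $\tilde{\vex}$-feasible step pair $(\veg,\lambda)$ with $\veg\in\G(A')$.

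In the base case $\ttd(F)=1$, the instance~\eqref{eq:primal_aux} is solved by the scaling algorithm whose inner augmentation procedure (step~\ref{it:t:primalalgo:3b}) enumerates all pairs $(\lambda,\veg)\in\Gamma_2\times\bigl([-g_\infty(A'),g_\infty(A')]^{n+m}\cap[\tilde{\vel},\tilde{\veu}]\bigr)$; since $\G(A')\subseteq B_\infty(g_\infty(A'))$, this enumeration ranges over a superset of $\{\lambda\veg\mid\veg\in\G(A'),\lambda\in\Gamma_2\}$, and the halfling inequality is immediate. For the inductive step, assume the algorithm is correct for all primal $\td$-decompositions of topological height $<\ttd(F)$. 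By Lemma~\ref{lem:decomposition} the matrices $A'_i$ obtained in step~\ref{it:primalalgo:1} carry $\td$-decompositions $F_i$ with $\ttd(F_i)<\ttd(F)$, so each subproblem~\eqref{eq:primal_aux_subproblem} is solved to optimality by the recursive call. Now fix $\lambda\in\Gamma_2$ and a $\tilde{\vex}$-feasible step pair $(\veg,\lambda)$ with $\veg\in\G(A')$, and decompose $\veg=(\veg^0,\veg^1,\dots,\veg^d)$ along the block structure~\eqref{eq:block-structure}. Because $\veg\in\G(A')\subseteq B_\infty(g_\infty(A'))$ we have $\veg^0\in[-g_\infty(A'),g_\infty(A')]^{k_1(F)}$, and since $\tilde{\vex}+\lambda\veg$ is feasible, $\lambda\veg^0$ is admissible for the head block, so the pair $(\lambda,\veg^0)$ is among those scanned in step~\ref{it:primalalgo:3}. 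As~\eqref{eq:primal_aux} is homogeneous (the scaling algorithm always works on homogeneous instances), $\tilde{\veb}^i=\vezero$, and feasibility of $\tilde{\vex}+\lambda\veg$ together with $A'\tilde{\vex}=\tilde{\veb}$ yields $A'_i(\lambda\veg^i)=-\bar{A}'_i\lambda\veg^0=\tilde{\veb}^i-\bar{A}'_i\lambda\veg^0$ and $\tilde{\vel}^i\le\tilde{\vex}^i+\lambda\veg^i\le\tilde{\veu}^i$; thus $\lambda\veg^i$ is a feasible solution of the $i$-th subproblem~\eqref{eq:primal_aux_subproblem} associated with $(\lambda,\veg^0)$, and the recursively computed optimum $\veh^i$ satisfies $\tilde{f}^i(\tilde{\vex}^i+\veh^i)\le\tilde{f}^i(\tilde{\vex}^i+\lambda\veg^i)$. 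Summing over $i\in[d]$ and adding the common head term, separability of $\tilde{f}$ gives $\tilde{f}\bigl(\tilde{\vex}+(\lambda\veg^0,\veh^1,\dots,\veh^d)\bigr)\le\tilde{f}(\tilde{\vex}+\lambda\veg)$; and since step~\ref{it:primalalgo:3} returns the best candidate over all enumerated $(\lambda,\veg^0)$, also $\tilde{f}(\tilde{\vex}+\veh)\le\tilde{f}(\tilde{\vex}+\lambda\veg)$, proving the halfling claim.

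Combining the two cases completes the induction, and with the outer reductions we conclude that the algorithm returns an optimum of~\eqref{IP}. The step needing the most care — and the main obstacle in turning this sketch into a full proof — is the bookkeeping around the block decomposition: verifying that every $\tilde{\vex}$-feasible step of~\eqref{eq:primal_aux} restricts, block by block, to a \emph{feasible} step of the subproblems~\eqref{eq:primal_aux_subproblem} (in particular that the right-hand sides match, which relies on the homogeneity maintained by the scaling algorithm and on the precise shape of the primal decomposition from Lemmas~\ref{lem:decomposition} and~\ref{lem:feas_td}), and conversely that any tuple of sub-steps glued to an enumerated head is again a feasible step of~\eqref{eq:primal_aux}. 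Everything else reduces to Lemmas~\ref{lem:powersoftwo} and~\ref{lem:halfling} and Corollaries~\ref{cor:scaling_algo} and~\ref{cor:feas_td}.
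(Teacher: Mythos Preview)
Your proof is correct and follows the same two-part structure as the paper: correctness of the outer scaling loop via Corollary~\ref{cor:scaling_algo}, and correctness of the inner augmentation loop by arguing that the step computed in point~\ref{it:primalalgo:3} is good enough that augmentation stops only at an optimum. The paper's own proof is far terser — it simply asserts the step is $\G(A')\best$ without any block-by-block verification — whereas your halfling argument via Lemma~\ref{lem:powersoftwo}, together with the inductive optimality of the recursive subproblem solutions, is exactly the elaboration that assertion needs.
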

\begin{proof}
The correctness of the outer loop follows from the correctness of the scaling algorithm as shown in Corollary~\ref{cor:scaling_algo}.
The correctness of the inner loop as implemented by the recursive algorithm follows from the fact that an augmenting step $\veh$ computed in point~\ref{it:primalalgo:3} is always 
a halfling 
and thus augmentation only stops when a global optimum has been reached.
\end{proof}
Thus it remains to prove the complexity bounds of Theorem~\ref{thm:nearlylinear_primal}, for which we need to
\begin{enumerate}
	\item bound the number of iterations of the augmentation procedure defined in point~\ref{it:primalalgo:3} required to reach the optimum by $g(\td_P(A), \|A\|_\infty) \log f_{\max}$, for some computable function $g$ (Lemma~\ref{lem:primal_speedup}),
	\item bound the time required to solve the leaf instances of the recursive algorithm (point \ref{it:t:primalalgo:3b}) by a function independent of $f_{\max}$ (Lemma~\ref{lem:fixeddim_nofmax}),
	\item bound the complexity of the inner loop (points~\ref{it:primalalgo:2}--\ref{it:primalalgo:3}) using the previous two claims (Lemma~\ref{lem:nearlylinear_primal_aux}),
	\item bound the overall complexity of the algorithm, i.e., use the bounds of the scaling algorithm together with Lemma~\ref{lem:nearlylinear_primal_aux} to prove the complexity bound of Theorem~\ref{thm:nearlylinear_primal}. 
\end{enumerate}
The following lemma shows that the augmenting steps computed in point~\ref{it:primalalgo:3} of the algorithm decrease the optimality gap rapidly.
\begin{lemma} \label{lem:primal_speedup}
	Let $A \in \Z^{m \times n}$, $k \in \N$, $k \leq n$, and $\vex$ be a feasible solution of~\eqref{IP}.
	For each $\lambda \in \Gamma_2 = \{1,2,4,8,\dots\}$ let $S_\lambda \df \{\lambda \veh' \mid \veh' \in [-g_\infty(A), g_\infty(A)]^{k} \} \times \Z^{n-k}$ and let $\veh_\lambda$  be a solution of
	\begin{equation} 
	S_\lambda\best \{f(\vex + \veh) \mid A\veh = \vezero, \, \vel \leq \vex + \veh \leq \veu, \, \veh \in \Z^n\} \enspace . \tag{$S_\lambda\best$}\label{eq:lem_primal_speedup_subproblem}
	\end{equation}
	Let $\veh^*$ be a minimizer of $f(\vex + \veh_{\lambda})$ over $\lambda \in \Gamma_2$, and let $\vex^*$ be any optimum of~\eqref{IP}.
	Then
	\begin{equation} 
	f(\vex) - f(\vex + \veh^*)  \geq \frac{1}{2(2 g_\infty(A) + 1)^{k}} \left(f(\vex) - f(\vex^*) \right)\enspace . \label{eq:lem_primal_speedup}
	\end{equation}
\end{lemma}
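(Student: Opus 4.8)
The strategy mirrors the proof of Lemma~\ref{lem:halfling}, but we must keep track of the $k$ ``hard'' coordinates separately from the rest. The plan is to start by decomposing $\vex^* - \vex$ into Graver basis elements, then group the resulting steps by how they interact with the first $k$ coordinates, apply superadditivity to bound the total gap by a sum over a bounded number of candidate moves, and finally observe that each such candidate move is feasible for one of the subproblems~\eqref{eq:lem_primal_speedup_subproblem}.

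First I would write $\vex^* - \vex = \sum_{j=1}^{n'} \mu_j \veg_j$ with $n' \leq 2n-2$, each $\veg_j \in \G(A)$, $\mu_j \in \N$, and $\veg_j \sqsubseteq \vex^* - \vex$, using the Positive Sum Property (Proposition~\ref{prop:possum}). By separable convex superadditivity (Proposition~\ref{prop:superadditivity}) applied to $\vex$ and the vectors $\mu_j \veg_j$ with all coefficients equal to $1$, we get
\[
f(\vex) - f(\vex^*) \leq \sum_{j=1}^{n'} \left( f(\vex) - f(\vex + \mu_j \veg_j) \right) \enspace .
\]
Now I would split each term $\mu_j \veg_j$ according to its restriction to the first $k$ coordinates. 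Write $\veg_j = (\veg_j^{[k]}, \veg_j^{\text{rest}})$. Since $\veg_j \in \G(A) \subseteq B_\infty(g_\infty(A))$, we have $\|\veg_j^{[k]}\|_\infty \leq g_\infty(A)$. The key point is that $\mu_j \veg_j$ is a step whose first-$k$-coordinate part is $\mu_j \veg_j^{[k]}$, and we want to express it as $\lambda \veh'$ for some $\lambda \in \Gamma_2$ and $\veh' \in [-g_\infty(A), g_\infty(A)]^k$ (padded arbitrarily in the remaining coordinates). This is not directly possible since $\mu_j$ need not be a power of two, so as in Lemma~\ref{lem:powersoftwo} I would set $\lambda_j \df 2^{\lfloor \log \mu_j \rfloor}$ and choose $\tfrac12 < \gamma_j \leq 1$ with $\lambda_j = \gamma_j \mu_j$; convexity of $f$ then gives
\[
f(\vex) - f(\vex + \lambda_j \veg_j) \geq \gamma_j \left( f(\vex) - f(\vex + \mu_j \veg_j) \right) \geq \tfrac12 \left( f(\vex) - f(\vex + \mu_j \veg_j) \right) \enspace .
\]
Moreover $\lambda_j \veg_j$ lies in $S_{\lambda_j}$: its first-$k$-coordinate part is $\lambda_j \veg_j^{[k]} = \lambda_j \cdot \veg_j^{[k]}$ with $\veg_j^{[k]} \in [-g_\infty(A), g_\infty(A)]^k$, and its remaining coordinates are unconstrained in the definition of $S_{\lambda_j}$. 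It is also $\vex$-feasible since $\lambda_j \veg_j \sqsubseteq \mu_j \veg_j \sqsubseteq \vex^* - \vex$ and hence $\vel \leq \vex + \lambda_j \veg_j \leq \veu$. Therefore $\lambda_j \veg_j$ is a candidate in the $S_{\lambda_j}\best$ problem, so $f(\vex + \veh_{\lambda_j}) \leq f(\vex + \lambda_j \veg_j)$, and since $\veh^*$ is the best over all $\lambda \in \Gamma_2$, $f(\vex + \veh^*) \leq f(\vex + \lambda_j \veg_j)$ for every $j$.

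Combining, there must by an averaging argument be an index $j$ with $f(\vex) - f(\vex + \mu_j \veg_j) \geq \tfrac{1}{n'}(f(\vex) - f(\vex^*)) \geq \tfrac{1}{2g_\infty(A)^k}(f(\vex)-f(\vex^*))$ once we use $n' \leq 2n-2 \leq 2g_\infty(A)^k$ — wait, this last inequality need not hold. The main obstacle is precisely getting the constant right: the naive averaging gives a factor $\tfrac{1}{2n'}$, not $\tfrac{1}{2g_\infty(A)^k}$. The resolution is that the $n'$ summands $\mu_j \veg_j$ can be \emph{merged by orthant of their first-$k$-coordinate projection}: there are at most $(2g_\infty(A)+1)^k$ distinct sign-patterns-with-bounded-magnitude for $\veg_j^{[k]}$, but more usefully, one groups the $\veg_j$ sharing the same first-$k$ part and sums them — since all $\veg_j$ are conformal to $\vex^* - \vex$, each group's sum is still conformal, its first-$k$ part still has $\ell_\infty$-norm at most $g_\infty(A)$ (as all entries have the same sign and... no, they add up). The correct merging is by the sign orthant of the full vector restricted to coordinates $[k]$: vectors with the same orthant restriction can be added, and there are at most $2^k \leq g_\infty(A)^k$ such merged vectors, but their first-$k$ parts may now exceed $g_\infty(A)$ in magnitude — so instead one keeps them as sums of Graver elements and applies Proposition~\ref{prop:prox_superadd} iteratively, bounding each merged step's first-$k$ part by the bound used in $S_\lambda$. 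I would carry this out by: grouping indices $j$ by $\suppo(\veg_j^{[k]})$ and sign, obtaining at most $g_\infty(A)^k$ groups (this is the bound we need), showing each grouped step can be truncated to a power-of-two multiple of a single conformal representative lying in the appropriate $S_\lambda$, and then averaging over the $\leq g_\infty(A)^k$ groups to lose only the factor $\tfrac{1}{2g_\infty(A)^k}$. The delicate bookkeeping in this grouping-and-truncation step is where the real work lies; everything else is a routine adaptation of Lemmas~\ref{lem:halfling} and~\ref{lem:powersoftwo}.
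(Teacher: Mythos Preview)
You had the right idea and then talked yourself out of it. The correct grouping is precisely the one you raised and dismissed: group the Graver summands by the \emph{exact value} of their first-$k$ projection. Let $\G' \df \{\veg' \in \Z^k \mid (\veg',\veg'') \in \G(A)\text{ for some }\veg''\}$; since every Graver element has $\ell_\infty$-norm at most $g_\infty(A)$, we have $|\G'| \leq (2g_\infty(A)+1)^k$. Rewriting the conformal decomposition as
\[
\vex^* - \vex \;=\; \sum_{\veg' \in \G'} \veh_{\veg'}, \qquad \veh_{\veg'} \df \sum_j \beta_j (\veg', \veg''_j),
\]
superadditivity and averaging give some $\veg'$ with $f(\vex) - f(\vex + \veh_{\veg'}) \geq \frac{1}{|\G'|}(f(\vex)-f(\vex^*))$. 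Your worry was that the first-$k$ part of $\veh_{\veg'}$ is large --- and it is, it equals $N\veg'$ with $N \df \sum_j \beta_j$ --- but that is \emph{exactly} the shape $S_\lambda$ asks for: it is $\lambda \cdot \veh'$ with $\lambda = N$ and $\veh' = \veg' \in [-g_\infty(A),g_\infty(A)]^k$. So $\veh_{\veg'} \in S_N$; nothing has been lost.

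The only remaining issue is that $N$ need not be a power of two. This is where the power-of-two rounding belongs --- applied once to $N$, not to each $\mu_j$ separately as you tried first. Set $N' \df 2^{\lfloor \log N \rfloor}$; one more application of superadditivity (expanding $\veh_{\veg'}$ into its $N$ unit summands) and an averaging argument produce $\beta'_j \leq \beta_j$ with $\sum_j \beta'_j = N'$ such that $\veh'_{\veg'} \df \sum_j \beta'_j (\veg',\veg''_j)$ satisfies $f(\vex)-f(\vex+\veh'_{\veg'}) \geq \frac{N'}{N}(f(\vex)-f(\vex+\veh_{\veg'})) \geq \frac{1}{2|\G'|}(f(\vex)-f(\vex^*))$. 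Now $\veh'_{\veg'}$ has first-$k$ part $N'\veg'$ with $N' \in \Gamma_2$, hence $\veh'_{\veg'} \in S_{N'}$, it is feasible by conformality, and the $S_{N'}\best$ solution $\veh_{N'}$ (and therefore $\veh^*$) is at least as good. Your orthant-grouping detour is unnecessary.
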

\begin{proof}
	Let $\G' = \{\veg' \in \Z^k \mid (\veg', \veg'') \in \G(A)\}$ and observe that $\G' \subseteq [-g_\infty(A), g_\infty(A)]^{k}$ and thus $|\G'| \leq (2g_\infty(A)+1)^{k}$.
	By the Positive Sum Property (Proposition~\ref{prop:possum}) we have that $\vex^* - \vex = \sum_{i=1}^{n'} \alpha_i \veg_i$ for some $n' \leq 2n-2$, $\alpha_i \in \N$, and $\veg_i \in \G(A)$.
	Rewriting each element $\veg_i$ into its first $k$ coordinates and its remaining $n-k$ coordinates as $\veg_i = (\veg'_i, \veg''_i)$, we may rearrange the decomposition by grouping its summands by the first component as
	$$\vex^* - \vex = \sum_{\veg' \in \G'} \sum_{j} \beta_j (\veg', \veg''_j)$$
	and denote, for each $\veg' \in \G'$, $\veh_{\veg'} \df \sum_{j} \beta_j (\veg', \veg''_j)$.
	Now by separable convex superadditivity (Proposition~\ref{prop:superadditivity}) and an averaging argument, we get that there must exist a $\veg' \in \G'$ such that
	\begin{equation}
	f(\vex) - f(\vex + \veh_{\veg'}) \geq \frac{1}{|\G'|} \left(f(\vex) - f(\vex^*)\right) \enspace .
	\label{eq:primal_g_prime}
	\end{equation}
	
	This would be sufficient if our claim was made with $\veh^*$ as the minimum over $\lambda \in \N$, however, we now need to deal with the fact that we are taking $\lambda \in \Gamma_2 = \{1,2,4,8,\dots\}$.
	Let $\veg' \in \G'$ satisfy~\eqref{eq:primal_g_prime}, let $N \df \sum_{j} \beta_j$ from the definition of $\veh_{\veg'}$ and let $N' \df 2^{\floor{\log N}}$ be the nearest smaller integer power of $2$.
	Again applying Proposition~\ref{prop:superadditivity} we get that
	\[
	f(\vex) - f(\vex + \veh_{\veg'}) \geq \sum_{j} \beta_j \left(f(\vex)-f(\vex + (\veg', \veg''_j)) \right) \enspace .
	\]
	By an averaging argument there exist numbers $\beta'_j \leq \beta_j$ such that $\sum_j \beta'_j = N'$ and $\veh'_{\veg'} \df \sum_j \beta'_j (\veg', \veg''_j)$ satisfies
	\[
	f(\vex) - f(\vex + \veh'_{\veg'}) \geq \frac{N'}{N}\left(f(\vex) - f(\vex + \veh_{\veg'}) \right) \geq \frac{1}{2} \frac{1}{|\G'|}\left(f(\vex)-f(\vex^*) \right) \enspace .
	\]
	Let $\lambda \G' \df \{\lambda \veg' \mid \veg' \in \G'\}$.
	By $|\G'| \leq (2 g_\infty(A) + 1)^{k}$, there exists a $\lambda \in \Gamma_2$ and $(\veh', \veh'') \in \lambda \G' \times \Z^{n - k}$ which satisfies~\eqref{eq:lem_primal_speedup} and since $\lambda\G' \times \Z^{n - k} \subseteq S_\lambda$, any solution of~\eqref{eq:lem_primal_speedup_subproblem} satisfies~\eqref{eq:lem_primal_speedup}.
\end{proof}
Let us now show that an instance of~\eqref{IP} in small dimension can be solved in time independent of $f_{\max}$ by a combination of the scaling algorithm and a reducibility bound.
This bounds the complexity of solving the leaf instances in point~\eqref{it:t:primalalgo:3b} of the algorithm.
\begin{lemma}[] \label{lem:fixeddim_nofmax}
	Problem~\eqref{IP} can be solved in time $(3m \|A\|_\infty)^{3n^2 + m} \cdot \log (\|\veu-\vel\|_\infty)$.
\end{lemma}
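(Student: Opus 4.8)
The strategy is the one announced just before the statement: combine the scaling algorithm of Corollary~\ref{cor:scaling_algo} (to shrink the variable bounds) with a reducibility bound (to bound the values taken by the objective), after which a small instance can be finished off by the halfling augmentation procedure using the Primal/Dual Lemma (Lemmas~\ref{lem:primal},~\ref{lem:dual}) as an~\eqref{AugIP} oracle. We may assume $\|A\|_\infty \ge 1$ (otherwise the problem is trivial). By Lemma~\ref{lem:bound1} the Graver norms of $A$, and of any matrix of the form $(A~I~\cdots~I)$ with $m$ rows, satisfy $g_\infty \le g_1 \le (2m\|A\|_\infty+1)^m$; set $N \df 8n(2m\|A\|_\infty+1)^m$, so that $N \le (3m\|A\|_\infty)^{m+2+\log_2 n}$.

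First apply the scaling algorithm of Corollary~\ref{cor:scaling_algo}. It solves~\eqref{IP} by solving $\Oh(\log\|\veu-\vel,\veb\|_\infty)$ auxiliary instances of the form~\eqref{eq:scaling_aux_ip}, each having a separable convex objective (still presented only by a comparison oracle), a constraint matrix of the form $(A~I~\cdots~I)$ with $m$ rows, $n' \le n+2m \le 3n$ columns and $\ell_\infty$-norm $\|A\|_\infty$, variable bounds of width at most $N$, and (after the homogenisation built into the scaling algorithm) the zero vector as a feasible solution; the feasibility preprocessing of the scaling algorithm, via Lemma~\ref{lem:feas_A_I}, has the same shape and cost. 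Fix one such instance, with $n'$ variables and constraint matrix $A'$, and translate it to a centered instance by Lemma~\ref{lem:centered}, so that its box of feasible assignments lies in $[-N,N]^{n'}$ and a feasible integer point is still known. If $n' < 4$ this is an instance of constant dimension over a box of width $\le N$ and is solved by brute-force enumeration; otherwise apply Corollary~\ref{cor:sepconvex_red}, part~\ref{it:sepconvex_red:1}, with $\bar n = n'$ and $\bar N = N$, obtaining a separable convex function $g$ equivalent to the (centered) objective on $[-N,N]^{n'}$ with $g_{\max}^{[-N,N]^{n'}} \le ((n')^2 N)^{n'(2N+1)+1}$, hence $\log(2g_{\max}) = \Oh\big(n'N\log(n'N)\big)$ — a quantity bounded by a computable function of $n, m, \|A\|_\infty$ alone, not of the original $f$.

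Now run the halfling augmentation procedure on the centered instance starting from the known feasible point, realising each~\eqref{AugIP} query by Lemma~\ref{lem:primal} (using $\td_P(A') \le n' \le 3n$) or Lemma~\ref{lem:dual} (using $\td_D(A') \le m$); with $g_\infty(A'), g_1(A') \le (2m\|A\|_\infty+1)^m$, one query costs at most $(3m\|A\|_\infty)^{\Oh(\min\{n,m^2\})}\poly(n)$. Because the augmentation oracle inspects the objective only through comparisons, Proposition~\ref{prop:converge_comparison} bounds the number of augmentation steps by $3n'\log(2g_{\max})$ even though the procedure is executed with $f$, not $g$; and each step solves at most $\log N + 1$ queries (Lemmas~\ref{lem:powersoftwo},~\ref{lem:lambda_oracle_initi}). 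Multiplying the number of steps, the number of queries per step, and the cost per query, substituting the bound on $N$ and $m \le n$, and summing over the $\Oh(\log\|\veu-\vel,\veb\|_\infty)$ auxiliary instances yields a bound of the claimed form.

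The only real obstacle is the final arithmetic: one must check that the various $\poly(n)$, $N\log N$, and $c^{\Oh(n)}$ overheads, together with the constant hidden in the exponent of Lemma~\ref{lem:dual} (respectively the factor $n' \le 3n$ of Lemma~\ref{lem:primal}), all fit inside the slack between these natural estimates and the stated $(3m\|A\|_\infty)^{3n^2+m}$, including the constant-dimension cases; this is routine but tedious. One should also note that the scaling algorithm naturally produces $\Oh(\log\|\veu-\vel,\veb\|_\infty)$ rather than $\Oh(\log\|\veu-\vel\|_\infty)$ auxiliary instances, so the statement is to be read with $\|\veb\|_\infty$ understood as part of the arithmetic input, which is the case in every application of this lemma.
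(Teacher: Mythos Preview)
Your approach is essentially the paper's: scaling algorithm (Corollary~\ref{cor:scaling_algo}) to shrink the bounds, separable-convex reducibility (Corollary~\ref{cor:sepconvex_red}) to bound $f_{\max}$, and the Primal Lemma as the~\eqref{AugIP} oracle inside the halfling augmentation procedure. The paper carries out exactly this plan, using only Lemma~\ref{lem:primal} (not Lemma~\ref{lem:dual}) and with slightly different bookkeeping constants.

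Two minor points. First, the auxiliary constraint matrix produced by the scaling algorithm is just $A_I=(A~I)$ with $n+m\le 2n$ columns, not ``$(A~I~\cdots~I)$'' with $n+2m$ columns; your looser bound $n'\le 3n$ still suffices, so this is cosmetic. Second, your closing remark about $\log\|\veu-\vel,\veb\|_\infty$ versus $\log\|\veu-\vel\|_\infty$ is a fair observation; the paper's proof silently uses the latter, and in the only place this lemma is invoked (the leaf instances inside Theorem~\ref{thm:nearlylinear_primal}) the right-hand side is already controlled.
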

\begin{proof}
	We will combine the scaling algorithm (Corollary~\ref{cor:scaling_algo}) with the separable convex reducibility bound (Corollary~\ref{cor:sepconvex_red}).
	The scaling algorithm solves a sequence of instances with a constraint matrix $A_I$, so we begin by considering the norm of its Graver elements.
	By Lemma~\ref{lem:bound1} we have $g_\infty(A_I) \leq (3m\|A\|_\infty)^m$ and thus the bounds $\bar{\vel}_i, \bar{\veu}_i$ of each auxiliary instance satisfy $\|\bar{\veu}_i  - \bar{\vel}_i\|_\infty \leq 2(n+m)(3m\|A\|_\infty)^m$, and, possibly after centering (Lemma~\ref{lem:centered}), also satisfy $[\bar{\vel}_i, \bar{\veu}_i] \subseteq [-N,N]^{n+m}$ with $N \df (n+m)(3m\|A\|_\infty)^m$.
	By Lemma~\ref{lem:primal},~\eqref{AugIP} can be solved in time $\left((3m\|A\|_\infty)^{m}\right)^{n+m} \leq \left(3m\|A\|_\infty\right)^{2n^2}$.
	Using~\eqref{AugIP} to realize the halfling augmentation procedure (Corollary~\ref{cor:lambda_oracle}) thus takes time $\left(3m\|A\|_\infty\right)^{2n^2} \log (\|\bar{\veu}_i-\bar{\vel}_i\|_\infty) \log (f_{\max}) \leq \left(3m\|A\|_\infty\right)^{2n^2+1} \log (f_{\max})$, and applying Corollary~\ref{cor:sepconvex_red} shows that we can replace $f_{\max}$ by
	\[
	((n+m)^2N)^{(n+m)(2N+1)+1} \leq \left(4n^2 \cdot 3n(3m\|A\|_\infty)^{m}\right)^{2n \cdot (3n)(3m\|A\|_\infty)^m} \leq \left(12n^3\cdot 3m\|A\|_\infty\right)^{6n^3(3m\|A\|_\infty)^m},
	\]
	and hence
	\[
	\log (f_{\max}) = 6n^3 \cdot (3m\|A\|_\infty)^m \cdot \log(12n^3 \cdot 3m\|A\|_\infty)
	\leq 6n^4 \cdot (3m\|A\|_\infty)^m
	\]
	obtaining a time bound of 
	\[\left(3m\|A\|_\infty\right)^{2n^2+1} \cdot \left( 6n^4 \cdot (3m\|A\|_\infty)^m\right) \leq 6n^4\cdot (3m \|A\|_\infty)^{2n^2 + m+1} \leq (3m \|A\|_\infty)^{3n^2 + m} \enspace .\]
	The number of instances which are solved in the course of the scaling algorithm is at most $\log \|\veu-\vel\|_\infty$, finishing the proof.
\end{proof}
The main technical statement bounds the complexity of the recursive algorithm of point~\ref{it:primalalgo:3}.
\begin{lemma} \label{lem:nearlylinear_primal_aux}
	Let $\II$ be an instance of~\eqref{IP} with a constraint matrix $A'$, let $A\df A'_I = (A'~I)$, $F$ be the $\td$-decomposition $F''_P$ of $G_P(A)$ from Lemma~\ref{lem:feas_td}, $k_{\ttd(F)} \df k_{\ttd(F)}(F)$, $P \df (3k_{\ttd(F)} \|A\|_\infty)^{4k_{\ttd(F)}^2} \cdot \log(\|\veu-\vel\|_\infty)$, $\tilde{L}_1 \df 1+\log (\|\veu-\vel, \veb\|_\infty)$, $\tilde{L}_2 \df \log(\max\{f_{\max}, \|\veb\|_1\})$.
	Then any \eqref{IP} instance $\II'$ with a constraint matrix $A$ is solvable in time $P \cdot (3g_\infty(A))^{2\height(F)} \cdot n (\tilde{L}_1 \cdot \tilde{L}_2)^{\ttd(F)-1}$.
\end{lemma}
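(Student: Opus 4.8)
The plan is to prove the complexity bound by induction on $\ttd(F)$, following the structure of the recursive algorithm from points~\ref{it:primalalgo:1}--\ref{it:t:primalalgo:3b}. First I would set up the base case $\ttd(F)=1$: here $A$ consists of a path-shaped primal treedepth decomposition, so the leaf instances are solved by the scaling algorithm combined with Lemma~\ref{lem:fixeddim_nofmax}, whose running time is $(3k_{\ttd(F)}\|A\|_\infty)^{3k_{\ttd(F)}^2+\height(F)}\cdot\log\|\veu-\vel\|_\infty$, which is absorbed into $P\cdot(3g_\infty(A))^{2\height(F)}\cdot n$ (note $f_{\sepmax}$ has vanished, as required, and the $(\tilde L_1\tilde L_2)^{\ttd(F)-1}$ factor is $(\tilde L_1\tilde L_2)^0=1$). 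For the inductive step, the algorithm computes the primal decomposition of $A$ into $\bar A_i, A_i$ with $\td$-decompositions $F_i$ of $G_P(A_i)$; by Lemma~\ref{lem:decomposition} and Proposition~\ref{prop:AIstructure}, each $A_i$ retains the form $(A_i'~I)$, so the induction hypothesis applies to each subproblem~\eqref{eq:primal_aux_subproblem}. The key monotonicity, emphasised at the start of Section~\ref{sec:almostlinear}, is that $f^i_{\sepmax}\le f_{\sepmax}$ for the restrictions $f^i$, so $\tilde L_2$ does not grow across recursion levels, and similarly $k_j(F_i)\le k_j(F)$ and $\height(F_i)\le\height(F)$.

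Next I would account for the augmentation loop. By Lemma~\ref{lem:primal_speedup} applied with $k\df k_1(F)$, one step of the augmentation procedure in point~\ref{it:primalalgo:3} closes a $1/(2g_\infty(A)^{k_1(F)})$-fraction of the optimality gap, hence (by the same convergence argument as in Lemma~\ref{lem:halfling}) after $\Oh(g_\infty(A)^{k_1(F)}\cdot n\cdot\log(2f_{\sepmax}))$ augmentation steps the optimum of~\eqref{eq:primal_aux} is reached; since this loop sits inside the scaling loop over $\Oh(\log\|\veu-\vel,\veb\|_\infty)=\tilde L_1$ auxiliary instances, the total number of augmentation steps over the scaling loop is $\Oh(g_\infty(A)^{k_1(F)}\cdot n\cdot\tilde L_1\tilde L_2)$. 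Each such step, by point~\ref{it:primalalgo:3}, enumerates $\Oh(\log\|\tilde\veu-\tilde\vel\|)\cdot(2g_\infty(A)+1)^{k_1(F)}$ pairs $(\lambda,\veg^0)$, and for each pair solves $d$ subproblems~\eqref{eq:primal_aux_subproblem} by the recursive algorithm on matrices $A_i$. The crucial book-keeping point — mirroring the ``memoization'' trick used in the Dual Lemma (Lemma~\ref{lem:dual}) and the structure of Corollary~\ref{cor:feas_td} — is that $\sum_{i=1}^d n_i = n$, so a single full sweep over all $d$ subproblems costs at most (by induction) $P\cdot(3g_\infty(A_i))^{2\height(F_i)}\cdot n\cdot(\tilde L_1\tilde L_2)^{\ttd(F_i)-1}\le P\cdot(3g_\infty(A))^{2\height(F)}\cdot n\cdot(\tilde L_1\tilde L_2)^{\ttd(F)-2}$ summed across $i$, using $\ttd(F_i)\le\ttd(F)-1$.

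Finally I would multiply through: the number of augmentation steps, $\Oh(g_\infty(A)^{k_1(F)}\cdot n\cdot\tilde L_1\tilde L_2)$, times the per-step cost $\Oh((2g_\infty(A)+1)^{k_1(F)}\cdot\tilde L_1)$ times the cost of one sweep of recursive subproblems, $P\cdot(3g_\infty(A))^{2\height(F)}\cdot n\cdot(\tilde L_1\tilde L_2)^{\ttd(F)-2}$. Collecting the $g_\infty(A)$ powers: $g_\infty(A)^{k_1(F)}\cdot(2g_\infty(A)+1)^{k_1(F)}\cdot(3g_\infty(A))^{2\height(F)}\le(3g_\infty(A))^{2k_1(F)+2\height(F)}\le(3g_\infty(A))^{2\height(F)}$ after re-folding the extra $k_1(F)$ factors into a redefinition of the $\height(F)$ exponent (or, more carefully, using $k_1(F)\le\height(F)$ and absorbing a constant-power blow-up into the $P$ factor, since $P$ already carries a $(3k_{\ttd(F)}\|A\|_\infty)^{4k_{\ttd(F)}^2}$ term which dominates any fixed power of $g_\infty(A)^{k_1(F)}$); collecting the $\tilde L_i$ powers gives $(\tilde L_1\tilde L_2)^{1+1+(\ttd(F)-2)}=(\tilde L_1\tilde L_2)^{\ttd(F)}$, which is off by one from the claimed exponent — so the genuinely delicate point, and the main obstacle, is arguing that the $\tilde L_2$ (the $\log f_{\sepmax}$) factor is \emph{not} incurred afresh at the top recursion level but only at the $\ttd(F)-1$ lower levels. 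This is exactly where one must invoke that the outermost loop is the \emph{scaling} loop (Corollary~\ref{cor:scaling_algo}) whose auxiliary instances~\eqref{eq:scaling_aux_ip} have bounds of width $\Oh(n g_\infty(A_I))$ independent of $f_{\sepmax}$, so the number of augmentation steps at the top level is $\Oh(g_\infty(A)^{k_1(F)}\cdot n\cdot\tilde L_2)$ but multiplied only by $\tilde L_1$ scaling rounds rather than by a further $\tilde L_2$; together with the careful amortisation of the $n_i$-sum over subproblems this yields the stated $n(\tilde L_1\tilde L_2)^{\ttd(F)-1}$ and, tracking the $(\log n)$ overheads of computing primal decompositions and of the DP-style gluing at each of the $\ttd(F)+1$ levels, the $n^{1+o(1)}=n(\log n)^{\ttd(F)+1}$ form promised in Theorem~\ref{thm:nearlylinear_primal}.
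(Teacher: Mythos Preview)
Your overall inductive scaffolding is the same as the paper's, and you correctly identify the key ingredients: primal decomposition (Lemma~\ref{lem:decomposition} and Proposition~\ref{prop:AIstructure}), the convergence acceleration of Lemma~\ref{lem:primal_speedup}, the leaf solver Lemma~\ref{lem:fixeddim_nofmax}, and the monotonicity $f^i_{\sepmax}\le f_{\sepmax}$. However, there are two genuine gaps in the accounting that break the bound.

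\textbf{The convergence bound has an extraneous factor $n$.} You write that the augmentation procedure converges in $\Oh\bigl(g_\infty(A)^{k_1(F)}\cdot n\cdot\log(2f_{\sepmax})\bigr)$ steps. But you yourself note that Lemma~\ref{lem:primal_speedup} closes a $1/(2g_\infty(A)^{k_1})$ fraction of the gap per step: repeating the analysis of Lemma~\ref{lem:halfling} with this rate gives convergence in $\Oh\bigl(g_\infty(A)^{k_1}\log(2f_{\max})\bigr)$ iterations --- the factor $g_\infty(A)^{k_1}$ \emph{replaces} the $n$ of Lemma~\ref{lem:halfling}, it does not multiply it. This is precisely the point of Lemma~\ref{lem:primal_speedup} and the only reason the final bound is linear in $n$ rather than quadratic. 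With your extra $n$, multiplying by the $\sum_i n_i = n$ from the subproblem sweep gives $n^2$, which is not what the lemma claims. The paper's proof states this as part~\ref{it:nearlylinear_primal:claim2} of an auxiliary Claim: the oracle converges in $\tfrac{2}{3}(2g_\infty(A)+1)^{k_1}\log(2f_{\max})$ steps, no $n$.

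\textbf{The $g_\infty(A)$--power bookkeeping and the role of the scaling loop are off.} First, you try to absorb an extra $(3g_\infty(A))^{2k_1}$ into $P$, arguing that $P$'s $(3k_{\ttd(F)}\|A\|_\infty)^{4k_{\ttd(F)}^2}$ dominates. It does not: $g_\infty(A)$ is a tower of exponentials in $\|A\|_\infty$ and $\td_P(A)$ (Lemma~\ref{lem:primal_norm}), vastly larger than any fixed power of $\|A\|_\infty$. The paper avoids this by tracking the subproblem exponent as $2(\height(F)-k_1)$, not $2\height(F)$, so that the two factors of $(2g_\infty(A)+1)^{k_1}$ picked up at the current level (one from enumerating $\veg^0$, one from the iteration count) recombine exactly to $(3g_\infty(A))^{2\height(F)}$. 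Second, there is no scaling loop inside this lemma. Lemma~\ref{lem:nearlylinear_primal_aux} bounds the time to solve a \emph{single} \eqref{IP} instance with constraint matrix $A$; the scaling algorithm is the outer wrapper applied only later in the proof of Theorem~\ref{thm:nearlylinear_primal}. The $\tilde L_1$ factor at each recursion level comes purely from enumerating $\lambda\in\Gamma_2$ (one factor per call to the augmentation oracle), and the single extra $\tilde L_2$ at the top level comes from the iteration count just discussed. With the correct (no-$n$) convergence bound, the exponent $(\tilde L_1\tilde L_2)^{\ttd(F)-1}$ falls out directly from the recursion, without invoking scaling at all; your ``off-by-one'' and the attempt to fix it via Corollary~\ref{cor:scaling_algo} are both artifacts of the first error.
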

\begin{proof}
	We will again use an inductive claim with two parts.
	\begin{claim*}
		Let $\ttd(F) \geq 2$, $k_1 \df k_1(F)$, and $k' \df 2\height(F) - k_1(F)$.
		\begin{enumerate}[label=(\alph*)]
			\item \label{it:nearlylinear_primal:claim1} There is an algorithm solving problem~\eqref{eq:lem_primal_speedup_subproblem} in time \[P \cdot (2g_\infty(A)+1)^{k_1}\cdot  (3g_\infty(A))^{2(\height(F)-k_1)} n (\tilde{L}_1 \cdot \tilde{L}_2)^{\ttd(F)-2} \enspace .\]
			\item \label{it:nearlylinear_primal:claim2} An $\AAA$ which converges in $\frac{2}{3} (2 g_\infty(A)+1)^{k_1} \log \left(f_{\max}\right)$ steps is realizable in time \[P \cdot (2g_\infty(A)+1)^{k_1}\cdot  (3g_\infty(A))^{2(\height(F)-k_1)} \cdot n \cdot \tilde{L}_1^{\ttd(F)-1} \tilde{L}_2^{\ttd(F)-2} \enspace .\]
		\end{enumerate}
	\end{claim*}
	Let us first give an outline of the proof and then provide the details.
	The proof proceeds by induction on $\ttd(F)$, where the algorithm for the base case $\ttd(F)=2$ uses a recursive call to the algorithm we exhibited in Lemma~\ref{lem:fixeddim_nofmax} (i.e., the algorithm of point~\ref{it:t:primalalgo:3b}). The structure of the proof is
	\[\text{Claim, part~\ref{it:nearlylinear_primal:claim1}} \underset{(1)}{\implies} \text{Claim, part~\ref{it:nearlylinear_primal:claim2}} \underset{(2)}{\implies} \text{Lemma~\ref{lem:nearlylinear_primal_aux}} \enspace \]
	Implication $(1)$ is proved as follows.
	By Lemma~\ref{lem:primal_speedup}, the $\AAA$ can be realized by solving subproblem~\eqref{eq:lem_primal_speedup_subproblem} for each $\lambda \in \Gamma_2$ and picking the best solution, which is possible in the time claimed by part~\ref{it:nearlylinear_primal:claim1} multiplied by $\log \|\veu-\vel\|_\infty$, which is the bound of part~\ref{it:nearlylinear_primal:claim2}; the convergence rate analysis which results in the factor $\frac{2}{3}$ is identical to that of Lemma~\ref{lem:halfling}.
	Implication $(2)$ follows because the $\AAA$-augmentation procedure can be realized by $\frac{2}{3} (2 g_\infty(A)+1)^{k_1(F)} \log \left(f_{\max}\right)$ calls to $\AAA$ (notice that the subproblem objective $f^i$ satisfies $f^i_{\max} \leq f_{\max}$ so our use of $f_{\max}$ in this bound is justified), and then by the last part of Corollary~\ref{cor:feas_td} (``feasibility as easy as optimization''), problem~\eqref{IP} can be solved in the claimed time because $A$ has the required form $(A'~I)$.
	Thus the main work is to prove part~\ref{it:nearlylinear_primal:claim1} of the Claim.
	
	We will describe the algorithm which realizes part~\ref{it:nearlylinear_primal:claim1} recursively and then analyze its complexity.
	As usual, we assume $A$ is block-structured along $F$, thus we have matrices $\bar{A}_1, \dots, \bar{A}_d, A_1, \dots, A_d$ and $\td$-decompositions $F_1, \dots, F_d$ of $G_P(A_1), \dots, G_P(A_d)$, where $d \in \N$, with $\bar{A}_i$ having $k_1$ columns and $\ttd(F_i) < \ttd(F)$ for each $i \in [d]$.
	Moreover, by Proposition~\ref{prop:AIstructure}, each $A_i$ is itself of the form $A_i = (A_i'~I)$.
	We will prove part~\ref{it:nearlylinear_primal:claim1} of the Claim by solving subproblems involving $\td$-decompositions with topological height less than $\ttd(F)$.
	Those are solved either by a recursive call to Lemma~\ref{lem:nearlylinear_primal_aux} if $\ttd(F_i) \geq 2$, or by an application of Lemma~\ref{lem:fixeddim_nofmax} if $\ttd(F_i) = 1$.
	
	Given a $\lambda \in \Gamma_2$, iterate over all $\veg^0 \in \left([-g_\infty(A),g_\infty(A)]^{k_1} \cap [\vel^0, \veu^0]\right)$ and for each use the algorithm for smaller topological height to compute $d$ vectors $\veh^i$, $i \in [d]$, such that $\veh^i$ is an optimum of
	\begin{equation}
	\min \{f^i(\vex^i + \veh) \mid A_i \veh = \veb^i - \bar{A}_i \lambda \veg^0, \, \vel^i \leq \vex^i + \veh \leq \veu^i, \, \veh^i \in \Z^{n^i}\} \enspace . \label{eq:slambdabest_subprob}
	\end{equation}
	Finally return the vector $(\lambda \veg^0, \veh^1 \dots, \veh^d)$ which minimizes $f^0(\vex^0 + \lambda \veg^0) + \sum_{i=1}^d f^i(\vex^i + \veh^i)$ over all choices of $\veg^0$.
	If for some $\veg^0$ at least one subproblem~\eqref{eq:slambdabest_subprob} is infeasible, disregard this $\veg^0$.
	If~\eqref{eq:slambdabest_subprob} is infeasible for all $\veg^0$, return that the problem is infeasible.
	
	Let us compute the complexity of this procedure.
	For each $i \in [d]$, $\height(F_i) + k_1 \leq \height(F)$ and $f^i_{\max} \leq f_{\max}$.
	There are at most $(2g_\infty(A)+1)^{k_1}$ choices of $\veg^0$, and computing the solution $(\veg^1, \dots, \veg^d)$ for each $\veg^0$ takes time at most \[\sum_{i=1}^d P \cdot (3g_\infty(A))^{2\height(F_i)} \cdot n_i (\tilde{L}_1 \cdot \tilde{L}_2)^{\ttd(F_i)-1} \leq P \cdot (3g_\infty(A))^{2(\height(F)-k_1)} n (\tilde{L}_1 \cdot \tilde{L}_2)^{\ttd(F)-2} \enspace .\]
	Summing over all choices of $\veg^0$, we get that the time complexity is at most
	\begin{equation}
	(2g_\infty(A)+1)^{k_1} \cdot P \cdot (3g_\infty(A))^{2(\height(F)-k_1)} n (\tilde{L}_1 \cdot \tilde{L}_2)^{\ttd(F)-2} \enspace . \label{eq:primal:claim1bound}
	\end{equation}
	This finishes the proof of part~\ref{it:nearlylinear_primal:claim1} of the Claim.
	
	Let us move on to part~\ref{it:nearlylinear_primal:claim2}.
	One call of the oracle $\AAA$ is realized by solving~\eqref{eq:slambdabest_subprob} for each $\lambda \in \Gamma_2$.
	Because $|\Gamma_2| \leq 1+\log\|\veu-\vel\|_\infty \leq \tilde{L}_1$, the time required to realize one call of the oracle is the term~\eqref{eq:primal:claim1bound} multiplied by $\tilde{L}_1$, which is the bound of part~\ref{it:nearlylinear_primal:claim2} of the Claim.
	To prove the convergence bound, Lemma~\ref{lem:primal_speedup} with $k \df k_1$ shows that an augmenting step $\veh$ decreases the gap $f(\vex) - f(\vex^*)$ by a multiplicative factor of $\frac{1}{2g_\infty(A)^{k_1}}$.
	Repeating the analysis of Lemma~\ref{lem:halfling} thus shows that the number of iterations until $f(\vex) - f(\vex^*) < 1$ is $\frac{2}{3} (2 g_\infty(A)+1)^{k_1} \log \left(f_{\max}\right) \leq \frac{2}{3} (2 g_\infty(A)+1)^{k_1} \log \left(f_{\max}\right)$.
	This concludes the proof of part~\ref{it:nearlylinear_primal:claim2} of the Claim.
	
	With $\AAA$ at hand, we can solve~\eqref{IP} in time
	\[
	\frac{2}{3}(2 g_\infty(A)+1)^{2k_1} P \cdot (3g_\infty(A))^{2(\height(F)-k_1)} n (\tilde{L}_1 \cdot \tilde{L}_2)^{\ttd(F)-1} \enspace .
	\]
	Handling feasibility using Corollary~\ref{cor:feas_td} then requires at most the same time by the definition of $\tilde{L}_1$ and $\tilde{L}_2$: the feasibility instance~\eqref{eq:auxiliary_feasibility} of Lemma~\ref{lem:feas_instance} has lower and upper bounds bounded by $\max\{\|\veu-\vel\|_\infty,\|\veb\|_\infty\} \leq 2^{\tilde{L}_1}$, and its objective function has largest value $\|\veb\|_1 \leq 2^{\log \|\veb\|_1} \leq 2^{\tilde{L}_2}$.
	Thus, we may bound the total complexity of realizing~\eqref{IP} as
	\[
	2\cdot \frac{2}{3}(2 g_\infty(A)+1)^{2k_1} P \cdot (3g_\infty(A))^{2(\height(F)-k_1)} n (\tilde{L}_1 \cdot \tilde{L}_2)^{\ttd(F)-1} 
	\leq (3g_\infty(A))^{2\height(F)} n (\tilde{L}_1\cdot  \tilde{L}_2)^{\ttd(F)-1} \enspace . \qedhere
	\]
\end{proof}

\begin{proof}[Proof of Theorem~\ref{thm:nearlylinear_primal}]
	For the proof we need a standard bound~\cite[Exercise 3.18]{CyganFKLMPPS:2015}:
	\begin{equation}
	(\log n)^k \leq 2^{k^2/2} \cdot 2^{{\log \log n}^2 /2} = 2^{k^2/2} n^{\oh(1)} \enspace . \label{eq:lognk}
	\end{equation}
	Let $F$ be an optimal $\td$-decomposition of $A$, let $A_I \df (A~I)$ and $F_I \df F''_P$ be a $\td$-decomposition of $G_P(A_I)$ from Lemma~\ref{lem:feas_td} such that $G_P(A_I) \subseteq \cl(F_I)$ and $\ttd(F_I) = \ttd(F_P)$.
	We solve~\eqref{IP} using the scaling algorithm, which involves solving $2\log \|\veu-\vel\|_\infty+2$ auxiliary instances~\eqref{eq:s-ip-scaled}.
	Note that the first $1+\log \|\veu-\vel\|_\infty$ of these are of the form~\eqref{eq:auxiliary_feasibility} and have the constraint matrix $A_I$.
	In our complexity estimates we include such an instance (feasibility or optimization) of the scaling algorithm which dominates the given bound.
	Each instance has bounds $\bar{\vel}_i, \bar{\veu}_i$ satisfying $\|\bar{\veu}_i - \bar{\vel}_i\|_\infty \leq 8ng_\infty(A_I) =: 2N$.
	By Lemma~\ref{lem:feas_td}, $\td_P(A_I) \leq \td_P(A)+1$ and by Lemma~\ref{lem:primal_norm} we see that $N \leq 4n \cdot g'(\|A\|_\infty, \td_P(A))$, where $g'$ is a tower of exponentials of height $\Oh(\ttd(F))$ (see~Lemma~\ref{lem:primal_norm}).
	
	Solving each auxiliary instance using Lemma~\ref{lem:nearlylinear_primal_aux} and using $F_I$ as a $\td$-decomposition of $A_I$ takes time $P \cdot (3g_\infty(A_I))^{2\height(F_I))} n \left(\log \|\bar{\veu}_i-\bar{\vel}_i\|_\infty \cdot \log f_{\max}\right)^{\ttd(F_I)-1}$.
	Since $\height(F_I) \leq 2\height(F) = 2\td_P(A)$, we may bound $(3g_\infty(A_I))^{2\height(F_I)}$ by $g''(\|A\|_\infty, \td_P(A))$ for a computable function $g''$.
	Moreover, since $\|\bar{\veu}_i-\bar{\vel}_i\|_\infty \leq 2N \leq 8ng'(\|A\|_\infty, \td_P(A))$ and by $\ttd(F_I) = \ttd(F)$, we have that 
	\[(\log \|\bar{\veu}_i-\bar{\vel}_i\|_\infty)^{\ttd(F_I)-1} \leq \left(\log 8ng'(\|A\|_\infty, \td_P(A))\right)^{\ttd(F)-1} \enspace .
	\]
	By the bound~\eqref{eq:lognk},
	\[
	\left(\log 8ng'(\|A\|_\infty, \td_P(A))\right)^{\ttd(F)-1} \leq 2^{\ttd(F)^2} \left(n g'(\|A\|_\infty, \td_P(A)\right)^{\oh(1)} \enspace .
	\]
	Moreover, when $f$ is a linear function, centering the instance (Lemma~\ref{lem:centered}) means that we may subtract the constant $\vew \vev$ and for each instance solved by the scaling algorithm bound
	\[
	(\log f_{\max})^{\ttd(F)-1} = \log(\|\vew\|_\infty N)^{\ttd(F)-1} \leq 2^{\ttd(F)^2} \left(\|\vew\|_\infty n g'(\|A\|_\infty, \td_P(A)\right)^{\oh(1)} \enspace .
	\]
	Setting $g(\|A\|_\infty, \td_P(A)) \df P \cdot g'(\|A\|_\infty, \td_P(A))$
	yields the claimed bounds of the Theorem,
	and correctness follows from Corollary~\ref{cor:scaling_algo}.
\end{proof}

\subsubsection{Dual Treedepth}
Recall that the \FPT solvability of~\eqref{IP} parameterized by $\td_D(A)+\|A\|_\infty$ follows from Lemma~\ref{lem:dual} which says that~\eqref{AugIP} can be solved in time $(\|A\|_\infty \cdot g_1(A))^{\Oh(\td_D(A))} n$.
In this subsection we will speed up this result using the observation that the halfling augmentation procedure solves a sequence of~\eqref{AugIP} instances which have very similar lower and upper bounds because each augmenting step has small support.
Our main goal is to prove the following:
\begin{theorem}[Nearly linear $\td_D(A)$]\label{thm:nearlylinear_dual}
	There is an algorithm realizing the halfling augmentation procedure for~\eqref{IP} in time $\AAap(\|\veu-\vel\|_\infty, f_{\max}) \leq (\|A\|_\infty g_1(A))^{\Oh(\td_D(A))} n \log n \log (f_{\max}) \log(\|\veu-\vel\|_\infty)$.
\end{theorem}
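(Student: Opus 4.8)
The plan is to run the halfling augmentation procedure and to exploit the fact that, across its iterations, the sequence of \eqref{AugIP} instances solved by the algorithm of Lemma~\ref{lem:dual} changes only in very few coordinates, so that the dynamic program underlying Lemma~\ref{lem:dual} can be maintained incrementally rather than rebuilt from scratch. Concretely, by Lemma~\ref{lem:halfling} the procedure reaches an optimum in at most $3n\log(2f_{\max})$ iterations, and by Lemma~\ref{lem:powersoftwo} a halfling for the current point $\vex$ is obtained by solving \eqref{AugIP} with step length $\lambda$ for each $\lambda\in\Gamma_2$ with $\lambda\le\|\veu-\vel\|_\infty$ (there are at most $\log\|\veu-\vel\|_\infty+1$ such $\lambda$) and returning the best of the resulting steps. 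An initial feasible solution, if not provided as part of the procedure, is produced within the same asymptotic time by Corollary~\ref{cor:feas_td} and Lemma~\ref{lem:feas_td}, which reduce feasibility to an instance with matrix $A_I$ satisfying $\td_D(A_I)=\td_D(A)$. Thus it suffices to maintain, for each of the $\le\log\|\veu-\vel\|_\infty+1$ relevant step lengths $\lambda$, a data structure that answers an \eqref{AugIP} query at the current $\vex$ and supports cheap updates when $\vex$ changes.

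The first ingredient is that the halfling can be taken to have \emph{small support}. Set $\rho\df g_1(A)$ and run the recursion of Lemma~\ref{lem:dual}, but additionally constrain every partial solution to have $\ell_1$-norm at most $\rho$; concretely the domain $R$ of each dynamic-programming table gains one coordinate in $\{0,\dots,\rho\}$ recording the $\ell_1$-weight used so far, which multiplies $|R|$ by at most $\rho+1$ and keeps $|R|\le(3\|A\|_\infty g_1(A))^{\td_D(A)+1}$. The returned step $\veg_\lambda$ then satisfies $\|\veg_\lambda\|_1\le g_1(A)$, hence $|\suppo(\veg_\lambda)|\le g_1(A)$; since $\G(A)\subseteq B_1(g_1(A))$ it is still a $\G(A)$-best step for step length $\lambda$, so by Lemma~\ref{lem:powersoftwo} the overall best $\veh=\lambda_0\veg_{\lambda_0}$ is a halfling, with $|\suppo(\veh)|=|\suppo(\veg_{\lambda_0})|\le g_1(A)$. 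Consequently, replacing $\vex$ by $\vex+\veh$ changes the local bounds $\bigl\lceil(\vel-\vex)/\lambda\bigr\rceil$, $\bigl\lfloor(\veu-\vex)/\lambda\bigr\rfloor$ and the local objective in at most $g_1(A)$ coordinates, simultaneously for every step length $\lambda$.

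The second ingredient is an incremental realization of Lemma~\ref{lem:dual}. Unrolled (using Corollary~\ref{cor:dual_decomp} and Lemma~\ref{lem:fhati}, and memoizing all right-hand sides in $R$), that algorithm computes a value for the full matrix by a tree of operations whose leaves are the $\Oh(n)$ columns and whose ``gluing'' step at each non-degenerate vertex combines its $c$ children by the $(\min,+)$-convolution over $R$ of their tables, an associative and commutative operation which the plain algorithm performs by a left-to-right $c$-stage fold. We instead organize each such gluing as a balanced binary tree over the $c$ children, each internal node storing the convolution of the leaves below it; the whole computation becomes a single tree $T$ of depth $\Oh(\ttd(F)\log n)\le\Oh(\td_D(A)\log n)$, each internal node holding a function $R\to\Z\cup\{+\infty\}$ recomputable from its two children by one $(\min,+)$-convolution of cost $|R|^2\le(\|A\|_\infty g_1(A))^{\Oh(\td_D(A))}$. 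Building $T$ costs $(\|A\|_\infty g_1(A))^{\Oh(\td_D(A))}n$, and reading off the \eqref{AugIP} answer together with the attained step costs $\Oh(|R|)$. When the point changes by a step of support $s\le g_1(A)$, only the $s$ leaves for the affected columns change, and recomputing $T$ along the union of those $s$ leaf-to-root paths touches $\Oh(s\,\td_D(A)\log n)$ nodes, for a total update cost of $(\|A\|_\infty g_1(A))^{\Oh(\td_D(A))}\log n$ (absorbing $s\le g_1(A)$ and the factor $\td_D(A)$).

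Putting it together: we keep one such structure per relevant step length; each of the $3n\log(2f_{\max})$ iterations queries all of them in $\Oh(\log\|\veu-\vel\|_\infty\cdot|R|)$ time, picks the best halfling, and updates all of them in $(\log\|\veu-\vel\|_\infty+1)\cdot(\|A\|_\infty g_1(A))^{\Oh(\td_D(A))}\log n$ time, while the initial build over all step lengths contributes only $(\|A\|_\infty g_1(A))^{\Oh(\td_D(A))}n\log\|\veu-\vel\|_\infty$. Multiplying gives the claimed $(\|A\|_\infty g_1(A))^{\Oh(\td_D(A))}\,n\log n\,\log(2f_{\max})\,\log\|\veu-\vel\|_\infty$. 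The delicate points — and the ones I expect to be the main obstacle — are: (i) forcing the halfling's support to be bounded purely in $g_1(A)$ and $\td_D(A)$, which is exactly the $\ell_1$-tracking modification of Lemma~\ref{lem:dual} and whose validity hinges on $\G(A)\subseteq B_1(g_1(A))$ combined with Lemma~\ref{lem:powersoftwo}; and (ii) arranging the recursion so that a change in few columns propagates through only $\Oh(\td_D(A)\log n)$ table cells — this needs each high-arity gluing fold replaced by a balanced convolution tree and a check that the vertical unbalancedness of $F$ (long degenerate chains) is harmless, since it only enlarges the $\bar A_i$ blocks while every relevant dimension stays $\le\td_D(A)$.
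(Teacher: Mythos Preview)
Your proposal is correct and follows essentially the same approach as the paper: the paper formalizes your ``incremental realization of Lemma~\ref{lem:dual}'' as a data structure it calls a \emph{convolution tree}, which tracks the $\ell_1$-norm via an extra coordinate in $R$ (your first ingredient) and organizes each gluing step as a balanced binary tree of $(\min,+)$-convolutions (your second ingredient), maintaining one such tree per $\lambda\in\Gamma_2$ and updating only along the $\Oh(\log n)$ root-to-leaf paths touched by the halfling's small support. The only minor difference is that the paper argues the overall tree height is $\Oh(\log n)$ rather than your $\Oh(\ttd(F)\log n)$, since along any root-to-leaf path the product of the branching degrees is at most $n$; but your looser bound is absorbed into the $\Oh(\td_D(A))$ exponent anyway.
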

Specifically, let $\vex \in \Z^n$ be a feasible solution to~\eqref{IP}, $\lambda \in \N$ be a step length, and $\veh \in \ker_{\Z}(A)$ with $\mathopen|\suppo(\veh)| \leq \sigma$.
Then the~\eqref{AugIP} instance $(\vex, \lambda)$ and the~\eqref{AugIP} instance $(\vex + \veh, \lambda)$ are identical up to at most $\sigma$ coordinates of their lower and upper bounds and we call them $\sigma$-similar, using the following definition.
Two instances $(\lambda_1, \vex_1)$ and $(\lambda_2, \vex_2)$ of~\eqref{AugIP} are defined as \emph{$\sigma$-similar} if $\lambda_1 = \lambda_2$ and $\mathopen|\suppo(\vex_1 - \vex_2)| \leq \sigma$.
We shall construct a data structure called a ``convolution tree'' which maintains a representation of an~\eqref{AugIP} instance, takes linear time to initialize, and takes time roughly $\sigma \log n$ to update to represent a $\sigma$-similar~\eqref{AugIP} instance.

\paragraph*{Convolution Tree}
We will need the following notion:
\begin{definition}[Convolution]
	Given a set $R \subseteq \Z^\delta$ and tuples $\vealpha=\left(\alpha_{\ver}\right)_{\ver \in R}, \vebeta = \left(\beta_{\ver}\right)_{\ver \in R} \in \left(\Z \cup \{+\infty\}\right)^R$, a tuple $\vegamma = \left(\gamma_{\ver}\right)_{\ver \in R} \in \left(\Z \cup \{+\infty\}\right)^R$ is the \emph{convolution of $\vealpha$ and $\vebeta$}, denoted $\vegamma = \convol(\vealpha, \vebeta)$, if
	\[\gamma_{\ver} = \min_{\substack{\ver', \ver'' \in R\\\ver' + \ver'' = \ver}} \alpha_{\ver'} + \beta_{\ver''} \qquad \forall \ver \in R \enspace .\]
	A tuple of pairs $w(\vegamma)_{\vealpha, \vebeta} \in (R\times R)^R$ is called a \emph{witness of $\vegamma$ w.r.t. $\vealpha, \vebeta$} if
	\[
	w(\vegamma)_{\vealpha, \vebeta}(\ver) = (\ver',\ver'') \Leftrightarrow \gamma_{\ver} = \alpha_{\ver'} + \beta_{\ver''} \qquad \forall \ver \in R \enspace .
	\]
\end{definition}

We note that, as in the rest of this paper, the separable convex functions which appear in the following definition are represented by comparison oracles.
\begin{definition}[Convolution Tree] \label{def:convol_tree}
	Let $A \in \Z^{m \times n}$, $F$ be a $\td$-decomposition of $G_D(A)$, $R \df [-\rho \|A\|_\infty, \rho \|A\|_\infty]^{k_1(F)}$ and $R' \df R \times [0,g_1(A)]$.
	A \emph{convolution tree} is a data structure $\T$ which stores two vectors $\vel_{\T}, \veu_{\T} \in (\Z \cup \{\pm \infty\})^n$ and a separable convex function $f_{\T}\colon \R^n \to \R$, and we call $\vel_{\T}, \veu_{\T}, f_{\T}$ the \emph{state of $\T$}.
	A convolution tree  $\T$ supports the following operations:
	\begin{enumerate}
		\item \textsc{Init}$(\vel_{\T}, \veu_{\T}, f_{\T})$ initializes $\T$ to be in state $\vel_{\T}, \veu_{\T}, f_{\T}$.
		\item \textsc{Update}$(i,l_i,u_i,f_i)$ is defined for $i \in [n]$, $l_i,u_i \in \Z$ and a univariate convex function $f_i: \R \to \R$.
		Calling \textsc{Update}$(i, l_i, u_i, f_i)$ changes the $i$-th coordinates of the state into $l_i, u_i$ and $f_i$, i.e.,
		if $\vel_{\T}, \veu_{\T}, f_{\T}$ is the state of $\T$ before calling \textsc{Update}$(i, l_i, u_i, f_i)$ and $\vel'_{\T}, \veu'_{\T}, f'_{\T}$ is the state of $\T$ afterwards, then for all $j \in [n] \setminus \{i\}$, $(\vel'_{\T})_j=(\vel_{\T})_j$, $(\veu'_{\T})_j=(\veu_{\T})_j$, and $(f'_{\T})_j=(f_{\T})_j$, and $(\vel'_{\T})_i = l_i$, $(\veu'_{\T})_i = u_i$, and $(f'_{\T})_i = f_i$.
		\item for each $\sigma \in [n]$, $\sigma$-\textsc{Update}$(U,\vel_U, \veu_U, f_U)$ is defined for $U \subseteq [n]$ with $|U|=\sigma$, $\vel_U, \veu_U \in \Z^U$ and a separable convex function $f_U: \R^U \to \R$. Calling $\sigma$-\textsc{Update}$(U,\vel_U, \veu_U, f_U)$ is equivalent to calling \textsc{Update}$(i,(\vel_U)_i,(\veu_U)_i,(f_U)_i)$ for each $i \in U$ (in increasing order of indices).
		\item \label{it:conv_tree_veG}\textsc{Query} returns a sequence $\veG \in (\Z^{n} \cup \{\undffd\})^{R'}$ where  $\veg_{\ver,\rho} \df (\veG)_{\ver,\rho}$ is a solution of
		\begin{equation}
		\min \left\{f(\veg) \mid A\veg = (\ver, \vezero), \, \vel_{\T} \leq \veg \leq \veu_{\T}, \, \veg \in \Z^n, \|\veg\|_1 = \rho\right\}, \label{eq:convol_tree_subproblem}
		\end{equation}
		for each $\ver'=(\ver,\rho) \in R'$, with $\vezero$ the $\left(m-k_1(F)\right)$-dimensional zero vector, and $\vel_{\T}, \veu_{\T}, f_{\T}$ the current state of $\T$.
		If~\eqref{eq:convol_tree_subproblem} has no solution, we define its solution to be $\undffd$.
	\end{enumerate}
\end{definition}
In Definition~\ref{def:convol_tree}, the set $R$ represents the set of possible right hand sides of~\eqref{eq:convol_tree_subproblem}, and the extra coordinate in the set $R'$ serves to represent the set of considered $\ell_1$-norms of solutions of~\eqref{eq:convol_tree_subproblem}.
Note that we allow a zero $\ell_1$-norm.

\begin{lemma}[Convolution Tree Lemma] \label{lem:convol_tree}
	Let $A,F$, and $\T$ be as in Definition~\ref{def:convol_tree}.
	\begin{enumerate}
		\item \textsc{Init}$(\vel_{\T}, \veu_{\T}, f_{\T})$ can be realized in time $(2\|A\|_\infty \cdot g_1(A) + 1)^{2\height(F)+1} \cdot 2n$,
		\item \textsc{Update}$(i,l_i,u_i,f_i)$ can be realized in time $(2\|A\|_\infty \cdot g_1(A) + 1)^{2\height(F)+1} \Oh(\ttd(\T) \log n)$
		\item $\sigma$-\textsc{Update}$(U,\vel_U,\veu_U,f_U)$ can be realized in time $\sigma \cdot (2\|A\|_\infty \cdot g_1(A) + 1)^{2\height(F)+1} \Oh(\ttd(\T) \log n)$,
		\item \textsc{Query} can be realized in constant time.
	\end{enumerate}
\end{lemma}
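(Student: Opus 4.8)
The plan is to build $\T$ as a balanced binary tree over the leaves $[n]$, where each node $v$ stores, for its associated column set $C_v \subseteq [n]$ and for each pair $(\ver,\rho) \in R'$, an optimal solution (and its objective value) of the restriction of~\eqref{eq:convol_tree_subproblem} to the columns in $C_v$, with right-hand side $\ver$ on the ``top block'' rows and zero on the remaining rows, and with $\ell_1$-norm exactly $\rho$. The root then stores precisely the sequence $\veG$ that \textsc{Query} must return, so \textsc{Query} is a constant-time lookup. I would first set up this tree so that the column set is split according to the dual decomposition structure: using Corollary~\ref{cor:dual_decomp} and Lemma~\ref{lem:fhati}, the matrix $A$ decomposes into blocks $\hat A_1,\dots,\hat A_d$ each sitting on $k_1(F)$ ``shared'' rows plus disjoint private rows, and a solution of~\eqref{eq:convol_tree_subproblem} splits accordingly. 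The key combinatorial fact is that merging the tables of two children is exactly a $(\min,+)$-convolution over $R' = R \times [0, g_1(A)]$ in the shared-row coordinate and an additive merge in the $\ell_1$-norm coordinate; this is where the $\convol$ operation from the definition above enters. Since $|R'| \le (2\|A\|_\infty g_1(A)+1)^{k_1(F)} \cdot (g_1(A)+1) \le (2\|A\|_\infty g_1(A)+1)^{k_1(F)+1}$, and the naive $(\min,+)$-convolution over a set of size $|R'|$ costs $|R'|^2$, each merge at a node costs $(2\|A\|_\infty g_1(A)+1)^{2(k_1(F)+1)}$; bounding $k_1(F) \le \height(F)$ and absorbing constants gives the exponent $2\height(F)+1$ claimed.

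For \textsc{Init}, I would build the tree bottom-up: at each of the $n$ leaves the table is trivial to fill by enumerating the at most $2g_1(A)+1$ feasible values of a single scalar variable within $[l_i,u_i]$, and then performing $n-1$ merges (one per internal node of a balanced binary tree on $n$ leaves). The total cost is $O(n)$ merges, each costing $(2\|A\|_\infty g_1(A)+1)^{2\height(F)+1}$, giving the bound $(2\|A\|_\infty g_1(A)+1)^{2\height(F)+1}\cdot 2n$ after accounting for the leaf initializations. Here one must be slightly careful that the base case actually solves the single-column subproblem correctly — but that is exactly the base case already used in the proof of Lemma~\ref{lem:dual}, so I would just invoke that reasoning. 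For \textsc{Update}$(i,l_i,u_i,f_i)$: changing leaf $i$ requires recomputing its scalar table (time $O(g_1(A))$, dominated by the merge cost) and then re-merging the $O(\log n)$ nodes on the root-to-leaf path, each re-merge costing $(2\|A\|_\infty g_1(A)+1)^{2\height(F)+1}$; this yields the $(2\|A\|_\infty g_1(A)+1)^{2\height(F)+1}\,O(\log n)$ bound. For $\sigma$-\textsc{Update}, the stated semantics is literally $\sigma$ successive \textsc{Update} calls, so the bound follows by multiplying by $\sigma$ (one could save a log factor by recomputing each affected internal node only once, but the stated bound does not require this, so I would not bother).

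The main obstacle I anticipate is not the merging recurrence itself but verifying correctness of the table semantics across the merge — specifically, that the $(\min,+)$-convolution on the shared-row right-hand-side coordinate together with the additive merge on the $\ell_1$-norm coordinate genuinely computes optimal solutions of~\eqref{eq:convol_tree_subproblem} restricted to $C_v$, given correct tables at the children. This requires the separable-convexity of $f$ (so that $f$ restricted to a column subset is still separable convex and objective values add across disjoint coordinate blocks) and the block structure guaranteeing that the private rows of distinct children are disjoint and that every row is either shared or private to exactly one child — which is precisely the content of Corollary~\ref{cor:dual_decomp}. I would state this as a short invariant maintained at every node and prove it by induction on the tree, the inductive step being the merge. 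The remaining bookkeeping — storing witnesses so that an actual vector $\veg_{\ver,\rho}$ (not just its value) can be reported, and handling the $\undffd$ case when a cell is infeasible (treated as value $+\infty$, exactly as in the $\convol$ definition) — is routine and I would only mention it in passing.
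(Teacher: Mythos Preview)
Your high-level plan matches the paper's --- a balanced tree with $(\min,+)$-convolution merges, the root caching the \textsc{Query} answer, and $O(\log n)$ path recomputation for \textsc{Update}. But the invariant you state has a genuine gap. You propose that every node $v$ stores, for each $(\ver,\rho)$ in a \emph{single} set $R'$ of dimension $k_1(F)+1$, the optimum over columns $C_v$ with ``$\ver$ on the top block rows and zero on the remaining rows.'' This fails whenever $C_v$ is a proper subset of the columns of some block $\hat A_i$: the private rows of $A_i$ must sum to zero only over the \emph{entire} block, not over $C_v$ alone. So either you force them to zero at $v$ (discarding feasible partial solutions) or you leave them free, in which case the merge with $v$'s sibling inside the same block must convolve over those rows too --- strictly more than $k_1(F)$ coordinates. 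One level of dual decomposition is not enough.

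The paper fixes this by making the construction recursive over $\ttd(F)$: the top binary tree $T$ has $d$ leaves, one per \emph{block} (not per column), and leaf $i$ is itself a full convolution tree $\T_i$ for $\hat A_i$ with decomposition $\hat F_i$, built recursively since $\ttd(\hat F_i) < \ttd(F)$. Along any branch of this recursion the number of shared-row coordinates in the convolution set accumulates to $k_1(F)+k_1(F_i)+\cdots$, i.e., the length of a root--leaf path in $F$, hence at most $\height(F)$. So each merge costs at most $(2\|A\|_\infty g_1(A)+1)^{2\height(F)+1}$, and here $\height(F)$ is the \emph{actual} worst-case coordinate count at the deepest recursion level --- not merely an upper bound on $k_1(F)$ as your step ``$k_1(F)\le\height(F)$'' suggests. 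The total number of internal nodes across all levels is still at most $n$ and the composite tree height is $O(\log n)$, which recovers your \textsc{Init} and \textsc{Update} bounds. Your handling of witnesses, $\undffd$ as $+\infty$, and $\sigma$-\textsc{Update} by iterated \textsc{Update} is fine and matches the paper.
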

Given Lemma~\ref{lem:convol_tree} whose proof we postpone, we prove Theorem~\ref{thm:nearlylinear_dual} as follows.
\begin{proof}[Proof of Theorem~\ref{thm:nearlylinear_dual}]
	Let $\Gamma_2 = \{1, 2, 4, \dots, 2^{\ceil{\log \|\veu-\vel\|_\infty}}\}$, let $\vex_0 \in \Z^n$ be a given initial solution, and set $\veh_0 \df \vezero$ and $U_0 := \emptyset$.
	In each iteration $i \geq 1$, $\veh_i$ denotes a halfling for $\vex_{i-1}$ with $\|\veh_i\|_1 \leq g_1(A)$, $U_i = \suppo(\veh_i)$ denotes the set of non-zero coordinates of $\veh_i$, and $\vex_i = \vex_{i-1} + \veh_i$.
	Moreover, we represent $\veh_i$ compactly as coordinate-value pairs, which means that its encoding length does not depend on the dimension $n$.
	This also means that, given $\veh_i$, computing $\vex_i = \vex_{i-1} + \veh_i$ only takes $\Oh(\mathopen|\suppo(\veh_i)|)$ arithmetic operations because it suffices to change the coordinates $U_i$ of $\vex_{i-1}$.
	
	At the beginning of iteration $i \in \N$ we distinguish two cases.
	If $i=0$, then, for each $\lambda \in \Gamma_2$, consider the substitution described in the proof of Lemma~\ref{lem:primal}, i.e.,
	$\vel_\lambda \df \ceil{(\vel - \vex_i)/\lambda}$, $\veu_\lambda \df \floor{(\veu - \vex_i)/\lambda}$, and $f_{\lambda}(\veg) \df f(\vex_i + \lambda \veg)$.
	Let $\T_\lambda$ be a new convolution tree, and call \textsc{Init}$(\vel_\lambda, \veu_\lambda, f_\lambda)$ on $\T_\lambda$.
	Otherwise, when $i \geq 1$, for each $\lambda \in \Gamma_2$, let $U \df U_{i-1}$, $\sigma := |U|$, $\vel_U$, $\veu_U$ and $f_U$ be a restriction of $\vel_\lambda \df \ceil{(\vel - \vex_i)/\lambda}$, $\veu_\lambda \df \floor{(\veu - \vex_i)/\lambda}$, and $f_{\lambda}(\veg) \df f(\vex_i + \lambda \veg)$ to the coordinates $U$, respectively, and call $\sigma$-\textsc{Update}$(U, \vel_U, \veu_U, f_U)$ on $\T_\lambda$.
	Note that an evaluation oracle for $f_{\lambda}$ is easily constructed from an evaluation oracle for $f$: when queried on $\veg$, return $f(\vex_i + \lambda \veg)$.
	Also note that computing $\vel_U, \veu_U$ and $f_U$ can be done with $\Oh(\sigma)$ arithmetic operations.
	
	Then, we obtain a halfling for $\vex_i$ as follows.
	For each $\lambda \in \Gamma_2$, query $\T_\lambda$ obtaining a sequence $\veG_\lambda$.
	For each $(\ver,\rho) \in R'$ denote by $\veg_{\ver,\rho,\lambda} \df (\veG_\lambda)_{\ver,\rho}$.
	Let $\rho^*_{(i,\lambda)} \df \arg \min_{\rho' \in [0,g_1(A)]} f(\vex_i + \lambda \veg_{\vezero,\rho',\lambda})$, and let $\veh_{i, \lambda} \df \veg_{\vezero, \rho^*_{(i,\lambda)}, \lambda}$.
	The idea of this definition is that augmenting step pairs $(\lambda, \veg)$ for $\vex_{i}$ with $\ell_1$-norm at most $g_1(A)$ correspond to solutions of~\eqref{eq:convol_tree_subproblem} with $\ver=\vezero$, the best one has $\ell_1$-norm exactly $\rho^*_{(i,\lambda)}$, and 
	it is specifically $\veh_{i, \lambda}$.
	Thus $\veh_{i, \lambda}$ is a solution of the $(\vex_i,\lambda)$ instance of~\eqref{AugIP}.
	Let $\lambda_i^* \in \arg \min_{\lambda \in \Gamma_2} f(\vex_i + \lambda \veh_{i,\lambda})$ (in particular, we break ties in the ``$\arg\min$'' arbitrarily) and let $\veh_i = \lambda_i^* \veh_{i, \lambda_i^*}$.
	By Lemma~\ref{lem:powersoftwo}, $\veh_i$ is a halfling for $\vex_i$.
	If $f(\vex_i + \veh_i) < f(\vex_i)$, set $\vex_{i+1} := \vex_i + \veh_i$ and $i:= i+1$, and otherwise return $\vex_i$ as optimal.
	
	Let us compute the complexity.
	We are maintaining $|\Gamma_2| \leq \log \|\veu-\vel\|_\infty + 1$ convolution trees, with each taking $(\|A\|_\infty \cdot g_1(A))^{\Oh(\td_D(A))} n$ time to initialize in iteration $i=0$.
By Lemma~\ref{lem:halfling} the number of iterations is $n' \leq 3n \log (f_{\max})$.
	Because for each $i \in [0,n']$ it holds that $|\suppo(\vex_{i+1} - \vex_{i})| = |\suppo(\veh_i)| \leq g_1(A)$ due to the ``$\|\veg\|_1 = \rho$'' constraint in problem~\eqref{eq:convol_tree_subproblem} and the fact that $\rho \leq g_1(A)$, updating one convolution tree takes time $(\|A\|_\infty \cdot g_1(A))^{\Oh(\td_D(A))} \sigma \log n$ with $\sigma = g_1(A)$.
	Because $n' \leq 3n \log (f_{\max})$ we update each tree at most $3n  \log (f_{\max})$ times, in total taking time
	\begin{multline*}
	\log \|\veu-\vel\|_\infty \cdot \left(\|A\|_\infty \cdot g_1(A)\right)^{\Oh(\td_D(A))} \left(n + 3n\cdot \log (f_{\max}) \cdot g_1(A) \cdot \log n\right) \\
	\qquad \qquad \leq \left(\|A\|_\infty \cdot g_1(A)\right)^{\Oh(\td_D(A))} n \log n \log (f_{\max}) \log \|\veu-\vel\|_\infty \enspace . \qedhere
	\end{multline*}
\end{proof}

\begin{proof}[Proof of Lemma~\ref{lem:convol_tree}]
	The proof is similar to the proof of Lemma~\ref{lem:dual}.
	We define $\T$ recursively over $\ttd(F)$, then describe how the operations are realized, and finally analyze the time complexity.
	If $\ttd(F) \geq 2$, we assume $A$ is dual block-structured along $F$ (otherwise apply Corollary~\ref{cor:dual_decomp}) and we have, for every $i \in [d]$, matrices $A_i, \bar{A}_i, \hat{A}_i$ and a tree $\hat{F}_i$ (¨see Lemma~\ref{lem:fhati}) with the claimed properties, and a corresponding partitioning of $\veb, \vel, \veu, \veg$ and $f$.
	If $\ttd(F) = 1$, let $d \df n$, $\bar{A}_i \df A_{\bullet, i}$ for each $i \in [d]$, and $A_1, \dots, A_d$ be empty.
	For $i,j \in [d]$, $i \leq j$, denote by $A[i,j]$ the submatrix of $A$ induced by the rows and columns of the blocks $\bar{A}_i, \dots, \bar{A}_j$, $\vel_{\T}[i,j] \df (\vel^i_{\T}, \dots, \vel^j_{\T})$, $\veu_{\T}[i,j] \df (\veu^i_{\T}, \dots, \veu^j_{\T})$, and $f_{\T}[i,j]$ be the restriction of $f_{\T}$ to the coordinates of $\vel_{\T}[i,j]$.
	Observe that $\hat{A}_i = A[i,i] = \left(\begin{smallmatrix}\bar{A}_i \\ A_i\end{smallmatrix}\right)$.
	
	We obtain $\T$ by first defining a convolution tree $\T_i$ for each $i \in [d]$, where $\T_i$ is a convolution tree for the matrix $\hat{A}_i$ and for $\vel_{\T}^i, \veu_{\T}^i$ and $f_{\T}^i$, and then constructing a binary tree $T$ whose leaves are the $\T_i$'s and which is used to join the results of the $\T_i$'s in order to realize the operations of $\T$.
	Since, for each $i \in [d]$, $\T_i$ is supposed to be a convolution tree for a matrix $\hat{A}_i$ with $\ttd(\hat{F}_i) < \ttd(F)$, if $\ttd(F) \geq 2$, we will construct $\T_i$ by a recursive application of the procedure which will be described in the text starting from the next paragraph.
	Now we describe how to obtain $\T_i$ if $\ttd(F)=1$, i.e., when $\hat{A}_i$ is just one column.
	When $\T_i$ is initialized or updated, we construct the sequence $\veG_i$ by using the following procedure.
	For each $(\ver, \rho) \in R'$, defining $\veg^i$ to be such $\veg^i \in \left(\{-\rho, \rho\} \cap [\vel_{\T}^i, \veu_{\T}^i]\right)$ which satisfies $\hat{A}_i \veg^i = (\ver, \vezero)$ and minimizes $f_{\T}^i(\veg^i)$, and returning either $\veg^i$ if it is defined or $\undffd$ if no such $\veg^i$ exists.
	Notice that $\veg^i, \vel^i_{\T}$ and $\veu^i_{\T}$ are scalars.
	
	We say that a rooted binary tree is \emph{full} if each vertex has $0$ or $2$ children, and that it is \emph{balanced} if its height is at most $\log |T| + 2$.
	It is easy to see that for any number $h$ there exists a rooted balanced full binary tree with $h$ leaves (hence of height at most $3+\log h$).
	Let $T$ be a rooted balanced full binary tree with $d$ leaves labeled by the singletons $\{1\}, \dots, \{d\}$ whose internal vertices are labeled as follows: if $u \in T$ has children $v,w$, then $u=v \cup w$; hence the root $r$ satisfies $r=[d]$ and the labels are subsets of consecutive indices $1, \dots, d$.
	We obtain $\T$ by identifying the leaves of $T$ with the roots of the trees $\T_i$ and will explain how to use $T$ to join the results of all $\T_i$'s in order to realize the operations of $\T$.
	
	To initialize $\T$ with vectors $\vel_\T$ and $\veu_\T$ and a function $f_{\T}$, we shall compute, in a bottom-up fashion, the sequence $\veG$ described in point~\ref{it:conv_tree_veG} of Definition~\ref{def:convol_tree}.
	Let $u$ be a node of $T$ and let $i \df \min u$ and $j \df \max u$ be the leftmost and rightmost leaves of $T_u$ (i.e., the subtree of $T$ rooted at $u$), respectively.
	Moreover, if $u$ is an internal node, let its left and right child be $v$ and $w$, respectively, and let $k \df \max v$ be the rightmost leaf of $T_v$.
	Consider the following auxiliary problem, which is intuitively problem~\eqref{eq:convol_tree_subproblem} restricted to blocks $i$ to $j$: simply append ``$[i,j]$'' to all relevant objects, namely $f_{\T}, \veg, A, \vel_{\T}$ and $\veu_{\T}$:
	\begin{multline}
	\min \big\{f_{\T}[i,j](\veg[i,j]) \mid A[i,j] \veg[i,j] = (\ver, \vezero), \, \vel_{\T}[i,j] \leq \veg[i,j] \leq \veu_{\T}[i,j], \\
	\veg[i,j] \in \Z^{n_i + \cdots + n_j}, \, \|\veg[i,j]\|_1 = \rho \big\}, \label{eq:conv_tree_aux}
	\end{multline}
	where $\vezero$ has dimension $\sum_{\ell=i}^j m_{\ell}$.
	Let $\veg^u_{\ver, \rho}$ be a solution of~\eqref{eq:conv_tree_aux} and let $\veG^u$ be the sequence $\left(\veg^u_{\ver'}\right)_{\ver' \in R'}$.
	
	If $u$ is a leaf of $T$, the sequence $\veG^u$ is obtained by querying $\T_u$ (which was defined previously).
	Otherwise, compute the sequences $\veG^v$ and $\veG^w$ for the children $v,w$ of $u$, respectively.
	Then, we compute a convolution $\vegamma^u$ of sequences, $\zeta \in \{v,w\}$, $\vegamma^\zeta$ obtained from $\veG^\zeta$ by setting $\gamma^\zeta_{\ver'} \df f_{\T}[\min \zeta, \max \zeta](\veG^\zeta_{\ver'})$, where $f_{\T}[\min \zeta, \max \zeta](\undffd) \df +\infty$.
	We also compute a witness $w(\vegamma^u)_{\vegamma^v, \vegamma^w}$ of $\vegamma^u$.
	The desired sequence $\veG^u$ is easily obtained from $w(\vegamma^u)_{\vegamma^v, \vegamma^w}$.
	
	The \textsc{Update} operation is realized as follows.
	Let $i, l_i, u_i, f_i$ be as described in Definition~\ref{def:convol_tree}.
	Let $\iota(i) \in [d]$ be the index of the block containing coordinate $i$, and let $i' \in [n_i]$ be the coordinate of block $\iota(i)$ corresponding to $i$.
	First traverse $T$ downward from the root to leaf $\iota(i)$, call \textsc{Update}$(i', l_i, u_i, f_i)$ on subtree $\T_{\iota(i)}$, and then recompute convolutions $\vegamma^w$ and sequences $\veG^w$ for each vertex $w$ on a root-leaf path between $\iota(i)$ and the root $r$.
	The $\sigma$-\textsc{Update} operation is realized by simply calling \textsc{Update} for each $i \in U$, in increasing order of $i$.
	
	Let us analyze the time complexity.
	The convolution tree $\T$ is composed of $\ttd(F)$ levels of smaller convolution trees, each of which has height $\Oh(\log n)$.
	Specifically, the topmost level $\ttd(F)$ consists of the nodes corresponding to the internal vertices of $T$, and the previous levels are defined analogously by recursion (e.g., level $\ttd(F)-1$ consists of the union of the topmost levels of $\T_i$ over all $i \in [d]$, etc.).
	Thus, $\height(\T) \in \Oh (\ttd(F) \log(n))$.
	
	There are $n$ leaves of $\T$, one for each column of $A$.
	The initialization of leaves takes time at most $n \cdot (2\rho+1)$.
	Let $N_\ell$, $\ell \in [\ttd(F)]$, denote the number of internal nodes at level $\ell$.
	Because a full binary tree has at most as many internal nodes as it has leaves, we see that $\sum_{\ell=1}^{\ttd(F)} N_\ell \leq n$.
	Then, initializing level $\ttd(F)$ amounts to solving $N_{\ttd(F)}$ convolutions and their witnesses, each of which is computable in time $|R'|^2 \leq \left((2g_1(A) + 1)\cdot (2\|A\|_\infty \cdot g_1(A) + 1)\right)^{2\height(F)} \leq (2\|A\|_\infty \cdot g_1(A) + 1)^{2\height(F)+1}$.
	Processing level $\ell \in [\ttd(F)-1]$ amounts to solving $N_\ell$ convolutions with sets $R'(\ell) = R(\ell) \times [0,g_1(A)]$ where $R(\ell)$ is obtained from $R$ by dropping some coordinates, and thus $|R(\ell)| \leq |R|$, $|R'(\ell)| \leq |R'|$, and hence computing one convolution can be done in time $|R'(\ell)|^2 \leq (2\|A\|_\infty \cdot g_1(A) + 1)^{2\height(F)+1}$.
	In total, initialization takes time at most $2n (2\|A\|_\infty \cdot g_1(A) + 1)^{2\height(F)+1}$.
	
	Regarding the \textsc{Update} operation, there is one leaf of $\T$ corresponding to the coordinate $i$ being changed.
	Thus, in order to update the results, we need to recompute all convolutions corresponding to internal nodes along the path from the leaf to the root.
	Because $\height(\T) \leq \Oh(\log n)$, the number of such nodes is $\Oh(\log n)$, with each taking time at most $(2\|A\|_\infty \cdot g_1(A) + 1)^{2\height(F)+1}$.
	The time required by $\sigma$-\textsc{Update} is $\sigma$ times the time required by one \textsc{Update} operation.
\end{proof}

\lv{\begin{remark}
		TODO another observation: there is approximate convolution, could be useful for practice.
\end{remark}}

\subsubsection{Primal and Dual Treewidth} 
We also briefly consider the more permissive graph parameter treewidth:
\begin{definition}[Treewidth]
	A \emph{tree decomposition} of a graph $G=(V,E)$
	is a pair $(T, B)$, where $T$ is a tree and $B$ is a mapping
	$B: V(T) \rightarrow 2^V$ satisfying
	\begin{itemize}
		\item for any $uv \in E$, there exists $a \in V(T)$ such that
		$u, v \in B(a)$,
		\item if $v \in B(a)$ and $v \in B(b)$, then $v \in B(c)$ for all
		$c$ on the path from $a$ to $b$ in $T$.
	\end{itemize}
	We use the convention that the vertices of the tree are called \emph{nodes} and the sets
	$B(a)$ are called \emph{bags}.
	The {\em treewidth $\tw((T, B))$ of a tree decomposition} $(T, B)$ is
	the size of the largest bag of $(T, B)$ minus one.
	The {\em treewidth $\tw(G)$ of a graph} $G$ is the
	minimum treewidth over all possible tree decompositions of $G$.
	A \emph{path decomposition} is a tree decomposition in which $T$ is a path.
\end{definition}
Analogously to treedepth, we denote by $\tw_P(A) = \tw(G_P(A))$ and $\tw_D(A) = \tw(G_D(A))$.
We have that for any graph $G$, $\tw(G) \leq \td(G)$.

The point of this section is to prove Lemmas~\ref{lem:primal_treewidth} and~\ref{lem:dual_treewidth} which show that, given bounds on $g_\infty(A)$ and $g_1(A)$, problem~\eqref{AugIP} is efficiently solvable when $\tw_P(A)$ and $\tw_D(A)$ are small, respectively.
The reason we have focused on the more restrictive treedepth so far is that, in general, even if $\tw_P(A), \tw_D(A)$ and $\|A\|_\infty$ are bounded by a constant, $g_\infty(A)$ can be exponential in $n$ (Lemma~\ref{lem:largeg1}).
We still need a few notions and bounds before we can state and prove the key lemmas.

The \emph{incidence graph of $A$} is $G_I(A) = (V_I, E_I)$ with $V_I \df \{v_i \mid i \in [n]\} \cup \{c_j \mid j \in [m]\}$ and $E_I \df \left\{ \{v_i, c_j\} \mid A_{i,j} \neq 0, \, i \in [n], j \in [m] \right\}$.
To gain some intuition for how the primal, dual, and incidence graphs are related, consider the following.
For a graph $G$ let $G^2$ denote the \emph{square of $G$} which is obtained from $G$ by adding an edge between all vertices in distance $2$.
For a subset of vertices $W \subseteq V(G)$, we denote by $G[W]$ the subgraph of $G$ induced by $W$.
It is easy to see that $G_P(A) = G_I(A)^2[\{v_i \mid i \in [n]\}]$ and $G_D(A) = G_I(A)^2[\{c_j \mid j \in [m]\}]$.
We define the \emph{incidence treewidth of $A$} to be $\tw_I(A) \df \tw(G_I(A))$.
The following bound will be useful later.
\begin{lemma}[{Kolaitis and Vardi~\cite{KolaitisV:2000}}] \label{lem:inc_prim_tw}
	$\tw_I(A) \leq \tw_P(A) + 1$ and $\tw_I(A) \leq \tw_D(A) + 1$.
\end{lemma}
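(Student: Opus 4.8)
The plan is to prove both inequalities by one and the same construction, applied to $G_P(A)$ for the first bound and to $G_D(A) = G_P(A^{\transpose})$ for the second. The key structural fact is that every row of $A$ induces a clique in $G_P(A)$ (the columns in which that row is nonzero are pairwise adjacent by definition of $G_P(A)$), and dually every column of $A$ induces a clique in $G_D(A)$. Starting from a tree decomposition of the primal (resp.\ dual) graph, we will ``hang'' the missing incidence vertices off bags that already contain their neighborhood, paying exactly one extra unit of width.

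First I would state and prove in one line the folklore fact that a clique lives in a single bag: if $K$ is a set of pairwise-adjacent vertices of a graph $G$ and $(T,B)$ is a tree decomposition of $G$, then some node $a$ of $T$ satisfies $K \subseteq B(a)$ (induction on $|K|$ using the connectivity axiom, or it may simply be cited).

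Next, for $\tw_I(A) \le \tw_P(A)+1$: take a tree decomposition $(T,B)$ of $G_P(A)$ of width $\tw_P(A)$. For each $j \in [m]$ let $N_j \df \{i \in [n] \mid A_{i,j} \neq 0\}$; then $\{v_i \mid i \in N_j\}$ is a clique in $G_P(A)$, so by the fact above there is a node $a_j$ of $T$ with $\{v_i \mid i \in N_j\} \subseteq B(a_j)$. Form $(T',B')$ by adding, for each $j \in [m]$, a fresh leaf $a_j'$ adjacent to $a_j$ and setting $B'(a_j') \df B(a_j) \cup \{c_j\}$, leaving all original bags unchanged. I would then verify the three tree-decomposition axioms for $G_I(A)$: every edge $\{v_i,c_j\}$ with $i \in N_j$ is covered since $v_i \in B(a_j) \subseteq B'(a_j')$, and $G_I(A)$ has no other edges; each $c_j$ lies in the single bag $B'(a_j')$, so its set of occurrences is connected; and for each $v_i$ the set of nodes whose bag contains $v_i$ is the old connected subtree together with possibly some new leaves $a_j'$, each attached to a node $a_j$ already containing $v_i$, hence still connected. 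The largest bag of $(T',B')$ has at most $(\tw_P(A)+1)+1$ vertices, giving $\tw_I(A) \le \tw_P(A)+1$.

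Finally, $\tw_I(A) \le \tw_D(A)+1$ follows from the symmetric construction: from a tree decomposition of $G_D(A)$ of width $\tw_D(A)$, for each column $i \in [n]$ the set of rows meeting column $i$ is a clique in $G_D(A)$ and thus sits in some bag, to which we attach a new leaf whose bag additionally contains $v_i$; the verification is identical with rows and columns exchanged. I do not expect a genuine obstacle here — the proof is elementary; the only place requiring a little care is the connectivity bookkeeping in $(T',B')$ and the observation that degenerate cases (a zero row or column gives an empty clique, which lies in every bag) are handled automatically.
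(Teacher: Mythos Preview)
Your proposal is correct and follows essentially the same approach as the paper: use that each row of $A$ induces a clique in $G_P(A)$, find a bag containing that clique, and insert the corresponding constraint vertex there, with the dual bound obtained by symmetry via $G_I(A^\intercal)=G_I(A)$. The only cosmetic difference is that you attach a fresh leaf with bag $B(a_j)\cup\{c_j\}$ rather than adding $c_j$ directly to $B(a_j)$ and ``possibly copying bags''; your variant is a clean way to guarantee that at most one new vertex lands in any bag.
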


\begin{proof}
	Construct a tree decomposition $T'$ of $G_I(A)$ from an optimal tree decomposition $T$ of $G_P(A)$ as follows.
	Consider a row $\vea_i$ of $A$: its non-zeros correspond to a clique of columns in $G_P(A)$, so there must exist a bag of $T$ containing all of them; now add the vertex corresponding to $\vea_i$ to this bag.
	Repeating this for all rows and possibly copying bags obtains $T'$ of width at most one larger than $T$.
	The statement for $G_D(A)$ follows by the observation that $G_I(A^\intercal) = G_I(A)$.
\end{proof}

\begin{lemma} \label{lem:primal_treewidth}
	Problem~\eqref{AugIP} can be solved in time $(2g_\infty(A)+1)^{\Oh(\tw_P(A))}n$.
\end{lemma}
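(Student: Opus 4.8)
The plan is to mimic the proof of the Primal Lemma (Lemma~\ref{lem:primal}), replacing the $\td$-decomposition-based recursion with the standard bottom-up dynamic programming over a (nice) tree decomposition of $G_P(A)$. First I would invoke a standard fact (e.g.\ Bodlaender's algorithm, or since $\tw_P(A)$ is treated as a parameter, any \FPT\ treewidth algorithm) to compute a nice tree decomposition $(T,B)$ of $G_P(A)$ of width $w \df \tw_P(A)$ with $\Oh(n)$ nodes in time $2^{\Oh(w^3)}n$ (or we may simply assume it is given, as elsewhere in the paper). As in Lemma~\ref{lem:primal}, it suffices to solve the auxiliary problem
\[
B_\infty(\rho)\best \{f(\veg) \mid A\veg = \veb,\, \vel \leq \veg \leq \veu,\, \veg \in \Z^n\}
\]
with $\rho \df g_\infty(A)$ and the substitution $f(\veg) \df f(\vex+\lambda\veg)$, $\veb \df \vezero$, $\vel \df \ceil{(\vel-\vex)/\lambda}$, $\veu \df \floor{(\veu-\vex)/\lambda}$, since $\G(A) \subseteq B_\infty(g_\infty(A))$; the returned vector is then a $\G(A)\best$ solution of~\eqref{AugIP}.

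The key step is the DP itself. For each node $a$ of $T$ with bag $B(a)$, and each assignment $\vephi\colon B(a) \to [-\rho,\rho]$ of values to the bag-variables (there are at most $(2\rho+1)^{w+1}$ such), we store the minimum of $\sum_{i} f_i(g_i)$ over all partial assignments $\veg$ to the variables introduced in the subtree rooted at $a$ that agree with $\vephi$ on $B(a)$, satisfy the bounds, and satisfy all rows of $A$ whose support is fully contained in that subtree. A row of $A$ corresponds to a clique in $G_P(A)$, hence its support lies inside some bag; we charge the evaluation of the constraint $A_{k,\bullet}\veg = b_k$ to the topmost bag containing $\suppo(A_{k,\bullet})$. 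In a nice tree decomposition this gives the usual three cases: at an introduce node we extend $\vephi$ to the new variable (multiplying the table size by $2\rho+1$) and check any newly completed rows; at a forget node we take a minimum over the forgotten coordinate; at a join node we add the two children's values for the common $\vephi$ (subtracting the double-counted contribution of $B(a)$'s variables, or equivalently only counting $f_i(g_i)$ at the forget node of $i$). Each node is processed in time $(2\rho+1)^{\Oh(w)}\cdot\poly(w)$, and there are $\Oh(n)$ nodes, giving total time $(2g_\infty(A)+1)^{\Oh(w)}n$. Reading the optimal $\veg$ off the table by the usual backtracking, and reporting ``no solution'' if the root table is $+\infty$ everywhere, completes the algorithm. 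Correctness of the DP follows by a routine induction on the subtrees, exactly as in textbook treewidth DPs (e.g.\ \cite{CyganFKLMPPS:2015}), using that separability of $f$ lets the objective split as $\sum_i f_i(g_i)$ and that every row-constraint is eventually checked at the node where its clique is last fully present.

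I would also briefly note (as in the Remark after Lemma~\ref{lem:primal}) that this is essentially Freuder's algorithm~\cite{Freu} applied to the constraint network whose variables have domain $[-\rho,\rho]\cap[\vel,\veu]$ and whose constraints are the rows of $A$ together with the box constraints. The main obstacle here is purely expository/bookkeeping: making sure each row of $A$ is assigned to exactly one bag (the topmost one whose bag contains its support) so that it is checked exactly once and the charging argument is airtight, and handling the ``subtracting double-counted bag contributions'' at join nodes cleanly — this is where naive implementations leak a factor or miscount. There is no deep new idea beyond Lemma~\ref{lem:primal}; the content is that treewidth suffices for the \emph{algorithmic} step once $g_\infty(A)$ is bounded, whereas (as the paper emphasizes in Lemma~\ref{lem:largeg1}) bounding $g_\infty(A)$ itself is where treewidth fails and treedepth is needed.
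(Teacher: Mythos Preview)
Your approach is correct and essentially coincides with the paper's: both reduce \eqref{AugIP} to an \eqref{IP} instance with domain size $2g_\infty(A)+1$ via the substitution $\rho\df g_\infty(A)$ and then invoke Freuder's algorithm. The only difference is presentational: the paper states Freuder's result as a black-box proposition (\eqref{IP} is solvable in time $\|\veu-\vel\|_\infty^{\Oh(\tw_P(A))}n$) and applies it in two lines, whereas you spell out the underlying tree-decomposition DP in detail --- and you yourself note this equivalence at the end of your proposal.
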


\begin{proof}
	The algorithm follows from Freuder's algorithm:
	\begin{proposition}[Freuder~{\cite{Freu,JK}}] \label{prop:primal_treewidth}
		\eqref{IP} can be solved in time $\|\veu - \vel\|_\infty^{O(\tw_P(A))} \cdot n$.
	\end{proposition}
	Replacing $\G(A)\best$ by $B_\infty(g_\infty(A))\best$ in~\eqref{AugIP} yields a subproblem with bounds $\bar{\vel}, \bar{\veu}$ satisfying $\|\bar{\veu} - \bar{\vel}\|_\infty \leq 2 g_\infty(A)+1$ and thus is solvable by Proposition~\ref{prop:primal_treewidth} in the claimed time.
\end{proof}

\begin{lemma} \label{lem:dual_treewidth}
	Problem~\eqref{AugIP} can be solved in time $(2\|A\|_\infty g_1(A))^{\Oh(\tw_D(A))}n$.
\end{lemma}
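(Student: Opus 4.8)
The statement is the dual-treewidth analogue of the Dual Lemma (Lemma~\ref{lem:dual}), with the $\td$-decomposition of $G_D(A)$ replaced by a tree decomposition of $G_D(A)$. First I would reduce, exactly as in Lemmas~\ref{lem:primal} and~\ref{lem:dual}, the problem \eqref{AugIP} to an auxiliary norm-bounded problem: for an \eqref{AugIP} instance $(\vex,\lambda)$ it suffices to solve, with $\rho\df g_1(A)$,
\[
B_1(\rho)\best\{f(\veg)\mid A\veg=\veb,\ \vel\le\veg\le\veu,\ \veg\in\Z^n\}
\]
under the substitution $\veb\df\vezero$, $\vel\df\ceil*{\frac{\vel-\vex}{\lambda}}$, $\veu\df\floor*{\frac{\veu-\vex}{\lambda}}$, $f(\veg)\df f(\vex+\lambda\veg)$ (which keeps $f$ separable convex), since $\G(A)\subseteq B_1(g_1(A))$ makes any $B_1(\rho)\best$ solution a $\G(A)\best$ solution. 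As in Lemma~\ref{lem:dual} we may assume $\|\veb\|_\infty\le\rho\|A\|_\infty$ (else there is no solution in $B_1(\rho)$), and since $\|\veg\|_1\le\rho$ forces $\veg\in[-\rho,\rho]^n$ it suffices to search over $\veg$ with all coordinates in $[-\rho,\rho]$ whose partial constraint-sums stay bounded by $\rho\|A\|_\infty$ in absolute value. So the whole task reduces to solving this $B_1(\rho)\best$ problem in time $(2\|A\|_\infty\rho+1)^{\Oh(\tw_D(A))}n$.

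Next I would run the standard bottom-up dynamic program over a tree decomposition $(T,B)$ of $G_D(A)$ of width $w\df\tw_D(A)$ (assumed given; it is computable in $g(w)n$ time, dominated by the rest), which I take to have $\Oh(wn)$ nodes (using $m\le n$) and an empty root bag. Since the constraints incident to a variable $x_i$, i.e. $\{j: A_{j,i}\neq 0\}$, form a clique in $G_D(A)$, some bag contains all of them; fix a canonical such node $t_i$ and say $x_i$ is \emph{processed at} $t_i$. For a node $t$ put $R_t\df[-\rho\|A\|_\infty,\rho\|A\|_\infty]^{B(t)}$. The DP maintains a table $D_t\colon R_t\to\Z\cup\{+\infty\}$ where $D_t(\ve p)$ is the minimum of $\sum f_i(g_i)$ over all choices $g_i\in[-\rho,\rho]\cap[l_i,u_i]$ for the variables processed strictly below (and at) $t$ such that for every constraint $c_j\in B(t)$ the accumulated sum $\sum_i A_{j,i}g_i$ over those variables equals $p_{c_j}$. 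At each node I would, in this order: (i) combine the children (a $(\min,+)$-convolution $D_t(\ve p)=\min_{\ve p'+\ve p''=\ve p}D_{t_1}(\ve p')+D_{t_2}(\ve p'')$ at a two-child node, inheritance with a fresh zero coordinate at an introduce node, and the base all-zero table at a leaf); (ii) for each variable $x_i$ with $t_i=t$, update $D_t$ by trying every $g_i\in[-\rho,\rho]\cap[l_i,u_i]$, adding $g_iA_{j,i}$ to coordinate $c_j$ for each incident constraint (all in $B(t)$ by the choice of $t_i$) and $f_i(g_i)$ to the objective; (iii) for each constraint $c_j$ being forgotten at $t$, restrict $D_t$ to entries with $p_{c_j}=b_j$ and drop that coordinate. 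The answer is the unique entry of the root table (or ``no solution'' if it is $+\infty$), with an optimal $\veg$ recovered via back-pointers. Correctness rests on the usual observation: when $c_j$ is forgotten, the nodes whose bag contains $c_j$ form a connected subtree lying entirely below the forget-node, so every $x_i$ incident to $c_j$ has $t_i$ in that subtree and is already processed, whence $\sum_i A_{j,i}g_i=b_j$ is correctly enforced; and every $\veg\in\G(A)$, being feasible with $\|\veg\|_1\le g_1(A)=\rho$, has all its coordinates and all its partial sums inside the searched ranges, hence is among the candidates, so the DP indeed returns a $B_1(\rho)\best$ solution.

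For the running time, $|R_t|\le(2\rho\|A\|_\infty+1)^{w+1}$, so each convolution costs $\Oh(|R_t|^2)$ and each variable update or forget step costs $\Oh((2\rho+1)|R_t|)$; over $\Oh(wn)$ nodes and $n$ variable updates this totals $(2\rho\|A\|_\infty+1)^{\Oh(w)}n=(2\|A\|_\infty g_1(A))^{\Oh(\tw_D(A))}n$, and composing with the reduction above yields the claimed bound for \eqref{AugIP}. I expect the only delicate point to be the bookkeeping of the DP: pinning each variable to a single processing node $t_i$ and ordering the per-node steps so that a constraint's defining equation is fully accumulated precisely when the constraint is forgotten, together with the two-sided use of the $\|\veg\|_1\le\rho$ restriction (it simultaneously bounds coordinates by $\rho$ and partial sums by $\rho\|A\|_\infty$, which is what keeps $|R_t|$ within the target); everything else is routine and mirrors the proof of Lemma~\ref{lem:dual}. (Alternatively one could run the same DP on the incidence graph $G_I(A)$, whose treewidth is at most $\tw_D(A)+1$ by Lemma~\ref{lem:inc_prim_tw}, tracking variable values for variable-nodes in the bag and partial sums for constraint-nodes; this gives the same bound.)
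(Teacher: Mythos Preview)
Your proposal is correct and reaches the same bound, but it takes a different route than the paper. The paper does not build the DP from scratch: it invokes the black-box result of Ganian, Ordyniak, and Ramanujan (Proposition~\ref{prop:incidence_treewidth}) stating that \eqref{IP} is solvable in time $\Gamma^{\Oh(\tw_I(A))}n$, where $\Gamma$ bounds all prefix sums of $A\vex$. To fit the $\ell_1$ constraint into that framework, the paper linearizes $\|\veg\|_1\le g_1(A)$ by splitting each $g_i$ into $g_i^+,g_i^-\ge 0$ and adding the single linear constraint $\sum_i(g_i^++g_i^-)\le g_1(A)$; it then checks that the resulting instance has incidence treewidth at most $2\tw_I(A)+2\le 2\tw_D(A)+4$ and $\Gamma\le\|A\|_\infty g_1(A)$.

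Your direct DP over a tree decomposition of $G_D(A)$ is a valid alternative. Two points are worth highlighting. First, you correctly exploit the $S\best$ semantics: you never enforce $\|\veg\|_1\le\rho$ as a constraint, and your DP may return a $\veg\notin B_1(\rho)$, but since it is feasible and dominates every candidate in $B_1(\rho)$ (all of which lie in your search space because every partial sum over any subset of coordinates is bounded by $\|A\|_\infty\|\veg\|_1\le\rho\|A\|_\infty$), this is exactly a $B_1(\rho)\best$ solution. This lets you sidestep the variable-splitting trick entirely. Second, your observation that the support of each column is a clique in $G_D(A)$, hence fits in a single bag, is precisely what makes the ``process variable $x_i$ at node $t_i$'' step well-defined; this is the dual-treewidth analogue of what the incidence-graph DP does implicitly. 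The trade-off: the paper's proof is shorter since it outsources the DP, while yours is self-contained and arguably cleaner in avoiding the auxiliary $g_i^\pm$ variables and the treewidth-doubling they cause.
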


\begin{proof}
	The algorithm follows from a recent result of Ganian et al.~\cite{GOR}:
	\begin{proposition}[Ganian et al.~{\cite[Theorem 6]{GOR}}] \label{prop:incidence_treewidth}
		\eqref{IP} can be solved in time $\Gamma^{O(\tw_I(A))} \cdot n$,
	\end{proposition}
	In other words, the parameter $\Gamma$ is bounding the largest number in absolute value which appears in any prefix sum of $A \vex$ for any feasible solution $\vex$.
	
	Our goal is to use Proposition~\ref{prop:incidence_treewidth} to solve~\eqref{AugIP}, where, by $\G(A) \subseteq B_1(g_1(A))$, we may replace $\G(A)\best$ with $B_1(g_1(A))\best$.
	Given an instance $(\lambda,\vex)$ of~\eqref{AugIP}, a solution $\veg$ of
	\begin{equation}
	\min f(\vex + \lambda\veg):\, A\veg=\vezero, \, \vel \leq \vex + \lambda\veg \leq \veu, \, \|\veg\|_1 \leq g_1(A), \, \veg \in \Z^n \label{eq:dualtw_subprob}
	\end{equation}
	is certainly also a solution of~\eqref{AugIP}.
	In order to use Proposition~\ref{prop:incidence_treewidth} to solve~\eqref{eq:dualtw_subprob} our only task is to replace the nonlinear constraint $\|\veg\|_1 \leq g_1(A)$ with a linear constraint.
	This is easy by splitting every variable $g_i$ into its positive and negative part $g_i^+$ and $g_i^-$ which we force to be nonnegative by setting $g_i^+, g_i^- \geq 0$.
	Correspondingly, every column $A_i$ of $A$ is now split into $A_i^+ = A_i$ and $A_i^- = -A_i$.
	The bounds $l_i \leq x_i + \lambda g_i \leq u_i$ are rewritten to $l_i \leq x_i + (\lambda g_i^+) - (\lambda g_i^-) \leq u_i$.
	Finally, $\|\veg\|_1 \leq g_1(A)$ in~\eqref{eq:dualtw_subprob} is equivalent to $\sum_{i=1}^n (g_i^+ + g_i^-) \leq g_1(A)$, and we additionally set $0 \leq g_i^-, g_i^+ \leq g_1(A)$.
	It is easy to observe that this auxiliary problem has incidence treewidth at most $2\tw_I(A) + 2$: replace $g_i$ in each bag by $g_i^+, g_i^-$, add the constraint $\sum_{i=1}^n (g_i^+ + g_i^-) \leq g_1(A)$ into each bag, and add one of $l_i \leq x_i + (\lambda g_i^+) - (\lambda g_i^-)$ or $x_i + (\lambda g_i^+) - (\lambda g_i^-) \leq u_i$ for at most one $i$ for each bag, perhaps for multiple copies of the original bag.
	Moreover, by the fact that $\|\veg\|_1 \leq g_1(A)$, we have that $\Gamma \leq \|A\|_\infty g_1(A)$.
	The claim follows.
\end{proof}

\lv{
	\subsection{Memoization of Subproblems}
	\begin{itemize}
		\item because we are looking for a solution in a small box (by bounds on $g_1(A)$ and $g_\infty(A)$, there are only "few" objectives.
		\item show that the structure of DPs is such that we only solve for "few" different right hand sides
		\item in conclusion: only few $\vel, \veu$ (by $g_1(A)$ or $g_\infty(A)$ bounds), only few $\veb$ by structure of recursion/DP, only few $f$ by reducibility $\implies$ only few (indep of $n$) leaf instances, can hash them and get a time complexity of form $f(\td(A)) + n \poly\log n$ instead of multiplicative.
		\item this can be done in a lazy way except for the keeping of a "canonical" $f$ (we'd need to find it and that's extra work).
	\end{itemize}
}

\subsection{Summarizing Lemma} \label{sec:summarizing}
By combining the various ingredients of the framework we have developed in the previous sections we obtain several different algorithms with different bounds corresponding to:
\begin{itemize}
	\item (not) using a relaxation oracle $\RRR$ to reduce bounds and the right hand side,
	\item (not) using the scaling algorithm (Corollary~\ref{cor:scaling_algo}) to obtain ``semi-strongly polynomial'' algorithms, i.e., algorithms whose time complexity depends on $\vel, \veu$ but not $f$.
	\item (not) using the reducibility bounds $\rho$ (Theorem~\ref{thm:lin_red} and Corollary~\ref{cor:sepconvex_red}) to replace the $\log f_{\max}$ term.
\end{itemize}
Even though there are three boolean options, only four choices are sensible for the following reasons.
First, reducibility bounds are only useful when dealing with instances whose lower and upper bounds are polynomial in $n$, i.e., when we use either the scaling algorithm or $\RRR$.
Second, using $\RRR$ essentially only makes sense when the reducibility bounds $\rho$ are used afterwards in order to confine the possible dependence on $\vel, \veu, \veb, f$ to the time complexity of $\RRR$.
Thus the remaining sensible settings are the following:
\begin{description}[align=right,labelwidth=2cm]
	\item[--, --] no relaxation, no scaling, no reducibility bounds.
	\item[scaling, --] scaling algorithm, no reducibility bounds.
	\item[scaling, $\rho$] scaling algorithm, apply reducibility bounds.
	\item[$\RRR$, $\rho$] use $\RRR$, apply reducibility bounds.
\end{description}
The next lemma shows the complexity bounds which can be derived from a ``base'' bound by applying the above described approaches:
\begin{lemma}[Summarizing Lemma] \label{lem:summarizing}
	Assume there is an algorithm solving any~\eqref{IP} in time at most 
	\[g \cdot T_n(n) \cdot T_{\domain}(\domain) \cdot T_{\text{obj}}(\log f_{\max}),\]
	where $g_P, g_D$ are classes of computable functions, $T_n, T_{\domain}$ and $T_{\text{obj}}$ are functions bounded by a polynomial, $\domain \df \log \|\veu-\vel, \veb\|_\infty$, and $g$ is either $g_P \df g_P(\|A\|_\infty, \td_P(A))$ or $g_D \df g_D(\|A\|_\infty, \td_D(A))$.
	Let $g \in \Omega(\min\{g_\infty(A), g_1(A)\})$ and $g^{\Oh(1)} \in g$.
	Let $\RRR$ be an approximate relaxation oracle for $A$.
	
	Then there exist three algorithms, one per row, solving every centered ($\vezero \in [\vel, \veu]$) \eqref{IP} instance $\II$ with finite bounds in time
	\begin{center}
		\begin{tabular}{lcc}  
			\toprule
			& $f$ linear & $f$ separable convex\\
			\midrule
			scaling, -- & $g \cdot T_n(n) \cdot T_{\domain}(\log n) \domain \cdot T_{\text{obj}}(\log n\|\vew\|_\infty)$ & $g \cdot T_n(n) \cdot T_{\domain}(\log n) \domain \cdot T_{\text{obj}}(\log f_{\max})$ \\
			scaling, $\rho$ & $g \cdot T_n(n) \cdot T_{\domain}(\log n) \domain \cdot T_{\text{obj}}(n \log n)$& $g \cdot T_n(n) \cdot T_{\domain}(\log n) \domain \cdot T_{\text{obj}}(n^2 \log n)$\\
			$\RRR$, $\rho$ & $\RR(\II,n) + g \cdot T_n(n) \cdot T_{\domain}(\log n) \cdot T_{\text{obj}}(n \log n)$ & $\RR(\II,n) + g \cdot T_n(n) \cdot T_{\domain}(\log n) \cdot T_{\text{obj}}(n^2 \log n)$ \\
			\bottomrule
		\end{tabular}
	\end{center}
\end{lemma}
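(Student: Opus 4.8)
The plan is to obtain the three claimed algorithms by composing the hypothesised base algorithm $\mathcal{B}$ (the one running in time $g\cdot T_n(n)\cdot T_\domain(\domain)\cdot T_f(f)$) with, respectively, the scaling algorithm of Corollary~\ref{cor:scaling_algo}, the reducibility bounds of Theorem~\ref{thm:lin_red} and Corollary~\ref{cor:sepconvex_red}, and the relaxation--proximity machinery packaged in the Master Lemma (Lemma~\ref{lem:stronglypoly_master}). The one routine fact that drives all the bookkeeping is that $T_n,T_\domain,T_f$ are polynomially bounded: for a monotone polynomially bounded $T$ one has $T(a+b)=O(T(a)\,T(b))$ and $T(ca)=O_c(T(a))$, and in every step below the implicit ``constants'' are computable functions of $(\|A\|_\infty,\td_P(A))$ or $(\|A\|_\infty,\td_D(A))$, which may be absorbed into $g$ since the class is closed under products with such functions and under polynomial powers. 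We also use that $\mathcal{B}$ accesses $f$ only through a comparison oracle --- equivalently, that its time bound depends on $f$ only through $\log f_{\max}$ or $\log f_{\sepmax}$, as in the hypothesis --- so by Proposition~\ref{prop:converge_comparison} we may hand it any objective equivalent to $f$ on the current box.

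For the first two rows we run the scaling algorithm (Corollary~\ref{cor:scaling_algo}), which reduces the given instance to $\Theta(\log\|\veu-\vel,\veb\|_\infty)$ auxiliary instances~\eqref{eq:scaling_aux_ip}, each with constraint matrix $A_I$ or $A$ and with bounds satisfying $\|\bar\veu_i-\bar\vel_i\|_\infty\le N'$ where $N'\df 8n\,g_\infty(A_I)$. By Lemma~\ref{lem:feas_td} we have $\|A_I\|_\infty=\|A\|_\infty$, $\td_P(A_I)\le\td_P(A)+1$ and $\td_D(A_I)=\td_D(A)$, so Lemmas~\ref{lem:bound1},~\ref{lem:primal_norm} and~\ref{lem:dual_norm} give $\log N'=O(\log n)+(\text{parameter function})$; hence, after centering (Lemma~\ref{lem:centered}), running $\mathcal{B}$ on one auxiliary instance costs $g\cdot T_n(n)\cdot T_\domain(\log n)$ times the relevant $T_f$ term, up to parameter-function factors. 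For the row without reducibility it remains to control that $T_f$ term: if $f$ is separable convex the scaled objective sees $f$ only on (essentially) $[\vel,\veu]$, so its $f_{\max}$ and $f_{\sepmax}$ are no larger than the original and $T_f$ is unchanged; if $f(\vex)=\vew\vex$ is linear we first replace the scaled objective $s_i\vew\vex$ by the equivalent $\vew\vex$ (same minimisers since $s_i>0$), which after centering is bounded by $n\|\vew\|_\infty N'$ on the box, yielding $T_f(O(\log n\|\vew\|_\infty))$ up to parameter-function factors. For the row with reducibility we additionally replace, on each auxiliary instance (whose centered box lies in $[-N',N']^{n_i}$), the objective by the equivalent one of Theorem~\ref{thm:lin_red} in the linear case and of Corollary~\ref{cor:sepconvex_red}, part~\ref{it:sepconvex_red:1}, in the separable convex case; these have $\log(\cdot)_{\max}=O(n\log n)$, resp.\ $O(n^2\log n)$, up to parameter-function factors, and substituting into $T_f$ gives $T_f(n\log n)$, resp.\ $T_f(n^2\log n)$. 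Summing over the $\Theta(\log\|\veu-\vel,\veb\|_\infty)$ instances gives the first two rows.

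For the third row we follow the Master Lemma, part~\eqref{lem:stronglypoly_master:1}, with $N=4\PP_p(A,f)$; by Theorem~\ref{thm:proximity} this is at most $4n\,g_\infty(A)$ or $4n\,g_1(A)$, hence polynomial in $n$ times a parameter function, so $\log N=O(\log n)+(\text{parameter function})$. Querying $\RRR$ to accuracy $N/4$ --- at cost $\RR(I,N/4)$, written $\RR(I,n)$ after absorbing the parameter dependence of the accuracy --- and performing the shift of~\eqref{IP1} produces an equivalent instance with $\|\bar\veu-\bar\vel\|_\infty\le N$ and $\|\bar\veb\|_\infty\le n\|A\|_\infty N$; an initial feasible solution for it is obtained by applying $\mathcal{B}$ to its $A_I$-feasibility instance~\eqref{eq:auxiliary_feasibility}, which by the analysis of the previous paragraph stays inside the claimed budget. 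Finally we replace $f$ on the bound-reduced instance by a $\rho$-reduced equivalent (Theorem~\ref{thm:lin_red}, resp.\ Corollary~\ref{cor:sepconvex_red}) and run $\mathcal{B}$ once more, at cost $g\cdot T_n(n)\cdot T_\domain(\log N)\cdot T_f(\log\rho(N/2))$; since $\log N=O(\log n)$ and $\log\rho(N/2)=O(n\log n)$ (linear) or $O(n^2\log n)$ (separable convex), all up to parameter-function factors, this collapses to the last row. Note that here no $\domain$-factor survives, because the bounds have been made polynomial in $n$; this is exactly why the relaxation route, unlike the scaling route, eliminates the dependence on $\domain$ outright.

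The mathematical content is slight; the whole difficulty lies in the accounting. The point to be careful about is to verify that \emph{every} auxiliary object that appears --- the matrix $A_I$ (and the matrices that $\mathcal{B}$ may build internally by the decompositions), the bound ranges, the dimensions, the Graver norms $g_\infty(A_I),g_1(A_I)$, and above all the reducibility blow-ups $\rho(N'),\rho(N/2)$ and the proximity bound $\PP_p(A,f)$ --- is bounded by a polynomial in $n$ times a \emph{computable function of the original parameters only}, and that feeding each such quantity into the polynomially bounded $T_n,T_\domain,T_f$ introduces only multiplicative factors of the same shape, so that they can be swept into $g$. Two small points deserve to be isolated: (i) for linear objectives one \emph{must} pass to the unscaled linear objective in the scaled instances, otherwise $\log f_{\max}$ reacquires a dependence on $s_i$ and hence on $\|\veu-\vel\|_\infty$; and (ii) every objective replacement above is legitimate only because $\mathcal{B}$ consults $f$ through a comparison oracle, i.e.\ because $T_f$ enters the hypothesis non-constructively.
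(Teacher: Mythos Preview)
Your proof is correct and follows essentially the same approach as the paper: the scaling algorithm of Corollary~\ref{cor:scaling_algo} for the first two rows (with reducibility bounds from Theorem~\ref{thm:lin_red} and Corollary~\ref{cor:sepconvex_red} applied on each auxiliary instance for the second row), and the Master Lemma (Lemma~\ref{lem:stronglypoly_master}) combined with the proximity bound of Theorem~\ref{thm:proximity} and the same reducibility bounds for the third row. You are considerably more explicit than the paper about the bookkeeping---in particular the treatment of $A_I$ via Lemma~\ref{lem:feas_td}, the centering step, the replacement of the scaled linear objective $s_i\vew\vex$ by the equivalent $\vew\vex$, and the role of the comparison-oracle access in licensing the objective replacements---but none of this departs from the paper's route.
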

\begin{proof}
	The ``scaling, --'' row is derived from the base bound by applying the scaling algorithm (Corollary~\ref{cor:scaling_algo}) and solving the auxiliary instances using the assumed algorithm.
	This results in $\Oh(\domain)$ auxiliary instances, each of which has bounds $\bar{\vel}, \bar{\veu}$ satisfying $\|\veu-\vel\|_\infty \leq g n$, hence $\log \|\bar{\veu}-\bar{\vel}\|_\infty = \log (gn)$.
	In the linear case this means that $\bar{f}_{\max}$ in the auxiliary instance is bounded by $\log (gn\|\vew\|_\infty)$.
	Thus the resulting time complexity contains an additional factor of $\domain$ to account for the $\Oh(\domain)$ auxiliary instances, and $T_{\domain}(\domain)$ becomes $T_{\domain}(\log n)$.
	Since $T_{\domain}$ is a polynomial, the $T_{\domain}(\log g_{\infty}(A))$ part is accounted for in $g'$.
	Next, the ``scaling, $\rho$'' row is obtained by replacing $\log f_{\max}$ by either $n \log n$ or $n^2 \log n$, depending on whether $f$ is linear or separable convex, with the bound following from Theorem~\ref{thm:lin_red} or Corollary~\ref{cor:sepconvex_red}, respectively.
	Finally, the $\RRR, \rho$ row is obtained from bound~\eqref{lem:stronglypoly_master:1} of the Master Lemma and the reducibility bounds.
\end{proof}

\section{Applications} \label{sec:apps}
In this section we will combine the results we have proved so far to obtain the currently fastest algorithm for~\eqref{IP} with small $\td_P(A)$ or $\td_D(A)$.
Furthermore, we will spell out the consequences for other classes of~\eqref{IP} which have small $\td_P(A)$ or $\td_D(A)$, such as 2-stage and multi-stage stochastic IP, and $n$-fold and tree-fold IP, and for problems which have been modeled using these classes of~\eqref{IP}.
We will also briefly mention IPs of small primal and dual treewidth, and the consequences of our results for the solvability of the corresponding fractional relaxations.

\subsection{Corollaries}
To summarize our results we use a table with the following rows and columns.
The rows distinguish bounds for linear and separable convex objective functions.
The columns correspond to the four possible considered settings: the ``base'' setting without any scaling, reduction, or relaxation, denoted ``--, --'', and the settings ``scaling, --'', ``scaling, $\rho$'', and ``$\RRR$, $\rho$'' defined in Section~\ref{sec:summarizing}.
Moreover, if one result in a table dominates another, we only write the stronger result.
Thus, the table only contains incomparable results.
For brevity, we denote by $g$ the ``main'' component of the complexity, by $\domain$ we denote $\log \|\veu-\vel,\veb\|_\infty$, and by $\RR(\II,n)$ we denote the time needed by $\RRR$ to solve the relaxation~\eqref{relax} to accuracy $n$.
The reason why $\domain$ is not just $\log \|\veu-\vel\|_\infty$ but includes a dependence on $\|\veb\|_\infty$ is the necessity of solving feasibility instances containing slack variables with bounds $[\vezero, \veb]$.
If an instance with infinite bounds needs to be handled, an additional $\log \|\vex^* - \vex_0\|_\infty$ factor is incurred (Lemma~\ref{lem:infinite_sepconv}).
Finally, let us redefine (only for the purpose of giving more concise bounds) $f_{\max} \df \max\{f_{\max}, \|\veb\|_1\}$: this is so that we can use the intuition that ``feasibility is as easy as optimization'', recalling that the objective function of the auxiliary instance~\eqref{eq:auxiliary_feasibility} has maximum value $\|\veb\|_1$.
This potentially overestimates the correct complexity in the scaling and relaxation regimes, but only by $\log \|\veb\|_1$ factors, which are dominated by other factors in all known applications.

\begin{corollary}[Primal Algorithm] \label{cor:primal}
	Let an~\eqref{IP} instance $\II$ be given with $A \in \Z^{m \times n}$, let $F$ be a $\td$-decomposition of $G_P(A)$, $\RRR$ be a relaxation oracle for $A$, $\domain \df \log \|\veu-\vel, \veb\|_\infty$, redefine $f_{\max} \df \max\{f_{\max}, \|\veb\|_1\}$, and
	\[
	g \df \stackinset{l}{61pt}{b}{-9pt}{\tiny\rotatebox{33}{$\underbrace{\kern21pt}_{\ttd(F)-1}$}}{%
		$g(\|A\|_\infty,F) \df 2^{2^{\rdots^{2^{(2\|A\|_\infty)^{\Oh\left(2^{\ttd(F)} \cdot \height(F)^2\right)}}}}}$}
	\] 
	If $f(\vex) = \vew \vex$, $\II$ is solvable in time at most
	\begin{center}
		\begin{tabular}{cccc}  
			\toprule	
			--, -- & scaling, -- & scaling, $\rho$ & $\RRR$, $\rho$ \\
			\midrule
			$g\domain n^2(\log f_{\max})$
			& $g\domain n^2\log n \cdot (\log n \|\vew\|_\infty)$ 			
			& $g\domain n^3 \log^2 n$ 
			& $\RR(\II,n)+g n^3 \log^2 n$   \\ 
			\multicolumn{2}{c}{$g\domain n^{1+o(1)}(\log\|\vew\|_\infty)^{\ttd(F)-1}$}			
			& $g\domain n^{\ttd(F)+o(1)}$ 
			& $\RR(\II,n)+gn^{\ttd(F)+o(1)}\log^2 n$   \\
			\bottomrule
		\end{tabular}
	\end{center}
	If $f$ is an arbitrary separable convex function, $\II$ can be solved in time at most
	\begin{center}
		\begin{tabular}{cccc}  
			\toprule	
			--, -- &  scaling, --  &  scaling, $\rho$  &  $\RRR$, $\rho$ \\
			\midrule
			\multicolumn{2}{c}{$g\domain n^2(\log f_{\max})$}
			& $g\domain n^4 \log^2 n$
			& $\RR(\II,n)+gn^4 \log^2 n$ \\
			\multicolumn{2}{c}{$g \domain n^{1+o(1)} (\log f_{\max})^{\ttd(F)-1}$}
			& $g\domain n^{2\ttd(F)-1+o(1)}$
			& $\RR(\II,n)+gn^{2\ttd(F)-1+o(1)}\log^2 n$\\
			\bottomrule
		\end{tabular}
	\end{center}
	The $n^{o(1)}$ term is more precisely $\log^{\ttd(F)+1} n$.
	We present it as $n^{o(1)}$ for brevity and to stress the fact that the algorithm is nearly-linear and \FPT (see inequality~\eqref{eq:lognk}).
\end{corollary}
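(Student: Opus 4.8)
The plan is to derive the corollary mechanically from the two ``workhorse'' algorithms for small $\td_P(A)$ together with the Summarizing Lemma (Lemma~\ref{lem:summarizing}). First I would record the base bounds. Theorem~\ref{thm:main}, with the Hermite-normal-form feasibility step of Lemma~\ref{lem:initial} replaced by the bounded-treedepth feasibility routine of Corollary~\ref{cor:feas_td} (which runs within the same budget once $\td_P(A)$ is a parameter, so the additive $\Oh(n^\omega\la A\ra)$ term of Theorem~\ref{thm:main} disappears), solves~\eqref{IP} in time $h_1\cdot n^2\,\domain\,\log(2f_{\max})$ with $h_1$ a computable function of $\|A\|_\infty,\td_P(A)$; in the language of Lemma~\ref{lem:summarizing} this is the ``$n^2$'' base, with $T_n(n)=n^2$, $T_\domain(\domain)=\domain$, $T_f(f)=\log(2f_{\max})$. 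Theorem~\ref{thm:nearlylinear_primal} is the ``$n^{1+o(1)}$'' base: $T_n(n)=n\log^{\ttd(F)+1}n$, $T_\domain(\domain)=\domain$, and $T_f(f)=(\log f_{\sepmax})^{\ttd(F)-1}$ for separable convex $f$; for linear $f$ it already gives $h_2\cdot n^{1+o(1)}\,\domain\,(\|\vew\|_\infty)^{o(1)}$, which inequality~\eqref{eq:lognk} rewrites as $h_2'\cdot n^{1+o(1)}\,\domain\,(\log\|\vew\|_\infty)^{\ttd(F)-1}$ at the cost of a factor depending only on $\ttd(F)$.

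Second, I would check that every parameter-dependent factor in these bounds --- the functions $h_1,h_2$, the polynomial factor $P$ of Lemma~\ref{lem:nearlylinear_primal_aux}, the Graver-norm bounds $g_\infty(A)$ and $g_\infty(A_I)$ and the powers $3g_\infty(A_I)^{2\height(F)}$, and the $\log g$ overheads that appear when absorbing $g$-factors below --- collapses into the single function $g=g(\|A\|_\infty,F)$ displayed in the statement. By Lemma~\ref{lem:primal_norm}, $g_\infty(A)$ is at most a tower of $2$'s of height $\ttd(F)-1$ with innermost exponent $(2\|A\|_\infty)^{\Oh(2^{\ttd(F)}\td_P(A)^2)}$; with $\height(F)=\td_P(A)$ for an optimal decomposition, and since raising such a tower to a fixed polynomial power of $\td_P(A)$, or multiplying a bounded number of them together, only enlarges the constant in that innermost $\Oh(\cdot)$, all of the above are dominated by $g$. (We also use $\td_P(A_I)\le 2\td_P(A)$ and $\ttd(F_I)=\ttd(F)$ from Lemma~\ref{lem:feas_td} so that the $A_I$-quantities stay within the same tower.) The residual $n^{o(1)}$ is exactly $\log^{\ttd(F)+1}n$, read off from $T_n$ in Theorem~\ref{thm:nearlylinear_primal}.

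Third, I would feed each base bound into Lemma~\ref{lem:summarizing}. The ``--, --'' column is just the two raw bounds. The remaining three columns are the three rows that Lemma~\ref{lem:summarizing} produces from each base bound, where: (a) the scaling algorithm (Corollary~\ref{cor:scaling_algo}) contributes a $\domain$ factor from its $\Oh(\domain)$ auxiliary instances and, crucially, shrinks the bounds of each auxiliary instance to $\|\bar\veu-\bar\vel\|_\infty\le\Oh(gn)$ (via the proximity bound applied to $A_I$), so $\log\|\bar\veu-\bar\vel\|_\infty=\Oh(\log n)$ with the $g$-part absorbed; (b) for the ``$\rho$'' columns the reducibility bounds of Theorem~\ref{thm:lin_red} and Corollary~\ref{cor:sepconvex_red}, applied on a box of half-width $N=\Oh(gn)$, replace $\log f_{\max}$ (resp.\ $\log f_{\sepmax}$) by $\Oh(n\log n)$ for linear $f$ and by $\Oh(n^2\log n)$ for separable convex $f$; (c) the ``$\RRR$, $\rho$'' column is obtained from bound~\eqref{lem:stronglypoly_master:1} of the Master Lemma, which trades the $\domain$ factor for an additive $\RR(I,n)$ while leaving only polylogarithmic-in-$n$ overhead. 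Carrying these substitutions through the ``$n^2$'' base yields the top row of each table (e.g.\ $g\,\domain\,n^2\log n\cdot\log(n\|\vew\|_\infty)$ under ``scaling, --'' linear, and $g\,\domain\,n^3\log^2 n$, $g\,\domain\,n^4\log^2 n$ under ``scaling, $\rho$''), and through the ``$n^{1+o(1)}$'' base yields the bottom row: since $T_f$ there is $(\cdot)^{\ttd(F)-1}$, the reducibility substitution turns $n^{1+o(1)}$ into $n^{\ttd(F)+o(1)}$ (linear) and $n^{2\ttd(F)-1+o(1)}$ (separable convex). Discarding every entry dominated by another --- in particular, for separable convex $f$ the plain ``scaling, --'' entry is dominated by ``--, --'' and is merged with it --- leaves exactly the two tables.

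The only point requiring genuine care is the bookkeeping of the scaling/reducibility substitutions, and within it a single nontrivial estimate: that $\PP_\infty(A_I,f)\le\Oh(g_\infty(A_I)\,n)$ (so that every auxiliary box produced by the scaling algorithm has half-width $\Oh(gn)$), and that evaluating Theorem~\ref{thm:lin_red} and Corollary~\ref{cor:sepconvex_red} on a box of half-width $N=\Oh(gn)$ indeed gives equivalent objectives of logarithmic size $\Oh(n\log n)$ and $\Oh(n^2\log n)$ respectively, after the $g$- and $\log g$-factors are folded into $g$. Both follow from results already in hand --- Theorem~\ref{thm:proximity}, Theorem~\ref{thm:lin_red} and Corollary~\ref{cor:sepconvex_red} --- so once they are spelled out the table entries are obtained by straightforward multiplication, and the closing remark $n^{o(1)}=\log^{\ttd(F)+1}n$ is immediate from Theorem~\ref{thm:nearlylinear_primal}.
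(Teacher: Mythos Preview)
Your proposal is correct and follows essentially the same route as the paper: both identify the two primal ``workhorses'' (the basic Lemma~\ref{lem:primal}$+$Lemma~\ref{lem:primal_norm} algorithm and the recursive algorithm), verify that all parameter-dependent factors collapse into the tower $g$, invoke Corollary~\ref{cor:feas_td} for feasibility, and then feed the base bounds through the Summarizing Lemma to populate the tables. The only cosmetic difference is that for the second row the paper takes Lemma~\ref{lem:nearlylinear_primal_aux} (the pre-scaling bound, with $T_{\domain}(\domain)=\domain^{\ttd(F)+1}$) as the base, whereas you take Theorem~\ref{thm:nearlylinear_primal} (which already has scaling built in, with $T_{\domain}(\domain)=\domain$); after absorbing the resulting polylogarithmic factors into $n^{o(1)}$ both choices give identical entries.
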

Let us briefly elaborate on the tables of complexities above and the relationship between individual cells.
The two lines of each table correspond to our two algorithms for primal treedepth: the basic algorithm which essentially follows from the solvability of~\eqref{AugIP} by Lemma~\ref{lem:primal}, and the recursive algorithm of Theorem~\ref{thm:nearlylinear_primal}.
The ``base'' complexity is the one of the first column.
The ``scaling, --'' column is derived from it by adding a factor of $\log n$ and replacing $\log f_{\max}$ with $\log (n \|\vew\|_\infty)$ in the case of linear objectives.
Note that with large $\|\veu-\vel\|_\infty$, $\log (n \|\vew\|_\infty)$ might be much smaller than $\log f_{\max}$.
Next, the ``scaling, $\rho$'' column is obtained from the previous one by replacing $\log f_{\max}$ with $n \log n$ in the linear case and $n^2 \log n$ in the separable convex case, which follows from our reducibility bounds.
Finally, the last column has the form $\RR(\II,n)$ plus the base complexity without the $\domain$ term and with the $\log f_{\max}$ term replaced by the reduced bounds.
\begin{proof}
	We will use the Summarizing Lemma (Lemma~\ref{lem:summarizing}).
	The two lines of each table correspond to our two algorithms for primal treedepth: the basic algorithm based on the solvability of~\eqref{AugIP} by Lemma~\ref{lem:primal}, and the recursive algorithm of Theorem~\ref{thm:nearlylinear_primal}.
	Let us call these algorithms \emph{basic} and \emph{recursive} in the following text.
	Thus, the main task is to give two ``base'' time complexity bounds, for each algorithm.
	By Corollary~\ref{cor:feas_td}, feasibility is as easy as optimization for primal treedepth, so we focus on bounding the time needed for optimization.
		
	Let us first prove the complexities of the basic algorithm, i.e., the first line of both tables.
	By Lemma~\ref{lem:lambda_oracle_initi}, given an initial solution $\vex_0$,~\eqref{IP} can be solved by solving $\Oh(n \domain \log  f_{\max})$ instances of~\eqref{AugIP}.
	By Lemmas~\ref{lem:primal} and~\ref{lem:primal_norm}, one instance of~\eqref{AugIP} can be solved in time $g \cdot n$.
	Hence $\AAap(\|\veu-\vel\|_\infty, f_{\max}) \leq g n^2 \domain \log f_{\max}$ shows the first column.
	Now we apply Lemma~\ref{lem:summarizing} with $g := g$, $T_{\domain}(\domain) = \domain$, $T_{\text{obj}}(\log f_{\max}) = \log f_{\max}$, and $T_n(n) = n^2$ and the remaining columns follow.
	
	Regarding the recursive algorithm, the first column is irrelevant because it is always advantageous to use the scaling algorithm.
	However, our base complexity bound for Lemma~\ref{lem:summarizing} is the one of Lemma~\ref{lem:nearlylinear_primal_aux}, which states that~\eqref{IP} can be solved in time $g(\|A\|_\infty, \td_P(A)) n (\domain)^{\ttd(F)+1}(\log f_{\max})^{\ttd(F)-1}$.
	Now we apply Lemma~\ref{lem:summarizing} with $g \df g$, $\T_{\domain}(\domain) = \domain^{\ttd(F)+1}$, $T_{\text{obj}}(\log f_{\max}) = (\log f_{\max})^{\ttd(F)-1}$, and $T_n(n) = n$ and the remaining columns follow.
\end{proof}

\medskip

\begin{corollary}[Dual Algorithm] \label{cor:dual}
	Let an~\eqref{IP} instance $\II$ be given with $A \in \Z^{m \times n}$, $F$ a $\td$-decomposition of $G_D(A)$, $K=\max_{P \text{: root-leaf path in }F}\prod_{i=1}^{\ttd(F)} (k_i(P)+1)$, $\RRR$ 
	be a relaxation oracle for $A$, $\domain \df \log \|\veu-\vel, \veb\|_\infty$, redefine $f_{\max} \df \max\{f_{\max}, \|\veb\|_1\}$, and let $g \df g(A,F) \df (\|A\|_\infty K)^{\Oh(\height(F) \cdot (K-1))}$.
	$\II$ is solvable in time at most
	\begin{center}
		\begin{tabular}{lcccc}  
			\toprule	
			obj    & --, -- & scaling, -- & scaling, $\rho$ & $\RRR$, $\rho$ \\
			\midrule
			linear 
			& $g n \log n \cdot \domain(\log f_{\max})$
			& $g  n \log^2 n \cdot \domain (\log n \|\vew\|_\infty)$ 			
			& $g  n^2 \log^2 n \cdot \domain$ 
			& $\RR(\II,n) + g n^2 \log^3 n$   \\
			sep. conv.
			& \multicolumn{2}{c}{$g n \log n \cdot \domain(\log f_{\max})$}
			& $g n^3 \log^3 n \cdot \domain$
			& $\RR(\II,n) + gn^3 \log^3 n$ \\
			\bottomrule
		\end{tabular}
	\end{center}
\end{corollary}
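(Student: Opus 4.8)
The plan is to mirror the structure of the Primal Algorithm (Corollary~\ref{cor:primal}) but replace the primal ingredients by their dual counterparts. Since feasibility is as easy as optimization for dual treedepth (the second part of Corollary~\ref{cor:feas_td}), throughout I only need to bound the time needed to optimize given an initial feasible solution; the feasibility oracle then costs at most the same asymptotically, using that $G_D(A_I)=G_D(A)$ by Lemma~\ref{lem:feas_td}. The two ``base'' complexity bounds I want to feed into the Summarizing Lemma (Lemma~\ref{lem:summarizing}) are: (i) the one coming from solving \eqref{AugIP} via Lemma~\ref{lem:dual}, combined with the Dual Norm bound (Lemma~\ref{lem:dual_norm}) and the $\Oh(nL^2)$-many \eqref{AugIP} calls of Lemma~\ref{lem:lambda_oracle_initi}; and (ii) the one coming from the nearly linear halfling augmentation procedure of Theorem~\ref{thm:nearlylinear_dual}.

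First I would establish the base bound (first column, ``--,--''). By Lemma~\ref{lem:dual_norm}, $g_1(A)\leq (3\|A\|_\infty K)^{K-1}$, and by Lemma~\ref{lem:dual}, \eqref{AugIP} is solvable in time $(2\|A\|_\infty g_1(A)+1)^{\Oh(\td_D(A))}n$. Since $\td_D(A)=\height(F)$ and plugging in the norm bound, this is $(\|A\|_\infty K)^{\Oh(\height(F)(K-1))}n=g\cdot n$. By Lemma~\ref{lem:lambda_oracle_initi}, given an initial solution~\eqref{IP} is solved by $3n(\log\|\veu-\vel\|_\infty+1)\log(2f_{\max})=\Oh(n\domain\log f_{\max})$ many \eqref{AugIP} calls, so optimization costs $g\cdot n^2\domain\log f_{\max}$. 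Hmm — but the claimed ``--,--'' bound is $gn\log n\domain\log f_{\max}$, only \emph{near-linear} in $n$. So for the first column I must instead invoke Theorem~\ref{thm:nearlylinear_dual}, which realizes the halfling augmentation procedure in time $(\|A\|_\infty g_1(A))^{\Oh(\td_D(A))}n\log n\log(2f_{\max})\log\|\veu-\vel\|_\infty=g\,n\log n\,\domain\log f_{\max}$, handling feasibility via Corollary~\ref{cor:feas_td}. That gives the first column for both the linear and separable convex rows.

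Next I would obtain the remaining columns by applying the Summarizing Lemma with the base bound $T_n(n)=n\log n$, $T_{\domain}(\domain)=\domain$, $T_f(f)=\log f_{\max}$, and $g\df g$ (noting $g\in\Omega(g_1(A))\supseteq\Omega(\min\{g_\infty(A),g_1(A)\})$ and $g^{\Oh(1)}\in g$ as required). For the ``scaling, --'' column Lemma~\ref{lem:summarizing} yields, after absorbing the extra $T_{\domain}(\log g)$ into $g$, the bound $g\,n\log^2 n\,\domain\cdot T_f(\cdot)$ with $T_f$ evaluated at $\log(n\|\vew\|_\infty)$ in the linear case and left as $\log f_{\max}$ (dominated by the ``--,--'' entry) in the separable convex case — matching the table, where the separable convex ``scaling,--'' cell is merged with ``--,--''. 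For ``scaling, $\rho$'' I replace $\log f_{\max}$ by $n\log n$ (linear, Theorem~\ref{thm:lin_red}) or $n^2\log n$ (separable convex, Corollary~\ref{cor:sepconvex_red}), giving $g\,n^2\log^2 n\,\domain$ and $g\,n^3\log^3 n\,\domain$ respectively. Finally ``$\RRR$, $\rho$'' comes from part~\eqref{lem:stronglypoly_master:1} of the Master Lemma (Lemma~\ref{lem:stronglypoly_master}): after solving the relaxation to accuracy $n$ in time $\RR(I,n)$ and shrinking bounds via proximity (Theorem~\ref{thm:proximity}, so $\domain$ becomes $\Oh(\log n)$), the augmentation cost is the base bound without the $\domain$ factor and with $\log f_{\max}$ reduced, yielding $\RR(I,n)+g\,n^2\log^3 n$ (linear) and $\RR(I,n)+g\,n^3\log^3 n$ (separable convex). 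The main obstacle I anticipate is purely bookkeeping: carefully checking that $K$, $\height(F)$, and $\td_D(A)$ interact correctly so that the exponent $\Oh(\height(F)(K-1))$ in $g$ genuinely dominates both the $(\|A\|_\infty g_1(A))^{\Oh(\td_D(A))}$ factor from Theorem~\ref{thm:nearlylinear_dual} and the $T_{\domain}(\log g)=\mathrm{poly}\log g$ factors generated by the Summarizing Lemma, and that the ``feasibility as easy as optimization'' translation of Corollary~\ref{cor:feas_td} does not inflate the parameters $k_i(F)$; these are routine but need to be stated precisely.
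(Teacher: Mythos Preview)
Your proposal is correct and follows essentially the same route as the paper: establish the base ``--,--'' bound via Theorem~\ref{thm:nearlylinear_dual} (combined with the Dual Norm bound of Lemma~\ref{lem:dual_norm} to justify the form of $g$) together with Corollary~\ref{cor:feas_td} for feasibility, then apply the Summarizing Lemma (Lemma~\ref{lem:summarizing}) with $T_n(n)=n\log n$, $T_{\domain}(\domain)=\domain$, $T_f(f)=\log f_{\max}$ to derive the remaining three columns. The paper's own proof is terser but identical in substance; your additional bookkeeping remarks (checking that $(\|A\|_\infty g_1(A))^{\Oh(\td_D(A))}\le g$ and that the $T_{\domain}(\log g)$ overhead is absorbed) are correct and simply spell out what the paper leaves implicit.
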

\begin{proof}
	By Theorem~\ref{thm:nearlylinear_dual}, the $\AAA$-augmentation procedure can be realized in time $\AAap(\|\veu-\vel\|_\infty, f_{\max}) \leq (\|A\|_\infty g_1(A))^{\Oh(\height(F))} n \log n \cdot \domain \log (f_{\max})$.
	By Corollary~\ref{cor:feas_td}, feasibility is as easy as optimization for dual treedepth.
	This gives the bound of the first column.
	Now we apply Lemma~\ref{lem:summarizing} with $g \df g$, $T_n(n) = n \log n$, $T_{\domain}(\domain) = \domain$, and $T_{\text{obj}}(f) = \log f_{\max}$ and the remaining columns follow.
\end{proof}

\subsubsection{Non-centered Instances, Infinite Bounds}
To obtain correct bounds for the non-typical cases of instances which are not centered and/or which contain infinities in the lower and upper bounds, one can use the following corollaries.

\begin{corollary}[Non-centered Instances]
	If $\vel, \veu$ are finite but $\vezero \not\in [\vel, \veu]$, dependence on $\veb$ is replaced by $\veb-A\vev$ for any $\vev \in [\vel, \veu]$.
\end{corollary}
\begin{proof}
Follows immediately from the translation of the Centering Lemma~\ref{lem:centered}.
\end{proof}

\begin{corollary}[Infinite bounds (linear case)]
	If $\vel, \veu \in \left(\Z \cup \{\pm \infty\}\right)^n$ and $f(\vex) = \vew\vex$, dependence on $\|\veu-\vel\|_\infty$ is replaced by
	$2 n \|\vel_{\fin}, \veu_{\fin}, \veb\|_\infty g_1(A)$.
\end{corollary}
\begin{proof}
	By Lemma~\ref{lem:feas-inf-bounds-bounded}, if there exists a feasible solution, then there is a feasible solution $\vex_0$ satisfying $\|\vex_0\|_1 \leq \|\veb\|_1 g_1(A)$.
	By Lemma~\ref{lem:infinite_linear}, unboundedness can be detected with one call to~\eqref{AugIP} whose cost is negligible, and if it is bounded, there is an optimum $\vex^*$ with $\|\vex^*-\vex_0\|_1 \leq (n \|\vel_{\fin},\veu_{\fin}\|_{\infty} + \|\vex_0\|_1) g_1(A)$.
	By triangle inequality, we obtain
	\[
	\|\vex^*\|_1 \leq \|\vex^* - \vex_0\|_1 + \|\vex_0\|_1 \leq n \|\vel_{\fin},\veu_{\fin},\veb\|_{\infty} g^2_1(A).
	\]
	Thus, we can replace any infinite entry in $\veu$ with this value, and in $\vel$ with its opposite.
\end{proof}
\begin{corollary}[Infinite bounds (separable convex case)]
	If $\vel, \veu \in \left(\Z \cup \{\pm \infty\}\right)^n$ and $f$ is separable convex, dependence on $\|\veu-\vel\|_\infty$ is replaced by $\|\vex^*\|_\infty + \|\veb\|_1 g_1(A)$, and adding an additional multiplicative factor of $\log (\|\vex^*\|_\infty + \|\veb\|_1 g_1(A))$.
\end{corollary}
\begin{proof}
	By Lemma~\ref{lem:infinite_sepconv} together with Lemma~\ref{lem:feas-inf-bounds-bounded}.
\end{proof}

\subsection{IP Classes of Small Treedepth}
\subsubsection{Transportation Problem, Tables, and $n$-fold IP}
The transportation problem, which asks for an optimal routing from several sources to several destinations, has been defined by Hitchcock~\cite{Hitchcock:1941} in 1941 and independently studied by Kantorovich~\cite{Kantorovich:1942} in 1942, and Dantzig~\cite{Dantzig:1951} showed how the simplex method can be applied to it in 1951.
The transportation problem may be seen as a \emph{table problem} where we are given $m$ row-sums and $n$ column-sums and the task is to fill in non-negative integers into the table so as to satisfy these row- and column-sums.
A natural generalization to higher-dimensional tables, called \emph{multiway tables}, has been studied already in 1947 by Motzkin~\cite{Mot}.
It also has applications in privacy in databases and confidential data disclosure of statistical tables, see a survey by Fienberg and Rinaldo~\cite{FienbergR:2007} and the references therein.

Specifically, the three-way table problem is to decide if there exists a non-negative integer $l \times m \times n$ table satisfying given line-sums, and to find the table if there is one.
Deciding the existence of such a table is \NPc already for $l = 3$~\cite{DeLoeraO:2004}.
Moreover, every bounded integer program can be isomorphically represented in polynomial time for some $m$ and $n$ as some $3 \times m \times n$ table problem~\cite{DeLoeraO:2006}.
The complexity with $l,m$ parameters and $n$ variable thus became an interesting problem.
Let the input line-sums be given by vectors $\veu \in \Z^{ml}, \vev \in \Z^{nl}$ and $\vew \in \Z^{nm}$.
Observe that the problem can be formulated as an~\eqref{IP} with variables $x^i_{j,k}$ for $i \in [n]$, $j \in [m]$ and $k \in [l]$, $f \equiv 0$, and the following constraints:
\begin{align*}
\sum_{i=1}^n x^i_{j,k} &= u_{j,k} & \forall j \in [m], k \in [l], \\
\sum_{j=1}^m x^i_{j,k} &= v^i_k & \forall i \in [n], k \in [l], \\
\sum_{k=1}^l x^i_{j,k} &= w^i_j & \forall i \in [n], j \in [m], \\
\vex &\geq \vezero & \enspace .
\end{align*}
Written in matrix form, it becomes $A\vex=\veb, \, \vex \geq \vezero$ with $\veb=(\veu,\vev^1,\vew^1,\vev^2,\vew^2,\dots,\vev^n,\vew^n)$, $I_k$ the $k\times k$, $k \in \N$, identity matrix, $\veone_k$ the all-ones vector of dimension $k \in \N$, and with
\[
A=\left(
\begin{matrix}
I_{ml} & I_{ml} & \cdots & I_{ml} \\
J & & & \\ 
& J & & \\
& & \ddots & \\
& & & J
\end{matrix}\right),
\text{ where } J=\left(
\begin{matrix}I_{l} & \cdots & I_{l} \\
\veone_l &   & \\
& \ddots &  \\
& & \veone_l
\end{matrix}
\right) \in \Z^{(l+m) \times ml} \enspace .
\]
Here, $J$ has $m$ diagonal blocks $\veone_l$ and $A$ has $n$ diagonal blocks $J$.
This formulation gave rise to the study of \emph{$n$-fold integer programs}, 
where the constraint matrix is of the form
\begin{equation}
A^{(n)}=\left(
\begin{matrix}
A_1 & A_1 & \cdots & A_1 \\
A_2 & & & \\ 
& A_2 & & \\
& & \ddots & \\
& & & A_2
\end{matrix}\right),
\label{eq:nfold}
\end{equation}
for $A_1 \in \Z^{r \times t}$ and $A_2 \in \Z^{s \times t}$.
Such IPs have become the main motivation for the study of~\eqref{IP} with bounded $\td_D(A)$ because they essentially correspond to the class of~\eqref{IP} with a $\td$-decomposition of topological height $2$, as we will soon show.

Another example of $n$-fold IP formulation comes from scheduling.
The problem of \emph{uniformly related machines makespan minimization}, denoted $Q||C_{\max}$ in the standard notation, is the following.
We are given $m$ machines, each with speed $0 < s_i \leq 1$, and $n$ jobs, where the $j$-th job has processing time $p_j \in \N$ and processing it on machine $i$ takes time $p_j/s_i$.
The task is to assign jobs to machines such that the time when the last job finishes (the \emph{makespan}) is minimal, i.e., if $M_i$ is the set of jobs assigned to machine $i$, the task is to minimize $\max_{i \in [m]} \sum_{j \in M_i} p_j/s_i$.
The decision version of the problem asks whether there is a schedule of makespan $C_{\max} \in \R$.
We consider the scenario when $p_{\max} = \max_j p_j$ is bounded by a parameter and the input is represented \emph{succinctly} by multiplicities $n_1, \dots, n_{p_{\max}}$ of jobs of each length, i.e., $n_\ell$ is the number of jobs with $p_j = \ell$.
Letting $x_j^i$ be a variable representing the number of jobs of length $j$ assigned to machine $i$, Knop and Koutecký~\cite{KnopK:2018} give the following $n$-fold formulation:
\begin{align}
\sum_{i=1}^m x_j^i &= n_j & \forall j \in [p_{\max}], \label{eq:sched:1}\\
\sum_{j=1}^{p_{\max}} j \cdot x_j^i &\leq \floor{s_i \cdot C_{\max}} & \forall i \in [m] \enspace . \label{eq:sched:2}
\end{align}
Constraints~\eqref{eq:sched:1} ensure that each job is scheduled on some machine, and constraints~\eqref{eq:sched:2} ensure that each machine finishes before time $C_{\max}$.
This corresponds to an $n$-fold formulation with $A_1 = I_{p_{\max}}$ and $A_2 = (1,2,\dots,p_{\max})$ and with $\|A^{(n)}\|_\infty = p_{\max}$.

Another scheduling problem is finding a schedule minimizing the \emph{sum of weighted completion times $\sum w_j C_j$}.
Knop and Koutecký~\cite{KnopK:2018} show an $n$-fold formulation for this problem as well, in particular one which has a separable quadratic objective.
In the context of scheduling, what sets methods based on $n$-fold IP apart from other results is that they allow the handling of many ``types'' of machines (such as above where machines have different speeds) and also ``non-linear'' objectives (such as the quadratic objective in the formulation for $\sum w_j C_j$).

Another field where $n$-fold IP has had an impact is computational social choice.
The problem of \textsc{Bribery} asks for a cheapest manipulation of voters which lets a particular candidate win an election.
An \FPT algorithm was known for \textsc{Bribery} parameterized by the number of candidates which relied on Lenstra's algorithm.
However, this approach has two downsides, namely a time complexity which is doubly-exponential in the parameter, and the fact that voters have to be ``uniform'' and cannot each have an individual cost function.
Knop et al.~\cite{KnopKM:2017} resolved this problem using $n$-fold IP by showing a single-exponential algorithm for many \textsc{Bribery}-type problems, even in the case when each voter has a different cost function.
For other applications see~\cite{KnopK:2018,KnopKM:2017b,JansenKMR:2018}.

\subsubsection*{Algorithmic Improvements for $n$-fold IP}
The main structural property of $n$-fold IPs is the following:
\begin{lemma}[Structure of $n$-fold IP] \label{lem:nfold:tdd}
	Let $A^{(n)}$ be as in~\eqref{eq:nfold}.
	Denote by $P_n$ the path on $n$ vertices, and let $F$ be obtained by identifying one endpoint of $P_r$ with an endpoint of each of $n$ copies of $P_{s+1}$.
	Then $G_D\left(A^{(n)}\right) \subseteq \cl(F)$ and thus $\td_D\left(A^{(n)}\right) \leq r+s$, $\ttd(F) = 2$, $k_1(F) = r$, and $k_2(F) = s$.
\end{lemma}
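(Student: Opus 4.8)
The plan is to produce an explicit $\td$-decomposition $F$ rooted at the free endpoint of $P_r$ together with an embedding of $G_D(A^{(n)})$ into $\cl(F)$, and then to read the numerical parameters off the shape of $F$. The starting point is the support structure of the rows of $A^{(n)}$ (equivalently, the vertices of $G_D(A^{(n)})$): they split into the $r$ \emph{global} rows coming from the top block row $(A_1~A_1~\cdots~A_1)$ and, for each $i \in [n]$, the $s$ \emph{local} rows coming from the $i$-th copy of $A_2$. The key observation I would record first is that every column of $A^{(n)}$ belongs to exactly one block $i$ and, within that block, is nonzero only among the $r$ global rows and the $s$ local rows of block $i$. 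Hence the only edges that can occur in $G_D(A^{(n)})$ are: (a) between two global rows; (b) between a global row and a local row of some block $i$; and (c) between two local rows of the \emph{same} block $i$. In particular there is never an edge between local rows of two distinct blocks.

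Next I would set up $F$ and the embedding. Rooting $F$ at the endpoint $u$ of $P_r$ opposite to the glued vertex $v$, the tree $F$ is the path $P_r$ from $u$ down to $v$, with $n$ vertex-disjoint pendant paths on $s$ vertices each hanging below $v$ (each being a copy of $P_{s+1}$ with its glued endpoint removed); thus $|V(F)| = r + ns$, matching the number of rows of $A^{(n)}$. I would map the $r$ global rows bijectively onto $V(P_r)$ in arbitrary order and, for each $i \in [n]$, map the $s$ local rows of block $i$ bijectively onto the $i$-th pendant path in arbitrary order. To verify $G_D(A^{(n)}) \subseteq \cl(F)$ I would then check each surviving edge type: for (a) the two images lie on $P_r$, whose closure (a path rooted at an endpoint) is a clique; for (c) the two images lie on a single pendant path, whose closure is likewise a clique; and for (b) the image of the global row lies on $P_r$ and is therefore an ancestor of $v$, hence an ancestor of the image of the block-$i$ local row, so the edge is again present in $\cl(F)$. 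Since cross-block local–local edges do not occur, every edge of $G_D(A^{(n)})$ is an ancestor–descendant pair in $F$, establishing the containment.

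Finally I would extract the parameters directly. The longest root–leaf path in $F$ traverses all $r$ vertices of $P_r$ and then all $s$ vertices of a pendant path, so $\height(F) = r+s$, and combined with $G_D(A^{(n)}) \subseteq \cl(F)$ this gives $\td_D(A^{(n)}) \leq r+s$. Assuming $n \geq 2$, the vertex $v$ has $n \geq 2$ children and is non-degenerate, every vertex strictly between $u$ and $v$ has a unique child, and the remaining non-degenerate vertices are exactly the leaves at the bottoms of the pendant paths; thus every root–leaf path contains precisely the two non-degenerate vertices $v$ and a leaf, so $\ttd(F) = 2$. The segment from the root $u$ to the first non-degenerate vertex $v$ is all of $P_r$, so $k_1(F) = r$, while the segment from $v$ to the leaf has $s+1$ vertices, so $k_2(F) = (s+1) - 1 = s$. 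The cases $n \le 1$ or $s = 0$ are degenerate and can be dispatched in a sentence.

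I do not expect a genuine obstacle here: the only real content is the column-support observation and the ancestor check for edge type (b), and everything else is bookkeeping. The one point worth stating carefully is that $k_1(F)$ counts vertices \emph{inclusively} from the root through the first non-degenerate vertex whereas $k_i(F)$ for $i \ge 2$ drops one endpoint — this asymmetry is exactly what yields $(k_1(F),k_2(F)) = (r,s)$ rather than $(r,s+1)$, and it is also why $F$ must be rooted at the free end of $P_r$ (rooting at $v$ would give the same $\ttd$ but the wrong level heights).
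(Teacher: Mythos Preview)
Your proposal is correct and is exactly the natural verification the paper has in mind; the paper in fact states the lemma without proof, treating it as evident from the block structure, and your argument spells out precisely this structure (global rows on $P_r$, local rows of block $i$ on the $i$-th pendant, with the column-support observation ruling out cross-block edges). Your care with the rooting and with the asymmetry between $k_1$ and $k_2$ is well placed and matches Definition~\ref{def:topheight}.
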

\begin{corollary}[$n$-fold IP] \label{cor:nfold}
	Let an $n$-fold IP instance $\II$ be given.
	Let $\RRR$ be a relaxation oracle for $A^{(n)}$, $\domain \df \log \|\veu-\vel\|_\infty$, $N \df nt$, and $g \df (\|A\|_\infty rs)^{\Oh(r^2s + rs^2)}$.
	$\II$ is solvable in time\\
	{\small
		\begin{center}
			\begin{tabular}{lcccc}  
				\toprule	
				obj    & --, -- & scaling, -- & scaling, $\rho$ & $\RRR$, $\rho$ \\
				\midrule
				linear 
				& $gN \log N\domain(\log f_{\max})$
				& $gN \log^2 N\domain (\log N \|\vew\|_\infty)$ 			
				& $g N^2 \log^3 N\domain$ 
				& $\RR(\II,N) + g N^2 \log^3 N$   \\
				sep. convex
				& \multicolumn{2}{c}{$g N \log N\domain(\log f_{\max})$}
				& $g N^3 \log^3 N\domain$
				& $\RR(\II,N) + g N^3 \log^3 N$ \\
				\bottomrule
			\end{tabular}
	\end{center}}
\end{corollary}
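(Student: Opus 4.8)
This corollary is a specialization of the Dual Algorithm (Corollary~\ref{cor:dual}) to the $n$-fold constraint matrix $A^{(n)}$ of~\eqref{eq:nfold}, whose number of variables is $N \df nt$. The plan has three steps: (i) read off the $\td$-decomposition of $G_D(A^{(n)})$ and its parameters from Lemma~\ref{lem:nfold:tdd}; (ii) evaluate the parameter-dependence function of Corollary~\ref{cor:dual} on those parameters and verify it matches the $g$ in the statement; (iii) substitute $n \mapsto N$ into the running-time tables of Corollary~\ref{cor:dual}.

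First, Lemma~\ref{lem:nfold:tdd} yields a $\td$-decomposition $F$ of $G_D(A^{(n)})$ with $\ttd(F) = 2$, $k_1(F) = r$, $k_2(F) = s$, and $\height(F) = r + s = \td_D(A^{(n)})$. Since $\ttd(F) = 2$, every root-leaf path $P$ of $F$ has $k_1(P) = r$ and $k_2(P) = s$, so the quantity appearing in Corollary~\ref{cor:dual} is
\[
K = \max_{P} \prod_{i=1}^{2} \bigl(k_i(P)+1\bigr) = (r+1)(s+1) = \Theta(rs), \qquad K - 1 = rs + r + s .
\]
Using $\|A^{(n)}\|_\infty = a$, the parameter function of Corollary~\ref{cor:dual} becomes
\[
g(A^{(n)}, F) = \bigl(\|A^{(n)}\|_\infty K\bigr)^{\Oh(\height(F)\cdot(K-1))} = (a\,rs)^{\Oh((r+s)(rs+r+s))} = (a\,rs)^{\Oh(r^2 s + r s^2)},
\]
where $(r+s)(rs+r+s) = \Oh(r^2 s + r s^2)$ because $r, s \ge 1$; note this bound is $t$-free since $K$ and $\height(F)$ depend only on $r$ and $s$. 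By Lemma~\ref{lem:dual_norm} one moreover has the underlying norm bound $g_1(A^{(n)}) \le (3aK)^{K-1}$, which is what feeds into this $g$.

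With this identification in hand, Corollary~\ref{cor:dual} applies verbatim to $A^{(n)}$, with the relaxation oracle $\RRR$ for $A^{(n)}$ and with the ``dimension'' there equal to the number of columns $N = nt$; feasibility is folded in internally via Corollary~\ref{cor:feas_td}, so no separate argument is needed. Substituting $n \mapsto N$ into the four columns of the tables of Corollary~\ref{cor:dual} --- so that the $n \log n$, $n \log^2 n$, $n^2 \log^j n$, $n^3 \log^j n$ factors become their $N$-analogues, using $\log N = \log(nt)$ --- yields the claimed bounds. The only real work is this bookkeeping: keeping track of which logarithmic overheads come from the scaling loop versus the reducibility bounds (Theorem~\ref{thm:lin_red} and Corollary~\ref{cor:sepconvex_red}), and checking $\|A^{(n)}\|_\infty = a$ and that the variable count is $nt$ rather than $n$. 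I expect no substantive obstacle here: all the content lives in Corollary~\ref{cor:dual}, Lemma~\ref{lem:dual_norm}, and Lemma~\ref{lem:nfold:tdd}, and this corollary merely records their combination.
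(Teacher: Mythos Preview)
Your proposal is correct and follows exactly the paper's approach: invoke Lemma~\ref{lem:nfold:tdd} to read off $\ttd(F)=2$, $k_1(F)=r$, $k_2(F)=s$, $\height(F)=r+s$, then apply Corollary~\ref{cor:dual}. The paper's own proof is just these two sentences; you have merely made explicit the computation $K=(r+1)(s+1)$ and $(r+s)(K-1)=\Oh(r^2s+rs^2)$ that the paper leaves to the reader.
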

\begin{proof}
	By Lemma~\ref{lem:nfold:tdd}, $\td_D\left(A^{(n)}\right) \leq r+s$, $\ttd(F)=2$, $k_1(F)=r$, $k_2(F)=s$.
	Corollary~\ref{cor:dual} then implies the claim.
\end{proof}
This improves on the polynomial dependency (i.e., dependency on the dimension $N$ and the input data $\veb, \vel, \veu, f_{\max}$) over all prior algorithms, with the previously best one being an algorithm of Jansen et al.~\cite{JansenLR:2018} which only pertains to the $f(\vex) = \vew\vex$ case and with time complexity $(\|A\|rs)^{\Oh(r^2s + s^2)} N \log^6 N (\log \|\vel,\veu,\veb,\vew\|_\infty)^2$.
Compared to it, our algorithm has a $\log N$ factor instead of a $\log^6 N$ one and applies to general separable convex objectives.
Moreover, the $\domain \cdot (\log f_{\max})$ factor can be bounded as $\log \|\veu-\vel\|_\infty (\log (\|\vew\|_\infty \cdot \|\veu-\vel\|_\infty)) = (\log \|\veu-\vel\|_\infty)(\log \|\vew\|_\infty + \log \|\veu-\vel\|_\infty) \leq (\log \|\vel,\veu,\veb,\vew\|_\infty)^2$, so our algorithm at worst matches theirs in this regard.
Their parameter dependence is better than ours, but we note that in all known applications of $n$-fold IP, $r^2s > rs^2$  when the corresponding problem parameters are plugged in, and thus there is (so far) no benefit in using the algorithm of Jansen et al.~\cite{JansenLR:2018} in regard to the parameter dependence.

Recalling the previously described applications, Corollary~\ref{cor:dual} thus newly implies a strongly-polynomial algorithm for all the mentioned problems (i.e., tables, scheduling, and bribery).
We also note that all of our results transfer to \emph{generalized $n$-fold integer programming} where the constraint matrix has the form 
\begin{equation*}
A^{(n)}=\left(
\begin{matrix}
\bar{A}_1 & \bar{A}_2 & \cdots & \bar{A}_n \\
A_1 & & & \\ 
& A_2 & & \\
& & \ddots & \\
& & & A_n
\end{matrix}\right),
\end{equation*}
and $r,s$ are the maximum number of rows of $\bar{A}_i$ or $A_i$ over all $i \in [n]$, respectively, and $N$ is the total number of columns of $A^{(n)}$, i.e., the dimension of the~\eqref{IP}.

\subsubsection{2-stage Stochastic IP} \label{sec:2stage}
Another important model arises in decision making under uncertainty.
Here, one is asked to make a partial decision in a ``first stage'', and after realization of some random data, one has to complete their decision in a ``second stage''.
The goal is minimizing the ``direct'' cost of the first-stage decision plus the expected cost of the second-stage decision.
Random data are often modeled by a finite set of $n$ \emph{scenarios}, each with a given probability.
Assume that the scenarios are represented by integer vectors $\veb^1, \dots, \veb^n \in \Z^{t}$, their probabilities by $p_1, \dots, p_n \in (0,1]$, the first-stage decision is encoded by a variable vector $\vex^0 \in \Z^{r}$, and the second-stage decision for scenario $j \in [n]$ is encoded by a variable vector $\vex^j \in \Z^s$. 
Setting $\vex \df (\vex^0, \vex^1, \dots, \vex^n)$ and $\veb \df (\veb^1, \dots, \veb^n)$ then makes it possible to write this problem as
\begin{equation}
\min \vew^0\vex^0 + \sum_{j=1}^{n} p_j \vew' \vex^j \colon B^{(n)}\vex = \veb, \, \vel \leq \vex \leq \veu,\, \vex \in \Z^{r+ns},\, \text{where }
B^{(n)}=
\left(\begin{matrix}
A_1 & A_2     &  &\\
\vdots &      &  \ddots & \\
A_1&      &   & A_2
\end{matrix}\right),
\label{eq:2stage}
\end{equation}
with $A_1 \in \Z^{t \times r}$, $A_2 \in \Z^{t \times s}$, and $\vel, \veu \in \Z^{r+ns}$ some lower and upper bounds.
Problem~\eqref{eq:2stage} is called \emph{$2$-stage stochastic IP} and finds many applications in various areas~\cite{BirgeL:1997,HigleS:1996,KallW:1994,Prekopa:1995,Ruszczynski:1999} and references therein.
Note that~\eqref{eq:2stage} is not \emph{exactly} problem~\eqref{IP} because of the fractional values $p_1, \dots, p_n$ in the objective function: recall that we require $f$ to satisfy $\forall \vex \in \Z^n:\, f(\vex) \in \Z$.
However, this is easily overcome by scaling all $p_j$, $j \in [n]$, by a common large enough integer.

\subsubsection*{Algorithmic Improvements for 2-stage stochastic IP}
It is easy to see that $B^{(n)} = \left(A^{(n)}\right)^{\intercal}$ and thus Lemma~\ref{lem:nfold:tdd} immediately implies as a corollary:
\begin{corollary}[Structure of $2$-stage stochastic IP] \label{cor:2stage:tdp}
	Let $B^{(n)}$ be as in~\eqref{eq:2stage}.
	Denote by $P_n$ the path on $n$ vertices, and let $F$ be obtained by identifying one endpoint of $P_r$ with an endpoint of each of $n$ copies of $P_{s+1}$.
	Then $G_P\left(B^{(n)}\right) \subseteq \cl(F)$ and thus $\td_P(B^{(n)}) \leq r+s$, $\ttd(F) = 2$, $k_1(F) = r$, and $k_2(F) = s$.
\end{corollary}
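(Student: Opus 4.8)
The statement to prove is Corollary~\ref{cor:2stage:tdp}, which describes the structure of the constraint matrix $B^{(n)}$ of a $2$-stage stochastic IP. The plan is to observe that this is an immediate transposition of Lemma~\ref{lem:nfold:tdd}, the structural lemma for $n$-fold IP.

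First I would note the identity $B^{(n)} = \left(A^{(n)}\right)^{\intercal}$: comparing the block forms~\eqref{eq:nfold} and~\eqref{eq:2stage}, the matrix $A^{(n)}$ has its first block-row consisting of copies of $A_1 \in \Z^{r\times t}$ and a block-diagonal of copies of $A_2 \in \Z^{s \times t}$, while $B^{(n)}$ has its first block-column consisting of copies of $A_1 \in \Z^{t \times r}$ and a block-diagonal of copies of $A_2 \in \Z^{t \times s}$. Transposing $A^{(n)}$ swaps rows and columns and turns the repeated top block-row into a repeated left block-column, and the diagonal of $A_2$-blocks into the diagonal of $A_2^{\intercal}$-blocks, which is exactly $B^{(n)}$ once we rename $A_1^{\intercal}, A_2^{\intercal}$ back to $A_1, A_2$ (the lemma's statement is about the shape of the zero/nonzero pattern, so the renaming is harmless).

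Next I would invoke the defining relation between the primal and dual graphs, Definition~\ref{primaldual-graph}: $G_D(M) = G_P(M^{\intercal})$ for any matrix $M$. Applying this with $M = B^{(n)}$ and using $B^{(n)} = \left(A^{(n)}\right)^{\intercal}$ gives $G_P\left(B^{(n)}\right) = G_P\left(\left(A^{(n)}\right)^{\intercal\intercal}\right)$... more directly, $G_D\left(A^{(n)}\right) = G_P\left(\left(A^{(n)}\right)^{\intercal}\right) = G_P\left(B^{(n)}\right)$. So the primal graph of $B^{(n)}$ coincides with the dual graph of $A^{(n)}$.

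Finally, Lemma~\ref{lem:nfold:tdd} already asserts that $G_D\left(A^{(n)}\right) \subseteq \cl(F)$ for the tree $F$ obtained by identifying one endpoint of $P_r$ with an endpoint of each of $n$ copies of $P_{s+1}$, and that this $F$ satisfies $\ttd(F)=2$, $k_1(F)=r$, $k_2(F)=s$, hence $\td\left(G_D(A^{(n)})\right) \le r+s$. Substituting $G_D\left(A^{(n)}\right) = G_P\left(B^{(n)}\right)$ yields $G_P\left(B^{(n)}\right) \subseteq \cl(F)$ and therefore $\td_P\left(B^{(n)}\right) = \td\left(G_P(B^{(n)})\right) \le \height(F) = r+s$, with the same $\ttd$ and level-height values, which is precisely the claim. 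I do not anticipate any real obstacle here: the only thing to double-check carefully is that the block dimensions line up under transposition (that the $A_1$ appearing in~\eqref{eq:2stage} is $t\times r$, matching the transpose of the $r\times t$ matrix $A_1$ in~\eqref{eq:nfold}, and similarly for $A_2$), and that the tree $F$ described in Corollary~\ref{cor:2stage:tdp} is literally the same tree as in Lemma~\ref{lem:nfold:tdd} — which it is, verbatim. So the proof is a two-line deduction once the transpose identity is recorded.
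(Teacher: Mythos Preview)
Your proposal is correct and follows exactly the paper's approach: the paper simply notes that $B^{(n)} = \left(A^{(n)}\right)^{\intercal}$, so $G_P(B^{(n)}) = G_D(A^{(n)})$, and then Lemma~\ref{lem:nfold:tdd} gives the result verbatim. You have, if anything, been more careful than the paper about verifying the block dimensions match under transposition.
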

As a corollary of the above and the Primal Algorithm Corollary (Corollary~\ref{cor:primal}), we get that:
\begin{corollary}[$2$-stage stochastic] \label{cor:2stage}
	Let a $2$-stage stochastic IP instance $\II$ be given.
	Let $\RRR$ be a relaxation oracle for $B^{(n)}$, $\domain \df \log \|\veu-\vel\|_\infty$, and $g \df 2^{(2\|A\|_\infty)^{\Oh(r^2s+rs^2)}}$.
	$\II$ is solvable in time at most
	\begin{center}
		\begin{tabular}{lccc}  
			\toprule	
			obj     & scaling, -- & scaling, $\rho$ & $\RRR$, $\rho$ \\
			\midrule
			linear 
			& $g n \log^3 n\domain(\log\|\vew\|_\infty)$
			& $g n^{2}\log^5 n\domain$ 
			& $\RR(\II,n)+gn^{2}\log^5 n$   \\
			sep. convex
			& $g n\log^3 n \domain(\log f_{\max})$
			& $g n^{3}\log^5 n \domain$
			& $\RR(\II,n)+gn^{3}\log^5 n$\\
			\bottomrule
		\end{tabular}
	\end{center}
\end{corollary}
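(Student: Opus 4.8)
The plan is to obtain Corollary~\ref{cor:2stage} as a direct specialization of the Primal Algorithm (Corollary~\ref{cor:primal}), once the block structure of $B^{(n)}$ has been made explicit; nothing new is needed beyond the machinery already developed.

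First I would record the structural input. Since $B^{(n)} = \left(A^{(n)}\right)^{\intercal}$, Corollary~\ref{cor:2stage:tdp} applies verbatim and supplies a $\td$-decomposition $F$ of $G_P\!\left(B^{(n)}\right)$ with $\height(F) = \td_P\!\left(B^{(n)}\right) \le r+s$, topological height $\ttd(F) = 2$, and level heights $k_1(F) = r$, $k_2(F) = s$. I would also note that the ambient dimension of~\eqref{eq:2stage} is $r+ns$, which satisfies $n \le r+ns \le n(r+s)$, so it is $\Theta(n)$ once $r,s$ are treated as parameters; the factors $r,s$ thus lost are absorbed into the parameter-dependent function $g$. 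The only genuine check is that the objective of~\eqref{eq:2stage} can be taken integral: the probabilities $p_j \in (0,1]$ are cleared by multiplying the whole objective by a common integer, which preserves the set of optima and separable convexity and enlarges $\log f_{\max}$, respectively $\log\|\vew\|_\infty$, only by an additive term polynomial in the input size, so that $B^{(n)}\vex=\veb$ becomes a bona fide instance of~\eqref{IP}.

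Second I would substitute $\ttd(F)=2$ and $\td_P\!\left(B^{(n)}\right)\le r+s$ into Corollary~\ref{cor:primal}. With $\ttd(F)=2$ the tower defining $g$ in Corollary~\ref{cor:primal} has a single level, so $g$ is $2^{(2\|A\|_\infty)^{\Oh(2^{\ttd(F)}\,\height(F)^2)}} = 2^{(2\|A\|_\infty)^{\Oh((r+s)^2)}}$; the passage to $A_I$ inside the scaling algorithm at most doubles $\height(F)$, and the exponentiation by $2\td_P$ coming from Theorem~\ref{thm:nearlylinear_primal} (and Lemma~\ref{lem:primal_norm}) is likewise absorbed, and since $(r+s)^2 \le 2(r^2s+rs^2)$ for all $r,s\ge 1$ this is bounded by the stated $g = 2^{(2\|A\|_\infty)^{\Oh(r^2s+rs^2)}}$. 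The remaining entries of the table are read off from the ``scaling, --'', ``scaling, $\rho$'' and ``$\RRR,\rho$'' columns of Corollary~\ref{cor:primal}: the $n^{1+o(1)}$ factor of the recursive primal algorithm, which is $n\log^{\ttd(F)+1}n = n\log^3 n$, together with the $\log n$ and $\log\domain$ factors contributed by the scaling algorithm and with the reducibility substitutions ($\log f_{\max}\mapsto n\log n$ in the linear case by Theorem~\ref{thm:lin_red}, $\log f_{\sepmax}\mapsto n^2\log n$ in the separable convex case by Corollary~\ref{cor:sepconvex_red}) produces the $n^2$, $n^3$ and the $\log^3 n$--$\log^5 n$ powers listed; the ``$\RRR,\rho$'' column is obtained by invoking bound~\eqref{lem:stronglypoly_master:1} of the Master Lemma instead of the scaling bound, which trades the $\domain$ factor for $\RR(I,n)$.

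I do not expect any real obstacle: all the substance lives in Theorem~\ref{thm:nearlylinear_primal}, Lemma~\ref{lem:primal_norm}, and the scaling and reducibility results packaged into Corollary~\ref{cor:primal}, and what is left is bookkeeping. The single point that deserves care is reconciling the $\Oh(\cdot)$ in the parameter dependence between the general statement of Corollary~\ref{cor:primal} (phrased through $\height(F)^2$) and the form $r^2s+rs^2$ quoted here, which I would handle exactly as in the $n$-fold corollary by observing that the latter dominates the former.
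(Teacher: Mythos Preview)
Your proposal is correct and follows exactly the paper's approach: invoke Corollary~\ref{cor:2stage:tdp} to extract the structural parameters $\td_P(B^{(n)})\le r+s$, $\ttd(F)=2$, $k_1(F)=r$, $k_2(F)=s$, and then plug into Corollary~\ref{cor:primal}. The paper's own proof is literally two sentences to this effect; your additional bookkeeping (dimension $r+ns=\Theta(n)$, clearing the $p_j$, the inequality $(r+s)^2\le 2(r^2s+rs^2)$) is sound and only makes explicit what the paper leaves implicit.
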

\begin{proof}
	By Corollary~\ref{cor:2stage:tdp}, $\td_P\left(B^{(n)}\right) \leq r+s$, $\ttd(F)=2$, $k_1(F)=r$, $k_2(F)=s$.
	Corollary~\ref{cor:primal} then implies the claim.
\end{proof}
In the weakly polynomial case, this matches the parameter dependence of Klein~\cite{Klein} and improves the dependency on $n$ from $n^2$ to $n \log^3 n$.
The only strongly-polynomial algorithm previously known is for the linear case $f(\vex) = \vew \vex$ due to the preliminary conference version of this paper~\cite{KouteckyLO:2018} which gave a bound of $g' n^6$ for some computable function $g'$ without any concrete bounds.
Thus, Corollary~\ref{cor:2stage} improves the parameter dependence to doubly-exponential and the polynomial dependence from $n^6$ to $n^2 \log^5 n$.

\subsubsection{Multi-stage Stochastic and Tree-fold Matrices} \label{subsec:multistage_treefold}
In the following we let $T$ be a rooted tree of height $\tau \in \N$.
For a vertex $v \in T$, let the \emph{depth of $v$} be the distance of $v$ from the root.
Let all leaves of $T$ be at depth $\tau-1$.
For a vertex $v \in T$, let $T_v$ be the subtree of $T$ rooted in $v$ and let $\ell(v)$ denote the number of leaves of $T$ contained in $T_v$.
Let $A_1,A_2,\dots,A_{\tau}$ be a sequence of integer matrices
with each $A_s$ having $l \in \N$ rows and $n_s$ columns, where $n_s\in\N$, $n_s\geq 1$.
We shall define a \emph{multi-stage stochastic} matrix $T^P(A_1, \dots, A_{\tau})$ inductively; the superscript $P$ refers to the fact that $T^P(A_1, \dots, A_{\tau})$ has bounded $\td_P$, as we will later see.

For a leaf $v \in T$, $T^P_v(A_{\tau}) \df A_{\tau}$.
Let $d \in \N$, $d \in [0, \tau-2]$, and assume that for all vertices $v \in T$ at depth $d+1$, matrices $T^P_v(A_{d+2}, \dots, A_{\tau})$ have been defined.
For $s \in \N$, $s \in [\tau]$, we set $T^P_v(A_{[s:	\tau]}) \df T^P_v(A_s, \dots, A_{\tau})$.
Let $v \in T$ be a vertex at depth $d$ with $\delta$ children $v_1, \dots, v_\delta$.
We set
\[
T^P_v (A_{[d+1 : \tau]}) := \left(
\begin{array}{cccc}
A_{d+1, \ell(v_1)}      & T^P_{v_1}(A_{[d+2 : \tau]})        & \cdots & 0\\
\vdots   & \vdots    & \ddots & \vdots\\
A_{d+1, \ell(v_\delta)}            & 0       & \cdots & T^P_{v_\delta}(A_{[d + 2 : \tau]})\\
\end{array}
\right)
\]
where, for $N \in \N$, $A_{s,N}=\left(\begin{smallmatrix} A_s \\ \vdots \\ A_s\end{smallmatrix}\right)$
consists of $N$ copies of the matrix $A_s$.

The structure of a multi-stage stochastic matrix makes it natural to partition any solution of a multi-stage stochastic IP into \emph{bricks}.
Bricks are defined inductively: for $T_v^P(A_{\tau})$ there is only one brick consisting of all coordinates; for $T_v^P(A_{[s:\tau]})$ the set of bricks is composed of all bricks for all descendants of $v$, plus the first $n_s$ coordinates form an additional brick.

\begin{example}
	For $\tau=3$ and $T$ with root $r$ of degree $2$ and its children $u$ and $v$ of degree $2$ and $3$, we have $T^P_u(A_2, A_3)=
	\left(\begin{smallmatrix}
	A_2 & A_3  &     \\
	A_2 &      & A_3
	\end{smallmatrix}\right)$,
	$T^P_v(A_2, A_3)=
	\left(\begin{smallmatrix}
	A_2 & A_3  &    & \\
	A_2 &      & A_3 & \\
	A_2 &      &  & A_3
	\end{smallmatrix}\right)$, and
	$T^P(A_1, A_2, A_3) = T^P_r(A_1, A_2, A_3)=
	\left(\begin{smallmatrix}
	A_1 & A_2 & A_3 &     &     &     &    & \\
	A_1 & A_2 &     & A_3 &     &     &    & \\
	A_1 &     &     &     & A_2 & A_3 &    & \\
	A_1 &     &     &     & A_2 &     & A_3& \\
	A_1 &     &     &     & A_2 &     &    & A_3 \\
	\end{smallmatrix}\right)$, with a total of $8$ bricks.
\end{example}


\emph{Tree-fold} matrices are transposes of multi-stage stochastic ILP matrices.
Let $T$ be as before and $A_1, \dots, A_{\tau}$ be a sequence of integer matrices with each $A_s \in \Z^{r_s \times t}$, where $t \in \N$, $r_s \in \N$, $r_s \geq 1$.
We shall define $T^D(A_1, \dots, A_{\tau})$ inductively; the superscript $D$ refers to the fact that $T^D(A_1, \dots, A_{\tau})$ has bounded $\td_D$.
The inductive definition is the same as before except that, for a vertex $v \in T$ at depth $d$ with $\delta$ children $v_1, \dots, v_\delta$,
we set
\[
T^D_v (A_{[d + 1 :\tau]})\df \left(
\begin{array}{ccccc}
A_{d + 1, \ell(v_1)}      & A_{d + 1, \ell(v_2)}  & \cdots & A_{d + 1, \ell(v_\delta)}   \\
T^D_{v_1}(A_{[d + 2 :\tau]})      & 0       & \cdots & 0\\
0 & T^D_{v_2}(A_{[d + 2 :\tau]})           & \cdots & 0\\
\vdots   & \vdots   & \ddots & \vdots\\
0      & 0 & \cdots & T^D_{v_\delta}(A_{[d + 2 :\tau]})      \\
\end{array}
\right)
\]
where, for $N \in \N$, $A_{s,N}=\left( A_s \\ \cdots \\ A_s\right)$ consists of $N$ copies of the matrix $A_s$.
A solution $\vex$ of a tree-fold IP is partitioned into bricks $(\vex^1, \dots, \vex^n)$ where $n$ is the number of leaves of $T$, and each $\vex^i$ is a $t$-dimensional vector.

\medskip

Multi-stage stochastic IPs arise, as the name suggests, in models generalizing the two-stage process of decision making under uncertainty previously described in Section~\ref{sec:2stage}, see~\cite{BirgeL:1997,KallW:1994,Ruszczynski:1999}.
Here, instead of making a decision in two stages and with the random data being realized completely after the first stage, the random data is being realized gradually, with partial decisions being made in each stage.
Note that, as before, once a decision is made by an actor, it cannot be reversed.

Tree-fold IPs have been introduced by Chen and Marx~\cite{MC} in order to show an \FPT algorithm for the \textsc{Subtree Cover} problem.
In this problem, we are given a rooted tree $T$ and integers $m,k$, and the task is to cover $T$ with $m$ rooted subtrees which have the same root as $T$, and each contains at most $k$ edges.
This problem is equivalent to multi-agent TSP on a rooted tree.
Chen and Marx show that this problem admits an \FPT algorithm when parameterized by $k$ by giving a tree-fold IP formulation with $\tau=k$ and showing that tree-fold ILP is \FPT parameterized by $\tau, t, r_1, \dots, r_\tau$.

\subsubsection*{Algorithmic Improvements for tree-fold and multi-stage stochastic IP}
Let us first give a structural lemma as in the previous cases.
\begin{lemma}[Structure of multi-stage stochastic and tree-fold IP] \label{lem:tfmss_structure}
	Let $T$ be a rooted tree of height $\tau$ and $A_1, \dots, A_\tau$ be integer matrices, each having $l$ ($r_1, \dots, r_\tau$) rows and $n_1, \dots, n_\tau$ ($t$) columns, respectively.
	Let $F$ be obtained from $T$ as follows.
	For each $i \in [0,\tau-1]$, replace every vertex $v$ at depth $i$ with a copy of the path $P = P_{n_{i+1}}$ on $n_{i+1}$ ($P = P_{r_{i+1}}$ on $r_{i+1}$) vertices, respectively, in such a fashion that one endpoint of $P$ is connected to all former children of $v$, and the other endpoint is adjacent to the parent of $v$.
	Then, 
	\[
	G_P\left(T^P(A_1, \dots, A_{\tau})\right) \subseteq \cl(F) \quad \left(G_D\left(T^D(A_1, \dots, A_{\tau})\right) \subseteq \cl(F) \right),
	\]
	and $\height(F) = \sum_{i=1}^\tau n_i$ ($\height(F) = \sum_{i=1}^\tau r_i$), $\ttd(F) = \tau$, and for each $i \in [n]$, $k_i(F) = n_i$ ($k_i(F) = r_i$), respectively.
\end{lemma}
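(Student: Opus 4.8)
The plan is to prove the primal statement by induction on $\tau$, following the inductive definition of $T^P(A_1,\dots,A_\tau)$, and then to obtain the dual statement by transposition. In the base case $\tau=1$ the tree $T$ is a single vertex, $T^P(A_1)=A_1$ and $F=P_{n_1}$; since $\cl(P_{n_1})$ is the complete graph on $n_1$ vertices it contains $G_P(A_1)$, and $\height(F)=n_1$, $\ttd(F)=1$, $k_1(F)=n_1$. For the inductive step let $r$ be the root of $T$ with children $r_1,\dots,r_\delta$ and let $T_{r_k}$ be the height-$(\tau-1)$ subtree rooted at $r_k$. Applying the inductive hypothesis to each $T_{r_k}$ with the matrices $A_2,\dots,A_\tau$ yields $\td$-decompositions $F_{r_k}$ of $G_P\!\left(T^P_{r_k}(A_2,\dots,A_\tau)\right)$ with $\height(F_{r_k})=\sum_{i=2}^{\tau}n_i$, $\ttd(F_{r_k})=\tau-1$ and $k_{i-1}(F_{r_k})=n_i$ for $i\in[2,\tau]$. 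I would then build $F$ by taking the path $Q=P_{n_1}$, making one endpoint of $Q$ the parent in $F$ of the root of every $F_{r_k}$, and taking the other endpoint of $Q$ as the root of $F$; this is exactly the operation in the statement (replace $r$ by $P_{n_1}$).

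The core of the argument is to check that $F$ is a valid $\td$-decomposition, i.e.\ $G_P\!\left(T^P_r(A_1,\dots,A_\tau)\right)\subseteq\cl(F)$, which I would do by reading off the block form of $T^P_r$ and splitting the columns into the ``$r$-brick'' (the first $n_1$ columns) and the $\delta$ column sets of the child blocks $T^P_{r_k}$. Edges inside the $r$-brick are fine because its $n_1$ columns lie on the path $Q$, hence in an ancestor--descendant relation in $F$. An edge inside a child block lies in $\cl(F_{r_k})$ by induction, and $F_{r_k}$ is a subtree of $F$. An edge joining the $r$-brick to the $k$-th child block can occur — it comes from a row of the stacked block $\bigl(A_{1,\ell(r_k)}\mid T^P_{r_k}\bigr)$ — and is covered because in $F$ every vertex of $Q$ is an ancestor of every vertex of $F_{r_k}$. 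Finally, the block-diagonal layout below the first column block shows that no row of $T^P_r$ is nonzero in two different child blocks, so there is no edge between them. This gives $\td_P\bigl(T^P(A_1,\dots,A_\tau)\bigr)\le\height(F)$ as a byproduct.

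It remains to compute the parameters of $F$. Every root--leaf path of $F$ is the $n_1$ vertices of $Q$ followed by a root--leaf path of some $F_{r_k}$, so $\height(F)=n_1+\max_k\height(F_{r_k})=\sum_{i=1}^{\tau}n_i$. All vertices of $Q$ except its bottom endpoint are degenerate, and the bottom endpoint has $\delta\ge 2$ children (treating $T$ as branching at every internal node, which is the case in all applications — Subtree Cover, multi-stage stochastic IP, $n$-fold and $2$-stage stochastic IP; a one-child node would merely merge adjacent path segments), hence on each root--leaf path the bottom of $Q$ is the first non-degenerate vertex. Therefore $\ttd(F)=1+\ttd(F_{r_k})=\tau$, $k_1(F)=|Q|=n_1$, and for $i\ge 2$ one gets $k_i(F)=k_{i-1}(F_{r_k})=n_i$, the index shift and the ``$-1$'' in the definition of $k_i$ cancelling because the bottom of $Q$ is the shared boundary vertex of the first two segments.

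For the dual statement I would use the identity $T^D(A_1,\dots,A_\tau)=\bigl(T^P(A_1^{\transpose},\dots,A_\tau^{\transpose})\bigr)^{\transpose}$, proved by the same induction from the two definitions: transposing the displayed block form of $T^P_v$ yields the displayed block form of $T^D_v$ with each $A_s$ replaced by $A_s^{\transpose}$, using that the horizontal replication $A_{s,N}=(A_s\ \cdots\ A_s)$ appearing in $T^D$ is the transpose of the vertical replication used in $T^P$. Hence $G_D\bigl(T^D(A_1,\dots,A_\tau)\bigr)=G_P\bigl(T^P(A_1^{\transpose},\dots,A_\tau^{\transpose})\bigr)$ and the primal case applies verbatim, with $r_s$ in place of $n_s$. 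I expect the main obstacle to be the careful bookkeeping in the edge-type analysis and in tracking the level-height indices through the recursion; everything else is routine once the transpose identity is in place.
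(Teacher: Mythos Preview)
Your proposal is correct and follows essentially the same approach as the paper: induction on $\tau$ for the primal case, constructing $F$ by attaching the inductively obtained $F_{r_k}$'s to one endpoint of $P_{n_1}$, and then deducing the dual case from the transpose identity $\bigl(T^P(A_1,\dots,A_\tau)\bigr)^{\transpose}=T^D(A_1^{\transpose},\dots,A_\tau^{\transpose})$. In fact you are more careful than the paper in two places: you spell out the four edge types when verifying $G_P\subseteq\cl(F)$ (the paper just asserts this ``clearly''), and you correctly note that $\ttd(F)=\tau$ uses $\delta\ge 2$ at each internal node of $T$, a hypothesis the paper silently assumes.
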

\begin{proof}
	First observe that the transpose of a multi-stage stochastic matrix is a tree-fold matrix, whose blocks are the transposes of the blocks of the original matrix, i.e., \[\left(T^P(A_1, \dots, A_{\tau})\right)^\intercal = T^D(A^\intercal_1, \dots, A^\intercal_{\tau}) \enspace .\]
	Thus we focus on proving the statement for $T^P(A_1, \dots, A_{\tau})$ and the dual case then immediately follows.
	The proof may be carried out by induction on $\tau$.
	As the base case, when $\tau=1$, there is only one block, $A_1$, which has $n_1$ columns, and clearly the path $P_{n_1}$ on $n_1$ is a $\td$-decomposition of $G_P(A_1)$.
	Assume that the claim holds for each $\tau' < \tau$ and consider the matrix $T^P(A_1, \dots, A_{\tau})$.
	Let $A'$ be $T^P(A_1, \dots, A_{\tau})$ without the first $n_1$ columns.
	By the definition of $T^P(A_1, \dots, A_{\tau})$, $A'$ has block diagonal structure with blocks $T^P_{v_j}(A_{[2:\tau]})$ for $v_1, \dots, v_\delta$ the children of the root of $T$.
	By the induction hypothesis, the lemma holds for each $T^P_{v_j}(A_{[2:\tau]})$.
	Denote by $F_1, \dots, F_\delta$ the rooted trees implied by the lemma such that, for each $i \in [\delta]$, $F_i$ is a $\td$-decomposition of $G_P\left(T^P_{v_i}(A_{[2:\tau]})\right)$, the height of $F_i$ is $\sum_{j=2}^\tau n_j$, $\ttd(F_i) = \tau-1$, and for each $j \in [\tau-1]$, $k_j(F_i) = n_{j+1}$.
	
	Let $F$ be obtained by joining the roots of $F_1, \dots, F_\delta$ by an edge with one endpoint of $P_{n_1}$ and letting the other endpoint be a root.
	Clearly $F$ is a $\td$-decomposition of $G_P\left(T^P(A_1, \dots, A_{\tau})\right)$, its height is $n_1 + \max_{i \in [\tau]} \height(F_i) = \sum_{i=1}^\tau n_i$, $\ttd(F) = \tau$, and $k_i(F) = n_i$ for each $i \in [\tau]$, as claimed.
\end{proof}
From this lemma we are able to obtain algorithmic corollaries.
\begin{corollary}[Multi-stage stochastic IP] \label{cor:mss}
	Let a multi-stage stochastic IP instance $\II$ be given, $\RRR$ be a relaxation oracle for $T^P(A_1, \dots, A_{\tau})$, $\domain \df \log \|\veu-\vel\|_\infty$, $k \df \sum_{i=1}^\tau n_i$, $N \df |V(T)|$ and
	\[
	g \df \stackinset{l}{4pt}{b}{-9pt}{\tiny\rotatebox{33}{$\underbrace{\kern21pt}_{\tau-1}$}}{%
		$2^{2^{\rdots^{2^{(2\|A\|_\infty)^{\Oh\left(2^{\tau} \cdot k^2\right)}}}}}$}
	\] 
	When $f(\vex) = \vew \vex$, $\II$ is solvable in time at most
	\begin{center}
		\begin{tabular}{cccc}  
			\toprule	
			--, -- & scaling, -- & scaling, $\rho$ & $\RRR$, $\rho$ \\
			\midrule
			$g\domain N^2(\log f_{\max})$
			& $g\domain N^2\log N \cdot (\log N \|\vew\|_\infty)$ 			
			& $g\domain N^3 \log^2 N$ 
			& $\RR(\II,N)+g N^3 \log^2 N$   \\ 
			\multicolumn{2}{c}{$g\domain N^{1+o(1)}(\log\|\vew\|_\infty)^{\tau-1}$}			
			& $g\domain N^{\tau+o(1)}$ 
			& $\RR(\II,N)+gN^{\tau+o(1)}\log^2 N$   \\
			\bottomrule
		\end{tabular}
	\end{center}
	When $f$ is an arbitrary separable convex function, $\II$ can be solved in time at most
	\begin{center}
		\begin{tabular}{cccc}  
			\toprule	
			--, -- &  scaling, --  &  scaling, $\rho$  &  $\RRR$, $\rho$ \\
			\midrule
			\multicolumn{2}{c}{$g\domain N^2(\log f_{\max})$}
			& $g\domain N^4 \log^2 N$
			& $\RR(\II,N)+g N^4 \log^2 N$ \\
			\multicolumn{2}{c}{$g \domain N^{1+o(1)} (\log f_{\max})^{\tau-1}$}
			& $g\domain N^{2\tau-1+o(1)}$
			& $\RR(\II,N)+gN^{2\tau-1+o(1)}\log^2 N$\\
			\bottomrule
		\end{tabular}
	\end{center}
	The $N^{o(1)}$ term above is more precisely $\log^{\tau+1} N$.
\end{corollary}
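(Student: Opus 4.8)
The plan is to obtain the corollary as a direct instantiation of the Primal Algorithm (Corollary~\ref{cor:primal}), fed with the $\td$-decomposition of $G_P\left(T^P(A_1,\dots,A_\tau)\right)$ supplied by the structural lemma (Lemma~\ref{lem:tfmss_structure}). First I would apply Lemma~\ref{lem:tfmss_structure} to the given instance, obtaining a rooted tree $F$ with $G_P\left(T^P(A_1,\dots,A_\tau)\right) \subseteq \cl(F)$, $\height(F) = \sum_{i=1}^\tau n_i = k$, $\ttd(F) = \tau$, and $k_i(F) = n_i$ for each $i \in [\tau]$. This $F$ is read off directly from the defining tree $T$ and does not have to be computed, so Proposition~\ref{prop:tddecomposition} is not needed and the run-time bounds of Corollary~\ref{cor:primal} apply with $\ttd(F)$ and $\height(F)$ replaced by $\tau$ and $k$.

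Second I would reconcile the size parameter. By construction $T^P(A_1,\dots,A_\tau)$ has one brick of $n_{d+1}$ columns for every vertex of $T$ at depth $d$, so its number of columns equals $\sum_{v \in V(T)} n_{\mathrm{depth}(v)+1}$, a quantity lying between $N = |V(T)|$ and $Nk$. Since $k = \sum_i n_i$ is part of the parameter, substituting ``number of columns'' for the quantity ``$n$'' of Corollary~\ref{cor:primal} turns each polynomial factor $n^c$ (with $c \le 2\tau$ in the worst line) into $N^c k^c$, and the bounded $k^c$ part is absorbed into the leading function $g$; likewise $\log(Nk) = \Oh(\log N)$ for fixed parameters, so the polylogarithmic factors are unaffected up to the implied constants. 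Plugging $\height(F) = k$ and $\ttd(F) = \tau$ into the expression for $g$ of Corollary~\ref{cor:primal} produces precisely the tower of exponentials of height $\tau-1$ with topmost exponent $(2\|A\|_\infty)^{\Oh(2^\tau k^2)}$ as stated. With these substitutions the four columns (``--, --'', ``scaling, --'', ``scaling, $\rho$'', ``$\RRR$, $\rho$'') and the two rows (linear and separable convex) of the two tables in the corollary transcribe verbatim from the two tables of Corollary~\ref{cor:primal}, and the refinement of the $N^{o(1)}$ term to $\log^{\tau+1} N$ is the corresponding refinement there.

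Third, I would dispose of the small formal mismatch between~\eqref{IP} and a multi-stage stochastic objective: exactly as for $2$-stage stochastic IP in Section~\ref{sec:2stage}, a cost function $\vew^0\vex^0 + \sum_{j} p_j \vew' \vex^j$ with rational weights $p_j$ becomes an integer-valued linear (or, in the separable convex variant, separable convex) objective after scaling by a common denominator; the attendant growth of $\|\vew\|_\infty$ is polynomial in the input and hence harmless for the weakly polynomial bounds, while for the strongly polynomial ``$\RRR$, $\rho$'' column the reducibility bound (Theorem~\ref{thm:lin_red}, resp.\ Corollary~\ref{cor:sepconvex_red}) already removes any dependence on $\vew$.

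The only step that is not entirely mechanical is the size bookkeeping of the second paragraph: one must verify that exposing $N = |V(T)|$ rather than the raw column count is legitimate, i.e.\ that every hidden $\max_i n_i$ factor is parameter-dependent and can be swept into $g$ without inflating the exponents of the $\log N$ terms. I expect this to be routine, but it is the part that needs care; everything else is a substitution into Corollary~\ref{cor:primal}.
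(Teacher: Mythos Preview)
Your proposal is correct and follows essentially the same approach as the paper: apply Lemma~\ref{lem:tfmss_structure} to extract the $\td$-decomposition parameters $\height(F)=k$, $\ttd(F)=\tau$, $k_i(F)=n_i$, then instantiate Corollary~\ref{cor:primal}, absorbing the factor between the column count and $N=|V(T)|$ (which is at most $k$) into $g$. Your treatment is in fact more careful than the paper's own terse proof (which even contains an apparent typo, citing Corollary~\ref{cor:dual} instead of~\ref{cor:primal}), and your third paragraph on rational probabilities is a welcome clarification the paper omits here.
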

\begin{proof}
	By Lemma~\ref{lem:tfmss_structure}, $\td_P\left(T^P(A_1, \dots, A_{\tau})\right) \leq k$, $\ttd(F)=\tau$, and $k_i(F) = n_i$ for each $i \in [\tau]$.
	Let $n_{\max} \df \max_{i \in [\tau]} n_i$ and note that $n_{\max} \leq k$.
	The number of columns $n$ of $T^P(A_1, \dots, A_{\tau})$ is upper bounded by $N n_{\max} \leq Nk$, and $k \leq g$ and thus the factor $k$ gets consumed by the big-$\Oh$ (Landau) notation in the definition of $g$.
	Hence $n$ in Corollary~\ref{cor:dual} is correctly replaced here by $N$ although $N$ is less than the dimension.
	Corollary~\ref{cor:dual} implies the rest of the claim.
\end{proof}
An \FPT algorithm for multi-stage stochastic IP follows from the work of Aschenbrenner and Hemmecke~\cite{AH} although it is not clearly stated there, similarly to the treatment of De Loera et al.~\cite{DHK}.
The currently fastest algorithm is due to Klein~\cite{Klein} and attains runtime $g n^2 \log f_{\max}$.
(We note that even though Klein's algorithm is only stated for linear objectives, it is easily extended to the separable convex case.)
His complexity matches ours in the general case, but we additionally provide a better bound with a dependence on $n$ of $n^{1+o(1)}$ in the more restricted regime when $\log(f_{\max})^{\tau-1}$ is less than $n \cdot \log(f_{\max})$, or, in the linear case $f(\vex) = \vew \vex$, when $(\log \|\vew\|_\infty)^{\tau-1}$ is less than $n \cdot \log(f_{\max})$.
Note that even this restrictive regime is quite useful: it captures for instance the case of deciding feasibility of a multi-stage stochastic IP (i.e., deciding whether there is a decision path satisfying any sequence of scenarios), as well as the case when the costs and probabilities (scaled to integers) are bounded by a polynomial in $n$.
Moreover, as in the case of 2-stage stochastic IP, the only strongly-polynomial algorithm previously known is for the linear case $f(\vex) = \vew \vex$ due to the preliminary conference version of this paper~\cite{KouteckyLO:2018} which gave a bound of $g' n^6$ for some computable function $g'$ without any concrete bounds.
Thus, Corollary~\ref{cor:2stage} improves the parameter dependence to an exponential tower and the polynomial dependence from $n^6$ to $n^3 \log^2 n$.

Turning to tree-fold IP, we obtain the following algorithmic result:
\begin{corollary}[Tree-fold IP] \label{cor:tf}
	Let a tree-fold IP be given.
	Let $\RRR$ be a relaxation oracle for $T^D(A_1, \dots, A_{\tau})$, $\domain \df \log \|\veu-\vel\|_\infty$, $K \df \prod_{i=1}^{\tau} (r_i+1)$, $k \df \sum_{i=1}^\tau r_i$, $N := nt$, and $g \df (\|A\|_\infty K)^{\Oh(k \cdot (K-1))}$.
	$\II$ is solvable in time at most
	\begin{center}
		\begin{tabular}{lcccc}  
			\toprule	
			obj    & --, -- & scaling, -- & scaling, $\rho$ & $\RRR$, $\rho$ \\
			\midrule
			linear 
			& $g N \log N\domain(\log f_{\max})$
			& $g  N \log^2 N \domain (\log N \|\vew\|_\infty)$ 			
			& $g  N^2 \log^2 N \domain$ 
			& $\RR(\II,N) + g N^2 \log^3 N$   \\
			s. conv.
			& \multicolumn{2}{c}{$g N \log N\domain(\log f_{\max})$}
			& $g N^3 \log^3 N\domain$
			& $\RR(\II,N) + gN^3 \log^3 N$ \\
			\bottomrule
		\end{tabular}
	\end{center}
\end{corollary}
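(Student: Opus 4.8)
The plan is to derive Corollary~\ref{cor:tf} from the Dual Algorithm (Corollary~\ref{cor:dual}) in exactly the same way that Corollaries~\ref{cor:nfold} and~\ref{cor:mss} were obtained, the only new input being the structural description of tree-fold matrices provided by Lemma~\ref{lem:tfmss_structure}. First I would apply Lemma~\ref{lem:tfmss_structure} to the matrix $T^D(A_1,\dots,A_\tau)$ and the underlying rooted tree $T$, obtaining a $\td$-decomposition $F$ of $G_D(T^D(A_1,\dots,A_\tau))$ with $\height(F)=\sum_{i=1}^\tau r_i = k$, topological height $\ttd(F)=\tau$, and level heights $k_i(F)=r_i$ for every $i\in[\tau]$. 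In particular $\td_D(T^D(A_1,\dots,A_\tau))\le\height(F)=k$, and since every root-leaf path $P$ of $F$ shares the same level heights $k_i(P)=r_i$, the quantity $K$ appearing in Corollary~\ref{cor:dual} equals $\max_P\prod_{i=1}^{\ttd(F)}(k_i(P)+1)=\prod_{i=1}^\tau(r_i+1)$, which is exactly the $K$ in the statement.

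Next I would plug these values into Corollary~\ref{cor:dual}. The parameter factor there is $g(A,F)=(\|A\|_\infty K)^{\Oh(\height(F)\cdot(K-1))}=(\|A\|_\infty K)^{\Oh(k(K-1))}$, matching $g$ in the statement. The one place that needs care is the meaning of the ``$n$'' in Corollary~\ref{cor:dual}, which there denotes the number of columns of the constraint matrix: for $T^D(A_1,\dots,A_\tau)$ this number is $nt$, since the matrix has one brick of $t$ columns for each of the $n$ leaves of $T$. Hence every occurrence of $n$ in the complexity table of Corollary~\ref{cor:dual} becomes $N=nt$ here, and the four columns (``--,--'', ``scaling, --'', ``scaling, $\rho$'', ``$\RRR$, $\rho$'') translate verbatim into the four claimed columns, with $\domain=\log\|\veu-\vel\|_\infty$ unchanged and $\RRR$ a relaxation oracle for the tree-fold matrix itself.

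Finally I would record the one minor bookkeeping point: the parameters $r_1,\dots,r_\tau,t,\tau$ are all absorbed into $g$ — in particular $k=\sum_i r_i$ and $K=\prod_i(r_i+1)$ are at most $g$, so any stray polynomial-in-parameter overhead produced inside Corollary~\ref{cor:dual} (e.g.\ factors of $\height(F)$) disappears into the $\Oh(\cdot)$ in the exponent. Since $f$ is required to be integral on integer points, rescaling any rational weights in the objective (as discussed for $n$-fold and $2$-stage stochastic IP) is harmless. I do not expect a genuine obstacle: the ``hard part'', such as it is, is purely verifying that the substitution $n\mapsto nt$ together with the identifications $\height(F)=k$, $\ttd(F)=\tau$, $K=\prod_i(r_i+1)$ are consistent with the hypotheses of Corollary~\ref{cor:dual}, which rests entirely on Lemma~\ref{lem:tfmss_structure} (itself proved by transposition from the multi-stage stochastic case).
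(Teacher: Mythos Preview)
Your proposal is correct and follows essentially the same approach as the paper: apply Lemma~\ref{lem:tfmss_structure} to extract $\height(F)=k$, $\ttd(F)=\tau$, $k_i(F)=r_i$ (hence $K=\prod_i(r_i+1)$), then invoke Corollary~\ref{cor:dual} with the column count $n$ replaced by $N=nt$. The paper's own proof is a two-line version of exactly this; your additional bookkeeping about absorbing stray parameter factors into $g$ is more explicit than what the paper writes but not different in substance.
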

\begin{proof}
	By Lemma~\ref{lem:tfmss_structure}, $\td_D\left(T^D(A_1, \dots, A_{\tau})\right) \leq k$, $\ttd(F)=\tau$, and $k_i(F) = r_i$ for each $i \in [\tau]$.
	Corollary~\ref{cor:dual} then implies the claim.
\end{proof}
The previously best algorithm for tree-fold IP is due to the preliminary conference version of this paper~\cite{EisenbrandHK:2018} and gave a bound of $g N^2 \domain (\log f_{\max})$.
Thus, we improve its polynomial dependence from $N^2$ to $N \log N$.
The same applies for the strongly-polynomial algorithm when $f$ is linear, where a conference version of this paper~\cite{KouteckyLO:2018} gave a $g' N^6$ algorithm for an unspecified computable function $g'$ only depending on $r_1, \dots, r_\tau$, which we improve here to $gN^2 \log^3 N$.

\subsection{Small Treewidth} \label{sec:apps:tw}
We have seen that when $A$ is a matrix for which $g_\infty(A)$ or $g_1(A)$ and $\tw_P(A)$ or $\tw_D(A)$ can be bounded by parameters, then the algorithms of Lemmas~\ref{lem:primal_treewidth} and~\ref{lem:dual_treewidth}, respectively, solve~\eqref{AugIP} in \FPT time.
In combination with Corollary~\ref{cor:lambda_oracle} this immediately implies solvability of~\eqref{IP} with a constraint matrix $A$ in \FPT time:
\begin{theorem} \label{thm:tw}
	\eqref{IP} is solvable in time \[\min\left\{(2g_\infty(A))^{\Oh(\tw_P(A))},(2\|A\|_\infty g_1(A))^{\Oh(\tw_D(A))}\right\} n^2 \log \|\veu-\vel\|_\infty \log f_{\max} + \Oh(n^{\omega})\enspace .\]
\end{theorem}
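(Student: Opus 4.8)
The plan is to combine the two \eqref{AugIP} algorithms for bounded treewidth (Lemmas~\ref{lem:primal_treewidth} and~\ref{lem:dual_treewidth}) with the generic reduction from~\eqref{IP} to~\eqref{AugIP} given by Corollary~\ref{cor:lambda_oracle}, running a ``primal'' and a ``dual'' variant in parallel exactly as in the proof of Theorem~\ref{thm:main}: the primal variant works with $G_P(A)$ and $\tw_P(A)$, the dual variant with $G_D(A)$ and $\tw_D(A)$, and we return the output of whichever halts first, which produces the $\min\{\cdot,\cdot\}$ in the claimed bound.

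First I would fix the per-call cost of~\eqref{AugIP}. By Lemma~\ref{lem:primal_treewidth} one call costs $(2g_\infty(A)+1)^{\Oh(\tw_P(A))}n$, and by Lemma~\ref{lem:dual_treewidth} it costs $(2\|A\|_\infty g_1(A))^{\Oh(\tw_D(A))}n$; since $g_\infty(A),g_1(A),\|A\|_\infty\ge 1$ (and if $\G(A)=\emptyset$ then~\eqref{AugIP} is trivial), the ``$+1$'' is absorbed into the $\Oh(\cdot)$ in the exponent, so one call runs in time $T_{\mathrm{aug}} := \min\{(2g_\infty(A))^{\Oh(\tw_P(A))},(2\|A\|_\infty g_1(A))^{\Oh(\tw_D(A))}\}\cdot n$. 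If a tree decomposition of $G_P(A)$ or $G_D(A)$ is not supplied, it is computed in time linear in $n$ and FPT in the treewidth, which is dominated by $T_{\mathrm{aug}}$; this is already implicit in Propositions~\ref{prop:primal_treewidth} and~\ref{prop:incidence_treewidth} that the two lemmas rely on.

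Next I would invoke Corollary~\ref{cor:lambda_oracle}: solving~\eqref{IP} reduces to solving $\Oh(nL^2)$ instances of~\eqref{AugIP} — concretely $\Oh(n\log\|\veu-\vel\|_\infty\log f_{\max})$ instances for the augmentation phase (Lemma~\ref{lem:lambda_oracle_initi}) together with a number of further instances polylogarithmic in $\|A,\veb,\vel,\veu\|_\infty$ for producing an initial feasible solution (Lemma~\ref{lem:initial}) — plus $\Oh(n^\omega)$ arithmetic operations for the Hermite normal form and for purifying $A\vex=\veb$. The cubic purification cost $\Oh(\min\{n,m\}nm)$ of Proposition~\ref{prop:purification} appearing in Corollary~\ref{cor:lambda_oracle} is replaced here by $\Oh(n)$: since $\tw_I(A)\le\min\{\tw_P(A),\tw_D(A)\}+1$ by Lemma~\ref{lem:inc_prim_tw}, the bounded-treewidth variant of Proposition~\ref{prop:tw_pure} (with a tree decomposition of $G_I(A)$, computable in FPT time linear in $n$) purifies the system — hence lets us assume $m\le n$ — in time linear in $n$ and FPT in the treewidth, so the Hermite normal form step is then the only contribution exceeding $n^2\cdot\mathrm{polylog}$. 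Multiplying the number of~\eqref{AugIP} calls by $T_{\mathrm{aug}}$, adding $\Oh(n^\omega)$, and collecting the polylogarithmic factors yields the stated running time.

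The routine but delicate bookkeeping is the collection of the polylog factors, exactly as in the proof of Theorem~\ref{thm:main}: one must check that the feasibility phase contributes only a factor dominated by the displayed $\log\|\veu-\vel\|_\infty\log f_{\max}$, and that after treewidth-aware purification no term other than the Hermite normal form computation exceeds $n^2\cdot\mathrm{polylog}$, so that the additive $\Oh(n^\omega)$ is all that sits outside the main term. The one genuine obstacle is precisely this purification / initial-solution handling: the point is that bounded $\tw_P(A)$ or $\tw_D(A)$ bounds $\tw_I(A)$, which is what makes it possible to avoid the cubic cost of the naive Gaussian elimination and keep the whole algorithm within $T_{\mathrm{aug}}\cdot n\cdot\mathrm{polylog}+\Oh(n^\omega)$.
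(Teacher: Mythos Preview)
Your proposal is correct and follows exactly the paper's approach: the paper states the theorem as an immediate consequence of combining Lemmas~\ref{lem:primal_treewidth} and~\ref{lem:dual_treewidth} with Corollary~\ref{cor:lambda_oracle}, and the remark following the theorem addresses precisely the purification issue you raise (via Proposition~\ref{prop:tw_pure} and the treewidth analogue of Lemma~\ref{lem:feas_td}). One small slip: the feasibility phase of Lemma~\ref{lem:initial} costs $\Oh(n\log(\|A,\veb,\vel,\veu\|_\infty)^2)$ \eqref{AugIP} calls, not merely polylogarithmically many, but this is already subsumed by the $\Oh(nL^2)$ count you correctly cite from Corollary~\ref{cor:lambda_oracle} and does not affect the final bound.
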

We remark that purification can be realized quickly by Proposition~\ref{prop:tw_pure} and it is easy to show that the matrix $A_I = (A~I)$ of the auxiliary feasibility instance~\eqref{eq:auxiliary_feasibility} and the scaling algorithm~\eqref{eq:s-ip-scaled} satisfies $\tw_P(A_I) \leq \tw_P(A)+1$ and $\tw_D(A_I) = \tw_D(A)$ by an analogue of Lemma~\ref{lem:feas_td}, thus circumventing the $\Oh(n^\omega)$ additive factor.

In general, it is not possible to bound $g_\infty(A)$ or $g_1(A)$ even if $\tw_P(A), \tw_D(A) =1$ and $\|A\|_\infty=2$ (Lemma~\ref{lem:largeg1}), which is the reason why matrices with small primal or dual treewidth have not been the center of our attention.
However, in specific cases good bounds can be proven and advantageously used, see~Gavenčiak et al.~\cite{GavenciakKK:2019}.

\subsection{Fast Relaxation Algorithms}
By Corollary~\ref{cor:scaling_relaxation} fast algorithms for~\eqref{IP} imply essentially as fast (up to a $\log n \frac{1}{\epsilon}$ factor) algorithms for the fractional relaxation~\eqref{relax}.
Specifically, an $\epsilon$-accurate solution to~\eqref{relax} can be found in time $T \cdot \log(n \frac{1}{\epsilon})$, where $T$ is the time complexity appearing in Corollaries~\ref{cor:nfold},~\ref{cor:2stage},~\ref{cor:mss},~\ref{cor:tf} and Theorem~\ref{thm:tw}.
In particular, we have the following result:
\begin{theorem}[$n$-fold and $2$-stage stochastic relaxation, informal]
There is a nearly-linear \FPT algorithm for the relaxation of $n$-fold and $2$-stage stochastic IP.
\end{theorem}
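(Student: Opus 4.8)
The plan is to derive the result from the nearly-linear algorithms for the integer problems (Corollary~\ref{cor:nfold} for $n$-fold IP and Corollary~\ref{cor:2stage} for $2$-stage stochastic IP) by way of Corollary~\ref{cor:scaling_relaxation}, which converts an algorithm for~\eqref{IP} into one for the relaxation~\eqref{relax} at the cost of scaling up the bounds and the objective. Concretely, given a target accuracy $\epsilon > 0$, I would set $p \df \tfrac{1}{\epsilon} n g_\infty(A)$ and invoke Corollary~\ref{cor:scaling_relaxation}: an $\epsilon$-accurate solution of~\eqref{relax} is obtained by solving the single instance~\eqref{upsIP}, which is again an instance of~\eqref{IP} with the \emph{unchanged} constraint matrix $A$, with bounds $p\vel, p\veu$, right-hand side $p\veb$, and objective $\vez \mapsto f(\vez/p)$. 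The informal claim is mainly about the standard (linear-objective) fractional relaxation of these problems, and I would prove that case in full; the separable convex case follows along the same lines modulo the integrality technicality discussed below.

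The first observation is that scaling up does not touch the constraint matrix, so the block structure is preserved: an $n$-fold matrix remains $n$-fold with the same blocks $A_1, A_2$ and a $2$-stage stochastic matrix remains $2$-stage stochastic with the same blocks, and in particular $\|A\|_\infty$ and the block dimensions $r, s, t$ — hence also the structural quantities furnished by Lemma~\ref{lem:nfold:tdd} and Corollary~\ref{cor:2stage:tdp} — are all unaffected. The second observation is that the scaling factor is small: for $n$-fold matrices Lemma~\ref{lem:dual_norm} together with Lemma~\ref{lem:nfold:tdd} bounds $g_\infty(A) \le g_1(A)$ by a computable function of $\|A\|_\infty, r, s$ only, and for $2$-stage stochastic matrices Lemma~\ref{lem:primal_norm} together with Corollary~\ref{cor:2stage:tdp} does the same for $g_\infty(A)$. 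Consequently $\log p$ equals $\log(n/\epsilon)$ plus a quantity depending only on the parameters, so that passing from~\eqref{IP} to~\eqref{upsIP} replaces $\domain \df \log \|\veu-\vel\|_\infty$ by $\domain$ plus $\log(n/\epsilon)$ plus a parameter-dependent additive term, and inflates $\log\|\veb\|_\infty$ and the logarithm of the (scaled) maximum objective value in the same way.

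Now I would simply solve~\eqref{upsIP} with Corollary~\ref{cor:nfold} (respectively Corollary~\ref{cor:2stage}). Every running-time bound appearing in those corollaries depends on $\|\veu - \vel\|_\infty$, $\|\veb\|_\infty$ and $f_{\max}$ only through their logarithms; substituting the scaled quantities therefore replaces each such logarithm by itself plus $\log(n/\epsilon)$ and a parameter-dependent term, which keeps the overall bound nearly linear in $N$. The outcome is a running time of the form $g(\|A\|_\infty, r, s) \cdot N \cdot \log^{O(1)} N \cdot \log^{O(1)}(\|\veb,\vel,\veu,f_{\max}\|_\infty) \cdot \log^{O(1)}(1/\epsilon)$ with $N = nt$, which is nearly linear in $N$ and fixed-parameter tractable parameterized by $\|A\|_\infty, r, s$; this is exactly the claimed near-linear \FPT algorithm for the relaxation. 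Starting instead from another column of Corollaries~\ref{cor:nfold} or~\ref{cor:2stage} (e.g. the ``$\RRR, \rho$'' column) would give the same qualitative conclusion, possibly with an even better dependence on $N$.

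The step I expect to require the most care is the bookkeeping inside the use of Corollary~\ref{cor:scaling_relaxation}: one must ensure that the scaled objective $\vez \mapsto f(\vez/p)$ satisfies the integrality hypothesis of that corollary and that its maximum value over the scaled box stays polynomially bounded, so that the $\log f_{\max}$-type factors in Corollaries~\ref{cor:nfold} and~\ref{cor:2stage} do not destroy near-linearity. For a linear objective $f(\vex) = \vew \vex$ this is immediate, since one may equivalently use the integral vector $\vew$ itself as the objective of~\eqref{upsIP} (it has the same set of minimizers), and its maximum over the scaled box is bounded by $\|\vew\|_\infty$ times a quantity logarithmic in the input and in $1/\epsilon$. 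For a separable convex objective one restricts to objectives for which the scaled objective is integral on the grid $\tfrac{1}{p}\Z^n$, or clears denominators, which multiplies the relevant maximum value by a factor that is harmless because it enters only logarithmically; this is the only point where the separable convex case needs an extra remark.
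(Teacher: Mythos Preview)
Your proposal is correct and follows essentially the same route as the paper: invoke Corollary~\ref{cor:scaling_relaxation} to reduce the relaxation to a scaled-up instance of~\eqref{IP} with the same constraint matrix, then apply the nearly-linear algorithms of Corollaries~\ref{cor:nfold} and~\ref{cor:2stage}, noting that the scaling factor $p$ contributes only an additive $\log(n/\epsilon)$ plus a parameter-dependent term to the logarithmic factors. The paper's own treatment is in fact just a one-sentence pointer to Corollary~\ref{cor:scaling_relaxation}; your write-up is more explicit about the bookkeeping (preservation of block structure, the bound on $g_\infty(A)$, and the integrality of the scaled objective), which is all correct and exactly what underlies the paper's informal claim.
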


\section{Hardness and Lower Bounds} \label{sec:hardness}
In most of our hardness reductions we will use the famous \NPh \textsc{Subset Sum} problem:
\prob{\textsc{Subset Sum}}
{Positive integers $a_1, \dots, a_n, b$.}
{Is there $I \subseteq [n]$ such that $\sum_{i \in I} a_i = b$?}

\begin{proposition}[\eqref{ILP} hardness] \label{prop:ilphardness}
	\eqref{ILP} is \NPh already when $\|A\|_\infty=1$ or when $m=1$.
\end{proposition}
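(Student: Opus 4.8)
The plan is to reduce from \textsc{Subset Sum} and exhibit, for each of the two restrictions separately, a polynomial-time construction producing an \eqref{ILP} instance (with $\vew \df \vezero$, so that only feasibility is at stake) which is feasible exactly when the \textsc{Subset Sum} instance is a yes-instance.

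For the case $m=1$ this is immediate: given positive integers $a_1,\dots,a_n,b$, take $A \df (a_1 \cdots a_n) \in \Z^{1\times n}$, $\veb \df (b)$, $\vel \df \vezero$, $\veu \df \veone$. Then $\vex \in \{0,1\}^n$ is feasible iff $I \df \{i : x_i = 1\}$ satisfies $\sum_{i\in I} a_i = b$. Hence \eqref{ILP} is \NPh already with a single equality constraint; here $\|A\|_\infty$ is of course unbounded, which is exactly why the second case is not subsumed.

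For the case $\|A\|_\infty = 1$ I would use the standard binary/carry encoding. Put $k \df \lceil \log(1+\max\{a_1,\dots,a_n,b\})\rceil + \lceil \log(n+1)\rceil$ and write $a_i = \sum_{j=0}^{k} a_{ij}2^j$ and $b = \sum_{j=0}^k b_j 2^j$ with all digits $a_{ij},b_j \in \{0,1\}$ (the top digits being $0$). Introduce $0/1$ variables $x_1,\dots,x_n$, and for each $j \in \{0,\dots,k-1\}$ nonnegative ``carry'' variables $t_j,u_j$. Impose, for each $j \in \{0,\dots,k\}$, the addition equation $\sum_{i=1}^n a_{ij}x_i + t_{j-1} - t_j - u_j = b_j$ (where the term $t_{-1}$ is dropped, and at $j=k$ there is no term $-t_k-u_k$) together with, for $j \in \{0,\dots,k-1\}$, the gadget equation $t_j - u_j = 0$. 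All coefficients lie in $\{-1,0,1\}$, there are $n+2k$ variables and about $2k$ rows, all of polynomial size, and we set $f \df \vezero$, $\vel \df \vezero$ (upper bounds may be $+\infty$ or any polynomial bound).

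For correctness: the gadget $t_j = u_j$ lets $t_j + u_j$ play the role of the forbidden coefficient-$2$ term $2t_j$, so the equations say exactly $s_j + t_{j-1} = b_j + 2t_j$ for the column sums $s_j \df \sum_i a_{ij}x_i$, and the choice of $k$ ensures no carry ever needs to leave bit position $k$. Multiplying the $j$-th equation by $2^j$ and summing over $j$, the carries telescope and give $\sum_j s_j 2^j = \sum_j b_j 2^j$, i.e.\ $\sum_i a_i x_i = b$; thus $I \df \{i:x_i=1\}$ solves \textsc{Subset Sum}. Conversely, a subset $I$ with $\sum_{i\in I}a_i=b$ yields $x_i \df [i\in I]$, whereupon the carries are forced recursively by $t_j \df (s_j + t_{j-1} - b_j)/2$, and an induction using $\sum_j s_j 2^j = b$ shows each $t_j$ is a nonnegative integer with $t_k = 0$, so the instance is feasible. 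I do not expect any genuine obstacle here; the only points needing care are reserving enough high bit positions (the $\lceil\log(n+1)\rceil$ slack above) so that all carries fit, and the routine bookkeeping that the coefficient-$2$ term is faithfully simulated by the pair $(t_j,u_j)$.
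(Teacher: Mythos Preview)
Your $m=1$ case matches the paper exactly. For $\|A\|_\infty=1$ your binary/carry encoding is correct: the telescoping identity gives $\sum_i a_i x_i = b$ from any feasible $(\vex,\vet,\veu)$, and conversely the carries $t_{j-1}$ defined by $t_{j-1}2^j = \sum_{j'<j}(s_{j'}-b_{j'})2^{j'}$ are integers (the right-hand side is $\equiv 0 \pmod{2^j}$ since $\sum_i a_i x_i \equiv b \pmod{2^j}$) and nonnegative (since $\sum_{j'<j} b_{j'}2^{j'} < 2^j$), with $t_{k-1}=0$ following from $\sum_i a_i x_i = b$ and the choice of $k$.

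The paper, however, takes a much shorter route for $\|A\|_\infty=1$: it reduces from \textsc{Vertex Cover}, writing $x_i + x_j - s_{\{i,j\}} = 1$ for each edge $\{i,j\}$ with objective $\min\sum_i x_i$, which needs no carries, no bit decomposition, and only three lines. Your construction is more elaborate but has the merit of reusing \textsc{Subset Sum} for both parts and, interestingly, anticipates the base-$\Delta$ encoding trick (constraints $(X_i)$, $(Y_i^j)$, $(S)$) that the paper itself deploys later in Section~\ref{sec:hardness} for the treewidth and treedepth lower bounds. So your approach is heavier here but is essentially the same machinery the paper needs anyway for the sharper results downstream.
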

\begin{proof}
	First, consider the \NPh \textsc{Vertex Cover} problem in which we are given a graph $G$ and the task is to find $C \subseteq V(G)$ such that $\forall e \in E(G):\, e \cap C \neq \emptyset$ and $|C|$ is minimized.
	Without loss of generality assume $V(G) = [n]$ and let $x_i$, $i \in [n]$, be a $0/1$ variable encoding whether vertex $i$ belongs to $C$.
	Furthermore, we need a slack variable $s_{i,j}$ for every edge $\{i,j\} \in E(G)$.
	The following~\eqref{ILP} instance encodes the given \textsc{Vertex Cover} instance:
	\begin{align*}
	\min & \sum_{i=1}^n x_i & \\
	x_i + x_j - s_{i,j}&= 1 & \forall \{i,j\} \in E(G) \\
	x_i & \geq 0 & \forall i \in [n] \\
	s_{i,j} & \geq 0 & \forall \{i,j\} \in E(G)
	\end{align*}
	Clearly the largest coefficient is $1$, hence~\eqref{ILP} with $\|A\|_\infty \geq 1$ is \NPh.
	
	As for the second part, let $\vea = (a_1, \dots, a_n), b$ be a given \textsc{Subset Sum} instance.
	As before, let $x_i$, $i \in [n]$, be a $0/1$ variable encoding whether $i \in I$.
	Deciding whether the following~\eqref{ILP} instance is feasible is equivalent to deciding the given \textsc{Subset Sum} instance:
	\begin{align*}
	\vea \vex &= b & \\
	\vex &\geq \vezero &
	\end{align*}
	Since the constraint matrix is $A=\vea$, we have $m = 1$ and~\eqref{ILP} is \NPh already when $m=1$.
\end{proof}

\medskip

Eiben et al.~\cite{EibenGKOPW:2019} have recently shown that~\eqref{ILP} is \NPh already when the more permissive incidence treedepth $\td_I(A)$ is $5$ and $\|A\|_\infty=2$.
Hence, $\td_P$ and $\td_D$ in our results cannot be replaced with $\td_I$.

\subsection{\NP-hardness of Non-separable Convex and Separable Concave Integer Optimization}
\begin{proposition}[{Part~\ref{it:nearlylinear_primal:claim1} in~\cite[Proposition 1]{LeeORW:2012}}]~ \label{prop:nphconvexconcave}
	\begin{enumerate}
		\item \label{it:nphprop:1} It is \NPh to minimize a non-separable quadratic convex function over $\Z^n$.
		\item Problem~\eqref{IP} is \NPh already when \label{it:nphprop:2} $f$ is separable concave and $A=(1\cdots1)$.
	\end{enumerate}
\end{proposition}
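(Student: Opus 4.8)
The plan is to prove both parts by elementary Karp reductions from \textsc{Subset Sum}, the \NPh problem already used in this section. Throughout, let $a_1, \dots, a_n, b$ be a \textsc{Subset Sum} instance; we may assume $n \geq 2$, since the problem remains \NPh in this regime.

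For part~\ref{it:nphprop:1}, I would associate to the instance the quadratic function
\[
q(\vex) \df \left(\sum_{i=1}^n a_i x_i - b\right)^2 + \sum_{i=1}^n \left(x_i^2 - x_i\right) \enspace .
\]
First I would check that $q$ is convex: its Hessian equals $2\vea\vea^\transpose + 2I$, which is positive definite; and $q$ is genuinely \emph{non-separable}, because the off-diagonal entry $2 a_i a_j$ of $\vea\vea^\transpose$ is nonzero for every $i \neq j$ (as $a_i, a_j \geq 1$). Next I would observe that for every $\vex \in \Z^n$ each term $x_i^2 - x_i = x_i(x_i - 1)$ is a nonnegative integer, hence $q(\vex) \geq 0$ over $\Z^n$, and $q(\vex) = 0$ holds if and only if $\sum_i a_i x_i = b$ and $x_i \in \{0,1\}$ for all $i$, i.e. if and only if the \textsc{Subset Sum} instance is a yes-instance. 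Thus deciding whether $\min_{\vex \in \Z^n} q(\vex) \leq 0$ is \NPh, proving part~\ref{it:nphprop:1}. (An alternative would be to cite \NP-hardness of the closest vector problem in the $\ell_2$-norm, but this encoding keeps the argument self-contained.)

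For part~\ref{it:nphprop:2}, I would use the~\eqref{IP} instance with $A \df (1~\cdots~1)$, right-hand side $\veb \df (b)$, bounds $\vel \df \vezero$, $\veu \df \vea$, and objective $f(\vex) \df \sum_{i=1}^n x_i(a_i - x_i)$. Each $f_i(t) = a_i t - t^2$ has second derivative $-2$, so $f$ is separable concave, and it is integer-valued on integer points, so it is an admissible objective. The feasible region is finite (the bounds are finite), so the minimum is attained; on the feasible region every coordinate satisfies $0 \leq x_i \leq a_i$, where $f_i(x_i) \geq 0$ with equality precisely when $x_i \in \{0, a_i\}$. Hence the optimum value of this~\eqref{IP} is $0$ if and only if there is a feasible integer point whose coordinates all lie in $\{0, a_i\}$ and sum to $b$, which is exactly the condition $\exists I \subseteq [n]\colon \sum_{i \in I} a_i = b$. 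This shows~\eqref{IP} with a separable concave objective is \NPh even when $A = (1~\cdots~1)$, i.e. even when $a = 1$ and $\td_P(A) = \td_D(A) = 1$.

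Neither reduction presents a serious obstacle; the only points needing care are confirming in part~\ref{it:nphprop:1} that $q$ is non-separable rather than merely convex, and in part~\ref{it:nphprop:2} that the integer minimum of the concave objective is finite and is characterized exactly by the all-$\{0,a_i\}$ condition (which is where concavity, as opposed to convexity, is doing the work: a convex objective would instead be minimized in the interior of the box).
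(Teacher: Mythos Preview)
Your proposal is correct and essentially identical to the paper's proof: both parts are Karp reductions from \textsc{Subset Sum} using the same encodings, with only cosmetic differences in the barrier functions (you use $x_i^2 - x_i$ and $x_i(a_i - x_i)$, the paper uses $(2x_i - 1)^2$ and $-(x_i - a_i/2)^2$, which agree up to additive constants). Your versions are arguably slightly cleaner, since your Part~2 objective is integer-valued without further scaling and you make the convexity and non-separability checks explicit.
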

\begin{proof}
	Let a \textsc{Subset Sum} instance be given and denote $\vea \df (a_1, \dots, a_n)$.
	\paragraph*{Part~\ref{it:nphprop:1}.}
	We encode the \textsc{Subset Sum} instance into $n$ binary variables $x_1, \dots, x_n$, so the goal is to enforce that an optimal solution $\vex$ satisfies $\vea \vex = b$ if and only if the instance is a \textsc{Yes}-instance.
	The idea here is that the objective function allows us to encode a ``barrier function'' which attains its minimum if and only if $(\vea \vex - b)^2 = 0$ and $x_i \in \{0,1\}$ for each $i \in [n]$.
	Already setting $f'(\vex) \df (\vea \vex -b)^2$ shows that~\eqref{IP} is \NPh with a non-separable convex objective and with bounds $\vezero \leq \vex \leq \veone$.
	Then, the bounds can be encoded in exactly the same way, setting $f \df f'(\vex) + \sum_{i=1}^n (2x_i-1)^2$. 
	Because $\min_{x_i \in \Z}(2x_i-1)^2=1$ is attained when $x_i \in \{0,1\}$, we have that $\min_{\vex \in \Z^n} f(\vex) = n$ if and only if $\vex \in \{0,1\}^n$ and $\vea \vex = b$, i.e., when the instance is a \textsc{Yes}-instance.
	
	\paragraph*{Part~\ref{it:nphprop:2}.}
	The idea of the proof is to use the objective function to encode a disjunction, i.e., for each variable $x_i$, enforcing $x_i \in \{0, a_i\}$.
	This is done by setting $\vel \df \vezero$, $\veu \df \vea$, and for each $i \in [n]$, $f_i(x_i) \df -(x_i - \frac{a_i}{2})^2$.
	Because $\min_{x_i \in [0,a_i]} f_i(x_i) = -\frac{a_i^2}{4}$ is attained when $x_i \in \{0,a_i\}$, it holds that $\min_{\vex \in [\vel, \veu]} f(\vex) = \sum_{i=1}^n -\frac{a_i^2}{4}$ if and only if $x_i \in \{0, a_i\}$ for each $i \in [n]$. Then,
	the single linear constraint $\sum_{i=1}^n x_i=b$ with the bounds $\vel, \veu$ is equivalent to the input instance being a \textsc{Yes}-instance.
\end{proof}

\subsection{\NP-hardness for Treewidth and Double-exponential Lower Bounds for Treedepth}
Our goal now is to exhibit two encodings of the \textsc{Subset Sum} problem which show that~\eqref{IP} is \NPh already when $\tw_P(A), \tw_D(A) \leq 2$ and $\|A\|_\infty = 2$, and then derive double-exponential time complexity lower bounds for~\eqref{IP} parameterized by $\td_P(A), \td_D(A)$.
For the $\td_D(A)$ parameter our lower bound is off by roughly a $\td_D(A)$ factor in the exponent when compared to our upper bounds, which asymptotically means that the dependency on the level heights $k_1(F) ,\dots, k_{\ttd(F)}(F)$ is inherent, assuming ETH.
Regarding the $\td_P(A)$ parameter, no non-trivial lower bounds were previously known.
Our encoding of \textsc{Subset Sum} is inspired by~\cite[Theorem 12]{GOR}.

We begin with the natural encoding of \textsc{Subset Sum} with $n$ boolean variables $x_1, \dots, x_n$:
\begin{equation}
\sum_{i=1}^n a_i x_i = b \enspace . \label{eq:subsetsum}
\end{equation}
Unfortunately, constraint~\eqref{eq:subsetsum} contains large coefficients and has primal treewidth $n$.
In the following we will use two tricks to overcome these two problems.

Let us now describe these two tricks informally.
The first trick is to rewrite a constraint such as~\eqref{eq:subsetsum} into $n$ new constraints, each only involving $3$ variables, while introducing $n$ new variables.
To see how this could be done, consider constraints $z_1 = a_1 x_1$ and $z_i = a_i x_i + z_{i-1}$ for all $i \geq 2$ -- clearly then $z_n = \sum_{i=1}^n a_i x_i$.
The second trick is the idea of introducing, for each item $i \in [n]$, variables $y_i^j$ such that, if $x_i = 0$ then $y_j^i = 0$ and if $x_i = 1$ then $y_i^j = 2^j$.
Then, it is possible to obtain the term $a_i x_i$ by summing up those $y_i^j$ which correspond to the digits of $a_i$ in its binary encoding which are equal to one.
In the following we will assume not only a base-$2$ encoding, but encodings in some general base $\Delta$.

Formally, let $\Delta \in \N_{\geq 2}$, and assuming $a_i \leq b$ for all $i \in [n]$, let $L_\Delta \df \ceil{\log_\Delta (b + 1)}$.
Denote by $[a_i]_\Delta = (\alpha_i^0, \dots, \alpha_i^{L_\Delta - 1})$ the base-$\Delta$ encoding of $a_i$, i.e., $a_i = \sum_{j=0}^{L_\Delta - 1} \alpha_i^j \Delta^j$.
Thus, $a_i x_i = \sum_{j=0}^{L_\Delta - 1} \alpha_i^j y_i^j$.
(Note that the superscript $\bullet^j$ only means the $j$-th power when written over $\Delta$.)
Now, let
\begin{align}
y_i^0 &= x_i & \forall i \in [n] \tag{$X_i$} \label{eq:tw:iy0}\\
y_i^j &= \Delta \cdot y_i^{j-1} & \forall i \in [n], \, \forall j \in [L_\Delta - 1] \tag{$Y_i^j$} \label{eq:tw:yij} \\
\sum_{i=1}^n \sum_{j=0}^{L_\Delta - 1} \alpha_i^j y_i^j &= b &  \enspace .\tag{$S$} \label{eq:tw:dualagg}
\end{align}
\begin{lemma} \label{lem:dualdtwdecomp}
	Let $A$ be the matrix of constraints~\eqref{eq:tw:yij}--\eqref{eq:tw:dualagg}.
	$G_D(A)$ has a path decomposition of width $2$ and length $n(L_\Delta-1)-1$.
\end{lemma}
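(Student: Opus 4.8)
The plan is to read the structure of $G_D(A)$ directly off the three families of constraints, exhibit an explicit path decomposition, and then verify the width bound, the edge-coverage axiom, the interpolation axiom, and the length count. Recall that $G_D(A)=G_P(A^{\transpose})$ has the \emph{rows} of $A$ as vertices --- namely the rows $X_i$ (constraint \eqref{eq:tw:iy0}), the rows $Y_i^1,\dots,Y_i^{L_\Delta-1}$ (constraints \eqref{eq:tw:yij}) for each $i\in[n]$, and the single aggregation row $S$ (constraint \eqref{eq:tw:dualagg}) --- with an edge between two rows precisely when some variable has a nonzero coefficient in both. (If one reads $A$ as containing only \eqref{eq:tw:yij}--\eqref{eq:tw:dualagg}, the argument below goes through with chains one vertex shorter, giving a decomposition within the stated bound; I would note this in one line.)

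First I would determine the adjacencies by recording, for each variable, the set of rows it occurs in. The variable $x_i$ occurs only in $X_i$; the variable $y_i^0$ occurs in $X_i$, in $Y_i^1$, and --- if $\alpha_i^0\neq 0$ --- in $S$; the variable $y_i^j$ for $1\le j\le L_\Delta-2$ occurs in $Y_i^j$, in $Y_i^{j+1}$, and possibly in $S$; and $y_i^{L_\Delta-1}$ occurs in $Y_i^{L_\Delta-1}$ and possibly in $S$. Hence, for each $i$, the rows $X_i,Y_i^1,\dots,Y_i^{L_\Delta-1}$ induce a simple path $C_i$ on $L_\Delta$ vertices (the ``chain'' $X_i-Y_i^1-\cdots-Y_i^{L_\Delta-1}$), the chains $C_1,\dots,C_n$ are pairwise nonadjacent, and $S$ is adjacent only to vertices of the chains (to all of them in the worst case). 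So $G_D(A)$ is a subgraph of the graph obtained from $n$ vertex-disjoint paths on $L_\Delta$ vertices by adding one apex vertex joined to every path vertex.

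Next I would build the path decomposition. For each $i\in[n]$, list the $L_\Delta-1$ bags $\{S,X_i,Y_i^1\},\{S,Y_i^1,Y_i^2\},\dots,\{S,Y_i^{L_\Delta-2},Y_i^{L_\Delta-1}\}$ in this order, and concatenate the $n$ blocks for $i=1,\dots,n$ into a single path on $n(L_\Delta-1)$ nodes, i.e.\ of length $n(L_\Delta-1)-1$. Every bag has exactly three vertices, so the width is $2$. Edge coverage is immediate: each chain edge sits in its defining bag, and each edge $\{S,v\}$ with $v\in C_i$ lies in a bag of block $i$ containing $v$. For the interpolation property, $S$ lies in every bag (a contiguous subpath, trivially), each $X_i$ lies only in the first bag of block $i$, and each $Y_i^j$ lies in the one or two consecutive bags of block $i$ that straddle it; since block $i$'s bags are consecutive and $G_D(A)$ is only a subgraph of the decomposed graph, validity follows.

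I do not expect a genuine obstacle here: once the adjacency structure is pinned down, the rest is bookkeeping. The only points needing care are matching the index range $j\in[L_\Delta-1]$ to exactly $L_\Delta-1$ bags per chain and hence to the global length $n(L_\Delta-1)-1$, and the degenerate case $L_\Delta=1$ (no $Y$-constraints at all), where $G_D(A)$ is the star with center $S$ and the decomposition collapses to the bags $\{S,X_i\}$; I would dispatch that case in a sentence, observing that it still meets the stated bounds.
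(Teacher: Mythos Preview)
Your proposal is correct and follows essentially the same approach as the paper: identify $G_D(A)$ as $n$ disjoint chains together with the apex constraint $S$, then build the path decomposition by keeping $S$ in every bag and sliding a two-element window along each chain in turn. The paper's proof is terser (it does not spell out the interpolation check or the degenerate case) and writes the first bag of block $i$ as $\{S,Y_i^0,Y_i^1\}$ rather than $\{S,X_i,Y_i^1\}$, implicitly identifying $x_i$ with $y_i^0$ and dropping \eqref{eq:tw:iy0}; your parenthetical remark already anticipates this reading.
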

\begin{proof}
	We will disregard the variables $x_i$ and thus also the constraint~\eqref{eq:tw:iy0} for the sake of slightly improved bounds as the variables $y_i^0$ play an identical role.
	Let $A$ be the matrix of constraints~\eqref{eq:tw:yij}--\eqref{eq:tw:dualagg}.
	The graph $G_D(A)$ contains the following edges:
	\begin{itemize}
		\item Between $S$ and each~$Y_i^j$.
		\item Between $Y_i^{j-1}$ and $Y_i^{j}$ for each $i \in [n]$ and $j \in [L_\Delta - 1]$.
	\end{itemize}
	We construct a tree decomposition (in fact, a path decomposition), by consecutively taking the following segment of bags for each $i \in [n]$:
	\[
	\{S, Y_i^0, Y_i^1\}, \{S, Y_i^1, Y_i^2\}, \dots, \{S, Y_i^{L_\Delta-2}, Y_i^{L_\Delta-1}\} \enspace .
	\]
	Since each bag is of size $3$, the treewidth is $2$.
	Moreover, since each segment comprises $L_\Delta-1$ bags, and there are $n$ segments, the length of the path decomposition is $n(L_\Delta-1)-1$.
	Note that there are $n L_\Delta$ variables and $n (L_\Delta -1) + 1$ constraints.
\end{proof}
\begin{corollary}[$\tw_D$ hardness] \label{cor:twdhardness}
	\eqref{IP} is \NPh already when $\tw_D(A) = 2$ and $\|A\|_\infty = 2$.
\end{corollary}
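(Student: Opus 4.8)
The plan is to instantiate the construction~\eqref{eq:tw:yij}--\eqref{eq:tw:dualagg} with base $\Delta=2$ and invoke Lemma~\ref{lem:dualdtwdecomp} to control the dual treewidth, so that the whole argument reduces to checking that the resulting feasibility instance of~\eqref{IP} is a \textsc{Yes}-instance exactly when the given \textsc{Subset Sum} instance is. First I would normalize the \textsc{Subset Sum} instance so that $a_i \le b$ for every $i \in [n]$: any item with $a_i > b$ can never lie in a feasible subset, so deleting all such items produces an equivalent instance, and \textsc{Subset Sum} remains \NPh under this promise. Then, with $L_2 \df \ceil{\log_2(b+1)}$, each $a_i$ has a base-$2$ expansion $[a_i]_2 = (\alpha_i^0,\dots,\alpha_i^{L_2-1})$ with all $\alpha_i^j \in \{0,1\}$ and $a_i = \sum_{j=0}^{L_2-1}\alpha_i^j 2^j$.

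Next I would write down the integer program: variables $y_i^j$ for $i\in[n]$, $j\in[0,L_2-1]$; the equalities~\eqref{eq:tw:yij}, which for $\Delta=2$ read $y_i^j - 2y_i^{j-1}=0$, together with the single equality~\eqref{eq:tw:dualagg}; bounds $0 \le y_i^0 \le 1$ for all $i$ and $y_i^j \ge 0$ for $j\ge1$; and the trivial objective $f\equiv 0$, so that solving~\eqref{IP} amounts to deciding feasibility. Let $A$ be the constraint matrix of this program. Its nonzero entries are $1$ and $-2$, coming from~\eqref{eq:tw:yij}, and the digits $\alpha_i^j\in\{0,1\}$, coming from~\eqref{eq:tw:dualagg}; hence $\|A\|_\infty = 2$. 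Moreover $A$ is precisely the matrix of constraints~\eqref{eq:tw:yij}--\eqref{eq:tw:dualagg}, so Lemma~\ref{lem:dualdtwdecomp} yields $\tw_D(A) = 2$.

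It then remains to verify the equivalence. If $\vey=(y_i^j)$ is feasible, then $y_i^0\in\{0,1\}$ and~\eqref{eq:tw:yij} forces $y_i^j = 2^j y_i^0$ for every $j$; substituting into~\eqref{eq:tw:dualagg} gives $\sum_{i=1}^n y_i^0\bigl(\sum_{j=0}^{L_2-1}\alpha_i^j 2^j\bigr) = \sum_{i=1}^n a_i y_i^0 = b$, so $I \df \{i : y_i^0 = 1\}$ satisfies $\sum_{i\in I}a_i = b$. Conversely, given $I\subseteq[n]$ with $\sum_{i\in I}a_i=b$, setting $y_i^j \df 2^j$ when $i\in I$ and $y_i^j\df 0$ otherwise satisfies all the bounds and all the constraints. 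Hence deciding feasibility of~\eqref{IP}, and therefore~\eqref{IP} itself, is \NPh already when $\tw_D(A)=2$ and $\|A\|_\infty = 2$.

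There is no real obstacle here: once $\Delta=2$ is fixed, the construction is essentially forced, and both directions of the equivalence are a one-line substitution. The only points that need a moment's care are the normalization $a_i\le b$, which is what makes the digit length $L_2$ — and hence the matrix $A$ — uniform across all items, and the observation that the bounds $0\le y_i^0\le 1$ together with~\eqref{eq:tw:yij} already pin each $y_i^0$ to a $0/1$ value and each $y_i^j$ to $\{0,2^j\}$, so that no additional upper bounds on the $y_i^j$ are needed.
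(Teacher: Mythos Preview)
Your proof is correct and follows exactly the paper's approach: set $\Delta=2$ and invoke Lemma~\ref{lem:dualdtwdecomp}. The paper's own proof is a single line to this effect; you have simply unpacked the reduction and the two directions of the equivalence explicitly, which is fine. One tiny remark: Lemma~\ref{lem:dualdtwdecomp} only gives $\tw_D(A)\le 2$, not equality, but for the hardness statement this is all that is needed.
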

\begin{proof}
Let $\Delta=2$ and apply Lemma~\ref{lem:dualdtwdecomp}.
\end{proof}

When it comes to $\tw_P$, the constraint~\eqref{eq:tw:dualagg} corresponds to a large clique.
Consider instead the following set of constraints:
\begin{align}
z_i^j &=\begin{cases}
y_1^0 & \text{if } i=1,\, j=0 \\
z_{i-1}^{L_\Delta-1} + \alpha_i^0 y_i^0 & \text{if } i>1, j=0\\
z_{i}^{j-1} + \alpha_i^j y_i^j & \text{if } j>0\end{cases} \label{eq:tw:zij} \tag{$Z_i^j$}\\
z_n^{L_\Delta - 1} &= b & \enspace .\tag{$S'$} \label{eq:tw:primalagg} 
\end{align}
The intuitive meaning of $z_i^j$ is that it is a prefix sum of the constraint~\eqref{eq:tw:dualagg}, i.e., $z_i^j = \left(\sum_{k=1}^{i-1} \sum_{\ell=0}^{L_\Delta-1} \alpha_k^\ell y_k^\ell\right) + \left(\sum_{\ell=0}^{j} \alpha_i^\ell y_i^\ell \right)$.
\begin{lemma} \label{lem:primaldtwdecomp}
	Let $A$ be the matrix of constraints~\eqref{eq:tw:yij},~\eqref{eq:tw:zij}, and~\eqref{eq:tw:primalagg}.
		$G_P(A)$ has a path decomposition of width $2$ and length at most $2 n L_\Delta $.
\end{lemma}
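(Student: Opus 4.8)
The plan is to construct an explicit path decomposition of $G_P(A)$, in the same spirit as the proof of Lemma~\ref{lem:dualdtwdecomp}, except that now the ``spine'' of the decomposition follows the prefix-sum chain of the $z$-variables rather than the $y$-chains. First I would describe $G_P(A)$: its vertices are the $2nL_\Delta$ variables $y_i^j$ and $z_i^j$ with $i\in[n]$ and $j\in\{0,1,\dots,L_\Delta-1\}$ (there is no $x_i$, since \eqref{eq:tw:iy0} is excluded). Constraint \eqref{eq:tw:yij} makes $\{y_i^{j-1},y_i^j\}$ an edge; constraint \eqref{eq:tw:zij} makes $\{z_i^{j-1},z_i^j,y_i^j\}$ a triangle when $j\geq 1$, makes $\{z_{i-1}^{L_\Delta-1},z_i^0,y_i^0\}$ a triangle when $i\geq 2$ and $j=0$, and contributes only the edge $\{z_1^0,y_1^0\}$ when $i=1,j=0$; and \eqref{eq:tw:primalagg} involves a single variable, hence no edge. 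Thus $G_P(A)$ is a ``braided double chain''.

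Next I would lay out the bags. Process the items $i=1,\dots,n$ in increasing order, and for item $i$ emit, from left to right: the opening bag $\{z_{i-1}^{L_\Delta-1},z_i^0,y_i^0\}$ (with $z_0^{L_\Delta-1}$ omitted when $i=1$), and then, for $j=1,\dots,L_\Delta-1$, the pair of bags $\{z_i^{j-1},y_i^{j-1},y_i^j\}$ followed by $\{z_i^{j-1},z_i^j,y_i^j\}$; finally concatenate these $n$ blocks along one path. Every bag has at most three vertices, so the width is $2$, and the number of bags is $n\bigl(1+2(L_\Delta-1)\bigr)=n(2L_\Delta-1)\leq 2nL_\Delta$, which gives the length bound.

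What is left is to check that this is a valid path decomposition. Edge coverage is immediate: the opening bag of item $i$ contains the triangle (or edge) of \eqref{eq:tw:zij} at $j=0$, and for $j\geq 1$ the first bag of the $j$-th pair covers \eqref{eq:tw:yij} while the second covers \eqref{eq:tw:zij}. The one point that needs care — though it is still routine bookkeeping — is the running-intersection property: one verifies that every $y_i^j$ and every $z_i^j$ occurs in a block of at most three consecutive bags. For interior indices this block is simply the two bags carrying the constraints incident to that variable together with one neighbouring bag; the only ``seams'' to inspect are $y_i^0$, which lives exactly in the opening bag and the first $Y$-bag of item $i$, and $z_i^{L_\Delta-1}$, which lives in the last bag of item $i$ and the opening bag of item $i+1$ (or just the last bag of item $n$, which also witnesses the lone variable of \eqref{eq:tw:primalagg}). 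Since each variable appears in only the two or three constraints just named, no bag outside its block is needed, so the running-intersection property holds. This yields a path decomposition of $G_P(A)$ of width $2$ with at most $2nL_\Delta$ bags, proving the lemma; I expect the only real work to be organizing this seam-by-seam verification cleanly.
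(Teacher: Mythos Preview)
Your proposal is correct and essentially identical to the paper's own proof: the paper lists the same edge set for $G_P(A)$ and writes down the very same sequence of three-element bags (opening bag $\{z_{i-1}^{L_\Delta-1},y_i^0,z_i^0\}$ followed by the alternating pairs $\{y_i^{j-1},z_i^{j-1},y_i^j\}$, $\{z_i^{j-1},y_i^j,z_i^j\}$), obtaining length $2nL_\Delta-n-1$. The only cosmetic difference is that you keep a separate two-vertex opening bag for $i=1$, giving one more (redundant) bag than the paper, but the decompositions are otherwise the same.
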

\begin{proof}
	Let us analyze the primal treewidth of constraints~\eqref{eq:tw:yij},~\eqref{eq:tw:zij}, and~\eqref{eq:tw:primalagg}.
	We shall again disregard the variables $x_i$ and simply identify them with $y_i^0$.
	Denoting the constraint matrix as $A$, the graph $G_P(A)$ has the following edges:
	\begin{itemize}
		\item $\{y_i^{j-1}, y_i^{j}\}$, $\{z_i^j, z_i^{j-1}\}$, and $\{y_i^{j}, z_i^{j-1}\}$ for each $i \in [n]$ and $j \in [L_\Delta-1]$,
		\item $\{z_{i-1}^{L_\Delta-1} , y_i^0\}$ and $\{z_i^0, z_{i-1}^{L_\Delta -1 }\}$ for each $i \in [n]$,
		\item $\{z_i^j, y_i^j\}$ for each $i \in [n]$ and $j \in [0,L_\Delta-1]$.
	\end{itemize}
	The following sequence of bags constitutes a path decomposition of $G_P(A)$ of width $2$ and length $2 n L_\Delta - n - 1$:
	\begin{align*}
	\{y_1^0,z_1^0,y_1^1\},\{z_1^0,y_1^1,z_1^1\},\{y_1^1,z_1^1,y_1^2\},& \dots, \{y_1^{L_\Delta-2},z_1^{L_\Delta-2},y_1^{L_\Delta-1}\}, \{z_1^{L_\Delta-2},y_1^{L_\Delta-1},z_1^{L_\Delta-1}\}, \\
	\{z_1^{L_\Delta-1},y_2^0,z_2^0\},\{y_2^0,z_2^1,y_2^1\}, &\dots,\{y_2^{L_\Delta-2},z_2^{L_\Delta-2},y_2^{L_\Delta-1}\}, \{z_2^{L_\Delta-2},y_2^{L_\Delta-1},z_2^{L_\Delta-1}\}, \\
	&\enspace\vdots\\
	\{z_{n-1}^{L_\Delta-1},y_n^0,z_n^0\},\{y_n^0,z_n^0,y_n^1\}, &\dots,\{y_n^{L_\Delta-2},z_n^{L_\Delta-2},y_n^{L_\Delta-1}\}, \{z_n^{L_\Delta-2},y_n^{L_\Delta-1},z_n^{L_\Delta-1}\}\enspace .
	\end{align*}
	Moreover, $\|A\|_\infty = \Delta$, the number of variables is $2n L_\Delta $, and the number of constraints is $2nL_\Delta - n + 1$.
\end{proof}
\begin{corollary}[$\tw_P$ hardness] \label{cor:twphardnness}
	\eqref{IP} is \NPh already when $\tw_P(A) = 2$ and $\|A\|_\infty = 2$.
\end{corollary}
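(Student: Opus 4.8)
The plan is to instantiate the gadget constraints~\eqref{eq:tw:yij},~\eqref{eq:tw:zij}, and~\eqref{eq:tw:primalagg} at $\Delta = 2$ and invoke Lemma~\ref{lem:primaldtwdecomp}, exactly paralleling the proof of Corollary~\ref{cor:twdhardness}. Given a \textsc{Subset Sum} instance $a_1, \dots, a_n, b$ with (w.l.o.g.) $a_i \leq b$ for all $i$, set $L_2 \df \ceil{\log_2(b+1)}$ and write each $a_i$ in binary as $a_i = \sum_{j=0}^{L_2-1} \alpha_i^j 2^j$ with $\alpha_i^j \in \{0,1\}$. I would build the feasibility instance of~\eqref{IP} (i.e.\ $f \equiv 0$) on the variables $y_i^j$ and $z_i^j$ for $i \in [n]$, $j \in [0,L_2-1]$, with constraints~\eqref{eq:tw:yij},~\eqref{eq:tw:zij},~\eqref{eq:tw:primalagg}, together with the box constraints $0 \leq y_i^0 \leq 1$ for each $i \in [n]$ (all other variables unbounded, or bounded by $b$, which is immaterial).

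First I would verify correctness. Constraint~\eqref{eq:tw:yij} forces $y_i^j = 2 y_i^{j-1}$, so inductively $y_i^j = 2^j y_i^0$; the box constraint makes $y_i^0 = x_i \in \{0,1\}$, hence $\sum_{j} \alpha_i^j y_i^j = x_i \sum_j \alpha_i^j 2^j = a_i x_i$. Constraint~\eqref{eq:tw:zij} then makes $z_i^j$ the running prefix sum, so that $z_n^{L_2-1} = \sum_{i=1}^n a_i x_i$, and~\eqref{eq:tw:primalagg} enforces $z_n^{L_2-1} = b$. Thus the instance is feasible if and only if there is $I \subseteq [n]$ (namely $I = \{i : x_i = 1\}$) with $\sum_{i \in I} a_i = b$, i.e.\ the \textsc{Subset Sum} instance is a \textsc{Yes}-instance; and the reduction is clearly polynomial.

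Next I would record the parameter bounds. With $\Delta = 2$, every coefficient appearing in~\eqref{eq:tw:yij}--\eqref{eq:tw:primalagg} lies in $\{-2,-1,0,1\}$ (the $\alpha_i^j$ are $0$ or $1$, and the only coefficient of absolute value $2$ is the $\Delta \cdot y_i^{j-1}$ term of~\eqref{eq:tw:yij}), so $\|A\|_\infty = 2$. By Lemma~\ref{lem:primaldtwdecomp}, $G_P(A)$ admits a path decomposition of width $2$, so $\tw_P(A) \leq 2$, with equality since e.g.\ the constraint $z_i^j - z_i^{j-1} - \alpha_i^j y_i^j = 0$ with $\alpha_i^j=1$ induces a triangle in $G_P(A)$. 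This yields the claim.

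The reduction is essentially routine; the one point I would be most careful about is that the treewidth bound of Lemma~\ref{lem:primaldtwdecomp} is not spoiled by the mechanism enforcing $x_i \in \{0,1\}$. Because that is realized through the bound vectors $\vel,\veu$ rather than additional rows, the constraint matrix $A$ — and hence $G_P(A)$, $\tw_P(A)$, and $\|A\|_\infty$ — is unchanged; had one instead introduced auxiliary equality constraints, the path decomposition would need to be re-examined.
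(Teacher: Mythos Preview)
Your proof is correct and follows exactly the paper's approach: set $\Delta=2$ and invoke Lemma~\ref{lem:primaldtwdecomp}. The paper's own proof is the one-liner ``Again, let $\Delta=2$ and apply Lemma~\ref{lem:primaldtwdecomp}''; you have simply made explicit the correctness of the reduction, the coefficient bound $\|A\|_\infty=2$, and the lower bound $\tw_P(A)\geq 2$ via a triangle, all of which the paper leaves to the reader.
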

\begin{proof}
Again, let $\Delta=2$ and apply Lemma~\ref{lem:primaldtwdecomp}.
\end{proof}

\begin{remark}
The system given by constraints~\eqref{eq:tw:yij},~\eqref{eq:tw:zij}, and~\eqref{eq:tw:primalagg} also has constant degree and dual and incidence treewidth, but we will not use this fact.
\end{remark}

\medskip

Let us turn our attention to treedepth.
Say that an instance $(a_1, \dots, a_n, b)$ of \textsc{Subset Sum} is \emph{balanced} if the encoding length of $b$ is roughly $n$, i.e., if $n \in \Theta(\log_2 b)$.
We will use the following ETH-based lower bound for \textsc{Subset Sum}:
\begin{proposition}[\cite{KnopPW:2018}]
	\label{prop:sssum}
	Unless ETH fails, there is no algorithm for \textsc{Subset Sum} which would solve every balanced instance in time $2^{o(n+\log b)}$.
\end{proposition}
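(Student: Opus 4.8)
The plan is to derive this lower bound from the Exponential Time Hypothesis via the textbook Karp-style reduction from $3$-SAT to \textsc{Subset Sum}, while carefully tracking \emph{both} the number of items and the magnitude of the target so that the produced instances are balanced.

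First I would invoke the strong form of ETH obtained through the Sparsification Lemma (see e.g.~\cite{CyganFKLMPPS:2015}): unless ETH fails, satisfiability of a $3$-CNF formula $\phi$ with $\nu$ variables and $\mu = \Oh(\nu)$ clauses cannot be decided in time $2^{o(\nu)}$. The passage to bounded clause density is essential here; raw ETH alone would not suffice, since an unsparsified formula may have $\mu$ polynomially larger than $\nu$, which would break the coupling between the two \textsc{Subset Sum} parameters below.

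Second I would recall the reduction. Given such a $\phi$, build a \textsc{Subset Sum} instance whose item multiset $S$ consists of $2\nu+2\mu$ positive integers, each written in a fixed base $\Delta := 6$ using one digit per variable and one digit per clause, i.e.\ $\nu+\mu$ digit positions in total. For each variable $x_i$ put into $S$ two numbers $t_i,f_i$: both have digit $1$ in the position of $x_i$, and $t_i$ (resp.\ $f_i$) additionally has digit $1$ in the position of every clause satisfied by the literal $x_i$ (resp.\ $\lnot x_i$). For each clause $C_j$ put into $S$ two ``slack'' numbers carrying digit $1$ and digit $2$, respectively, in the position of $C_j$ and zeros elsewhere. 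Let the target $b$ have digit $1$ in every variable position and digit $4$ in every clause position. Since no digit column of $S$ sums to more than $5<\Delta$, additions never carry; hence a sub-multiset of $S$ sums to $b$ if and only if it contains exactly one of $t_i,f_i$ for every $i$ (which reads off a truth assignment) and the chosen literal numbers contribute at least $1$ to each clause column, the remaining slack up to $4$ being attainable from the two slack numbers precisely when the clause has a satisfied literal. Spelling out the no-carry bookkeeping is routine, so I would only sketch it.

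Finally I would check the parameters and conclude. The reduction runs in polynomial time; it outputs $n := 2\nu+2\mu = \Theta(\nu)$ items, and from $\Delta^{\nu-1} \le b < \Delta^{\nu+\mu}$ with $\mu = \Oh(\nu)$ we get $\log_2 b = \Theta(\nu)$, so $n \in \Theta(\log_2 b)$ and the instance is balanced, with $n+\log b = \Theta(\nu)$. Thus a hypothetical algorithm solving every balanced \textsc{Subset Sum} instance in time $2^{o(n+\log b)}$ would, precomposed with the reduction, decide $3$-SAT with $\mu = \Oh(\nu)$ clauses in time $2^{o(\nu)}\cdot\poly(\nu) = 2^{o(\nu)}$, contradicting the strong form of ETH; hence no such algorithm exists unless ETH fails. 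The main obstacle is exactly the balancedness requirement: one must run the reduction on \emph{sparsified} formulas and use a constant base with exactly $\nu+\mu$ digit positions, so that the item count and $\log b$ are simultaneously $\Theta(\nu)$ — with either a super-linear clause count or a non-constant base this coupling, and hence the contradiction with ETH, would be lost.
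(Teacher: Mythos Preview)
Your proposal is correct and follows essentially the same approach as the paper: the paper does not give a full proof but remarks that the proposition is obtained via the standard \NP-hardness reduction from \textsc{3-Sat} to \textsc{Subset Sum} (citing CLRS), which from a formula with $n$ variables and $m$ clauses produces an instance with $\tilde{n}=2(n+m)$ items and $3(n+m)\le\log_2 b\le 4(n+m)$, hence balanced. Your write-up is in fact more careful than the paper's remark in that you explicitly invoke the Sparsification Lemma to ensure $\mu=\Oh(\nu)$, which is needed so that a $2^{o(\tilde{n}+\log b)}=2^{o(n+m)}$ algorithm actually contradicts ETH; the paper leaves this step implicit.
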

We remark that this proposition is obtained via the standard \NP-hardness reduction from \textsc{3-Sat} to \textsc{Subset Sum}, which starts from a \textsc{3-Sat} formula with $n$ variables and $m$ clauses and produces a \textsc{Subset Sum} instance with $\tilde{n} = 2(n+m)$ and $3(n+m) \leq \log_2 b \leq 4(n+m)$~\cite[Theorem 34.15]{CLRS}, hence $\frac{3}{2}\tilde{n} \leq \log_2 b \leq 2\tilde{n}$.
This is the reason why the lower bound holds for balanced instances.

In the next definition, we want to define a tree which is in some sense maximal among all trees with the same level heights and an additional constraint on the degrees of non-degenerate vertices.
\begin{definition}[$\vek$-maximal tree]
	Let $\vek = (k_1, \dots, k_\ell) \in \N^\ell$ and denote by $F_{\vek}$ the maximal (w.r.t. the number of vertices) rooted tree such that each root-leaf path $P$ of $F$ satisfies the following:
	\begin{enumerate}
		\item it contains $\ell$ non-degenerate vertices, i.e., $\ttd(F_\vek) = \ell$,
		\item $k_i(P) = k_i$ for each $i \in [\ell]$, thus $P$ has length $\|\vek\|_1$,
		\item the $i$-th non-degenerate vertex on $P$ has (in $F$) out-degree $k_i+1$, for each $i \in [\ell-1]$.
	\end{enumerate}
\end{definition}

\begin{lemma}\label{lem:fvekbound}
	Let $\vek \in \N^\ell$.
	Then $F_\vek$ has $K_\vek \df \left(\prod_{i=1}^\ell (k_i+1)\right)-1$ vertices.
\end{lemma}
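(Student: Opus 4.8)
The plan is induction on $\ell$, via a recursive description of $F_{\vek}$. The starting observation is that the shape of $F_{\vek}$ near the root is forced: since every root-leaf path $P$ satisfies $k_1(P)=k_1$, the first non-degenerate vertex occupies the same position on every such path, so the root together with its first $k_1-1$ iterated children forms a path on $k_1$ vertices --- each of these first $k_1-1$ vertices must be degenerate and hence has a unique child --- ending at a vertex $v$ that is the first non-degenerate vertex. For $\ell=1$, $v$ can only be the leaf of the path, no internal vertex may have two children, and $F_{(k_1)}$ is exactly the path on $k_1$ vertices; for $\ell\ge 2$, condition~(3) forces $v$ to have exactly $k_1+1$ children.

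Next I would establish, for $\ell\ge 2$, the recurrence $|V(F_{\vek})| = k_1 + (k_1+1)\,|V(F_{(k_2,\dots,k_\ell)})|$ by a two-sided comparison. For the lower bound, I would check that the ``complete'' tree --- the forced path on $k_1$ vertices to $v$, with $v$ carrying $k_1+1$ disjoint copies of $F_{(k_2,\dots,k_\ell)}$ --- is itself a valid candidate: prepending the fixed $k_1$-vertex segment to any root-leaf path of $F_{(k_2,\dots,k_\ell)}$ yields a path with exactly $\ell$ non-degenerate vertices (namely $v$ together with the $\ell-1$ of the $F_{(k_2,\dots,k_\ell)}$-part, using that a vertex's degeneracy depends only on its number of children), with level heights $(k_1,k_2,\dots,k_\ell)$ by unwinding Definition~\ref{def:topheight} with indices shifted by one, and with the $i$-th non-degenerate vertex of out-degree $k_i+1$ for $i<\ell$. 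This also exhibits a valid candidate, so $F_{\vek}$ is well-defined. For the upper bound, the root-to-$v$ segment of $F_{\vek}$ is forced as above and $v$ has exactly $k_1+1$ children; each subtree $F_c$ of $F_{\vek}$ rooted at a child $c$ of $v$ satisfies conditions~(1)--(3) for the tuple $(k_2,\dots,k_\ell)$ --- the same index-shift argument read in reverse --- whence $|V(F_c)|\le |V(F_{(k_2,\dots,k_\ell)})|$ by maximality, and summing over the $k_1+1$ children yields $|V(F_{\vek})| \le k_1 + (k_1+1)\,|V(F_{(k_2,\dots,k_\ell)})|$.

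With the recurrence in hand, the induction is immediate. Base case $\ell=1$: $|V(F_{(k_1)})| = k_1 = (k_1+1)-1 = K_{(k_1)}$. Inductive step $\ell\ge 2$: using the hypothesis $|V(F_{(k_2,\dots,k_\ell)})| = \prod_{i=2}^{\ell}(k_i+1)-1$,
\[
|V(F_{\vek})| = k_1 + (k_1+1)\Bigl(\prod_{i=2}^{\ell}(k_i+1)-1\Bigr) = (k_1+1)\prod_{i=2}^{\ell}(k_i+1)-1 = \prod_{i=1}^{\ell}(k_i+1)-1 = K_{\vek}.
\]
The genuinely delicate ingredient is the structural/recurrence claim, and inside it the bookkeeping matching level heights and out-degrees across the decomposition at $v$ according to Definition~\ref{def:topheight}; beyond being careful with the indices $b(0),\dots,b(\ell)$, I expect no real obstacle.
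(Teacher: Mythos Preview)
Your proof is correct and follows essentially the same approach as the paper: induction on $\ell$, with the recursive decomposition of $F_{\vek}$ into a path on $k_1$ vertices followed by $k_1+1$ copies of $F_{(k_2,\dots,k_\ell)}$, yielding the recurrence $|V(F_{\vek})| = k_1 + (k_1+1)K_{\vek'}$ and the same algebraic finish. The paper simply asserts the structural observation (``the subtree of each child of $v$ is isomorphic to $F_{\vek'}$''), whereas you supply the two-sided argument justifying it; your version is more careful but not different in substance.
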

\begin{proof}
	The proof goes by induction on $\ell$.
	In the base case when $\ell=1$, $F_{(k_1)}$ is a path on $k_1$ vertices and clearly $(k_1+1)-1 = k_1$.
	In the induction step, let $\vek' \df (k_2, \dots, k_\ell)$, so by the induction hypothesis $K_{\vek'} = \left(\prod_{i=2}^\ell (k_i+1)\right)-1$.
	Observe the structure of $F_{\vek}$: the segment between its root and its first non-degenerate vertex $v$ is a path on $k_1$ vertices, and the subtree of each child of $v$ is isomorphic to $F_{\vek'}$ and hence has $K_{\vek'}$ vertices.
	Thus, the number of vertices of $F_{\vek'}$ is
	\begin{multline*}
	K_{\vek} =\underbrace{k_1}_{\text{path}} + \underbrace{(k_1+1)K_{\vek'}}_{\text{subtrees}} = k_1 + (k_1+1)\left(\prod_{i=2}^\ell (k_i+1) - 1\right) = \\ =k_1 + \left(\prod_{i=1}^{\ell}(k_i+1)\right) - (k_1+1)	=\left(\prod_{i=1}^{\ell}(k_i+1)\right) - 1 \qedhere
	\end{multline*}
\end{proof}

\begin{lemma}\label{lem:vekpath}
	Let $\vek \in \N^\ell$, $K_{\vek}$ as in Lemma~\ref{lem:fvekbound}, $P_{K_{\vek}}$ be a path on $K_{\vek}$ vertices. Then
	$P_{K_{\vek}} \subseteq \cl(F_\vek)$.
\end{lemma}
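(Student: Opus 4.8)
The plan is to prove the statement by induction on $\ell$, the length of $\vek$ (equivalently $\ttd(F_{\vek})=\ell$), establishing the equivalent phrasing that $\cl(F_{\vek})$ contains a Hamiltonian path, i.e.\ a path through all $K_{\vek}$ of its vertices. The key elementary fact I would use repeatedly is that in $\cl(F)$ two vertices are adjacent precisely when one is an ancestor of the other; in particular, if $F$ is a single root-leaf path, then $\cl(F)$ is a complete graph. This already handles the base case $\ell=1$: there $F_{(k_1)}=P_{k_1}$ and $K_{(k_1)}=k_1$ by Lemma~\ref{lem:fvekbound}, so $\cl(F_{(k_1)})=K_{k_1}\supseteq P_{k_1}$.

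For the inductive step I would first unfold the shape of $F_{\vek}$ forced by the definition of a $\vek$-maximal tree. Writing $\vek'\df(k_2,\dots,k_\ell)$, the tree $F_{\vek}$ consists of a path (a ``spine'') $u_1-u_2-\cdots-u_{k_1}$ with $u_1$ the root, whose last vertex $v=u_{k_1}$ (the first non-degenerate vertex) has exactly $k_1+1$ children $c_1,\dots,c_{k_1+1}$, and the subtree $T_j$ rooted at each $c_j$ is an isomorphic copy of $F_{\vek'}$. Indeed, any root-leaf path must reach its first non-degenerate vertex after exactly $k_1$ vertices and that vertex must have out-degree $k_1+1$; maximality then forces each $T_j$ to realize the remaining level heights $k_2,\dots,k_\ell$ and the remaining prescribed out-degrees maximally, i.e.\ to be $F_{\vek'}$. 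By Lemma~\ref{lem:fvekbound} this gives $K_{\vek}=k_1+(k_1+1)K_{\vek'}$, which will exactly account for the vertices.

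Next I would record two facts about $\cl(F_{\vek})$: every spine vertex $u_i$ is adjacent to every vertex of every $T_j$ (being an ancestor of all of them), and the subgraph of $\cl(F_{\vek})$ induced on $V(T_j)$ is precisely $\cl(F_{\vek'})$ (ancestor relations among vertices of $T_j$ are exactly those internal to $T_j$). By the induction hypothesis each such induced subgraph contains a Hamiltonian path $Q_j$ on its $K_{\vek'}$ vertices; let $a_j,b_j$ be its endpoints. I then concatenate
\[
Q_1 \;-\; u_1 \;-\; Q_2 \;-\; u_2 \;-\; \cdots \;-\; u_{k_1} \;-\; Q_{k_1+1},
\]
i.e.\ traverse $Q_1$ from $a_1$ to $b_1$, step to $u_1$, traverse $Q_2$, step to $u_2$, and so on, finishing with $Q_{k_1+1}$. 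Each step $b_j-u_j$ and $u_j-a_{j+1}$ is an edge of $\cl(F_{\vek})$ by universality of the spine vertices, and consecutive vertices inside each $Q_j$ are adjacent by construction; this uses all $k_1+1$ subtrees and all $k_1$ spine vertices, hence all $k_1+(k_1+1)K_{\vek'}=K_{\vek}$ vertices. Therefore $\cl(F_{\vek})\supseteq P_{K_{\vek}}$, closing the induction.

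There is no hard computation here; the one point needing care is the justification of the recursive shape of $F_{\vek}$ from the maximality condition and its consistency with the count in Lemma~\ref{lem:fvekbound}, together with the observation that the ``$+1$'' in the prescribed out-degree $k_i+1$ is exactly what yields one more subtree than the number of spine ``connector'' vertices, which is what makes the concatenated walk a Hamiltonian path rather than merely spanning a proper subset.
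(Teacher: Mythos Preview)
Your proof is correct and follows essentially the same inductive approach as the paper: both decompose $F_{\vek}$ into a spine of $k_1$ vertices with $k_1+1$ pendant copies of $F_{\vek'}$, and use the spine vertices as the $k_1$ ``separator'' vertices interleaving the $k_1+1$ sub-paths obtained by induction. The only cosmetic difference is direction---the paper starts from the path and embeds it, whereas you build a Hamiltonian path inside $\cl(F_{\vek})$---but the underlying construction and the key observation (spine vertices are ancestors of all subtree vertices, hence adjacent to them in the closure) are identical.
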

\begin{proof}
	The proof goes by induction over $\ell$.
	In the base case $\ell=1$, $F_{(k_1)}$ is a path on $k_1$ vertices, so clearly $P_{k_1} \subseteq \cl\left(F_{(k_1)}\right)$.
	Assume that the claim holds for all $\ell' < \ell$ and let $\vek' \df (k_2, \dots, k_\ell)$.
	Note that in $P_{K_{\vek}}$ there exist $k_1$ vertices whose deletion partitions $P_{K_{\vek}}$ into $k_1+1$ paths on $K_{\vek'}$ vertices.
	Denote these vertices by $v_1, \dots, v_{k_1}$ and let $P'$ be the path $(v_1, v_2, \dots, v_{k_1})$.
	By the inductive hypothesis we have $P_{K_{\vek'}} \subseteq \cl(F_{\vek'})$.
	Take $k_1+1$ copies of $F_{\vek'}$ and connect each of its roots to $v_{k_1}$.
	Then $P_{K_{\vek}}$ is contained in the closure of this tree, and it is easy to see that this tree is isomorphic to $F_{\vek}$.
\end{proof}

\subsubsection{Lower Bound for Primal Treedepth}
\begin{theorem}[$\td_P$ lower bound]\label{thm:tdp_lowerbound}
	Let $\mathfrak{C}_P(\ell, \Delta)$ be the class of all~\eqref{IP} instances with $\|A\|_\infty \leq \Delta$ and such that the primal graph $G_P(A)$ admits a $\td$-decomposition with 
$\ttd(F) \leq \ell$,
and let $d \df \height(F)$.
	Unless ETH fails, there is no algorithm that solves every instance in $\mathfrak{C}_P(\ell, \Delta)$ in time
	\[
2^{o \left( 2 \sqrt{\log_2 \Delta}  \left( \tfrac{d}{2 \ell}\right)^{\ell/2}     \right)}
\]
\end{theorem}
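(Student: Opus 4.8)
The plan is to reduce from \textsc{Subset Sum} restricted to \emph{balanced} instances, which by Proposition~\ref{prop:sssum} admits no $2^{o(n+\log b)}$ algorithm unless ETH fails, and to produce from each such instance an equivalent~\eqref{IP} instance lying in $\mathfrak{C}_P(\ell,\Delta)$ together with a $\td$-decomposition $F$ whose height $d \df \height(F)$ and topological height $\ttd(F)=\ell+1$ are related to $n,b,\Delta$ so tightly that a $\Delta^{o((d/2\ell)^{\ell/2})}$-time algorithm would beat the ETH bound. The starting point is the base-$\Delta$ encoding used for Corollary~\ref{cor:twphardnness}: constraints~\eqref{eq:tw:yij} force $y_i^j=\Delta^j x_i$, so that $a_i x_i=\sum_j \alpha_i^j y_i^j$ with all coefficients at most $\Delta$, hence $\|A\|_\infty=\Delta$; the $0/1$ restriction on the $x_i$ is imposed by $\vel=\vezero$, $\veu=\veone$ on those coordinates. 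What must change is how the global sum $\sum_{i,j}\alpha_i^j y_i^j=b$ is realised.

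First I would replace the linear prefix-sum chain~\eqref{eq:tw:zij}--\eqref{eq:tw:primalagg} by a \emph{$k$-ary summation tree} of depth $\ell$, where $k=\lceil n^{1/\ell}\rceil$. Arrange the $n$ items as leaves of a balanced $k$-ary tree, introduce one partial-sum variable per node, add at every internal node with children $v_1,\dots,v_k$ the constraint $S_v=\sum_t S_{v_t}$ (coefficients $1$), at each leaf-item $i$ a chain $w_i^{-1}=0$, $w_i^j=w_i^{j-1}+\alpha_i^j y_i^j$ alongside~\eqref{eq:tw:yij}, and at the root $S_{\mathrm{root}}=b$. Correctness of the reduction is immediate since $S_{\mathrm{root}}=\sum_i a_i x_i$ over $\vex\in\{0,1\}^n$. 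The primal graph is a depth-$\ell$ tree of $(k+1)$-cliques (consecutive cliques sharing one partial-sum vertex) with a path of length $\Theta(L_\Delta)$ dangling from each leaf, where $L_\Delta=\lceil\log_\Delta(b+1)\rceil$. A $\td$-decomposition $F$ is obtained by linearising the root clique into a path on $k$ vertices, branching into the $k$ subtrees, recursing $\ell$ times, and finally appending the digit chain; up to a constant thickening this is a $\vek$-maximal tree with $\vek=(k,\dots,k,\Theta(L_\Delta))$ of length $\ell+1$, so $\ttd(F)=\ell+1$, the level heights are $k_i(F)=\Theta(k)$ for $i\le\ell$ and $k_{\ell+1}(F)=\Theta(L_\Delta)$, and $d=\height(F)=\Theta(\ell k+L_\Delta)$. (Alternatively one may keep the linear encoding of Corollary~\ref{cor:twphardnness}, whose primal graph is path-like on $M=\Theta(nL_\Delta)$ vertices, and invoke Lemma~\ref{lem:vekpath} together with Lemma~\ref{lem:fvekbound} to embed $G_P(A)$ into $\cl(F)$ for a constantly thickened $\vek$-maximal $F$ with $\prod_{i=1}^{\ell+1}(k_i+1)\ge cM$; by AM--GM the height is minimised by taking all $k_i$ equal to $\Theta((nL_\Delta)^{1/(\ell+1)})$.)

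It then remains to do the parameter bookkeeping. Either route gives $nL_\Delta=\Theta((d/(\ell+1))^{\ell+1})$, and balancedness gives $\log_2 b=\Theta(n)$, hence $L_\Delta=\Theta(n/\log_2\Delta)$, so $n^2=\Theta((d/(\ell+1))^{\ell+1}\log_2\Delta)$, i.e. $n=\Theta((d/(\ell+1))^{(\ell+1)/2}\sqrt{\log_2\Delta})$. Suppose now there were an algorithm solving every instance of $\mathfrak{C}_P(\ell,\Delta)$ in time $\Delta^{o((d/2\ell)^{\ell/2})}=2^{o((d/2\ell)^{\ell/2}\log_2\Delta)}$. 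A short computation (squaring both sides, using $(2\ell)^\ell/(\ell+1)^{\ell+1}=\Omega(1)$) shows $(d/2\ell)^{\ell/2}\log_2\Delta\le (d/(\ell+1))^{(\ell+1)/2}\sqrt{\log_2\Delta}$ whenever $d\gtrsim\log_2\Delta$, which holds for the family of instances we construct; hence the running time is $2^{o(n)}=2^{o(n+\log b)}$ on balanced instances, contradicting Proposition~\ref{prop:sssum} and therefore ETH.

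The main obstacle is the middle step: designing the summation layer so that it is simultaneously (i) a faithful encoding of $\sum a_i x_i=b$, (ii) of coefficient magnitude at most $\Delta$, and (iii) has a primal graph whose $\td$-decomposition is a $\vek$-maximal tree of topological height exactly $\ell+1$ with the stated level heights — the delicate points being the $(k+1)$-cliques at the summation nodes (forcing a ``path on $k$ vertices, then branch'' gadget rather than a single branching vertex), the constant blow-up incurred when absorbing the pathwidth-$2$ thickening into the tree, and the AM--GM argument (via Lemma~\ref{lem:fvekbound}) certifying that equal level heights give the smallest $d$ for a prescribed number $\Theta(nL_\Delta)$ of primal vertices.
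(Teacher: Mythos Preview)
Your ``alternative'' route—keep the path-like primal encoding of \textsc{Subset Sum} via constraints~\eqref{eq:tw:yij}, \eqref{eq:tw:zij}, \eqref{eq:tw:primalagg}, and then embed the resulting graph into $\cl(F_{\vek})$ with all level heights (nearly) equal—is exactly what the paper does, and your bookkeeping for that route is correct. Concretely, the paper applies Lemma~\ref{lem:vekpath} to the path on the $z_i^j$ variables alone, choosing $\vek$ of length $\ell$ with entries $k\approx (2n^2/\log_2\Delta)^{1/\ell}$, and then inserts each $y_i^j$ immediately above its partner $z_i^j$; this at most doubles every level height, giving $d\le 2\ell(k+1)$ and hence $(d/2\ell)^{\ell/2}\le O(n)$, which is precisely what is needed for the ETH contradiction via Proposition~\ref{prop:sssum}. (A minor wrinkle: the paper's $F$ actually has $\ttd(F)=\ell$, not $\ell+1$; your use of $\ell+1$ follows the theorem statement.)

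Your primary $k$-ary summation-tree route, however, has a genuine gap as stated. With $k=\lceil n^{1/\ell}\rceil$ summation levels and a digit chain of length $L_\Delta=\Theta(n/\log_2\Delta)$ hanging from each leaf, the level-height vector $\vek=(k,\dots,k,\Theta(L_\Delta))$ is grossly unbalanced: for fixed $\ell\ge 3$ and growing $n$ one has $L_\Delta\gg k$, so $d=\Theta(L_\Delta)$ and
\[
\left(\frac{d}{2\ell}\right)^{\ell/2}=\Theta\!\left(L_\Delta^{\ell/2}\right)=\Theta\!\left((n/\log_2\Delta)^{\ell/2}\right)=\omega(n).
\]
A hypothetical $\Delta^{o((d/2\ell)^{\ell/2})}$-time algorithm applied to these instances would then run in time $2^{o(L_\Delta^{\ell/2}\log_2\Delta)}$, which need \emph{not} be $2^{o(n)}$, so no contradiction with Proposition~\ref{prop:sssum} follows. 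Accordingly, your claim that ``either route gives $nL_\Delta=\Theta((d/(\ell+1))^{\ell+1})$'' is false for the summation-tree route: when $L_\Delta$ dominates, the right-hand side is $\Theta(L_\Delta^{\ell+1})$, not $\Theta(k^{\ell}L_\Delta)=\Theta(nL_\Delta)$. The whole point of the linear encoding plus Lemma~\ref{lem:vekpath} is exactly that it lets you choose \emph{all} level heights equal, a freedom the summation tree—with its rigid $L_\Delta$-long bottom level—does not provide.
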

When $d > 2\ell$, this is a double-exponential lower bound in terms of the topological height.
\begin{proof}[Proof of Theorem~\ref{thm:tdp_lowerbound}] 
	Take a balanced \textsc{Subset Sum} instance with $n$ items, i.e., $n \leq \log_2 b \leq 2n$, obtained by the reduction from a \textsc{3-Sat} instance.
	Taking the logarithm to the basis of $\Delta$, we obtain
	\[
		\frac{n}{\log_2 \Delta} \leq \log_\Delta b \leq \frac{2n}{\log_2 \Delta}.
	\]
	Encoding the instance with constraints~\eqref{eq:tw:primalagg},~\eqref{eq:tw:yij},~\eqref{eq:tw:zij}, we obtain $n (L_\Delta + 1) \leq \frac{2}{\log_2 \Delta} n^2 - 1$ variables $z_i^j$ and the same number of variables $y_i^j$.
	To obtain a $\td$-decomposition of $G_P(A)$, we will proceed as follows.
	We will first obtain a $\td$-decomposition for the $z_i^j$ variables only.
	Then, we will insert the $y_i^j$ variables in such a fashion that the topological height remains the same, and the treedepth increases by at most a factor of $2$.	
	
	To this end, let $k = \left\lfloor \sqrt[\ell]{\frac{2 n^2 }{\log_2 \Delta}} \right\rfloor$, and $r \in [1,\ell]$ such that
	\begin{align*}
	(k+1)^{r-1} k^{\ell - r + 1} \leq \frac{2 n^2 }{\log_2 \Delta} &< (k+1)^{r} k^{\ell - r} \\
		&= (k+1)^{r-1} k^{\ell-r + 1} + (k+1)^{r-1} k^{\ell-r} \\
		&\leq \frac{2n^2 }{\log_2 \Delta} + \frac{2n^2 }{\log_2 \Delta} \enspace .
	\end{align*}
	Set $\vek \df \{k\}^{r} \times \{k-1\}^{\ell -r}$, and observe that the constraints~\eqref{eq:tw:zij} form a path on the $z_i^j$ variables and the other constraints do not affect this path.
	By adding dummy variables (and thus at most doubling the number of variables),
	we may assume we have precisely $K_\vek$ variables $z_{i}^j$.
	Thus, by Lemma~\ref{lem:vekpath}, $F_\vek$ is a $\td$-decomposition of the subgraph of $G_P(A)$ induced by the $z_i^j$ variables.
	Now, for every $i \in [n]$, $j \in [0,L_\Delta]$, replace the vertex $z_{i}^j$ in $F_{\vek}$ with an edge $\{z_i^j , y_i^j\}$ in such a fashion that $z_i^j$ remains connected to its (possible) children, and the (possible) parent of $z_i^j$ gets connected to $y_i^j$.
	After doing this for every $z_{i}^j$ one by one, we obtain a tree $F$ on all vertices of $G_P(A)$.
	It remains to show that $F$ is a $\td$-decomposition of $G_P(A)$.
	Let us check the requirements of the definition of a $\td$-decomposition.
\begin{enumerate}
	\item For all $i \in [n]$, $j \in [L_\Delta]$, the edge $\{y_i^j, z_i^{j}\}$ is in $\cl( F )$ by construction.
	\item For $i \in [n]$, $j \in [L_\Delta]$, observe that the edge $e = \{z_i^j, z_i^{j-1}\}$ is in $\cl( F )$.
	Let w.l.o.g.\ $z_{i}^j$ be above (i.e., closer to the root than) $z_{i}^{j-1}$.
	As $y_i^{j-1}$ is a child of $z_{i}^{j-1}$, this implies that there is a path from the root to $y_i^{j-1}$ containing the vertices $z_{i}^{j}$, $y_{i}^{j}$, $z_{i}^{j-1}$.
	Thus all edges between any two of these vertices are contained in $\cl(F)$, in particular the edges $\{y_{i}^{j}, y_{i}^{j-1}\}$ and $\{y_{i}^{j}, z_{i}^{j-1}\}$.
	\item Hence, the only edges left are $\{z_{i-1}^{L_\Delta}, y_i^0\}$.
	But as the edge $\{z_{i-1}^{L_\Delta}, z_i^0\}$ is contained in $F$, the same arguments as before hold.
	\end{enumerate}
	The resulting tree $F$ has the same topological height as $F_\vek$, as we only introduced degenerate vertices.
	Moreover, the level heights do not decrease, and increase at most by a factor of $2$, as each vertex $z_i^j$ on a root-leaf path gets replaced by an edge (but not the dummy vertices).
	Hence, we modeled the instance of \textsc{Subset Sum} as an instance of~\eqref{IP} with a $\td$-decomposition $F$ of $G_P(A)$ with $\ttd(F)=\ell$ and
	\begin{align*}
		\height(F) = d &\leq 2 \ell k \leq 2 \ell \sqrt[\ell]{\tfrac{2 n^2 }{\log_2 \Delta}} \\
		\Rightarrow \qquad n^2 &\geq \left( \tfrac{d}{2\ell} \right)^\ell \log_2(\Delta).
	\end{align*}
	As we cannot solve the initial instance faster than $2^{o \left( n + \log_2 b \right)}$, the lower bound follows:
	\[
		2^{o \left( n + \log_2 b \right)} \geq 2^{o \left( 2n \right)}
			\geq 2^{o \left( 2 \sqrt{\log_2 \Delta}  \left( \tfrac{d}{2 \ell}\right)^{\ell/2}     \right)}		 \enspace . \qedhere
	\]
\end{proof}

\subsubsection{Lower Bound for Dual Treedepth}
In order to obtain the doubly-exponential lower bound for the dual case, we consider a multidimensional generalization of \textsc{Subset sum}.

\prob{\textsc{Multidimensional subset sum}}
{Integral vectors $\vea_1, \dots, \vea_n, \veb \in \Z^k$, multiplicities $\veu \in (\Z_{> 0}\cup \{\infty\})^n $.}
{Is there $\vex \in [\vezero,\veu]$ such that $\sum_{i =1}^n \vea_i x_i = \veb$?}
For short, we say that an instance of \textsc{Multidimensional subset sum} of dimension $k$ is a \textsc{$k$-dimensional subset sum} instance.
This problem generalizes \textsc{Subset sum} in two ways.
First, the vectors $\vea_i$ may also contain non-positive entries.
Moreover, every item $\vea_i$ has a multiplicity $u_i$, hence we can choose it up to $u_i$ times.
The following lemma easily follows by choosing $k=1$.

\begin{lemma}
The \textsc{Multidimensional subset sum} problem is \NPh even in fixed dimension.
\end{lemma}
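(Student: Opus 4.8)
The plan is to observe that \textsc{Multidimensional subset sum} restricted to dimension $k=1$ is, up to a trivial renaming, exactly the \textsc{Subset Sum} problem, whose \NPh-ness is classical (it underlies Proposition~\ref{prop:ilphardness}, and its \NP-completeness is textbook~\cite{CLRS}). Concretely, given a \textsc{Subset Sum} instance $a_1, \dots, a_n, b$, I would produce the \textsc{$1$-dimensional subset sum} instance with vectors $\vea_i \df (a_i) \in \Z^1$, target $\veb \df (b)$, and multiplicities $u_i \df 1$ for every $i \in [n]$. This transformation is computable in linear time, and the dimension $k=1$ is fixed independently of the input.

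For correctness I would argue both directions. The membership constraint $\vex \in [\vezero, \veu]$ forces $x_i \in \{0,1\}$ for each $i$, so a feasible $\vex$ is precisely the indicator vector of some set $I \df \{i \in [n] : x_i = 1\}$, and the single equation $\sum_{i=1}^n a_i x_i = b$ then holds if and only if $\sum_{i \in I} a_i = b$. Hence the produced instance is a \textsc{Yes}-instance of \textsc{Multidimensional subset sum} exactly when $(a_1,\dots,a_n,b)$ is a \textsc{Yes}-instance of \textsc{Subset Sum}. Since $k$ is held fixed at $1$, this shows that \textsc{Multidimensional subset sum} is \NPh already in dimension one, which is the claim of the lemma.

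There is essentially no obstacle here; the only points worth a sentence of care are (i) that permitting non-positive entries in the $\vea_i$ and finite or infinite multiplicities in the general problem only widens the problem, so restricting to strictly positive entries and unit multiplicities is a legitimate special case, and (ii) that the reduction is parsimonious, which is convenient should later arguments need to preserve the number of solutions.
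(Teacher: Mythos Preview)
Your argument is correct and matches the paper's own one-line justification, which simply notes that choosing $k=1$ recovers \textsc{Subset Sum}. Your reduction with unit multiplicities is exactly this observation spelled out in full.
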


\begin{lemma}
\label{lem:sssum-md-ssum}
Let $a_1,\dots,a_n,b$ be an instance of the \textsc{Subset sum} problem, $M \df \max (\{a_i \mid i=1,\dots,n\} \cup \{b\})$, and $\Delta \in \Z_{\geq 2}$.
There is an equivalent instance\footnote{i.e., there is a bijection between the sets of solutions} of \textsc{Multidimensional subset sum} with dimension $\lceil \log_\Delta (M + 1) \rceil$, $n + \lceil \log_\Delta (M + 1) \rceil -1$ items,
and all numbers bounded by $\Delta$ in absolute value.
\end{lemma}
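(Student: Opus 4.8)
The plan is to encode a single large number in base $\Delta$ as a vector of its digits, and to simulate the ``carry'' mechanism of base-$\Delta$ addition by introducing auxiliary items. Let $a_1,\dots,a_n,b$ be the given \textsc{Subset Sum} instance, $M \df \max(\{a_i\}\cup\{b\})$, and $\ell \df \lceil \log_\Delta(M+1)\rceil$, so every number in the instance has a base-$\Delta$ representation with $\ell$ digits. For each $i \in [n]$ write $[a_i]_\Delta = (\alpha_i^0,\dots,\alpha_i^{\ell-1})$ and let $\vea_i \df (\alpha_i^0,\dots,\alpha_i^{\ell-1}) \in \Z^\ell$; similarly let $\veb \df (\beta^0,\dots,\beta^{\ell-1})$ be the digit vector of $b$. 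Each entry of $\vea_i$ and $\veb$ lies in $[0,\Delta-1]$, hence is bounded by $\Delta$ in absolute value. The original items get multiplicity $u_i \df 1$ for $i \in [n]$. By construction $\sum_{i=1}^n \vea_i x_i = \sum_{i=1}^n x_i [a_i]_\Delta$ for $\vex \in \{0,1\}^n$; but $\sum_{i=1}^n a_i x_i = b$ does \emph{not} directly translate to $\sum_{i=1}^n \vea_i x_i = \veb$, because digit-wise sums may overflow $\Delta$. This is exactly what the auxiliary items fix.

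First I would introduce, for each ``digit position'' $j \in [0,\ell-2]$, one \emph{carry item} $\vecc_j \df \Delta \cdot \vece_j - \vece_{j+1} \in \Z^\ell$, where $\vece_j$ is the $j$-th unit vector; this item has entries $\Delta$ and $-1$, hence is bounded by $\Delta$ in absolute value, and we give it multiplicity $u_{\vecc_j} \df \infty$ (or, if finite multiplicities are required, a large enough finite bound such as $\sum_i a_i$, which suffices since no more than that many carries can ever be needed and which is still polynomially bounded). The total item count is $n + (\ell-1) = n + \lceil\log_\Delta(M+1)\rceil - 1$, and the dimension is $\ell = \lceil\log_\Delta(M+1)\rceil$, as claimed. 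The target vector is $\veb$.

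The key step is to establish the bijection between solution sets. Given $I \subseteq [n]$ with $\sum_{i\in I} a_i = b$: set $x_i = 1$ iff $i \in I$, and then choose the carry multiplicities $y_0,\dots,y_{\ell-2}$ greedily from the least significant digit upward, so that after subtracting $\sum_j y_j \vecc_j$ the running digit sums are normalized; since the integer value $\sum_{i\in I} a_i$ equals $b$, this normalization terminates exactly at $\veb$. Conversely, given any feasible $(\vex, \vey)$ for the multidimensional instance, I would take the dot product of the equation $\sum_i \vea_i x_i + \sum_j y_j \vecc_j = \veb$ with the vector $(\Delta^0,\Delta^1,\dots,\Delta^{\ell-1})$; each carry item $\vecc_j$ contributes $\Delta\cdot\Delta^j - \Delta^{j+1} = 0$, so this dot product yields $\sum_i a_i x_i = b$ with $x_i \in \{0,1\}$, i.e.\ a valid \textsc{Subset Sum} solution. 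The main obstacle is the forward direction: one must argue that the greedy choice of carries is well-defined and nonnegative — that is, that at each digit position the accumulated value is a nonnegative integer whose quotient by $\Delta$ can be pushed up to the next position without ever producing a negative coordinate — which follows because all $a_i$ and $b$ are nonnegative and $\sum_{i\in I}a_i = b$ holds as an identity of integers, so every partial "value so far, from position $j$ up" is a nonnegative integer. I would also remark that the dot-product argument shows the correspondence is a genuine bijection once carries are taken canonically (normalized so that $0 \le$ each intermediate digit $< \Delta$), completing the proof.
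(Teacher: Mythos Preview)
Your approach is essentially identical to the paper's: encode each number by its base-$\Delta$ digit vector, add $\ell-1$ ``carry'' items, and recover the original equation by dotting with $(1,\Delta,\dots,\Delta^{\ell-1})$, which annihilates every carry item. Two minor points. First, there is a sign slip: with $\vecc_j=\Delta\vece_j-\vece_{j+1}$ \emph{added} with nonnegative multiplicity you implement a borrow, not a carry (position $0$ would force $\Delta y_0=\beta^0-\sum_i\alpha_i^0 x_i$, which is typically negative); this is why your forward direction silently switches to \emph{subtracting} $\vecc_j$. The fix is to take $-\vecc_j$ as the item, exactly as the paper does. Second, for the bijection the paper simply observes that the $\ell-1$ carry columns are linearly independent, so once $\vex$ is fixed the carry vector $\ves$ is uniquely determined; this is cleaner than appealing to ``canonical'' carries and dispenses with any separate normalization argument.
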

\begin{proof}
Let $r \df \lceil \log_\Delta (M + 1) \rceil$.
Recall that $[a_i]_{\Delta} = (\alpha_i^0, \dots, \alpha_i^{r-1})$ is the base-$\Delta$ encoding of $a_i$, and similarly for $[b]_\Delta = (\beta^0, \dots, \beta^{r-1})$, and note that the upper indices do \emph{not} denote exponentiation.
Consider the following system of equations.
\begin{align*}
\begin{pmatrix}
\alpha_{1}^0 & \alpha_2^0 & \dots & \alpha_n^0 & - \Delta \\
\alpha_1^1 & \alpha_2^1 & \dots & \alpha_n^1 & 1 & \ddots \\
\vdots & & & \vdots & & \ddots & - \Delta \\
\alpha_1^{r-1} & \alpha_2^{r-1} & \dots & a_n^{r-1} & & & 1
\end{pmatrix}
\begin{pmatrix}
\vex \\ \ves
\end{pmatrix}
&=
\begin{pmatrix}
\beta^0 \\
\beta^1 \\
\vdots \\
\beta^{k-1}
\end{pmatrix}, \qquad 
\begin{matrix}
\vezero \leq \vex \leq \veone \\
\vezero \leq \ves
\end{matrix}.
\end{align*}
If we multiply the vector $(1,\Delta, \Delta^{2}, \dots, \Delta^{r-1})$ from the left on both sides,
we retrieve the original \textsc{Subset sum} instance where only the $\vex$ variables occur.
Hence, the projection $\pi: \, (\vex,\ves) \mapsto \vex$ maps solutions to solutions.
To show that every solution $\vex$ for the \textsc{Subset sum} instance has a pre-image $(\vex,\ves)$ that is a solution to the \textsc{Multidimensional subset sum} instance, reformulate the initial equation as
\[
\begin{array}{llllllll}
& 
\left( \sum_{j=0}^{r-1} \alpha_1^j \Delta^j \right) x_1 
&+&
\left( \sum_{j=0}^{r-1} \alpha_2^j \Delta^j \right) x_2 
&+& \dots &+&
\left( \sum_{j=0}^{r-1} \alpha_n^j \Delta^j \right) x_n
\\
= & \left( \sum_{i=1}^n \alpha_i^0 x_i \right)
&+&
\left( \sum_{i=1}^n \alpha_i^1 x_i \right) \Delta
&+& \dots &+&
\left( \sum_{i=1}^n \alpha_i^{r-1} x_i \right) \Delta^{r-1} \\
= & \left( \sum_{i=1}^n \alpha_i^0 x_i - \Delta s_1 \right)
&+&
\left( s_1 + \sum_{i=1}^n \alpha_i^1 x_i - \Delta s_2 \right) \Delta
&+& \dots &+&
\left( s_{r-1} + \sum_{i=1}^n \alpha_i^{r-1} x_i \right) \Delta^{r-1} \\
= & \hspace{10pt} \beta^0 \hspace{10pt} + \hspace{10pt} \beta^1 \Delta 
&+&
\hspace{10pt} \beta^2 \Delta^2
&+& \dots &+&
\hspace{10pt} \beta^{r-1} \Delta^{r-1}.
\end{array}
\]
Due to integrality, the difference $\sum_{i=1}^n (\alpha_i^0 x_i) -  \beta^0$ has to be a multiple of $\Delta$, and we can choose $s_1 \in \Z$ such that $\sum_{i=1}^n (\alpha_i^0 x_i) - \Delta s_1 =  \beta^0$.
Applying the argument iteratively, every difference $(s_{j-1} + \sum_{i=1}^n \alpha_i^j x_i) -  \beta^j$ has to be a multiple of $\Delta^{j+1}$. Defining $s_j$ iteratively by $s_{j} \Delta = (s_{j-1} + \sum_{i=1}^n \alpha_i^{j-1} x_i ) - \beta^{j-1}$ thus yields an integral vector $(\vex,\ves)$ fulfilling the system of equations.
To see that $\ves \geq \vezero$, first note that all entries $\alpha_i^j \geq 0$.
Furthermore, we have $\beta^j < \Delta^j$.
This immediately implies that $s_1 \geq 0$, and applying this argument iteratively again, shows $s_i \geq 0$ for all $i \in [n]$.

Hence, the  projection $\pi$ restricted to solutions is surjective.
As the last $r-1$ columns are linearly independent, we also know that our choice of $\ves$ is unique and $\pi$ is indeed a bijection.
\end{proof}

\begin{theorem}
\label{thm:k-dim-sssum-lower}
Assuming the ETH, there is no algorithm solving every \textsc{$k$-dimensional subset sum} instance with absolute values of the entries bounded by $\Delta$ in time $\Delta^{o(k)}$.
\end{theorem}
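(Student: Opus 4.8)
The plan is to derive Theorem~\ref{thm:k-dim-sssum-lower} by composing the ETH-based hardness of balanced \textsc{Subset Sum} (Proposition~\ref{prop:sssum}) with the base-$\Delta$ encoding of Lemma~\ref{lem:sssum-md-ssum}, keeping careful track of how the dimension $k$ and the entry bound $\Delta$ scale with the size of the original \textsc{Subset Sum} instance.

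First I would take a balanced \textsc{Subset Sum} instance $a_1,\dots,a_n,b$, i.e.\ one with $n \le \log_2 b \le 2n$; these are precisely the instances output by the standard reduction from \textsc{3-Sat}, so by Proposition~\ref{prop:sssum} no algorithm decides all of them in time $2^{o(n+\log b)}$, which for balanced instances equals $2^{o(n)}$ since $n+\log b = \Theta(n)$. Setting $M \df \max(\{a_i \mid i \in [n]\} \cup \{b\}) = b$, I would then apply Lemma~\ref{lem:sssum-md-ssum} with the fixed base $\Delta \ge 2$: in polynomial time this produces an equivalent \textsc{$k$-dimensional subset sum} instance with $k = \lceil \log_\Delta(M+1) \rceil$, with $n + k - 1$ items, and with all matrix entries and the target vector bounded by $\Delta$ in absolute value. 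Since $\log_\Delta(M+1) = \log_2(M+1)/\log_2\Delta$ and $\log_2 M = \Theta(n)$, we get $k = \Theta(n/\log_2\Delta)$, i.e.\ $k \log_2 \Delta = \Theta(n)$.

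Next I would argue by contradiction: suppose there is an algorithm $\mathcal{A}$ that solves every \textsc{$k$-dimensional subset sum} instance with entries bounded by $\Delta$ in time $\Delta^{o(k)}$. Running $\mathcal{A}$ on the instance obtained above takes time
\[
\Delta^{o(k)} = 2^{o(k)\cdot \log_2\Delta} = 2^{o(k\log_2\Delta)} = 2^{o(n)},
\]
using $k\log_2\Delta = \Theta(n)$ in the last step. Because Lemma~\ref{lem:sssum-md-ssum} gives a bijection between solution sets, the answer to the multidimensional instance is exactly the answer to the original \textsc{Subset Sum} instance, so the whole pipeline (reduction plus call to $\mathcal{A}$) decides balanced \textsc{Subset Sum} in time $2^{o(n)} = 2^{o(n+\log b)}$, contradicting Proposition~\ref{prop:sssum} and hence ETH.

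The bulk of the work is just this bookkeeping, and the one spot where I expect to have to be a little careful is the exponent manipulation $\Delta^{o(k)} = 2^{o(n)}$: it has to remain valid uniformly, which is exactly why I would phrase the key relation multiplicatively as $k\log_2\Delta = \Theta(n)$ and treat $\Delta \ge 2$ as fixed, rather than letting $\Delta$ and $n$ drift independently; a minor additional point is to observe that the reduction produces only $n+k-1 = \mathrm{poly}(n)$ items, so the polynomial reduction time is dominated by the $2^{o(n)}$ bound.
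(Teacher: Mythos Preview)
Your proposal is correct and follows essentially the same route as the paper: start from a balanced \textsc{Subset Sum} instance, apply Lemma~\ref{lem:sssum-md-ssum} with base $\Delta$ to obtain a $k$-dimensional instance with $k = \lceil \log_\Delta(b+1)\rceil$, and use $k\log_2\Delta = \Theta(n) = \Theta(n+\log b)$ to translate a hypothetical $\Delta^{o(k)}$ algorithm into a $2^{o(n+\log b)}$ algorithm, contradicting Proposition~\ref{prop:sssum}. The paper presents the same chain of estimates directly rather than phrasing it as a contradiction, but the content is identical.
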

\begin{proof}
Choose a balanced \textsc{Subset sum} instance, and for $\Delta \in \Z_{\geq 2}$, let $k \in \Z$ be the unique integer such that $\Delta^{k-1} \leq b + 1 < \Delta^{k}$.
By Lemma~\ref{lem:sssum-md-ssum}, there exists an equivalent \textsc{$k$-dimensional subset sum} instance with entries bounded by $\Delta$.
As we cannot solve the initial instance faster than $2^{o (n + \log_2 (b))}$, where $3/2 n \leq \log_2 (b) \leq 2n$, we cannot solve the equivalent instance faster than
\begin{align*}
2^{o(n + \log_2 b)} &\geq 2^{o\left( n + (k-1)\log_2(\Delta) \right)} \\
	&\geq 2^{o\left( 1/2 (k-1)\log_2(\Delta)  + (k-1)\log_2(\Delta)\right)} \\
	&\geq \Delta^{o\left( 3/2 (k-1) \right) } \enspace . \qedhere
\end{align*}
\end{proof}

To eventually obtain the double-exponential lower bound, we will encode a \textsc{Subset sum} instance first as a \textsc{Multidimensional subset sum} instance, and then as an~\eqref{IP}.
To this end, we will first discuss how the decomposition of Lemma~\ref{lem:dualdtwdecomp} adapts for the multi-dimensional case.
Take a \textsc{$k$-dimensional subset sum} instance, given by a matrix $\bar{A} \in \Z^{k \times n}$ and $\bar{\veb} \in \Z^k$,
i.e., we are to solve the system $\bar{A}\vex = \bar{\veb}$ with $\vex \in [\vezero,\veu]^n$.
Choosing some $\Delta$, we encode this instance analogously to the encoding of \textsc{Subset sum} by constraints~\eqref{eq:tw:iy0},\eqref{eq:tw:yij}, and~\eqref{eq:tw:dualagg}, but instead of the single constraint~\eqref{eq:tw:dualagg}, we use $k$ constraints, one for each dimension.
Let $M = \max \{\Vert \bar{A} \Vert_\infty , \Vert \bar{\veb} \Vert_\infty \}$, and $L_\Delta = \log_\Delta (M + 1)$.
For the $\ell$-th entry of the $i$-th item,
denote the base-$\Delta$ encoding by $[a_{\ell,i}]_\Delta = (\alpha_{\ell,i}^{0},\dots,\alpha_{\ell,i}^{L_\Delta-1})$, and similarly for $[b_\ell]_{\Delta} = (\beta_\ell^0, \dots, \beta_\ell^{L_\Delta-1})$.

For the variables $x_i$, we introduce again the variables $y_i^j$ and constraints~\eqref{eq:tw:yij} for all $i \in [n]$, $j \in [L_\Delta]$.
Additionally, we add constraints
\begin{align}
\sum_{i=1}^n \sum_{j=0}^{L_\Delta-1} \alpha_{\ell,i}^j y_i^j &= b_\ell &  \forall \ell \in [k].
\tag{$S_\ell$} \label{eq:tw:dualagg-indexed}
\end{align}
Denote by $A$ the matrix of constraints \eqref{eq:tw:iy0},\eqref{eq:tw:yij}, and~\eqref{eq:tw:dualagg-indexed}.
For each $i \in [n]$, the constraints~\eqref{eq:tw:yij} induce a path $P_i$ in $G_D(A)$. 
For different $i$'s, these paths are not connected by any edge, hence we can use Lemma~\ref{lem:vekpath} to obtain a $\td$-decomposition of each path $P_i$ independently.
If all entries $\alpha_{\ell,i}^j$ are non-zero, i.e., the maximum number of edges in the $G_D(A)$ is attained, the constraints~\eqref{eq:tw:dualagg-indexed} induce a clique in $G_D(A)$, and there are edges between~\eqref{eq:tw:dualagg-indexed} and~\eqref{eq:tw:yij} for all $\ell \in [k]$, $i \in [n]$, and $j \in [0,L_{\Delta}-1]$.
Thus, we will take a path on the variables~\eqref{eq:tw:dualagg-indexed}, and attach one $\td$-decomposition of each path $P_i$ to it.

The whole proof for the lower bound will therefore be divided in two steps.
Starting with a balanced instance of \textsc{Subset Sum}, we first encode by an instance of \textsc{Multidimensional subset sum}.
In a second step, we encode this instance by an instance of~\eqref{IP}.
Though it seems convenient to start already with a \textsc{Multidimensional subset sum} instance as in Theorem~\ref{thm:k-dim-sssum-lower}, observe that the dimension $k$ depends on the size of the \textsc{Subset sum} instance it is derived from, as well as with the choice of an upper bound for $\Vert A \Vert_\infty$ of the~\eqref{IP} instance we want to obtain.
Hence, we find it cleaner to not use this intermediate step.

\begin{theorem}[$\td_D$ lower bound]\label{thm:tdd_lowerbound}
	Let $\Delta \geq 2$, $\ell \geq 1$, and $\mathfrak{C}_D(\ell, \Delta)$ be the class of all~\eqref{IP} instances with $\|A\|_\infty \leq \Delta$,
	such that the dual graph $G_D(A)$ admits a $\td$-decomposition with $\ttd(F)=\ell$.
	Let $d \df \height(F)$.
	Assuming ETH there is no algorithm that solves every instance in $\mathfrak{C}_D(\ell, \Delta)$ in time
	\[
	\Delta^{o \left( \left(\frac{d}{\ell} + \tfrac{1}{12} \right)^\ell \right)} \enspace .
	\]
	Specifically, assuming ETH, no algorithm solves every generalized $n$-fold IP in time
	\[
		\|A\|_\infty^{o \left( (r+s)^2 \right)} \enspace .
	\]
\end{theorem}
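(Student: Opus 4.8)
The plan is to derive Theorem~\ref{thm:tdd_lowerbound} by composing three reductions, following the two-step encoding outlined in the paragraphs before the statement. First I would start from a \emph{balanced} \textsc{Subset Sum} instance $(a_1,\dots,a_n,b)$ obtained via the standard reduction from \textsc{3-Sat}, so $\tfrac{3}{2}n \le \log_2 b \le 2n$, and invoke Proposition~\ref{prop:sssum}: unless ETH fails, it cannot be solved in time $2^{o(n+\log b)}$. Next, fix the target coefficient bound $\Delta$ and apply Lemma~\ref{lem:sssum-md-ssum} with this $\Delta$ to obtain an equivalent \textsc{Multidimensional subset sum} instance $\bar A\vex=\bar\veb$, $\vezero\le\vex\le\veu$, of dimension $k \df \ceil{\log_\Delta(M+1)}$ with $M \df \max\{\|\bar A\|_\infty,\|\bar\veb\|_\infty\}$ and all entries bounded by $\Delta$ in absolute value; here I choose $M$ roughly equal to $b$, so that $\Delta^{k-1}\le b+1<\Delta^k$, i.e.\ $k \approx \log_\Delta b \approx \tfrac{n}{\log_2\Delta}$ (up to the balanced-instance constants). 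Finally I encode this $k$-dimensional instance as an~\eqref{IP} using constraints~\eqref{eq:tw:iy0},~\eqref{eq:tw:yij}, and~\eqref{eq:tw:dualagg-indexed}, which introduce the digit variables $y_i^j$ for $i\in[n]$, $j\in[0,L_\Delta-1]$ with $L_\Delta \df \ceil{\log_\Delta(M+1)}$, and $k$ aggregation rows, one per coordinate. Crucially $\|A\|_\infty=\Delta$.

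The second step is to engineer a $\td$-decomposition $F$ of $G_D(A)$ of topological height $\ell$ and small height $d$, mirroring the primal argument in the proof of Theorem~\ref{thm:tdp_lowerbound}. As observed in the paragraph preceding the statement, for each $i\in[n]$ the constraints~\eqref{eq:tw:yij} induce a path $P_i$ in $G_D(A)$ on the vertices $Y_i^0,\dots,Y_i^{L_\Delta-1}$, these paths are pairwise non-adjacent, and the aggregation rows $S_1,\dots,S_k$ together with their edges to all $Y_i^j$ form (in the worst case) a clique plus a complete bipartite attachment. I would first build a $\td$-decomposition of the subgraph induced by the $S_\ell$'s: since these $k$ vertices can be taken as a path $P_k$, and a path on $K_\vek$ vertices embeds into $\cl(F_\vek)$ by Lemma~\ref{lem:vekpath}, I pick a level-height vector $\vek\in\N^{\ell-1}$ (of length $\ell-1$, since one more non-degenerate level will come from the $P_i$'s) with roughly balanced entries $k_j\approx (k/\cdots)^{1/(\ell-1)}$, padding with dummy rows so that $K_\vek \ge k$, which at most doubles $k$. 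This gives a tree $F'$ with $\ttd(F')=\ell-1$, $\height(F')\le (\ell-1)(k_{\max}+1)$, into whose closure $P_k$ embeds. Then I attach, below the appropriate leaves of $F'$, the $n$ paths $P_i$ (each of length $L_\Delta$), turning each former leaf into a non-degenerate vertex and thus raising the topological height to exactly $\ell$; one verifies as in the proof of Theorem~\ref{thm:tdp_lowerbound} that all three edge-types of $G_D(A)$ are covered by $\cl(F)$. The resulting decomposition has $\ttd(F)=\ell$ and $\height(F)=d$ with $d \le (\ell-1)(k_{\max}+1) + L_\Delta$, and a careful choice of how $k$ is split among the $\ell$ levels (one level absorbing $L_\Delta\approx k$, the other $\ell-1$ levels each absorbing a $(K_\vek)^{1/(\ell-1)}\approx k^{1/(\ell-1)}$-sized chunk) yields the bound $K_\vek \cdot L_\Delta \gtrsim k$, equivalently $k \lesssim \bigl(\tfrac{d}{\ell}+0.49\bigr)^{\ell}$ after optimizing the split; the $0.49$ slack comes from the rounding and the dummy-padding, exactly as the $k-1$ vs.\ $k$ discrepancy and the balanced-instance constant $3/2$ are handled in Theorem~\ref{thm:k-dim-sssum-lower}.

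With the encoding in place, the lower bound follows by the chain
\[
2^{o(n+\log_2 b)} \ge 2^{o\left(\tfrac{\log_2\Delta}{2}\cdot k\right)} = \Delta^{o(k)} \ge \Delta^{o\left(\left(\tfrac{d}{\ell}+0.49\right)^{\ell}\right)},
\]
where the first inequality uses $k\approx \tfrac{n}{\log_2\Delta}$ together with the balanced relation $\log_2 b \ge \tfrac32 n$ so that $n+\log_2 b = \Theta(n) = \Theta(k\log_2\Delta)$, and the last uses $k\gtrsim (d/\ell+0.49)^\ell$. Since the final~\eqref{IP} instance lies in $\mathfrak{C}_D(\ell,\Delta)$ and is equivalent (via the composed bijections of Lemma~\ref{lem:sssum-md-ssum} and the digit-encoding) to the original \textsc{Subset Sum} instance, an algorithm beating $\Delta^{o((d/\ell+0.49)^\ell)}$ would beat $2^{o(n+\log b)}$ for \textsc{Subset Sum}, contradicting ETH. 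For the $n$-fold consequence, I would specialize: a generalized $n$-fold matrix has $\td_D = r+s$ with $\ttd(F)=2$, $k_1(F)=r$, $k_2(F)=s$ (Lemma~\ref{lem:nfold:tdd}), so $d=r+s$, $\ell=2$, and the bound $\Delta^{o((d/2+0.49)^2)} = \|A\|_\infty^{o((r+s)^2)}$ is immediate once one checks that the encoding above, for $\ell=2$, actually produces a matrix in $n$-fold form — which it does, since the two levels correspond precisely to the $k$ aggregation rows (the $A_2$-type block, shared across copies) and the per-item digit-propagation paths (the $A_1$-type blocks). The main obstacle I anticipate is the bookkeeping in the second step: getting the constants in $d \le (\ell-1)(k_{\max}+1)+L_\Delta$ tight enough to yield the stated $0.49$ (rather than a weaker additive constant), which requires carefully balancing how the dimension $k$ is distributed across the $\ell$ levels and how much padding the $K_\vek\ge k$ requirement forces, exactly as in the analogous but slightly different optimization carried out in the proof of Theorem~\ref{thm:tdp_lowerbound}.
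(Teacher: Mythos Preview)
Your construction has the roles of the two blocks of constraints reversed, and this breaks the argument. The aggregation rows $S_1,\dots,S_k$ of~\eqref{eq:tw:dualagg-indexed} do \emph{not} ``form a path $P_k$'' in $G_D(A)$: as the paragraph before the theorem states, they induce a \emph{clique} (any two $S_\ell,S_{\ell'}$ share a column $y_i^j$ with both $\alpha_{\ell,i}^j,\alpha_{\ell',i}^j\neq 0$). A clique on $k$ vertices cannot be covered by $\cl(F_\vek)$ for any $\vek$ with $\ttd(F_\vek)>1$; all $k$ vertices must lie on a single root-leaf path, so these rows contribute height $k$ and topological height $1$ to any $\td$-decomposition. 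Your plan to spread the $S_\ell$'s over $\ell-1$ levels via Lemma~\ref{lem:vekpath} therefore fails outright, and since in your setup $k\approx\log_\Delta b$, this alone forces $d\ge \log_\Delta b$, which is far too large for the claimed bound.

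The missing idea is a two-scale base choice. The paper does \emph{not} apply Lemma~\ref{lem:sssum-md-ssum} with the target $\Delta$; it first picks $k:=\lfloor(\log_\Delta b)^{1/\ell}\rfloor$ and applies the lemma with the large intermediate base $\widetilde{\Delta}:=\Delta^{(k+1)^r k^{\ell-r-1}}\approx\Delta^{k^{\ell-1}}$, yielding a \textsc{$k$-dimensional subset sum} with only $k$ rows but entries up to $\widetilde{\Delta}$. Only \emph{then} is the base-$\Delta$ digit encoding~\eqref{eq:tw:yij},~\eqref{eq:tw:dualagg-indexed} applied, which produces per-item paths $P_i$ of length $L_\Delta\approx \log_\Delta\widetilde{\Delta}\approx k^{\ell-1}$. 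In the resulting $\td$-decomposition the $k$ aggregation rows form the top path (height $k$, one level), and each long path $P_i$ is decomposed by $F_\vek$ with $\vek\in\{k-1,k\}^{\ell-1}$ into $\ell-1$ further levels of height $\approx k$ each; this gives $\ttd(F)=\ell$ and $d\approx \ell k$, so $(d/\ell)^\ell\approx k^\ell\approx\log_\Delta b$, which is exactly what the ETH bound needs. In your version, applying Lemma~\ref{lem:sssum-md-ssum} directly with base $\Delta$ makes $L_\Delta\approx 1$, so the paths $P_i$ are trivial and there is nothing left on which to use Lemma~\ref{lem:vekpath}. The $n$-fold specialization then follows from the paper's construction with $\ell=2$ (top level $=k$ aggregation rows, bottom level $=$ the $P_i$'s), not from yours.
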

\begin{proof}
	Fix $\Delta \in \Z_{\geq 2}$ and $\ell \geq 2$.
	Let $n_0$ be large enough (implying that the choice of $k$ will be large enough), and take a balanced subset sum instance with $n \geq n_0$ items.
	Define 
	\[
	k \df \left\lfloor \sqrt[\ell]{ \log_\Delta (b) } \right\rfloor,
	\]
	and let $1 \leq r \leq \ell$ be the unique integer such that
	\[
	(k+1)^{r-1} k^{\ell-r + 1} \leq \log_\Delta (b) < (k+1)^r k^{\ell - r}.
	\]
	We will assume $r \leq \ell-1$, and discuss the case $\ell = r$ in the end.
	Setting 
	\[
	\widetilde{\Delta} \df \Delta^{(k+1)^{r-1} k^{\ell - r}},
	\]
	this implies that $b < \widetilde{\Delta}^{k+1}$, and due to integrality, all occurring numbers are at most $\widetilde{\Delta}^{k+1} - 1$. 
	By Lemma~\ref{lem:sssum-md-ssum}, we can encode this instance as a $(k+1)$-dimensional subset sum instance with $n^\prime = n + k$ items and with the largest absolute value of any number bounded by $\widetilde{\Delta}$.
	
	We further encode this $(k+1)$-dimensional subset sum instance as an~\eqref{IP} with constraints~\eqref{eq:tw:yij} and~\eqref{eq:tw:dualagg-indexed}.
	Let $A$ denote the constraint matrix we obtain this way, and
\[
L_\Delta \df \ceil{ \log_\Delta \left( \widetilde{\Delta} + 1 \right)} \leq \Delta^{(k+1)^{r-1} k^{\ell - r}} + 1.
\]
	For each $i$, the constraints~\eqref{eq:tw:yij} induce a path $\Gamma_i$ on $L_\Delta$ vertices as a subgraph in $G_D(A)$.
	We obtain $n^\prime$ paths in total, and there are no edges between distinct paths in $G_D(A)$.
	It remains to construct a $\td$-decomposition containing these paths and the Constraints~\eqref{eq:tw:dualagg-indexed}.
	
	Set $\vek \df \{k\}^{r} \times \{k - 1\}^{\ell - r - 1}$, and note that $F_\vek$ has
	$(k+1)^{r} k^{\ell -r -1} - 1 \geq L_\Delta$ vertices, for $k$ large enough.
	By Lemma~\ref{lem:vekpath} $F_{\vek}$ is a $\td$-decomposition for each subgraph $\Gamma_i$ (possibly after adding dummy constraints).
	Take one copy $F_i$ of $F_\vek$ for each $i$.

	For the subgraph $\Sigma$ induced by the Constraints~\eqref{eq:tw:dualagg-indexed}, take a path on $k+1$ vertices to be a $\td$-decomposition.
	Declare one of its endpoints to be the root and append the copies $F_i$ to the other end.
	As all edges of $G_D(A)$ are either within the subgraph $\Sigma$, a subgraph $\Gamma_i$, or between $\Sigma$ and one of the graphs $G_i$, the tree $F$ we obtain this way is a $\td$-decomposition for $G_D(A)$.
	We have $\ttd(F) = \ell$, its first level height is $k+1$, and the other level heights either $k$ or $k-1$.
	In particular, its height is 
\begin{align}
\label{eq:height-in-eth-lb-dual}
	d &\df (k+1) + rk + (\ell-r-1)(k-1) = \ell k - (\ell - r - 2) \leq \ell k + 1,
\end{align}
	where we used the fact that $r \leq \ell - 1$.
	
	By Proposition~\ref{prop:sssum}, the chosen subset sum instance cannot be solved faster than $2^{ o (n + \log_2 (b))}$, and due to the choice of parameters, we can estimate the exponent to
	\begin{align}
		n + \log_2 (b) \geq \frac32 \log_2 (b) 
			&\geq \frac32 \log_2 (\Delta) \log_\Delta (b) \nonumber \\
			&\geq \frac32 \log_2 (\Delta) (k+1)^{r-1} k^{\ell-r + 1}. \label{eq:aux9}
	\end{align}

	We now distinguish two cases.
	First, assume $r \geq \tfrac{2}{3} \ell - 2$, and let $\tfrac{1}{4} > \varepsilon > 0$.
	Choosing $n_0$ large enough so that $k$ is large enough, we obtain the two estimates
	\begin{align}
	( \tfrac{k}{k+1} )^3 &\geq \tfrac{4}{5}, \label{eq:aux12} \allowdisplaybreaks \\
	(k+1)^2 k &= k^3 + 2k^2 + k \geq k^3 + (2- \varepsilon) k^2 + \tfrac{4}{3} k + \tfrac{8}{27} \nonumber \\
	&\geq (k + \tfrac{2-\varepsilon}{3})^3, \label{eq:aux11} 
	\end{align}
	and can compute
	\begin{align*}
		(k+1)^{r-1} k^{\ell-r + 1} &= \left(\frac{k}{k+1}\right)^3 (k+1)^{r+2} k^{\ell - r - 2} \allowdisplaybreaks \\
			&\geq \frac45 \left( (k+1)^2k \right)^{\ell/3} & (with~\eqref{eq:aux12}), \allowdisplaybreaks \\
			&\geq \frac45 \left( k + \tfrac{2-\varepsilon}{3} \right)^\ell & (with~\eqref{eq:aux11}), \allowdisplaybreaks \\
			&\geq \frac45 \left( \tfrac{d}{\ell} - \tfrac{1}{\ell} + \tfrac{2-\varepsilon}{3} \right)^\ell & (with~\eqref{eq:height-in-eth-lb-dual}), \allowdisplaybreaks \\
			&> \frac45 \left( \frac{d}{\ell} + \tfrac{1}{12} \right)^\ell. & (with \ \ell \geq 2, \varepsilon < 1/4).
	\end{align*}

	The second case, $r < \ell/3 - 2$, can only occur if $\ell > 9$, as $r \geq 1$ by definition.
	Then
	\begin{align*}
	\allowdisplaybreaks
		(k+1)^{r-1} k^{\ell-r + 1} &\geq \left( \frac{\ell k }{\ell}\right)^\ell \\
		&= \left( \frac{ \ell k - (\ell - r - 2)}{\ell} + \frac{(\ell-r-2)}{\ell} \right)^\ell \\
		&> \left( \frac{ d }{\ell} + \frac{1}{3}\right)^\ell,
	\end{align*}
	where we used the definition of $d$ for the first term, and the estimate on $r$ and $\ell$ for the second.
	In the end, we obtain that there is no algorithm solving every IP instance~\eqref{IP} that has a $\td_D$-decomposition with height $d$ and topological height at most $\ell$ in time
	\begin{align*}
	2^{o (n + \log_2 b)} &= 2^{o \left( \log_2 (\Delta) (k+1)^{r-1} k^{\ell-r + 1} \right)} \\
	&= \Delta^{o \left( \left(\frac{d}{\ell} + \tfrac{1}{12} \right)^\ell \right)}.
	\end{align*}
	A generalized $n$-fold IP with coefficients $r,s$ has a $\td_D$-decomposition with topological height $2$ and level heights $r,s$.
	Thus, the class of generalized $n$-fold IPs cannot be solved faster than $\Vert A \Vert_\infty^{o ((r+s)^2)}$.

It is left to discuss the case
\[
	(k+1)^{\ell - 1} k \leq \log_{\Delta} (b) < (k+1)^\ell.
\]
The idea is to multiply the initial subset sum instance by some integer, changing our initial choice from $k$ to $k+1$.
We have $\Delta^{(k+1)^{\ell - 1} k} \leq b$, and since
\begin{align*}
	(k+1)^{\ell - 1} k + k^{\ell -1} &= (k+1)^ \ell, \\
	(k+1)^\ell + k^{\ell-1} &\leq (k+1)((k+1) + 1)^{\ell-1} = (k+1)(k+2)^{\ell-1},
\end{align*}
we can scale $b^\prime \df b \cdot \Delta^{k^{\ell-1}}$, and the new instance implies $1 \leq r \leq \ell -1$. 
We multiplied each number in the instance by at most $b$, which is assumed to be the largest integer.
The rest of the proof holds for the modified instance, up to the Estimate~\eqref{eq:aux9}, where we use that the initial instance was balanced.
However, this changes to
\[
	n + \log_2 (b) \geq \tfrac{3}{2} \log_2 (b) \geq \tfrac{3}{4} \log_2 (b^\prime) \geq \tfrac{3}{4} \log_2(\Delta)(k+1)^{r-1}k^{\ell-r+1},
\]
and as the constant $3/4$ vanishes in the Landau notation, the claim still holds.
\end{proof}
\begin{remark} 
	The parameter dependence of the lower bound of Theorem~\ref{thm:tdd_lowerbound} almost coincides with the norm bound $g_1(A)$ of Lemma~\ref{lem:dual_norm} by providing a $\td$-decomposition whose level heights are evenly split (i.e., each $k_i(F)$ is either $k$ or $k+1$).
	As compared with the recent lower bound of Knop, Pilipczuk, and Wrochna~\cite{KnopPW:2018}, our lower bound is slightly stronger (see below), and, more importantly, gives a tighter lower bound for each class of~\eqref{IP} with a fixed topological height $\ell \in \N$.
	Let us elaborate.
	
	Our algorithms have parameter dependence of $(2g_1(A))^{\Oh(\td_D(A))}$ which is $(\|A\|_\infty K)^{\Oh(\td_D(A) \cdot (K-1))}$, with $K \leq \prod_{i=0}^{\ttd(F)} k_i(F)$.
	Thus, in an asymptotic sense, the exponential dependence of our algorithm differs from our lower bound by a factor of $\td_D(A) \log K$.
	We find this remarkable, because closing this gap turns out to be a generalization of a major open problem: is there an $(\|A\|_\infty m)^{\Oh(m)} n$ algorithm for~\eqref{ILP}? The fastest known algorithm has complexity $(\|A\|_\infty m)^{\Oh(m^2)} n$~\cite{EisenbrandW:2018}, and there is no $(\|A\|_\infty m)^{o(m)} n$ algorithm assuming ETH~\cite{KnopPW:2018}.
	Taking $n$-fold IP as an analogue, there is an algorithm with parameter dependence $(\|A\|_\infty rs)^{\Oh(r^2s + s^2)}$~\cite{JansenLR:2018} and Theorem~\ref{thm:tdd_lowerbound} implies that under ETH no algorithm achieves $(\|A\|_\infty)^{o(r^2 + rs + s^2)}$ (improving this to $(\|A\|_{\infty} rs)^{o(rs)}$ would also be interesting).
	
	(Here and in the following we identify $n$-fold IP with~\eqref{IP} instances with $\ttd(F)=2$, and tree-fold IP with $\tau$ levels with~\eqref{IP} instances with $\ttd(F)=\tau$. This follows from the fact that any~\eqref{IP} where $G_D(A)$ has a $\td$-decomposition with $\ttd(F)=2$ can be embedded into an $n$-fold IP with similar parameters, and analogously for tree-fold, see~\cite{KouteckyLO:2018}.)
	
	The question similarly generalizes to tree-fold IP. A particularly nice form of this question is this: tree-fold IP with $\tau$ levels and with each block having $k \geq 2$ rows has an algorithm with parameter dependence $(\|A\|_\infty k)^{\Oh(k \cdot \tau^2 \cdot k^\tau)}$, and Theorem~\ref{thm:tdd_lowerbound} says that no algorithm can reduce this to $(\|A\|_\infty)^{o(k^\tau)}$ unless ETH fails.
	What is the ``true'' complexity of such tree-fold IP instances?
	
	Regarding the exact relationship of our lower bound to the recent result of Knop, Pilipczuk and Wrochna~\cite{KnopPW:2018}, we note that for instances with topological height $\ell = d$, our lower bound is asymptotically the same, but slightly stronger, in the following way.
	We obtain, in the exponent, a dependence of $o((3/2)^d) = o(2^{cd})$ for some constant $c < 1$, whereas they have $2^{o(d)}$.
	An example of a running time that is ruled out by our bound, but not theirs, is $(1/d) 2^{cd} = 2^{cd - \log_2 d}$:
	\begin{align*}
	\lim_{d \rightarrow \infty} \left| \frac{1/d 2^{cd}}{2^{cd}} \right| &= \lim_{d \rightarrow \infty} 1/d = 0, \\
	\lim_{d \rightarrow \infty} \left| \frac{cd - \log_2 d}{d} \right| = c \neq 0 \enspace .
	\end{align*}
\end{remark}

\medskip

\subsubsection{Lower Bound on Graver Elements}
The two deciding factors of the efficiency of our algorithms are small norms of Graver basis elements (Lemmas~\ref{lem:primal_norm} and~\ref{lem:dual_norm}), and small treewidth (see Lemmas~\ref{lem:primal_treewidth} and~\ref{lem:dual_treewidth}) of the constraint matrix.
The hardness of Corollaries~\ref{cor:twdhardness} and~\ref{cor:twphardnness} suggest that there exist matrices with constant treewidth yet large $g_\infty(A)$.
The next lemma exhibits such a matrix, and one may observe that it is precisely the matrix of constraints~\eqref{eq:tw:yij}.
\begin{lemma}[Constant $\tw$ but exponential $g_\infty(A)$] \label{lem:largeg1}
	For each $n \geq 2$, $n \in \N$, there exists a matrix $A \in \Z^{(n-1) \times n}$ with $\tw_P(A) = \tw_D(A) = 1$, $\|A\|_\infty = 2$, and $2^{n-1} \leq g_\infty(A) \leq g_1(A)$.
\end{lemma}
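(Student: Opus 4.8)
The plan is to write down explicitly the matrix $A$ hinted at after the statement — essentially the constraint matrix of the ``doubling chain'' $x_j = 2x_{j-1}$ — and then verify the four asserted properties by direct inspection. Concretely, index the $n$ columns by $0,1,\dots,n-1$ and the $n-1$ rows by $1,\dots,n-1$, and let row $j$ equal $\vece_j - 2\vece_{j-1}$, where $\vece_0,\dots,\vece_{n-1}$ are the standard basis vectors of $\R^n$; so $A_{j,j}=1$, $A_{j,j-1}=-2$, and all other entries vanish. Then $\|A\|_\infty = 2$ is immediate, and since the rows are plainly linearly independent, $A$ has rank $n-1$ and $\ker_\Z(A)$ is one-dimensional.

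Next I would identify the two graphs. Row $j$ is supported exactly on $\{j-1,j\}$, so $G_P(A)$ is the path on the vertices $0,1,\dots,n-1$; dually, column $i$ is supported on rows $\{i,i+1\}$ for $1\le i\le n-2$ and on a single row for $i\in\{0,n-1\}$, so $G_D(A)$ is the path on the vertices $1,\dots,n-1$. Every path has treewidth at most $1$, and at least $1$ once it has an edge, so $\tw_P(A)=\tw_D(A)=1$; the sole degenerate case is $n=2$, where $G_D(A)$ is a single vertex and the equality should be read as ``$\le 1$'', which is all the lemma needs.

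Third I would compute the Graver basis. Solving $A\vex=\vezero$ gives $x_j=2x_{j-1}$ for all $j\in[n-1]$, hence $\ker_\Z(A)=\{t\vev : t\in\Z\}$ with $\vev \df (1,2,4,\dots,2^{n-1})$. The key point is that $\vev$ and $-\vev$ are exactly the $\sqsubseteq$-minimal elements of $\ker_\Z(A)\setminus\{\vezero\}$: if $\vezero\neq\vew=t\vev\sqsubseteq\vev$ then $t\neq 0$, conformality in coordinate $0$ forces $t v_0^2\ge 0$ hence $t\ge 0$, and $|t|=|t v_0|\le |v_0|=1$, so $t=1$ and $\vew=\vev$; symmetrically for $-\vev$; and any $t\vev$ with $|t|\ge 2$ satisfies $\pm\vev\sqsubseteq t\vev$ and is therefore not minimal. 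Thus $\G(A)=\{\vev,-\vev\}$, whence $g_\infty(A)=\|\vev\|_\infty=2^{n-1}$ and $g_1(A)=\|\vev\|_1=2^n-1$; together with the trivial $\|\cdot\|_\infty\le\|\cdot\|_1$ this yields $2^{n-1}\le g_\infty(A)\le g_1(A)$, as required.

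I do not expect a genuine obstacle here. The subtlest point is the $\sqsubseteq$-minimality argument, which is what upgrades ``$\vev\in\ker_\Z(A)$'' to ``$\vev\in\G(A)$'' and so certifies $g_\infty(A)\ge 2^{n-1}$ rather than merely lower-bounding some kernel element; everything else is bookkeeping. It is worth stressing that this example is entirely consistent with the positive results: $G_P(A)$ and $G_D(A)$ are paths, so their treewidth is $1$ while their treedepth grows like $\log n$, and it is precisely this unboundedness of the treedepth — not of the treewidth — that our algorithms exploit via the Graver-norm bounds of Lemmas~\ref{lem:primal_norm} and~\ref{lem:dual_norm}; the bound $2^{n-1}$ is of course far below the tower-type bound those lemmas give for $\td_P(A),\td_D(A)=\Theta(\log n)$.
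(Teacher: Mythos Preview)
Your proof is correct and follows essentially the same route as the paper: exhibit the ``doubling chain'' matrix, observe that both $G_P(A)$ and $G_D(A)$ are paths (hence treewidth $1$), and verify that the kernel is generated by $(1,2,\dots,2^{n-1})$ so this vector is a Graver element. Your argument is in fact slightly more detailed than the paper's, which uses the sign-flipped matrix with $2$ on the diagonal and $-1$ on the superdiagonal but gives the same kernel and the same $\sqsubseteq$-minimality reasoning.
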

\begin{proof}
	Let
	\[
	A:=\left(
	\begin{array}{ccccccc}
	2      & -1  & 0 & \cdots & 0 & 0   \\
	0      & 2       & -1 & \cdots & 0 & 0\\
	0   & 0  & 2    &  \cdots & \vdots & \vdots\\
	\vdots   &  \vdots & \vdots     &  \ddots & -1 & 0\\
	0      & 0 & 0 &\cdots  &    2 & -1  \\
	\end{array}
	\right),
	\]
	be an $(n-1) \times n$ matrix.
	The sequence $\{1,2\}, \{2,3\}, \dots, \{n-1,n\}$ forms a tree decomposition of $G_P(A)$ of width $1$; analogously $\{1,2\}, \{2,3\}, \dots, \{n-2,n-1\}$ is a tree decomposition of $G_D(A)$ of width $1$.
	Observe that every $\vex \in \Z^n$ with $A \vex = \mathbf{0}$ must satisfy that $x_{i+1} = 2x_i$ for each $i \in [n-1]$.
	Clearly  $\veh = (1, 2, 4, \dots, 2^{n-1}) \in \N^{n}$ satisfies $A \veh = \mathbf{0}$, and from the previous observation it immediatelly follows that there is no $\veh' \in \N^n$ with $\veh' \sqsubset \veh$, and thus $\veh \in \G(A)$.
\end{proof}
\begin{acks}
Eisenbrand, Hunkenschröder, and Klein were supported by the Swiss National Science Foundation (SNSF) within the project \emph{Convexity, geometry of numbers, and the complexity of integer programming (Nr. 163071)}.
Levin and Onn are partially supported by Israel Science Foundation grant 308/18.
Onn was also partially supported by the Dresner Chair at the Technion.
Koutecký is partially supported by Charles University project UNCE/SCI/004, and by the project 17-09142S of GA ČR.
\end{acks}

\bibliographystyle{ACM-Reference-Format}
\bibliography{psp}

\end{document}